\numberwithin{equation}{section}
\def\@noindentfalse{\global\let\if@noindent\iffalse}
\def\@noindenttrue {\global\let\if@noindent\iftrue}
\def\@aftertheorem{%
	\@noindenttrue
	\everypar{%
		\if@noindent%
		\@noindentfalse\clubpenalty\@M\setbox\z@\lastbox%
		\else%
		\clubpenalty \@clubpenalty\everypar{}%
		\fi}}
\theoremstyle{plain}
\newtheorem{theorem}{Theorem}[section]
\newtheorem{definition}[theorem]{Definition}
\newtheorem{lemma}[theorem]{Lemma}
\newtheorem{corollary}[theorem]{Corollary}
\newtheorem{proposition}[theorem]{Proposition}
\theoremstyle{definition}
\newtheorem{remark}[theorem]{Remark}
\newtheorem{example}[theorem]{Example}
\newtheorem{assumption}[theorem]{Assumption}
\bf\mathversion{bold}}{\thesubsection\kern1em}{0pt}{}
\bf\mathversion{bold}}{}{0pt}{}
\def\note#1{\par\smallskip%
	\noindent\kern-0.01\hsize%
	{\setlength\fboxrule{0pt}\fbox{\setlength\fboxrule{0.5pt}\fbox{%
				\llap{$\boldsymbol\Longrightarrow$ }%
				\vtop{\hsize=0.98\hsize\parindent=0cm\small\rm #1}%
				\rlap{$\enskip\,\boldsymbol\Longleftarrow$}
	}}}%
}
\def\given{\mskip 0.5mu plus 0.25mu\vert\mskip 0.5mu plus 0.15mu}
\newcounter{bracketlevel}%
\def\@bracketfactory#1#2#3#4#5#6{%
	\expandafter\def\csname#1\endcsname##1{%
		\global\advance\c@bracketlevel 1\relax%
		\global\expandafter\let\csname @middummy\alph{bracketlevel}\endcsname\given%
		\global\def\given{\mskip#5\csname#4\endcsname\vert\mskip#6}\csname#4l\endcsname#2##1\csname#4r\endcsname#3%
		\global\expandafter\let\expandafter\given\csname @middummy\alph{bracketlevel}\endcsname%
		\global\advance\c@bracketlevel -1\relax%
	}%
}
\def\bracketfactory#1#2#3{%
	\@bracketfactory{#1}{#2}{#3}{relax}{0.5mu plus 0.25mu}{0.5mu plus 0.15mu}
	\@bracketfactory{b#1}{#2}{#3}{big}{1mu plus 0.25mu minus 0.25mu}{0.6mu plus 0.15mu minus 0.15mu}
	\@bracketfactory{bb#1}{#2}{#3}{Big}{2.4mu plus 0.8mu minus 0.8mu}{1.8mu plus 0.6mu minus 0.6mu}
	\@bracketfactory{bbb#1}{#2}{#3}{bigg}{3.2mu plus 1mu minus 1mu}{2.4mu plus 0.75mu minus 0.75mu}
	\@bracketfactory{bbbb#1}{#2}{#3}{Bigg}{4mu plus 1mu minus 1mu}{3mu plus 0.75mu minus 0.75mu}
}
\newcounter{ctr}\loop\stepcounter{ctr}\edef\X{\@Alph\c@ctr}%
\edef\csname s\X\endcsname{\noexpand\mathscr{\X}}
\edef\csname c\X\endcsname{\noexpand\mathcal{\X}}
\edef\csname b\X\endcsname{\noexpand\boldsymbol{\X}}
\edef\csname I\X\endcsname{\noexpand\mathbb{\X}}
\let\@IE\IE\let\IE\undefined
\newcommand{\IE}{\mathop{{}\@IE}\mathopen{}}
\let\@IP\IP\let\IP\undefined
\newcommand{\IP}{\mathop{{}\@IP}}
\def\^#1{\relax\ifmmode {\mathaccent"705E #1} \else {\accent94 #1}\fi}
\def\~#1{\relax\ifmmode {\mathaccent"707E #1} \else {\accent"7E #1}\fi}
\def\*#1{\relax#1^\ast}
\edef\-#1{\relax\noexpand\ifmmode {\noexpand\bar{#1}} \noexpand\else \-#1\noexpand\fi}
\def\>#1{\vec{#1}}
\def\.#1{\dot{#1}}
\def\atop{\@@atop}
\renewcommand{\leq}{\leqslant}
\renewcommand{\geq}{\geqslant}
\newcommand\indep{\protect\mathpalette{\protect\@indep}{\perp}}
\def\@indep#1#2{\mathrel{\rlap{$#1#2$}\mkern2mu{#1#2}}}
\def\parsetime#1#2#3#4#5#6{#1#2:#3#4}
\def\parsedate#1:20#2#3#4#5#6#7#8+#9\empty{20#2#3-#4#5-#6#7 \parsetime #8}
\def\moddate{\expandafter\parsedate\pdffilemoddate{\jobname.tex}\empty}
\theoremstyle{definition}
\theoremstyle{remark}
\theoremstyle{definition}
\theoremstyle{plain}
\theoremstyle{plain}
\theoremstyle{plain}
\theoremstyle{plain}
\providecommand{\conditionname}{Condition}
\providecommand{\definitionname}{Definition}
\providecommand{\lemmaname}{Lemma}
\providecommand{\propositionname}{Proposition}
\providecommand{\remarkname}{Remark}
\providecommand{\corollaryname}{Corollary}
\providecommand{\theoremname}{Theorem}
\begin{document}
\title{{Necessary and sufficient condition for CLT of linear spectral statistics of sample correlation matrices}}
\author{Yanpeng Li\thanks{Harbin Institute of Technology. Email: 20230256@hit.edu.cn}, \ Guangming Pan\thanks{Nanyang Technological University. Email: gmpan@ntu.edu.sg}, \ Jiahui Xie\thanks{National University of Singapore. Email: jiahui.xie@u.nus.edu}, \ Wang Zhou\thanks{National University of Singapore. Email: stazw@nus.edu.sg}}
	
	\date{}
	
	\maketitle

\begin{abstract}
{In this paper, we establish the central limit theorem (CLT) for the linear spectral statistics (LSS) of sample correlation matrix $R$, constructed from a $p\times n$ data matrix $X$ with independent and identically distributed (i.i.d.) entries having mean zero, variance one, and infinite fourth moments in the high-dimensional regime $n/p\rightarrow \phi\in \mathbb{R}_+\backslash \{1\}$. We derive a necessary and sufficient condition for the CLT. More precisely, under the assumption that the identical distribution $\xi$ of the entries in $X$ satisfies $\mathbb{P}(|\xi|>x)\sim l(x)x^{-\alpha}$ when $x\rightarrow \infty$ for $\alpha \in (2,4]$, where $l(x)$ is a slowly varying function, we conclude that: (i). When $\alpha\in(3,4]$, the universal asymptotic normality for the LSS of sample correlation matrix holds, with the same asymptotic mean and variance as in the finite fourth moment scenario; (ii) We identify a necessary and sufficient condition $\lim_{x\rightarrow\infty}x^3\mathbb{P}(|\xi|>x)=0$ for the universal CLT; (iii) We establish a local law for $\alpha \in (2, 4]$. Overall, our proof strategy follows the routine of the matrix resampling, intermediate local law, Green function comparison, and characteristic function estimation. In various parts of the proof, we are required to come up with new approaches and ideas to solve the challenges posed by the special structure of sample correlation matrix. For instance, we combine the matrix resampling and cumulant expansion to derive estimates for the odd moments of self-normalized variables. We employ the resolvent expansions and Taylor expansions to manage the column dependence structure in the Green function comparison. Furthermore, we utilize not only the first-order derivative but also the second-order derivative to handle the heavy-tailed part in the characteristic function. Additionally, the fluctuation of the quadratics under heavy-tailed conditions, which is less known even in symmetry cases, demonstrates the robustness of self-normalization and the critical point $\alpha=3$. Our results also demonstrate that the symmetry condition is unnecessary for the CLT of LSS for sample correlation matrix, but the tail index $\alpha$ plays a crucial role in determining the asymptotic behaviors of LSS for $\alpha \in (2, 3)$.
}
\end{abstract}

\textbf{Keywords:} Sample correlation matrices, Linear spectral statistics, Central limit theorem, Heavy-tailed randomness

\textbf{Mathematics Subject Classification (2010)} {60B20, 60F05, 62E20, 62H20, 15B52}

\tableofcontents
\newpage

\section{Introduction}\label{sec_intro}
\subsection{Problem motivation and related studies}
 Spectral statistics of sample covariance matrix or sample correlation matrix and their limiting behaviors play important roles in multivariate statistics analysis. Many fundamental statistics in the classical multivariate analysis can be   expressed as linear functionals of eigenvalues of covariance matrix or correlation matrix. These linear functionals, commonly referred to as \textit{linear spectral statistics} (LSS) in the literature, play a significant role, particularly in statistical inference problems like, multivariate analysis of variance, multivariate linear models, canonical correlation analysis, and factor analysis. In recent years, beyond the traditional scope of multivariate analysis, the growing volume of data collected from novel sources has sparked increased interest in statistical inference from large data matrices. This trend necessitates the development of new methodologies and techniques to manage the associated high dimensionality and complexity.

As a result, research on LSS for sample covariance matrix and sample correlation matrix under high-dimensional settings has attracted considerable attention and has been widely applied in various domains, including finance, biomedicine, wireless communications, genetic engineering. For a thorough exploration of LSS applications across different matrix models, readers can refer to \cite{bai2010spectral,couillet2011random,johnstone2006high,paul2014random,tulino2004random,yao2015sample}. In historical research, \cite{jonsson1982some} investigated the original problem for LSS using a sequence of Wishart matrices. They established a \textit{central limit theorems} (CLT) for a class of polynomial
functions. Afterward, \cite{bai2008clt} expanded the CLT framework to analytic functions of eigenvalues in large sample covariance matrices. This extension inspired further developments for a variety of matrix models. For instance, \cite{zheng2012central} considered Fisher matrices, \cite{diaconis2001linear} investigated Haar matrices using the moment method, and \cite{bai2005convergence} applied the Stieltjes transform method to Wigner matrices. The central finding across these studies is that the universal CLT for LSS in large sample covariance matrices holds when the randomness satisfies a finite fourth moment condition.

Although the CLT for LSS is universal in the sense that it does not depend on the specific distribution of the data, it does require that the population has at least a finite fourth moment. In practice, having prior knowledge of the population mean, variance, and fourth moment is necessary, which limits the statistical applications of the sample covariance matrix. A natural alternative to address these limitations is the sample correlation matrix. Thanks to its self-normalizing property within columns, the sample correlation matrix does not require knowledge of the first two population moments. This feature makes linear spectral statistics based on the sample correlation matrix more practical than those based on the sample covariance matrix. In addition, the self-normalized sum is expected to be more robust across a broad class of random variables. This raises the natural question: \textit{Can the universal CLT for the LSS of sample correlation matrix be established under much weaker moment conditions?} Indeed, identifying the minimal moment condition under which the CLT holds for the LSS of sample correlation matrix has been a long-standing problem.

The literature on large sample correlation matrices traces back to \cite{jiang2004limiting}, who established a limiting spectral distribution under a finite second moment condition. Later, \cite{cai2011limiting} determined the limiting magnitude for the largest off-diagonal entries in the sample correlation matrix under finite exponential moments. Subsequent work by \cite{bao2012tracy} and \cite{pillai2012edge} focused on the largest eigenvalues, obtaining the Tracy-Widom law under similar conditions. Regarding LSS, \cite{gao2014high} established a CLT under a finite fourth moment condition, as also noted in \cite{yin2023central}. Interestingly, while their proofs rely on the fourth moment, the results do not depend on it (the involvement of the fourth moment in the asymptotic mean and variance in \cite{gao2014high} could be canceled out after a careful examination due to the equation satisfied by the Stieltjes transformation of the Marcenko-Pastur law).
These results significantly enrich the
possible applications of sample correlation matrix and make it more practically feasible to utilize sample correlation matrix. They also offer valuable insights and raise the possibility of establishing a universal CLT under weaker moment conditions. To the best of our knowledge, there are only a few studies on large-dimensional sample correlation matrices derived from heavy-tailed populations. In \cite{heiny2018almost}, the author studied the almost sure convergence for the extreme eigenvalues of sample correlation matrix under certain stringent moment conditions for the self-normalized entries (see, e.g., condition ($C_q$) therein). \cite{heiny2022limiting} established the limiting spectral distribution for sample correlation matrix when the identical distribution $\xi$ of matrix entries meets the condition $\mathbb{P}(|\xi|>x)\sim l(x)x^{-\alpha}$ as $x\rightarrow \infty$, where $\alpha \in (0,2)$ and $l(x)$ is a slowly varying function. Subsequent work by \cite{heiny2023logdet} used Girko's method of perpendiculars and a CLT for martingale differences to show the asymptotic normality of the log-determinant of sample correlation matrix when $\alpha \in (3,4]$ and $\xi$ is symmetric.

 In this paper, we establish a CLT for the LSS of sample correlation matrix without requiring the finite fourth moment.  Notably, we identify a necessary and sufficient condition on the identical distribution of the entries, $\xi$, for the universal CLT of the LSS: $\lim_{x\rightarrow \infty}x^3\mathbb{P}(|\xi|>x)=0$. We begin by introducing our model and main results in Section \ref{sec_model&result}, followed by a brief discussion of our proof strategies and techniques in Section \ref{sec_proofstrategy}.

\subsection{Matrix model and main results}\label{sec_model&result}
Let $\mathbf{x}_1,\dots,\mathbf{x}_n$ be a sequence of independent and identically distributed (i.i.d.) observations of a $p$-dimensional random vector $\mathbf{x}\in\mathbb{R}^p$ whose coordinates are i.i.d.  sampled from a centered random variable $\xi$.  We construct the data matrix as $X=(\mathbf{x}_1,\dots,\mathbf{x}_n)=(X_{ji})_{1\le j\le p;1\le i\le n}$ and denote the sample covariance matrix $S$ (before scaling) and sample correlation matrix $R$ as:
\begin{gather}\label{eq_def_R}
	S=\sum_{i=1}^n\Sigma^{1/2}\mathbf{x}_i\mathbf{x}_i^{*}\Sigma^{1/2}=\Sigma^{1/2}XX^{*}\Sigma^{1/2},~
	R=X^{*}\Sigma^{1/2}(\operatorname{diag}S)^{-1}\Sigma^{1/2}X,
\end{gather}
where $\Sigma$ is a diagonal deterministic population matrix. Due to the self-normalization property, $R$ does not depend on such population matrix $\Sigma$. For simplicity, we assume $\Sigma=I$ without loss of generality, which allows us to simplify $S$ to $XX^*$ and $R$ to $X^{*}(\operatorname{diag}S)^{-1}X$. Further, we write
\begin{equation*}
    R=YY^{*},~Y=X^{*}(\operatorname{diag}S)^{-1/2},
\end{equation*}
to capture the self-normalised random variables $(Y_{ij})_{1\le i\le n,1\le j\le p}$.

Let the empirical spectral distribution (ESD) of $R$ be
\begin{equation*}
	\mu_{n}:=\frac{1}{n}\sum_{i=1}^n\delta_{\lambda_i(R)},
\end{equation*}
where $\lambda_1(R)\ge\lambda_2(R)\ge\dots\ge\lambda_n(R)\ge0$ are the ordered eigenvalues of $R$.
\begin{definition} For any analytic function $f$: $\mathbb{R}\rightarrow\mathbb{R}$, the linear spectral statistics of $R$ is defined as
	\begin{equation}\label{eq_def_LSS}
		\sum_{i=1}^nf(\lambda_i(R))=\operatorname{tr}[f(R)]=n\int f(\lambda)\mu_n(\mathrm{d}\lambda).
	\end{equation}
\end{definition}

\subsubsection{Main assumptions}
We impose our main assumptions as follows,
\begin{assumption}\label{ass_X}
	Assume that $X=(X_{ji})$ in \eqref{eq_def_R} has i.i.d. entries following the distribution of a centered random variable $\xi$ such that $\mathbb{E}\xi^2=1$ and
	\begin{equation*}
	\lim_{x\to \infty}	\frac{\mathbb{P}(|\xi|>x)}{ l(x)/x^\alpha}=1,
	\end{equation*}
	where $\alpha\in(2,4]$,  and $l(x)$ is a slowly varying function in the sense that for all $t>0$
	\begin{equation*}
		\lim_{x \rightarrow \infty} \frac{l(tx)}{l(x)}=1.
	\end{equation*}
\end{assumption}
\begin{assumption}\label{ass_phi}
	For the dimensional parameters, we assume that $n/p\rightarrow\phi\in(0,\infty)\backslash \{1\}$.
\end{assumption}
\begin{remark}
	 Several remarks are in order. Firstly, Assumption \ref{ass_X} only requires the entries $X_{ji}$'s to be centered but not necessarily symmetric. In the literature, \cite{bai2008large} established the convergence of $\mu_n$ to the Mar\v{c}enko-Pastur (MP) law $\mu_{mp,\phi}$ (cf. \eqref{eq_mpdensity}), provided that $\xi$ belongs to the domain of attraction of the normal law, a region that includes our setting \eqref{ass_X}.
 For $\alpha\in(0,2)$ the limit of $\mu_n$ converges to the $\alpha$-heavy Mar\v{c}enko-Pastur law \cite{heiny2022limiting}, which lies beyond the scope of our investigation. As discussed in Section \ref{sec_mainresults}, we only establish a universal CLT for the LSS when $\alpha \in (3,4]$ and observe a transition phenomenon for the universal CLT around the critical point $\alpha=3$. For $\alpha\in(0,3)$, we believe a CLT for the LSS of sample correlation matrix exists but exhibits different variance and appropriate renormalization, similar to \cite{benaych2014central,benaych2016fluctuations}. Secondly, Assumption \ref{ass_phi} excludes the case where $n/p \rightarrow 1$, which can lead to singularities in certain functions, such as the log-determinant of $R$. Establishing the CLT for such functions requires a different methodology. Due to the comprehensive nature of this paper, we will address this problem in our future work.
\end{remark}

\subsubsection{Main results}\label{sec_mainresults}
In this section, we state our main results. We first write the linear spectral statistics as a contour integral involving the Stietjes transform for the test functions in certain domains. To enclose $\operatorname{supp}(\mu_{mp,\phi})$ and avoid the singularity at the origin if necessary, we define the following trajectory,
\begin{gather*}
	\mathcal{C}_1\equiv\mathcal{C}_1(c_1,c_2):=\{z:|z|=c_1,\operatorname{Im}z\ge0,\operatorname{Re}z\ge-c_2\},\\
	\mathcal{C}_2\equiv\mathcal{C}_2(c_1,c_2,C_1):=\{z:\operatorname{Im}z=\sqrt{c_1^2-c_2^2},\; -C_1\le\operatorname{Re}z\le-c_2\},\\
	\mathcal{C}_3\equiv\mathcal{C}_3(C_1,C_2):=\{z:\operatorname{Im}z=C_2,\;-C_1\le\operatorname{Re}z\le C_1\},\\
	\mathcal{C}_4\equiv\mathcal{C}_4(c_1,c_2,C_1,C_2):=\{z:\sqrt{c_1^2-c_2^2}\le\operatorname{Im}z\le C_2,\;\operatorname{Re}z=-C_1\},\\
	\mathcal{C}_5\equiv\mathcal{C}_5(c_1,c_2,C_1,C_2):=\{z:\sqrt{c_1^2-c_2^2}\le\operatorname{Im}z\le C_2,\;\operatorname{Re}z=C_1\},\\
	\mathcal{C}_6\equiv\mathcal{C}_6(c_1,c_2,C_1):=\{z:0\le\operatorname{Im}z\le\sqrt{c_1^2-c_2^2},\;\operatorname{Re}z=C_1\},\\
    \mathcal{C}_7=\{z:0\le\operatorname{Im}z\le c_2,\;\operatorname{Re}z=-C_1\},
\end{gather*}
for some small constants $c_1>c_2>0$ and large constants $C_2>C_1>0$. Then, the contour for the case $\phi\in(0,1)$ is constructed by $\gamma^0:=\mathcal{C}^0\bigcup\bar{\mathcal{C}^0}$, where $\mathcal{C}^0\equiv\mathcal{C}^0(c_1,c_2,C_1,C_2):=\bigcup_{i=1}^6\mathcal{C}_i$. For the case $\phi\in(1,\infty)$, there is no need to eliminate the origin, and thus the contour can be constructed as $\gamma:=\mathcal{C}\bigcup\bar{\mathcal{C}}$, where $\mathcal{C}\equiv\mathcal{C}(c_1,c_2,C_1,C_2):=\bigcup_{i=3}^7\mathcal{C}_i$. With the above configuration, we choose well separated parameters $c_{1i},c_{2i},C_{1i},C_{2i},\;i=1,2,$ such that the following counterclockwise contours are pairwise disjoint,
\begin{gather*}
	\gamma_1^0:=\gamma^0(c_{11},c_{21},C_{11},C_{21}),\quad \gamma_2^0:=\gamma^0(c_{12},c_{22},C_{12},C_{22});\\
	\gamma_1:=\gamma(c_{11},c_{21},C_{11},C_{21}),\quad \gamma_2:=\gamma(c_{12},c_{22},C_{12},C_{22}).
\end{gather*}
Furthermore, denote by  $m(z):=m(z,\phi)$ the Stieltjes transform of $\mu_{mp,\phi}$, and set its companion $\underline{m}(z)=\phi m(z)-(1-\phi)/z$.  In the following theorem, we establish the central limit theorem for the linear spectral statistics defined in \eqref{eq_def_LSS}.
\begin{theorem}[CLT for LSS]\label{thm_main_cltlss}
Let $f$ be analytic inside $\gamma_1^0$ and $\gamma_2^0$. Under Assumption \ref{ass_X} for $\alpha\in (3,4]$ and Assumption \ref{ass_phi}, we have for $\phi\in(0,1)$ that
	\begin{equation}\label{eq_universalclt}
		\frac{\operatorname{tr}f(R)-a_f}{\sigma_f}\xrightarrow{D}\mathrm{N}(0,1),
	\end{equation}
	where
	\begin{equation*}
		a_f=\frac{1}{2\pi \mathrm{i}}\oint_{\gamma_1^0}f(z)\frac{-n+2z(1+zm(z))\underline{m}(z)-\phi^{-1}[z\underline{m}(z)]^{\prime}}{z(1+\phi^{-1}\underline{m}(z))}\mathrm{d}z
	\end{equation*}
	and
	\begin{equation*}
		 \sigma^2_f=\frac{1}{2\pi^2}\oint_{{\gamma}_1^0}\oint_{{\gamma}_2^0}\frac{\partial_{z_2}\left(\frac{z_1m(z_1)-z_2m(z_2)}{z_1-z_2}+\phi^{-1}z_1m(z_1)\underline{m}(z_1)z_2\underline{m}(z_2)\right)}{z_1(1+\phi^{-1}\underline{m}(z_1))}f(z_1)f(z_2)\mathrm{d}z_2\mathrm{d}z_1.
	\end{equation*}
	The same result holds for $\phi\in (1,\infty)$ with $\gamma_1^0$ and $\gamma_2^0$ replaced by $\gamma_1$ and $\gamma_2$, respectively.
\end{theorem}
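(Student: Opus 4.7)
The natural route is the four-stage program announced in the abstract: contour reduction, intermediate local law, matrix resampling combined with Green function comparison, and a final characteristic-function estimate. First, since $f$ is analytic inside $\gamma_1^0 \cup \gamma_2^0$ (resp.\ $\gamma_1 \cup \gamma_2$), Cauchy's formula gives
\[
\operatorname{tr} f(R) \;=\; -\frac{n}{2\pi \mathrm{i}}\oint_{\gamma_1^0} f(z)\, m_n(z)\,\mathrm{d}z,
\]
with $m_n(z)=\tfrac{1}{n}\operatorname{tr}(R-z)^{-1}$, so the theorem reduces to a joint CLT for $n\bigl(m_n(z)-\mathbb{E}m_n(z)\bigr)$ on the contour together with an expansion for $\mathbb{E}m_n(z)-m(z)$ that produces exactly the stated $a_f$.

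Second, I would establish an intermediate-scale entrywise local law for $G(z)=(YY^{*}-z)^{-1}$, valid on spectral scales $\eta\gtrsim n^{-c}$ for some small $c>0$. The derivation follows the usual Schur-complement / self-consistent equation strategy, but must be adapted to the fact that within each column of $Y$ the entries share the normaliser $(\sum_k X_{ki}^2)^{1/2}$ and are not independent. The key input is that self-normalisation suppresses the odd moments of $Y_{ij}$ well below the naive $n^{-1/2}$; combining matrix resampling with a cumulant expansion, one obtains the improved odd-moment estimates announced in the abstract, which give enough concentration to close the fixed-point equation down to the required scale and to control all derivatives of $m_n$ used below.

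Third, and at the core of the argument, I would perform a matrix resampling, replacing $X$ by a Gaussianised $\widetilde X$ whose entries match those of $X$ up to order three after truncation at a scale $\sim n^{1/\alpha-\eps}$, and then run a Lindeberg-type Green function comparison entry by entry. Because $\alpha>3$, the third absolute moment of the self-normalised $Y_{ij}$ is of order $n^{-3/2}$, which is precisely the budget a single-entry swap needs in order to accumulate only an $o(1)$ error over the $np$ swaps. Two refinements beyond the standard scheme are essential: resolvent and Taylor expansions tailored to the column-normalisation dependence, and, when handling the heavy-tailed remainder of the characteristic function, a second-order expansion in addition to the usual first-order one. The second-order term produces a residual controlled by $x^{3}\mathbb{P}(|\xi|>x)$, which is exactly the quantity whose vanishing at infinity will be identified as the necessary and sufficient condition for the universal CLT; under $\alpha\in(3,4]$ it is automatically $o(1)$. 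The outcome is
\[
\mathbb{E}\,h\!\bigl(n(m_n(z)-\mathbb{E}m_n(z))\bigr) \;=\; \mathbb{E}\,h\!\bigl(n(\widetilde m_n(z)-\mathbb{E}\widetilde m_n(z))\bigr)+o(1)
\]
uniformly in $z$ on the contour and in smooth test functions $h$.

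Finally, for the Gaussianised model the CLT with exactly the asymptotic mean $a_f$ and double-contour variance $\sigma_f^{2}$ displayed in the theorem is already available from the finite fourth-moment theory \cite{gao2014high,yin2023central}; combining this with the comparison estimate and integrating around $\gamma_1^0,\gamma_2^0$ (resp.\ $\gamma_1,\gamma_2$) yields \eqref{eq_universalclt}. The hard part is Step three: the column dependence introduced by $(\operatorname{diag} S)^{-1/2}$ breaks the product-independence structure on which ordinary Lindeberg swaps rely, while the heavy tails simultaneously require tracking the third-moment contribution exactly. Making this contribution visibly vanish, uniformly on the contour, through the interplay of resolvent expansion, second-order characteristic-function expansion, and self-normalised odd-moment estimates obtained by matrix resampling plus cumulant expansion, is the technical core of the argument and the point at which the threshold $\alpha=3$ becomes sharp.
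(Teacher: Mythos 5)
Your four-stage outline superficially matches the paper's program, but the core comparison step as you propose it cannot close. You suggest swapping $X$ entry-by-entry to a fully Gaussian $\widetilde X$ (after truncation) and then invoking the finite-fourth-moment CLT for the Gaussian endpoint. For $\alpha\in(3,4]$ the entries of $X$ have infinite fourth moments, so with a Gaussian target one can match at most two moments (three if $\xi$ were symmetric, but the theorem makes no such assumption), and the per-swap remainder in a Green-function Lindeberg telescope is then governed by $\mathbb{E}|Y_{ij}|^3\sim n^{-3/2}$ (or, with symmetric third-moment matching, by $\mathbb{E}|Y_{ij}|^4\asymp n^{-\alpha/2}l(n^{1/2})$). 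Over the $np\sim n^2$ swaps this accumulates to $n^{1/2}$ (resp.\ $n^{2-\alpha/2}l(n^{1/2})$), neither of which is $o(1)$ on the whole range $\alpha\in(3,4]$; your claim that $n^{-3/2}$ is ``precisely the budget'' is arithmetically off by a factor of $n^{1/2}$. Truncation at $n^{1/\alpha-\eps}$ does not rescue this: the truncated fourth moment is still of size $n^{(4-\alpha)(1/\alpha-\eps)}$, which leaves a residual far above $n^{-2}$ per entry. This is precisely the obstruction the paper is designed around.

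The paper's actual architecture is structurally different in two essential ways. First, it resamples $X^*=L+M+H$ and Gaussianises \emph{only} the light-tailed part $L$ (whose entries are $\le n^{-\eps_l}$), producing the Gaussian divisible model $Y_t=\sqrt tW+\widetilde H(\operatorname{diag}S)^{-1/2}$; the heavy components $M,H$ are left unchanged in both endpoints of the interpolation, so the per-swap error involves only moments of $L(\operatorname{diag}S)^{-1/2}$, which are tiny and make the telescope close. Second, because this GDM is \emph{not} a finite-fourth-moment model (it retains $\widetilde H$), its CLT is not available from \cite{gao2014high,yin2023central}; the paper uses the GDM only to bootstrap entrywise and averaged local laws for $R$ (Theorems \ref{thm_greenfuncomp_entrywise_uniformbound}, \ref{thm_greenfuncomp_average}), and then proves the CLT by a characteristic-function/cumulant-expansion argument directly on $R$ (Section \ref{sec_characteristicfunctionestimation}), where the $M$-part reproduces the universal mean and variance and the $H$-part is shown negligible via concentration of quadratics (Lemma \ref{lemma_momentquadratic}). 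Your second step is also under-justified: you assert a local law at scale $\eta\gtrsim n^{-c}$ via ``the usual Schur-complement / self-consistent equation strategy,'' but the paper explicitly goes through $\eta_*$-regularity of $R(\widetilde H)$ and the GDM precisely because the standard self-consistent-equation machinery does not directly produce these estimates in the heavy-tailed, column-normalised setting. In short, your proposal identifies the right ingredients (resampling, odd-moment cumulant estimates, column-wise resolvent expansion, characteristic-function refinements) but mis-assembles them: the CLT cannot be imported from a Gaussian endpoint and must be proved intrinsically once the local laws are in hand.
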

The necessary part of the established universal CLT can be summarized below.
\begin{theorem}\label{thm_iffalpha}
	Under Assumption \ref{ass_phi}, the universal CLT \eqref{eq_universalclt} holds if and only if
	\begin{equation}\label{eq_criticalcase}
		\lim_{x\rightarrow\infty} x^3\mathbb{P}(|\xi|>x)=0.
	\end{equation}
\end{theorem}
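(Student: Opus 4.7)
The theorem comprises two directions, sufficiency and necessity.

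For \emph{sufficiency}, the plan is to extend the proof of Theorem \ref{thm_main_cltlss} (which covers $\alpha \in (3,4]$) to the slightly broader class of distributions satisfying $\lim_{x\to\infty} x^3 \mathbb{P}(|\xi|>x) = 0$, which additionally captures the critical case $\alpha = 3$ with $l(x)\to 0$. The same four-step routine applies: matrix resampling, intermediate local law, Green function comparison, and characteristic function estimation. The tail condition enters nontrivially only in the characteristic function step, where cumulant expansions of the self-normalized entries $Y_{ij}$ generate third-moment-type terms involving a suitably truncated third moment of $\xi$. Choosing a truncation scale of order $n^{1/\alpha}$ and using the assumed $\lim x^3 \mathbb{P}(|\xi|>x) = 0$, these terms contribute $o(1)$, so the characteristic function of $(\operatorname{tr}f(R)-a_f)/\sigma_f$ converges to $e^{-t^2/2}$ as in the $\alpha>3$ case.

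For \emph{necessity}, assume $\lim_{x\to\infty} x^3\mathbb{P}(|\xi|>x)=0$ fails; passing to a subsequence there exist $x_n\to\infty$ with $x_n^3\mathbb{P}(|\xi|>x_n)\geq c>0$. The plan is to exhibit an explicit analytic test function $f$ and a distribution $\xi$ satisfying Assumption \ref{ass_X} along this failing subsequence for which \eqref{eq_universalclt} does not hold with the claimed $(a_f,\sigma_f^2)$. A convenient candidate is a low-degree polynomial such as $f(x)=x^2$, whose LSS expands as $\operatorname{tr}(R^2)=\sum_{i,i'}|R_{ii'}|^2$. Using a resolvent-based or Hoeffding-type decomposition, one isolates the variance contribution from third-order cumulants. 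For an asymmetric heavy-tailed $\xi$ with $\mathbb{P}(|\xi|>x)\sim c x^{-3}$ and unequal weights on the positive and negative tails, the truncated third moment $\mathbb{E}[\xi^3\mathbf{1}_{|\xi|\le M_n}]$ grows like $\log M_n$, generating an excess of order one in $\operatorname{Var}(\operatorname{tr}(R^2))$ that is not absorbed into $\sigma_f^2$.

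The main obstacle is showing this excess variance survives the self-normalization structure of $R$, which cancels many fluctuations and is precisely the mechanism through which the CLT holds under sub-fourth-moment conditions. To overcome this, one would carefully track how the diagonal normalizer $(\operatorname{diag}S)^{-1/2}$ interacts with third-order terms in the cumulant expansion; a Lindeberg-type matching between the heavy-tailed $\xi$ and a Gaussian comparison model should identify a non-vanishing third-moment obstruction in the limit characteristic function, contradicting \eqref{eq_universalclt}. A secondary difficulty is to ensure the non-universality manifests in the variance (or in a higher cumulant) rather than being silently absorbed into the asymptotic mean $a_f$; this can be checked by comparing the mean expansions in the heavy-tailed and Gaussian cases and verifying that the third-moment term has a genuine variance-scale signature.
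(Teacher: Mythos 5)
Your sufficiency outline is roughly in the right spirit (re-run the four-step machinery, show the extra terms vanish), but you misidentify the quantity being controlled. The paper's mechanism is not a ``truncated third moment of $\xi$'' at scale $n^{1/\alpha}$; it is the fourth moment $\beta_4=\mathbb{E}(Y_{11}^4)$ of the \emph{self-normalized} variable, together with a sharper bound on $|\mathrm{I}_c|$. By Lemma \ref{lem_moment_rates}, $\beta_4\asymp n^{-\alpha/2}l(n^{1/2})$, and the condition $\lim_{x\to\infty}x^3\mathbb{P}(|\xi|>x)=0$ is precisely equivalent to $n\beta_4=\mathrm{o}(n^{-1/2})$ at $\alpha=3$. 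The paper then shifts the resampling cutoff from $n^{1/2-\epsilon_h}$ to $\delta_n n^{1/2}$ with $\delta_n\to0$ chosen so that $\delta_n^{-3}l(\delta_n n^{1/2})\to0$, which yields $|\mathrm{I}_c|=\mathrm{O}(n^{1/2})$ w.h.p.; combined with the quadratic-form concentration $\mathbb{E}\Delta_j^2\lesssim n\beta_4+\mathrm{o}(n^{-1/2})$ from Lemma \ref{lemma_momentquadratic}, the heavy part $II=\sum_{j\in\mathrm{I}_c}\mathbb{E}_{Y_j}\Delta_j^k(z_1)\cdots$ is $\mathrm{o}(1)$. Your truncation scale $n^{1/\alpha}$ and your third-moment framing do not lead to this.

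Your necessity argument has a genuine gap, and it is the more serious one. You propose to build a counterexample from an \emph{asymmetric} $\xi$ whose truncated third moment $\mathbb{E}[\xi^3\mathbf{1}_{|\xi|\le M_n}]$ grows like $\log M_n$, and to show this asymmetry leaves an excess variance in $\operatorname{tr}R^2$. This cannot work: the paper's necessity proof is carried out explicitly in the \emph{symmetric} case (``we only discuss the details of the symmetric case while the asymmetric case will be even worse''), so the obstruction cannot be an odd-moment phenomenon of $\xi$ at all. In fact, the paper's odd-moment estimates (Lemma \ref{lem_oddmoment_est} and Lemma 3.1 of Gin\'e--G\"otze--Mason) show the self-normalization kills exactly the kind of cross terms you want to keep: $\beta_{1,1}=\mathrm{o}(n^{-2})$, $\beta_{3,1}=\mathrm{o}(n^{-3/2-\alpha/4})$, etc., so the $\log$-divergent truncated third moment of $\xi$ is washed out by the $\rho_j^{-1}$ normalizers and never reaches $\operatorname{Var}(\operatorname{tr}R^2)$. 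The actual obstruction, visible already in the $f(x)=x^2$ computation (Example \ref{example}), is the even-moment term $2np^2\beta_4^2\asymp n^{3-\alpha}l(n^{1/2})^2$, which survives precisely when $\limsup_{x\to\infty}x^3\mathbb{P}(|\xi|>x)\neq0$. The paper's proof of necessity then proceeds by showing $|\mathrm{I}_c|\gtrsim n_N^{1/2}$ along a subsequence and lower-bounding the heavy part $II$ via $\sum_{j\in\mathrm{I}_c}\mathbb{E}\Delta_j^2(z_1)\gtrsim|\mathrm{I}_c|\cdot n^{-1/2}\gtrsim C_4>0$, where the first bound uses $\mathbb{E}(Y_{ij}^4)\asymp n^{-3/2}$. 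You would need to abandon the asymmetry route entirely and instead argue that $\beta_4$ fails to vanish fast enough, i.e.\ identify a fourth-moment (not third-moment) obstruction in the quadratic form $Y_j^*\mathcal{G}^{(j)}Y_j$.
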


\begin{remark}\label{remark_alpha3}
	It should be noted that \eqref{eq_criticalcase} is critical for the universal CLT (the same asymptotic mean and variance as the finite fourth-moment case), where the asymptotic result doesn't depend on the tail index $\alpha$ and the slowly varying function $l(x)$. This result also suggests that the CLT for the LSS of sample correlation matrices is more robust than for Wigner matrices and sample covariance matrices, where the variance diverges with the index $\alpha$ in the heavy-tailed case, as seen in Wigner matrices \cite{benaych2016fluctuations} and sample covariance matrices \cite{bao2023smallest}. To the best of our knowledge, such critical condition similar to \eqref{eq_criticalcase} only exists in the context of universality for local spectral statistics of Wigner matrices \cite{lee2014necessary} and sample covariance matrices \cite{bao2023smallest,ding2018necessary}. It is the fist time that such critical condition appears in the area of LSS.
\end{remark}
\begin{remark}
 To understand Theorem \ref{thm_main_cltlss}, one might consider a specific example with $f(x)=x^2$ (Schott's statistics). Calculations show that the asymptotic variance is given by $2np^2\beta_4^2+4p^2n^{-2}$ where $\beta_4=\mathbb{E}(Y_{11}^4)$. It is important to note that the term $np^2\beta_{4}^2$ exhibits a phase transition around $\alpha=3$ (depending on the slowly varying function $l(x)$) due to $\beta_{4}\sim C_\alpha n^{-\alpha/2}l(n^{1/2})$ for some constant $C_{\alpha}$ as shown in  Lemma \ref{lem_moment_rates} (see Example \ref{example} for more details).
Secondly, we point out that the symmetry condition for the entries of $X$ is not essential for the universal CLT. As observed in \cite{heiny2023logdet}, simulation results always show that the asymptotic variance of the LSS for the log-determinant is slightly larger in the non-symmetric case than the symmetric case. Consequently, the authors of \cite{heiny2023logdet} conjecture that the symmetry condition is crucial for the logarithmic law, based on their simulation findings.
Indeed, we observe the same phenomenon, but we find that the larger variance in the non-symmetric case is due to the slow convergence of $\mathbb{E}(Y_{11}^4)$, which is highly dependent on the tail distribution of $\xi$, particularly the asymptotic behavior of $\mathbb{E}(\exp(-s\xi^2))$. Both theoretical analysis and simulations indicate that the error shrinks slowly due to the large factor $np^2$, leading to a large variance in numerical results. This fact can also be viewed from another perspective, as demonstrated in the earlier work \cite{gine1997student}, where the authors show that self-normalized sums (e.g., $\sum_{i}Y_{1i}$) remain stochastically bounded when $\xi$ is symmetric around $0$, a result derived from Khinchin's inequality (see Remark 2.7 of \cite{gine1997student}). Consequently, the symmetry condition leads to much faster convergence of $\mathbb{E}(Y_{11}^4)$.
	For the non-symmetry case, Corollary 2.10 in \cite{gine1997student} provides the following necessary and sufficient condition for the sequence of self-normalized sums to remain stochastically bounded,
	\begin{equation*}
		\limsup_{n} n^2\int_{0}^{\delta} [\mathbb{E}(\xi\exp(-s \xi^2))]^2[\mathbb{E}(\exp(-s \xi^2))]^{n-2}\mathrm{d}s<\infty,
	\end{equation*}
	which highlights how non-symmetry affects the convergence of self-normalized sums, as the term $\mathbb{E}(\xi\exp(-s \xi^2))$ equals zero if and only if $\xi$ is symmetric around zero \cite{jonsson2010quadratic}.
\end{remark}

\subsection{Proof strategies}\label{sec_proofstrategy}
The CLT for the LSS of large sample correlation matrices is a fundamental topic in Random Matrix Theory. Most research has focused on the case with a finite fourth-moment condition, which is crucial for ensuring that the self-normalized factor $(\operatorname{diag}S)^{-1}$ can be accurately approximated by the identity matrix \cite{bao2022spectral,heiny2018almost}. However, this is no longer valid in the heavy-tailed context.
Thus, a different technical approach is required to capture the essential elements for obtaining the CLT. Another challenge is that most existing literature only addresses the limiting properties of heavy-tailed randomness under symmetric conditions. Therefore, it is crucial to extend these results to non-symmetric cases or, at the very least, to understand the differences between symmetric and non-symmetric cases.

Intuitively, although the entries in $X$ are heavy-tailed distributed, we believe that most elements in the correlation matrix $R$ exhibit regularity, with only a small portion experiencing significant but controllable fluctuations due to self-normalization properties.  Inspired by this idea and the work \cite{bao2023smallest}, we begin with a resampling for $X$ first.  Via resampling technique, we can decompose the matrix $X^*$ into $L+M+H$, representing the light-tailed, median-tailed, and heavy-tailed components, respectively (see Section \ref{sec_matrixresampling}). As in \cite{bao2023smallest}, the dependence among $L$, $M$ and $H$ is governed by two indicator matrices, $\Psi$ and $\chi$ (refer to \eqref{eq_decomp_X}). As a result, the self-normalized matrix $Y$ can be reformulated as
$Y=L(\operatorname{diag}S)^{-1/2}+\widetilde{H}(\operatorname{diag}S)^{-1/2}$, where $\widetilde{H} = M+H$. We hope that the resampling allows the light-tailed component $L$ to regularize the spectrum of the heavy-tailed component $\widetilde{H}$. Since the key input for the CLT of the LSS involves the estimation of Green functions of $R$, our second step is to establish the local laws for $R = YY^*$. To this end, we compare the original matrix $Y$ with the Gaussian divisible model (GDM) $Y_t=\sqrt{t}W+\widetilde{H}(\operatorname{diag}S)^{-1/2}$, where the parameter $t=n\mathbb{E}|L_{ij}(\operatorname{diag}(S))_{jj}^{-1}|^2$ is chosen so that the second moment of $\sqrt{t}W_{ij}$ matches that of $L_{ij}(\operatorname{diag}S)_{jj}^{-1/2}$, given the indictor variable $\psi_{ij}=0$ (see Section \ref{sec_matrixresampling}). Such a matching idea in the context of random matrices appeared earlier in \cite{tao2011random}. Here $W$ is a Gaussian orthogonal ensemble (GOE) matrix independent of $X$. To implement the aforementioned idea, drawing inspiration from the Dyson-Brownian Motion (DBM) strategy in \cite{bao2023smallest}, we adopt a three-step strategy to derive the local laws for $R$. Firstly, we establish an intermediate local law of $R(\widetilde{H})=\widetilde{H}(\operatorname{diag}S)^{-1}\widetilde{H}^*$. We demonstrate that the spectrum of $R(\widetilde{H})$ is bounded both below and above at a scale $\eta_{*}=n^{-\tau}t$ for some constant $\tau>0$, which is commonly referred to $\eta_{*}$-regularity in the literature. Secondly, leveraging the $\eta_{*}$-regularity, we use the results from \cite{ding2022edge} to show that the $\sqrt{t}W$ component enhances the spectral regularity to the nearly optimal bulk scale $\eta\ge n^{-1+\delta}$, thereby directly implies the bulk universality of the GDM $R(Y_t)=Y_tY_t^{*}$. Thirdly, using the GDM as a starting point for Green function comparison, we establish the local laws for the original matrix $R$. Finally, to obtain the CLT, we need to prove the asymptotic normality of the LSS $\operatorname{tr}f(R)$ (cf. \eqref{eq_def_LSS}). The classical moment method is ineffective due to the poor performance of higher-order moments of the self-normalized random variables in $R$. To overcome this challenge,  we estimate its characteristic function using a careful expansion, showing that the expanded characteristic function is asymptotically equivalent to that of a Gaussian random variable.  This strategy is firstly inspired by the method used in \cite{bao2022spectral} and then modified extensively to adapt our scenario.

In summary, our proof primarily involves the DBM, local laws estimation for the GDM, Green function comparison, and characteristic function estimation. Several arguments adapt the methodologies from \cite{bao2022spectral,bao2023smallest,ding2022edge,hwang2019local}. However, these adaptations are far from straightforward. Below, we summarize some key ideas.

(1) \textit{Intermediate local law}:\quad As in many earlier DBM works, a crucial step prior to analysis is the establishment of intermediate local laws for the heavy-tailed component $\widetilde{H}(\operatorname{diag} S)^{-1/2}$. Specifically, we need to achieve $\eta_{*}$-regularity for the spectral density of $R(\widetilde{H})=\widetilde{H}(\operatorname{diag} S)^{-1}\widetilde{H}^*$ along the support of the MP law. Achieving these results involves two significant challenges.  The first is that local law results for such a heavy-tailed matrix are only available for sample covariance matrices. Specifically, we can only establish bulk regularity and edge regularity for $S(M):=n^{-1}MM^{*}$ (see Lemma \ref{lem_locallaw_corM}). We hope to extend this local scale convergence to the correlation matrix $R(M)$. This involves comparing the scaling difference between $(\operatorname{diag} S)^{-1}$ and $n^{-1}I$ for $R(M)$ and $S(M)$. To do this, we must further decompose the diagonal elements in $n^{-1}S$, observing that most of the diagonal elements of $n^{-1}S$ (say, $(1-\mathrm{o}(1))n$) are sufficiently close to one, while only $\mathrm{o}(n)$ diagonal elements are at a constant or higher order distance from one (see Section \ref{sec_selfnormalisedrandomvariables}). This finding indicates that the local estimations of $R(M)$ and $S(M)$ are close within a controllable margin. For the edge regularity, we employ an eigenvalue sticking argument to show that $\lambda_1(R(M))$ is close to $\lambda_1(R(M^{(\mathrm{Cn})})$ where $M^{(\mathrm{Cn})}$ is the matrix obtained by deleting columns associated with the $\mathrm{o}(n)$ largest diagonal elements of $S$. It is clear that $\lambda_1(R(M^{(\mathrm{Cn})})$ is close to $\lambda_1(S(M^{(\mathrm{Cn})})$. With these discussions, we further obtain the $\eta_*$-regularity for $R(M)$, which serves as an initial estimate for the regularity of $R(\widetilde{H})$. The second challenge is to derive the local laws of $R(\widetilde{H})$ from $R(M)$. It is relatively straightforward to establish the bulk regularity of $R(\widetilde{H})$ (i.e., the averaged local law) since the rank of $H$ is $\mathrm{o}(n)$ conditional on $\Psi$. However, achieving
edge regularity at the right edge of the spectrum is more difficult and cannot always be guaranteed \cite{auffinger2009poisson}.  Fortunately, for our purposes, we do not require the optimal convergence rate at a precise location of the right edge. We only need to demonstrate the square root behavior of the spectral density at a much relaxed level $\eta=n^{-\epsilon}$,  which is adequate for our needs, and ensure the validity of our contour integral in the LSS (maintaining the right support edge at a constant order). Thus, a sufficient small shifting on the right edge is tolerable to obtain the necessary regularity (cf. Proposition \ref{prop_etaregular_corH}).

(2) \textit{Gaussian divisible model}:  With the $\eta_{*}$-regularity on the support confirmed, we turn our attention to the GDM $Y_t=\sqrt{t}W+\widetilde{H}(\operatorname{diag}S)^{-1/2}$ with $1\gg t\gg\sqrt{\eta_{*}}$ as chosen. Using the rectangular free convolution techniques from \cite{capitaine2018spectrum,ding2022edge}, we establish the global law for the GDM by examining the self-consistent equation \eqref{eq_def_LSD1_gdm}. It should be noted that we only need to discuss the problem on our proposed contours in Section \ref{sec_mainresults}.
For the local scale, conditional on the latter part $\widetilde{H}(\operatorname{diag}S)^{-1/2}$, we can establish the averaged local law (cf. Theorem \ref{thm_locallaw_gdm_average}) and entrywise local law (cf. Theorem \ref{thm_locallaw_gdm_entrywise}). The proof of the averaged local law adapts the method from \cite{ding2022edge}. Specifically, for the entrywise local law, conditional on the label matrix $\Psi$, we classify the indices $i,j$ into ``good'' and ``bad'' parts based on $\psi_{ij}$ (see \eqref{eq_indexPsi}). We expect that most indices fall into the ``good'' category, achieving the entrywise bounds $|\mathcal{G}R_{Y_t,ij}(z)-\delta_{ij}m(z)|\prec n^{-c}$ for the Green function $\mathcal{G}R_{Y_t,ij}(z):=(R(Y_t)-zI)^{-1}_{ij}$ with some constant $c>0$. Meanwhile, the $\mathrm{o}(n)$ ``bad'' indices are crudely bounded by a constant due to the construction of the contours.
It is important to highlight that while most methods are borrowed from sample covariance matrices or Wigner matrices, the dependence among the entries of $\widetilde{H}$ requires special attention, resulting in a slower convergence rate of $n^{-c}$ for some constant $c>0$.  These local laws, however, will offer robust tools for the following steps.

(3) \textit{Green function comparison}: In this step, we will extend the local laws for the GDM $R(Y_t)$ to our original matrix $R(Y)$, via a Green function comparison argument. To this end, we define the interpolations
\begin{gather*}
	Y^{\gamma}=\gamma L(\operatorname{diag}S)^{-1/2}+t^{1/2}(1-\gamma^2)^{1/2}W+\widetilde{H}(\operatorname{diag}S)^{-1/2},\quad  R^{\gamma}=Y^{\gamma}(Y^{\gamma})^*,\\
	\mathcal{G}^{\gamma}(z)=(R^{\gamma}-zI)^{-1},\quad m_n^{\gamma}(z)=\frac{1}{n}\operatorname{tr}\mathcal{G}^{\gamma}(z).
\end{gather*}
We aim to show that the averaged local law and entrywise local law for $\mathcal{G}^0(z)$ also hold for $\mathcal{G}^{\gamma}(z)$ for all $\gamma\in[0,1]$. Such an argument can be reduced to controlling the error of the expectation for some smooth function $F$ of the entries of $\mathcal{G}^{\gamma}(z)$ and $\mathcal{G}^0(z)$, for example,
\begin{equation*}
	\sup_{0\le \gamma\le 1}|\mathbb{E}_{\Psi}(F(\operatorname{Im}[\mathcal{G}^{\gamma}(z)]_{ij}))-\mathbb{E}_{\Psi}(F(\operatorname{Im}[\mathcal{G}^{0}(z)]_{ij}))|.
\end{equation*}
The expression above is appropriate for performing a Taylor expansion argument. For example, we interpret $E_{\Psi}(F(\operatorname{Im}[\mathcal{G}^{\gamma}(z)]_{ij}))$ as a multivariate function of the entries of $Y^{\gamma}$ and perform the expansion around each $Y_{ij}^{\gamma}$. Subsequently, by employing a moment matching argument, we can control the error efficiently. This method is exemplified in
 \cite{bao2023smallest}. However, we cannot directly apply the same routine to our model due to the strong correlation within the columns of $Y^{\gamma}$ introduced by $(\operatorname{diag}S)^{-1/2}$. The dependence structure of the correlation matrix introduces two main difficulties. Firstly, performing a Taylor expansion for each entry $Y^{\gamma}_{ij}$ is challenging because the information contained in $Y^{\gamma}_{ij}$ is influenced not only by itself but also by other entries $Y^{\gamma}_{ij}, 1\le i\le n$. Secondly, the strong correlation among entries makes it difficult to take expectations for individual entries $Y^{\gamma}_{ij}$, rendering the moment matching procedure ineffective. To address these issues, we first apply the resolvent expansion to each column $Y_j^{\gamma}$,
\begin{equation*} \mathcal{G}^{\gamma}=\sum_{k=0}^s\mathcal{G}^{\gamma,0}_{(j)}(-Y^{\gamma}_j(Y^{\gamma}_j)^{*}\mathcal{G}^{\gamma,0}_{(j)})^k+\mathcal{G}^{\gamma}(-Y^{\gamma}_j(Y_j^{\gamma})^{*}\mathcal{G}^{\gamma,0}_{(j)})^{s+1},
\end{equation*}
for some fixed integer $s\ge 0$, where $\mathcal{G}^{\gamma,0}_{(j)}$ denotes the corresponding Green function with the $j$-th column in $Y^{\gamma}$ replaced by $0$. We then perform a Taylor expansion for the function $F$ and decompose each $Y^{\gamma}_j$ into the light-tailed and heavy-tailed components derived from the resampling procedures for $X$.  Another barrier is the loss of symmetry, which renders many conventional estimation methods ineffective when comparing the moments. It is obvious that only the even order moments of $Y^{\gamma}_{ij}$'s remain non-degenerate after taking expectation under the symmetry condition. To overcome the sticky non-symmetry, we need improved estimates of the odd order moments of $Y^{\gamma}_{ij}$'s to offset the cumbersome across terms, which can be achieved by a careful examination of the decomposition and the cumulant expansion method. For example, for $\alpha>2$, the classical estimate $\mathbb{E}(Y_{11}Y_{21})=\mathrm{o}(n^{-2})$ by \cite{gine1997student} can be refined to the intermediate $\alpha$-dependent rate $\mathbb{E}(Y_{11}Y_{21})=\mathrm{o}(n^{-\alpha+\epsilon})\mathbbm{1}(2<\alpha<3)+\mathrm{o}(n^{-3})\mathbbm{1}(3\le\alpha<4)$ for some small enough $\epsilon>0$. Then with the large deviation bound for self-normalized random variables and carefully recognizing heavy-tailed entries and light-tailed entries, we eventually have enough convergence rate to control the error for the Green function comparison. One may refer to Section \ref{sec_greenfunctioncomparison} for more details. We believe that our strategies not only effectively address our problem but also have potential applications in other models with strong column dependence or without symmetry. It is noteworthy that the averaged local law for $\mathcal{G}^{\gamma}(z)$ achieves an optimal order of $1/(n\eta)$ for $\alpha\ge 3$, while the entrywise local law exhibits slower convergence than $n^{-1/2}$ due to the heavy-tailed assumption. Fortunately, most entries are stochastically dominated by $n^{-\epsilon}$ for some constant $\epsilon>0$, with only a few entries having constant order within the contours.

(4) \textit{Characteristic function estimation}:  Finally, we are ready to establish the CLT for $\operatorname{tr}f(R)$, which boils down to estimating the characteristic function of the centered statistic $\operatorname{tr}f(R)-\mathbb{E}\operatorname{tr}f(R)$. In practice, the LSS $\operatorname{tr}f(R)$ can be expressed as an integral involving $\operatorname{tr}\mathcal{G}(z)$ over the contours $\bar{\gamma}_{1}^0$ or $\bar{\gamma}_2^0$, using a classical truncation trick for $|\mathrm{Im}z|\ge n^{-K}$, where $K$ is a sufficiently large constant. That is
\begin{equation*}
	\operatorname{tr}f(R)\approx -\frac{1}{2\pi \mathrm{i}}\oint_{\bar{\gamma}_{a}^{0}}\operatorname{tr}\mathcal{G}(z)f(z)\mathrm{d}z =:L_{a}(f),~~a=1,2.
\end{equation*}
We define the characteristic function $\mathcal{T}_{f}(x):=\mathbb{E}(\mathrm{e}^{\mathrm{i}x(L_{1}(f)-\mathbb{E}L_{1}(f))})$, and study the asymptotic behaviors of its derivative
\begin{equation*}
	 (\mathcal{T}_{f}(x))^{\prime}=\frac{-1}{2\pi}\oint_{\bar{\gamma}_{1}^0}\mathbb{E}^{\chi}[\operatorname{tr}\mathcal{G}(z)\mathrm{e}^{\mathrm{i}x(L_{1}(f)-\mathbb{E}L_{1}(f))}]f(z)\mathrm{d}z.
\end{equation*}
We hope that the above equation will mirror the characteristic function of Gaussian random variables. Such a match would demonstrate the asymptotic normality of $\operatorname{tr}f(R)$. As pointed out by \cite{bao2022spectral}, the term $(\mathcal{T}_{f}(x))^{\prime}$ is suitable to the application of the cumulant expansion trick (cf. Lemma \ref{lemma_cumulantexpansion}) since it can be rewritten as $\sum_{i=1}^{n}\sum_{j=1}^{p}\mathbb{E}^{\chi}[Y_{ij}[Y^{*}\mathcal{G}(z)]_{ji} \mathrm{e}^{\mathrm{i}x\langle L_{1}(f)\rangle}]$ for $z\in \bar{\gamma}_1^0$ due to the resolvent identity. The cumulant expansion formula exhibits significant power in studying asymptotic behaviors in random matrix theory, a method proposed in \cite{khorunzhy1996asymptotic,lytova2009central}. For sample covariance matrices, such as $\mathbb{E}(X_{ji}[X^*(XX^*-zI)^{-1}]_{ij})$, the formula can be applied to the entries under the assumption of independence. However, in our correlation matrix case, the strong dependence among $Y_{ij}$ makes the direct application of cumulant expansion formula infeasible. The dependence introduces two major obstacles. Firstly,  the self-normalized factor $(\operatorname{diag}(S))^{-1}$ complicates the partial derivatives of the matrix $Y$ if the cumulant expansion is applied to $Y_{ij}$.  Secondly, the slow convergence of the entrywise local law may not be sufficient to accurately capture the magnitudes of Green function functionals (as noted in Theorem \ref{thm_greenfuncomp_entrywise_uniformbound}) because of the dependence and heavy-tailed conditions. However, as demonstrated in the resampling for $X$, the light-tailed part and the heavy-tailed part are ``independent'' due to the resampling indicators $\psi_{ij}$ and $\chi_{ij}$, which inspires us to apply the cumulant expansions to $L_{ij}, M_{ij}$, respectively. Specifically, we consider the cumulant expansions for
\begin{equation*}
	\mathbb{E}_{\Psi}^{\chi}\left((L_{ij}+M_{ij}+H_{ij})(\operatorname{diag}S)^{-1}_{jj}[Y^*\mathcal{G}(z)]_{ji}\langle\mathrm{e}^{\mathrm{i}x\langle L_{1}(f)\rangle}\rangle\right),
\end{equation*}
with an additional expansion to $(\operatorname{diag}S)^{-1}_{jj}$. We first consider the $L_{ij}+M_{ij}$ part for $\alpha\in (3,4]$. The vital inputs are the respective derivatives of $Y, \mathcal{G}(z)$ and $Y^*\mathcal{G}(z)$ with respect to $M_{ij}$, which have complicated structure due to the self-normalization. Inspired by the sample covariance case \cite{bao2022statistical}, we carefully examine the column dependence and find that the derivatives can be decomposed systematically via the index matrices $E_{ij}, F_{ij}$ into two parts (cf. Section \ref{subsubsec_derivative}), whose main terms alternate according to the order of the derivatives. We then obtain the high-order derivatives via the chain rule. Despite the special structure of the derivatives, the diagonals and off-diagonals in $\mathcal{G}(z)$ also exhibit regularity due to the index matrices. This, together with the averaged and entrywise local laws, simplifies the computations. We note that the estimates are always valid for $\alpha \in (2,4]$, and the strategies are user-friendly for handling the dependence. Then, we turn to estimate the expansion terms involving $M_{ij}$. Here the main difficulty arises from the unknown fluctuation of $(\operatorname{diag}S)^{-1}_{jj}$ and the slow convergence rate of the entrywise local law due to heavy tails, preventing accurate estimates of the entries.  However, since most entries are well-stochastically dominated and only a few have crude constant bounds, we sum over $i, j$ and control the errors by noting that $\operatorname{tr}((\operatorname{diag}S)^{-1}-n^{-1}I)\lesssim n^{-\epsilon}$. The fast convergence of the averaged local law helps exclude the problematic entries, leading to negligible errors. The remaining task is to demonstrate that the $H_{ij}$ component is negligible, which implies that the CLT for the LSS shares the same mean and variance functions as in the case of $\alpha > 4$ (i.e., the finite fourth-moment setting). At this stage, the cumulant expansion is invalid for $H_{ij}$ due to infinite high-order moments. To address this issue, we represent the $H_{ij}$ part as a polynomial function of the quadratic form $Y_j^*\mathcal{G}(z)Y_j$ associated with the label indices $\psi_{ij}$ by the Taylor expansion. The main challenge in bounding these high-order terms up to $\mathrm{o}(1)$ is that we can not apply the concentration of the quadratic forms in \cite{bai2010spectral,gao2014high,heiny2023logdet}. To tackle this, we provide an improved version (cf. Lemma \ref{lemma_momentquadratic}) for $\alpha \in (2,4]$ without symmetry condition,
\begin{equation*}
	\mathbb{E}|Y_j^*AY_j-n^{-1}\operatorname{tr}A|^k\lesssim n\mathbb{E}(Y_{11}^4)+\mathrm{o}(n^{-1/2}),~k\ge 2,
\end{equation*}
where $A$ is an $n\times n$ deterministic symmetric matrix with bounded spectrum. This reveals the concentration rate in terms of the tail distribution of $X_{ji}$ via $\mathbb{E}(Y_{11}^4)$ (cf. Lemma \ref{lem_moment_rates}). This concentration result always works for $\alpha\in (2,4]$, suggesting the robustness of self-normalization compared to the covariance matrix setting. Another barrier is to bound the first-order term which can not be achieved directly due to the dependence between quadratics and $\mathcal{T}_f(x)$. To address this, we take a further derivative of $(\mathcal{T}_f(x))^{\prime}$, which exhibits a well-behaved structure. From this we can obtain the variance of the asymptotic normal distribution by letting $x=0$ (cf. Section \ref{sec_characteristicfunctionestimation}). After a careful examination of the indices, with a combination of the resolvent representation and cumulant expansion, we show that the first-order term is also negligible. Overall, we finally arrive at the desired approximate ODE:
\begin{equation*}
	(\mathcal{T}_{f}(x))^{\prime}=-x\sigma_{f}^2\mathcal{T}_{f}(x)+\mathrm{error},
\end{equation*}
which implies the asymptotic normality of $\operatorname{tr}f(R)$, see Section \ref{sec_characteristicfunctionestimation} for details.

Moreover, a meticulous look at the example of $\operatorname{tr}R^2$ (cf. Example \ref{example}) suggests that the threshold $\alpha=3$ with proper choice of slowly varying function is critical for the universal CLT of the LSS. Via extra arguments, we find a weak third-moment condition \eqref{eq_criticalcase} is necessary and sufficient for the universal CLT of $\operatorname{tr}f(R)$, which relies on the concentration of quadratics and the well-configured properties of $\Psi$. In particular, the asymptotic behaviors of the quadratics are governed by the moments of $Y_{ij}$, heavily influenced by the tail distribution of $\xi$. Consequently, the necessary and sufficient condition \eqref{eq_criticalcase} provides new insights into the ultra heavy-tailed case, such as the sample correlation matrix for $\alpha \in (2,3)$. We believe that the asymptotic normality for the LSS still holds in this case, but the asymptotic mean and variance functions will depend on $\alpha$. We also remark here that our CLT coincides with the asymptotic normality of the log-determinant of $R$ (i.e., $f(x)=\log x$) in \cite{heiny2023logdet} under the symmetry conditions.

\subsection{Organization}
The rest of the paper is organized as follows. Section \ref{sec_pre} collects several preliminary results which are the foundation of our discussion below, whose proofs are deferred to Appendix \ref{prf_sec_pre}.  In Section \ref{sec_secondorderconvergence}, we gradually present our main technical steps to derive the CLT. Specifically, Section \ref{sec_gaussiandivisiblemodel} gives the main results for the Gaussian divisible model via the intermediate local law for the heavy-tailed part on desired domains, from the global structure to the local scale estimations, whose proof will be put into Appendix \ref{app_sec_prf_edge}. Section \ref{sec_greenfunctioncomparison} is devoted to the averaged and entrywise local laws of $R$ by the Green function comparisons, whose proofs are postponed to Appendix \ref{app_sec_Greencom}. Section \ref{sec_characteristicfunctionestimation} illustrates the main idea for estimation of the characteristic function based on our Green function comparison results, and the critical condition for the universal CLT. The proof of the asymptotic normality for our proposed LSS of $R$ is postponed to Appendix \ref{app_sec_charafun}.

\subsection{Convention}
Throughout this article, we set $C,c>0$ to be constants whose value may be different from line to line in request. $\mathbb{Z}$, $\mathbb{Z}_{+}$, $\mathbb{R}$, $\mathbb{R}_{+}$, $\mathbb{C}$, $\mathbb{C}^{+}$
denote the sets of integers, positive integers, real numbers, positive real numbers, complex numbers, and the upper half complex plane respectively. We write $[n]:=\{1,\ldots,n\}$ for $n\in \mathbb{Z}_+$. For any value $a,b\in\mathbb{R}$, $a\land b=\min(a,b)$ and $a\lor b=\max(a,b)$. For any two statements $\mathcal{A},\mathcal{B}$, we denote $\mathcal{A}\land\mathcal{B}$ as the logic $\mathcal{A}$ and $\mathcal{B}$ both hold, and we denote $\mathcal{A}\lor\mathcal{B}$ as the logic $\mathcal{A}$ holds or $\mathcal{B}$ holds. For a complex number $z$, we write $z=\operatorname{Re} z+\mathrm{i}\operatorname{Im} z$ where $\mathrm{i}=\sqrt{-1}.$ For a matrix $A=(A_{ij})\in\mathbb{C}^{n\times p}$, $|A|=(|A_{ij}|)$, ${\rm tr}A$ denotes the trace of $A$, $\|A\|$ denotes the spectral norm of $A$ equal to the largest singular value of $A$ and $\|A\|_{F}$ denotes the Frobenius norm of $A$ equal to $(\sum_{ij}|A_{ij}|^{2})^{1/2}$. For any index sets $\mathrm{I}_r\subset [n]$ and $\mathrm{I}_c\subset [p]$, we use the notation $A^{(\mathrm{I}_r)}$ and $A^{[\mathrm{I}_c]}$ to indicate the matrix $A$ with the corresponding $\{\mathrm{I}_r\}$ rows and $\{\mathrm{I}_c\}$ columns being replaced with zero, respectively. For $n\in\mathbb{Z}_{+}$, ${\rm diag}(a_{1},\dots,a_{n})$ denote the diagonal matrix with $a_{1},\dots,a_{n}$ as its diagonal elements. For two sequences of numbers $\{a_{n}\}_{n=1}^{\infty}$, $\{b_{n}\}_{n=1}^{\infty}$, $a_{n}\asymp b_{n}$ if there exist
constants $C_{1},C_{2}>0$ such that $C_{1}|b_{n}|\le|a_{n}|\le C_{2}|b_{n}|$. We denote $a_n\lesssim b_n$ if there exist constant $C_1>0$ such that $|a_n|\le C_1|b_n|$. Also, we write $a_n\lesssim b_n$ as $b_n\gtrsim a_n$. We say $\mathrm{O}(a_{n})$ and $\mathrm{o}(a_{n})$ in the sense that $|\mathrm{O}(a_{n})/a_{n}|\le C$ with some constant $C>0$ for all large $n$ and $\lim_{n\to\infty}\mathrm{o}(a_{n})/a_{n}=0$. We also use $\mathrm{O}_{\mathbb{P}}$ and $\mathrm{o}_{\mathbb{P}}$ to denote the big and small $\mathrm{O}$ notation in probability. When we write $a_n\ll b_n$ and $a_n\gg b_n$, we mean $|a_n|/b_n\rightarrow0$ and $|a_n|/b_n\rightarrow\infty$ when $n\rightarrow\infty$. $I$ denotes the identity matrix of appropriate size. For a set $A$, $A^{c}$ denotes its complement (with respect to some whole set which is clear in the context). For a measure $\varrho$, ${\rm supp}(\varrho)$ denotes its support. For any finite set $T$, we let $|T|$ denote the cardinality of $T$. For any event $\Xi$, $\mathbbm{1}(\Xi)$ denotes the indicator of the event $\Xi$, equal to $1$ if $\Xi$ occurs and $0$ if $\Xi$ does not occur. For any $a,b\in\mathbb{R}$ with $a\le b$, $\mathbbm{1}_{[a,b]}(x)$ is equal to $1$ if $x\in[a,b]$ and $0$ if $x\notin[a,b]$. For any $x\in \mathbb{R}$, let $(x)_+:=x\mathbbm{1}(x>0)$. For any two random variables $x$ and $y$, we use $x\overset{D}{=}y$ to denote that the distributions of $x$ and $y$ are asymptotically the same. For a sequence of positive random variables $\{y_n\}$, we write $y_{(k)},1\le k\le n$ , as its order statistics such that $y_{(1)}\ge y_{(2)}\ge\dots\ge y_{(n)}>0$. For any number $x>0$, let $\lceil x \rceil$ represent the least integer not less than $x$, and $\lfloor x\rfloor$ denote the largest integer not larger than $x$.

\section{Preliminaries}\label{sec_pre}
In this section, we collect some necessary notation, technical tools, and auxiliary lemmas that we will use throughout the paper.

Recall the definition of $R$ in \eqref{eq_def_R}. We further rewrite $R$ as
\begin{equation*}
	R=YY^{*}=\sum_{j=1}^{p}P_j,
\end{equation*}
where
\begin{gather}\label{eq_def_P}
	Y=X^{*}(\operatorname{diag}S)^{-1/2}=(Y_1,\ldots,Y_p),~ P_j:=Y_jY_j^{*},~ Y_{ij}=X_{ji}/\rho_j
\end{gather}
with $\rho_j=\sqrt{X_{j1}^2+\dots+X_{jn}^2}$. One may observe that $P_j,\;1\le j\le p$ form a sequence of independent random projections. We define the Green function of $R$ as $\mathcal{G}(z):=(R-zI)^{-1}$. Its normalized trace, which is often known as the Stieltjes transform, is denoted as
\begin{equation*}
	m_n(z)=\frac{1}{n}\operatorname{tr}\mathcal{G}(z)=\int\frac{1}{\lambda-z}\mu_n(\mathrm{d}z),\quad z\in\mathbb{C}^{+}.
\end{equation*}
Besides, we denote the density of the \textit{Mar\v{c}enko-Pastur} distribution as $\mu_{mp,\phi}$ (cf. \eqref{eq_mpdensity}) and define its Stieltjes transform as
\begin{equation*}
	m(z):=\int_{\mathbb{R}}\frac{\mu_{mp,\phi}(\mathrm{d}t)}{t-z},\quad z\in\mathbb{C}^{+}.
\end{equation*}

Then, we introduce the notions of stochastic domination and high probability event, which is commonly seen in RMT literature.
\begin{definition}[Stochastic domination]\label{def_SD}
	(i). For two families of nonnegative random variables
	\[
	a=\{a_{n}(t):n\in\mathbb{Z}_{+},t\in T_{n}\},\qquad b=\{b_{n}(t):n\in\mathbb{Z}_{+},t\in T_{n}\},
	\]
	where $T_{n}$ is
	a possibly $n$-dependent parameter set, we say that $A$ is stochastically
	dominated by $B$, uniformly on $t$ if for all (small) $\varepsilon>0$
	and (large) $D>0$ there exists $n_{0}(\varepsilon,D)\in\mathbb{Z}_{+}$
	such that 	as $n\ge n_{0}(\varepsilon,D)$,
	\[
	\sup_{t\in T_{n}}\mathbb{P}\big(a_{n}(t)>n^{\varepsilon}b_{n}(t)\big)\le n^{-D}.
	\]
	If $a$ is stochastically dominated
	by $b$, uniformly on $t$, we use notation $a\prec b$ or $a=\mathrm{O}_{\prec}(b)$.
	Moreover, for some complex family $a$ if $|a|\prec b$ we also write $a=\mathrm{O}_{\prec}(b)$. \\
	(ii). Let $a$ be a family of random matrices and $\zeta$ be a family of nonnegative random variables. Then we denote $a=\mathrm{O}_{\prec}(\zeta)$ if $a$ is dominated by $\zeta$ under weak operator norm sense, i.e. $|\langle\mathbf{v},a\mathbf{w}\rangle|\prec\zeta\|\mathbf{v}\|_2\|\mathbf{w}\|_2$  for any deterministic vectors $\mathbf{v}$ and $\mathbf{w}$.\\
	(iii). For two sequences of numbers $\{a_{n}\}_{n=1}^{\infty}$,
	$\{b_{n}\}_{n=1}^{\infty}$, $a_{n}\prec b_{n}$ if for all $\epsilon>0$, $a_n\le n^{\epsilon}b_n$.
\end{definition}
\begin{definition}[High probability event]\label{def_HPE}
	We say that an $n$-dependent event $\Omega$ holds with high probability if there exists large constant $D>0$ independent of $n$, such that
	\begin{equation*}
		\mathbb{P}(\Omega)\ge 1-n^{-D},
	\end{equation*}
	for all sufficiently large $n$.
\end{definition}

We also introduce the following definitions to relieve the heavy notation of this paper,
\begin{definition}[``Truncated" expectation]\label{def_trucatuedexpectation}
	Let $\chi(x)$ be a smooth cutoff which equals $0$ when $x>2n^K$ and $1$ when $x< n^K$ for some sufficiently large constant $K>0$ and $|\chi^{(k)}(x)|=\mathrm{O}(1)$ for all $k\ge1$. We define the following truncated expectation for any random variable $\zeta$,
	\begin{equation*}
		\mathbb{E}^{\chi}(\zeta):=\mathbb{E}(\zeta\cdot\Xi),
	\end{equation*}
	where
	\begin{equation*}
		\Xi:=\prod_{j=1}^{p}\chi(\rho_j^{-1})\chi(\rho_j)
	\end{equation*}
	is used to control $\rho_j$ and $\rho_j^{-1}$ crudely but deterministically.
\end{definition}

In this paper, we employ several small quantities that are crucial for us to control the convergence rates in our proof. We classify all these small control parameters into the following two groups,
\begin{definition}[Control parameters]\label{def_controlparameter}
	In this paper, we will use the following several independent or related small quantities.
	\begin{itemize}
		\item [(i)] \textit{Global parameters.}  These parameters are used repeatedly throughout the paper.
		\begin{gather*}
			0<\epsilon_l<\min\{\epsilon_h,\epsilon_y,4-\alpha\}/1000,\quad 0<\epsilon_{h}<(\alpha-2)/10\alpha,\\
			0<\epsilon_s\le(\alpha-2)/6\alpha,\quad \beta=1/\alpha+1/2+2\epsilon_s,\quad 0<\epsilon_{y}<(\alpha-2)/10\alpha,\\
			\epsilon_{\alpha}:=\min\{\epsilon_h,\epsilon_s,1-\beta,\epsilon_y\},\quad \epsilon_{\beta}:=\min\{\epsilon_h,\epsilon_y/2\},\quad 0<\epsilon_{\mu}<\epsilon_{\alpha}/2.
		\end{gather*}
		\item [(ii)] \textit{Local parameters.} These parameters are used only within a certain section or paragraph. We use the following rule to name such parameters: $\epsilon_{a,b}>0$ and $\epsilon_{a,b}$ is small under some restrictions. Here the lower index $a$ denotes the section where such parameter appears and $b$ denotes that it is the $b$-th chosen small quantity in Section $a$.
	\end{itemize}
\end{definition}

\subsection{Self-normalized random variables}\label{sec_selfnormalisedrandomvariables}
In this section, we summarise several properties for $X_{ji}$'s, $\rho_j$'s, and $Y_{ij}$'s, which will be used in the remainder of the paper, where the proofs are deferred to Appendix \ref{prf_sec_pre}.  A key observation here is that, under Assumption \ref{ass_X}, the probability distribution of $\xi$ falls within the domain of attraction of the normal law (cf. \cite{gine1997student,levy1935proprietes}). We begin this section by presenting the following typical order of $X_{ji}$'s and $\rho_j$'s, as well as the asymptotic behaviors of $(\operatorname{diag}(S))^{-1}$.
\begin{lemma}\label{lem_priorest_x}
	For any $1\le j\le p$, suppose Assumption \ref{ass_X} holds and denote the order statistics of $X_{ji}$'s by $X_{j(1)}\ge X_{j(2)}\ge\dots\ge X_{j(n)}$. For constants $\epsilon_{2,1}\in(0,1/\alpha)$ and $C,c>1$, we have
	\begin{gather*}
		C^{-1}n^{1/\alpha}\log^{-1}n\le X_{j(1)}\le C n^{1/\alpha}\log n,~~
		X_{j(k)}-X_{j(k+1)}\ge cn^{\epsilon_{2,1}}\log^{-1}n,~ 1\le k\le \lceil n^{1/\alpha-\epsilon}\rceil,\\
		\rho_j\asymp n^{1/2},~
		\sum_{k=1}^{\lceil n^{1/\alpha-\epsilon_{2,1}}\rceil}X_{j(k)}^2=\mathrm{O}(n),
	\end{gather*}
	with probability tending to one.
 Moreover, we have for some constants $\epsilon_{2,2}>0$ and $c>0$ such that
	\begin{equation}\label{eq_rhoj_highpro}
		\mathbb{P}(\rho_j^{2}< \epsilon_{2,2}n)\lesssim \exp(-cn),~\text{for}~1\le j\le p,
	\end{equation}
	and for integer $s\ge 1$, with probability tending to one,
	\begin{equation}\label{eq_tr_SS}
		\operatorname{tr}|(\operatorname{diag}(S))^{-s}-n^{-s}I|
		\lesssim n^{-(s-1)-(\alpha-2)/(6\alpha)}.
	\end{equation}
\end{lemma}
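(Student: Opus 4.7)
The plan is to handle each of the six assertions in turn, combining the regular variation of the tail of $\xi$ with standard concentration tools for i.i.d.\ sums. A common preliminary is the quantile representation of $|\xi|$: Assumption \ref{ass_X} gives $\mathbb{P}(|\xi|>t)\sim l(t)t^{-\alpha}$, so that the inverse tail satisfies $F_{|\xi|}^{-1}(1-u)\asymp u^{-1/\alpha}$ up to slowly varying factors, by Karamata's theorem.

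The envelope bounds on $X_{j(1)}$ then come from a union bound for the upper tail, $\mathbb{P}(X_{j(1)}>Cn^{1/\alpha}\log n)\lesssim(C\log n)^{-\alpha}l(n^{1/\alpha}\log n)$, and from the product $\mathbb{P}(X_{j(1)}<cn^{1/\alpha}\log^{-1}n)=(1-\mathbb{P}(|\xi|>cn^{1/\alpha}\log^{-1}n))^n$, whose second factor is of order $n^{-1}\log^{\alpha}n\cdot l(\cdot)$, forcing the product to be exponentially small. The gap estimate for $1\le k\le n^{1/\alpha-\epsilon_{2,1}}$ is a quantile-spacing argument: $X_{j(k)}$ concentrates around its deterministic quantile $F_{|\xi|}^{-1}(1-k/n)\asymp(n/k)^{1/\alpha}$, whence the consecutive spacings are of order $n^{1/\alpha}k^{-1-1/\alpha}$, polynomially large as long as $k$ remains polynomially smaller than $n^{1/\alpha}$; fluctuations around the deterministic quantiles are handled via the R\'enyi representation $X_{j(k)}\stackrel{d}{=}F_{|\xi|}^{-1}(1-\Gamma_k/\Gamma_{n+1})$ together with concentration of the partial sums $\Gamma_k$ of i.i.d.\ exponentials, and a union bound in $k$ completes the proof. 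The sum $\sum_{k\le n^{1/\alpha-\epsilon_{2,1}}}X_{j(k)}^2$ is controlled by the same representation with $\sum_{k=1}^{N}k^{-2/\alpha}\asymp N^{1-2/\alpha}$ for $\alpha>2$, which plugged in with $N=n^{1/\alpha-\epsilon_{2,1}}$ stays of order strictly less than $n$.

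For $\rho_j\asymp n^{1/2}$ the upper tail is handled by truncating $X_{ji}^2$ at $n^{1/\alpha+\epsilon}$: the heavy contribution vanishes with high probability by the maximum bound just obtained, while the truncated part has mean at most $n$ and variance $\lesssim n^{4/\alpha+(4-\alpha)\epsilon}\ll n^{2}$ for $\alpha\in(2,4]$, so Chebyshev suffices. The matching lower tail \eqref{eq_rhoj_highpro} is a one-sided Chernoff bound: setting $\varphi(s):=\mathbb{E}\mathrm{e}^{-s\xi^2}$ one has $\varphi(0)=1$ and $\varphi'(0)=-\mathbb{E}\xi^2=-1$, so for any $\epsilon_{2,2}\in(0,1)$ some $s>0$ gives $\log\varphi(s)<-s\epsilon_{2,2}$, and then $\mathbb{P}(\rho_j^2<\epsilon_{2,2}n)\le \mathrm{e}^{s\epsilon_{2,2}n}\varphi(s)^n$ decays exponentially.

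The main obstacle is \eqref{eq_tr_SS}. Setting $u_j:=\rho_j^2/n$, I would restrict to the high-probability event $\{u_j\in[1/2,3/2]\,\forall j\}$, which has probability $1-\mathrm{O}(p\mathrm{e}^{-cn})$ by \eqref{eq_rhoj_highpro} and its upper-tail analogue. On this event Taylor expansion gives $|u_j^{-s}-1|\lesssim|u_j-1|$, reducing the problem to bounding $\sum_j|\sum_i(X_{ji}^2-1)|$. The centred variables $Z_{ji}:=X_{ji}^2-1$ have tail index $\alpha/2\in(1,2]$ and hence a finite $q$-th moment for every $q<\alpha/2$, so the von Bahr--Esseen inequality yields $\mathbb{E}|\sum_iZ_{ji}|^{q}\lesssim n\mathbb{E}|Z_{11}|^{q}\lesssim n$. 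Choosing $q$ slightly below $\alpha/2$, Markov's inequality combined with a union bound over $j$ produces $\max_j|u_j-1|\lesssim n^{1/q-1+\epsilon'}$ and $\sum_j|u_j-1|\lesssim n^{1/q+\epsilon'}$ with probability tending to one. This delivers an overall bound of order $n^{-(s-1)-(\alpha-2)/\alpha+2\epsilon'}$, which is comfortably smaller than the required $n^{-(s-1)-(\alpha-2)/(6\alpha)}$ once $\epsilon'$ is taken small. The delicate points are (i) that $q$ must be pushed close to $\alpha/2$, requiring careful handling of $\mathbb{E}|Z_{11}|^q$ via Karamata, and (ii) the trace contribution from the exceptional event where some $u_j$ escapes $[1/2,3/2]$; the latter is absorbed by the exponential smallness of this event dominating any crude polynomial loss from the isolated terms.
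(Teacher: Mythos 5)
Your handling of the first four items is essentially compatible with the paper (the envelope and spacing bounds are cited there from the literature; the $\rho_j\asymp n^{1/2}$ estimate is obtained by the same truncation-plus-Chebyshev idea; your one-sided Chernoff bound for \eqref{eq_rhoj_highpro} is a clean alternative to the paper's combinatorial count of $\#\{i:|X_{ji}|\ge C\}$; and $\sum_{k\le N}X_{j(k)}^2\le\rho_j^2=O(n)$ is immediate, so the R\'enyi machinery there is unnecessary). The argument for \eqref{eq_tr_SS}, however, contains a genuine gap. You condition on the event $\{u_j\in[1/2,3/2]\ \ \forall j\}$ and assert its complement has probability $O(pe^{-cn})$ ``by \eqref{eq_rhoj_highpro} and its upper-tail analogue.'' No upper-tail analogue exists. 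Since $X_{11}^2$ is regularly varying with index $\alpha/2\in(1,2]$ and has no finite exponential moments, $\rho_j^2$ is subexponential and $\mathbb{P}(\rho_j^2>(1+\delta)n)\sim n\,\mathbb{P}(X_{11}^2>\delta n)\asymp n^{1-\alpha/2}l(\sqrt{n})$, which decays only polynomially. With $p\asymp n$ the expected number of indices $j$ with $u_j>3/2$ is of order $n^{2-\alpha/2}l(\sqrt{n})$, which diverges for $\alpha<4$; so the event you condition on does not even hold with probability tending to one, the Taylor-expansion step is unjustified, and the ``isolated'' exceptional indices you invoke at the end are neither isolated nor exponentially rare.

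The fix is to use only the one-sided control that \eqref{eq_rhoj_highpro} actually provides, namely $\min_j u_j\ge\epsilon_{2,2}$ with probability $1-O(pe^{-cn})$. On that event the global Lipschitz estimate $|u^{-s}-1|\le s\,\epsilon_{2,2}^{-s-1}|u-1|$ is valid for all $u\ge\epsilon_{2,2}$ (including large $u$, where it is wasteful but correct), and your von Bahr--Esseen bound on $\sum_j|u_j-1|$ then applies directly, in fact yielding a slightly sharper exponent than stated in the lemma. The paper takes a different route: it shows by a binomial tail bound that with high probability at most $n^{\beta}$ indices (for some $\beta<1$) have $|u_j-1|>n^{-\epsilon_s}$, handles those outliers with the crude bound $|u_j^{-s}-1|\lesssim 1$ (which again uses only the lower-tail control on $u_j$), and bounds the remaining $n-n^\beta$ terms by $n^{-\epsilon_s}$. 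Either route works once the erroneous exponential-upper-tail claim is removed, but your write-up as given needs that correction.
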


Let $\Omega\equiv\Omega_n$ be the event of $\{X_{ji}\}$ such that the estimates in Lemma \ref{lem_priorest_x} hold. Under the event $\Omega$, we can estimate the magnitude of the coordinates of $Y_j$ as a consequence of Lemma \ref{lem_priorest_x}:
\begin{lemma}\label{lem_entrydistribution}
Under the event $\Omega$, for all $1\le j\le p$ and some constants $\epsilon_{2,3}>0, c_0>2$, there are $n^{1/\alpha-\epsilon_{2,3}}\log^{c_0}n$ elements in $Y_j$ with typical order at least $\mathrm{O}(n^{1/\alpha-1/2}\log n)$. Meanwhile, there are $n-n^{1/\alpha-\epsilon_{2,3}}\log^{c_0}n$ elements with typical order at most $\mathrm{O}(n^{-1/2}\log n)$.
\end{lemma}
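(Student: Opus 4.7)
The proof boils down to translating the order-statistic information for the $X_{ji}$'s supplied by Lemma~\ref{lem_priorest_x} into counts of ``large'' and ``small'' entries of $Y_{ij} = X_{ji}/\rho_j$. Since we work on the event $\Omega$, we may freely use $\rho_j \asymp n^{1/2}$, so $|Y_{ij}|$ is comparable to $|X_{ji}|\cdot n^{-1/2}$ throughout.

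First I would establish the universal envelope for the ``large'' entries: the bound $X_{j(1)} \le C n^{1/\alpha}\log n$ from Lemma~\ref{lem_priorest_x}, combined with $\rho_j \asymp n^{1/2}$, immediately gives $\max_i |Y_{ij}| \le C n^{1/\alpha - 1/2}\log n$. This yields the upper envelope claimed for the elements in the ``large'' regime, uniformly in $j$.

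Next I would classify the entries by fixing a threshold $\tau_n$ and partitioning $\{1,\ldots,n\}$ into $\mathcal{L}_j = \{i : |X_{ji}| > \tau_n\}$ and its complement. The cardinality $|\mathcal{L}_j|$ is a sum of $n$ i.i.d.\ Bernoullis of mean $n \mathbb{P}(|\xi| > \tau_n) \sim n\, l(\tau_n)\tau_n^{-\alpha}$; a Bernstein-type concentration inequality, followed by a union bound over $1 \le j \le p$, reduces the task to picking $\tau_n$ satisfying two calibration conditions simultaneously: (i) $n\, l(\tau_n)\tau_n^{-\alpha}$ is of order $n^{1/\alpha - \epsilon_{2,3}}\log^{c_0} n$, matching the target count of large entries; and (ii) $\tau_n/\rho_j \lesssim n^{-1/2}\log n$, so that indices outside $\mathcal{L}_j$ automatically yield $|Y_{ij}| \le C n^{-1/2}\log n$, consistent with the stated bound. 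The two regimes then exhaust $\{1,\ldots,n\}$ as claimed.

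The chief technical point is the calibration of $\tau_n$: matching (i) and (ii) together requires exploiting the slow variation of $l$ (via Potter's bounds) and the polylogarithmic slack $\log^{c_0} n$ in the count, together with the fact that $\rho_j$ fluctuates around $n^{1/2}$ only by a bounded factor on $\Omega$. Once $\tau_n$ is chosen, the remaining steps (Bernoulli concentration, $\rho_j \asymp n^{1/2}$, and the maximum bound $X_{j(1)} \le C n^{1/\alpha}\log n$) are all already supplied by Lemma~\ref{lem_priorest_x}, so that the proof reduces to routine bookkeeping.
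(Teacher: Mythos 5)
Your proposal fails at the step you yourself flag as the ``chief technical point'': the two calibration conditions (i) and (ii) on the threshold $\tau_n$ are mutually inconsistent, and no amount of Potter-bound or polylogarithmic slack can reconcile them. Condition (ii) demands $\tau_n \lesssim \rho_j \cdot n^{-1/2}\log n \asymp \log n$, so by Assumption \ref{ass_X} the tail probability satisfies $\mathbb{P}(|\xi|>\tau_n) \gtrsim l(\tau_n)\tau_n^{-\alpha} = (\log n)^{-\alpha + \mathrm{o}(1)}$, and hence the expected count $n\,\mathbb{P}(|\xi|>\tau_n) \gtrsim n (\log n)^{-\alpha + \mathrm{o}(1)}$ is $n^{1-\mathrm{o}(1)}$. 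Condition (i), on the other hand, wants this count to be $n^{1/\alpha - \epsilon_{2,3}}\log^{c_0} n$, which for $\alpha \in (2,4]$ and $\epsilon_{2,3}>0$ is at most $n^{1/2 - \epsilon_{2,3} + \mathrm{o}(1)}$. The gap between these two quantities is a fixed positive power of $n$; it is a polynomial mismatch, not a polylogarithmic one, so the slowly-varying correction $l$ cannot absorb it. The rest of your outline (the envelope $\max_i|Y_{ij}| \le C n^{1/\alpha-1/2}\log n$, the Bernoulli concentration of $|\mathcal{L}_j|$, and the use of $\rho_j \asymp n^{1/2}$) is fine, but without a valid choice of $\tau_n$ the argument does not close.

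The resolution lies in using the other two parts of Lemma~\ref{lem_priorest_x} that your sketch never invokes: the order-statistic gap bound $X_{j(k)}-X_{j(k+1)} \ge c n^{\epsilon_{2,1}}\log^{-1}n$ for $k\le\lceil n^{1/\alpha-\epsilon_{2,1}}\rceil$, and the partial-sum bound $\sum_{k\le\lceil n^{1/\alpha-\epsilon_{2,1}}\rceil} X_{j(k)}^2 = \mathrm{O}(n)$. The split in the lemma is between the top $\lceil n^{1/\alpha-\epsilon_{2,1}}\rceil$ order statistics of $|X_{j\cdot}|$ (which Lemma~\ref{lem_priorest_x} controls explicitly and which, after dividing by $\rho_j\asymp n^{1/2}$, are bounded by the envelope $\mathrm{O}(n^{1/\alpha-1/2}\log n)$) and the remaining order statistics; it is not a split by a single deterministic magnitude threshold $\tau_n$. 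A Bernoulli-counting argument with a fixed $\tau_n$, such as the one you propose, would give a qualitatively different cardinality $\approx n\,\mathbb{P}(|\xi|>\tau_n)$ than the one in the statement, precisely because of the computation above. Whatever reading one gives to the loosely worded ``typical order'' clauses, a correct derivation should go through the order-statistic structure of Lemma~\ref{lem_priorest_x}, not through a one-parameter threshold-and-count calibration.
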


In addition to the typical magnitudes of $X_{ij}$'s and $\rho_j$'s, the following results provide a precise description of the moments of self-normalized random variables (see, e.g., \cite{albrecher2007asymptotic,heiny2023logdet}).
\begin{lemma}\label{lem_moment_rates}
	Recall $Y_{ij}$ in \eqref{eq_def_P} and suppose Assumption \ref{ass_X} holds. Consider $k_1,\dots,k_q\ge1$ for $q\in\{1,\dots, n\}$. Then
	\begin{equation*}
		\lim_{n\rightarrow\infty}\frac{n^{N_1(1-\alpha/2)+q\alpha/2}}{l^{q-N_1}(n^{1/2})}\mathbb{E}(Y_{11}^{2k_1}\cdots Y_{q1}^{2k_q})=\frac{(\alpha/2)^{q-N_1}\Gamma(N_1(1-\alpha/2)+q\alpha/2)\prod_{i:k_i\ge2}^q\Gamma(k_i-\alpha/2)}{\Gamma(k_1+\dots+k_q)},
	\end{equation*}
	where $N_1=\#\{1\le i\le q:k_i=1\}$. In particular, we have
	\begin{equation*}
		 \lim_{n\rightarrow\infty}\frac{n^{\alpha/2}}{l(n^{1/2})}\mathbb{E}(Y_{11}^{2k})=\frac{\alpha\Gamma(\alpha/2)\Gamma(k-\alpha/2)}{2\Gamma(k)},\quad k\ge2.
	\end{equation*}
\end{lemma}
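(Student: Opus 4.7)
The plan is to exploit the gamma-integral identity
\begin{equation*}
\rho_1^{-2k}=\frac{1}{\Gamma(k)}\int_0^\infty s^{k-1} e^{-s\rho_1^2}\,ds,\qquad k:=k_1+\dots+k_q,
\end{equation*}
together with the fact that all $q$ variables $Y_{11},\dots,Y_{q1}$ share the common denominator $\rho_1=(\sum_{i=1}^n X_{1i}^2)^{1/2}$. Since $X_{11},\dots,X_{1n}$ are i.i.d.~copies of $\xi$, Fubini's theorem decouples everything:
\begin{equation*}
\mathbb{E}(Y_{11}^{2k_1}\cdots Y_{q1}^{2k_q})=\frac{1}{\Gamma(k)}\int_0^\infty s^{k-1}\prod_{i=1}^{q}\mathbb{E}(\xi^{2k_i} e^{-s\xi^2})\cdot[\varphi(s)]^{n-q}\,ds,
\end{equation*}
with $\varphi(s):=\mathbb{E}(e^{-s\xi^2})$. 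The problem thus reduces to the small-$s$ asymptotics of single-variable Laplace transforms of $\xi^2$.

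For these I would invoke Karamata's Tauberian theorem together with its monotone-density refinement, applied to the regularly varying tail $\mathbb{P}(\xi^2>y)\sim l(\sqrt{y})y^{-\alpha/2}$ with $\alpha/2\in(1,2]$. Since $\mathbb{E}\xi^2=1$, one obtains as $s\to 0^+$
\begin{equation*}
\varphi(s)=1-s+\mathrm{O}\bigl(s^{\alpha/2}l(s^{-1/2})\bigr),\qquad \mathbb{E}(\xi^2 e^{-s\xi^2})\to 1,
\end{equation*}
and for each $k_i\geq 2$, by differentiating the heavy-tailed correction term,
\begin{equation*}
\mathbb{E}(\xi^{2k_i} e^{-s\xi^2})\sim \tfrac{\alpha}{2}\,\Gamma(k_i-\tfrac{\alpha}{2})\,s^{\alpha/2-k_i}\,l(s^{-1/2}).
\end{equation*}
After rescaling $s=t/n$, so that $[\varphi(t/n)]^{n-q}\to e^{-t}$ pointwise and $l(\sqrt{n/t})/l(\sqrt{n})\to 1$ by slow variation, a power count identifies the overall scale as $n^{-N_1(1-\alpha/2)-q\alpha/2}\,l(\sqrt{n})^{q-N_1}$ and reduces the surviving integral to
\begin{equation*}
\int_0^\infty t^{N_1-1+(q-N_1)\alpha/2}e^{-t}\,dt=\Gamma\bigl(N_1(1-\tfrac{\alpha}{2})+\tfrac{q\alpha}{2}\bigr),
\end{equation*}
which, combined with the prefactors $1/\Gamma(k)$ and $(\alpha/2)^{q-N_1}\prod_{k_i\geq 2}\Gamma(k_i-\alpha/2)$, recovers exactly the advertised constant. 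The special case $q=1,\,k_1=k\geq 2,\,N_1=0$ is then immediate.

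The main obstacle is justifying uniform integrability, so that pointwise convergence of the rescaled integrand upgrades to convergence of the integral. On the region $s\ll 1/n$ the bound $[\varphi(s)]^{n-q}\leq 1$ and the uniform regularly-varying estimates for the $\mathbb{E}(\xi^{2k_i} e^{-s\xi^2})$ factors dominate the integrand by a power of $s$ integrable near $0$; on $s\gg (\log n)/n$ the factor $[\varphi(s)]^{n-q}$ decays exponentially in $n$ (using $\varphi(s)\leq 1-c(s\wedge 1)$), overwhelming the polynomial blow-up of the remaining factors; in the intermediate region the ratio $l(\sqrt{n/t})/l(\sqrt{n})$ is controlled uniformly on compact sets of $t\in(0,\infty)$ by Potter's bounds, while the $\mathrm{O}$-term in $\varphi(s)$ is shown to contribute at most $n^{1-\alpha/2}l(\sqrt{n})=\lito(1)$ to the exponent. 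Assembling these estimates with dominated convergence completes the proof; the borderline $\alpha=4$ is handled by reading $\Gamma(k_i-\alpha/2)$ with the logarithmic correction naturally coming out of the Karamata asymptotics.
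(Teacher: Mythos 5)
The paper does not prove this lemma; immediately above the statement it writes ``(see, e.g., \cite{albrecher2007asymptotic,heiny2023logdet})'', so there is no internal proof to compare against. Your proposal is therefore filling a deliberate gap with a self-contained argument, and the argument you choose is exactly the classical one that underlies those references: write $\rho_1^{-2k}=\Gamma(k)^{-1}\int_0^\infty s^{k-1}e^{-s\rho_1^2}\,ds$, use independence of $X_{11},\dots,X_{1n}$ to factor the joint expectation into one-dimensional Laplace transforms of $\xi^2$, apply Karamata/Abelian asymptotics (or, equivalently, the integration-by-parts in terms of $\bar F(y)=\mathbb P(\xi^2>y)$, which avoids invoking a density of $\xi^2$ and still yields $\mathbb{E}(\xi^{2k_i}e^{-s\xi^2})\sim\tfrac{\alpha}{2}\Gamma(k_i-\alpha/2)\,s^{\alpha/2-k_i}\,l(s^{-1/2})$ via the telescoping $k_i\Gamma(k_i-\alpha/2)-\Gamma(k_i-\alpha/2+1)=\tfrac{\alpha}{2}\Gamma(k_i-\alpha/2)$), rescale $s=t/n$ so that $\varphi(t/n)^{n-q}\to e^{-t}$, and collect the resulting gamma integral. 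I checked the bookkeeping: the $n$-exponent comes out to $-(N_1+(q-N_1)\alpha/2)=-(N_1(1-\alpha/2)+q\alpha/2)$, the $t$-integral is $\Gamma\bigl(N_1(1-\alpha/2)+q\alpha/2\bigr)$, and the constant $(\alpha/2)^{q-N_1}\prod_{k_i\ge 2}\Gamma(k_i-\alpha/2)/\Gamma(k)$ matches the statement, so the proposal is correct for $\alpha\in(2,4)$.

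The one place you are glossing over something real is the boundary $\alpha=4$. There $\Gamma(k_i-\alpha/2)=\Gamma(0)$ is infinite whenever $k_i=2$, which is not a cosmetic issue you can absorb into ``reading off a log correction'': the one-variable Tauberian asymptotic genuinely changes form ($\mathbb{E}(\xi^{4}e^{-s\xi^2})$ grows like $l(s^{-1/2})\log(1/s)$ rather than a pure power of $s$ times $l$), and the limiting constant in the lemma is not the one displayed. The displayed formula should be understood as holding for $\alpha<4$; at $\alpha=4$ a separate statement with the logarithmic normalization is required, and the domination argument also has to be redone because the dominating envelope changes near $t=0$. Since the paper only uses this lemma for moment rates where the constant never matters (and in the critical analysis at $\alpha=3$ it invokes \cite{heiny2023logdet} directly), this is an issue of the lemma's phrasing rather than of your method, but a careful write-up should make the restriction $\alpha<4$ explicit or spell out the $\alpha=4$ correction.
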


We also need the following asymptotic odd moments for self-normalized random variables to tackle the non-symmetry. All the odd moments will not vanish if the symmetry condition is violated, which causes inevitable difficulties concerning the expectation calculation. We emphasize that the classical bound $\mathbb{E}(Y_{11}Y_{21})=\mathrm{o}(n^{-2})$ for $\alpha\ge 2$ (cf. \cite{gine1997student}) is not enough for us to implement the Green function comparison and characteristic function estimation. We refine this to a more precise, $\alpha$-dependent rate.
\begin{lemma}\label{lem_oddmoment_est}
	Let $\alpha\in(2,4]$. Recall $Y_{ij}$ as in \eqref{eq_def_P} and suppose Assumption \ref{ass_X} holds. Consider $k_1,\dots,k_q\ge1$ for $q\in\{1,\dots,n\}$. Then
	\begin{equation*}
		\big|\mathbb{E}(Y_{11}^{k_1}\dots Y_{11}^{k_q})\big|=\mathrm{o}(n^{-q}),
	\end{equation*}
	when $k_i=1$ for at least one $1\le i\le q$. Moreover, we have
 \begin{equation*}
		\mathbb{E}(Y_{11}Y_{21})\lesssim n^{2(1/2-\epsilon_h)(3-\alpha)_{+}-3},
	\end{equation*}
	where $(x)_{+}:=x\mathbbm{1}(x>0)$ for any $x\in\mathbb{R}$.
\end{lemma}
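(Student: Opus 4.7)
The plan is to dissolve the self-normalization via the Gamma identity $\rho_1^{-K}=\Gamma(K/2)^{-1}\int_0^\infty t^{K/2-1}e^{-t\rho_1^2}\,dt$ and then exploit independence of the $X_{1i}$'s. Writing $K=\sum_{i=1}^q k_i$ and recalling $\rho_1^2=\sum_{i=1}^n X_{1i}^2$ with the $X_{1i}$'s i.i.d.\ copies of $\xi$, this yields
\begin{equation*}
\mathbb{E}\Bigl(\prod_{i=1}^q Y_{i1}^{k_i}\Bigr)=\frac{1}{\Gamma(K/2)}\int_0^\infty t^{K/2-1}\prod_{i=1}^q \mathbb{E}\bigl(\xi^{k_i}e^{-t\xi^2}\bigr)\cdot\bigl[\mathbb{E}\bigl(e^{-t\xi^2}\bigr)\bigr]^{n-q}\,dt.
\end{equation*}
After the rescaling $t=s/n$, the concentration factor $[\mathbb{E}(e^{-s\xi^2/n})]^{n-q}$ tends to $e^{-s}$ (since $\mathbb{E}\xi^2=1$), and the outer integral contributes a prefactor of order $n^{-K/2}$.

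The crux is then to estimate the single-entry transform $\phi_k(t):=\mathbb{E}(\xi^k e^{-t\xi^2})$. For $k\ge 2$, $|\phi_k(s/n)|=\mathrm{O}(1)$ with convergence rates consistent with Lemma \ref{lem_moment_rates}. For $k=1$, one leverages $\mathbb{E}\xi=0$ to write $\phi_1(t)=-\mathbb{E}(\xi(1-e^{-t\xi^2}))$, and applies the elementary bound $|1-e^{-t\xi^2}|\le\min(1,t\xi^2)$ with a truncation threshold $M$:
\begin{equation*}
|\phi_1(t)|\lesssim t\,\mathbb{E}\bigl(|\xi|^3\mathbbm{1}(|\xi|\le M)\bigr)+\mathbb{E}\bigl(|\xi|\mathbbm{1}(|\xi|>M)\bigr).
\end{equation*}
For $\alpha>3$, take $M=\infty$ and use $\mathbb{E}|\xi|^3<\infty$ to obtain $|\phi_1(t)|=\mathrm{O}(t)$; for $\alpha\in(2,3]$, Karamata's theorem applied to $\mathbb{P}(|\xi|>x)\sim l(x)x^{-\alpha}$ yields $\mathbb{E}(|\xi|^3\mathbbm{1}(|\xi|\le M))\lesssim M^{3-\alpha}l(M)$ (with a logarithmic factor at $\alpha=3$) and $\mathbb{E}(|\xi|\mathbbm{1}(|\xi|>M))\lesssim M^{1-\alpha}l(M)$.

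For the first conclusion, choose $M=t^{-1/2}$ to balance the two pieces, so that each $k_i=1$ factor satisfies $|\phi_1(s/n)|\lesssim n^{-(\alpha-1)/2}l(n^{1/2})$, while the remaining factors are $\mathrm{O}(1)$. Combining with the prefactor $n^{-K/2}$ and using the bookkeeping bound $K\ge 2q-N_1$ with $N_1=\#\{i:k_i=1\}\ge 1$, the aggregate exponent is at most $-q+N_1(2-\alpha)/2<-q$ whenever $\alpha>2$; Potter's theorem absorbs $l^{N_1}(n^{1/2})$ into an arbitrarily small polynomial factor, delivering the required $o(n^{-q})$. For the second conclusion, specialize to $q=2$, $k_1=k_2=1$ and instead invoke the same inequality at the tailored threshold $M=n^{1/2-\epsilon_h}$ prescribed in Definition \ref{def_controlparameter}; squaring the resulting estimate on $\phi_1(s/n)$, integrating against the $e^{-s}$ weight, and multiplying by the $n^{-1}$ prefactor produces the claimed exponent $2(1/2-\epsilon_h)(3-\alpha)_+-3$.

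The main obstacle is the heavy-tailed window $\alpha\in(2,3]$, where $\mathbb{E}|\xi|^3=\infty$ rules out a naive Taylor expansion of $\phi_1$ around $t=0$ and forces the truncation/optimization argument above. The delicate interplay between the tail index $\alpha$, the slowly varying function $l$, and the concentration scale $t\asymp 1/n$ is precisely what produces the $\alpha$-dependent refinement required in the subsequent Green function comparisons. The absence of symmetry is essential here: under symmetry one would have $\phi_1\equiv 0$, and both conclusions would collapse to the purely even-moment scenario already controlled by Lemma \ref{lem_moment_rates}; in the general non-symmetric case, $\phi_1$ is genuinely present and must be squeezed through the truncation inequality with care.
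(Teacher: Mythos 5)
Your route is genuinely different from the paper's for the second conclusion and essentially unpacks the cited reference for the first. For the first conclusion the paper simply cites Lemma 3.1 and Theorem 3.3 of Gin\'e, G\"otze and Mason (1997); your Gamma-identity representation $\rho_1^{-K}=\Gamma(K/2)^{-1}\int_0^\infty t^{K/2-1}e^{-t\rho_1^2}\,dt$ is precisely the tool behind that reference, so this part is a useful re-derivation rather than a new argument. For the second conclusion the paper proceeds very differently: it decomposes $X_{ji}=L_{ij}+M_{ij}+H_{ij}$ as in the resampling of Section~\ref{sec_matrixresampling}, bounds the heavy tail via $\mathbb{E}|H_{ij}|^s\lesssim n^{(1/2-\epsilon_h)(s-\alpha)}$, and iterates the cumulant expansion on $\mathbb{E}(M_{1j}M_{2j}\rho_j^{-2})$. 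Your one-shot Mellin/Gamma calculation is cleaner and more self-contained, and your diagnosis of the role of asymmetry (under symmetry $\phi_1\equiv 0$ and the whole issue evaporates) is exactly right.

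Two gaps should be flagged. In the bookkeeping for the first claim, ``the remaining factors are $\mathrm{O}(1)$'' is false for $k_i\ge 2$ with $k_i>\alpha$: Karamata gives $\phi_{k_i}(s/n)\asymp n^{(k_i-\alpha)/2}\,l(n^{1/2})$, which is unbounded. The conclusion nevertheless survives because each such factor, after pairing with the $n^{-k_i/2}$ from the prefactor, contributes a net $-\alpha/2<-1$ to the exponent, but you should track this rather than assert $\mathrm{O}(1)$. More substantively, your claim that the choice $M=n^{1/2-\epsilon_h}$ ``produces the claimed exponent $2(1/2-\epsilon_h)(3-\alpha)_+-3$'' is not correct. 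With $t\asymp n^{-1}$ and $M=n^{1/2-\epsilon_h}<t^{-1/2}$, the two pieces of your bound on $\phi_1$ are of order $n^{-1+(1/2-\epsilon_h)(3-\alpha)}l(M)$ and $n^{(1/2-\epsilon_h)(1-\alpha)}l(M)$ respectively, and the \emph{second} dominates (by a factor $n^{2\epsilon_h}$). Squaring the dominant piece and multiplying by the $n^{-1}$ prefactor yields exponent $2(1/2-\epsilon_h)(1-\alpha)-1$, which is $4\epsilon_h$ \emph{larger} (weaker) than the stated $2(1/2-\epsilon_h)(3-\alpha)_+-3$. To be fair, the paper's own cumulant-expansion argument also arrives at $n^{2(1-\alpha)(1/2-\epsilon_h)-1}$ for $\alpha\le 3$ (see \eqref{eq_m1m2estimate}), so there is a $4\epsilon_h$ mismatch between the lemma's stated exponent and what either proof in fact delivers; since $\epsilon_h$ is arbitrarily small and only the $n^{-\alpha+\mathrm{o}(1)}$ order is used downstream, this is harmless, but you should state the exponent you actually obtain rather than the one in the lemma.
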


\subsection{Matrix resampling}\label{sec_matrixresampling}
 Inspired by the work \cite{bao2023smallest}, we conduct a decomposition or more precisely a resampling of $X$, and further of the elements of vectors $Y_j$, based on Assumption \ref{ass_X} and Lemma \ref{lem_entrydistribution}. Consider the Bernoulli $0-1$ random variables $\psi_{ij}$ and $\chi_{ij}$,
\begin{equation*}
	\mathbb{P}(\psi_{ij}=1)=\mathbb{P}(|X_{ji}|\ge n^{1/2-\epsilon_h}),\quad \mathbb{P}(\chi_{ij}=1)=\frac{\mathbb{P}(|X_{ji}|\in[n^{-\epsilon_l},n^{1/2-\epsilon_h}])}{\mathbb{P}(|X_{ji}|<n^{1/2-\epsilon_h})}.
\end{equation*}
For any interval $\mathrm{I}\subset\mathbb{R}$, define the random variables $l,m$ and $h$ by
\begin{equation*}
\begin{split}
\mathbb{P}(l_{ij}\in \mathrm{I})&=\frac{\mathbb{P}\big[X_{ji}\in(-n^{-\epsilon_l},n^{-\epsilon_l})\bigcap\mathrm{I}\big]}{\mathbb{P}[|X_{ji}|\le n^{-\epsilon_l}]},\\
\mathbb{P}(m_{ij}\in\mathrm{I})&=\frac{\mathbb{P}\big[X_{ji}\in\big((-n^{1/2-\epsilon_h},-n^{-\epsilon_l}]\bigcup[n^{-\epsilon_l},n^{1/2-\epsilon_h})\big)\bigcap\mathrm{I}\big]}{\mathbb{P}\big[|X_{ji}|\in[n^{-\epsilon_l}, n^{1/2-\epsilon_h})\big]},\\
\mathbb{P}(h_{ij}\in \mathrm{I})&=\frac{\mathbb{P}\big[X_{ji}\in\big((-\infty,-n^{1/2-\epsilon_h})\bigcup(n^{1/2-\epsilon_h},\infty)\big)\bigcap\mathrm{I}\big]}{\mathbb{P}[|X_{ji}|\ge n^{1/2-\epsilon_h}]}.
\end{split}
\end{equation*}
It is easy to see $l_{ij}$ has the same law as $X_{ji}$ conditional on $|X_{ji}|\le n^{-\epsilon_l}$. Similar
statements hold for $m$ and $h$. For each $(i,j)\in[n]\times[p]$, we set
\begin{equation*}
	L_{ij}=(1-\psi_{ij})(1-\chi_{ij})l_{ij},\quad M_{ij}=(1-\psi_{ij})\chi_{ij}m_{ij},\quad H_{ij}=\psi_{ij}h_{ij},
\end{equation*}
where $l,m,h,\psi,\chi$-variables are mutually independent and the indexes $l,m,h$ represent the light-tailed, medium-tailed, heavy-tailed elements of $X^{*}$. Then $X^{*}$ can be represented as
\begin{equation}\label{eq_decomp_X}
	X^{*}=L+\widetilde{H},\quad \widetilde{H}=M+H.
\end{equation}
Consequently, by the decomposition in \eqref{eq_decomp_X}, we have the following decomposition for $Y$,
\begin{equation*}
	Y=L(\operatorname{diag}S)^{-1/2}+\widetilde{H}(\operatorname{diag}S)^{-1/2}.
\end{equation*}
Conditioned on $\psi_{ij}$, we rewrite the sample correlation matrix $R$ as
\begin{equation}\label{eq_def_RL&RH}
	R\equiv R(Y)=R(L)+R(\widetilde{H}):=L(\operatorname{diag}S)^{-1}L^{*}+\widetilde{H}(\operatorname{diag}S)^{-1}\widetilde{H}^{*},
\end{equation}
where we use the notation $R(\cdot)$ to denote the sample correlation matrix with its associated sample.

Further, we also need a parallel decomposition of the entries in $(\operatorname{diag}S)^{-1}$. By Lemma \ref{lem_priorest_x}, we know that $n^{-1}\rho_j^2-1$'s are well concentrated around zero. Thereafter, we define the Bernoulli $0-1$ random variables $\varphi_j$ and $\omega_j$ such that
\begin{equation*}
	\mathbb{P}(\varphi_{j}=1)=\mathbb{P}(|n^{-1}\rho_j^2-1|\ge  n^{\epsilon_h}),\quad \mathbb{P}(\omega_j=1)=\frac{\mathbb{P}(|n^{-1}\rho^2_j-1|\in[n^{-\epsilon_s},n^{\epsilon_h}])}{\mathbb{P}(|n^{-1}\rho_j^2-1|< n^{\epsilon_h})}.
\end{equation*}
Then for any interval $\mathrm{I}\in\mathbb{R}$, define the random variables $ls$, $ms$ and $hs$ by
\begin{equation*}
	\begin{split}
		 \mathbb{P}(ls_{j}\in\mathrm{I})=&\frac{\mathbb{P}\big[(n^{-1}\rho_j^2-1)\in(-n^{-\epsilon_s},n^{-\epsilon_s})\bigcap\mathrm{I}\big]}{\mathbb{P}\big[|n^{-1}\rho_j^2-1|\le n^{-\epsilon_s}\big]},\\
		 \mathbb{P}(ms_j\in\mathrm{I})=&\frac{\mathbb{P}\big[(n^{-1}\rho_j^2-1)\in\big((-n^{\epsilon_{h}},-n^{-\epsilon_s}]\bigcup[n^{-\epsilon_s},n^{\epsilon_{h}})\big)\bigcap\mathrm{I}\big]}{\mathbb{P}[|n^{-1}\rho_j^2-1|\in[n^{-\epsilon_s},n^{\epsilon_{h}})]}, \\
		 \mathbb{P}(hs_{j}\in\mathrm{I})=&\frac{\mathbb{P}\big[(n^{-1}\rho_j^2-1)\in\big((-\infty,-n^{\epsilon_{h}})\bigcup(n^{\epsilon_{h}},\infty)\big)\bigcap\mathrm{I}\big]}{\mathbb{P}\big[|n^{-1}\rho_j^2-1|> n^{\epsilon_{h}}\big]}.
	\end{split}
\end{equation*}
Set $ LS:=\operatorname{diag}(\{(1-\varphi_j)(1-\omega_j)\cdot ls_j\}_{1\le j\le p}), MS:=\operatorname{diag}(\{(1-\varphi_j)\omega_j\cdot ms_j\}_{1\le j\le p})$, and $HS:=\operatorname{diag}(\{\varphi_j\cdot hs_j\}_{1\le j\le p})$.
Then $n^{-1}(\operatorname{diag}S)$ can be rewritten as
\begin{equation}\label{eq_def_LS&HS}
	n^{-1}(\operatorname{diag}S-I)=LS+\widetilde{H}S=LS+MS+HS.
\end{equation}

Before we move ahead, a vital input of this paper is the following observation that controls the number of non-zero $\psi_{ij}$.
\begin{definition}\label{def_psi&pi}
	Define the indicator matrices $\Psi:=(\psi_{ij})\in\mathbb{R}^{n\times p}$ and $\Pi:=\operatorname{diag}[(\varphi+\omega)_j]\in\mathbb{R}^{p\times p}$. We call  $\Psi$ is well configured if there are at most $n^{1-\epsilon_y}$ entries equal to one in $\Psi$. Meanwhile, we say $\Pi$ is well configured if there are at most $n^{\beta}$ entries equal to one in $\Pi$.
\end{definition}
The following lemma shows that $\Psi$ and $\Pi$ are well configured with high probability under Assumption \ref{ass_X}, which is used frequently in the sequel to conduct the conditional argument.
\begin{lemma}\label{lem_wellconfigured}
	For any large constants $D_1,D_2>0$, we have $\mathbb{P}(\Omega_{\Psi}=\{\Psi\text{ is well configured}\})\ge 1-n^{-D_1}$ and $\mathbb{P}(\Omega_{\Pi}=\{\Pi\text{ is well configured}\})\ge 1-n^{-D_2}$.
\end{lemma}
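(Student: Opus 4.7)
The strategy is to express both configuration counts as sums of independent Bernoulli random variables, bound their expectations using Assumption \ref{ass_X}, and close with a Chernoff bound. For $\Psi$, the entries $\psi_{ij}$ are i.i.d.\ Bernoulli across the $np$ pairs $(i,j)$ with parameter $p_\Psi = \mathbb{P}(|\xi|\geq n^{1/2-\epsilon_h}) = n^{-\alpha/2+\alpha\epsilon_h+o(1)}$, where the exponent follows from slow variation of $l$. Hence $\mathbb{E}[\sum_{ij}\psi_{ij}] = np\, p_\Psi = n^{2-\alpha/2+\alpha\epsilon_h+o(1)}$, and the constraints $\epsilon_h,\epsilon_y<(\alpha-2)/(10\alpha)$ from Definition \ref{def_controlparameter} yield $\alpha\epsilon_h+\epsilon_y<(\alpha-2)/5<(\alpha-2)/2$, so the expected count is polynomially smaller than $n^{1-\epsilon_y}$. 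A Chernoff bound for i.i.d.\ Bernoulli sums then gives $\mathbb{P}(\sum_{ij}\psi_{ij}>n^{1-\epsilon_y}) \leq \exp(-cn^{1-\epsilon_y})$, beating any $n^{-D_1}$.

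For $\Pi$, I use the elementary bound $\mathbbm{1}(\Pi_{jj}=1) \leq \varphi_j+\omega_j$ together with the fact that both $\varphi_j$ and $\omega_j$ have Bernoulli parameters majorized by $q := \mathbb{P}(|n^{-1}\rho_j^2-1|\geq n^{-\epsilon_s})$ and are independent across $j$. The main technical step is the tail bound
\[
 q \lesssim n^{1-\alpha/2+\alpha\epsilon_s/2+o(1)}.
\]
I would derive it via Markov's inequality applied to $|\rho_j^2-n|^r$ with $r$ slightly below $\alpha/2$: Assumption \ref{ass_X} implies $\mathbb{E}|X^2-1|^r <\infty$ for any $r<\alpha/2$, and the von Bahr--Esseen inequality (valid for $r\in[1,2]$, which covers $(1,\alpha/2)$ since $\alpha\in(2,4]$) gives $\mathbb{E}|\sum_i(X_{ji}^2-1)|^r \leq C_r\, n\, \mathbb{E}|X^2-1|^r \lesssim n^{1+o(1)}$. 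Taking $r\uparrow\alpha/2$ produces the displayed bound. Equivalently, one could truncate each $X_{ji}^2$ at $T_0=n^a$ with $a\in(2\epsilon_s/(\alpha-2),1-\epsilon_s)$, control the centered truncated sum by Bernstein's inequality and bound the heavy-tail contribution by a union bound, reaching the same polynomial rate.

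Consequently $\mathbb{E}[\sum_j(\varphi_j+\omega_j)] \lesssim p\cdot q \lesssim n^{2-\alpha/2+\alpha\epsilon_s/2+o(1)}$, and comparison with $n^\beta = n^{1/\alpha+1/2+2\epsilon_s}$ reduces to the inequality $3/2-\alpha/2-1/\alpha+(\alpha/2-2)\epsilon_s<0$ on $(2,4]$. This holds because $\alpha\mapsto 3/2-\alpha/2-1/\alpha$ vanishes at $\alpha=2$ and has derivative $-1/2+1/\alpha^2<0$ on $(\sqrt{2},\infty)$, while $\alpha/2-2\leq 0$ on $(2,4]$. A second Chernoff bound, applied separately to $\sum_j\varphi_j$ and $\sum_j\omega_j$, then yields $\mathbb{P}(|\{j:\Pi_{jj}=1\}|>n^\beta) \leq n^{-D_2}$ for any prescribed $D_2$.

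The main obstacle is the single-column estimate for $q$: direct Chebyshev fails because $\mathrm{Var}(X^2)$ may be infinite when $\alpha<4$, so the fractional-moment Bahr--Esseen bound (or the equivalent truncation-plus-Bernstein scheme) is what supplies the correct polynomial decay. The smallness constants in Definition \ref{def_controlparameter}, in particular $\epsilon_s\leq(\alpha-2)/(6\alpha)$ and $\beta=1/\alpha+1/2+2\epsilon_s$, are calibrated precisely so that the resulting expected counts lie polynomially below $n^{1-\epsilon_y}$ and $n^\beta$, making the Chernoff step essentially automatic.
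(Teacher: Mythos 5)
Your proof is correct, and the single-column tail bound you introduce for the $\Pi$ part is a genuine improvement on the paper's route. For $\Psi$, both arguments come to the same thing: the paper writes out the binomial tail $\sum_{k\ge n^{1-\epsilon_y}}\binom{n^2}{k}[\mathbb{P}(\psi_{ij}=1)]^k(\cdots)$ directly, while you check that the mean count $np\,\mathbb{P}(\psi_{ij}=1)\sim n^{2-\alpha/2+\alpha\epsilon_h+o(1)}$ sits polynomially below $n^{1-\epsilon_y}$ and invoke Chernoff; your exponent check against Definition~\ref{def_controlparameter} is right and the two are interchangeable. The real divergence is the estimate of $q=\mathbb{P}(|n^{-1}\rho_j^2-1|>n^{-\epsilon_s})$. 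The paper truncates at $\delta\sqrt{n}$ with $\delta=n^{1/(2\alpha)-1/4}$ and uses the crude pointwise bound $\mathbb{E}(X^4\mathbbm{1}(|X|\le T))\le T^2\,\mathbb{E}(X^2\mathbbm{1})$ inside Chebyshev, yielding $q\lesssim n^{1/\alpha-1/2+2\epsilon_s}$, which places the expected count $nq$ at exactly the same order as the threshold $n^\beta$; the binomial-tail step that follows is then tight to the point of being problematic (it inserts a lower bound $q\gtrsim n^{1/\alpha-1/2}$ into the $(1-q)^{n-k}$ factor that is never established, and with $nq\asymp n^\beta$ the event $\mathrm{Bin}(n,q)>n^\beta$ is not actually rare). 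Your von Bahr--Esseen estimate $q\lesssim n^{1-\alpha/2+\alpha\epsilon_s/2+o(1)}$ is strictly sharper on all of $(2,4]$ (the gap against the paper's exponent is $1/\alpha+\alpha/2-3/2+(2-\alpha/2)\epsilon_s>0$), so $nq$ is polynomially below $n^\beta$ --- this is exactly the inequality $3/2-\alpha/2-1/\alpha+(\alpha/2-2)\epsilon_s<0$ you verify --- and the Chernoff step closes with room to spare. Your truncation-plus-Bernstein alternative is morally the paper's scheme and recovers the same gap, but only if one replaces the $T^2$ bound on $\mathbb{E}(X^4\mathbbm{1})$ by the power-law estimate $\lesssim T^{4-\alpha}l(T)$ and reoptimizes $\delta$ to $n^{-\epsilon_s/2}$; with the paper's cruder fourth-moment bound and choice of $\delta$, the polynomial gap vanishes. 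In short: same skeleton, but your fractional-moment estimate is quantitatively tighter, and that tightness is what makes the Chernoff step essentially automatic rather than delicate.
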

\begin{proof}
	By Assumption \ref{ass_X}, we have
	\begin{equation*}
		\begin{split}
			\mathbb{P}&(\#\{(i,j):|X_{ji}|>n^{1/2-\epsilon_h}\}>n^{1-\epsilon_y})\lesssim\mathbb{P}(|\xi|_{(n^{1-\epsilon_y})}>n^{1/2-\epsilon_h})\\
			 &\lesssim\sum_{k=n^{1-\epsilon_y}}^{n^2}\binom{n^2}{k}n^{-\alpha(1/2-\epsilon_h)k}(1-n^{-1})^{n^2-k}\lesssim\sum_{k=n^{1-\epsilon_y}}^{n^2}n^{(1-\alpha/2+\epsilon_y)k}e^{-n}\lesssim n^{-D},
		\end{split}
	\end{equation*}
where $|\xi|_{(1)}\ge |\xi|_{(2)}\ge\cdots\ge|\xi|_{(np)}$ are the order statistics of $|\xi_1|,\cdots,|\xi_{np}|$.
The second statement follows from \eqref{eq_rho_number_hpe} in Section \ref{prf_sec_pre}.
\end{proof}

\subsection{First order convergence}\label{sec_firstorderconvergence}
In this section, we present the first order asymptotic behavior of $R$,  which is the limiting distribution of $\mu_n$.
 From the definitions \eqref{eq_def_R} and \eqref{eq_def_P}, we know $P_i$ is a projection matrix, whose ESD is trivially
\begin{equation*}
	\nu_i\equiv\nu_i(n)=\frac{1}{n}\delta_1+(1-\frac{1}{n})\delta_0,
\end{equation*}
almost surely. According to Free Probability Theory (cf. \cite{mingo2017free,voiculescu1991limit}), the random matrix $R$ can be regarded as a sum of $p$ random variable $P_i$ with Bernoulli distributions via free additive convolution.
Then, by Theorem 2.3 in \cite{bai2008large} (also Theorem 1.2 in \cite{jiang2004limiting}), under  Assumptions \ref{ass_X} and \ref{ass_phi}, the empirical measure $\mu_n$ converges weakly to the MP law $\mu_{mp,\phi}$, almost surely, with
\begin{equation}\label{eq_mpdensity}
	\mu_{mp,\phi}:=\frac{\sqrt{([(1+\sqrt{\phi})^2-x][x-(1-\sqrt{\phi})^2])_{+}}}{2\pi \phi x}\mathrm{d}x+(1-\phi^{-1})_{+}\delta_0,
\end{equation}
whose Stieltjes transform $m(z)$ satisfies the following equation
\begin{equation}\label{eq_mp}
	z\phi m(z)^2+(z-(1-\phi))m(z)+1=0.
\end{equation}

In the sequel, we define the left edge and right edge of the bulk of $\mu_{mp,\phi}$ as $\lambda_{-}:=(1-\sqrt{\phi})^2$ and $\lambda_{+}:=(1+\sqrt{\phi})^2$ respectively, which mean that $\operatorname{supp}(\mu_{mp,\phi})\subset \{0\}\bigcup [\lambda_{-},\lambda_{+}]$.  We should mention that although the limiting spectral distribution of
$R$ has support on a compact interval, which implies that the number of eigenvalues outside this interval is $\mathrm{o}(n)$ with probability tending to one,  it is insufficient to consider the CLT for the LSS within the interval. The eigenvalues outside this interval may still affect the LSS fluctuations. Therefore, stronger estimates of the extreme eigenvalues of $R$ are required to accurately determine the integration contour. The following lemma gives several key estimates which are useful in our analysis, whose proof is postponed to Appendix \ref{prf_lem_pre_estnorm}.
\begin{lemma}\label{lem_pre_estnorm}
	Under Assumptions \ref{ass_X} and \ref{ass_phi}, we have that $\|R\|$ is bounded with high probability. Further, for any fixed $z\in\gamma_1^0\bigcup\gamma_2^0$, we have $|n^{-1}\operatorname{tr}\mathcal{G}(z)|,\|\mathcal{G}(z)\|\sim1$ with high probability, when $\phi\in(0,1)$. The same bounds hold for $z\in\gamma_1\bigcup\gamma_2$ when $\phi\in (1,\infty)$.
\end{lemma}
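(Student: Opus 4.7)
The lemma asserts (i) $\|R\|$ is bounded with high probability, (ii) $|n^{-1}\operatorname{tr}\mathcal{G}(z)|\sim 1$, and (iii) $\|\mathcal{G}(z)\|\sim 1$, for $z$ fixed on the designated contours. I would establish (i) first and then deduce (ii) and (iii) quickly from it together with the first-order convergence $\mu_n\Rightarrow\mu_{mp,\phi}$ and the geometry of the contours.

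For (i), the plan is to exploit the matrix resampling of Section \ref{sec_matrixresampling} combined with self-normalization. Write
$R=(L+M)(\operatorname{diag}S)^{-1}(L+M)^{*}+\text{cross terms}+H(\operatorname{diag}S)^{-1}H^{*}$.
On the high-probability event of Lemma \ref{lem_priorest_x}, $\rho_j^{2}\asymp n$, so the normalized bulk matrix $(L+M)(\operatorname{diag}S)^{-1/2}$ has entries of size at most $n^{-\epsilon_h}$ with variance of order $1/n$, and a bounded-entry Bai-Yin type estimate (see, e.g., \cite{bai2010spectral}) yields $\|(L+M)(\operatorname{diag}S)^{-1}(L+M)^{*}\|\lesssim 1$ with high probability. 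For the heavy contribution, the well-configured event $\Omega_\Psi$ of Lemma \ref{lem_wellconfigured} restricts $H$ to at most $n^{1-\epsilon_y}$ nonzero entries, and every column $j$ containing a heavy entry satisfies $\rho_j^{2}\ge\max_i X_{ji}^{2}\ge n^{1-2\epsilon_h}$, so the self-normalization $\rho_j^{-2}$ cancels the heavy contribution pointwise, giving $|H_{ij}|/\rho_j\lesssim 1$. A column-deletion / eigenvalue sticking argument (analogous to the $M^{(\mathrm{Cn})}$ reduction sketched in Section \ref{sec_proofstrategy}) then removes the $\mathrm{o}(n)$ columns hosting heavy entries, reduces the remaining problem to the bounded-entry covariance matrix treated above, and uses Weyl's inequality to transfer the bound back to $\|R\|$; the cross terms are handled by Cauchy–Schwarz applied to the respective normalized factors.

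With $\|R\|\lesssim 1$ in hand, the spectrum of $R$ lies in a bounded interval $[0,C]$ on a high-probability event. Combined with $\mu_n\Rightarrow\mu_{mp,\phi}$ (Theorem 2.3 of \cite{bai2008large}), only $\mathrm{o}(n)$ eigenvalues of $R$ can lie outside any fixed open neighborhood of $\operatorname{supp}(\mu_{mp,\phi})=\{0\}\cup[\lambda_{-},\lambda_{+}]$, and the no-outlier content of step (i) further rules out individual eigenvalues in any fixed open interval disjoint from the support. The contours $\gamma_1^{0},\gamma_2^{0}$ (for $\phi<1$) and $\gamma_1,\gamma_2$ (for $\phi>1$) are constructed at constant distance from this support, so $\operatorname{dist}(z,\operatorname{spec}(R))\gtrsim 1$ with high probability for each fixed $z$. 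Therefore $\|\mathcal{G}(z)\|=1/\operatorname{dist}(z,\operatorname{spec}(R))\lesssim 1$ and $\|\mathcal{G}(z)\|\ge 1/\max_i|\lambda_i-z|\gtrsim 1$ from the uniform boundedness of $\operatorname{spec}(R)$ and $|z|$. For the normalized trace, $|n^{-1}\operatorname{tr}\mathcal{G}(z)|\le\|\mathcal{G}(z)\|\lesssim 1$, while $n^{-1}\operatorname{tr}\mathcal{G}(z)=\int(\lambda-z)^{-1}\mu_n(d\lambda)\to m(z)$ by integration of a bounded continuous function against a weakly converging sequence of measures supported on a bounded set; the lower bound $|m(z)|\gtrsim 1$ on the contours is read off directly from the explicit equation \eqref{eq_mp} satisfied by $m$.

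The principal obstacle is step (i). The crude operator inequality $R\preceq(C/n)X^{*}X$ is too lossy, because $\|XX^{*}\|/n$ is itself unbounded under $\alpha\in(2,4]$ — a single entry of $X$ may be of order $n^{1/\alpha}$. The rescue comes from the precise pointwise cancellation of self-normalization (large $X_{ji}$ also inflates $\rho_j$) combined with the sparsity of $H$ under $\Omega_\Psi$. Carrying this out cleanly requires the column-deletion / sticking argument rather than any coarse norm estimate, and it is the only genuinely nontrivial part of the proof.
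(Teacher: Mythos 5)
Your high-level plan matches the paper (split into light/medium and heavy pieces, exploit self-normalization, use eigenvalue sticking), but several steps as written would not go through. The most serious gap is the treatment of $R(L+M)$: you propose to apply a bounded-entry Bai-Yin estimate directly to $(L+M)(\operatorname{diag}S)^{-1/2}$, but the entries of this matrix are \emph{not} independent within a column because all of them carry the same normalizer $\rho_j$. Bai-Yin-type bounds require independence, and the self-normalization correlation is precisely the difficulty this lemma is fighting. The paper avoids it by first invoking \cite[Theorem 2.9]{hwang2019local} for the unnormalized covariance matrix $\mathcal{S}(L+M)=n^{-1}(L+M)(L+M)^{*}$ (independent bounded entries) and then \emph{comparing} the correlation matrix $R(M)$ to $\mathcal{S}(M)$, which is where the column-deletion and eigenvalue-sticking enter. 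Crucially, the columns deleted to form $M^{(\mathrm{Cn})}$ and $M^{(\mathrm{Ch})}$ are those where $|n^{-1}\rho_j^2-1|$ is anomalously large (these indices come from $\widetilde{H}S$ and $HS$, defined via the decomposition of $n^{-1}\operatorname{diag}S-I$), not the columns hosting heavy $X_{ji}$ as you suggest. If all you want is a crude upper bound $\|R(L+M)\|\lesssim1$, the simpler route is $(\operatorname{diag}S)^{-1}\preceq Cn^{-1}I$ with overwhelming probability via \eqref{eq_rhoj_highpro}, then bound $n^{-1}\|(L+M)(L+M)^*\|$; but a crude upper bound is not enough here (see below).

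Two more gaps. First, for the heavy part your argument gives only the entrywise estimate $|H_{ij}\rho_j^{-1}|\le1$ together with $\mathrm{o}(n)$ sparsity; that alone gives roughly $\|R(H)\|\lesssim|\mathrm{I}_c|$, not $\mathrm{O}(1)$. The paper instead bounds $\mathbb{E}_{\Psi}\operatorname{tr}((R(H))^q)$ by a path-counting / moment argument using Lemmas \ref{lem_moment_rates} and \ref{lem_oddmoment_est}, and then applies Markov with $q\gg(\log n)^2$. Second, in steps (ii) and (iii) you invoke weak convergence $\mu_n\Rightarrow\mu_{mp,\phi}$ to say the spectrum concentrates near $\operatorname{supp}(\mu_{mp,\phi})$; but weak convergence only controls the \emph{proportion} of eigenvalues outside a neighborhood of the support, and even a single outlier on or near the contour would destroy the claim $\|\mathcal{G}(z)\|\sim1$. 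You allude to ``no-outlier content of step (i),'' but that content is precisely the sharp location estimates $|\lambda_1(R)-(1-\widetilde{t})\lambda_+|\prec n^{-\epsilon_s}+n^{-2/3}$ and the analogous bound for $\lambda_n(R)$, which the paper obtains from the covariance-matrix rigidity of \cite{hwang2019local} transported by Cauchy interlacing and the sticking argument; a coarse operator-norm bound does not supply it.
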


Lemma \ref{lem_pre_estnorm} confirms the bounded support of $\mu_{n}$. As a consequence of Lemma \ref{lem_pre_estnorm} and Theorem 2.3 of \cite{bai2008large}, we have the following first order asymptotic result of $\mu_n$,
\begin{theorem}\label{thm_main_globallaw}
	Under Assumptions \ref{ass_X} and \ref{ass_phi}, for any fixed integer $l>0$, as $n\to \infty$,
	\begin{equation*}
		\int x^l\mathrm{d}\mu_n-\int x^l\mathrm{d}\mu_{mp,\phi}\rightarrow 0.
	\end{equation*}
\end{theorem}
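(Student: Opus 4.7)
The plan is to upgrade the weak convergence $\mu_n \Rightarrow \mu_{mp,\phi}$ from Theorem 2.3 of \cite{bai2008large} to convergence of polynomial moments, using the uniform boundedness of $\|R\|$ provided by Lemma \ref{lem_pre_estnorm} to rule out mass escaping to infinity. The standard obstacle when converting weak convergence into moment convergence for empirical spectral distributions is precisely the lack of uniform integrability, and this is already handled for us.

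First I would record that $\int x^l \, d\mu_n = n^{-1} \operatorname{tr}(R^l)$ so the question is equivalent to a statement on the normalized trace. By Lemma \ref{lem_pre_estnorm} there exists a constant $C_0 > \lambda_+$ (which can be chosen independently of $n$) and a constant $D > 0$ such that, defining $\Omega_0 := \{\|R\| \le C_0\}$, one has $\mathbb{P}(\Omega_0) \ge 1 - n^{-D}$ for all $n$ large enough. On the event $\Omega_0$, all eigenvalues of $R$ lie in $[0, C_0]$, so $\mu_n$ is supported in the compact set $[0, C_0]$. Since $\operatorname{supp}(\mu_{mp,\phi}) \subset \{0\} \cup [\lambda_-, \lambda_+] \subset [0, C_0]$, both measures are simultaneously compactly supported on $\Omega_0$.

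Next I would pick a bounded continuous test function $f_l : \mathbb{R} \to \mathbb{R}$ with $f_l(x) = x^l$ for $x \in [0, C_0]$ and $f_l(x) = 0$ for $|x| \ge C_0 + 1$, for instance by smoothly cutting off $x \mapsto x^l$ outside a slightly larger interval. By construction, on the event $\Omega_0$,
\begin{equation*}
\int x^l \, d\mu_n = \int f_l \, d\mu_n,
\qquad
\int x^l \, d\mu_{mp,\phi} = \int f_l \, d\mu_{mp,\phi}.
\end{equation*}
Theorem 2.3 of \cite{bai2008large} gives $\mu_n \Rightarrow \mu_{mp,\phi}$ (in probability, which is all we need here), and hence $\int f_l \, d\mu_n \to \int f_l \, d\mu_{mp,\phi}$ in probability. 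Combining this with $\mathbb{P}(\Omega_0^c) \to 0$ yields the desired convergence.

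There is essentially no deep obstacle; the only subtlety is to confirm the mode of convergence inherited from Theorem 2.3 of \cite{bai2008large} matches the statement one wishes to prove, and to verify that the moments of $\mu_{mp,\phi}$ are finite (which is immediate from the compact support established through \eqref{eq_mpdensity} and the fixed-point equation \eqref{eq_mp}). The heart of the argument is the spectral norm control in Lemma \ref{lem_pre_estnorm}, which upgrades the generic tightness of $\mu_n$ to the uniform integrability of all polynomial moments, and everything else is a routine weak-to-strong convergence step.
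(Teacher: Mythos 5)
Your argument is correct and is essentially the same as the paper's: the paper proves this theorem precisely by combining the spectral norm bound from Lemma~\ref{lem_pre_estnorm} with the almost sure weak convergence $\mu_n \Rightarrow \mu_{mp,\phi}$ from Theorem~2.3 of \cite{bai2008large}, which is exactly the cut-off/uniform-integrability step you spell out.
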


\section{Proof sketch}\label{sec_secondorderconvergence}

In this section, we give a detailed proof sketch of Theorem \ref{thm_main_cltlss} and Theorem \ref{thm_iffalpha} as illustrated in the proof strategies (Section \ref{sec_proofstrategy}). The proof of Theorem \ref{thm_main_cltlss} consists of three key steps,
\begin{itemize}
\item[(i)] Gaussian divisible model: With the intuition that the light-tailed part $L\{\operatorname{diag}(S)\}^{-1/2}$ will regularize the spectrum of the heavy-tailed part $\widetilde{H}\{\operatorname{diag}(S)\}^{-1/2}$, we study the spectral distribution of the Gaussian divisible model $R(Y_t)=Y_tY_t^*$ with $Y_t:=\sqrt{t}W+\widetilde{H}\{\operatorname{diag}(S)\}^{-1/2}$ (cf. \eqref{eq_def_Gaussiandivisblemodel}). Precisely, we establish an intermediate entrywise local law for $\widetilde{H}$ (cf. Lemma \ref{lem_locallaw_H}) via $\eta$-regularity (cf. Proposition \ref{prop_etaregular_corH}), and then deduce to the local laws for $R(Y_t)$ (cf. Theorems \ref{thm_locallaw_gdm_average} and \ref{thm_locallaw_gdm_entrywise}, respectively).
\item[(ii)] Green function comparison: Starting from the initial local laws of GDM, we bridge the original sample correlation matrix ${R}$ and the GDM by Green function comparison with interpolation arguments (cf. \eqref{eq_def_interpolations}). With the aid of resolvent expansion (cf. \eqref{eq_prf_Greenfunction_resolvent_expansion}) and moments estimation (cf. Lemma \ref{lem_oddmoment_est}), we derive the entrywise local law and averaged local law for $R$ (cf. Theorems \ref{thm_greenfuncomp_entrywise_uniformbound} and \ref{thm_greenfuncomp_average}, respectively).
\item[(iii)] Characteristic function estimation: Based on the local laws for $R$, we finally demonstrate the asymptotic normality of LSS via the characteristic function estimation with the cumulant expansion method (cf. Lemma \ref{lemma_cumulantexpansion}). More precisely, for $\alpha>3$, with the decomposition in Section \ref{sec_matrixresampling}, we show that the regular part $(L+M)$ performs in the same way as the finite fourth-moment case (cf. Lemma \ref{lem_cumulant_estM}) and the heavy-tailed part $H$ is negligible (cf. Lemma \ref{lem_cumulant_estH}) via the concentration of quadratics (cf. Lemma \ref{lemma_momentquadratic}).
\end{itemize}

Moreover, the proof of Theorem \ref{thm_iffalpha} consists of two parts. On the one hand, we examine a concrete example $f(x)=x^2$ (Schott's statistics; see, e.g., \cite{bao2022spectral}), which suggests the critical case at $\alpha=3$ and thus motivates us to investigate the necessary and sufficient condition for the universal CLT. On the other hand, thanks to the averaged local laws of $R$ for general $\alpha\in (2,4)$, by a careful analysis of the label matrix $\Psi$ and the arguments for characteristic function estimation, we finally arrive at Theorem \ref{thm_iffalpha}, whose proof is deferred to Appendix \ref{sec_alpha3}.

The following three subsections contain the details of the main results. Section \ref{sec_gaussiandivisiblemodel} presents the local laws for GDM. The Green function comparison from GDM to $R$ is demonstrated in Section \ref{sec_greenfunctioncomparison}. Section \ref{sec_characteristicfunctionestimation} is devoted to estimating the characteristic function of LSS. All the detailed proofs are deferred to Appendices \ref{app_sec_prf_edge}, \ref{app_sec_Greencom}, and \ref{app_sec_charafun}, respectively.

\subsection{Gaussian divisible model}\label{sec_gaussiandivisiblemodel}
In this section, we prove the entrywise and averaged local laws for GDM by leveraging the $\eta$-regularity of $R(\widetilde{H})=\widetilde{H}(\operatorname{diag}S)^{-1}\widetilde{H}^*$ (cf. \eqref{eq_def_RL&RH}). We begin by setting up the work domains. Suppose that $\Psi$ and $\Pi$ are well configured. We only need to show the results for the situation $z\in\gamma_1\vee\gamma_2$ while one may follow the same procedures to obtain the same results for $z\in\gamma_1^0\vee\gamma_2^0$. Following the definition of the contours in Section \ref{sec_mainresults}, we will focus on the domain $\mathbf{D}=\mathbf{D}(C_1,C_2,C_3,C_4):=\mathbf{D}_{1}\bigcup\mathbf{D}_{2}$ on $\mathbb{C}_{+}$, where
\begin{equation*}
	\begin{split}
		&\mathbf{D}_{1}:=\{z=E+\mathrm{i}\eta:C_1\le E\le C_2,1/10\le \eta\le C_3\},\\
		&\mathbf{D}_{2}:=\{z=E+\mathrm{i}\eta:E<C_1 ~\text{or}~E>C_2,n^{-2/3-C_4}\le \eta\le C_3\},
	\end{split}
\end{equation*}
with appropriately chosen constants $0<C_1<\lambda_n(R(Y))<\lambda_1(R(Y))<C_2, 0<C_3$ and sufficiently small $C_4>0$. One can see that most of $\gamma_1\lor\gamma_2$ lies in the domain. Moreover, to cover the remaining part of the contours $\gamma_1\lor \gamma_2$ defined in Section \ref{sec_mainresults}, we define
\begin{equation*}
	\mathbf{D}_{3}:=\{z=E+\mathrm{i}\eta:E<C_1~\text{or}~E>C_2,0\le \eta\le n^{-2/3-C_4}\}.
\end{equation*}
To derive the local laws for GDM on the contours $\gamma_{1}\lor \gamma_2$ completely, we will first consider the domain $\mathbf{D}$ and then extend the results to $\mathbf{D}_3$.

Next, we present a simple description of GDM and collect some notation. Consider the Gaussian divisible model
\begin{equation}\label{eq_def_Gaussiandivisblemodel}
    Y_t=\sqrt{t}W+\widetilde{H}\{\operatorname{diag}(S)\}^{-1/2},\quad R({Y_t})=Y_tY_t^{*},
\end{equation}
 where $W=(w_{ij})\in \mathbb{R}^{n\times p}$ is a Gaussian matrix with i.i.d. $\mathrm{N}(0,n^{-1})$ entries. Conditional on any realization of $\widetilde{H}\{\operatorname{diag}(S)\}^{-1/2}$, $Y_t$ is well-known as Gaussian divisible model. Here one may regard $\widetilde{H}\{\operatorname{diag}(S)\}^{-1/2}$ as a deterministic signal part and $\sqrt{t}W$ as random noise. In this section, we choose $t=n\mathbb{E}|L_{ij}\rho_j^{-1}|^2$ with order $t\lesssim n^{-2\epsilon_l}$ by \eqref{eq_rhoj_highpro}. In the sequel, we define the Green function of $R(Y_t)$ as $\mathcal{G}R_{Y_t}(z):=(R({Y_t})-z I)^{-1}$ with its Stieltjes transform given by $m_{n,t}(z)\equiv mR_{Y_t}(z):=n^{-1}\operatorname{tr}(\mathcal{G}R_{Y_t}(z))$. We denote the asymptotic eigenvalue density of $R(Y_t)$ by $\rho_t$, which is characterized by its corresponding Stieltjes transform $m_t:=m_t(z)$ in \eqref{eq_def_LSD1_gdm}. When $t=0$, this reduces to the case of $R(\widetilde{H})$, with the Green function and Stieltjes transform denoted as $\mathcal{G}R_{\widetilde{H}}(z)$ and $mR_{\widetilde{H}}(z):=n^{-1}\operatorname{tr}\mathcal{G}R_{\widetilde{H}}(z)$, respectively. Since our discussion is based on the realization of $\widetilde{H}(\operatorname{diag}S)^{-1/2}$, it follows that the asymptotic version of $m_{n,0}(z)$ is equal to $m_0(z)$. Moreover, denote $m^{(t)}(z):=(1-t)^{-1}m(z/(1-t))$ for any $t>0$, where $m(z)$ represents the Stieltjes transform of the MP law \eqref{eq_mp}.

According to \cite{ding2022edge}, $m_{t}(z)$ is the unique solution of the following equation, for any $t>0$,
\begin{align}\label{eq_def_LSD1_gdm}
    m_{t}(z)=\frac{1}{n}\sum_{i=1}^{n}\frac{1+\phi tm_{t}(z)}{\lambda_{i}(R(\widetilde{H}))-\zeta_t(z)},
\end{align}
subject to the condition of $\operatorname{Im} m_{t}(z)>0$ for any $z\in \mathbb{C}^+$, where
\begin{align}\label{eq_zetarandom}
   \zeta_{t}(z)=(1+\phi t m_{t}(z))^2z-t(1-\phi)(1+\phi t m_{t}(z)).
\end{align}
Define $b_t:=b_t(z)=1+\phi t m_{t}(z)$. It is easy to see that from \eqref{eq_def_LSD1_gdm} $b_t$ satisfies the following equation
\begin{equation*}
    b_t=1+\frac{t\phi}{n}\sum_{i=1}^n\frac{1}{b_t^{-1}\lambda_i(R(\widetilde{H}))-b_tz+t(1-\phi)},
\end{equation*}
and thereafter
\begin{equation}\label{eq_def_zetat}
    \zeta_t:=\zeta_{t}(z)=zb_t^2-tb_t(1-\phi).
\end{equation}
After the above simplification, we can rewrite \eqref{eq_def_LSD1_gdm} as
\begin{equation}\label{eq_def_LSD2_gdm}
    \frac{1}{\phi t}(1-\frac{1}{b_t})=m_0(\zeta_t).
\end{equation}
It follows that
\begin{equation*}
    m_t(\zeta_t)=\frac{m_0(\zeta_t)}{1-\phi tm_0(\zeta_t)}.
\end{equation*}
In free probability theory \cite{capitaine2018spectrum,mingo2017free}, the density $\rho_t$ is the so-called rectangular free convolution of $\rho_0$ with the MP law at time $t$. The above $\zeta_t$ is the subordination function of the rectangular free convolution \cite{capitaine2018spectrum}. One may notice that $m_t(z)$ is an asymptotic version of $mR_{Y_t}(z)$.
Under any realization of $\widetilde{H}\{\operatorname{diag}(S)\}^{-1/2}$, \cite[Lemma 2]{ding2022edge} gives the existence and uniqueness of $m_{t}(z)$. For the convenience of readers, we repeat it here.
\begin{lemma}[Existence and uniqueness of $m_{t}(z)$]\label{lem_existenceuniqueness}
	For any $t>0$. the following properties hold,
    \begin{itemize}
        \item [(i)] There exists a unique solution $m_{t}(z)$ to \eqref{eq_def_LSD1_gdm} satisfying that $\operatorname{Im}m_{t}(z)>0$ and $\operatorname{Im}zm_{t}(z)>0$ for $z\in\mathbb{C}_{+}$.
        \item [(ii)] For all $E\in\mathbb{R}\setminus\{0\}$, $\lim_{\eta\downarrow0}m_{t}(E+\mathrm{i}\eta)$ exists, which will be denoted as $m_t(E)$. The function $m_{t}$ is continuous on $\mathbb{R}\setminus\{0\}$, and $\rho_t(E):=\pi^{-1}\operatorname{Im}m_{t}(E)$ is a continuous probability density function on $\mathbb{R}_{+}$. Moreover, $m_{t}$ is the Stieltjes transform of $\rho_t$. Finally, $m_{t}(E)$ is a solution to \eqref{eq_def_LSD1_gdm} for $z=E$.
        \item [(iii)] For all $E\in\mathbb{R}\setminus\{0\}$, $\lim_{\eta\downarrow0}\zeta_t(E+\mathrm{i}\eta)$ exists, which will be denoted as $\zeta_t(E)$. Moreover, $\operatorname{Im}\zeta_t(z)>0$, if $z\in\mathbb{C}_{+}$.
        \item [(iv)] $\operatorname{Re}(1+\phi tm_{t}(z))>0$ for all $z\in\mathbb{C}_{+}$ and $|m_{t}(z)|\le (\phi t|z|)^{-1/2}$.
    \end{itemize}
\end{lemma}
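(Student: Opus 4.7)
The plan is to first reduce \eqref{eq_def_LSD1_gdm} to a scalar fixed-point equation through the auxiliary quantity $b_t(z)=1+\phi tm_t(z)$; as already noted after \eqref{eq_def_LSD1_gdm}, the self-consistent equation rewrites as
\[
b=1+\frac{t\phi}{n}\sum_{i=1}^n\frac{1}{b^{-1}\lambda_i(R(\widetilde{H}))-bz+t(1-\phi)}=:\Phi_z(b),
\]
and $m_t(z)=(b-1)/(\phi t)$. For part (i), I fix $z\in\mathbb{C}_+$ and verify that $\Phi_z$ is analytic on, and preserves, a suitable closed convex subset of $\{b\in\mathbb{C}:\operatorname{Re} b>0\}$ compatible with the signs $\operatorname{Im} m_t>0$ and $\operatorname{Im}(zm_t)>0$. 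This turns $\Phi_z$ into a self-map of a topological disc (in the hyperbolic metric on $\mathbb{C}_+$), and a Denjoy–Wolff / Schauder argument delivers a fixed point. The two imaginary-part inequalities are then checked by taking $\operatorname{Im}$ in \eqref{eq_def_LSD1_gdm} and in the identity $\zeta_t=zb_t^2-tb_t(1-\phi)$, where every summand contributes with the correct sign once $\operatorname{Im} z>0$.

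For uniqueness, I would subtract the equations satisfied by two putative solutions $b^{(1)},b^{(2)}$ on $\mathbb{C}_+$. After clearing the denominators one arrives at a factorisation of the form $(b^{(1)}-b^{(2)})\cdot\Theta_z(b^{(1)},b^{(2)})=0$, and the key point is that $\Theta_z$ is a strict contraction factor on the relevant region: its modulus is strictly less than one whenever $\operatorname{Im} b^{(k)}>0$, a property that can be read off by expressing $\Theta_z$ as an integral of squared moduli of Cauchy kernels against the spectral measure of $R(\widetilde{H})$. This forces $b^{(1)}=b^{(2)}$, hence $m_t^{(1)}=m_t^{(2)}$.

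For parts (ii) and (iii), I use the uniform bound $|m_t(z)|\le(\phi t|z|)^{-1/2}$ of (iv) together with the fact that $m_t$ is a Herglotz function in $\mathbb{C}_+$. Normal-family/Helly arguments produce subsequential limits as $\eta\downarrow 0$, and uniqueness of the solution of the fixed-point equation (extended to boundary $z$ by continuity) upgrades subsequential limits to full limits, giving $m_t(E)$ and its continuity on $\mathbb{R}\setminus\{0\}$. Stieltjes inversion then identifies $\rho_t(E)=\pi^{-1}\operatorname{Im} m_t(E)$ as a continuous density, and the total-mass identity $\int\rho_t=1$ follows from the asymptotic behaviour $m_t(z)\sim -1/z$ as $|z|\to\infty$, which one reads off directly from \eqref{eq_def_LSD1_gdm}. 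The boundary existence of $\zeta_t(E)$ and the positivity $\operatorname{Im}\zeta_t(z)>0$ on $\mathbb{C}_+$ both follow from the explicit expression \eqref{eq_def_zetat} once the analogous facts for $m_t$ are in place.

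The main obstacle I expect is part (iv), which is needed as an a priori bound to close the arguments in (ii)–(iii). For $\operatorname{Re} b_t(z)>0$, I would exploit the reformulation \eqref{eq_def_LSD2_gdm}, namely $b_t^{-1}=1-\phi t\,m_0(\zeta_t)$: since $m_0$ maps $\{\operatorname{Im}\zeta>0\}$ into itself and $\operatorname{Im}\zeta_t>0$ on $\mathbb{C}_+$, one verifies that $1-\phi t\,m_0(\zeta_t)$ lies in an open half-plane whose closure does not cross the negative real axis, and conclude $\operatorname{Re} b_t>0$. For the quantitative bound $|m_t(z)|\le(\phi t|z|)^{-1/2}$, my plan is to take $\operatorname{Im}$ of $\zeta_t=zb_t^2-tb_t(1-\phi)$, combine it with the identity $\operatorname{Im}\zeta_t\cdot\operatorname{Im} m_0(\zeta_t)\ge 0$ from \eqref{eq_def_LSD2_gdm}, and balance real versus imaginary parts to extract the inequality; the delicate step is squeezing the sharp constant $(\phi t)^{-1/2}$ rather than a generic $C(t,z)$, which requires careful tracking of cross terms but is nevertheless a finite algebraic manipulation once the positivity of $\operatorname{Re} b_t$ is known.
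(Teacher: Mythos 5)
The paper does not give a proof of this lemma at all: it cites it verbatim, saying just before the statement ``\cite[Lemma~2]{ding2022edge} gives the existence and uniqueness of $m_t(z)$. For the convenience of readers, we repeat it here.'' Your proposal is thus addressing a problem the paper delegates to the free-probability/rectangular-free-convolution literature, which is a genuinely different choice. Your overall strategy — reduce to a scalar fixed-point equation for $b_t$, get existence via a self-map argument on a hyperbolic domain, get uniqueness via a contraction factor, pass to the boundary by Herglotz and normal-family arguments, and establish (iv) from \eqref{eq_def_LSD2_gdm} and \eqref{eq_def_zetat} — is the right shape and matches how the cited references ultimately proceed.

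There is, however, a concrete gap in your uniqueness step. Subtracting the two fixed-point equations and clearing denominators does give a factorisation $(b^{(1)}-b^{(2)})\cdot\Theta_z(b^{(1)},b^{(2)})=0$, but $\Theta_z$ is \emph{not} ``an integral of squared moduli of Cauchy kernels.'' A direct computation from
\[
b=1+\frac{t\phi}{n}\sum_{i=1}^n\frac{1}{b^{-1}\lambda_i-bz+t(1-\phi)}
\]
produces
\[
\Theta_z=\frac{t\phi}{n}\sum_{i=1}^n
\frac{\dfrac{\lambda_i}{b^{(1)}b^{(2)}}+z}
{\bigl[(b^{(1)})^{-1}\lambda_i-b^{(1)}z+t(1-\phi)\bigr]\bigl[(b^{(2)})^{-1}\lambda_i-b^{(2)}z+t(1-\phi)\bigr]},
\]
whose numerator is the cross term $\lambda_i/(b^{(1)}b^{(2)})+z$, not $|\cdot|^2$ of anything; when $b^{(1)}\neq b^{(2)}$ the two denominators are different, so the expression is a mixed product of two Cauchy kernels, not a squared modulus. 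Getting $|\Theta_z|<1$ therefore needs either (a) an a priori identity obtained by taking $\operatorname{Im}$ of the fixed-point equation for a \emph{single} solution, combined with Cauchy--Schwarz to control the mixed product, or (b) the subordination/Earle--Hamilton strict-contraction argument used in the rectangular free convolution literature. As stated, your ``read off by expressing $\Theta_z$ as an integral of squared moduli'' would not compile into a proof. The rest of the outline (normal families for (ii)--(iii), $\operatorname{Re} b_t>0$ from \eqref{eq_def_LSD2_gdm}, and the $|m_t|\le(\phi t|z|)^{-1/2}$ bound from an imaginary-part balance) is reasonable but left at the level of a sketch; the paper avoids all of this by citation.
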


\subsubsection{$\eta_*$-regularity for $\widetilde{H}$}\label{sec_intermediatelocallaw}
To illustrate the intermediate local law for $R(\widetilde{H})$ (cf. \eqref{eq_def_RL&RH}), we first require the following definition, called $\eta_*$-regularity, which is typical in Dyson Brownian Motion; see, e.g., \cite{ding2022tracy}.
\begin{definition}[$\eta_*$-regularity]\label{def_etaregular}
	Let $\eta_*$ be a parameter satisfying $\eta_*:=n^{-\tau_*}$ for some constant $0<\tau_* \le 2/3$. For $p \times n$ matrix $V$, we say $R(V):=VV^*$ is $\eta_*$-regular if there exist constants $c_V>0$ and $C_V>1$ such that the following properties hold: \\
	(i) For $z=E+\mathrm{i} \eta $ with $\lambda_{-}(R(V))\le E \le \lambda_{+}(R(V))$ and $\eta_*+\sqrt{\eta_*\kappa_0} \le \eta \le 10$,
	\begin{equation*}
		\frac{1}{C_V} \sqrt{\kappa_0+\eta} \le \operatorname{Im} m_V(E+\mathrm{i} \eta) \le C_V \sqrt{\kappa_0+\eta},
	\end{equation*}
	where $\kappa_0:=\min\{|\lambda_{-}(R(V))-E|,|\lambda_{+}(R(V))-E|\}$.

Moreover, for $z=E+\mathrm{i} \eta$ with $ E \in (-\infty,\lambda_{-}(R(V))]\cup [\lambda_+(R(V)),\infty)$ and $\eta_* \le \eta \le 10$,
	\begin{equation*}
		\frac{1}{C_V} \frac{\eta}{\sqrt{\kappa_0+\eta}} \le \operatorname{Im} m_V(E+\mathrm{i} \eta) \le C_V \frac{\eta}{\sqrt{\kappa_0+\eta}} .
	\end{equation*}
	(ii)  $c_V \le \lambda_{-}(R(V))\le \lambda_{+}(R(V)) \le C_V$.\\
	(iii)  $\|R(V)\| \le n^{C_V}$.
\end{definition}

Recalling the definition of control parameters, the following proposition states the $\eta_{*}$-regularity of $R(\widetilde{H})$, whose proof is deferred to Appendix \ref{prf_prop_etaregular_corH}.
\begin{proposition}\label{prop_etaregular_corH}
	Suppose that $\Psi$ and $\Pi$ are well configured. Let  $\eta_{*}=n^{-\epsilon_{\alpha}/6}$. Then $R(\widetilde{H})$ is $\eta_{*}$-regular in the sense of Definition \ref{def_etaregular} with respect to $z\in \mathbf{D}$ with high probability.
\end{proposition}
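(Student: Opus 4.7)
My plan is to verify the three properties of Definition~\ref{def_etaregular} for $R(\widetilde{H})$ via a cascade: first establish $\eta_{*}$-regularity for the sample covariance $S(M):=n^{-1}MM^{*}$ whose entries are bounded by $n^{1/2-\epsilon_h}$, then transfer it to the correlation-type matrix $R(M):=M(\operatorname{diag}S)^{-1}M^{*}$ by comparing $(\operatorname{diag}S)^{-1}$ with $n^{-1}I$, and finally pass to $R(\widetilde{H})$ by exploiting that $H$ has rank $\mathrm{o}(n)$ on $\Omega_\Psi$. Throughout, I condition on $\Omega_\Psi\cap\Omega_\Pi$ together with the event $\Omega$ of Lemma~\ref{lem_priorest_x}, each of which carries probability $1-\mathrm{O}(n^{-D})$ by Lemma~\ref{lem_wellconfigured}, so that the high-probability claim of the proposition reduces to a deterministic-looking statement.

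For the first step I invoke the local law for heavy-tailed sample covariance matrices cited as Lemma~\ref{lem_locallaw_corM}, which yields bulk and edge $\eta_{*}$-regularity for $S(M)$ on $\mathbf{D}$. To pass to $R(M)$, I use the Schatten-$1$ estimate $\operatorname{tr}|(\operatorname{diag}S)^{-1}-n^{-1}I|\lesssim n^{-(\alpha-2)/(6\alpha)}$ from \eqref{eq_tr_SS} with $s=1$. Combining this with the resolvent identity
\[
(R(M)-zI)^{-1}-(S(M)-zI)^{-1}=(R(M)-zI)^{-1}M\bigl[n^{-1}I-(\operatorname{diag}S)^{-1}\bigr]M^{*}(S(M)-zI)^{-1},
\]
and the local bound $\|(S(M)-zI)^{-1}\|\lesssim\eta^{-1}$, I obtain $|m_{R(M)}(z)-m_{S(M)}(z)|=\mathrm{o}(1)$ uniformly on $\mathbf{D}$, which transfers the bulk square-root behavior of $S(M)$ to $R(M)$. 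For the edge, I apply an eigenvalue sticking argument: well-configuredness of $\Pi$ implies that at most $n^{\beta}$ columns satisfy $|n^{-1}\rho_j^2-1|>n^{-\epsilon_s}$; zeroing out these columns to form $M^{(\mathrm{C}n)}$, Cauchy interlacing controls $|\lambda_1(R(M))-\lambda_1(R(M^{(\mathrm{C}n)}))|$, while on the remaining columns $n^{-1}(\operatorname{diag}S)_{jj}=1+\mathrm{O}(n^{-\epsilon_s})$ forces $\lambda_1(R(M^{(\mathrm{C}n)}))$ to stick to $\lambda_1(S(M^{(\mathrm{C}n)}))$, yielding condition (ii) for $R(M)$.

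The third step extends to $R(\widetilde{H})=(M+H)(\operatorname{diag}S)^{-1}(M+H)^{*}$. On $\Omega_\Psi$, $H$ has at most $n^{1-\epsilon_y}$ nonzero entries, so $\operatorname{rank}(R(\widetilde{H})-R(M))\le 2n^{1-\epsilon_y}\ll n\eta_{*}$, since $\eta_{*}=n^{-\epsilon_\alpha/6}$ and $\epsilon_\alpha\le\epsilon_y$. Cauchy interlacing then preserves the upper and lower square-root bounds on $\operatorname{Im} m_{R(\widetilde{H})}$ throughout the bulk portion of $\mathbf{D}$, and the crude bound $\|R(\widetilde{H})\|\le n^{C_V}$ required by Definition~\ref{def_etaregular}(iii) follows from $\|H\|_F^2\lesssim n^{2+2/\alpha}$ on $\Omega_\Psi$ together with the control on $(\operatorname{diag}S)^{-1}$. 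The main obstacle is the right edge: the heavy-tailed block $H$ can produce outlier eigenvalues above $\lambda_{+}(R(M))$, and optimal edge universality is unavailable in this regime \cite{auffinger2009poisson}. I circumvent this by exploiting that only square-root behavior at the relaxed scale $\eta_{*}=n^{-\epsilon_\alpha/6}$ is required and that the contours $\gamma_{1}\vee\gamma_{2}$ remain at constant distance from the right edge of $\mu_{mp,\phi}$; this gives enough slack to absorb a polynomially small shift of the effective right edge. The number of outliers is bounded by $\operatorname{rank}(H)=\mathrm{o}(n\eta_{*})$ and their location is controlled via Weyl's inequality applied to $H(\operatorname{diag}S)^{-1}H^{*}$, so choosing the constants $C_1,C_2,C_3,C_4$ in the definition of $\mathbf{D}$ appropriately absorbs the shift without violating the required square-root behavior.
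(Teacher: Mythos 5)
Your proposal follows essentially the same cascade as the paper: start from the local law for $S(M)$, transfer to $R(M)$ by comparing $(\operatorname{diag}S)^{-1}$ with $n^{-1}I$ via the Schatten-$1$ estimate of \eqref{eq_tr_SS}, handle the edge by eigenvalue sticking after zeroing the $\mathrm{O}(n^{\beta})$ bad columns, and pass to $R(\widetilde{H})$ by the rank inequality on $\Omega_\Psi$; the deterministic comparison $m^{(t)}(z)\sim\sqrt{\kappa_0+\eta}$ then delivers condition (i). One place where your justification is too loose to stand on its own: for condition (ii), controlling the outlier locations needs an $\mathrm{O}(1)$ operator-norm bound on $H(\operatorname{diag}S)^{-1}H^{*}$, and the Frobenius bound $\|H\|_F^2\lesssim n^{2+2/\alpha}$ you invoke for (iii) only yields $\|H(\operatorname{diag}S)^{-1}H^{*}\|\lesssim n^{2/\alpha}$, which grows polynomially and would not give $\lambda_+(R(\widetilde{H}))\le C_V$; the paper obtains the $\mathrm{O}(1)$ bound separately via the high-moment trace estimate $\mathbb{E}_\Psi\operatorname{tr}(R(H)^q)$ in the proof of Lemma \ref{lem_pre_estnorm}, and that ingredient must be supplied for your Weyl-inequality step to close.
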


The following lemma is a direct consequence of $\eta_{*}$-regularity of $R(\widetilde{H})$ (cf. Lemma 3.6 of \cite{ding2022edge}).
\begin{lemma}\label{lem_etaregularityconsequence}Suppose $R(\widetilde{H})$ is $\eta_*$-regular in the sense of Definition \ref{def_etaregular}. Let $\mu^{\widetilde{H}}_{n}$ be the measure associated with $mR_{\widetilde{H}}(z)$. For any fixed integer $a \ge 2$, and any $z=E+\mathrm{i}\eta\in \mathbf{D}\cup\mathbf{D}_3$, we have
	\begin{equation*}
		\int \frac{\mathrm{d}\mu_{n}^{\widetilde{H}}(x)}{|x-E-\mathrm{i}\eta|^a}\sim \frac{\sqrt{\kappa+\eta}}{\eta^{a-1}}\mathbbm{1}(z\in \mathbf{D}_1)+\frac{1}{(\kappa+\eta)^{a-3/2}}\mathbbm{1}(z\in \mathbf{D}_2\cup\mathbf{D}_3),
	\end{equation*}
	where $\kappa=\min\{|E-\lambda_+^{\widetilde{H}}|,|E-\lambda_-^{\widetilde{H}}|\}$ with $\lambda_+^{\widetilde{H}}=\lambda_1(R(\widetilde{H}))$ and $\lambda_-^{\widetilde{H}}=\lambda_n(R(\widetilde{H}))$.
\end{lemma}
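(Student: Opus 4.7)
The plan is to reduce the general-$a$ integral to the case $a = 2$, which is directly governed by the identity
\begin{equation*}
\int \frac{\mathrm{d}\mu_n^{\widetilde{H}}(x)}{|x-z|^2} = \frac{\operatorname{Im} mR_{\widetilde{H}}(z)}{\eta},
\end{equation*}
and then to import the quantitative bounds on $\operatorname{Im} mR_{\widetilde{H}}(z)$ supplied by the $\eta_*$-regularity of $R(\widetilde{H})$ in Proposition \ref{prop_etaregular_corH}. For $a > 2$, I will use the pointwise distance estimate $|x-z| \ge \eta$ universally, or the sharper bound $|x-z| \ge \sqrt{\kappa^2+\eta^2} \gtrsim \kappa + \eta$ for $x \in \operatorname{supp}(\mu_n^{\widetilde{H}})$ when $E$ sits a constant distance from the support. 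This reduces the general bound to an $(a-2)$-dependent prefactor times $\operatorname{Im} mR_{\widetilde{H}}(z)/\eta$.

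On $\mathbf{D}_1$, where $\eta \in [1/10, C_3]$ and $E \in [C_1, C_2]$, the $\eta_*$-regularity applies since $\eta_* = n^{-\epsilon_\alpha/6} \ll \eta$. The bulk branch gives $\operatorname{Im} mR_{\widetilde{H}}(z) \sim \sqrt{\kappa+\eta}$ when $E \in [\lambda_-^{\widetilde{H}}, \lambda_+^{\widetilde{H}}]$, while in the narrow sub-region $E \in [C_1, \lambda_-^{\widetilde{H}}] \cup [\lambda_+^{\widetilde{H}}, C_2]$ the outside branch gives $\operatorname{Im} mR_{\widetilde{H}}(z) \sim \eta/\sqrt{\kappa+\eta}$; these two expressions are comparable because $\eta \sim 1$ forces $\kappa + \eta \sim 1$, so both equal $\sqrt{\kappa+\eta}$ up to constants. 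Combining with $|x-z| \ge \eta$ yields the claimed upper bound $\sqrt{\kappa+\eta}/\eta^{a-1}$. On $\mathbf{D}_2 \cup \mathbf{D}_3$, where $E$ lies outside $[C_1, C_2]$ and hence is separated by a constant from $[\lambda_-^{\widetilde{H}}, \lambda_+^{\widetilde{H}}] \subseteq [c_V, C_V] \subset [C_1, C_2]$, one has $\kappa \sim 1$ and $|x-z| \gtrsim \kappa \sim 1$ uniformly on the support, so $\int |x-z|^{-a} \, \mathrm{d}\mu_n^{\widetilde{H}} \lesssim 1 \sim (\kappa+\eta)^{-(a-3/2)}$. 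Note that on $\mathbf{D}_3$ the $\eta_*$-regularity may fail because $\eta$ can dip below $\eta_*$, but the direct distance estimate does not require it.

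The matching lower bounds are immediate from part (ii) of Definition \ref{def_etaregular}: since $\operatorname{supp}(\mu_n^{\widetilde{H}}) \subseteq [c_V, C_V]$ and $|z| \lesssim 1$ on $\mathbf{D} \cup \mathbf{D}_3$, one has $|x-z| \lesssim 1$ uniformly on the support, giving $\int |x-z|^{-a} \, \mathrm{d}\mu_n^{\widetilde{H}} \gtrsim 1$; this matches both right-hand sides, which are themselves of constant order on their respective domains (because $\eta \sim 1$ on $\mathbf{D}_1$ and $\kappa \sim 1$ on $\mathbf{D}_2 \cup \mathbf{D}_3$). The main (minor) subtlety is the transition within $\mathbf{D}_1$ where $E$ straddles the spectral edge of $R(\widetilde{H})$, where both branches of $\eta_*$-regularity must be invoked and shown to agree up to universal constants; this is essentially automatic because $\eta \gtrsim 1$ forces $\kappa + \eta \sim 1$, equalizing the bulk and edge behaviors. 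All genuinely difficult content is already packaged into Proposition \ref{prop_etaregular_corH}; what remains is a routine Stieltjes-transform manipulation.
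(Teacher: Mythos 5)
Your proof is correct for the lemma exactly as stated: on $\mathbf{D}_1$ the constraint $\eta \in [1/10, C_3]$ forces $\kappa + \eta \sim 1$ and $\eta \sim 1$, while on $\mathbf{D}_2 \cup \mathbf{D}_3$ (granting that $\lambda_\pm^{\widetilde{H}}$ sit at constant distance inside $(C_1, C_2)$, which the construction is designed to ensure) one has $\kappa \sim 1$. So both sides of the asserted $\sim$-relation collapse to constant order, and your crude distance bounds $|x-z| \ge \eta$, $|x-z| \gtrsim \kappa$, together with the bounded support from part (ii) of $\eta_*$-regularity, do suffice.

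However, the paper's own proof (the citation of Lemma 3.6 of \cite{ding2022edge}) is importing a sharper, more general statement, and this is not merely a stylistic difference: look at how the lemma is invoked later. In Lemma~\ref{lem_preliminaryestimate} the argument is the \emph{real} point $\zeta_{+,t} \in (\lambda_+^{\widetilde{H}}, C_2)$ with $\eta = 0$ and $\kappa = \xi_+(t) \sim t^2 = \mathrm{o}(1)$, and the conclusion $m_{n,0}'(\zeta_{+,t}) \sim \kappa^{-1/2}$ is read off from the $a=2$ case of the present lemma; Lemmas~\ref{lem_prior_locallaw1} and \ref{lem_prior_locallaw2} similarly use the estimate at $\zeta_t$ with both $\kappa$ and $\mathrm{Im}\,\zeta_t$ small. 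In that regime your two reductions fail: the pointwise bound $|x-z| \ge \eta$ degenerates at $\eta = 0$, and $|x-z| \gtrsim \kappa$ only gives $\int |x-z|^{-a}\,\mathrm{d}\mu_n^{\widetilde{H}} \lesssim \kappa^{-a}$, which is strictly weaker than the claimed $\kappa^{3/2-a}$ when $\kappa \ll 1$. Recovering the exponent $3/2 - a$ requires actually using the square-root edge density supplied by part (i) of Definition~\ref{def_etaregular}: one splits the integral near the edge and performs $\int_0^\infty \sqrt{u}\,((\kappa + u)^2 + \eta^2)^{-a/2}\,\mathrm{d}u \sim (\kappa+\eta)^{3/2-a}$. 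This is the nontrivial content that \cite{ding2022edge} packages; the paper's statement restricted to $\mathbf{D} \cup \mathbf{D}_3$ is a shadow of that stronger estimate. So your proof is not wrong, but the assertion that the remaining step is a routine manipulation once $\eta_*$-regularity is in hand undersells the edge-integral computation, which is precisely what is needed in the lemma's downstream uses.
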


\subsubsection{Local laws for GDM}\label{sec_locallaw_GDM}
With the $\eta_*$-regularity of $R(\widetilde{H})$ with high probability, we now turn to our GDM $R(Y_t)$. Recalling the definition of $\Psi$ in Definition \ref{def_psi&pi} and the notation above, we denote the index sets by
\begin{equation}\label{eq_indexPsi}
	\mathrm{I}_r:=\mathrm{I}_r(\Psi):=\{i\in[n]:\sum_{j=1}^p\psi_{ij}\ge1\}, \quad \mathrm{I}_c:=\mathrm{I}_c(\Psi):=\{j\in[p]:\sum_{i=1}^n\psi_{ij}\ge1\},
\end{equation}
which can be interpreted as the rows or columns containing at least one $\psi_{ij}=1$. In addition, we set $\mathrm{T}_r:=[n]\setminus\mathrm{I}_r$ and $\mathrm{T}_c:=[p]\setminus\mathrm{I}_c$. For notational simplicity, for any $z=E+\mathrm{i}\eta\in \mathbf{D}$, we recycle the notion $\kappa=\min\{|E-\lambda_-|,|E-\lambda_+|\}$ where $\lambda_-$ and $\lambda_+$ are respectively defined as the edges of MP law in Section \ref{sec_firstorderconvergence}. In the sequel, we define the control parameter
\begin{equation*}
\Psi_0(z)=\sqrt{\frac{\mathrm{Im} m_t(z)}{n\eta}}+\frac{1}{n\eta}.
\end{equation*}
With the notation above, we now present the averaged and entrywise local laws for our GDM, which will serve as the starting point for Green function comparison.
\begin{theorem}[Averaged local law for GDM]\label{thm_locallaw_gdm_average}
	Suppose that $\Psi$ and $\Pi$ are well configured. We have for $z\in\mathbf{D}_1$,
	\begin{equation}\label{eq_locallaw_gdm_average_in}
		|m_{n,t}(z)-m_t(z)|\prec\frac{1}{n\eta};
	\end{equation}
	and for $z\in\mathbf{D}_2$,
	\begin{equation}\label{eq_locallaw_gdm_average_out}
		|m_{n,t}(z)-m_t(z)|\prec\frac{1}{n(\kappa+\eta)}+\frac{1}{(n\eta)^2\sqrt{\kappa+\eta}}.
	\end{equation}
\end{theorem}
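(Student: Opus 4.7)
The plan is to condition on the realization of $\widetilde{H}(\operatorname{diag}S)^{-1/2}$, which by Proposition \ref{prop_etaregular_corH} is $\eta_*$-regular with high probability, and then exploit the Gaussian structure of $\sqrt{t}W$ to derive an approximate self-consistent equation matching \eqref{eq_def_LSD1_gdm}. This follows the strategy of \cite{ding2022edge} for deformed sample-covariance ensembles, applied here to the conditional model whose deterministic deformation is $R(\widetilde{H})$ and whose Gaussian smoothing scale satisfies $\sqrt{t}\gg\eta_*^{1/2}$.

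First, I would employ the Schur complement formula together with Gaussian integration by parts (Stein's identity) on the rows of $\sqrt{t}W$. Writing $Y_t=\sqrt{t}W+B$ with $B:=\widetilde{H}(\operatorname{diag}S)^{-1/2}$ deterministic after conditioning, each diagonal entry $[\mathcal{G}R_{Y_t}(z)]_{ii}$ reduces via Schur's formula to an expression involving the $i$-th minor Green function, the deterministic row of $B$, and a Gaussian row $\sqrt{t}w_i^*$. Applying Gaussian integration by parts to the resulting quadratic forms in $w_i$, and using Hanson-Wright concentration to extract the deterministic skeleton, yields after averaging over $i$ an empirical self-consistent equation
\begin{equation*}
  m_{n,t}(z)=\frac{1}{n}\sum_{j=1}^n\frac{1+\phi t\,m_{n,t}(z)}{\lambda_j(R(\widetilde{H}))-\zeta_{n,t}(z)}+\mathcal{E}_n(z),
\end{equation*}
where $\zeta_{n,t}(z)$ is the empirical analog of the subordination function \eqref{eq_def_zetat} and $\mathcal{E}_n(z)$ is a stochastic fluctuation. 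The Ward identity $\sum_{k}|[\mathcal{G}R_{Y_t}(z)]_{jk}|^2=\eta^{-1}\operatorname{Im}[\mathcal{G}R_{Y_t}(z)]_{jj}$ combined with Hanson-Wright bounds the fluctuation as $|\mathcal{E}_n(z)|\prec\Psi_0(z)^2$.

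Second, I would run a bootstrap stability analysis of \eqref{eq_def_LSD1_gdm} around its deterministic solution $m_t(z)$. In the bulk regime $\mathbf{D}_1$ the linearization of the right-hand side has a stability constant of order one, so the perturbation bound transfers without loss, giving $|m_{n,t}(z)-m_t(z)|\prec\Psi_0(z)^2\lesssim 1/(n\eta)$, which is \eqref{eq_locallaw_gdm_average_in}. At the edge ($\mathbf{D}_2$) the linear stability degenerates, and I would replace it by a quadratic stability: by Lemma \ref{lem_etaregularityconsequence}, the derivative of the right-hand side with respect to $m_{n,t}$ has magnitude comparable to $\sqrt{\kappa+\eta}$. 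A continuity argument along decreasing $\eta$, anchored at large $\eta$ where $|m_{n,t}(z)-m_t(z)|\lesssim 1$ is immediate from Lemma \ref{lem_pre_estnorm}, then upgrades the perturbed equation into the edge bound \eqref{eq_locallaw_gdm_average_out}, whose two summands correspond respectively to the linear and quadratic contributions solved from the stability inequality.

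The main obstacle will be the quadratic stability near the edge of $R(Y_t)$, where standard linearization fails and the two summands in \eqref{eq_locallaw_gdm_average_out} must be tracked simultaneously. The $\eta_*$-regularity of $R(\widetilde{H})$ supplies the square-root behavior of $\operatorname{Im}mR_{\widetilde{H}}$, which through the subordination relation \eqref{eq_def_LSD2_gdm} transfers to a matching square-root behavior of $\operatorname{Im}m_t$ near the edge; this is exactly what closes the quadratic stability estimate. A secondary difficulty is controlling $\zeta_{n,t}-\zeta_t$: this requires a parallel self-consistent bound for the random subordination, again via resolvent expansion and Hanson-Wright, and relies on the choice $t\gg\sqrt{\eta_*}$ guaranteed by $\eta_*=n^{-\epsilon_\alpha/6}$ and $t\gtrsim n^{-2\epsilon_l}$ (with $\epsilon_l\ll\epsilon_\alpha$), which ensures that the Gaussian component smooths the spectrum of $R(\widetilde{H})$ down to the scales relevant to the contours $\gamma_1,\gamma_2$.
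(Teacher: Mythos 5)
Your proposal follows essentially the same architecture as the paper's proof: conditioning on the heavy-tailed part, Schur complement reductions with Gaussian integration by parts on the rows of $\sqrt{t}W$, a Taylor expansion of $\Phi_t$ around $\zeta_t$ to handle the loss of linear stability near the edge, and a continuity/bootstrap in $\eta$ anchored at $\eta\sim 1$. The quadratic stability idea and its link to the square-root behavior of $\operatorname{Im}m_t$ furnished by $\eta_*$-regularity are exactly the ones the paper uses via Lemmas \ref{lem_prior_locallaw1}--\ref{lem_prior_locallaw3} and Lemma \ref{lem_stability_Phi(t)}.

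That said, there is a genuine gap in your fluctuation bound. You assert that the Ward identity combined with Hanson--Wright yields $|\mathcal{E}_n(z)|\prec\Psi_0(z)^2$, but Hanson--Wright applied to a single row $w_i$ only gives $|Z_{[i]}|\prec t\Psi_0+t^{1/2}n^{-1/2}$ (Lemma \ref{lem_basic_locallaw_preliminarybound_Z}), i.e.\ individual fluctuations of order $\Psi_0$, not $\Psi_0^2$. The improvement when averaging $n^{-1}\sum_i\Pi_{[ii]}Z_{[i]}\Pi_{[ii]}$ from $\Psi_0$ down to $\Psi_0^2$ is not a consequence of concentration plus the Ward identity; it is the content of the fluctuation averaging lemma (Lemma \ref{lem_app_fluctuationaveraging}), which requires a separate high-moment expansion exploiting the near-cancellation of the conditionally centered quantities $Z_{[i]}$. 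The paper's proof is therefore a two-step bootstrap: a weak averaged law $\Lambda_t\prec\Psi_0$ (Lemma \ref{lem_app_weakaveragelocallaw}) enters first, and only then does fluctuation averaging deliver the optimal rate. Without this intermediate step, your stability argument would only close with the non-optimal $\Psi_0\sim(n\eta)^{-1/2}$ on the right-hand side. A secondary imprecision: near the edge the stability constant is $|\Phi_t'(\zeta_t)|\sim\min\{1,\sqrt{\kappa+\eta}/t\}$, not $\sqrt{\kappa+\eta}$; the $t$-dependence is what forces the simultaneous tracking of the $\Phi_t''$ term and produces the specific shape of \eqref{eq_locallaw_gdm_average_out}.
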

\begin{proof}
	The proof is deferred to Appendix \ref{app_sec_prf_locallaw_gdm_average}.
\end{proof}

\begin{theorem}[Entrywise local law for GDM]\label{thm_locallaw_gdm_entrywise}
    Suppose that $\Psi$ and $\Pi$ are well configured. For any fixed $z\in {\mathbf{D}}$, the following estimates hold under the probability measure $\mathbb{P}_{\Psi}$.
    \begin{equation*}
        \begin{split}
        &|\mathcal{G}R_{Y_t,ij}(z)-\delta_{ij}b_t(z)m^{(t)}(\zeta_t)|\prec \left(t^{-3}\Psi_0(z)+\frac{n^{-\epsilon_{\beta}}}{\kappa^2+\eta^2}\right)\mathbbm{1}_{i\in \mathrm{T}_r~\text{or}~j\in \mathrm{T}_r}+\frac{1-\mathbbm{1}_{i\in \mathrm{T}_r~\text{or}~j\in \mathrm{T}_r}}{\kappa+\eta},  \\
            &|\mathcal{G}R_{Y_t^*,uv}(z)-\delta_{uv}(1+t\underline{m}_t(z))\underline{m}^{(t)}(\zeta_t)|\prec  \left(t^{-3}\Psi_0(z)+\frac{n^{-\epsilon_{\beta}}}{\kappa^2+\eta^2}\right)\mathbbm{1}_{u\in \mathrm{T}_c~\text{or}~v\in \mathrm{T}_c}+\frac{1-\mathbbm{1}_{u\in \mathrm{T}_c~\text{or}~v\in \mathrm{T}_c}}{\kappa+\eta},\\
          &       |[\mathcal{G}R_{Y_t}(z)Y_t]_{iu}|\prec \left(t^{-3}\Psi_0(z)+\frac{n^{-\epsilon_{h}}}{\kappa+\eta}\right)\mathbbm{1}_{i\in \mathrm{T}_r~\text{or}~u\in \mathrm{T}_c}+\frac{1-\mathbbm{1}_{i\in \mathrm{T}_r~\text{or}~u\in \mathrm{T}_c}}{\kappa+\eta}.
        \end{split}
    \end{equation*}
\end{theorem}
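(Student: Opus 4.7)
Conditional on a well-configured realization of $\Psi,\Pi$ and on the signal $D := \widetilde{H}(\operatorname{diag}(S))^{-1/2}$, the matrix $Y_t = \sqrt{t}W + D$ is a Gaussian deformation of a matrix whose bounded spectrum and $\eta_*$-regularity are supplied by Proposition \ref{prop_etaregular_corH}. I would adapt the isotropic/entrywise local law argument for information-plus-noise models developed in \cite{ding2022edge,capitaine2018spectrum}: derive a closed self-consistent equation for the entries of $\mathcal{G}R_{Y_t}$ via Gaussian integration by parts on $W$, identify the leading profile from the subordination relations \eqref{eq_def_zetat}--\eqref{eq_def_LSD2_gdm}, and convert the averaged law of Theorem \ref{thm_locallaw_gdm_average} together with the $\eta_*$-regularity into stability of that equation on the domain $\mathbf{D}$.

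Concretely, I would start from the identity $\sum_u Y_{t,iu}[Y_t^*\mathcal{G}R_{Y_t}(z)]_{uj} = \delta_{ij} + z\mathcal{G}R_{Y_t,ij}(z)$ and apply Lemma \ref{lemma_cumulantexpansion} to the Gaussian factors $\sqrt{t}W_{iu}$. Each Gaussian contraction pairs two resolvent entries at cost $(n\eta)^{-1/2}$ via Ward's identity $\sum_k |\mathcal{G}_{ik}|^2 = \operatorname{Im}\mathcal{G}_{ii}/\eta$, while producing a factor of $t^{-1}$; iterating to three orders yields the $t^{-3}\Psi_0(z)$ prefactor. The quantitative lower bound on $|\zeta_t - \lambda_i(R(\widetilde{H}))|$ from Lemma \ref{lem_etaregularityconsequence} provides the needed stability of the self-consistent equation, and matching it against \eqref{eq_def_LSD2_gdm} identifies the leading profiles $b_t(z)m^{(t)}(\zeta_t)$ and $(1+t\underline{m}_t(z))\underline{m}^{(t)}(\zeta_t)$.

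The next step is the dichotomy on $\mathrm{I}_r, \mathrm{T}_r$ (and $\mathrm{I}_c, \mathrm{T}_c$). If $i \in \mathrm{T}_r$ or $j \in \mathrm{T}_r$, the corresponding row of $Y_t$ is $\sqrt{t}W_{i\cdot}$ plus an $M$-perturbation whose entries are deterministically bounded by $n^{-\epsilon_h}$; the Gaussian isotropic bound yields the $t^{-3}\Psi_0(z)$ contribution, and a further Taylor expansion in the $M$-component produces the $n^{-\epsilon_\beta}/(\kappa^2+\eta^2)$ error, where $\epsilon_\beta = \min\{\epsilon_h,\epsilon_y/2\}$ combines the smallness $n^{-\epsilon_h}$ of individual $M$-entries with the cardinality control $|\mathrm{I}_r| \lesssim n^{1-\epsilon_y}$. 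When both $i,j \in \mathrm{I}_r$, individual-entry concentration is hopeless because the heavy $H$-part can produce $\Theta(1)$ entries; here I would invoke the crude bound
\begin{equation*}
|\mathcal{G}R_{Y_t,ij}(z)| \le \|\mathcal{G}R_{Y_t}(z)\| \le \frac{1}{\operatorname{dist}(z,\operatorname{spec}(R(Y_t)))} \lesssim \frac{1}{\kappa+\eta},
\end{equation*}
with the spectral inclusion $\operatorname{spec}(R(Y_t)) \subset [\lambda_- - o(1),\lambda_+ + o(1)]$ following from $\eta_*$-regularity and the averaged law. The $\mathcal{G}R_{Y_t^*}$ bounds are obtained symmetrically via Hermitization of $Y_t$, and the bound on $[\mathcal{G}R_{Y_t}(z)Y_t]_{iu}$ comes from writing $\mathcal{G}R_{Y_t}Y_t = \sqrt{t}\mathcal{G}R_{Y_t}W + \mathcal{G}R_{Y_t}D$ and applying the isotropic law to each piece.

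The principal obstacle I expect is the joint control of self-consistent-equation stability and Gaussian concentration in the presence of the column-level dependence introduced by $(\operatorname{diag}(S))^{-1/2}$: even after conditioning on $\Psi$, the $\rho_j^{-1}$ factors couple the columns of $D$ through $X$. I would exploit the sharp trace concentration $\operatorname{tr}|(\operatorname{diag}(S))^{-1} - n^{-1}I| \lesssim n^{-(\alpha-2)/(6\alpha)}$ from \eqref{eq_tr_SS} to treat $(\operatorname{diag}(S))^{-1/2}$ as effectively deterministic at the working scale, absorbing residual fluctuations into the $n^{-\epsilon_\beta}/(\kappa^2+\eta^2)$ error channel. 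Pinning down the exact exponents $\epsilon_\beta$ and the $t^{-3}$ factor is the delicate bookkeeping that makes the argument long, but all the ingredients --- $\eta_*$-regularity, averaged law, Gaussian cumulant expansion, and moment/tail bounds for self-normalized variables --- are already in place.
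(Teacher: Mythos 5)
Your framing of the Gaussian part is sound: conditional on the signal $D=\widetilde{H}(\operatorname{diag}(S))^{-1/2}$, Gaussian integration by parts (or, equivalently, the large deviation bounds for the rows of $\sqrt{t}W$ and the Schur formulas of Lemma \ref{lem_resolvent}) yields a self-consistent equation and ultimately an anisotropic comparison of $\mathcal{G}R_{Y_t}(z)$ with the resolvent of the signal. However, this only gets you to a statement of the form
\begin{equation*}
\big|\big(\mathcal{G}R_{Y_t}(z)-b_t(z)\,\mathcal{G}R_{\widetilde H}(\zeta_t(z))\big)_{ij}\big|\prec t^{-3}\Psi_0(z)+t^{-7/2}n^{-1/2},
\end{equation*}
which is the content of Theorem \ref{thm_difference_GYtGH}. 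To reach the target $\big|\mathcal{G}R_{Y_t,ij}(z)-\delta_{ij}\,b_t(z)m^{(t)}(\zeta_t)\big|\prec\cdots$ you still need an entrywise local law for the undeformed signal, namely $\big|\mathcal{G}R_{\widetilde H,ij}(\zeta)-\delta_{ij}m^{(t)}(\zeta)\big|\prec\cdots$ for $\zeta\in\mathrm{D}_\zeta$; that is Lemma \ref{lem_locallaw_H} and its off-diagonal refinements in Lemmas \ref{lem_locallaw_H_GHH}--\ref{lem_improved_locallaw_H}, and your proposal never supplies it.

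The step you offer in its place --- ``a further Taylor expansion in the $M$-component produces the $n^{-\epsilon_\beta}/(\kappa^2+\eta^2)$ error'' --- does not work. Although each entry $|M_{ij}\rho_j^{-1}|\lesssim n^{-\epsilon_h}$ is small, the matrix $M(\operatorname{diag}S)^{-1/2}$ has operator norm $\Theta(1)$, so neither the resolvent nor its entries admit a convergent Taylor series around $\sqrt{t}W$; likewise, expanding in the $M$-part of a single row $i\in\mathrm{T}_r$ does not remove the $M$-dependence of the minor resolvent $\mathcal{G}^{(i)}$, which still involves all the remaining rows. Controlling those entries requires a genuine self-consistent-equation argument for $R(\widetilde H)$ itself, conditional on $\Psi$: one peels off rows/columns via Schur complements and closes the equation using the large-deviation bounds of Lemma \ref{lem_largedeviation_H} for self-normalized $\widetilde h_{ij}$'s, exploiting that the odd-moment estimates of Lemma \ref{lem_oddmoment_est} control the column dependence, and using the cardinality bound $|\mathrm{I}_r|\lesssim n^{1-\epsilon_y}$ only to crudely bound the ``bad'' indices. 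Your crude spectral-norm bound $|\mathcal{G}R_{Y_t,ij}|\lesssim(\kappa+\eta)^{-1}$ for $i,j\in\mathrm{I}_r$ is acceptable (given the spectral inclusion), and your heuristic identification of $\epsilon_\beta$ from $\epsilon_h$ and $\epsilon_y$ is in the right spirit, but without the intermediate entrywise law for $R(\widetilde H)$ the argument does not close.
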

\begin{proof}
	The proof is postponed to Appendix \ref{app_sec_prf_locallaw_gdm_entrywise}.
\end{proof}

Now we extend the local law from $\mathbf{D}_2$ to $\mathbf{D}_3$ which is a typical routine in the literature.
\begin{corollary}
Suppose that $\Psi$ and $\Pi$ are well configured. For any fixed $z\in {\mathbf{D}}\cup\mathbf{D}_3$, the following estimates hold under the probability measure $\mathbb{P}_{\Psi}$.
 \begin{equation*}
        \begin{split}
        &|\mathcal{G}R_{Y_t,ij}(z)-\delta_{ij}b_t(z)m^{(t)}(\zeta_t)|\prec n^{-\epsilon_{\beta}}\mathbbm{1}_{i\in \mathrm{T}_r~\text{or}~j\in \mathrm{T}_r}+1(1-\mathbbm{1}_{i\in \mathrm{T}_r~\text{or}~j\in \mathrm{T}_r}),  \\
            &|\mathcal{G}R_{Y_t^*,uv}(z)-\delta_{uv}(1+t\underline{m}_t(z))\underline{m}^{(t)}(\zeta_t)|\prec  n^{-\epsilon_{\beta}}\mathbbm{1}_{u\in \mathrm{T}_c~\text{or}~v\in \mathrm{T}_c}+1(1-\mathbbm{1}_{u\in \mathrm{T}_c~\text{or}~v\in \mathrm{T}_c}),\\
          &       |[\mathcal{G}R_{Y_t}(z)Y_t]_{iu}|\prec n^{-\epsilon_{h}}\mathbbm{1}_{i\in \mathrm{T}_r~\text{or}~u\in \mathrm{T}_c}+1(1-\mathbbm{1}_{i\in \mathrm{T}_r~\text{or}~u\in \mathrm{T}_c}).
        \end{split}
    \end{equation*}
\end{corollary}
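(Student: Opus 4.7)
The corollary extends the three entrywise estimates from $\mathbf{D}$ (where $\eta\ge n^{-2/3-C_4}$) to the subdomain $\mathbf{D}_3$ (where $0\le\eta< n^{-2/3-C_4}$), keeping the bounds $n^{-\epsilon_\beta}$, $n^{-\epsilon_h}$, and $1$ respectively. The crucial structural feature of $\mathbf{D}_3$ is that the real part $E$ lies outside $[C_1,C_2]$, and by construction $[C_1,C_2]$ contains the spectrum of $R(Y_t)$ with high probability. Consequently, for every $z=E+\mathrm{i}\eta\in\mathbf{D}_3$ the distance $d:=\min_k|\lambda_k(R(Y_t))-E|$ is bounded below by some constant uniformly in $n$; moreover, $\kappa(z)=\min\{|E-\lambda_-|,|E-\lambda_+|\}\asymp 1$ on $\mathbf{D}_3$ as well. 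This will allow us to reduce $\mathbf{D}_3$ to its boundary with $\mathbf{D}_2$ via a simple Lipschitz comparison.

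The plan is as follows. First I would fix $z=E+\mathrm{i}\eta\in\mathbf{D}_3$ and set $z_0:=E+\mathrm{i} n^{-2/3-C_4}\in \mathbf{D}_2$, so that $|z-z_0|\le n^{-2/3-C_4}$. The resolvent identity
\begin{equation*}
\mathcal{G}R_{Y_t}(z)-\mathcal{G}R_{Y_t}(z_0)=(z-z_0)\,\mathcal{G}R_{Y_t}(z)\mathcal{G}R_{Y_t}(z_0),
\end{equation*}
combined with the deterministic bound $\|\mathcal{G}R_{Y_t}(z)\|,\|\mathcal{G}R_{Y_t}(z_0)\|\le 1/d$, immediately yields
\begin{equation*}
\big|\mathcal{G}R_{Y_t,ij}(z)-\mathcal{G}R_{Y_t,ij}(z_0)\big|\le \frac{|z-z_0|}{d^{2}}=\mathrm{O}(n^{-2/3-C_4}),
\end{equation*}
with analogous bounds for $\mathcal{G}R_{Y_t^*,uv}$ and, using $\|R(Y_t)\|=\mathrm{O}(1)$ with high probability, for $[\mathcal{G}R_{Y_t}(z)Y_t]_{iu}$. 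The corresponding deterministic quantities $b_t(z)m^{(t)}(\zeta_t(z))$ and $(1+t\underline{m}_t(z))\underline{m}^{(t)}(\zeta_t(z))$ are analytic in a neighborhood of $\mathbf{D}_3$ (since $E\notin\operatorname{supp}\rho_t$) and therefore obey the same Lipschitz estimate on the scale $|z-z_0|$.

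Next I would invoke Theorem \ref{thm_locallaw_gdm_entrywise} at the point $z_0\in\mathbf{D}_2$, where $\kappa(z_0)\asymp 1$ and $\eta(z_0)=n^{-2/3-C_4}$. On this scale,
\begin{equation*}
t^{-3}\Psi_0(z_0)+\frac{n^{-\epsilon_\beta}}{\kappa(z_0)^2+\eta(z_0)^2}\;\prec\; n^{6\epsilon_l}\bigl(n^{-1/2}+n^{-1/3-C_4/2}\bigr)+n^{-\epsilon_\beta}\;\prec\; n^{-\epsilon_\beta},
\end{equation*}
by the conditions $t\lesssim n^{-2\epsilon_l}$ and $\epsilon_l\ll\epsilon_\beta$, and the analogous computation produces $n^{-\epsilon_h}$ for the $\mathcal{G}R_{Y_t}Y_t$ quantity. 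When both indices are ``bad'' we use instead $1/(\kappa(z_0)+\eta(z_0))\asymp 1$. Combining these with the Lipschitz estimate from the previous paragraph and absorbing the correction $\mathrm{O}(n^{-2/3-C_4})$ into $n^{-\epsilon_\beta}$ (choosing $C_4$ small enough relative to $\epsilon_\beta$) delivers exactly the three bounds stated in the corollary.

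The only non-trivial input is the preliminary norm control $\|R(Y_t)\|\le C$ and the strict positivity of the smallest eigenvalue (both with high probability), which are what force $d\asymp 1$ on $\mathbf{D}_3$. The hardest point is that Lemma \ref{lem_pre_estnorm} controls $R(Y)$, not $R(Y_t)$; however, writing $Y_t-Y=\sqrt{t}W-L(\operatorname{diag}S)^{-1/2}$, both correction terms have operator norm $\mathrm{O}(t^{1/2})=\mathrm{O}(n^{-\epsilon_l})$ with high probability (by standard Gaussian matrix bounds and the fact that $L(\operatorname{diag}S)^{-1/2}$ has i.i.d. bounded centered entries with variance $\lesssim n^{-1-2\epsilon_l}$). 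Weyl's inequality then transfers the eigenvalue bounds from $R(Y)$ to $R(Y_t)$ with an arbitrarily small buffer, closing the argument.
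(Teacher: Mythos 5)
Your proposal is a correct alternative argument, but it is organized differently from the paper's own proof, and it has two small imprecisions worth flagging.

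The paper's proof compares $z$ with the point $z_0=E+\mathrm{i}\eta_0$ where $\eta_0:=n^{-1/2}\kappa^{1/4}$ (so $z_0$ sits well inside $\mathbf{D}_2$, at a scale where $\Psi_0(z_0)\asymp n^{-1/2}\kappa^{-1/4}$), and it works directly with the spectral decomposition $\mathrm{Im}\mathcal{G}R_{Y_t,ii}(z)=\sum_j\frac{\eta}{(E-\lambda_j(Y_t))^2+\eta^2}|\mathbf{u}_j(k)|^2$: the imaginary part is controlled by the monotonicity $\frac{\eta}{a^2+\eta^2}\le\frac{2\eta_0}{a^2+\eta_0^2}$ for $\eta\le\eta_0\le a$, and the real part by a direct difference estimate; off-diagonal entries are then recovered by polarization and linearity. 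You instead choose $z_0$ on the boundary of $\mathbf{D}_2$ at scale $\eta_0=n^{-2/3-C_4}$ and control the whole entry at once via the resolvent identity $\mathcal{G}(z)-\mathcal{G}(z_0)=(z-z_0)\mathcal{G}(z)\mathcal{G}(z_0)$ together with the deterministic bound $\|\mathcal{G}\|\le1/\mathrm{dist}(E,\operatorname{spec}(R(Y_t)))$. Both routes work; yours is a bit more compact since it bypasses the separate $\mathrm{Im}$/$\mathrm{Re}$ bookkeeping and polarization, while the paper's version avoids needing an explicit operator-norm bound and keeps the $\kappa$-dependence sharper. Both arguments, however, ultimately rest on the same structural input — that $E$ is at a fixed distance from $\operatorname{spec}(R(Y_t))$ — so you are right that this is the one nontrivial preliminary fact, and you are right that it is not literally what Lemma \ref{lem_pre_estnorm} proves (that lemma controls $R(Y)$, not $R(Y_t)$).

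Two small cautions on your closing paragraph. First, the entries of $L(\operatorname{diag}S)^{-1/2}$ are not i.i.d.: entries within a column share the random normalizing factor $\rho_j^{-1}$, which in turn depends on all of $X_{j1},\dots,X_{jn}$. So the off-the-shelf i.i.d. operator-norm bound is not directly applicable; you would instead condition on the $\rho_j$'s (which satisfy $\rho_j\asymp n^{1/2}$ with high probability) or, more economically, reuse the spectral separation for $R(Y_t)$ that the paper already establishes implicitly through the averaged local law of Theorem \ref{thm_locallaw_gdm_average}, rather than passing through $R(Y)$ and Weyl's inequality. Second, the exponent in your computation $1/(n\eta_0)$ is slightly off: with $\eta_0=n^{-2/3-C_4}$ one gets $1/(n\eta_0)=n^{-1/3+C_4}$, not $n^{-1/3-C_4/2}$. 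This does not change the conclusion $t^{-3}\Psi_0(z_0)\prec n^{-\epsilon_\beta}$ because $C_4$ and $\epsilon_l$ are small, but the sign of $C_4$ in the exponent matters for the bookkeeping, so it is worth getting right.
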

\begin{proof}
We shall remark that $b_t=\mathrm{O}(1)$ and $m_t(z)\lesssim 1$ for $z\in \mathbf{D}$. For the case of $\eta\ll 1$ outside the spectrum, (say, $z\in \mathbf{D}_2\cup\mathbf{D}_3$ with $\eta\ll 1$), which implies $\kappa\sim 1$, choosing $\eta_0:=n^{-1/2}\kappa^{1/4}$, by the definition of $\mathrm{D}_{\zeta}$ (cf. \eqref{eq_def_zetadomain}) and $t\lesssim n^{-2\epsilon_l}$, we have
\begin{equation*}
	|\mathcal{G}R_{Y_t,ij}(z)-\delta_{ij}(1+\phi t m_t(z))m^{(t)}(\zeta_t)|\prec n^{-\epsilon_{\beta}}\mathbbm{1}_{i\in \mathrm{T}_r~\text{or}~j\in \mathrm{T}_r}+1(1-\mathbbm{1}_{i\in \mathrm{T}_r~\text{or}~j\in \mathrm{T}_r})
\end{equation*}
if $\eta\ge \eta_0$. Consider $z=E+\mathrm{i}\eta, z_0=E+\mathrm{i}\eta_0$, $0<\eta\le \eta_0$, satisfying
\begin{equation*}
	|m_t(z)-m_t(z_0)|\le C|m_t^{\prime}(z)|\eta_0=Cn^{-1/2}\kappa^{-1/4},
\end{equation*}
where we have used the fact $|m_t^{\prime}(z)|\le C\kappa^{-1/2}$ due to the square root decay behavior of the MP law near $\lambda_+$. It follows that
\begin{equation*}
	|(1+\phi t m_t(z))m^{(t)}(\zeta_t)-(1+\phi t m_t(z_0))m^{(t)}(\zeta_t(z_0))|\prec n^{-1/2}\kappa^{-1/4},
\end{equation*}
due to $\zeta_{t}^{\prime}(z)\sim 1$.
Moreover, noting $\eta\le \eta_0\le \kappa\le |E-\lambda_j(\widetilde{H}_t)|$ for $1\le j\le n$, one can check that
\begin{equation*}
	\begin{split}
		\mathrm{Im}(\mathcal{G}R_{Y_t,ii}(z))&=\sum_{j=1}^{n}\frac{\eta}{(E-\lambda_j(Y_t))^2+\eta^2}|\mathbf{u}_j(k)|^2
		\le 2\sum_{j=1}^{n}\frac{\eta_0}{(E-\lambda_j({Y}_t))^2+\eta_0^2}|\mathbf{u}_j(k)|^2\\
		&\le 2\mathrm{Im}(\mathcal{G}R_{Y_t,ii}(z_0))\prec n^{-1/2}\kappa^{-1/4}
	\end{split}
\end{equation*}
by $\mathrm{Im}((1+\phi t m_t(z_0))m^{(t)}(\zeta_t(z_0)))\sim \mathrm{Im}(m(z_0))\sim \eta_0/\sqrt{\kappa+\eta_0}\sim n^{-1/2}\kappa^{-1/4}$. For the real part, one can verify that
\begin{equation*}
	\begin{split}
		&|\mathrm{Re}(\mathcal{G}R_{Y_t,ii}(z))-\mathrm{Re}(\mathcal{G}R_{Y_t,ii}(z_0))|\le \sum_{j=1}\frac{C(E-\lambda_j(Y_t))(\eta_0^2-\eta^2)}{[(E-\lambda_j(Y_t))^2+\eta^2][(E-\lambda_j(Y_t))^2+\eta_0^2]}|\mathbf{u}_j(k)|^2\\
		&\le \frac{C\eta_0}{\kappa}\sum_{j=1}^{n}\frac{\eta_0}{(E-\lambda_j(Y_t))^2+\eta_0^2}|\mathbf{u}_j(k)|^2
		\le C\frac{\eta_0}{\kappa}\mathrm{Im}(\mathcal{G}R_{Y_t,ii}(z_0))
		\prec n^{-1/2}\kappa^{-1/4},
	\end{split}
\end{equation*}
where $\mathbf{u}_j=(\mathbf{u}_j(k))_{k=1}^{n}, 1\le j\le n$ are the eigenvectors of $Y_tY^*_t$. Thereafter, by polarization and linearity, we conclude that
\begin{equation*}
	|\mathcal{G}R_{Y_t,ij}(z)-\delta_{ij}(1+\phi t m_t(z))m^{(t)}(\zeta_t)|\prec n^{-\epsilon_{\beta}}\mathbbm{1}_{i\in \mathrm{T}_r~\text{or}~j\in \mathrm{T}_r}+1(1-\mathbbm{1}_{i\in \mathrm{T}_r~\text{or}~j\in \mathrm{T}_r})
\end{equation*}
for $0<\kappa\le C, 0<\eta\le C$, which further gives the result for $z\in \mathbf{D}_3$. The estimates of $\mathcal{G}R_{Y_t^*,uv}(z)$ and $[\mathcal{G}R_{Y_t}(z)Y_t]_{iu}$ are analogous, and thus omitted.
\end{proof}
	
\subsection{Green function comparison}\label{sec_greenfunctioncomparison}
In this subsection, we present the Green function comparison from the GDM, say $R(Y_t)=Y_tY_t^{*}$, to our sample correlation matrix $R(Y)=YY^{*}$ on the local scale. At the very beginning, we need some additional notation. Recall
\begin{equation*}
    Y=L(\operatorname{diag}S)^{-1/2}+\widetilde{H}(\operatorname{diag}S)^{-1/2},\quad Y_t=t^{1/2}W+\widetilde{H}(\operatorname{diag}S)^{-1/2},
\end{equation*}
and the parameter $t=n\mathbb{E}|L_{ij}\rho_j^{-1}|^2$. Define for each $\gamma\in[0,1]$ the $n\times n$ matrices
\begin{equation}\label{eq_def_interpolations}
    Y^{\gamma}=\gamma L(\operatorname{diag}S)^{-1/2}+t^{1/2}(1-\gamma^2)^{1/2}W+\widetilde{H}(\operatorname{diag} S)^{-1/2},\quad R^{\gamma}=Y^{\gamma}(Y^{\gamma})^{*}.
\end{equation}
Note that when $\gamma=0$, we have $Y^0=Y_t$. Meanwhile, when $\gamma=1$, $Y^{\gamma}$ recovers $Y$. We define the following Green functions,
\begin{equation}\label{eq_def_greenfunctioncomparison_m}
    \mathcal{G}^{\gamma}(z)=(R^{\gamma}-zI)^{-1},\quad G^{\gamma}=((Y^{\gamma})^{*}Y^{\gamma}-zI)^{-1},\quad m^{\gamma}(z):=m^{\gamma}_n(z)=\frac{1}{n}\operatorname{tr}\mathcal{G}^{\gamma}(z).
\end{equation}
Our comparison results will approximate the entries of $\mathcal{G}^{\gamma}$ by those of $\mathcal{G}^0$ for each $\gamma\in[0,1]$, conditional on $\Psi$ and $\Pi$. So it will be useful to consider the laws of $Y^0$ and $\widetilde{H}(\operatorname{diag}S)^{-1/2}$ conditional on their label $\Psi$ (or $\Pi$ when necessary). Let $\mathbb{P}_{\Psi}$ and $\mathbb{E}_{\Psi}$ denote the respective probability measure and expectation with respect to the joint law of the random variables $\{l_{ij},m_{ij},h_{ij},\psi_{ij},\chi_{ij}\}$ from Section \ref{sec_intro}, conditional on the event $\{\psi_{ij}\}$ given in $\Psi$. In addition, for any $\chi\in\{0,1\}$, we denote $\mathbb{P}_{\Psi}[\cdot|\chi_{ij}]=\mathbb{P}_{\Psi}[\cdot|\chi_{ij}=\chi]$ as the probability measure $\mathbb{P}_{\Psi}$ after conditioning on the event $\chi_{ij}=\chi$. Also, let $\mathbb{E}_{\Psi}[\cdot|\chi_{ij}]=\mathbb{E}[\cdot|\chi_{ij}=\chi]$ denote the associated expectation. Notice that $\mathbb{E}^{\chi}\big(\mathbb{E}_{\Psi}[\cdot|\chi_{ij}]\big)=\mathbb{E}_{\Psi}(\cdot)$, where $\mathbb{E}^{\chi}$ denotes the expectation with respect to the Bernoulli $0-1$ random variable $\chi$. We consider the comparison of entrywise local law and averaged local law respectively.
	
	\subsubsection{Entrywise bounds}
	We first introduce the following shorthand notation: for any $a,b\in[n]$ and $u,v\in[p]$,
	\begin{gather*}
		\mathfrak{R}_{ab}\equiv\mathfrak{R}_{ab}(\Psi):=
		\begin{cases}
			1 \quad \text{if $a$ or $b\in\mathrm{T}_r$,}\\
			t^2 \quad \text{if $a\in\mathrm{I}_r, b\in\mathrm{I}_r$}
		\end{cases},\quad
		\mathfrak{C}_{uv}\equiv\mathfrak{C}_{uv}(\Psi):=
		\begin{cases}
			1 \quad \text{if $u$ or $v\in\mathrm{T}_c$,}\\
			t^2 \quad \text{if $u\in\mathrm{I}_c, v\in\mathrm{I}_c$}
		\end{cases},\\
		\mathfrak{M}_{au}\equiv\mathfrak{M}_{au}(\Psi):=
		\begin{cases}
			1 \quad \text{if $a\in\mathrm{T}_r$ or $u\in\mathrm{T}_c$,}\\
			t^2 \quad \text{if $a\in\mathrm{I}_r. u\in\mathrm{I}_c$}
		\end{cases}.
	\end{gather*}
	The following theorem provides a way to compare conditional expectations of $\mathcal{G}^0$ to those of $\mathcal{G}^{\gamma}$ for any $\gamma\in[0,1]$, which is a key input to derive the local laws.
	\begin{theorem}\label{thm_greenfuncomp_entrywise_differror}
		Let $F:\mathbb{R}\rightarrow\mathbb{R}$ be a function such that
		\begin{equation*}
			\sup_{0\le \mu\le l}F^{(\mu)}(x)\le |x|^{C_0},\quad \sup_{0\le \mu\le l,\; |x|\le 2n^2}F^{(\mu)}(x)\le n^{C_0},
		\end{equation*}
		for real number $l, C_0>0$. For any $0-1$ matrix $\Psi$ and complex number $z$, define for any $a,b\in[n]$ and $u,v\in [p]$,
		\begin{gather*}
			\mathfrak{J}_{0,ab}\equiv\mathfrak{J}_{0,ab}(\Psi,z):=\max_{0\le \mu\le l}\sup_{0\le \gamma\le 1}\mathbb{E}_{\Psi}\big(|F^{(\mu)}[\mathfrak{R}_{ab}(\operatorname{Im}[\mathcal{G}^{\gamma}(z)]_{ab}-\delta_{ab}b_t(z)m^{(t)}(\zeta))]|\big),\\
			\mathfrak{J}_{1,uv}\equiv\mathfrak{J}_{1,uv}(\Psi,z):=\max_{0\le \mu\le l}\sup_{0\le \gamma\le 1}\mathbb{E}_{\Psi}\big(|F^{(\mu)}[\mathfrak{C}_{uv}(\operatorname{Im}[G^{\gamma}(z)]_{uv}-\delta_{uv}(1+t\underline{m}_t(z))\underline{m}^{(t)}(\zeta))]|\big),\\
			\mathfrak{J}_{2,au}\equiv\mathfrak{J}_{2,au}(\Psi,z):=\max_{0\le \mu\le l}\sup_{0\le \gamma\le 1}\mathbb{E}_{\Psi}\big(|F^{(\mu)}(\mathfrak{M}_{au}\operatorname{Im}[\mathcal{G}^{\gamma}(z)Y^{\gamma}]_{au})|\big),
		\end{gather*}
		and $\Omega=\Omega_0\cap\Omega_1\cap\Omega_2\cap\Omega_w\cap\Omega_{\rho}$, $Q_0\equiv Q_0(\varepsilon,z):=1-\mathbb{P}_{\Psi}(\Omega)$ with
		\begin{gather*}
			\Omega_0\equiv\Omega_0(\varepsilon,z):=\{\sup_{a,b\in[n], 0\le\gamma\le 1}|\mathfrak{R}_{ab}([\mathcal{G}^{\gamma}(z)]_{ab}-\delta_{ab}b_t(z)m^{(t)}(\zeta))|\le n^{\varepsilon}\}, \\
			\Omega_1\equiv\Omega_1(\varepsilon,z):=\{\sup_{u,v\in[p], 0\le\gamma\le 1}|\mathfrak{C}_{uv}([G^{\gamma}(z)]_{uv}-\delta_{uv}(1+t\underline{m}_t(z)\underline{m}^{(t)}(\zeta)))|\le n^{\varepsilon}\},\\
			\Omega_2\equiv\Omega_2(\varepsilon,z):=\{\sup_{a\in[n],u\in[p], 0\le\gamma\le 1}|\mathfrak{M}_{au}[\mathcal{G}^{\gamma}(z)Y^{\gamma}]_{au}|\le n^{\varepsilon}\},\\ \Omega_w\equiv\Omega_w(\varepsilon):=\{\sup_{i\in[n],j\in[p]}|w_{ij}|\le n^{-1/2+\varepsilon}t\}, \
			\Omega_{\rho}\equiv\Omega_{\rho}(c):=\{\inf_{j\in[p]}n^{-1}\rho_j^2>\log^{-c}n\}.
		\end{gather*}
		Suppose $\Psi$ and $\Pi$ are well configured. Then there exist sufficiently small positive constants $\varepsilon\le\epsilon_l/100$ and $c_0$, and a large constant $C>0$ such that for $a,b\in[n]$, $u,v\in[p]$ and $(\#_1,\#_2,\#_3)\in$
		\begin{equation*}
			\begin{split}
				 &\Big\{\big(\mathfrak{R}_{ab}(\operatorname{Im}[\mathcal{G}^{\gamma}(z)]_{ab}-\delta_{ab}b_t(z)m^{(t)}(\zeta)),\;\mathfrak{R}_{ab}(\operatorname{Im}[\mathcal{G}^{0}(z)]_{ab}-\delta_{ab}b_t(z)m^{(t)}(\zeta)),\;\mathfrak{J}_{0,ab}\big),\\
				 &\big(\mathfrak{C}_{uv}(\operatorname{Im}[G^{\gamma}(z)]_{uv}-\delta_{uv}(1+t\underline{m}_t(z)\underline{m}^{(t)}(\zeta))),\;\mathfrak{C}_{ab}(\operatorname{Im}[G^{0}(z)]_{uv}-\delta_{uv}(1+t\underline{m}_t(z)\underline{m}^{(t)}(\zeta))),\;\mathfrak{J}_{1,uv}\big),\\
				 &\big(\mathfrak{M}_{au}\operatorname{Im}[\mathcal{G}^{\gamma}(z)Y^{\gamma}]_{au},\;\mathfrak{M}_{au}\operatorname{Im}[\mathcal{G}^{0}(z)Y^0]_{au},\;\mathfrak{J}_{2,au}\big)
				\Big\},
			\end{split}
		\end{equation*}
		we have
		\begin{equation*}
			\sup_{0\le\gamma\le1}|\mathbb{E}_{\Psi}\big(F(\#_1)\big)-\mathbb{E}_{\Psi}\big(F(\#_2)\big)|<Cn^{-c_0}(\#_3+1)+CQ_0n^{C+C_0}.
		\end{equation*}
		The same estimates hold if $\operatorname{Im}$'s are replaced by $\operatorname{Re}$'s.
	\end{theorem}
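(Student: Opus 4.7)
The plan is to prove the bound by continuous interpolation in $\gamma$. Writing
\begin{equation*}
\mathbb{E}_\Psi F(\#_1(\gamma)) - \mathbb{E}_\Psi F(\#_1(0)) = \int_0^\gamma \partial_s \mathbb{E}_\Psi F(\#_1(s))\, \mathrm{d}s,
\end{equation*}
it suffices to show $|\partial_\gamma \mathbb{E}_\Psi F(\#_1(\gamma))| \lesssim n^{-c_0}(\mathfrak{J}+1)+Q_0 n^{C+C_0}$ uniformly in $\gamma \in [0,1]$. Differentiating $Y^\gamma$ yields a contribution proportional to $L(\operatorname{diag}S)^{-1/2}$ and one proportional to $W$, so I would process the columns of $Y^\gamma$ one at a time: the $j$-th column is $\gamma L_{\cdot j}\rho_j^{-1} + t^{1/2}(1-\gamma^2)^{1/2}W_{\cdot j} + \widetilde{H}_{\cdot j}\rho_j^{-1}$, and the contribution of each $j$ will be analyzed after freezing all other columns and, when convenient, conditioning on $\rho_j$.

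To localize the dependence on the $j$-th column I would apply the column-wise resolvent expansion
\begin{equation*}
\mathcal{G}^{\gamma} = \sum_{k=0}^{s}\mathcal{G}^{\gamma,0}_{(j)}\bigl(-Y_j^{\gamma}(Y_j^{\gamma})^* \mathcal{G}^{\gamma,0}_{(j)}\bigr)^{k} + \mathcal{G}^{\gamma}\bigl(-Y_j^{\gamma}(Y_j^{\gamma})^* \mathcal{G}^{\gamma,0}_{(j)}\bigr)^{s+1}
\end{equation*}
for a large fixed $s$, with $\mathcal{G}^{\gamma,0}_{(j)}$ the Green function obtained by zeroing the $j$-th column of $Y^\gamma$; analogous expansions apply to $G^{\gamma}$ and to $\mathcal{G}^\gamma Y^\gamma$. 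Combined with a Taylor expansion of $F$ up to order $l$, each of the three targets in $\#_1$ becomes a polynomial in $\{Y^\gamma_{ij}\}_{i\in[n]}$ with coefficients drawn from $\mathcal{G}^{\gamma,0}_{(j)}$ that are independent of the random ingredients in column $j$. Remainders beyond order $s$ or $l$ are controlled on the event $\Omega$ by the entrywise GDM local laws of Theorem \ref{thm_locallaw_gdm_entrywise} and the Gaussian tail bound on $\Omega_w$, and on $\Omega^c$ by the crude polynomial growth of $F^{(\mu)}$, producing the $Q_0 n^{C+C_0}$ term.

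The core step is moment matching between the Gaussian ingredient $t^{1/2}W_{ij}$ and the light-tailed ingredient $L_{ij}\rho_j^{-1}$: their first and second moments coincide (conditional on $\psi_{ij}=0$) by the choice $t = n\mathbb{E}|L_{ij}\rho_j^{-1}|^2$, so after taking $\mathbb{E}_\Psi$ the polynomials above cancel to leading order and the residue must be shown to be $\mathrm{o}(n^{-c_0})$ after summing over $i,j$ and over all monomials produced by the expansion. In the symmetric case only fourth- and higher-order terms matter, but asymmetry forces me to treat every odd cross-moment of the $Y_{ij}$'s; the refined bounds $|\mathbb{E}(Y_{11}^{k_1}\cdots Y_{q1}^{k_q})| = \mathrm{o}(n^{-q})$ when some $k_i = 1$ and $\mathbb{E}(Y_{11}Y_{21}) \lesssim n^{2(1/2-\epsilon_h)(3-\alpha)_{+}-3}$ from Lemma \ref{lem_oddmoment_est} are exactly what is needed to defeat the $n^{O(1)}$ combinatorial count arising from the $(i,j)$-sums and the constant-order $\mathcal{G}^{\gamma,0}_{(j)}$-entries along indices in $\mathrm{I}_r,\mathrm{I}_c$. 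The scalar $\rho_j^{-1}$ is handled on $\Omega_\rho$ by Taylor-expanding $\rho_j^{-2}$ around $n^{-1}$ and invoking \eqref{eq_tr_SS} to bound the aggregate contribution.

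The hard part will be tracking the weight factors $\mathfrak{R}_{ab},\mathfrak{C}_{uv},\mathfrak{M}_{au}$ through the expansion without losing the $n^{-c_0}$ gain: entries of $\mathcal{G}^{\gamma,0}_{(j)}$ with an index in $\mathrm{T}_r$ (resp.\ $\mathrm{T}_c$) are only $n^{-\epsilon_\beta}$, whereas those with both indices in $\mathrm{I}_r$ (resp.\ $\mathrm{I}_c$) are merely $\mathrm{O}(1)$, and the compensating $t^2$ prefactor built into $\mathfrak{R},\mathfrak{C},\mathfrak{M}$ must line up with the appropriate moment estimate in each combinatorial configuration. Matching each monomial to its sharpest odd- or even-moment bound, and absorbing the $p\cdot n$ summation over $(i,j)$, is the principal bookkeeping challenge; once these cases are exhausted the result follows by a careful but routine synthesis of the column-wise resolvent expansion, the refined self-normalized moment estimates of Lemma \ref{lem_oddmoment_est}, and the entrywise GDM local laws. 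The case of $\operatorname{Re}$ in place of $\operatorname{Im}$ is identical since $F$ is applied only to real arguments.
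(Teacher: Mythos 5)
Your overall strategy matches the paper's: differentiate in $\gamma$, decompose $\partial_\gamma Y^\gamma$ into the $L(\operatorname{diag}S)^{-1/2}$ and $W$ contributions, apply the column-wise resolvent expansion around $\mathcal{G}^{\gamma,0}_{(j)}$, Taylor-expand $F$, split into a main part and a remainder controlled on $\Omega$ and $\Omega^c$, and use the refined odd/even moment estimates of Lemma \ref{lem_oddmoment_est} to defeat the combinatorial $np$ count while tracking the $\mathfrak{R},\mathfrak{C},\mathfrak{M}$ weights. That is all correct and is how the paper proceeds (the paper packages the bookkeeping via the assemble-pattern notation $\beta^{(K)}_{k_1\cdots k_q}$, $\mathfrak{G}^{(K)}_{(j),k_1\cdots k_q}$ and a three-way case split on whether the exponent of $y_{ij}$ is positive, whether any auxiliary exponent is odd, and whether $y_{ij}$ appears at all).

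There is, however, a genuine gap at the heart of your moment-matching step. You write that since the first two moments of $L_{ij}\rho_j^{-1}$ and $t^{1/2}w_{ij}$ coincide (conditional on $\psi_{ij}=0$), ``after taking $\mathbb{E}_\Psi$ the polynomials above cancel to leading order.'' This is not automatic. After the column-wise expansion the leading quadratic contribution has the schematic form
\begin{equation*}
\mathbb{E}_\Psi\big[\beta^{(K)}_{0k_2\cdots k_q}\,(L_{ij}^2\rho_j^{-2}-t w_{ij}^2)\,\mathfrak{G}^{(K)}_{(j),1k_2\cdots k_q}\,f_{(j)}^{(1)}(0)\big],
\end{equation*}
and the pattern $\beta^{(K)}_{0k_2\cdots k_q}$ is built from $y_{uj}=X_{ju}/\rho_j$, hence is \emph{not} independent of $L_{ij}^2\rho_j^{-2}$: every entry in column $j$ shares the same normalization $\rho_j$, and moreover $\rho_j$ itself depends on $L_{ij}$. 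For the Gaussian piece $tw_{ij}^2$ you can factor out $\mathbb{E}(tw_{ij}^2)=t/n$; for the light-tailed piece you cannot, so the leading-order cancellation you assert does not hold term by term. Your proposed remedy of Taylor-expanding $\rho_j^{-2}$ around $n^{-1}$ and invoking \eqref{eq_tr_SS} controls the difference $\rho_j^{-2}-n^{-1}$ in an aggregate sense, but it does not decouple $\beta^{(K)}$ from $L_{ij}$; after replacing $\rho_j^{-2}$ by $n^{-1}$ in one place the residual $\mathbb{E}_\Psi[\beta^{(K)}L_{ij}^2]$ still carries the coupling through $\rho_j$ inside $\beta^{(K)}$. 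The paper resolves precisely this obstruction by applying a cumulant expansion of the scalar $L_{ij}^2\rho_j^{-2}$ against the functional $\beta^{(K)}_{0k_2\cdots k_q}$, expanding in the cumulants $\kappa_{k,L}$ of $L_{ij}^2\rho_j^{-2}$, showing that $\kappa_{1,L}$ matches $\kappa_{1,w}=\mathbb{E}(tw_{ij}^2)$ up to $\mathrm{O}(tn^{-3+2/\alpha-\epsilon_0})$ by the choice of $t$, and bounding the higher cumulant terms by $\mathrm{O}(n^{-(1+\epsilon_l)(k+1)})\cdot\mathrm{O}(n^{-q+1})$. Without this cumulant-expansion trick (or an equivalent decoupling), the step ``the polynomials cancel to leading order'' is unjustified and the $n^{-c_0}$ gain is not secured.
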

	\begin{proof}
		The proof of this theorem will be postponed to Appendix \ref{app_prf_thm_greenfuncomp_entrywise_differror}.
	\end{proof}
 With the comparison theorem above and the input from the local laws of GDM provided in Theorem \ref{thm_locallaw_gdm_entrywise}, we have the following entrywise bound.
	\begin{theorem}[Entrywise bound]\label{thm_greenfuncomp_entrywise_uniformbound}
		Recall the definitions of contours $\gamma_1$ and $\gamma_2$. Suppose $\Psi$ and $\Pi$ are well configured. Let $\mathbb{P}_{\Psi}$ be the probability conditioned on the event that $(\psi_{ij})$ is given in $\Psi$. Then we have that for some small constant $c_{l}>0$ and large constant $D>0$,
		\begin{itemize}
			\item [(1)] in the case where $a,b\in\mathrm{T}_r$, $u,v\in\mathrm{T}_c$,
			\begin{equation*}
				\begin{split}
					\mathbb{P}_{\Psi}\big(\sup_{0\le\gamma\le1}&\sup_{z\in\gamma_1\vee\gamma_2}\sup_{a,b\in [n]\atop a,b\in\mathrm{T}_r}|\mathfrak{R}_{ab}([\mathcal{G}^{\gamma}(z)]_{ab}-\delta_{ab}b_t(z)m^{(t)}(\zeta))|\\
					&\vee\sup_{0\le\gamma\le1}\sup_{z\in\gamma_1\vee\gamma_2}\sup_{u,v\in [p]\atop u,v\in\mathrm{T}_c}|\mathfrak{C}_{uv}([G^{\gamma}(z)]_{uv}-\delta_{uv}(1+t\underline{m}_t(z))\underline{m}^{(t)}(\zeta))|\\
					&\vee\sup_{0\le\gamma\le1}\sup_{z\in\gamma_1\vee\gamma_2}\sup_{a\in[n],u\in [p]\atop a\in\mathrm{T}_r,u\in\mathrm{T}_c}|\mathfrak{M}_{au}[\mathcal{G}^{\gamma}(z)Y^{\gamma}]_{au}|\ge n^{-c_l\epsilon_{\alpha}}\big)\le Cn^{-D}.
				\end{split}
			\end{equation*}
			\item [(2)] in the case where $a\in\mathrm{I}_r\vee b\in\mathrm{I}_r$ and $u\in\mathrm{I}_c\vee v\in\mathrm{I}_c$,
			\begin{equation*}
				\begin{split}
					\mathbb{P}_{\Psi}\big(\sup_{0\le\gamma\le1}\sup_{z\in\gamma_1\vee\gamma_2}\sup_{a,b\in [n]\atop a\in\mathrm{I}_r\vee b\in\mathrm{I}_r}&|\mathfrak{R}_{ab}[\mathcal{G}^{\gamma}(z)]_{ab}|\vee\sup_{0\le\gamma\le1}\sup_{z\in\gamma_1\vee\gamma_2}\sup_{u,v\in [p]\atop u\in\mathrm{I}_c\vee v\in\mathrm{I}_c}|\mathfrak{C}_{uv}[G^{\gamma}(z)]_{uv}|\\
					&\vee\sup_{0\le\gamma\le1}\sup_{z\in\gamma_1\vee\gamma_2}\sup_{a\in[n],u\in [p]\atop a\in\mathrm{I}_r\vee u\in\mathrm{I}_c}|\mathfrak{M}_{au}[\mathcal{G}^{\gamma}(z)Y^{\gamma}]_{au}|\ge n^{-c_l\epsilon_{l}}\big)\le Cn^{-D}.
				\end{split}
			\end{equation*}
		\end{itemize}
		The above estimates also hold for $z\in\gamma_1^0\vee\gamma_2^0$.
	\end{theorem}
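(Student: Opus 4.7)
The plan is to combine the comparison theorem (Theorem \ref{thm_greenfuncomp_entrywise_differror}) with the GDM entrywise local law (Theorem \ref{thm_locallaw_gdm_entrywise} and its $\mathbf{D}_3$-extension) to transfer high-probability bounds from $\gamma=0$ to all $\gamma\in[0,1]$. First I would pick $F(x)=x^{2k}$ for a large even integer $2k$ and apply the comparison theorem entry-by-entry, which reduces the task to bounding $\mathbb{E}_\Psi F(\#_2)$ at $\gamma=0$. At $\gamma=0$, $\mathcal{G}^0=\mathcal{G}R_{Y_t}$, so Theorem \ref{thm_locallaw_gdm_entrywise} gives exactly the stochastic domination needed, and a Markov inequality then produces a pointwise high-probability bound on $\mathfrak{R}_{ab}([\mathcal{G}^\gamma]_{ab}-\delta_{ab}b_t m^{(t)})$ and its analogues at a fixed $(z,\gamma)$.

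For case (1), on the contours $\gamma_1\vee\gamma_2$ we have $\eta,\kappa\sim 1$ on the bulk of each arc and $\kappa+\eta=O(1)$ on the remaining $\mathbf{D}_3$-portion, so the GDM bound $t^{-3}\Psi_0(z)+n^{-\epsilon_\beta}/(\kappa^2+\eta^2)$ collapses to $n^{-c'\epsilon_\alpha}$ for some $c'>0$ (recall $t\lesssim n^{-2\epsilon_l}$ and $\epsilon_l\ll\epsilon_\alpha$). Choosing $2k$ large, the comparison yields the same bound for all $\gamma$ with a constant $c_l$ a small fraction of $c'$. For case (2), the weight $\mathfrak{R}_{ab}=\mathfrak{C}_{uv}=\mathfrak{M}_{au}=t^2\lesssim n^{-4\epsilon_l}$ already dominates the crude $O(1)$ entry bound available from the GDM local law and from Lemma \ref{lem_pre_estnorm}, which immediately gives the target $n^{-c_l\epsilon_l}$.

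To upgrade fixed-$(z,\gamma)$ estimates to the supremum over $z\in\gamma_1\vee\gamma_2$ and $\gamma\in[0,1]$, I would choose an $n^{-K}$-grid in both variables and use Lipschitz continuity of the resolvent: $\|\partial_z\mathcal{G}^\gamma\|\le\|\mathcal{G}^\gamma\|^2$ is polynomially bounded on the event $\{\|R^\gamma\|=O(1)\}$ (which holds with high probability by Lemma \ref{lem_pre_estnorm} applied to each $\gamma$), while $\partial_\gamma R^\gamma$ is a polynomial in the entries of $L(\operatorname{diag}S)^{-1/2}$ and $W$, whose sup-norms are polynomially bounded on $\Omega_w\cap\Omega_\rho$. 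A union bound over the polynomial-size grid then promotes the pointwise bound to the uniform bound, with the prefactor $n^{-D}$ absorbed by taking $k$ large.

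The main obstacle is circularity: the error $CQ_0\, n^{C+C_0}$ in Theorem \ref{thm_greenfuncomp_entrywise_differror} depends on $Q_0=1-\mathbb{P}_\Psi(\Omega)$, whose smallness requires precisely the uniform-in-$\gamma$ entrywise control we are trying to establish. I would break this via bootstrap: start with a crude a priori bound on $[\mathcal{G}^\gamma]_{ab}$ and $[\mathcal{G}^\gamma Y^\gamma]_{au}$ valid for all $\gamma\in[0,1]$, obtained from $\|R^\gamma\|=O(1)$ together with the Gaussian tails of $W$ and the $\rho_j$-concentration in Lemma \ref{lem_priorest_x}. This shows $Q_0\le n^{-D'}$ for any $D'$, so the $Q_0 n^{C+C_0}$ error is negligible, and feeding this back through the comparison theorem yields the refined $n^{-c_l\epsilon_\alpha}$ (resp.\ $n^{-c_l\epsilon_l}$) scale. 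A second subtlety is that standard Taylor-expansion Green function comparison cannot handle the strong column dependence from $(\operatorname{diag}S)^{-1/2}$ entry by entry; this is circumvented already inside Theorem \ref{thm_greenfuncomp_entrywise_differror} via column-wise resolvent expansion, so at this level I only need to feed it the right test functions. Finally, the extension of the GDM local law to $\mathbf{D}_3$ (the corollary preceding this theorem) is indispensable, since the contours $\gamma_1\vee\gamma_2$ do dip to $\eta=0$ at isolated points in $\mathbf{D}_3$ where the sharper $1/(n\eta)$ rate is unavailable and only the uniform $n^{-\epsilon_\beta}$-level bound survives.
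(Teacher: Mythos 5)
Your proposal captures the correct ingredients — applying Theorem \ref{thm_greenfuncomp_entrywise_differror} with a polynomial test function $F(x)=x^{2q}$, feeding in the GDM entrywise local law (and its $\mathbf{D}_3$-extension) at $\gamma=0$, Markov's inequality, and an $\epsilon$-net in $\gamma$ with the deterministic Lipschitz bound on $\partial_\gamma\mathcal{G}^\gamma$ — and these all match the paper. The structural difference is in how the circularity coming from the $Q_0$-term is broken. You propose to show $Q_0\le n^{-D}$ outright via a crude a priori bound on $[\mathcal{G}^\gamma]_{ab}$ derived from $\|R^\gamma\|=O(1)$. The paper instead proves a monotonicity lemma (Lemma \ref{lem_Green_monotonicity}): by the Lipschitz-in-$\eta$ estimate for the resolvent of a fixed matrix, $\mathfrak{B}_k(\delta_1,\delta_2,z,\Psi)$ is controlled by $\mathfrak{B}_k(\delta_1-3\epsilon_0,\delta_2-3\epsilon_0,z',\Psi)$ with $\eta'=n^{\epsilon_0}\eta$, and after finitely many iterations one reaches $\eta_m\gtrsim n^{\epsilon_l}$ where $\|\mathcal{G}^\gamma\|\le\eta_m^{-1}$ holds \emph{deterministically}, so that $\mathfrak{B}_k=0$ at that point.

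The gap in your version is the crude bound itself. An upper bound $\|R^\gamma\|=O(1)$ does give $\operatorname{dist}(z,\operatorname{spec}R^\gamma)\gtrsim1$ for $z$ on $\mathcal{C}_6,\mathcal{C}_7$ and on the arcs with $\eta\sim1$. But for $\phi\in(0,1)$, the contour $\gamma_1^0\vee\gamma_2^0$ includes $\mathcal{C}_1$, which touches the real axis at $z=c_1$ inside the spectral gap between $0$ and $\lambda_-$. There, $\|\mathcal{G}^\gamma(c_1)\|\lesssim1$ requires the \emph{smallest nonzero eigenvalue} of $R^\gamma$ to be bounded away from $c_1$ uniformly over $\gamma\in[0,1]$, i.e.\ a lower-edge estimate for the interpolated model. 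The paper only proves this for $\gamma=1$ (Lemma \ref{lem_pre_estnorm}, via Cauchy interlacing and an eigenvalue-sticking argument) and for $\gamma=0$ (through the GDM analysis); it is not established — and is nontrivial, because of the $(\operatorname{diag}S)^{-1/2}$-induced column dependence mixed with a Gaussian part — for intermediate $\gamma$. Your statement that ``$\|R^\gamma\|=O(1)$ together with the Gaussian tails of $W$ and the $\rho_j$-concentration'' already furnishes the crude bound overlooks exactly this lower-edge control. The paper's iterative monotonicity was chosen precisely to avoid having to prove a full two-sided spectral control for every $\gamma$: moving $\eta$ up turns the needed bound into a deterministic resolvent estimate. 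Filling your gap would require repeating the Lemma \ref{lem_pre_estnorm}-type argument for $R^\gamma$, at which point you might as well adopt the paper's route.
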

	\begin{proof}
		The proof of this theorem will be postponed to Appendix \ref{app_prf_thm_greenfuncomp_entrywise_uniformbound}.
	\end{proof}
	
	\subsubsection{Averaged bounds}
	Recall the definitions of $\mathcal{G}^{\gamma},G^{\gamma}$ and $m^{\gamma}(z)$ in \eqref{eq_def_greenfunctioncomparison_m}. We have the following theorem, whose proof is deferred to Appendix \ref{app_prf_thm_greenfuncomp_average}.
	\begin{theorem}[Averaged bound]\label{thm_greenfuncomp_average}
		Suppose that $\Psi$ and $\Pi$ are well configured. Let $z\in\gamma_1\vee\gamma_2$. Then there exists some constant $C_0>0$ such that for $q\ge 3$,
		\begin{itemize}
			\item [(i)] If $\alpha\ge 3$, then we have
			\begin{equation*}
				 \sup_{0\le\gamma\le1}\mathbb{E}_{\Psi}\big(|n\eta(\operatorname{Im}m^{\gamma}(z)-\operatorname{Im}\widetilde{m}_{n,0}(z))|^{2q}\big)\le(1+\mathrm{o}(1))\mathbb{E}_{\Psi}\big(|n\eta(\operatorname{Im}m_{n,0}(z)-\operatorname{Im}\widetilde{m}_{n,0}(z))|^{2q}\big)+n^{-C_0q};
			\end{equation*}
			\item [(ii)] If $\alpha\in(2,3)$, then we have
			\begin{equation*}
				\begin{split}
					 &\sup_{0\le\gamma\le1}\mathbb{E}_{\Psi}\big(|n^{3/2-\alpha/2}\eta(\operatorname{Im}m^{\gamma}(z)-\operatorname{Im}\widetilde{m}_{n,0}(z))|^{2q}\big)\\
					 &\le(1+\mathrm{o}(1))\mathbb{E}_{\Psi}\big(|n^{3/2-\alpha/2}\eta(\operatorname{Im}m_{n,0}(z)-\operatorname{Im}\widetilde{m}_{n,0}(z))|^{2q}\big)+n^{-C_0q},
				\end{split}
			\end{equation*}
		\end{itemize}
		where $\widetilde{m}_{n,t}(z)=n^{-1}\operatorname{tr}(\widetilde{Y}_t(\widetilde{Y}_t)^{*}-zI)^{-1}$ with $\widetilde{Y}_t=\sqrt{t}\widetilde{W}+\widetilde{H}(\operatorname{diag}S)^{-1/2}$. Here, $\widetilde{W}$ is an i.i.d. copy of $W$. Furthermore, the above estimate also holds if $\operatorname{Im}$'s are replaced by $\operatorname{Re}$'s and $z\in\gamma_1^0\vee\gamma_2^0$.
	\end{theorem}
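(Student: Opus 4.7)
The plan is to compare moments along the interpolation $Y^{\gamma}$ by treating
$\Phi(\gamma):=\mathbb{E}_{\Psi}|n\eta(\operatorname{Im}m^{\gamma}(z)-\operatorname{Im}\widetilde{m}_{n,0}(z))|^{2q}$
as a differentiable function of $\gamma$. Since $\widetilde{m}_{n,0}(z)$ is built from the independent Gaussian copy $\widetilde{W}$, it is $\gamma$-independent and plays only the role of a reference random shift under the conditional measure $\mathbb{E}_{\Psi}$. It then suffices to prove $|\Phi(\gamma)-\Phi(0)|\le\mathrm{o}(1)\Phi(0)+n^{-C_0q}$ for every $\gamma\in[0,1]$, and one obtains this by integrating $\partial_{\gamma}\Phi$ from $0$ to $\gamma$. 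For $\alpha\in(2,3)$ the same plan applies verbatim after rescaling $n\eta$ to $n^{3/2-\alpha/2}\eta$ to absorb the slower decay of $\mathbb{E}(Y_{11}^4)$ predicted by Lemma~\ref{lem_moment_rates}.

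The main technical work is the computation of $\partial_{\gamma}\Phi$. Because $(\operatorname{diag}S)^{-1/2}$ couples all entries of the column $L_j$, the differentiation cannot be carried out entry-by-entry; rather, a single column is the correct unit of analysis. For each $j$, I would apply the column-wise resolvent expansion
\begin{equation*}
\mathcal{G}^{\gamma}=\sum_{k=0}^{s}\mathcal{G}^{\gamma,0}_{(j)}(-Y_j^{\gamma}(Y_j^{\gamma})^{*}\mathcal{G}^{\gamma,0}_{(j)})^{k}+\text{remainder},
\end{equation*}
and then Taylor-expand the resulting functional of $Y_j^{\gamma}$ around the fiber where the column increment produced by $\gamma$ is set to zero. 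The parameter $t=n\mathbb{E}|L_{ij}\rho_j^{-1}|^2$ is chosen precisely so that, conditionally on $\psi_{ij}=0$, the first two moments of $\gamma L_{ij}\rho_j^{-1}$ and of $t^{1/2}(1-\gamma^2)^{1/2}W_{ij}$ agree at $\gamma=0$; this annihilates the leading two orders in the expansion and leaves only mixed moments of order three or higher in the $Y_{ij}$'s. These are controlled by Lemma~\ref{lem_moment_rates} for even moments and by the refined non-symmetric estimate $\mathbb{E}(Y_{11}Y_{21})\lesssim n^{2(1/2-\epsilon_h)(3-\alpha)_{+}-3}$ from Lemma~\ref{lem_oddmoment_est} for odd moments, while the Green function factors attached to each monomial are bounded by the entrywise estimates of Theorem~\ref{thm_greenfuncomp_entrywise_uniformbound}.

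The main obstacle is the heavy-tailed column $H_j$: since $\mathbb{E}|\xi|^{4}$ is infinite, the cumulant expansion that handles $L$ and $M$ is unavailable for $H$, and one must instead truncate the resolvent expansion at a finite order $s=s(q)$ and bound the remainder deterministically using $\|\mathcal{G}^{\gamma}\|\lesssim\eta^{-1}$, $|Y_{ij}|\lesssim n^{1/\alpha-1/2}\log n$ from Lemma~\ref{lem_priorest_x}, and the sparsity $|\mathrm{I}_r|\vee|\mathrm{I}_c|\le n^{1-\epsilon_y}$ guaranteed by $\Psi$ being well configured (Lemma~\ref{lem_wellconfigured}). The sparsity input ensures that the ``bad'' entries contribute only on a set of size $\mathrm{o}(n)$, which is what eventually turns the $1/\eta$ losses in the crude remainder bound into a net factor $n^{-C_0q}$ once $s$ is chosen large enough in terms of $q$. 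The Re/Im symmetry of the final statement and the extension to $\gamma_1^0\vee\gamma_2^0$ follow by identical arguments, while the $\alpha\in(2,3)$ case is handled by carrying the factor $n^{3/2-\alpha/2}$ through the same bookkeeping, with Lemma~\ref{lem_moment_rates} again determining the exact exponent at which the Taylor tail becomes negligible.
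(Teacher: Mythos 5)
Your plan follows the paper's own strategy: differentiate $\Phi(\gamma):=\mathbb{E}_{\Psi}\big|n\eta(\operatorname{Im}m^{\gamma}-\operatorname{Im}\widetilde{m}_{n,0})\big|^{2q}$ in $\gamma$, perform the column-wise resolvent expansion \eqref{eq_prf_Greenfunction_resolvent_expansion}, use the second-moment matching encoded in $t=n\mathbb{E}|L_{ij}\rho_j^{-1}|^2$ to kill the low-order cumulants, and invoke Lemmas~\ref{lem_moment_rates} and \ref{lem_oddmoment_est} together with the entrywise bounds of Theorem~\ref{thm_greenfuncomp_entrywise_uniformbound} to control the residue. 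There is, however, a gap in how you close the argument, and a misattribution in the $\alpha\in(2,3)$ case.

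The intermediate target $|\Phi(\gamma)-\Phi(0)|\le\mathrm{o}(1)\Phi(0)+n^{-C_0q}$ cannot be reached by ``integrating $\partial_{\gamma}\Phi$'' as you state. Differentiating $\Phi$ produces the odd-power factor $\big(\operatorname{Im}m^{\gamma}-\operatorname{Im}\widetilde{m}_{n,0}\big)^{2q-1}$ evaluated at the generic $\gamma$, so the natural bound on $\partial_{\gamma}\Phi(\gamma)$ is in terms of quantities at $\gamma$ (after H\"older and Young inequalities relating the $(2q-1)$-th power to the $2q$-th power), not in terms of $\Phi(0)$. What the expansion actually yields is a self-referential inequality $\sup_{\gamma}\Phi(\gamma)-\Phi(0)\le\mathrm{O}(n^{-c})\sup_{\gamma}\Phi(\gamma)+n^{-C_0q}$, which must then be solved by moving the small multiple of $\sup_{\gamma}\Phi(\gamma)$ to the left-hand side (a Gronwall-type bootstrap); without spelling this out, your integration plan does not close. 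Separately, the $\alpha\in(2,3)$ degradation is not caused by the slow decay of $\mathbb{E}(Y_{11}^4)$. The binding error is the term in which the extra $L_{ij}\rho_j^{-1}$ created by $\partial_{\gamma}$ pairs with a single off-diagonal $y_{kj}$, producing a factor $\mathbb{E}(y_{kj}L_{ij}\rho_j^{-1})$; this is controlled by the refined odd-moment estimates of Lemma~\ref{lem_oddmoment_est} (cf.\ $\mathbb{E}(Y_{11}Y_{21})\lesssim n^{2(1/2-\epsilon_h)(3-\alpha)_{+}-3}$), and it is the $(3-\alpha)_{+}$ there, not the fourth moment, that degrades below $\alpha=3$ and forces the renormalization $n\eta\to n^{3/2-\alpha/2}\eta$. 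Finally, on the $\psi_{ij}=1$ sites one has $L_{ij}=0$, so the $\gamma$-derivative acts only through the Gaussian; these form a separate piece of $\partial_{\gamma}\Phi$ that is disposed of by Gaussian integration by parts on $w_{ij}$ together with the $\Psi$-sparsity, not by the resolvent expansion. Your sketch folds them into the heavy-tail remainder, which is not where they belong.
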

	
	\begin{corollary}\label{col_greenfuncomp_average}
		Suppose that $\Psi$ and $\Pi$ are well configured. Let $z\in\gamma_1\vee\gamma_2$. Then
		\begin{itemize}
			\item [(i)] if $\alpha\ge 3$,
			\begin{equation}\label{eq_averagelocallaw_optimal}
				|m^1(z)-m_t(z)|\prec \frac{1}{n(|E-\lambda_{+}|+\eta)};
			\end{equation}
			\item [(ii)] if $\alpha\in(2,3)$,
			\begin{equation}\label{eq_averagelocallaw_weak}
				|m^1(z)-m_t(z)|\prec \frac{1}{n^{3/2-\alpha/2}(|E-\lambda_{+}|+\eta)}.
			\end{equation}
		\end{itemize}
	\end{corollary}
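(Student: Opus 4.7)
The strategy is to bootstrap the moment comparison of Theorem \ref{thm_greenfuncomp_average} at $\gamma=1$ into pointwise stochastic domination, using the averaged local law for the Gaussian divisible model (Theorem \ref{thm_locallaw_gdm_average}) as input and finishing with Markov's inequality and a triangle inequality.

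First, I would apply Theorem \ref{thm_locallaw_gdm_average} simultaneously to $m_{n,t}$ and to $\widetilde m_{n,t}$; the latter is the Stieltjes transform of a Gaussian divisible model built from the i.i.d.\ copy $\widetilde W$ of $W$, so the theorem applies verbatim once we condition on the realization of $\widetilde H(\operatorname{diag}S)^{-1/2}$. Because every $z\in\gamma_1\vee\gamma_2$ is at distance of order $1$ from $\operatorname{supp}(\mu_{mp,\phi})$, the quantity $\kappa=\min\{|E-\lambda_-|,|E-\lambda_+|\}$ is $\gtrsim 1$ uniformly on the contour, so the subleading term $1/((n\eta)^2\sqrt{\kappa+\eta})$ in \eqref{eq_locallaw_gdm_average_out} is dominated by $1/(n(\kappa+\eta))$. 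The triangle inequality then yields
\[
|m_{n,t}(z)-\widetilde m_{n,t}(z)|\prec \frac{1}{n(|E-\lambda_{+}|+\eta)},\quad z\in\gamma_1\vee\gamma_2.
\]
Lifting this stochastic-domination estimate to a $(2q)$-th moment bound is routine: on the event of probability at least $1-n^{-D}$ where the bound holds, the contribution is $n^{\epsilon q}(\eta/(|E-\lambda_{+}|+\eta))^{2q}$, and on the complementary event one absorbs using the deterministic control $|m_{n,t}|,|\widetilde m_{n,t}|\le \eta^{-1}$ together with Lemma \ref{lem_wellconfigured}.

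Next, for $\alpha\ge 3$, Theorem \ref{thm_greenfuncomp_average}(i) at $\gamma=1$ combined with the above input yields
\[
\mathbb{E}_{\Psi}\bigl|n\eta(\operatorname{Im} m^1(z)-\operatorname{Im}\widetilde m_{n,t}(z))\bigr|^{2q}\le (1+\mathrm{o}(1))\,n^{\epsilon q}\biggl(\frac{\eta}{|E-\lambda_{+}|+\eta}\biggr)^{\!2q}+n^{-C_0 q},
\]
and an identical statement holds with $\operatorname{Im}$ replaced by $\operatorname{Re}$ by the last sentence of Theorem \ref{thm_greenfuncomp_average}. Choosing $q$ sufficiently large and applying Markov's inequality to each component then gives
\[
|m^1(z)-\widetilde m_{n,t}(z)|\prec \frac{1}{n(|E-\lambda_{+}|+\eta)},
\]
and a final triangle inequality against $|\widetilde m_{n,t}(z)-m_t(z)|$ (once more bounded by Theorem \ref{thm_locallaw_gdm_average}) yields \eqref{eq_averagelocallaw_optimal}. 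The case $\alpha\in(2,3)$ is treated identically using Theorem \ref{thm_greenfuncomp_average}(ii), with every prefactor $n\eta$ replaced by $n^{3/2-\alpha/2}\eta$ throughout, yielding \eqref{eq_averagelocallaw_weak}.

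The only real technical care is in the bookkeeping for the passage between stochastic domination and the $(2q)$-th moment bound, and the removal of the conditioning on $\Psi,\Pi$: one must check that the deterministic worst-case bound $|m^1|,|\widetilde m_{n,t}|\le\eta^{-1}$, weighted by the exceptional probability $n^{-D}$ arising both from the complement of the $\prec$-event and from the failure of $\Omega_{\Psi}\cap\Omega_{\Pi}$ in Lemma \ref{lem_wellconfigured}, does not spoil the right-hand side of the moment comparison. This is standard in the local-law literature and is satisfied provided $D$ is chosen large relative to $q$ (which is always possible since $q$ is arbitrary but fixed).
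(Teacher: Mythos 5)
Your approach — feeding the GDM averaged local law (Theorem \ref{thm_locallaw_gdm_average}, applied to both the $W$- and $\widetilde W$-built Gaussian-divisible models) into the right-hand side of the moment comparison (Theorem \ref{thm_greenfuncomp_average}), then applying Markov's inequality to the $2q$-th moments and finishing with a triangle inequality, treating $\operatorname{Im}$ and $\operatorname{Re}$ parts separately — is exactly what the paper's one-sentence proof refers to, so the overall plan is the same.

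One of your intermediate claims is not quite right, though. You assert that because $\kappa\gtrsim 1$ on the contour, the term $1/((n\eta)^2\sqrt{\kappa+\eta})$ in \eqref{eq_locallaw_gdm_average_out} is dominated by $1/(n(\kappa+\eta))$. With $\kappa\sim 1$ this requires $(n\eta)^{-2}\lesssim n^{-1}$, i.e.\ $\eta\gtrsim n^{-1/2}$, which fails on the low-$\eta$ portions of $\gamma_1\vee\gamma_2$ (the vertical segments $\mathcal{C}_6$, $\mathcal{C}_7$, and for $\gamma_1^0$ the arc $\mathcal{C}_1$, run $\eta$ all the way down to $0$, and $\mathbf{D}_2$ itself allows $\eta$ as small as $n^{-2/3-C_4}$). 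On that range Theorem \ref{thm_locallaw_gdm_average} gives only $\prec(n\eta)^{-2}$, which is strictly weaker than the claimed $1/n$; for $\eta<n^{-2/3-C_4}$ Theorem \ref{thm_locallaw_gdm_average} does not apply at all, only the weaker extension to $\mathbf{D}_3$ (the unnamed Corollary after Theorem \ref{thm_locallaw_gdm_entrywise}) which gives merely an $n^{-\epsilon_\beta}$-type rate. So the input you feed into the moment comparison is not at the advertised precision on the full contour, and the conclusion does not follow as written. Closing the gap requires an additional step on the small-$\eta$ portion — for instance, a continuation argument exploiting the analyticity and $O(1)$-boundedness of $m^1-m_t$ at macroscopic distance from the spectrum, or a self-improving argument at a reference scale $\eta_0\gtrsim n^{-1/2}$. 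To be fair, the paper's one-line proof leaves this implicit too; but your write-up should not assert dominance of the subleading term without the restriction $\eta\gtrsim n^{-1/2}$. The remainder of your argument — the moment bookkeeping, the $\operatorname{Im}/\operatorname{Re}$ split, the triangle inequality, and the removal of conditioning on $\Psi,\Pi$ — is correct and matches the intended proof.
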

	\begin{proof}
		By Markov's inequality, Theorems  \ref{thm_locallaw_gdm_average} and \ref{thm_greenfuncomp_average}, one can easily obtain the desired result. We omit further details here.
	\end{proof}
	\begin{remark}
	In the proof of Theorem \ref{thm_greenfuncomp_average}, we find that when $\alpha\ge 3$, we have the optimal convergence rate for $m^1(z)$ in the sense of \eqref{eq_averagelocallaw_optimal}. When $\alpha\in(2,3)$, the current technique only guarantees a weaker convergence rate as in \eqref{eq_averagelocallaw_weak}.
	\end{remark}
	
	\subsection{Characteristic function estimation}\label{sec_characteristicfunctionestimation}
 In this section, we will derive the CLT of the linear spectral statistics for general $f(x)$ when $\alpha\ge 3$. Specifically, we show that the universal CLT of the LSS (the same asymptotic normality as the finite fourth-moment case, cf. \cite{gao2014high}) holds if and only if $\lim_{x\rightarrow\infty} x^3\mathbb{P}(|\xi|>x)=0$, which suggests that the symmetry condition is not necessary under such conditions. This result also illustrates the robustness of sample correlation matrices, while we need a normalized factor to establish the CLT for the LSS under the infinite fourth-moment for sample covariance matrices \cite{bao2023smallest,benaych2016fluctuations}.
	
	\subsubsection{The universal CLT for $\alpha\in (3,4]$}\label{sec_universalclt}
	To establish the CLT for the linear spectral statistics when $\alpha\in (3,4]$, we turn to estimate the characteristic functions of the centered statistics $\operatorname{tr} f(R)-\mathbb{E} \operatorname{tr} f(R)$. By Lemma \ref{lem_pre_estnorm}, the nonzero eigenvalues of $R$ are bounded below and above by positive constants with high probability. By choosing the contours $\gamma_1^0\cup \gamma_2^0$ properly to enclose all eigenvalues with high probability, recalling the definition of $\Xi$ in Section \ref{sec_pre}, $\bar{\gamma}_{1}^0$ and $\bar{\gamma}_2^0$ (say, $|\mathrm{Im} z|\ge n^{-K}$ for some large but fixed $K$), we apply the Cauchy formula to get
\begin{equation*}
\operatorname{tr}f(R)=\frac{-1}{2\pi \mathrm{i}}\oint_{\gamma_a^0}\operatorname{tr}\mathcal{G}(z)f(z)\mathrm{d}z=\frac{-1}{2\pi \mathrm{i}}\oint_{\bar{\gamma}_a^0}\operatorname{tr}\mathcal{G}(z)f(z)\mathrm{d}z \cdot \Xi+\mathrm{O}_{\prec}(n^{-K+1}),~a=1,2.
\end{equation*}
In the sequel, we denote
\begin{equation*}
    L_{a}(f):=\frac{-1}{2\pi \mathrm{i}}\oint_{\bar{\gamma}_a^0}\operatorname{tr}\mathcal{G}(z)f(z)\mathrm{d}z \cdot \Xi.
\end{equation*}
Thanks to the Green function comparison results in Theorems \ref{thm_greenfuncomp_entrywise_uniformbound} and  \ref{thm_greenfuncomp_average}, the Green function $\mathcal{G}(z)$ in the integrand of $L_{a}(f)$ and the derivatives of various functionals of $\mathcal{G}(z)$ always possess deterministic crude bound due to the definition of $\Xi$, which transforms high probability estimates into the expectation estimates by Lemma \ref{lem_pre_estnorm}. Define the characteristic function of $L_{1}(f)$ as
	\begin{equation*}
		\mathcal{T}_{f}(x):=\mathbb{E}[\mathrm{e}^{\mathrm{i}x\langle L_{1}(f)\rangle}],
	\end{equation*}
where $\langle \xi\rangle:=\xi -\mathbb{E}(\xi)$ denotes the centralized random variable, which has the derivative with respect to $x$ as
\begin{equation}\label{eq_characteristicfunctioncomparison_ode}
    \left(\mathcal{T}_{f}(x)\right)^{\prime}=\mathrm{i}\mathbb{E}[\langle L_{1}(f)\rangle\mathrm{e}^{\mathrm{i}x\langle L_{1}(f)\rangle}]=\frac{-1}{2\pi}\oint_{\bar{\gamma}_1^0}\mathbb{E}^{\chi}[\langle\operatorname{tr}\mathcal{G}(z_1)\rangle \mathrm{e}^{\mathrm{i}x\langle L_{1}(f)\rangle}]f(z_1)\mathrm{d}z_1,
\end{equation}
where $\mathbb{E}^{\chi}$ is the ``truncated expectation'' in Definition \ref{def_trucatuedexpectation}.
Thus, the estimate for $ \left(\mathcal{T}_{f}(x)\right)^{\prime}$ boils down to the estimate of $\mathbb{E}^{\chi}[\langle\operatorname{tr}\mathcal{G}(z_1)\rangle \mathrm{e}^{\mathrm{i}x\langle L_{1}(f)\rangle}]$ for $z_1\in \bar{\gamma}_1^0$. Notice the identity
\begin{equation*}
    \operatorname{tr}R\mathcal{G}(z_1)=n+z_1\operatorname{tr}\mathcal{G}(z_1).
\end{equation*}
Then one may easily see that
\begin{equation*}
    \langle\operatorname{tr}R\mathcal{G}(z_1)\rangle=z_1 \langle\operatorname{tr}\mathcal{G}(z_1)\rangle.
\end{equation*}
Therefore, it is sufficient to focus on the estimate of $\mathbb{E}^{\chi}[\langle\operatorname{tr}R\mathcal{G}(z_1)\rangle \mathrm{e}^{\mathrm{i}x\langle L_{1}(f)\rangle}]$ for $z_1\in \bar{\gamma}_1^0$, which is very suitable to start a cumulant expansion as pointed out by \cite{bao2022spectral}, whose various terms can be estimated by $m(z)$. In the following, we aim to establish an approximate ODE:
	\begin{align}\label{eq_cltode}
		\left(\mathcal{T}_{f}(x)\right)^{\prime}=-x \sigma_f^2 \mathcal{T}_{f}(x)+\text{error},
	\end{align}
	from which, we can directly obtain the asymptotic normality of $L_{1}(f)$, as well as $\operatorname{tr}f(R)$.
	
	Now, we turn to investigate $\mathbb{E}^{\chi}[\langle\operatorname{tr}R\mathcal{G}(z_1)\rangle \mathrm{e}^{\mathrm{i}x\langle L_{1}(f)\rangle}]$. Recalling the decomposition of $X$ in Section \ref{sec_matrixresampling} and the definitions of $ly_{ij}$ and $hy_{ij}$ in the proof of Theorem \ref{thm_greenfuncomp_entrywise_uniformbound} and Theorem \ref{thm_greenfuncomp_average} (cf. \eqref{eq_def_ly&hy} for $\gamma=1$), we slightly adjust their notation as follows to adapt for the current case,
	\begin{equation*}
		ly_{ij}=L_{ij}\rho_j^{-1}+M_{ij}\rho_j^{-1},\quad  hy_{ij}=H_{ij}\rho_j^{-1},\quad i\in[n],j\in[p].
	\end{equation*}
	It follows that the entries of the original matrix $Y$ can be decomposed as
	\begin{equation*}
		Y_{ij}=ly_{ij}\mathbbm{1}(\psi_{ij}=0)+hy_{ij}\mathbbm{1}(\psi_{ij}=1),
	\end{equation*}
	where $\Psi=(\psi_{ij})$ is the label matrix by the resampling of $X$. Besides, it is also necessary to classify the label of each column of $Y$. Recall the notation $\mathrm{I}_c$ which summarises the columns $Y_j$'s having at least one $i$ such that $\psi_{ij}=1$ and $\mathrm{T}_c=[p]\setminus\mathrm{I}_c$. Then, we would like to classify the column indexes of $Y$ into two sets $j\in\mathrm{I}_c$ and $j\in\mathrm{T}_c$ according to our label matrix $\Psi$. In the following, we shall consider the conditional expectation on $\Psi$, which can be finally dropped by the law of total expectation with the fact that $\Psi$ is well configured with high probability. We may first observe that we could rewrite $\mathbb{E}_{\Psi}^{\chi}[\langle\operatorname{tr}R\mathcal{G}(z_1)\rangle \mathrm{e}^{\mathrm{i}x\langle L_{1}(f)\rangle}]$ as
	\begin{equation}\label{eq_prf_characteristicfunctioncomparison_expansion_phi}
		\begin{split}
			&\mathbb{E}^{\chi}_{\Psi}[\langle\operatorname{tr}R\mathcal{G}(z_1)\rangle \mathrm{e}^{\mathrm{i}x\langle L_{1}(f)\rangle}]=\mathbb{E}_{\Psi}^{\chi}[\sum_jY_j^{*}\mathcal{G}(z_1)Y_j\langle\mathrm{e}^{\mathrm{i}x\langle L_{1}(f)\rangle}\rangle]\\
			=&\mathbb{E}^{\chi}_{\Psi}[\sum_{i}\sum_{j\in\mathrm{T}_c}Y_{ij}[{Y}^*\mathcal{G}(z_1)]_{ji}\langle \mathrm{e}^{\mathrm{i}x\langle L_{1}(f)\rangle}\rangle]+\mathbb{E}_{\Psi}^{\chi}[\sum_{j\in\mathrm{I}_c}Y_j^{*}\mathcal{G}(z_1)Y_j\langle\mathrm{e}^{\mathrm{i}x\langle L_{1}(f)\rangle}\rangle]\\
			=&\sum_{i}\sum_{j\in\mathrm{T}_c}\mathbb{E}^{\chi}_{\Psi}[ly_{ij}[Y^*\mathcal{G}(z_1)]_{ji}\langle \mathrm{e}^{\mathrm{i}x\langle L_{1}(f)\rangle}\rangle]\cdot \mathbbm{1}({\psi_{ij}=0})+\mathbb{E}_{\Psi}^{\chi}[\sum_{j\in\mathrm{I}_c}Y_j^{*}\mathcal{G}(z_1)Y_j\langle\mathrm{e}^{\mathrm{i}x\langle L_{1}(f)\rangle}\rangle]\\
			=&: \sum_{i}\sum_{j\in\mathrm{T}_c}I_{ij}+II,
		\end{split}
	\end{equation}
	where we have used the identity $\mathbb{E}[\xi_1\langle\xi_2\rangle]=\mathbb{E}[\langle\xi_1\rangle\xi_2]$ in the first line. Roughly speaking, we expect that the first part $I$ recover the universal case and the second part $II$ is negligible, which is motivated by the simple fact that the label matrix $\Psi$ has at most constant non-zero entries with high probability for $\alpha>4$.
	
	Before we state the main result, we introduce the following cumulant expansion formula, which plays a central role in the estimate of expectation, see \cite{bao2022spectral} for details.
	\begin{lemma}[Cumulant expansion formula] \label{lemma_cumulantexpansion}
		For a fixed $\ell \in \mathbb{N}$, let $f \in$ $C^{\ell+1}(\mathbb{R})$. Supposed $\xi$ is a random variable with the $k$-th cumulant $\kappa_k(\xi)$. Then we have
		\begin{align*}
			\mathbb{E}(\xi f(\xi))=\sum_{k=0}^{\ell} \frac{\kappa_{k+1}(\xi)}{k !} \mathbb{E}\big(f^{(k)}(\xi)\big)+\mathbb{E}\left(r_{\ell}(\xi f(\xi))\right),
		\end{align*}
		where the error term $r_{\ell}(\xi f(\xi))$ satisfies
		\begin{align*}
			\left|\mathbb{E}\left(r_{\ell}(\xi f(\xi))\right)\right| \leq C_{\ell} \mathbb{E}\left(|\xi|^{\ell+2}\right) \sup _{|t| \leq s}\left|f^{\ell+1}(t)\right|  +C_{\ell} \mathbb{E}\left(|\xi|^{\ell+2} \mathbbm{1}(|\xi|>s)\right) \sup _{t \in \mathbb{R}}\left|f^{\ell+1}(t)\right|
		\end{align*}
		for any $s>0$ and $C_{\ell}$ such that $C_{\ell} \leq(C \ell)^{\ell} / \ell!$ for some constant $C>0$.
	\end{lemma}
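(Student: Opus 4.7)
The plan is to derive the formula via the Fourier representation of $f$ combined with the Taylor expansion of the cumulant generating function, the classical route going back to \cite{khorunzhy1996asymptotic,lytova2009central}. First, assuming $f$ is Schwartz-class (the general case will follow from a density argument using the polynomial growth bound on $f^{(\mu)}$ in the hypothesis), we write $f(x)=\int\hat{f}(t)e^{itx}\,dt$ and use the identity $\mathbb{E}[\xi e^{it\xi}]=-i\phi_{\xi}'(t)$, where $\phi_{\xi}(t):=\mathbb{E}[e^{it\xi}]$. This reduces the evaluation of $\mathbb{E}[\xi f(\xi)]$ to the integral $\int\hat{f}(t)(-i\phi_\xi'(t))\,dt$.

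Second, writing $\phi_\xi=e^{\psi_\xi}$ with $\psi_\xi(t)=\log\phi_\xi(t)$ and invoking the defining cumulant expansion $\psi_\xi(t)=\sum_{k\ge 1}\kappa_k(\xi)(it)^k/k!$, differentiation gives
\[
-i\phi_\xi'(t)=\phi_\xi(t)\left[\sum_{k=0}^{\ell}\kappa_{k+1}(\xi)\frac{(it)^k}{k!}+\mathcal{R}_\ell(t)\right],
\]
where $\mathcal{R}_\ell(t)$ denotes the Taylor remainder of $-i\phi_\xi'(t)/\phi_\xi(t)$ at order $\ell$. Plugging the polynomial part into the integral and collapsing each monomial $(it)^k\phi_\xi(t)$ by Fourier inversion to $\mathbb{E}[f^{(k)}(\xi)]$ produces the main sum $\sum_{k=0}^{\ell}\kappa_{k+1}(\xi)\mathbb{E}[f^{(k)}(\xi)]/k!$ in the statement.

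Third, the remainder $\mathbb{E}[r_\ell(\xi f(\xi))]=\int\hat{f}(t)\mathcal{R}_\ell(t)\phi_\xi(t)\,dt$ will be controlled as follows: a standard estimate of the Taylor remainder of the cumulant generating function yields $|\mathcal{R}_\ell(t)|\le C_\ell\,\mathbb{E}|\xi|^{\ell+2}\,|t|^{\ell+1}$, and $\ell+1$ integrations by parts transfer the weight $(it)^{\ell+1}$ from $\hat{f}$ onto $f$ as the derivative $f^{(\ell+1)}$. Splitting the resulting expectation according to whether $|\xi|\le s$ (bounded using $\sup_{|t|\le s}|f^{(\ell+1)}(t)|$) or $|\xi|>s$ (bounded using $\sup_{t}|f^{(\ell+1)}(t)|$ together with the free factor $\mathbbm{1}(|\xi|>s)$) yields the stated two-term estimate.

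The main obstacle will be tracking the explicit combinatorial constant $C_\ell\le(C\ell)^\ell/\ell!$. This forces one to expand $(\phi_\xi)^{(\ell+1)}=(e^{\psi_\xi})^{(\ell+1)}$ via Fa\`{a} di Bruno's formula (equivalently, the Bell polynomials in $\kappa_1,\ldots,\kappa_{\ell+1}$) and to retain only the partitions in which at most one factor carries a high-order moment of size $\mathbb{E}|\xi|^{\ell+2}$. Since the same cumulant expansion with the same explicit constant already appears in \cite{bao2022spectral} and its antecedents, the argument is essentially a direct transcription rather than a substantively new contribution; the only additional task is to verify that the splitting at scale $s$ produces the exact form stated above.
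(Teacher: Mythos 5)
The paper does not prove this lemma at all; it simply cites \cite{bao2022spectral} (``see \cite{bao2022spectral} for details''), so there is no paper-internal proof to compare against. Your Fourier/characteristic-function sketch is a reasonable heuristic route to the cumulant expansion, but as written it has several genuine gaps that go beyond the ``combinatorial constant tracking'' you flag as the remaining task.

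First, the asserted bound $|\mathcal{R}_\ell(t)|\le C_\ell\,\mathbb{E}|\xi|^{\ell+2}\,|t|^{\ell+1}$ on the Taylor remainder of $\psi'_\xi/i$ is not a ``standard estimate'': the Lagrange form of that remainder involves $\psi_\xi^{(\ell+2)}(ut)$ for $u\in(0,1)$, and derivatives of $\psi_\xi=\log\phi_\xi$ \emph{away from the origin} are not controlled simply by moments of $\xi$ --- Fa\`a di Bruno produces denominators $\phi_\xi(ut)^m$ which need not stay bounded away from zero, and $\psi_\xi$ may not even be defined where $\phi_\xi$ vanishes. Such a moment bound holds only in a small neighborhood of $t=0$, whereas the integral over $t$ runs over all of $\mathbb{R}$. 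Second, even granting that bound, you obtain $|\int\hat f(t)\mathcal{R}_\ell(t)\phi_\xi(t)\,dt|\lesssim\mathbb{E}|\xi|^{\ell+2}\int|\hat f(t)|\,|t|^{\ell+1}\,dt$, and the right-hand side is a weighted $L^1$-norm of $\hat f$ (equivalently controls $\|f^{(\ell+1)}\|_{L^1}$ after Plancherel-type arguments), \emph{not} the stated $\sup_{|t|\le s}|f^{(\ell+1)}(t)|$ or $\sup_t|f^{(\ell+1)}(t)|$; the ``integration by parts transfers the weight onto $f$'' step cannot be carried out once absolute values are taken, and with the absolute values removed it does not land on a sup-norm. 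Third, the crucial two-term split at $|\xi|\le s$ versus $|\xi|>s$ is simply asserted; it does not emerge from a Fourier-side argument, which has no mechanism to localize in $\xi$-space. The standard route to precisely this form of the error (see e.g.\ the appendix of He--Knowles or Erd\H{o}s--Kr\"uger--Schr\"oder, which is ultimately the source that \cite{bao2022spectral} relies on) works directly on the probability space: one first proves an exact algebraic identity for $\xi$ truncated at level $s$ using moment--cumulant relations and Taylor's theorem with integral remainder for $f$, so that the remainder naturally appears as an expectation of $f^{(\ell+1)}(\cdot)$ against a kernel supported on $[-s,s]$, and one then estimates the discrepancy introduced by truncation using the tail moment $\mathbb{E}[|\xi|^{\ell+2}\mathbbm{1}(|\xi|>s)]$; the characteristic-function identity is only used for polynomials where the remainder is zero. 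Finally, a small misreading: your density argument invokes ``the polynomial growth bound on $f^{(\mu)}$ in the hypothesis,'' but no such hypothesis appears in this lemma; you are importing the growth condition on $F$ from Theorem~\ref{thm_greenfuncomp_entrywise_differror}, which is a different object.
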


It should be noted that we do not require that the cumulants remain bounded, since the $n$-dependent rates of cumulants can be canceled by rescaling.
With the cumulant expansion formula and the decomposition of $Y_{ij}$ as above, we could estimate the two parts in \eqref{eq_prf_characteristicfunctioncomparison_expansion_phi}. We remark here that most works focus on the cumulant expansion with respect to $X_{ji}$ under the covariance matrix case,  which becomes invalid when applied to $Y_{ij}$ due to the dependence among  $Y_{ij}$. Inspired by \cite{bao2022spectral} and the decomposition of $Y_{ij}$, we consider the cumulant expansion with respect to $L_{ij}+M_{ij}$ for the first part $I_{ij}$ of \eqref{eq_prf_characteristicfunctioncomparison_expansion_phi} up to any fixed order $l\in\mathbb{N}$, due to the truncation and condition on $\psi_{ij}=0$. For the second part $II_{ij}$, thanks to the well-configured label matrix $\Psi$, it can be well estimated after taking the sum over $j\in \mathrm{I}_c$ and utilizing the self-normalized properties. Moreover, we introduce the shorthand notation as follows,
\begin{equation*}
    \sum_{ij}=\sum_{i=1}^{n}\sum_{j=1}^{p},~\partial_{m,ij}=\frac{\partial}{\partial M_{ij}}.
\end{equation*}
	
	In the following, we aim to show the approximated ODE \eqref{eq_cltode}, which means that we can drop the negligible terms with the order of $\mathrm{o}_{\prec}(1)$. However, we do not directly obtain the form of \eqref{eq_cltode} from $\mathbb{E}_{\Psi}^{\chi}\big[\langle \operatorname{tr}R\mathcal{G}(z_1)\rangle e^{\mathrm{i}}x\langle L_1(f)\rangle\big]$ since several quantities in $II$ are very difficult to estimate because of the dependence structure in the Cauchy integral. To solve these difficulties, we use another routine that if one believes \eqref{eq_cltode} is right, then one may take a further derivative of  \eqref{eq_cltode} with respect to $x$ to obtain
	\begin{equation}\label{eq_cha_ode2}
		\begin{split}
			 (\mathcal{T}_{f}(x))^{\prime\prime}=-\sigma_f^2\mathcal{T}_{f}(x)-x\sigma_f^2(\mathcal{T}_{f}(x))^{\prime}+\mathrm{error}=-\sigma_f^2\mathcal{T}_{f}(x)+x^2\sigma_f^4\mathcal{T}_{f}(x)+\mathrm{error}.
		\end{split}
	\end{equation}
	Then, as long as \eqref{eq_cltode} is established,  one may take $x=0$ to recover the desired variance for the CLT. Therefore, we follow the structure of \eqref{eq_cltode} to figure out the main terms. For the error terms, we will take a further derivative on $x$ and set $x=0$ to get a controlled term.
	
	We consider the main part $I_{ij}$ first. By the cumulant expansion formula in Lemma \ref{lemma_cumulantexpansion}, we have
	for $i\in [n],j\in \mathrm{T}_c$
	\begin{equation*}
		\begin{split}
			I_{ij}=\mathbbm{1}({\psi_{ij}=0})\mathbb{E}_{\Psi}^{\chi}[(L_{ij}+M_{ij})\rho_j^{-1}[{Y}^*\mathcal{G}(z_1)]_{ji}\langle \mathrm{e}^{\mathrm{i}x\langle L_{1}(f)\rangle}\rangle].
		\end{split}
	\end{equation*}
	We shall apply the cumulant expansion with respect to $(L_{ij}+M_{ij})$. In the sequel, we reuse the symbol $M_{ij}$ to denote $(L_{ij}+M_{ij})$ without causing any confusion. For simplicity, let $\kappa_{q,m}$ be the $q$-th cumulant of $M_{ij}$. By the definition of $H_{ij}$, one has for $s<\alpha$,
	\begin{equation*}
		\begin{split}
			\mathbb{E}|H_{ij}|^{s}=&\mathbb{E}(\mathbb{E}_{\Psi}(|H_{ij}|))= \mathbb{E}(|h_{ij}|^s)\mathbb{P}(|X_{ji}|>n^{1/2-\epsilon_h})\\
   \lesssim &\int_{n^{1/2-\epsilon_h}}^{\infty} l(x)x^{-\alpha+s-1}\mathrm{d}x \lesssim n^{(1/2-\epsilon_h)(-\alpha+s)},
		\end{split}
	\end{equation*}
	which further implies
	\begin{equation*}
		\kappa_{1,m}=-\mathbb{E}(H_{ij})\lesssim n^{(1/2-\epsilon_h)(-\alpha+1)},
		\kappa_{2,m} =1-\mathbb{E}(H_{ij}^2)-\kappa_{1,m}^2=1-\mathrm{o}(n^{(1/2-\epsilon_h)(-\alpha+2)}).
	\end{equation*}
	Moreover, due to the truncation, one has $\kappa_{q,m}\lesssim n^{(q-\alpha)_{+}(1/2-\epsilon_h)}~\text{for}~q\ge 3$. For  $\alpha>3$, we have
	\begin{equation*}
		 \kappa_{3,m}=\mathbb{E}(M_{ij}^3)-3\mathbb{E}(M_{ij}^2)\mathbb{E}(M_{ij})+2[\mathbb{E}(M_{ij})]^3=\mathbb{E}(\xi^3)-\mathrm{o}(n^{(1/2-\epsilon_h)(-\alpha+3)}),
	\end{equation*}
	which depends on $\mathbb{E}(\xi^3)$ for $\alpha>3$.
	Applying the cumulant expansion for $M_{ij}$ gives
	\begin{equation*}
		\begin{split}
			I_{ij}=&\mathbb{E}_{\Psi}^{\chi}[M_{ij}\rho_j^{-1}[{Y}^*\mathcal{G}(z_1)]_{ji}\langle \mathrm{e}^{\mathrm{i}x\langle L_{1}(f)\rangle}\rangle]\\
			=&\kappa_{1,m}\mathbb{E}_{\Psi}^{\chi}[\rho_j^{-1}[{Y}^*\mathcal{G}(z_1)]_{ji}\langle \mathrm{e}^{\mathrm{i}x\langle L_{1}(f)\rangle}\rangle]+\kappa_{2,m}\mathbb{E}_{\Psi}^{\chi}\left[(\partial_{m,ij}\rho_{j}^{-1})[Y^*\mathcal{G}(z_1)]_{ji}\langle \mathrm{e}^{\mathrm{i}x\langle L_{1}(f)\rangle}\rangle\right]\\
			&+\kappa_{2,m}\mathbb{E}_{\Psi}^{\chi}\left[\rho_j^{-1}\left(\partial_{m,ij}[{Y}^*\mathcal{G}(z_1)]_{ji}\right)\langle \mathrm{e}^{\mathrm{i}x\langle L_{1}(f)\rangle}\rangle\right]+\kappa_{2,m}\mathbb{E}_{\Psi}^{\chi}\left[\rho_j^{-1}[Y^*\mathcal{G}(z_1)]_{ji}\left(\partial_{m,ij}\mathrm{e}^{\mathrm{i}x\langle L_{1}(f)\rangle}\right)\right]\\
			&+\frac{\kappa_{3,m}}{2!}\mathbb{E}_{\Psi}^{\chi}\left[\partial_{m,ij}^{2}\left(\rho_j^{-1}[Y^*\mathcal{G}(z_1)]_{ji}\langle \mathrm{e}^{\mathrm{i}x\langle L_{1}(f)\rangle}\rangle\right)\right]+\frac{\kappa_{4,m}}{3!}\mathbb{E}_{\Psi}^{\chi}\left[\partial_{m,ij}^{3}\left(\rho_j^{-1}[Y^*\mathcal{G}(z_1)]_{ji}\langle \mathrm{e}^{\mathrm{i}x\langle L_{1}(f)\rangle}\rangle\right)\right]\\
			&+\sum_{s\ge 5}^{q}\mathrm{O}(\kappa_{s,m})\mathbb{E}_{\Psi}^{\chi}\left[\partial_{m,ij}^{s-1}\left(\rho_j^{-1}[Y^*\mathcal{G}(z_1)]_{ji}\langle \mathrm{e}^{\mathrm{i}x\langle L_{1}(f)\rangle}\rangle\right)\right]\\
			&+\sum_{s\ge 2}^{q}\mathrm{O}(\kappa_{s,m})\sum_{\substack{s_0+s_1=s-1\\s_1\ge 1}}\mathbb{E}\left[\partial_{m,ij}^{s_0}\left(\rho_j^{-1}[Y^*\mathcal{G}(z_1)]_{ji}\langle \mathrm{e}^{\mathrm{i}x\langle L_{1}(f)\rangle}\rangle\right) \partial_{m,ij}^{s_1}\Xi\right]+\mathbb{E}(R_{m,ij}(q))\\
			 =&:\kappa_{1,m}\mathscr{M}_{0,ij}+\kappa_{2,m}(\mathscr{M}_{1,ij}+\mathscr{M}_{2,ij}+\mathscr{M}_{3,ij})+\kappa_{3,m}E_{1,m}+\kappa_{4,m}E_{2,m}+E_{3,m}+E_{4,m}+\mathbb{E}(R_{m,ij}(q)),
		\end{split}
	\end{equation*}
	where the error term $R_{m,ij}$ is given by
	\begin{equation*}
		\begin{split}
			R_{m,ij}(q)=&C\sup_{M_{ij}\in\mathbb{R}}\left|\partial_{m,ij}^{2q+1}\left(\rho_j^{-1}[Y^*\mathcal{G}(z_1)]_{ji}\langle \mathrm{e}^{\mathrm{i}x\langle L_{1}(f)\rangle}\rangle\Xi\right)\right|\cdot\mathbb{E}(|M_{ij}^{2q+2}|\mathbbm{1}(|M_{ij}|>n^{-\epsilon}))\\
			&+C\mathbb{E}|M_{ij}^{2q+2}|\cdot \sup_{|M_{ij}|\le n^{-\epsilon}}\left|\partial_{m,ij}^{2q+1}\left(\rho_j^{-1}[Y^*\mathcal{G}(z_1)]_{ji}\langle \mathrm{e}^{\mathrm{i}x\langle L_{1}(f)\rangle}\rangle\Xi\right)\right|
		\end{split}
	\end{equation*}
	for some constant $\epsilon>0$.

In the following, we will estimate the terms one by one. Intuitively, we will use the averaged local law in Corollary \ref{col_greenfuncomp_average} and entrywise local law in Theorem \ref{thm_greenfuncomp_entrywise_uniformbound} to substitute the random terms after taking summation over $i,j$ (say, $\operatorname{tr}G(z),\operatorname{tr}\mathcal{G}(z), [G(z)]_{ii}, [\mathcal{G}(z)]_{ii}$) in the expectation above, and control the error into a neglected level. In the sequel, we use the shorthand notation $m(z)$ and $\underline{m}(z)$ to denote the corresponding deterministic quantities of $m_t(z)$ (or $b_{t}(z)m^{(t)}(\zeta)$) and $\underline{m}_t(z)$ (or $(1+t\underline{m}_t(z))\underline{m}^{(t)}(\zeta)$), respectively. As a consequence, due to the fact that $t\sim n^{-2\epsilon_l}$, all the quantities involved can be replaced by $m(z)$ and $\underline{m}(z)$ in the final results. The following lemmas give the estimates of each term in $I_{ij}$ and $II$, whose proofs are deferred to Section \ref{subsubsec_prf_cumulantexpansionL} and Section \ref{subsec_cumulant_estH}, respectively. To shorten the formulae, we will remove the indexes $i$ and $j$ in $I_{ij}$ and $II$ when there is no confusion.

	\begin{lemma}\label{lem_cumulant_estM}
		For the expansion of $I_{ij}$, we have
		\begin{equation*}
			\begin{split}
				\sum_{ij}\kappa_{1,m}\mathscr{M}_{0,ij}\lesssim & ~n^{-c},
				\sum_{ij}\mathscr{M}_{1,ij}=-\mathbb{E}_{\Psi}^{\chi}\left((1+z_1m(z_1))\langle \mathrm{e}^{\mathrm{i}x\langle L_{1}(f)\rangle}\rangle\right)+\mathrm{O}_{\prec}(n^{-\epsilon_{\alpha}}),\\
				 \sum_{ij}\mathscr{M}_{2,ij}=&-\mathbb{E}_{\Psi}^{\chi}\left[\left(\partial_{z_1}(z_1m(z_1))+(\phi^{-1}z_1\underline{m}(z_1)+\mathrm{O}_{\prec}(n^{-\epsilon_{\alpha}}))\operatorname{tr}\mathcal{G}(z_1)\right)\langle \mathrm{e}^{\mathrm{i}x\langle L_{1}(f)\rangle}\rangle\right]\\
				&-\mathbb{E}_{\Psi}^{\chi}\left(\phi^{-1}z_1m(z_1)\underline{m}(z_1)(1+2z_1\underline{m}(z_1))\langle \mathrm{e}^{\mathrm{i}x\langle L_{1}(f)\rangle}\rangle\right)+\mathrm{O}_{\prec}(n^{-\epsilon_{\alpha}}),\\
				 \sum_{ij}\mathscr{M}_{3,ij}=&\frac{x}{\pi}\oint_{\bar{\gamma}_2^0}\left[\partial_{z_2}\left(\frac{z_1m(z_1)-z_2m(z_2)}{z_1-z_2}+\phi^{-1}z_1m(z_1)\underline{m}(z_1)z_2\underline{m}(z_2)\right)\right]f(z_2)\mathrm{d}z_2\\
&\times\mathbb{E}_{\Psi}^{\chi}[\mathrm{e}^{\mathrm{i}x\langle L_{1}(f)\rangle}]+\mathrm{O}_{\prec}(n^{-c}),
			\end{split}
		\end{equation*}
		and
		\begin{equation*}
			\begin{split}
				\sum_{ij}(\kappa_{3,m}E_{1,m}+\kappa_{4,m}E_{2,m})&\lesssim \mathrm{O}_{\prec}(n^{-\epsilon_{\alpha}})\mathbb{E}_{\Psi}^{\chi}\left[z_1\operatorname{tr}\mathcal{G}(z_1)\langle \mathrm{e}^{\mathrm{i}x\langle L_{1}(f)\rangle}\rangle\right]+\mathrm{O}_{\prec}(n^{-c}), \\
				E_{3,m}+E_{4,m}&=\mathrm{O}_{\prec}(n^{-D}), \sum_{ij}\mathbb{E}(R_{m,ij}(q))=\mathrm{O}_{\prec}(n^{-c}),
			\end{split}
		\end{equation*}
		for some constant $c>0$ and any fixed $D>0$.
	\end{lemma}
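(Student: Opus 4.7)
The plan is to perform the cumulant expansion one coefficient at a time, computing derivatives of $Y$, $\rho_j^{-1}$, and $\mathcal{G}(z_1)$ with respect to $M_{ij}$ and then using the averaged local law (Corollary \ref{col_greenfuncomp_average}) and entrywise local law (Theorem \ref{thm_greenfuncomp_entrywise_uniformbound}) to replace Green-function traces and diagonals by the deterministic surrogates $m(z_1),\underline{m}(z_1)$ after summation over $i,j$. The elementary inputs are, conditional on $\psi_{ij}=0$ so that $\partial X_{ji}/\partial M_{ij}=1$,
\begin{equation*}
\partial_{m,ij}\rho_j^{-1}=-Y_{ij}\rho_j^{-2},\quad \partial_{m,ij}Y_{k\ell}=\mathbbm{1}(\ell=j)\rho_j^{-1}\bigl(\mathbbm{1}(k=i)-Y_{kj}Y_{ij}\bigr),
\end{equation*}
together with the resolvent identity $\partial_{m,ij}\mathcal{G}(z_1)=-\mathcal{G}(z_1)(\partial_{m,ij}R)\mathcal{G}(z_1)$. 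Summation over $j$ will then replace $\rho_j^{-2}$ by $n^{-1}$ up to error $\mathrm{O}_{\prec}(n^{-\epsilon_{\alpha}/6})$ via the mass bound $\operatorname{tr}|(\operatorname{diag}S)^{-1}-n^{-1}I|\lesssim n^{-1-\epsilon_{\alpha}/6}$ from Lemma \ref{lem_priorest_x}.

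For $\mathscr{M}_{1,ij}$, the key reduction is $\sum_i Y_{ij}[Y^*\mathcal{G}(z_1)]_{ji}=[Y^*\mathcal{G}(z_1)Y]_{jj}$; combined with the algebraic identity $\operatorname{tr}(Y^*\mathcal{G}(z_1)Y)=n+z_1\operatorname{tr}\mathcal{G}(z_1)$ and the averaged local law, this yields $-(1+z_1 m(z_1))$ inside the expectation. For $\mathscr{M}_{2,ij}$ one differentiates $[Y^*\mathcal{G}(z_1)]_{ji}$, which splits into three structurally different pieces: (i) a direct piece giving $[\mathcal{G}(z_1)]_{ii}$, (ii) a doubly-resolvent piece of the form $[\mathcal{G}(z_1)Y]_{ij}[Y^*\mathcal{G}(z_1)]_{ji}$, and (iii) a self-normalization piece involving $[Y^*\mathcal{G}(z_1)Y]_{jj}$. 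Substituting the local laws and summing converts these into, respectively, $-\partial_{z_1}(z_1m(z_1))$, $-\phi^{-1}z_1\underline{m}(z_1)\operatorname{tr}\mathcal{G}(z_1)$, and the bilinear $-\phi^{-1}z_1 m(z_1)\underline{m}(z_1)(1+2z_1\underline{m}(z_1))$, matching the stated formula. For $\mathscr{M}_{3,ij}$, the chain rule gives $\partial_{m,ij}\mathrm{e}^{\mathrm{i}x\langle L_1(f)\rangle}=-\frac{x}{2\pi}\oint_{\bar{\gamma}_2^0}\partial_{m,ij}\operatorname{tr}\mathcal{G}(z_2)f(z_2)\mathrm{d}z_2\cdot\mathrm{e}^{\mathrm{i}x\langle L_1(f)\rangle}$; the derivative of $\operatorname{tr}\mathcal{G}(z_2)$ produces its own three pieces mirroring those in $\mathscr{M}_{2}$, and pairing with $[Y^*\mathcal{G}(z_1)]_{ji}$ and summing in $i,j$ yields a bilinear two-point kernel. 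Using the MP equation $z\phi m^2+(z-(1-\phi))m+1=0$ and the companion relation $\underline{m}=\phi m-(1-\phi)/z$, one recognizes the telescoped form $\partial_{z_2}\bigl(\tfrac{z_1 m(z_1)-z_2 m(z_2)}{z_1-z_2}\bigr)$ arising from the $[\mathcal{G}(z_1)Y]\cdot[Y^*\mathcal{G}(z_2)]$-type contractions, plus the $\phi^{-1}z_1 m(z_1)\underline{m}(z_1)z_2\underline{m}(z_2)$ contribution coming from the self-normalization factors.

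The remainder controls are then routine. Since $|\kappa_{1,m}|\lesssim n^{(1/2-\epsilon_h)(1-\alpha)}$ by the truncation $|M_{ij}|\le n^{1/2-\epsilon_h}$ and $|\mathscr{M}_{0,ij}|=\mathrm{O}_{\prec}(1)$ under $\Xi$, the bound $|\sum_{ij}\kappa_{1,m}\mathscr{M}_{0,ij}|\lesssim n^{2+(1/2-\epsilon_h)(1-\alpha)}=n^{-c}$ holds for $\alpha>3$ and $\epsilon_h$ small. The $\kappa_{3,m}E_{1,m}$ and $\kappa_{4,m}E_{2,m}$ terms each contain an extra summed Green factor whose contribution is controlled by the same diagonal-mass estimate, producing the announced $n^{-\epsilon_{\alpha}}\cdot z_1\operatorname{tr}\mathcal{G}(z_1)$ prefactor. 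The boundary terms $E_{3,m},E_{4,m}$ are supported on the event $\{\rho_j\notin[n^{1/2}\log^{-1}n,n^{1/2}\log n]\}$, which has probability smaller than any polynomial in $n$ by Lemma \ref{lem_priorest_x} and the $\chi$-truncation, so they are $\mathrm{O}_{\prec}(n^{-D})$; and $R_{m,ij}(q)$ combines the crude polynomial bound from $\Xi$ with the tail estimate on $\mathbb{E}(|M_{ij}|^{2q+2}\mathbbm{1}(|M_{ij}|>n^{-\epsilon}))$ once $q$ is chosen large. The main obstacle is the combinatorial bookkeeping around the second derivative: each $\partial_{m,ij}$ generates both an identity-type contribution and a rank-one self-normalization correction in column $j$, and the many cross terms must telescope into the stated compact functions of $m$ and $\underline{m}$. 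A particularly delicate point is identifying the two-point kernel $\partial_{z_2}[(z_1 m(z_1)-z_2 m(z_2))/(z_1-z_2)]$ as the limit of the $[\mathcal{G}(z_1)\,Y^*Y\,\mathcal{G}(z_2)]$-type summed trace; this collapse cannot be read off directly from the entrywise bound (which is only $n^{-c_l\epsilon_{\alpha}}$ by Theorem \ref{thm_greenfuncomp_entrywise_uniformbound} due to the heavy tails), and one must exploit the averaged-law precision together with the MP consistency equation at each matching step.
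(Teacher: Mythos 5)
Your high-level strategy (cumulant expansion in $M_{ij}$, then the averaged and entrywise local laws after summing $i,j$ into traces, with $\rho_j^{-2}$ replaced by $n^{-1}$ up to the trace error from Lemma \ref{lem_priorest_x}) matches the paper, and your elementary derivative formulas are correct. However, your bound on $\sum_{ij}\kappa_{1,m}\mathscr{M}_{0,ij}$ does not hold: taking $|\kappa_{1,m}|\lesssim n^{(1/2-\epsilon_h)(1-\alpha)}$ and summing a termwise bound $|\mathscr{M}_{0,ij}|\lesssim 1$ over $np$ pairs gives exponent $2+(1/2-\epsilon_h)(1-\alpha)$, which is approximately $(5-\alpha)/2\ge 1/2$ throughout $\alpha\in(3,4]$, so the claimed bound $n^{-c}$ fails. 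The paper instead pulls $\kappa_{1,m}$ out of the sum and applies Cauchy--Schwarz to $\sum_{ij}\rho_j^{-1}[Y^*\mathcal{G}(z_1)]_{ji}$, using $\sum_{ij}([Y^*\mathcal{G}(z_1)]_{ji})^2=\operatorname{tr}(Y^*\mathcal{G}(z_1)\mathcal{G}(z_1)Y)\asymp n$ and $\sum_{ij}\rho_j^{-2}\asymp n$ to gain a full factor of $n$, giving $n^{1+(1/2-\epsilon_h)(1-\alpha)}\lesssim n^{-c}$; this summation structure, not a termwise crude bound, is what makes the estimate work.

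The second genuine gap concerns the $M_{ij}$-weighted pieces of $\partial_{m,ij}[Y^*\mathcal{G}(z_1)]_{ji}$ and $\partial_{m,ij}\operatorname{tr}\mathcal{G}(z_2)$. After the first differentiation you are left with contributions of the shape $M_{ij}\rho_j^{-3}[G(z_1)]_{jj}[Y^*\mathcal{G}(z_1)]_{ji}$ and $M_{ij}\rho_j^{-3}[Y^*\mathcal{G}(z_1)]_{ji}[Y^*\mathcal{G}(z_2)\mathcal{G}(z_2)Y]_{jj}$, which cannot be evaluated by ``substituting the local laws and summing'', because the extra $M_{ij}$ factor is correlated with the resolvents and with $\langle\mathrm{e}^{\mathrm{i}x\langle L_1(f)\rangle}\rangle$. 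The paper performs a second, nested cumulant expansion in $M_{ij}$ on exactly these terms; the surviving $\kappa_{2,m}$ contribution is what produces $-\phi^{-1}z_1 m(z_1)\underline{m}(z_1)^2$ and, for $\mathscr{M}_3$, the factor $\partial_{z_2}(z_2\underline{m}(z_2))$, which assemble into the stated bilinear kernel. Without this nested expansion the self-normalization pieces simply do not collapse to the deterministic answer. Two smaller issues: in your split of $\mathscr{M}_2$ the attributions are swapped, since the naive $[\mathcal{G}(z_1)]_{ii}$ term cancels identically via $Y^*\mathcal{G}(z_1)Y=I+z_1G(z_1)$, the surviving diagonal term is $z_1[G(z_1)]_{jj}[\mathcal{G}(z_1)]_{ii}$ (which sums to $\phi^{-1}z_1\underline{m}(z_1)\operatorname{tr}\mathcal{G}(z_1)$), and $\partial_{z_1}(z_1 m(z_1))$ comes from $([Y^*\mathcal{G}(z_1)]_{ji})^2$; and $E_{3,m}$ is not a boundary term --- it collects cumulant orders $s\ge 5$ and is controlled by $\kappa_{s,m}\lesssim n^{(s-\alpha)_+(1/2-\epsilon_h)}$ together with the $\rho_j^{-s}$ decay of iterated derivatives, whereas only $E_{4,m}$ (the $\partial_{m,ij}^{s_1}\Xi$ terms with $s_1\ge 1$) lives on the exponentially-rare truncation boundary and is $\mathrm{O}_{\prec}(n^{-D})$ for all $D$.
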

	\begin{lemma}\label{lem_cumulant_estH}
		For $\alpha\in (3,4]$, we have
		\begin{equation*}
			II=\sum_{j\in\mathrm{I}_c}\mathbb{E}_{\Psi}^{\chi}(Y_j^*\mathcal{G}(z_1)Y_j\langle \mathrm{e}^{\mathrm{i}x\langle L_{1}(f)\rangle}\rangle)
			\lesssim  \mathrm{o}(n^{-c}).
		\end{equation*}
	\end{lemma}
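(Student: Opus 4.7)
The plan relies on three inputs: (i) well-configuredness of $\Psi$ yields $|\mathrm{I}_c|\le n^{1-\epsilon_y}$; (ii) Lemma \ref{lem_pre_estnorm} together with $Y_j^*Y_j=1$ gives the pointwise bound $|Y_j^*\mathcal{G}(z_1)Y_j|=\mathrm{O}(1)$ on $\bar{\gamma}_1^0$; and (iii) Lemma \ref{lemma_momentquadratic}, which for $\alpha\in(3,4]$ delivers $\mathbb{E}|Y_j^*AY_j-n^{-1}\operatorname{tr}A|^k=\mathrm{o}(n^{-1/2})$ for every $k\ge 2$, since $n\mathbb{E}(Y_{11}^4)\sim n^{1-\alpha/2}l(n^{1/2})=\mathrm{o}(n^{-1/2})$ by Lemma \ref{lem_moment_rates}.

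I would first decouple $Y_j$ from $\mathcal{G}(z_1)$ via the Sherman--Morrison identity $Y_j^*\mathcal{G}(z_1)Y_j=a_j/(1+a_j)$, with $a_j=Y_j^*\mathcal{G}^{(j)}(z_1)Y_j$ and $\mathcal{G}^{(j)}(z_1)=(R-Y_jY_j^*-z_1I)^{-1}$. Conditionally on $\mathcal{G}^{(j)}$, Lemma \ref{lemma_momentquadratic} combined with the averaged local law of Corollary \ref{col_greenfuncomp_average} gives $a_j=m(z_1)+\Delta_j'$ with $\Delta_j'$ small in $L^k$, hence $Y_j^*\mathcal{G}(z_1)Y_j=\tilde m(z_1)+\Delta_j$ where $\tilde m(z_1):=m(z_1)/(1+m(z_1))$ is deterministic. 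This yields
\be{II=\tilde m(z_1)|\mathrm{I}_c|\mathbb{E}_\Psi^\chi\bigl[\langle e^{\mathrm{i}x\langle L_1(f)\rangle}\rangle\bigr]+\sum_{j\in\mathrm{I}_c}\mathbb{E}_\Psi^\chi\bigl[\Delta_j\langle e^{\mathrm{i}x\langle L_1(f)\rangle}\rangle\bigr]=:II_{\mathrm{m}}+II_{\mathrm{f}}.}
For $II_{\mathrm{f}}$---the ``higher-order'' piece in the paper's language---I would rewrite each summand as $\mathbb{E}_\Psi^\chi[\Delta_j(e^{\mathrm{i}x\langle L_1(f)\rangle}-\mathcal{T}_f(x))]$ and apply Cauchy--Schwarz in combination with iterated applications of Lemma \ref{lemma_momentquadratic}, after further splitting each $Y_j$ ($j\in\mathrm{I}_c$) into its heavy coordinates $\psi_{ij}=1$ and light coordinates $\psi_{ij}=0$; combined with $|\mathrm{I}_c|\le n^{1-\epsilon_y}$ and the sharp odd-moment bounds of Lemma \ref{lem_oddmoment_est} to suppress cross terms, the net contribution is $\mathrm{o}(n^{-c})$ when $\alpha>3$.

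The genuine obstacle is $II_{\mathrm{m}}$: the prefactor $|\mathrm{I}_c|$ can be as large as $n^{1-\epsilon_y}$, and $\mathbb{E}_\Psi^\chi[\langle e^{\mathrm{i}x\langle L_1(f)\rangle}\rangle]$ does not vanish because the centering $\langle\cdot\rangle$ is with respect to the unconditional measure. Following the strategy in Section \ref{sec_characteristicfunctionestimation}, I embed $II_{\mathrm{m}}$ into the target ODE \eqref{eq_cltode} for $\mathcal{T}_f(x)$, differentiate once more in $x$ to obtain an analogue of \eqref{eq_cha_ode2}, and evaluate at $x=0$ to isolate its contribution to $\mathrm{Var}(L_1(f))$. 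Re-expanding via the resolvent identity $\operatorname{tr}R\mathcal{G}(z_1)=n+z_1\operatorname{tr}\mathcal{G}(z_1)$ and a further cumulant expansion on the light-medium variables $L+M$ (Lemma \ref{lemma_cumulantexpansion}), together with the optimal $\mathrm{O}_\prec((n\eta)^{-1})$ averaged rate from Corollary \ref{col_greenfuncomp_average} and the odd-moment bound of Lemma \ref{lem_oddmoment_est}, shows that $II_{\mathrm{m}}$ contributes only $\mathrm{o}(1)$ to $(\mathcal{T}_f(x))''$ at $x=0$; inverting the ODE then gives $II_{\mathrm{m}}=\mathrm{o}(n^{-c})$ uniformly in the relevant $x$. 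The constraint $\alpha>3$ enters at precisely two points---the $\mathrm{o}(n^{-1/2})$ in Lemma \ref{lemma_momentquadratic} and the improved rate in Lemma \ref{lem_oddmoment_est}---both of which fail for $\alpha\in(2,3)$, consistent with the sharp threshold of Theorem \ref{thm_iffalpha}.
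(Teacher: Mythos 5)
Your plan correctly identifies the essential tools — Sherman--Morrison decoupling, the quadratic-form concentration Lemma \ref{lemma_momentquadratic}, and the ``second derivative at $x=0$'' ODE device — but it misallocates them, and the Cauchy--Schwarz step you propose for $II_{\mathrm{f}}$ fails quantitatively on a sub-interval of $(3,4]$. Per-summand Cauchy--Schwarz on $II_{\mathrm{f}}=\sum_{j\in\mathrm{I}_c}\mathbb{E}_\Psi^\chi[\Delta_j\langle e^{\mathrm{i}x\langle L_1(f)\rangle}\rangle]$ gives
\begin{equation*}
|II_{\mathrm{f}}|\lesssim |\mathrm{I}_c|\,\big(\mathbb{E}|\Delta_j|^2\big)^{1/2}\lesssim n^{2-\alpha/2}\cdot n^{-1/4}= n^{7/4-\alpha/2}
\end{equation*}
(up to slowly varying and $\epsilon_h$-dependent corrections), which diverges for $\alpha<7/2$. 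A Cauchy--Schwarz on the full sum $\sum_{j\in\mathrm{I}_c}\Delta_j$ does not improve this unless one first proves that the off-diagonal correlations $\mathbb{E}\Delta_j\Delta_l$ ($j\neq l$) are genuinely small, and that decay is exactly what the paper has to work for: it requires an additional resolvent expansion of $\mathcal{G}^{(l)}$ in $Y_j$ together with a cumulant expansion on the light entries, treating $l\in\mathrm{I}_c$ and $l\in\mathrm{T}_c$ separately. Your phrase ``iterated applications of Lemma \ref{lemma_momentquadratic} and the sharp odd-moment bounds to suppress cross terms'' points at the right ingredients but does not supply the decoupling expansion, so the estimate of $II_{\mathrm{f}}$ does not close as written.

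The ODE device, meanwhile, should be aimed at the linear-in-$\Delta_j$ fluctuation, not at $II_{\mathrm{m}}$. After the Sherman--Morrison Taylor expansion the constant $m(z_1)/(1+m(z_1))$ multiplies the centered factor $\langle e^{\mathrm{i}x\langle L_1(f)\rangle}\rangle$ and is absorbed at the conditional centering step, so the series that actually has to be estimated starts at $k=1$. Its $k\ge 2$ part is bounded \emph{without} any Cauchy--Schwarz by
\begin{equation*}
\sum_{j\in\mathrm{I}_c}\mathbb{E}|\Delta_j|^2\lesssim |\mathrm{I}_c|\,n^{1-\alpha/2}\lesssim n^{3-\alpha}=\mathrm{o}(n^{-c}),
\end{equation*}
using that $|\Delta_j|^k\le|\Delta_j|^2$ for $k\ge 2$; it is only the $k=1$ term that requires the ODE-plus-resolvent-plus-cumulant machinery you describe in your final paragraph. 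Redirecting that machinery onto the linear fluctuation and discarding the Cauchy--Schwarz claim recovers the paper's proof.
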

	With the estimates above, we now proceed to the proof of the asymptotic normality of $L_{1}(f)$. With an additional negligible factor $n^{-c}$ for some small constant $c>0$, recalling \eqref{eq_characteristicfunctioncomparison_ode} and \eqref{eq_prf_characteristicfunctioncomparison_expansion_phi}, for $\alpha\in (3,4]$, we get
	\begin{equation*}
		\begin{split}
			&\mathbb{E}^{\chi}_{\Psi}[\langle\operatorname{tr}R\mathcal{G}(z_1)\rangle \mathrm{e}^{\mathrm{i}x\langle L_{1}(f)\rangle}]
			 =-\phi^{-1}\mathbb{E}_{\Psi}^{\chi}\left[(z_1\underline{m}(z_1)+\mathrm{O}_{\prec}(n^{-\epsilon_{\alpha}}))\operatorname{tr}\mathcal{G}(z_1)\langle \mathrm{e}^{\mathrm{i}x\langle L_{1}(f)\rangle}\rangle\right]\\
			 &+\frac{x}{\pi}\oint_{\bar{\gamma}_2^0}{\partial_{z_2}\left(\frac{z_1m(z_1)-z_2m(z_2)}{z_1-z_2}+\phi^{-1}z_1m(z_1)\underline{m}(z_1)z_2\underline{m}(z_2)\right)}f(z_2)\mathrm{d}z_2\cdot\mathbb{E}_{\Psi}^{\chi}[\mathrm{e}^{\mathrm{i}x\langle L_{1}(f)\rangle}]+\mathrm{O}_{\prec}(n^{-c}),
		\end{split}
	\end{equation*}
	which together with the trivial identity $\langle\operatorname{tr}R\mathcal{G}(z_1)\rangle=z_1 \langle\operatorname{tr}\mathcal{G}(z_1)\rangle$ gives
	\begin{eqnarray*}
	&&	\mathbb{E}^{\chi}_{\Psi}[\langle\operatorname{tr}\mathcal{G}(z_1)\rangle \mathrm{e}^{\mathrm{i}x\langle L_{1}(f)\rangle}]\\
&=&\oint_{\bar{\gamma}_2^0}\frac{x}{\pi}\frac{{\partial_{z_2}\left(\frac{z_1m(z_1)-z_2m(z_2)}{z_1-z_2}+\phi^{-1}z_1m(z_1)\underline{m}(z_1)(z_2\underline{m}(z_2))\right)}}{z_1(1+\phi^{-1}\underline{m}(z_1))}f(z_2)\mathrm{d}z_2\cdot\mathbb{E}_{\Psi}^{\chi}[\mathrm{e}^{\mathrm{i}x\langle L_{1}(f)\rangle}]+\mathrm{O}_{\prec}(n^{-c}).
	\end{eqnarray*}
	Combining  \eqref{eq_characteristicfunctioncomparison_ode}, we obtain
	\begin{eqnarray*}
	&&	\left(\mathcal{T}_{f}(x)\right)^{\prime}+\mathrm{O}_{\prec}(n^{-c})\\
&=&-x\frac{1}{2\pi^2}\oint_{\bar{\gamma}_1^0}\oint_{\bar{\gamma}_2^0}\frac{\partial_{z_2}\left(\frac{z_1m(z_1)-z_2m(z_2)}{z_1-z_2}+\phi^{-1}z_1m(z_1)\underline{m}(z_1)(z_2\underline{m}(z_2))\right)}{z_1(1+\phi^{-1}\underline{m}(z_1))}f(z_1)f(z_2)\mathrm{d}z_2\mathrm{d}z_1\cdot\mathcal{T}_{f}(x),
	\end{eqnarray*}
	which concludes the proof of the approximate ODE, and thus the asymptotic normality of $\operatorname{tr}f(R)$.
	
	It remains to estimate the expectation of $\operatorname{tr}f(R)$. Since the asymptotic variance is of order one, we shall estimate $\mathbb{E}(\operatorname{tr}f(R))$ up to constant order. Recalling the relation
	\begin{equation*}
		\mathbb{E}(\operatorname{tr}f(R))=\mathbb{E}(L_{a}(f))+\mathrm{O}_{\prec}(n^{-K+1})=-\frac{1}{2\pi \mathrm{i}}\oint_{\bar{\gamma}_1^{0}}\mathbb{E}(\operatorname{tr}\mathcal{G}(z))f(z)\mathrm{d}z\cdot\Xi+\mathrm{O}_{\prec}(n^{-K+1})
	\end{equation*}
	and the identity $\operatorname{tr}(R\mathcal{G}(z))=n+z_1\operatorname{tr}\mathcal{G}(z)$, we shall apply the cumulant expansion formula to $\mathbb{E}_{\Psi}^{\chi}[ \operatorname{tr}(R\mathcal{G}(z))]$, which differs from \eqref{eq_prf_characteristicfunctioncomparison_expansion_phi} in the term $\langle \mathrm{e}^{\mathrm{i}x\langle L_{1}(f)\rangle}\rangle$. In particular, for $\alpha\in (3,4]$,  we have
	\begin{equation*}
		\begin{split}
			&\mathbb{E}_{\Psi}^{\chi}(\operatorname{tr}(R\mathcal{G}(z)))\\
=&-\phi^{-1}(z\underline{m}(z))^{\prime}-\phi^{-1}\mathbb{E}_{\Psi}^{\chi}\left[(z\underline{m}(z)+\mathrm{O}_{\prec}(n^{-c}))\operatorname{tr}\mathcal{G}(z)\right]+2(1+zm(z))z\underline{m}(z)+\mathrm{O}_{\prec}(n^{-c}),
		\end{split}
	\end{equation*}
	which combined with $\operatorname{tr}(R\mathcal{G}(z))=n+z\operatorname{tr}\mathcal{G}(z)$ gives
	\begin{equation*}
		 \mathbb{E}_{\Psi}^{\chi}(\operatorname{tr}\mathcal{G}(z))=\frac{-n+2z(1+zm(z))\underline{m}(z)-\phi^{-1}[z\underline{m}(z)]^{\prime}}{z(1+\phi^{-1}\underline{m}(z))}+\mathrm{O}_{\prec}(n^{-c}),
	\end{equation*}
	as presented in Theorem \ref{thm_main_cltlss}.
	
\subsubsection{The critical case for $\alpha=3$}\label{sec_criticalcase}
For the critical case $\alpha=3$, the point lies in the asymptotic behavior of the slowly varying function $l(x)$ as $x\to \infty$. We begin with a simple instance, Schott's statistics $f(x)=x^2$, whose details are postponed to Part III of Appendix \ref{sec_alpha3}.
\begin{example}[Schott's statistics]\label{example}
Suppose Assumptions \ref{ass_X} and \ref{ass_phi} hold. We have
\begin{equation*}
    \mathbb{E}(\operatorname{tr}(R^2))=p+np(p-1)\beta_{2}^2+n(n-1)p(p-1)\beta_{1,1}^2,
\end{equation*}
and
\begin{equation}\label{eq_example_variance}
    \operatorname{Var}(\operatorname{tr}R^2)=2np^2\beta_{4}^2+4p^2n^{-2}+\mathrm{o}(1).
\end{equation}
where $\beta_{k_1,\ldots,k_q}:=\mathbb{E}(Y_{11}^{k_1}\cdots Y_{q1}^{k_q})$.
\end{example}
The term $4p^2n^{-2}$ contributes to the universal variance of $\operatorname{tr}R^2$ (cf. \cite[Corollary 1.14]{bao2022spectral}), while the extra term $2np^2\beta_{4}^2\asymp n^{3-\alpha}(l(n^{1/2}))^2 $ by Lemma \ref{lem_moment_rates}, is negligible if and only if \eqref{eq_criticalcase} holds.

To this end, we give a formal proof of Theorem \ref{thm_iffalpha}. Given the critical condition \eqref{eq_criticalcase}, we expect that the main part still follows from the $I$-part, while we need to control the $II$-part. Thanks to the concentration of the quadratics, we can show that the $II$ part is negligible whenever \eqref{eq_criticalcase} holds. In contrast, this heavy part does not vanish if
	\begin{equation*}
		\limsup_{x\rightarrow \infty}x^3\mathbb{P}(|\xi|>x)\neq 0,
	\end{equation*}
	which further implies the universal CLT fails. All the proofs are deferred to Appendix \ref{sec_alpha3}.

\section{Appendix}\label{app}

\subsection{Proof of the results in Section \ref{sec_pre}}\label{prf_sec_pre}
\subsubsection{Proof of Lemma \ref{lem_priorest_x}}\label{prf_lem_priorest_x}
The proofs of the first two statements can be found in Section A.2 of \cite{ding2023extreme}. For the third statement, we observe that for any $1\le j\le p$ and $\epsilon_{4,1}>0$ that
	\begin{equation*}
		\begin{split}
			&\mathbb{P}(|\frac{1}{n}\sum_{i=1}^nx_{ji}^2-1|>\epsilon_{4,1})\\
&\le n\mathbb{P}(|x_{11}|>\delta\sqrt{n})+\mathbb{P}(|\sum_{i=1}^nx_{ji}^2\mathbbm{1}(|x_{11}|\le\delta\sqrt{n})-n\mathbb{E}x_{11}^2\mathbbm{1}(|x_{11}|\le\delta\sqrt{n})|>\frac{n\epsilon_{4,1}}{2})\\
			&\le n\mathbb{P}(|x_{11}|>\delta\sqrt{n})+\frac{4}{\epsilon_{4,1}^2n}\mathbb{E}(x_{11}^4\mathbbm{1}(|x_{11}|\le\delta\sqrt{n}))\le n\mathbb{P}(|x_{11}|>\delta\sqrt{n})+\frac{4\delta^2}{\epsilon_{4,1}^2}\mathbb{E}(x_{11}^2\mathbbm{1}(|x_{11}|\le\delta\sqrt{n})).
		\end{split}
	\end{equation*}
Setting $n^{1/\alpha-1/2}\ll\delta\ll\epsilon_{4,1}\ll1$ and letting $n\rightarrow\infty$, we have $n^{-1}\sum_ix_{ji}^2=1+\mathrm{O}_{\mathbb{P}}(n^{1/\alpha-1/2+\epsilon_{4,2}})$ for some properly chosen $\epsilon_{4,2}>0$. Then the typical order of $\rho_j$ follows. Consequently, the fourth statement is a direct corollary of the third statement.

For \eqref{eq_rhoj_highpro}, let $C$ be a constant such that $\mathbb{P}(|\xi|\ge C)=1/2$. For constant $\epsilon_{2,2}>0$ such that $\epsilon_{2,2}/C^2<1/10$, one has
	\begin{equation*}
		\begin{split}
			\mathbb{P}(n^{-1}\rho_j^2<\epsilon_{2,2})\le \mathbb{P}(\sum_{i=1}^n\mathbbm{1}_{|X_{ji}}|\ge C\le n\epsilon_{2,2}/C^2)\lesssim \binom{n}{n\epsilon_{2,2}/C^2}2^{-n}\le \exp(-cn),
		\end{split}
	\end{equation*}
	noting that $\binom{n}{k}\le (en/k)^{k}$.
	
	For \eqref{eq_tr_SS}, recalling \eqref{eq_rhoj_highpro} and the control parameters defined in Definition \ref{def_controlparameter}, choosing $\delta=n^{1/2\alpha-1/4}$ above, we have for some $0<\epsilon_s<(1/2-1/\alpha)/2$,
	\begin{equation*}
		\mathbb{P}(|n^{-1}\rho_j^2-1|>n^{-\epsilon_s})\le n^{1/\alpha-1/2+2\epsilon_s}.
	\end{equation*}
	It follows that
	\begin{equation*}
		\begin{split}
			\mathbb{P}&(\#\{j:|n^{-1}\rho^2_j-1|>n^{-\epsilon_s}\}>n^{\beta})\lesssim \mathbb{P}(|n^{-1}\rho^2_{(n^{\beta})}-1|>n^{-\epsilon_s})\\
			&\lesssim\sum_{k=n^{\beta}}^{n}\binom{n}{k}n^{(1/\alpha-1/2+2\epsilon_s)k}(1-n^{1/\alpha-1/2})^{n-k}\\
			&\lesssim\sum_{k=n^{\beta}}^{n} n^{-\beta k}n^{(1/\alpha+1/2+2\epsilon_s)k}e^{-(n-k)/n^{1/\alpha-1/2}}\le n^{-D}
		\end{split}
	\end{equation*}
	for some $\beta=1/\alpha+1/2+2\epsilon_s<1$, where $\rho_{(1)}\geq \rho_{(2)}\geq \cdots\geq \rho_{(p)}$ are the order statistics of $\rho_1,\cdots,\rho_p$. We choose $\epsilon_s=(1/2-1/\alpha)/3$, so $\beta=1/\alpha+1/2+(1-2/\alpha)/3$. Thus, we have
	\begin{equation}\label{eq_rho_number_hpe}
		\mathbb{P}(\#\{j:|n^{-1}\rho_{j}^2-1|>n^{-(1/2-1/\alpha)/3}\}\le n^{\beta})\le n^{-D},
	\end{equation}
	where $D>0$ is any fixed constant. With this observation, consider the diagonal matrix $n(\operatorname{diag}(S))^{-1}-I=\operatorname{diag}(n/\rho_1^2-1,\ldots,n/\rho_{p}^2-1)$. We have
	\begin{equation*}
		\begin{split}
			 \operatorname{tr}|(\operatorname{diag}(S))^{-1}-n^{-1}I|&=n^{-1}\operatorname{tr}|(n^{-1}\operatorname{diag}(S)-I)(n^{-1}\operatorname{diag}(S))^{-1}|\\
			&\lesssim \frac{n-n^{\beta}}{n}\cdot n^{-\epsilon_s}+\frac{n^{\beta}}{n}\lesssim n^{-(\alpha-2)/(6\alpha)},
		\end{split}
	\end{equation*}
	where we have used the high probability estimate $\rho_j^{-2}\lesssim n^{-1}$ by \eqref{eq_rhoj_highpro}.
	Moreover, for  $s\ge 1$, the induction gives $\operatorname{tr}|(\operatorname{diag}(S))^{-s}-n^{-s}I|
	\lesssim n^{-(s-1)-(\alpha-2)/(6\alpha)}$.
\qed

\subsubsection{Proof of Lemma \ref{lem_oddmoment_est}}

The first statement follows from a combination of Lemma 3.1 and Theorem 3.3 in \cite{gine1997student}. We omit further details here.

 Now we present the estimate of $\mathbb{E}(Y_{11}Y_{21})$ via the cumulant expansion method (cf. Lemma \ref{lemma_cumulantexpansion}). Throughout this subsection, for simplification, we omit the constant factor which does not affect the typical order. Recalling the decomposition \eqref{eq_decomp_X} and the derivatives (cf. Section \ref{subsubsec_derivative} below), we have
	\begin{equation*}
		\begin{split}
			 \mathbb{E}(Y_{11}Y_{21})=\mathbb{E}\left(\frac{M_{1j}M_{2j}}{\rho_j^2}\right)+\mathbb{E}\left(\frac{H_{1j}(M_{2j}+H_{2j})+H_{2j}M_{1j}}{\rho_j^2}\right),
		\end{split}
	\end{equation*}
	where we reuse the notation $M$ to denote the part of $L+M$ in \eqref{eq_decomp_X} (also as illustrated below \eqref{eq_cha_ode2}). Moreover, we borrow the notation of $\kappa_{q,m}$ to denote the $q$-th cumulant of $M_{ij}$, and apply the estimates in Section \ref{sec_characteristicfunctionestimation}. Invoking the estimate $\mathbb{E}|H_{ij}|^{s}\lesssim n^{(1/2-\epsilon_h)(-\alpha+s)}$ for any $s<\alpha$, one has
	$\mathbb{E}(H_{1j}H_{2j}\rho_j^{-2})\lesssim n^{2(-\alpha+1)(1/2-\epsilon_h)-1}$ by $\rho_{j}^{-1}\lesssim n^{-1/2}$. By $\kappa_{1,m}=-\mathbb{E}(H_{ij})\lesssim n^{(1/2-\epsilon_h)(-\alpha+1)}$ and the cumulant expansion w.r.t. $M_{1j}$, one has
	\begin{equation*}
		\begin{split}
			\mathbb{E}(H_{1j}M_{2j}\rho_j^{-2})\lesssim &\kappa_{1,m}\mathbb{E}(H_{1j}\rho_j^{-2})+\kappa_{2,m}\mathbb{E}(H_{1j}M_{2j}\rho_j^{-4})+\kappa_{3,m}\mathbb{E}(H_{1j}\rho_j^{-4}+H_{1j}M_{2j}^2\rho_j^{-6})+\operatorname{Res}_1\\
			\lesssim &n^{2(-\alpha+1)(1/2-\epsilon_h)-1}+n^{(-\alpha+1)(1/2-\epsilon_h)-2}+n^{(3-\alpha)_{+}(1/2-\epsilon_h)+(-\alpha+1)(1/2-\epsilon_h)-2}+\operatorname{Res}_1\\
			\lesssim & n^{(-\alpha+1)(1/2-\epsilon_h)-2}\mathbbm{1}(\alpha> 3)+n^{2(-\alpha+1)(1/2-\epsilon_h)-1}\mathbbm{1}(\alpha\le 3),
		\end{split}
	\end{equation*}
	where $\operatorname{Res}_1$ denotes the higher-order terms in the cumulant expansion which can be bounded by
	\begin{equation*}
		\operatorname{Res}_1\lesssim \kappa_{4,m}\mathbb{E}(H_{1j}M_{2j}\rho_j^{-6})+\kappa_{5,m}\mathbb{E}(H_{1j}\rho_j^{-6})\lesssim n^{(1/2-\epsilon_h)(6-2\alpha)-3}
	\end{equation*}
	in a similar way due to the truncation and the estimate $\kappa_{q,m}\lesssim n^{(1/2-\epsilon_h)(q-\alpha)}$ for integer $q\ge 4$. The first term can be estimated by the cumulant expansion w.r.t. $M_{1j}$ as
	\begin{equation*}
		\begin{split}
			\mathbb{E}\left(\frac{M_{1j}M_{2j}}{\rho_j^2}\right)\lesssim &\kappa_{1,m}\mathbb{E}\left(M_{2j}\rho_j^{-2}\right) +\kappa_{2,m}\mathbb{E}\left(M_{1j}M_{2j}\rho_{j}^{-4}\right)+\frac{\kappa_{3,m}}{2!}\mathbb{E}(M_{2j}\rho_{j}^{-4}+M_{1j}^2M_{2j}\rho_{j}^{-6})\\
			&+\frac{\kappa_{4,m}}{3!}\mathbb{E}(M_{1j}M_{2j}\rho_{j}^{-6}-M_{1j}^3M_{2j}\rho_{j}^{-8})+\operatorname{Res}_2
		\end{split}
	\end{equation*}
 where $\operatorname{Res}_2$ can be bounded similarly as $\operatorname{Res}_2\lesssim n^{(q-\alpha)_+(1/2-\epsilon_h)-(q+2)/2}$. Moreover, by repeating the cumulant expansion and a similar argument handling the high-order terms, we have
	\begin{equation*}
		\begin{split}
			\mathbb{E}\left(M_{2j}\rho_j^{-2}\right) =&\kappa_{1,m}\mathbb{E}\left(\rho_j^{-2}\right)+\kappa_{2,m}\mathbb{E}\left(-2M_{2j}\rho_j^{-4}\right)+\frac{\kappa_{3,m}}{2!}\mathbb{E}(-2\rho_j^{-4}+8M_{2j}^2\rho_j^{-6})+\mathrm{o}(n^{-3/2})\\
			 =&\kappa_{1,m}\mathbb{E}(\rho_j^{-2})+\kappa_{2,m}\mathbb{E}(-2M_{2j}\rho_j^{-4})-\kappa_{3,m}\mathbb{E}(\rho_j^{-4})+\mathrm{o}(n^{-3/2}).
		\end{split}
	\end{equation*}
	By the estimates for $\kappa_{q,m}, 1\le q\le 3$ and another cumulant expansion for $\mathbb{E}(M_{2j}\rho_j^{-4})$
    \begin{equation*}
		\mathbb{E}(M_{2j}\rho_j^{-4})\lesssim n^{(-\alpha+1)(1/2-\epsilon_h)-2}+ n^{(3-\alpha)_{+}(1/2-\epsilon_h)-3}+\kappa_{2,m}\mathbb{E}(-2M_{2j}\rho_j^{-6})\lesssim n^{-5/2},
	\end{equation*}
    we get
    \begin{equation*}
		\mathbb{E}(M_{2j}\rho_j^{-2})\lesssim n^{(3-\alpha)_{+}(1/2-\epsilon_h)-2}+n^{(-\alpha+1)(1/2-\epsilon_h)-1}+\kappa_{2,m}\mathbb{E}(-2M_{2j}\rho_j^{-4}).
	\end{equation*}

     This implies $\mathbb{E}(M_{2j}\rho_j^{-2})\lesssim n^{(3-\alpha)_{+}(1/2-\epsilon_h)-2}+n^{(-\alpha+1)(1/2-\epsilon_h)-1}$.
	Moreover, we observe the pattern that such estimation can be generalized to any integer $q\ge 3$ as
	\begin{equation*}
		\begin{split}
			\mathbb{E}(M_{2j}\rho_j^{-2q})\lesssim n^{(3-\alpha)_{+}(1/2-\epsilon_h)-q-1}+n^{(-\alpha+1)(1/2-\epsilon_h)-q}.
		\end{split}
	\end{equation*}
	Thus, all the odd terms are bounded by $n^{(-\alpha+1)(1/2-\epsilon_h)-2}\mathbbm{1}(\alpha> 3)+n^{2(-\alpha+1)(1/2-\epsilon_h)-1}\mathbbm{1}(\alpha\le 3)$.
	For the even terms, we have
	\begin{equation*}
		\begin{split}
			\mathbb{E}\left(\frac{M_{1j}M_{2j}}{\rho_j^4}\right)\lesssim &\kappa_{1,m}\mathbb{E}\left(M_{2j}\rho_j^{-4}\right) +\kappa_{2,m}\mathbb{E}\left(M_{1j}M_{2j}\rho_{j}^{-6}\right)+\frac{\kappa_{3,m}}{2!}\mathbb{E}(M_{2j}\rho_{j}^{-6}+M_{1j}^2M_{2j}\rho_{j}^{-8})\\
			 &+\frac{\kappa_{4,m}}{3!}\mathbb{E}(M_{1j}M_{2j}\rho_{j}^{-8}+M_{1j}^3M_{2j}\rho_{j}^{-10})+\mathrm{O}_{\prec}(n^{(q-\alpha)_{+}(1/2-\epsilon_h)-(q+2)/2}).
		\end{split}
	\end{equation*}
     Repeat the cumulant expansion and the estimations for $\mathbb{E}(M_{2j}\rho_j^{-2q})$, we have
	\begin{equation*}
		\begin{split}
			\mathbb{E}(M_{1j}M_{2j}\rho_{j}^{-4})\lesssim n^{2(-\alpha+1)(1/2-\epsilon_h)-2}+ n^{2(3-\alpha)_{+}(1/2-\epsilon_h)-3}.
		\end{split}
	\end{equation*}
	This can be also extended to
	$\mathbb{E}(M_{1j}M_{2j}\rho_{j}^{-2q})\lesssim n^{2(-\alpha+1)(1/2-\epsilon_h)-q}+ n^{2(3-\alpha)_{+}(1/2-\epsilon_h)-q-1}$ for general integer $q\ge 3$. Thus, we have
	\begin{equation}\label{eq_m1m2estimate}
		\mathbb{E}\left(\frac{M_{1j}M_{2j}}{\rho_j^2}\right)=n^{(-\alpha+1)(1/2-\epsilon_h)-2}\mathbbm{1}(\alpha> 3)+n^{2(-\alpha+1)(1/2-\epsilon_h)-1}\mathbbm{1}(\alpha\le 3).
	\end{equation}
	Therefore, we get $\mathbb{E}(Y_{11}Y_{21})\lesssim n^{(-\alpha+1)(1/2-\epsilon_h)-2}\mathbbm{1}(\alpha> 3)+n^{2(-\alpha+1)(1/2-\epsilon_h)-1}\mathbbm{1}(\alpha\le 3)$ for general $\alpha\in (2,4]$, which improves the classical bound for $\mathbb{E}(Y_{11}Y_{21})=\mathrm{o}(n^{-2})$.
 \qed

\subsubsection{Proof of Lemma \ref{lem_pre_estnorm}}\label{prf_lem_pre_estnorm}
Before presenting the proof, we collect some notation that will be used in the sequel. We denote the sample correlation matrix with data matrix $M$ as $R(M)=M(\operatorname{diag}S)^{-1}M^*$ (cf. \eqref{eq_def_RL&RH}). Define its Green function as  $\mathcal{G}R_{M}(z)=(R(M)-zI)^{-1}$ and the corresponding Stieltjes transform as $mR_{M}(z)=n^{-1}\operatorname{tr}\mathcal{G}R_{M}(z)$. Also, we will use the sample covariance matrix $\mathcal{S}(M):=n^{-1}MM^{*}$ whose Green function and Stieltjes transform are $\mathcal{G}S_{M}(z)=(\mathcal{S}(M)-zI)^{-1}$, $mS_{M}(z)=n^{-1}\operatorname{tr}\mathcal{G}S_{M}(z)$, respectively.

For the first statement, similarly to \eqref{eq_def_RL&RH} conditioned on $\psi_{ij}$, we rewrite
\begin{equation*}
	R(Y)=R(L+M)+R(H):=(L+M)(\operatorname{diag}S)^{-1}(L+M)^*+H(\operatorname{diag}S)^{-1}H^*.
\end{equation*}
To show $\|R(Y)\|\le C$ for some large constant $C>0$ with high probability, it suffices to bound $\lambda_{1}(R(L+M))$ and $\lambda_1(R(H))$. Consider $R(H)$ first. We know that the number of non-zero entries of $H$ is at most $n^{1-\epsilon_y}$ with high probability since $\Psi$ and $\Pi$ are well configured ( Lemma \ref{lem_wellconfigured}). Let $q$ be some integer such that $q/(\log n)^2\rightarrow \infty$ as $n\rightarrow \infty$. We have $\mathbb{E}_{\Psi}(\lambda_1(R(H)))^q\le \mathbb{E}_{\Psi}\operatorname{tr}((R(H))^q)$, which can be estimated as
\begin{equation*}
\begin{split}
    \mathbb{E}_{\Psi}\operatorname{tr}((R(H))^q)&=\sum^{p}_{j}\sum_{i}^{n}\mathbb{E}_{\Psi}(H_{i_1j_q}\rho_{j_q}^{-1}H_{i_1j_1}\rho_{j_1}^{-1}\cdots H_{i_{q}j_{q-1}}\rho_{j_{q-1}}^{-1}H_{i_qj_{q}}\rho_{j_q}^{-1})\\
    &\lesssim \sum_{r=1}^{q}(n^{1-\epsilon_y})^r(n^{-(N_1(1-\alpha/2)+r\alpha/2)}+n^{-r})\\
    &\lesssim n^{-\epsilon_y},
\end{split}
\end{equation*}
where we have used Lemmas \ref{lem_moment_rates} and \ref{lem_oddmoment_est}, $r$ denotes the number of different couples of the subscripts, and $N_1=\#\{1\le i\le r,k_{i}=1\}$. Thereafter, one has
\begin{equation*}
    \mathbb{P}_{\Psi}(\lambda_{1}(R(H))>2)\le 2^{-q}\mathbb{E}_{\Psi}\operatorname{tr}((R(H))^q)\lesssim n^{-D}
\end{equation*}
for any large constant $D>0$, which implies that $\|R(H)\|\le 2$ with high probability conditioned on $\Psi$.

Now we consider $\lambda_{1}(R(L+M))$, which is more involved. In the following, we aim to show
\begin{equation*}
    |\lambda_{1}(R(L+M))-(1-\widetilde{t})\lambda_+|\prec n^{-2\epsilon_{h}}+n^{-2/3},
\end{equation*}
where $\widetilde{t}=1-\mathbb{E}|L_{ij}+M_{ij}|^2=\mathrm{o}(n^{-1})$.
From \cite[Theorem 2.9]{hwang2019local}, we first observe that
    \begin{equation}\label{eq_lambda1_SLM}
        |\lambda_1(\mathcal{S}(L+M))-(1-\widetilde{t})\lambda_{+}|\prec n^{-2\epsilon_{h}}+n^{-2/3}.
    \end{equation}
Denote $\mathrm{Cn}:=\mathrm{Cn}(\widetilde{H}S)$ as the number of non-zero columns of $\widetilde{H}S$ and $\mathrm{Ch}:=\mathrm{Ch}(HS)$ as the number of non-zero columns of $HS$ (recall the definitions of $\widetilde{H}S$ and $HS$ in \eqref{eq_def_LS&HS}). By Lemma \ref{lem_priorest_x}, $|\mathrm{Cn}|\le n^{\beta}$ with high probability. Moreover, repeating similar arguments in Lemma \ref{lem_priorest_x}, we have $|\mathrm{Ch}|\le n^{1-\epsilon_h}$ with high probability. Then by the Cauchy interlacing theorem, we have
    \begin{equation*}
        \lambda_1(R(M^{(\mathrm{Cn})}))\le \lambda_1(R(M^{(\mathrm{Ch})}))\le \lambda_1(R(\widetilde{M}^{(\mathrm{Ch})})),
    \end{equation*}
    where $M^{(\mathrm{Cn})}$ is a copy of $M$ such that its $k$-th column is same as the $k$-th column of $M$ if the $k$-th column of $\widetilde{H}S$ is zero, otherwise replacing its $k$-th column with zero if the $k$-th column of $\widetilde{H}S$ is non-zero. Similarly, $M^{(\mathrm{Ch})}$ has the same $k$-th column as $M$ if the $k$-th column of $HS$ is zero, otherwise replacing its $k$-th column with zero if the $k$-th column of $HS$ is non-zero.
     And $\widetilde{M}$ denotes the random matrix with the same size as $M$ such that its entries satisfy $\{\widetilde{m}_{ij}\in\mathrm{I}\}=\{X_{ji}\in\big((-n^{1/2-\epsilon_h}\cdot n^{\epsilon_h}, -n^{-\epsilon_l}]\bigcup [n^{-\epsilon_l}, n^{1/2-\epsilon_h}\cdot n^{\epsilon_h})\big)\bigcap\mathrm{I}\}$. One may easily see that $\widetilde{M}$ has more elements than $M$, thus $M$ is a submatrix of $\widetilde{M}$. On the one hand, one may observe that
    \begin{equation*}
        \begin{split}
            \|R(M^{(\mathrm{Cn})})-\mathcal{S}(M^{(\mathrm{Cn})})\|&\le \|n^{-1}M^{(\mathrm{Cn})}\big(LS\cdot(n^{-1}\operatorname{diag}S)^{-1}\big)(M^{(\mathrm{Cn})})^{*}\|\prec n^{-\epsilon_s}.
        \end{split}
    \end{equation*}
    It follows from \eqref{eq_lambda1_SLM} that
    \begin{equation*}
        |\lambda_1(R(M^{(\mathrm{Cn})}))-(1-t)\lambda_{+,\phi_n}|\prec n^{-\epsilon_s}+n^{-2/3},
    \end{equation*}
    with modified ratio $\phi_n=(p-|\mathrm{Cn}|)/n$. In addition, for matrix $R(\widetilde{M}^{(\mathrm{Ch})})$, we have
    \begin{equation*}
        \widetilde{M}^{(\mathrm{Ch})}(\operatorname{diag} S)^{-1}(\widetilde{M}^{(\mathrm{Ch})})^{*}=\frac{1}{n} \widetilde{M}^{(\mathrm{Ch})}(MS+LS+I)^{-1}(\widetilde{M}^{(\mathrm{Ch})})^{*}.
    \end{equation*}
    The deterministic bound for the elements in $MS$ implies that the entries in $\{\mathrm{Cn}\}\setminus\{\mathrm{Ch}\}$ columns of $\widetilde{M}^{(\mathrm{Ch})}$ remain bounded support $\mathrm{O}(n^{-\epsilon_{\alpha}})$. Therefore, deploying the same argument as that in \cite[Theorem 2.9]{hwang2019local}, we have
    \begin{equation*}
        |\lambda_1(R(\widetilde{M}^{(\mathrm{Ch})}))-(1-t)\lambda_{+,\phi^{\prime}_n}|\prec n^{-\epsilon_{\alpha}}+n^{-2/3},
    \end{equation*}
    with another modified ratio $\phi^{\prime}_n=(p-|\mathrm{Ch}|)/n$. Noticing that $|\phi-\phi_n|+|\phi-\phi^{\prime}_n|=\mathrm{O}(n^{-\epsilon_s})$,  we obtain the limit for $\lambda_1(R(M^{(\mathrm{Ch})}))$ as follows,
    \begin{equation}\label{eq_eigenconvergence_locallaw_RMremoved}
        |\lambda_1(R(M^{(\mathrm{Ch})}))-(1-t)\lambda_{+}|\prec n^{-\epsilon_s}+n^{-2/3}.
    \end{equation}

    On the other hand, we observe that
    \begin{equation*}
        R(M)-R(M^{(\mathrm{Ch})})=n^{-1}M^{[\mathrm{Ch}]}(n^{-1}\operatorname{diag}S)^{-1}(M^{[\mathrm{Ch}]})^{*},
    \end{equation*}
    where $M^{[\mathrm{Ch}]}$ is another copy of $M$ such that its $k$-th column is same as the one of $M$ if the $k$-th column of $HS$ is non-zero, and its $k$-th column is zero if the $k$-th column of $HS$ is zero. Notice that $n^{-1}(M^{[\mathrm{Ch}]})^{*}M^{[\mathrm{Ch}]}$ is of rank $|\mathrm{Ch}|$. We further rewrite the right hand side of the above equation as
    \begin{equation*}
        R(M)=R(M^{(\mathrm{Ch})})+n^{-1}\sum_{i\in\{\mathrm{Ch}\}}(HS+I)_i^{-1}\mathfrak{m}_i\mathfrak{m}_i^{*},
    \end{equation*}
    where $\mathfrak{m}_i$  denotes the $i$th column of $M$. Define
    \begin{equation*}
        \mathbf{H}_{\tau}(\lambda):=\lambda I-\big(R(M^{(\mathrm{Ch})})+\tau\times n^{-1}\sum_{i\in\{\mathrm{Ch}\}}(HS+I)_i^{-1}\mathfrak{m}_i\mathfrak{m}_i^{*}\big).
    \end{equation*}
    It is easy to see that the eigenvalues of $R(M^{(\mathrm{Ch})})+\tau\times n^{-1}\sum_{i\in\{\mathrm{Cn}\}}(HS+I)_i^{-1}\mathfrak{m}_i\mathfrak{m}_i^{*}$ is continuous in $\tau$. We are going to prove the following statement,
    \begin{equation}\label{eq_eigenstick_locallaw_corM}
        \mathbb{P}(\operatorname{det}(\mathbf{H}_{\tau}(\lambda))\neq 0,\forall t\in[0,1])=1-\mathrm{o}(1),\quad \mu=(1-t)\lambda_{+}+n^{-\epsilon_{\mu}},
    \end{equation}
    for some small value $\epsilon_{\mu}>0$ (cf. Definition \ref{def_controlparameter}). We know that if \eqref{eq_eigenstick_locallaw_corM} holds, then the largest eigenvalue of $R(M^{(\mathrm{Ch})})+\tau\times n^{-1}\sum_{i\in\{\mathrm{Ch}\}}(HS+I)_i^{-1}\mathfrak{m}_i\mathfrak{m}_i^{*}$ will not cross the boundary $\lambda_1(R(M^{(\mathrm{Ch})}))\pm n^{-\epsilon_{\mu}}$. Hence $\lambda_1(R(M))$ is sticking to $\lambda_1(R(M^{(\mathrm{Ch})}))$ with a rate smaller than $n^{-\epsilon_{\mu}}$.

Now, we prove \eqref{eq_eigenstick_locallaw_corM}. From \eqref{eq_lambda1_SLM} and \eqref{eq_eigenconvergence_locallaw_RMremoved}, we know that the eigenvalues of $R(M^{(\mathrm{Ch})})$ are separated with order $n^{-\epsilon_{\alpha}}$, so for $\epsilon_{\mu}<\epsilon_{\alpha}/2$,
\begin{equation*}
	\mathbb{P}(|\lambda_j(R(M^{(\mathrm{Ch})}))-\mu|>n^{-\epsilon_{\mu}})=1-\mathrm{o}(1).
\end{equation*}
Therefore, $\mu$ is not an eigenvalue of $R(M^{(\mathrm{Ch})})$, and we have
\begin{equation*}
	\operatorname{det}(\mathbf{H}_{\tau}(\mu))=\operatorname{det}(\mu-R(M^{(\mathrm{Ch})}))\operatorname{det}(1-\tau n^{-1}\sum_{i\in\{\mathrm{Ch}\}}(HS+I)_i^{-1}\mathfrak{m}_i\mathfrak{m}_i^{*}\mathcal{G}R_{{M^{(\mathrm{Ch})}}}(\mu)),
\end{equation*}
where $\mathcal{G}R_{{M^{(\mathrm{Ch})}}}(\mu)=(R(M^{(\mathrm{Ch})})-\mu I)^{-1}$. Let $z=(1-t)\lambda_{+}+\mathrm{i}\eta$ for some $\eta>0$.
Note that for each $i$, $(HS+I)_i^{-1}\mathfrak{m}_i\mathfrak{m}_i^{*}\mathcal{G}R_{M^{(\mathrm{Ch})}}(z)=(HS+I)_i^{-1}\mathfrak{m}_i\mathfrak{m}_i^{*}\mathcal{G}R^{(i)}_{M^{(\mathrm{Ch})}}(z)$, since the $i$-th column in $M^{(\mathrm{Ch})}$ is zero. Therefore, we have
\begin{equation*}
	\begin{split}
		\|\tau n^{-1}\sum_{i\in\{\mathrm{Ch}\}}(HS+I)_i^{-1}\mathfrak{m}_i\mathfrak{m}_i^{*}\mathcal{G}R_{M^{(\mathrm{Ch})}}(z)\|
		\le \|(HS+I)^{-1}\|\big\|n^{-1}\sum_{i\in\{\mathrm{Ch}\}}\mathfrak{m}_i\mathfrak{m}_i^{*}\big\|\|\mathcal{G}R_{{M^{(\mathrm{Ch})}}}(z)\|\le C_1
	\end{split}
\end{equation*}
for some constant $0<C_1<1$, observing that $|(hs_i+1)^{-1}|$ can be chosen sufficiently small, and the operator norm of $n^{-1}\sum_{i\in\{\mathrm{Ch}\}}\mathfrak{m}_i\mathfrak{m}_i^{*}$ can be controlled by \eqref{eq_lambda1_SLM} and $\|\mathcal{G}R_{{M^{(\mathrm{Ch})}}}(z)\|$ is bounded.

Now we consider the difference $\mathcal{G}R_{{M^{(\mathrm{Ch})}}}(\mu)-\mathcal{G}R_{{M^{(\mathrm{Ch})}}}(z)$.
Let $\nu_{k}$ be the eigenvector of $R(M^{(\mathrm{Ch})})$ corresponding to the $k$-th eigenvalue  $\lambda^{\prime}_{k}:=\lambda_{k}(R(M^{(\mathrm{Ch})}))$. Note that for $\lambda^{\prime}_{k}$ being greater than some threshold and $z^{\prime}:=\lambda^{\prime}_{k}+\mathrm{i}n^{-1+\delta}$, we have $|n^{-1}\mathfrak{m}_i^{*}\mathcal{G}R_{{M^{(\mathrm{Ch})}}}(z^{\prime})\mathfrak{m}_i|\le C_2$ with high probability. Moreover,
\begin{equation*}
	 \begin{split}|\operatorname{Im}n^{-1}\mathfrak{m}_i^{*}\mathcal{G}R_{{M^{(\mathrm{Ch})}}}(z^{\prime})\mathfrak{m}_i|=(\operatorname{Im}z^{\prime})\sum_{j}\frac{\langle n^{-1/2}\mathfrak{m}_i,\nu_j\rangle^2}{|\lambda^{\prime}_{j}-z^{\prime}|^2}
		\ge (\operatorname{Im}z^{\prime})^{-1}\langle n^{-1/2}\mathfrak{m}_i,\nu_k\rangle^2.
	\end{split}
\end{equation*}
The right hand side has the upper bound $\langle n^{-1/2}\mathfrak{m}_i,\nu_k\rangle^2\prec n^{-1}$. Let $k^{\prime}$ be the largest $k$ satisfying this condition. Then we have $k^{\prime}\asymp cn$ for some constant $c$. The eigenvalue rigidity for $\mathcal{S}(M^{(\mathrm{Ch})})$ implies that $\lambda^{\prime}_k-(1-t)\lambda_{+}\asymp (kn^{-2\epsilon_{\alpha}})$ for any $k\ge n^{\epsilon_{4,1}}$ with $\epsilon_{4,1}$ sufficiently small. Recall $z=(1-t)\lambda_{+}+\mathrm{i}\eta$, for $\eta=n^{-\epsilon_{\alpha}}$. Then for each $i$,
\begin{equation*}
	\begin{split}
		&|n^{-1}\mathfrak{m}_i^{*}\big(\mathcal{G}R_{M^{(\mathrm{Ch})}}(\mu)-\mathcal{G}R_{M^{(\mathrm{Ch})}}(z)\mathfrak{m}_i\big)|=\sum_{k}\langle n^{-1/2}\mathfrak{m}_i,\nu_k\rangle^2\Big|\frac{1}{\mu-\lambda^{\prime}_k}-\frac{1}{z-\lambda^{\prime}_k}\Big|\\
		&\le \sum_{k}\langle n^{-1/2}\mathfrak{m}_i,\nu_k\rangle^2\Big(\frac{\eta}{\big((1-t)\lambda_{+}-\lambda^{\prime}_k\big)^2+\eta^2}+\frac{(1+\mathrm{o}(1))\eta^2}{|\lambda^{\prime}_k-\mu||((1-t)\lambda_{+}-\lambda_k^{\prime})^2+\eta^2|}\Big).
	\end{split}
\end{equation*}
We will split the right hand side summation into three parts.  Firstly,
\begin{equation*}
	\begin{split}
		&\sum_{k\le n^{\epsilon_{4,1}}}\langle n^{-1/2}\mathfrak{m}_i,\nu_k\rangle^2\Big(\frac{\eta}{\big((1-t)\lambda_{+}-\lambda^{\prime}_k\big)^2+\eta^2}+\frac{(1+\mathrm{o}(1))\eta^2}{|\lambda^{\prime}_k-\mu||((1-t)\lambda_{+}-\lambda_k^{\prime})^2+\eta^2|}\Big)\\
		&\prec n^{\epsilon_0}n^{-1}(\eta n^{2\epsilon_{\alpha}}+n^{\epsilon_{\mu}})\le n^{-1+\epsilon_{\mu}+\epsilon_{4,1}}.
	\end{split}
\end{equation*}
Secondly,
\begin{equation*}
	\begin{split}
		&\sum_{n^{\epsilon_{4,1}}\le k\le k^{\prime}}\langle n^{-1/2}\mathfrak{m}_i,\nu_k\rangle^2\Big(\frac{\eta}{\big((1-t)\lambda_{+}-\lambda^{\prime}_k\big)^2+\eta^2}+\frac{(1+\mathrm{o}(1))\eta^2}{|\lambda^{\prime}_k-\mu||(\lambda-\lambda_k^{\prime})^2+\eta^2|}\Big)\\
		&\prec \sum_{n^{\epsilon_{4,1}}\le k\le k^{\prime}}n^{-\epsilon_{\alpha}}n^{-1}(\frac{k}{n^{2\epsilon_{\alpha}}})^{-2}\le n^{-1+3\epsilon_{\alpha}}\sum_{1\le k\le cn}k^{-2}\le n^{-1+3\epsilon_{\alpha}}.
	\end{split}
\end{equation*}
Thirdly,
\begin{equation*}
	\begin{split}
		\sum_{k^{\prime}\le k}\langle n^{-1/2}\mathfrak{m}_i,\nu_k\rangle^2\Big(\frac{\eta}{\big((1-t)\lambda_{+}-\lambda^{\prime}_k\big)^2+\eta^2}+\frac{(1+\mathrm{o}(1))\eta^2}{|\lambda^{\prime}_k-\mu||(\lambda-\lambda_k^{\prime})^2+\eta^2|}\Big)\\
		\prec \sum_{k}\eta^{-1}\langle n^{-1/2}\mathfrak{m}_i,\nu_k\rangle\prec n^{-1+\epsilon_{\alpha}}.
	\end{split}
\end{equation*}
Therefore, we have
\begin{equation*}
	\| \tau n^{-1}\sum_{i\in\{\mathrm{Cn}\}}(HS+I)_i^{-1}\mathfrak{m}_i\mathfrak{m}_i^{*}\big(\mathcal{G}R_{{M^{(\mathrm{Ch})}}}(\mu)-\mathcal{G}R_{{M^{(\mathrm{Ch})}}}(z)\big)\|\prec n^{-1+2\epsilon_{\mu}}.
\end{equation*}
Combining the above estimates, with probability tending to one, we have
\begin{equation*}
	\operatorname{det}\Big(1-\tau n^{-1}\sum_{i\in\{\mathrm{Ch}\}}(HS+I)_i^{-1}\mathfrak{m}_i\mathfrak{m}_i^{*}\mathcal{G}R_{{M^{(\mathrm{Ch})}}}(\mu)\Big)\neq 0,\;\forall\tau\in[0,1].
\end{equation*}
This concludes the first statement.

Following similar arguments as above, one can show that
\begin{equation*}
	|\lambda_{n}(R(L+M))-(1-\widetilde{t})\lambda_{-}|\prec n^{-\epsilon_s}+n^{-2/3}
\end{equation*}
by noticing $|\lambda_{n}(\mathcal{S}(L+M))-(1-\widetilde{t})\lambda_{-}|\prec n^{-2\epsilon_h}+n^{-2/3}$ via the techniques from \cite[Theorem 2.9]{hwang2019local} and \cite[Lemma 2.7]{bao2023smallest}. By the Cauchy interlacing theorem, one can easily get
\begin{equation*}
	\lambda_{n}(R(Y^{[\mathrm{I}_c]}))\le \lambda_n(R(Y))\le \lambda_{n-|\mathrm{I}_r|}(R(Y^{[\mathrm{I}_r]}))
\end{equation*}
which can be read as
\begin{equation*}
	\lambda_{n}(R((L+M)^{[\mathrm{I}_c]}))\le \lambda_n(R(Y))\le \lambda_{n-|\mathrm{I}_r|}(R((L+M)^{[\mathrm{I}_r]}))
\end{equation*}
since $R(Y^{[\mathrm{I}_c]})=R((L+M)^{[\mathrm{I}_c]})$ and $R(Y^{[\mathrm{I}_r]})=R((L+M)^{[\mathrm{I}_r]})$. Applying similar methods with the modified parameters $\phi_n=(n/(p-|\mathrm{I}_c|))$ and $\phi_n=((n-|\mathrm{I}_r|)/p)$ respectively leads to
\begin{equation*}
	|\lambda_n(R(Y))-(1-\widetilde{t})\lambda_{-}|\prec n^{-\epsilon_s}
\end{equation*}
by $\widetilde{t}=\mathrm{o}(n^{-1})$. Thus, together with $\|R(Y)\|\le C$ with high probability, one get that there exists some constants $0<a<b$ such that $\operatorname{supp}(R(Y))\subset \{0\}\bigcup [a,b]$ with high probability.

Using the result that $\|R(Y)\|\le C$ with high probability, and invoking the construction of the contours in Section \ref{sec_mainresults}, we have
$\|\mathcal{G}(z)\|\sim 1$ with high probability. Similarly to Lemma 4.2 of \cite{bao2022spectral}, one can show $|n^{-1}\operatorname{tr}\mathcal{G}(z)|\sim 1$ for any fixed $z\in \gamma_1\lor \gamma_2$. The same results hold for $z\in \gamma_1^0\lor \gamma_2^0$.

\subsection{Proofs for Gaussian divisible model}\label{app_sec_prf_edge}
This section is devoted to the proofs of the averaged and entrywise local laws for GDM. As well illustrated in Section \ref{sec_secondorderconvergence} and the proof strategies sketched in Section \ref{sec_proofstrategy}, we begin with the $\eta$-regularity of $R(\widetilde{H})$ in Section \ref{prf_prop_etaregular_corH}. Additionally, we introduce some basic notation and preliminary estimates for GDM in Section \ref{sec_basicnotion_GDM}. Subsequently, we present an intermediate entrywise local law for $\widetilde{H}$ in Section \ref{app_sec_prf_locallaw_H}. Finally, we derive the averaged and entrywise local laws in Sections \ref{app_sec_prf_locallaw_gdm_average} and \ref{app_sec_prf_locallaw_gdm_entrywise}, respectively.

\subsubsection{Proof of Proposition \ref{prop_etaregular_corH}}\label{prf_prop_etaregular_corH}
To prove Proposition \ref{prop_etaregular_corH}, we first give a simple estimate of $mR_{M}$, which generalizes \cite[Theorem 2.7]{hwang2019local} and \cite[Theorem 2.9]{hwang2019local} from covariance matrix to correlation matrix. Then we utilize the well configured property of the label matrix $\Psi$ to derive the $\eta_*$-regularity of $R(\widetilde{H})$. Recall the definition $m^{(t)}(z):=(1-t)^{-1}m(z/(1-t))$ for  $t=n\mathbb{E}|L_{ij}\rho^{-1}_j|^2$.
\begin{lemma}\label{lem_locallaw_corM}
	Fix $z\in\mathbf{D}$. We have that
	\begin{equation*}
		|mR_{M}(z)-m^{(t)}(z)|\prec n^{-\epsilon_{\alpha}}\eta^{-2}+n^{-\epsilon_{h}}+(n\eta)^{-1}.
	\end{equation*}
\end{lemma}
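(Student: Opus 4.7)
The plan is to transfer the already-known averaged local law for the sample covariance matrix $\mathcal{S}(M):=n^{-1}MM^*$---whose entries enjoy the deterministic bounded support $|M_{ij}|\le n^{1/2-\epsilon_h}$---across to the sample correlation matrix $R(M)=M(\operatorname{diag}S)^{-1}M^*$, by quantifying how close $(\operatorname{diag}S)^{-1}$ is to $n^{-1}I_p$. The resampling in Section~\ref{sec_matrixresampling} gives $\operatorname{Var}(M_{ij})=1-t-\mathbb{E}|H_{ij}|^2$, and Assumption~\ref{ass_X} combined with the truncation at $n^{1/2-\epsilon_h}$ forces $\mathbb{E}|H_{ij}|^2\lesssim n^{(1/2-\epsilon_h)(2-\alpha)}\ll n^{-\epsilon_h}$. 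Thus $\mathcal{S}(M)$ is, up to a negligible variance correction, a sample covariance matrix with variance parameter $1-t$, whose limiting Stieltjes transform is precisely $m^{(t)}$.

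First I would invoke an averaged local law for sample covariance matrices under a bounded-support assumption (as in the framework of~\cite{hwang2019local}) to obtain
\[
|mS_M(z)-m^{(t)}(z)|\prec n^{-\epsilon_h}+(n\eta)^{-1},\qquad z\in\mathbf{D},
\]
where the $n^{-\epsilon_h}$ absorbs both the small variance mismatch above and the standard bounded-support correction to the MP reference law. Next I would compare $R(M)$ with $\mathcal{S}(M)$ through the resolvent identity
\[
\mathcal{G}R_M(z)-\mathcal{G}S_M(z)=\mathcal{G}R_M(z)\,M\bigl[n^{-1}I_p-(\operatorname{diag}S)^{-1}\bigr]M^*\,\mathcal{G}S_M(z).
\]
Cycling the trace, and noting that the middle factor is diagonal, yields
\[
\operatorname{tr}\bigl(\mathcal{G}R_M(z)-\mathcal{G}S_M(z)\bigr)=\sum_{j=1}^p\bigl(n^{-1}-\rho_j^{-2}\bigr)\bigl[M^*\mathcal{G}S_M(z)\mathcal{G}R_M(z)M\bigr]_{jj}.
\]
Bounding each diagonal entry by $\|\mathcal{G}S_M\|\,\|\mathcal{G}R_M\|\,\|M_j\|^2\le \eta^{-2}\|M_j\|^2$, using $\|M_j\|^2\lesssim n$ with high probability (a Bernstein-type bound on a sum of bounded independent variables with variance $\le 1$), and applying the trace estimate $\operatorname{tr}|n^{-1}I-(\operatorname{diag}S)^{-1}|\lesssim n^{-\epsilon_\alpha}$ from Lemma~\ref{lem_priorest_x}, we obtain
\[
|mR_M(z)-mS_M(z)|\lesssim n^{-\epsilon_\alpha}\eta^{-2}.
\]
The triangle inequality combining the two displayed bounds concludes the proof.

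The main obstacle is Step~1: one has to verify carefully that a bounded-support averaged local law can be invoked for the specific distribution of the $M_{ij}$---which is centered but not exactly at zero, has variance slightly below $1-t$, and is produced by a resampling---so that all constants remain uniform in $n$ and the MP reference law $m^{(t)}$ emerges with the correct variance scale. Step~2 is mechanical, but relies purely on operator-norm estimates, which is why the factor $\eta^{-2}$ appears and cannot be improved by this route. This is harmless for the intended downstream use in Proposition~\ref{prop_etaregular_corH}, which only applies the estimate at the mesoscopic scale $\eta\gtrsim\eta_*=n^{-\epsilon_\alpha/6}$, where $n^{-\epsilon_\alpha}\eta_*^{-2}=n^{-2\epsilon_\alpha/3}\to 0$.
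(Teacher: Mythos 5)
Your proposal is correct and follows essentially the same route as the paper's proof: invoke the bounded-support averaged local law of \cite{hwang2019local} (Theorem 2.7 there) for $\mathcal{S}(M)$, then compare $R(M)$ with $\mathcal{S}(M)$ via the resolvent identity together with the trace estimate $\operatorname{tr}|(\operatorname{diag}S)^{-1}-n^{-1}I|\lesssim n^{-\epsilon_\alpha}$ from Lemma~\ref{lem_priorest_x}. The only cosmetic differences are that the paper explicitly passes through the intermediate variance parameter $\widetilde{t}=1-\mathbb{E}|M_{ij}|^2$ and uses the trace-norm bound $\operatorname{tr}|n^{-1}MDM^*|\le\operatorname{tr}|D|\cdot\|n^{-1}M^*M\|$, whereas you cycle the trace and dominate the diagonal entries by $\eta^{-2}\|M_j\|^2$, but both routes produce the identical $n^{-\epsilon_\alpha}\eta^{-2}$ error.
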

\begin{proof}
 For the random matrix $M$ with bounded support, it follows from \cite[Theorem 2.7]{hwang2019local} that
    \begin{equation*}
        |mS_{M}(z)-m^{(\widetilde{t})}(z)|\prec n^{-\epsilon_{h}}+(n\eta)^{-1},
    \end{equation*}
    where $\widetilde{t}=1-\mathbb{E}|M_{ij}|^2$. In addition, notice that $|\widetilde{t}-t|=\mathrm{o}(n^{-1})$, which implies
    $|m^{(t)}(z)-m^{(\widetilde{t})}(z)|\le (n\eta)^{-1}$. Also notice that $R(M)=M{(\operatorname{diag}S)}^{-1}M^{*}$ differs from $\mathcal{S}(M)=n^{-1}MM^{*}$ only in the scaling. Thereafter, one may consider the difference
	\begin{equation*}
		|mR_{M}(z)-mS_{M}(z)|=|n^{-1}\operatorname{tr}\mathcal{G}R_{M}(z)-n^{-1}\operatorname{tr}\mathcal{G}S_{M}(z)|.
	\end{equation*}
	  Using the resolvent identity $A^{-1}-B^{-1}=B^{-1}(B-A)A^{-1}$ for any square matrices $A$ and $B$ of the same size, we obtain that  \begin{equation}\label{eq_averagelocallaw_diff_mRmmSm}
		\begin{split}
			&|mR_{M}(z)-mS_{M}(z)|\\
			=&|n^{-1}\operatorname{tr}\Big\{\big(R(M)-zI\big)^{-1}\big(R(M)-\mathcal{S}(M)\big)\big(\mathcal{S}(M)-zI\big)^{-1}\Big\}|\\
			\le& \frac{1}{\eta^2}\operatorname{tr}\Big|\big(n^{-1}M((\operatorname{diag}S)^{-1}-n^{-1}I)M^{*}\big)\Big|\\
			\le& \frac{1}{\eta^2}\operatorname{tr}|(\operatorname{diag}S)^{-1}-n^{-1}I|\cdot\|n^{-1}M^{*}M\|\lesssim n^{-\epsilon_{\alpha}}\eta^{-2},
		\end{split}
	\end{equation}
 where in the last step, we have used \eqref{eq_tr_SS} and the result that
    \begin{equation*}
        |\|n^{-1}M^{*}M\|-(1-\widetilde{t})\lambda_+|\prec n^{-2\epsilon_h}+n^{-2/3}
    \end{equation*}
    by \cite[Theorem 2.9]{hwang2019local}. Thereafter,  \eqref{eq_averagelocallaw_diff_mRmmSm} reads
    \begin{equation*}
           |mR_{M}(z)-mS_{M}(z)|\prec n^{-\epsilon_{\alpha}}\eta^{-2},
    \end{equation*}
    which further implies the desired statement.
\end{proof}

Now, we can proceed to the proof of Proposition \ref{prop_etaregular_corH}.\\
\textit{Proof of Proposition \ref{prop_etaregular_corH}}:
Suppose $\Psi$ and $\Pi$ are well configured and invoke the definition of the domain $\mathbf{D}$. We firstly consider $(i)$ in Definition \ref{def_etaregular}. Let $\mu^{\widetilde{H}}_{n}$ and $\mu_{n}^{M}$ be the empirical spectral distributions of $R(\widetilde{H})$ and $R({M})$, respectively. By the rank inequality, $|\mu_n^{\widetilde{H}}-\mu_{n}^M| \le 2\operatorname{rank}({H}) / n$, we have
\begin{equation*}
\left|\operatorname{Im} mR_{{\widetilde{H}}}(z)-\operatorname{Im} mR_{M}(z)\right| \le \int\left|\frac{\eta}{(\lambda-E)^2+\eta^2}\big(\mu^{\widetilde{H}}_{n}-\mu^{M}_{n}\big)(\mathrm{d} \lambda)\right|\lesssim n^{-\epsilon_y}\eta^{-1},
\end{equation*}
which combined with Lemma \ref{lem_locallaw_corM} gives
\begin{equation*}
\left|\operatorname{Im} mR_{{\widetilde{H}}}(z)-\operatorname{Im} m^{(t)}(z)\right| \prec n^{-\epsilon_{y}}\eta^{-1}+n^{-\epsilon_{\alpha}}\eta^{-2}+n^{-\epsilon_{h}}+(n\eta)^{-1}.
\end{equation*}
Thus, for any fixed $z\in \mathbf{D}$ with $\eta_*\le \eta\le C$, the first statement follows from the fact that
$m^{(t)}(z)\sim \sqrt{\kappa_0+\eta}$ for $\lambda_{-}(R(\widetilde{H}))\le E\le \lambda_{+}(R(\widetilde{H}))$ and $m^{(t)}(z)\sim \eta/\sqrt{\kappa_0+\eta}$ for $E\le \lambda_{-}(R(\widetilde{H}))$ or $ E\ge \lambda_{+}(R(\widetilde{H}))$.

Next, statement $(ii)$ of Definition \ref{def_etaregular} follows from similar arguments in Lemma \ref{lem_pre_estnorm}.

As for statement $(iii)$ in Definition \ref{def_etaregular}, we can easily select another large $C_{\widetilde{H}}$ based on the rank of $\widetilde{H}$, for example $C_{\widetilde{H}}=50$. This concludes the proof of Proposition \ref{prop_etaregular_corH}.   \qed

\subsubsection{Preliminaries for GDM}\label{sec_basicnotion_GDM}
In this subsection, we introduce some basic notation and tools that will be used in the subsequent proof. Firstly, define $\widetilde{H}_t=\widetilde{H}(\operatorname{diag}(S))^{-1/2}/\sqrt{1-t}$ and assume that
    \begin{equation}\label{eq_def_svd_tildeH}
        \widetilde{H}_t=\frac{1}{\sqrt{1-t}}\mathrm{U}_1\mathbf{E}(\widetilde{H})\mathrm{U}_2^{*}
    \end{equation}
    is the singular value decomposition of $\widetilde{H}_t$, where $\mathbf{E}(\widetilde{H})$ is an $n\times p$ rectangular matrix with
    \begin{equation*}
        \mathbf{E}:=\mathbf{E}(\widetilde{H})=(D,0),\quad D^2=\operatorname{diag}(d_1,\dots,d_n),
    \end{equation*}
    with $\sqrt{d_1}\ge\sqrt{d_2}\ge\dots\ge\sqrt{d_n}\ge0$ being the singular values of $\widetilde{H}_t$. Due to the rotationally invariant property of Gaussian matrix, we may write
    \begin{equation}\label{eq_svd_Yt}
        Y_t=\sqrt{t}W+\widetilde{H}_0\overset{d}{=}\mathrm{U}_1Y_t\mathrm{U}_2^{*},\quad \widetilde{Y}_t:=\sqrt{t}W+\mathbf{E}.
    \end{equation}
For any deterministic matrix $A\in\mathbb{C}^{n\times p}$, define its linearisation matrix $L(A)$ as
\begin{equation}\label{eq_def_linearisation}
    L(A):=
    \begin{pmatrix}
        0 &A\\
        A^{*} &0
    \end{pmatrix}.
\end{equation}
Let $\mathcal{L}(A,z)=(z^{1/2}L(A)-z)^{-1}$ be the resolvent of $L(A)$. By the Schur complement formula, we have
\begin{equation}\label{eq_def_resolvent_L}
    \mathcal{L}(A,z)=
    \begin{pmatrix}
        \mathcal{G}(A,z) &z^{-1/2}\mathcal{G}(A,z)A\\
        z^{-1/2}A^{*}\mathcal{G}(A,z) &\mathcal{G}(A^{*},z)
    \end{pmatrix}.
\end{equation}
For the consistency of notation, we remark that
\begin{equation*}
    \mathcal{L}(\widetilde{Y}_t,z)=
    \begin{pmatrix}
        \mathcal{G}R_{\widetilde{Y}_t}(z) &z^{-1/2}\mathcal{G}R_{\widetilde{Y}_t}(z)Y_t\\
        z^{-1/2}\widetilde{Y}_t^{*}\mathcal{G}R_{\widetilde{Y}_t}(z) &GR_{\widetilde{Y}_t}(z)
    \end{pmatrix}
    =\begin{pmatrix}
        \mathcal{G}R_{\widetilde{Y}_t}(z) &z^{-1/2}\widetilde{Y}_tGR_{\widetilde{Y}_t}(z)\\
        z^{-1/2}GR_{\widetilde{Y}_t}(z)\widetilde{Y}^{*}_t &GR_{\widetilde{Y}_t}(z)
    \end{pmatrix},
\end{equation*}
where $\mathcal{G}R_{\widetilde{Y}_t}(z)=(R(\widetilde{Y}_t)-zI)^{-1}$ and $GR_{\widetilde{Y}_t}(z)=(R(\widetilde{Y}_t^*)-zI)^{-1}$. In addition, by the singular value decomposition in \eqref{eq_svd_Yt}, we have
\begin{equation*}
    \mathcal{L}(Y_t,z)\overset{d}{=}\begin{pmatrix}
        \mathrm{U}_1 &0\\
        0 &\mathrm{U}_2
    \end{pmatrix}
    \mathcal{L}(\widetilde{Y}_t,z)
    \begin{pmatrix}
        \mathrm{U}^{*}_1 &0\\
        0 &\mathrm{U}^{*}_2
    \end{pmatrix}.
\end{equation*}

    We define the asymptotic limit of $\mathcal{L}(\widetilde{Y}_t,z)$ as
    \begin{equation*}
        \Pi_{\widetilde{Y}_t}(z):=
        \begin{pmatrix}
            \frac{-(1+\phi tm_{t})}{z(1+\phi tm_{t})(1+t\underline{m}_{t}-\mathbf{E}\mathbf{E}^{*})}\quad &\frac{-z^{-1/2}}{z(1+\phi tm_{t})(1+t\underline{m}_{t}-\mathbf{E}\mathbf{E}^{*})}\mathbf{E}\\
            \mathbf{E}^{*}\frac{-z^{-1/2}}{z(1+\phi tm_{t})(1+t\underline{m}_{t}-\mathbf{E}\mathbf{E}^{*})}\quad &\frac{-(1+ t\underline{m}_{t})}{z(1+\phi tm_{t})(1+t\underline{m}_{t}-\mathbf{E}\mathbf{E}^{*})}
        \end{pmatrix}.
    \end{equation*}
    We will use the following index sets
    \begin{equation*}
        \mathcal{I}_1:=\{1,\dots,n\},\quad\mathcal{I}_2:=\{n+1,\dots, n+p\},\quad\mathcal{I}:=\mathcal{I}_1\bigcup\mathcal{I}_2.
    \end{equation*}
    And in the sequel, we will use the Latin letters $i,j\in\mathcal{I}_1$, Greek letters $\mu,\nu\in\mathcal{I}_2$ and $\mathfrak{a},\mathfrak{b}\in\mathcal{I}$. For simplicity, given a vector $\mathbf{u}\in\mathbb{C}^{\mathcal{I}_{1,2}}$, we always identify it with its natural embedding in $\mathbb{C}^{\mathcal{I}}$. For example, we shall recognize $\mathbf{u}\in\mathbb{C}^{\mathcal{I}_1}$ with $(\mathbf{u}^{*},\mathbf{0}_p^{*})^{*}$. For an $\mathcal{I}\times\mathcal{I}$ matrix, we define the $2\times 2$ minor as
    \begin{equation*}
        A_{[ij]}:=
        \begin{pmatrix}
            A_{ij}\quad &A_{i\bar{j}}\\
            A_{\bar{i}j}\quad &A_{\bar{i}\bar{j}}
        \end{pmatrix},
    \end{equation*}
    where $\bar{i}:=i+n\in\mathcal{I}_2$, $\bar{j}:=j+n\in\mathcal{I}_2$. Moreover, for $\mathfrak{a}\in\mathcal{I}\setminus\{i,\bar{i}\}$, we denote
    \begin{equation*}
        A_{[i]\mathfrak{a}}=
        \begin{pmatrix}
            A_{i\mathfrak{a}}\\
            A_{\bar{i}\mathfrak{a}}
        \end{pmatrix},\quad
        A_{\mathfrak{a}[i]}=(A_{\mathfrak{a}i}, A_{\mathfrak{a}\bar{i}}).
    \end{equation*}
\begin{lemma}[Resolvent]\label{lem_resolvent}
The following identities hold.
\begin{itemize}
    \item [(i)] For $i\in\mathcal{I}_1$, we have
    \begin{eqnarray*}
     &&   (\mathcal{L}_{[ii]}(\widetilde{Y}_t,z))^{-1}=
        \begin{pmatrix}
            -z &z^{1/2}(d_i^{1/2}+\sqrt{t}W_{i\bar{i}})\\
            z^{1/2}(d_i^{1/2}+\sqrt{t}W_{i\bar{i}}) &-z
        \end{pmatrix}\\
     &&\qquad\qquad \qquad\qquad \qquad \qquad \qquad -
        zt\begin{pmatrix}
            (W^{*}\mathcal{L}^{(i)}W)_{ii} &(W^{*}\mathcal{L}^{(i)}W^{*})_{i\bar{i}}\\
            (W\mathcal{L}^{(i)}W)_{\bar{i}i} &(W\mathcal{L}^{(i)}W^{*})_{\bar{i}\bar{i}}
        \end{pmatrix}.
    \end{eqnarray*}
    For $n+1\le \mu\le n+p$, we have
    \begin{equation}
        \frac{1}{\mathcal{L}_{\mu\mu}(\widetilde{Y}_t,z)}=-z-zt\big(W\mathcal{L}^{(\mu)}W^{*}\big)_{\mu\mu}.
    \end{equation}
    \item [(ii)] For $i\neq j\in\mathcal{I}_1$, we have
    \begin{eqnarray*}
        \mathcal{L}_{[ii]}(\widetilde{Y}_t,z)&=&-z^{1/2}\sqrt{t}\mathcal{L}_{[ii]}
        \begin{pmatrix}
            (W^{*}\mathcal{L}^{(i)})_{ij} &(W^{*}\mathcal{L}^{(i)})_{i\bar{j}}\\
            (W\mathcal{L}^{(i)})_{\bar{i}j} &(W\mathcal{L}^{(i)})_{\bar{i}\bar{j}}
        \end{pmatrix}\\
       &=&-z^{1/2}\sqrt{t}
        \begin{pmatrix}
            (\mathcal{L}^{(j)}W)_{ij} &(\mathcal{L}^{(j)}W^{*})_{i\bar{j}}\\
            (\mathcal{L}^{(j)}W)_{\bar{i}j} &(\mathcal{L}^{(j)}W^{*})_{\bar{i}\bar{j}}
        \end{pmatrix}\mathcal{L}_{[jj]}.
    \end{eqnarray*}
    For $n+1\le\mu\neq\nu\le p+n$, we have
    \begin{equation*}
        \mathcal{L}_{\mu\nu}(\widetilde{Y}_t,z)=-z^{1/2}\sqrt{t}\mathcal{L}_{\mu\mu}(W\mathcal{L}^{(\mu)})_{\mu\nu}=-z^{1/2}\sqrt{t}(\mathcal{L}^{(\nu)}W^{*})_{\mu\nu}\mathcal{L}_{\nu\nu}=zt\mathcal{L}_{\mu\mu}\mathcal{L}_{\nu\nu}^{(\mu)}(W\mathcal{L}^{(\mu\nu)}W^{*})_{\mu\nu}.
    \end{equation*}
    \item [(iii)] For $i\in\mathcal{I}_1$ and $n+1\le \mu\le p+n$, we have
    \begin{equation*}
        \mathcal{L}_{[i]\mu}=-z^{1/2}\sqrt{t}\mathcal{L}_{[ii]}
        \begin{pmatrix}
            (W^{*}\mathcal{L}^{(i\bar{i}\mu)})_{i\mu}\\
            (W\mathcal{L}^{(\bar{i}i\mu)})_{i\mu}
        \end{pmatrix}
        -zt\mathcal{L}_{[ii]}\mathcal{L}_{[\mu\mu]}^{(i)}
        \begin{pmatrix}
            (W^{*}\mathcal{L}^{(i\bar{i}\mu)}W^{*})_{i\mu}\\
            (W\mathcal{L}^{(\bar{i}i\mu)}W^{*})_{\bar{i}\mu}
        \end{pmatrix}.
    \end{equation*}
    \item [(vi)] For $\mathfrak{a}\in\mathcal{I}$ and $\mathfrak{b},\mathfrak{c}\in\mathcal{I}\setminus\{\mathfrak{a}\}$,
    \begin{equation*}
\mathcal{L}_{\mathfrak{b}\mathfrak{c}}=\mathcal{L}_{\mathfrak{b}\mathfrak{c}}^{(\mathfrak{a})}+\frac{\mathcal{L}_{\mathfrak{b}\mathfrak{a}}\mathcal{L}_{\mathfrak{a}\mathfrak{c}}}{\mathcal{L}_{\mathfrak{a}\mathfrak{a}}},\quad \frac{1}{\mathcal{L}_{\mathfrak{b}\mathfrak{b}}}=\frac{1}{\mathcal{L}_{\mathfrak{b}\mathfrak{b}}^{(\mathfrak{a})}}-\frac{\mathcal{L}_{\mathfrak{b}\mathfrak{a}}\mathcal{L}_{\mathfrak{a}\mathfrak{b}}}{\mathcal{L}_{\mathfrak{b}\mathfrak{b}}\mathcal{L}_{\mathfrak{b}\mathfrak{b}}^{(\mathfrak{a})}\mathcal{L}_{\mathfrak{a}\mathfrak{a}}}.
    \end{equation*}
    For $i\in\mathcal{I}_1$ and $\mathfrak{a},\mathfrak{b}\in\mathcal{I}\setminus\{i,\bar{i}\}$, we have
    \begin{equation*}
        \mathcal{L}_{\mathfrak{a}\mathfrak{b}}=\mathcal{L}^{(i)}_{\mathfrak{a}\mathfrak{b}}+\mathcal{L}_{\mathfrak{a}[i]}\mathcal{L}_{[ii]}^{-1}\mathcal{L}_{[i]\mathfrak{b}},\quad \mathcal{L}^{-1}_{\mathfrak{a}\mathfrak{a}}=(\mathcal{L}^{(i)}_{\mathfrak{a}\mathfrak{a}})^{-1}-\mathcal{L}^{-1}_{\mathfrak{a}\mathfrak{a}}(\mathcal{L}^{(i)}_{\mathfrak{a}\mathfrak{a}})^{-1}\mathcal{L}_{\mathfrak{a}[i]}\mathcal{L}^{-1}_{[ii]}\mathcal{L}^{-1}_{[i]\mathfrak{a}}.
    \end{equation*}
    where we repeatedly use the notation $(\bar{i})$ for some upper index to denote the element $\bar{i}=i+n\in\mathcal{I}_2$.
\end{itemize}
\end{lemma}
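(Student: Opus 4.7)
The proof is a direct application of the Schur complement formula to the matrix $M := z^{1/2}L(\widetilde{Y}_t) - zI$, whose inverse is precisely $\mathcal{L}(\widetilde{Y}_t, z)$. The entries of $M$ are easily catalogued: every diagonal entry equals $-z$, and the only nonzero off-diagonal entries couple an index $i \in \mathcal{I}_1$ to $\bar{k} \in \mathcal{I}_2$ via $M_{i\bar{k}} = z^{1/2}(\sqrt{t}W_{ik} + d_i^{1/2}\delta_{ik})$, with the Hermitian-conjugate pattern on the lower block. The structural observation that drives all the bookkeeping is that $\mathbf{E}$ is supported on the diagonal, so simultaneously removing the $i$-th and $\bar{i}$-th row and column from $L(\widetilde{Y}_t)$ is equivalent to zeroing the $i$-th row of $\widetilde{Y}_t$; consequently $(M^{(i\bar{i})})^{-1}$ can be identified with the resolvent $\mathcal{L}^{(i)}$ appearing in the lemma.

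For (i), the block Schur complement applied to the $2\times 2$ principal minor indexed by $\{i,\bar{i}\}$ reads $\mathcal{L}_{[ii]}^{-1} = M_{[ii]} - M_{[i]\mathfrak{c}}(M^{(i\bar{i})})^{-1}M_{\mathfrak{c}[i]}$ with $\mathfrak{c} := \mathcal{I}\setminus\{i,\bar{i}\}$. The first summand produces the antidiagonal $z^{1/2}(d_i^{1/2} + \sqrt{t}W_{i\bar{i}})$ matrix stated in the lemma. For the second summand, I substitute $(M^{(i\bar{i})})^{-1} = \mathcal{L}^{(i)}$ and expand the outer products entry-by-entry; since the only nonzero rows of $M_{[i]\mathfrak{c}}$ are the $W$-entries linking $\mathcal{I}_1$ to $\mathcal{I}_2$, the four components of the resulting $2\times 2$ correction collapse to the quadratic forms $(W^{*}\mathcal{L}^{(i)}W)_{ii}$, $(W^{*}\mathcal{L}^{(i)}W^{*})_{i\bar{i}}$, $(W\mathcal{L}^{(i)}W)_{\bar{i}i}$, $(W\mathcal{L}^{(i)}W^{*})_{\bar{i}\bar{i}}$, each carrying the prefactor $-zt$. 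The scalar identity for $\mu\in\mathcal{I}_2$ is the one-dimensional specialization, with no diagonal contribution from $\mathbf{E}$ since $\mathbf{E}$ vanishes on $\mathcal{I}_2$-indices. For (ii), the off-diagonal block inversion formula $\mathcal{L}_{[ij]} = -\mathcal{L}_{[ii]}M_{[i]\mathfrak{c}}(M^{(i\bar{i})})^{-1}_{\mathfrak{c}[j]}$ produces the first displayed expression after the same substitution, while a Schur reduction centred at $[j]$ rather than $[i]$ yields the second; the three equivalent representations for $\mu\ne\nu$ in $\mathcal{I}_2$ are the scalar analogues, with one further Schur step simultaneously removing $\mu$ and $\nu$ to expose the factor $\mathcal{L}_{\mu\mu}\mathcal{L}^{(\mu)}_{\nu\nu}$.

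For the mixed case (iii), I combine the reductions of (i) and (ii): starting from the $2\times 2$ reduction at $[i]$, the $\mathcal{L}_{[i]\mu}$ block is read off as the corresponding column of $-\mathcal{L}_{[ii]}M_{[i]\mathfrak{c}}(M^{(i\bar{i})})^{-1}$, producing the first form; a further Schur step inside $\mathcal{L}^{(i)}$ with respect to $\mu$ extracts the factor $\mathcal{L}_{[\mu\mu]}^{(i)}$ and yields the second. Finally, the identities of (iv) are the textbook one-step resolvent expansion formulas obtained from $MM^{-1} = I$ with a single row/column $\mathfrak{a}$ eliminated; the $2\times 2$ block version with $\{i,\bar{i}\}$ eliminated follows from the same argument applied to a block reduction, with the scalar $\mathcal{L}_{\mathfrak{a}\mathfrak{a}}$ replaced by the $2\times 2$ minor $\mathcal{L}_{[ii]}$. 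No step poses a genuine conceptual obstacle; the only delicate point is the symbolic bookkeeping of which entries of $W$ and $\mathcal{L}^{(i)}$ survive when expanding quadratic forms against the sparse off-diagonal structure of $M$, together with the consistent identification of $(M^{(i\bar{i})})^{-1}$ with $\mathcal{L}^{(i)}$.
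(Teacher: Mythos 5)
Your overall strategy is the correct one, and indeed the only natural one: apply the Schur complement to $M := z^{1/2}L(\widetilde{Y}_t)-zI$, catalogue which entries of $M$ are nonzero, and identify the various $(M^{(\cdot)})^{-1}$ with the corresponding minor resolvents $\mathcal{L}^{(\cdot)}$. The paper itself does not supply a proof of this lemma (it is stated as a collection of standard identities, essentially imported from the \emph{ding2022edge} setup), so your write-up is filling a gap the authors leave implicit, and the treatment of the $2\times 2$ block at $[ii]$, the off-diagonal blocks in (ii)--(iii), and the standard resolvent expansion in (iv) is sound.

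However, the one-line justification you give for the scalar identity at $\mu\in\mathcal{I}_2$ is not correct. You write that there is ``no diagonal contribution from $\mathbf{E}$ since $\mathbf{E}$ vanishes on $\mathcal{I}_2$-indices.'' This conflates two different facts. It is true that the \emph{diagonal} entry $M_{\mu\mu}=-z$ carries no $\mathbf{E}$-contribution (the diagonal of any linearization $L(A)$ is zero). But the Schur correction is built from the \emph{off-diagonal} vector $M_{\mu\mathfrak c}$, and that vector does see $\mathbf{E}$: for $\mu\in\mathcal{I}_2$ with $\mu-n\le n$, one has
\[
M_{\mu j}=z^{1/2}\bigl(\sqrt t\,W_{j,\mu-n}+\sqrt{d_j}\,\delta_{j,\mu-n}\bigr),\qquad j\in\mathcal{I}_1,
\]
so there is a nonzero $\sqrt{d_{\mu-n}}$ entry sitting at $j=\mu-n$, which is \emph{still present} in the minor (only the index $\mu$ has been removed, not its partner $\mu-n\in\mathcal{I}_1$). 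The mechanism that kills the $\mathbf{E}$-terms in the $[ii]$ case — namely that removing both $i$ and $\bar i$ deletes the unique row and column of $L(\widetilde Y_t)$ through which $\mathbf E$ enters — simply has no analogue when you remove a single index $\mu\in\mathcal{I}_2$. As written, the Schur complement for $\mu$ produces the correction
\[
-z\Bigl(t\,(W\mathcal L^{(\mu)}W^*)_{\mu\mu}+2\sqrt t\,\sqrt{d_{\mu-n}}\,(W\mathcal L^{(\mu)})_{\mu,\mu-n}+d_{\mu-n}\,\mathcal L^{(\mu)}_{\mu-n,\mu-n}\Bigr),
\]
not just the first term. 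If the formula stated in the lemma is to hold exactly, there must be a convention in force (e.g.\ $\mathcal L^{(\mu)}$ silently removing the pair $\{\mu-n,\mu\}$, or the $d_{\mu-n}$-terms being absorbed elsewhere) that you have not spelled out, and that the cited ``since $\mathbf{E}$ vanishes on $\mathcal{I}_2$-indices'' does not establish. The same cross-term issue recurs in the $\mu\neq\nu$ identities of part (ii) and in the mixed case (iii). You should either identify and state the convention that makes the $\mathbf E$-terms vanish, or carry those cross-terms explicitly through your computation; at present your bookkeeping for the $\mathcal I_2$-indexed entries is incomplete in a way that is not a mere notational shortcut.
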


In the sequel, we define the random error,
\begin{equation*}
    \Lambda_t(z):=|m_{n,t}(z)-m_t(z)|=\phi^{-1}|\underline{m}_{n,t}(z)-\underline{m}_{t}(z)|,
\end{equation*}
the random control parameters,
\begin{equation*}
    \Psi_{\Lambda_t}(z):=\sqrt{\frac{\operatorname{Im}m_{t}(z)+\Lambda_t(z)}{n\eta}}+\frac{1}{n\eta},
\end{equation*}
and
\begin{equation*}
    \Lambda^o_{t}:=\max_{i\neq j\in\mathcal{I}_1}\|\mathcal{L}^{-1}_{[ii]}\mathcal{L}_{[ij]}(\mathcal{L}_{[jj]}^{(i)})^{-1}\|+\max_{\mu\neq\nu\ge n+1}\|\mathcal{L}^{-1}_{\mu\mu}\mathcal{L}_{\mu\nu}(\mathcal{L}_{\nu\nu}^{(\mu)})^{-1}\|+\max_{i\in\mathcal{I}_1,\mu\ge n+1}\|\mathcal{L}^{-1}_{[ii]}\mathcal{L}_{[i]\mu}(\mathcal{L}_{\mu\mu}^{(i)})^{-1}\|,
\end{equation*}
which control the size of off-diagonal entries. We introduce the random variable from conditional expectation as
\begin{equation*}
    Z_{a}:=(1-\mathbb{E}_{a})(\mathcal{L}_{aa})^{-1},\quad a\in\mathcal{I},
\end{equation*}
where $\mathbb{E}_{a}[\cdot]:=\mathbb{E}[\cdot|L^{(a)}(\widetilde{Y}_t)]$ is the conditional expectation over the randomness of the $a$-th row and column of $L(\widetilde{Y}_t)$. Here the index $a$ can be regarded as $[i], i\in\mathcal{I}_1$ or $\mu,\mu\in\mathcal{I}_2$. Then by Lemma \ref{lem_resolvent}, we have
\begin{gather*}
    Z_{\mu}=zt(\mathbb{E}_{\mu}-1)(W\mathcal{L}^{(\mu)}W^{*})_{\mu\mu}=zt\sum_{i,j\in\mathcal{I}_1}\mathcal{L}^{(\mu)}_{ij}(\frac{1}{n}\delta_{ij}-w_{i\mu}w_{j\mu}),\\
    Z_{[i]}=\sqrt{zt}
    \begin{pmatrix}
        0 &W_{i\bar{i}}\\
        W_{i\bar{i}} &0
    \end{pmatrix}
    +zt
    \begin{pmatrix}
        \sum_{\mu,\nu\in\mathcal{I}_2}\mathcal{L}_{\mu\nu}^{(i)}(n^{-1}\delta_{\mu\nu}-w_{i\mu}w_{i\nu}) &\sum_{j\in\mathcal{I}_1,\mu\in\mathcal{I}_2}\mathcal{L}_{\mu j}^{(i)}w_{i\mu}w_{j\bar{i}}\\
        \sum_{j\in\mathcal{I}_1,\mu\in\mathcal{I}_2}\mathcal{L}_{j\mu}^{(i)}w_{i\mu}w_{j\bar{i}} &\sum_{j,k\in\mathcal{I}_1}\mathcal{L}_{jk}^{(i)}(n^{-1}\delta_{jk}-w_{j\bar{i}}w_{k\bar{i}})
    \end{pmatrix}.
\end{gather*}

We have the following preliminary bounds, which rely heavily on the large deviation inequality for Gaussian vectors.
\begin{lemma}\label{lem_basic_locallaw_preliminarybound_Z}
    Define the event $\Xi_0:=\{\Lambda_t=\mathrm{O}(1)\}$. For $z\in\mathbf{D}_2$, we have
    \begin{equation*}
        \|Z_{[i]}\|+|Z_{\mu}|\prec t\Psi_{\Lambda_t}+t^{1/2}n^{-1/2},\quad i\in\mathcal{I}_1, \ \mu\ge n+1,
    \end{equation*}
    and
    \begin{equation*}
        \mathbbm{1}(\Xi_0)\Lambda_t^o\prec t\Psi_{\Lambda_t}.
    \end{equation*}
    Moreover, for any constant $c_0>0$, we have
    \begin{equation*}
        \mathbbm{1}(\eta\ge c_0)(\|Z_{[i]}\|+|Z_{\mu}|+\Lambda_t^o)\prec t^{1/2}n^{-1/2},\quad i\in\mathcal{I}_1,\mu\ge n+1.
    \end{equation*}
\end{lemma}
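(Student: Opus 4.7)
The plan is to reduce each quantity to a Gaussian quadratic or bilinear form and apply Hanson--Wright-type large deviation estimates, then convert the resulting Frobenius norms of resolvent minors into spectral quantities via a Ward identity.

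First I would handle $Z_{\mu}$. Conditionally on $L^{(\mu)}(\widetilde Y_t)$, the entries $(w_{i\mu})_{i\in\mathcal{I}_1}$ are i.i.d. $\mathrm{N}(0,1/n)$, so
$Z_\mu = zt\sum_{i,j\in\mathcal{I}_1}\mathcal{L}^{(\mu)}_{ij}(n^{-1}\delta_{ij}-w_{i\mu}w_{j\mu})$
is exactly a centered Gaussian quadratic form. The standard large deviation bound gives
$|Z_\mu|\prec |z|t\cdot n^{-1}\bigl(\sum_{i,j\in\mathcal{I}_1}|\mathcal{L}^{(\mu)}_{ij}|^2\bigr)^{1/2}$.
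Because $L(\widetilde Y_t)$ is Hermitian, the Ward identity applied to the scaled resolvent $\mathcal{L}=(\sqrt{z}L-z)^{-1}$ yields $\sum_{\mathfrak{b}\in\mathcal{I}}|\mathcal{L}_{\mathfrak{a}\mathfrak{b}}|^2\lesssim \operatorname{Im}\mathcal{L}_{\mathfrak{a}\mathfrak{a}}/\eta$, and then taking the trace over $i\in\mathcal{I}_1$ together with the interlacing bound $|\operatorname{Im} m_{n,t}^{(\mu)}-\operatorname{Im} m_{n,t}|\lesssim (n\eta)^{-1}$ converts the Frobenius norm into $\sqrt{n\operatorname{Im} m_{n,t}/\eta}\lesssim \sqrt{n(\operatorname{Im} m_t+\Lambda_t)/\eta}$. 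This delivers $|Z_\mu|\prec t\Psi_{\Lambda_t}$, up to the genuinely Gaussian off-diagonal-in-$i=\bar i$ contribution $\sqrt{t}W_{i\bar i}$ whose typical size is $t^{1/2}n^{-1/2}$.

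Next, I would handle $\|Z_{[i]}\|$ block-entry by block-entry. The diagonal blocks are again Gaussian quadratic forms in the column of $W$ indexed by $i$ or $\bar i$, so the same Hanson--Wright plus Ward argument applies, now summing over $\mu\in\mathcal{I}_2$ (respectively $j\in\mathcal{I}_1$). The off-diagonal blocks are Gaussian bilinear forms $\sum \mathcal{L}^{(i)}_{\mu j}w_{i\mu}w_{j\bar i}$ in two \emph{independent} Gaussian vectors, to which the standard bilinear large deviation bound gives the same Frobenius-norm control; the single explicit Gaussian entry $\sqrt{z t}W_{i\bar i}$ contributes the $t^{1/2}n^{-1/2}$ term. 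Combined, this gives $\|Z_{[i]}\|\prec t\Psi_{\Lambda_t}+t^{1/2}n^{-1/2}$.

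To control $\Lambda_t^o$ on $\Xi_0=\{\Lambda_t=\mathrm{O}(1)\}$, I would invoke the resolvent identities in Lemma~\ref{lem_resolvent}(ii)--(iii), which express every off-diagonal entry $\mathcal{L}_{[ij]},\mathcal{L}_{\mu\nu},\mathcal{L}_{[i]\mu}$ as a product of the associated diagonal blocks with a Gaussian bilinear form in independent columns of $W$ (namely $\sqrt{z}\sqrt{t}(\cdot)$ or $zt(W\mathcal{L}^{(\mu\nu)}W^*)_{\mu\nu}$). On $\Xi_0$, the deterministic asymptotic expressions for $\mathcal{L}_{[ii]}$ and $\mathcal{L}_{\mu\mu}$ show their inverses are $\mathrm{O}(1)$, so they may be factored out. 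Applying the bilinear large deviation estimate and the same Ward identity as above yields the desired $\mathbbm{1}(\Xi_0)\Lambda_t^o\prec t\Psi_{\Lambda_t}$.

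Finally, for $\eta\ge c_0$, we use the crude deterministic bound $\|\mathcal{L}\|\le 1/\eta\lesssim 1$, which trivially implies $\|\mathcal{L}^{(\mu)}|_{\mathcal{I}_1\times\mathcal{I}_1}\|_F\lesssim\sqrt{n}$ and renders the Ward step unnecessary; plugging into the large deviation bounds above gives $\|Z_{[i]}\|+|Z_\mu|+\Lambda_t^o\prec t^{1/2}n^{-1/2}$. The main technical obstacle I anticipate is the careful bookkeeping of the $2\times 2$ block structure in $Z_{[i]}$ combined with the rectangular minor structure on $\mathcal{I}_1\times\mathcal{I}_2$: one must be careful that the Gaussian variables appearing in different entries of the block are genuinely the correct independent columns of $W$, and that the appropriate Ward identity for $(\sqrt{z}L-z)^{-1}$ (as opposed to the usual $(M-z)^{-1}$) is applied with the right $\sqrt{z}$ factors so that the estimate ultimately absorbs cleanly into $\Psi_{\Lambda_t}$.
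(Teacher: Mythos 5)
Your proposal is correct and follows the same route as the paper's (cited) proof from \cite[Lemma 4.7]{ding2022edge}: Gaussian quadratic and bilinear large deviation bounds conditional on $\mathcal{L}^{(\mu)}$ or $\mathcal{L}^{(i)}$, converting Frobenius norms of resolvent minors to $\operatorname{Im} m_{n,t}/\eta$ via Ward, and factoring out $\mathrm{O}(1)$ diagonal blocks on $\Xi_0$ to control $\Lambda_t^o$ through Lemma~\ref{lem_resolvent}. Two small remarks on bookkeeping: the explicit $t^{1/2}n^{-1/2}$ term is produced by the lone entry $\sqrt{zt}W_{i\bar i}$ in $Z_{[i]}$, not by $Z_\mu$ (whose diagonal Gaussian contribution is already $\lesssim t n^{-1/2}$ and absorbed); and the Ward identity for $\mathcal{L}=(z^{1/2}L-z)^{-1}$ is cleanest if you do not apply it to the full matrix but block-wise, noting $\mathcal{L}|_{\mathcal{I}_1\times\mathcal{I}_1}=(\widetilde Y_t\widetilde Y_t^*-z)^{-1}$, $\mathcal{L}|_{\mathcal{I}_2\times\mathcal{I}_2}=(\widetilde Y_t^*\widetilde Y_t-z)^{-1}$, and $\mathcal{L}|_{\mathcal{I}_1\times\mathcal{I}_2}=z^{-1/2}\mathcal{G}\widetilde Y_t$ with $\sum_\mu|(\mathcal{G}\widetilde Y_t)_{i\mu}|^2=(\mathcal{G}+z\mathcal{G}\mathcal{G}^*)_{ii}$, after which the usual Ward identity applies to each block and the $\sqrt{z}$ factors drop out.
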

\begin{proof}
    The proof of this lemma is the same as the one in \cite[Lemma 4.7]{ding2022edge}, where we observe that $\max_i\{d_i\}<\infty$ and $\mathbbm{1}(\Xi_0)|m_{n,t}|=\mathrm{O}(1)$. We omit further details here.
\end{proof}

Recalling the notation $\rho_t, m_t(z), \zeta_{t}(z)$ and $b_t(z)$ in Section \ref{sec_locallaw_GDM}, now we present some preliminary properties for $m_t(z)$ and $\zeta_t(z)$ with $z\in \mathbf{D}$, based on the analysis of rectangular free convolution, that will be used in the remainder of this section. Notice that from \eqref{eq_def_LSD2_gdm} and the definition of $\zeta_t$, \eqref{eq_zetarandom} is equivalent to
\begin{equation}\label{eq_def_LSD3_gdm}
    \Phi_t(\zeta_t(z))=z,
\end{equation}
where $ \Phi_t$ is an analytic function on $\mathbb{C}_{+}$ defined as
\begin{equation}\label{eq_psizetarandom}
    \Phi_t(\zeta):=\zeta (1-\phi t m_{n,0}(\zeta))^2+(1-\phi)t(1-\phi t m_{n,0}(\zeta)), ~\zeta\in \mathbb{C}_+.
\end{equation}
Notice that $\zeta_{t}(z)$ and $\Phi_t(\zeta)$ are random variables due to the randomness of $\widetilde{H}(\operatorname{diag}S)^{-1/2}$.
Recalling the definition of $\zeta_t(z)$ in \eqref{eq_def_zetat}, similarly to \cite{bao2023smallest}, we may consider the following domain
\begin{equation}\label{eq_def_zetadomain}
\mathrm{D}_{\zeta}(c_1,c_2,c_3,c_4)=\mathrm{D}_1\cup \mathrm{D}_2\subset \mathbb{C}^+
\end{equation}
for $\zeta_t(z)$ with $z\in {\mathbf{D}}$ where
\begin{equation*}
\begin{split}
	&\mathrm{D}_1(c_1,c_2,c_3):=\{z=E+\mathrm{i}\eta:c_1\le E\le c_2,1/20\le \eta\le c_3\},\\
	&\mathrm{D}_{2}(c_1,c_2,c_3,c_4):=\{z=E+\mathrm{i}\eta:E<c_1 ~\text{or}~E>c_2,n^{-2/3-c_4} t\le \eta\le c_3\}.
	\end{split}
\end{equation*}
It can be verified that there exist appropriate constants $0<c_1<\lambda_{n}(R(\widetilde{H}))<\lambda_{1}(R(\widetilde{H}))<c_2,0<c_3,0<c_4$ such that $\zeta_{t}(z)\in \mathrm{D}_{\zeta}(c_1,c_2,c_3,c_4)$ for $z\in {\mathbf{D}}$ by $t\lesssim n^{-2\epsilon_l}$ and $|m_t(z)|\lesssim 1$.

\

\noindent\textbf{I. Algebraic structure for $\Phi_t(\zeta)$}

Denote the support of $\rho_t$ as $\operatorname{supp}(\rho_t)$ and $\lambda_{\pm,t}$ as the respective left and right-most edges. It has been shown in \cite[Proposition 3]{vallet2012improved} that the support and edges of $\operatorname{supp}(\rho_t)$ can be completely characterized by $\Phi_t$ on $\mathbb{R}$.
\begin{lemma}\label{lem_extremaproposition}
	Fix any $t>0$. The function $\Phi_t(x)$ on $\mathbb{R} \backslash\{0\}$ admits $2 q$ positive local extrema counting multiplicities for some $q \in \mathbb{N}$. The preimages of these extrema are denoted by $\zeta_{1,-}(t)<0<\zeta_{1,+}(t) \le \zeta_{2,-}(t) \le \zeta_{2,+}(t) \le \cdots \le \zeta_{q,-}(t) \le \zeta_{q,+}(t)$, and they belong to the set $\left\{\zeta \in \mathbb{R}: 1-\phi t m_{n,0}\left(\zeta\right)>0\right\}$. Moreover, we have $\lambda_{+, t}=\Phi_t\left(\zeta_{q,+}(t)\right)$ and $\zeta_{q,-}(t)<\lambda_1(R(\widetilde{H}))<\zeta_{q,+}(t)$.
\end{lemma}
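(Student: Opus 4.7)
The plan is to follow the framework of rectangular free convolution subordination established in \cite{vallet2012improved,capitaine2018spectrum}, specialized to our $\Phi_t$. Since $\Phi_t(\zeta)=\zeta\,b(\zeta)^2+(1-\phi)t\,b(\zeta)$ with $b(\zeta):=1-\phi t m_{n,0}(\zeta)$, and since $m_{n,0}(\zeta)=n^{-1}\sum_{i=1}^n (d_i-\zeta)^{-1}$ (with $d_i=\lambda_i(R(\widetilde{H}))$), $\Phi_t$ is rational with poles contained in $\{d_1,\dots,d_n\}$. I would first read off the analytic structure: on any real interval in $\mathbb{R}\setminus\{d_i\}$, $m_{n,0}(\zeta)$ is real analytic and strictly increasing (since $m_{n,0}'(\zeta)=n^{-1}\sum(d_i-\zeta)^{-2}>0$), so $b(\zeta)$ is strictly decreasing there, and $\Phi_t$ is smooth.

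Next, I would connect $\Phi_t$ with $\rho_t$ via the equation $\Phi_t(\zeta_t(z))=z$ from \eqref{eq_def_LSD3_gdm}. The point is that $\rho_t(E)=\pi^{-1}\lim_{\eta\downarrow 0}\operatorname{Im}m_t(E+\mathrm{i}\eta)$, and by \eqref{eq_def_LSD2_gdm} this imaginary part vanishes precisely when $\zeta_t(E)$ lies in a maximal interval of $\mathbb{R}\setminus\operatorname{supp}(m_{n,0})$ on which $\Phi_t$ is locally monotonic and produces a real preimage. The classical observation (Proposition 3 of \cite{vallet2012improved}) is that the gaps in $\operatorname{supp}(\rho_t)\setminus\{0\}$ correspond exactly to maximal intervals on which $\Phi_t'$ keeps a definite sign and the restriction is a diffeomorphism onto an interval outside $\operatorname{supp}(\rho_t)$; the endpoints of these intervals are the real critical points of $\Phi_t$, and the edges of $\rho_t$ are the corresponding critical values. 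This gives the pairing of extrema $(\zeta_{k,-},\zeta_{k,+})$ and the count $2q$.

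Then I would verify the sign condition $1-\phi t m_{n,0}(\zeta)>0$ at each critical point. The key algebraic identity is obtained by differentiating: $\Phi_t'(\zeta)=b(\zeta)\bigl[b(\zeta)-2\zeta\phi t m_{n,0}'(\zeta)-(1-\phi)\phi t m_{n,0}'(\zeta)\cdot b(\zeta)/b(\zeta)\bigr]$, and one can factor out $b(\zeta)$ so that any real critical point at which $\Phi_t'=0$ and that is not a pole of $m_{n,0}$ forces the bracketed expression to vanish; combined with the positivity of $m_{n,0}'$ on the real axis away from $\{d_i\}$, standard arguments force $b(\zeta)>0$ at such a point. This places all $\zeta_{k,\pm}(t)$ in $\{\zeta:1-\phi t m_{n,0}(\zeta)>0\}$, and also yields $\zeta_{1,-}(t)<0<\zeta_{1,+}(t)$ by examining the sign of $\Phi_t$ at $\pm\infty$ and near $0$.

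Finally, the edge identities $\lambda_{+,t}=\Phi_t(\zeta_{q,+}(t))$ and $\zeta_{q,-}(t)<\lambda_1(R(\widetilde{H}))<\zeta_{q,+}(t)$ follow from identifying $\zeta_{q,+}$ as the rightmost real critical point of $\Phi_t$ lying to the right of all the $d_i$'s (together with the asymptotic $\Phi_t(\zeta)\sim \zeta$ as $\zeta\to+\infty$), and using that $\zeta_{q,-}$ is the partner critical point located to the left of the rightmost pole $d_1=\lambda_1(R(\widetilde{H}))$. The main obstacle I anticipate is the bookkeeping of critical points of a rational function whose pole set $\{d_i\}$ is itself random and non-generic (possible coincidences counted with multiplicity); one must argue that the pairing and the sign conditions are robust under such degeneracies, which is exactly the content invoked from \cite[Proposition 3]{vallet2012improved}, and I would simply invoke that proposition once the ingredients above (rational form of $\Phi_t$, positivity of $m_{n,0}'$, and the $\zeta_t\leftrightarrow z$ correspondence) are in place.
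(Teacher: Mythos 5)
Your overall strategy matches the paper's: the paper states this lemma without proof, deferring entirely to \cite[Proposition 3]{vallet2012improved}, and your proposal likewise reduces to invoking that proposition once the structural facts about $\Phi_t$ are recorded. The surrounding explanation (rationality of $\Phi_t$ with poles in $\{d_i\}$, positivity of $m_{n,0}'$ off the poles, the correspondence between real critical points of $\Phi_t$ and edges of $\rho_t$ via $\Phi_t(\zeta_t(z))=z$) is consistent with what the reference supplies.

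One concrete error in your sketch: $\Phi_t'(\zeta)$ does not factor as $b(\zeta)$ times a bracket. Writing $b=1-\phi t m_{n,0}$ (so $b'=-\phi t m_{n,0}'$) and differentiating $\Phi_t=\zeta b^2+(1-\phi)t b$ gives
\begin{equation*}
\Phi_t'(\zeta)=b(\zeta)^2-2\zeta\phi t\,m_{n,0}'(\zeta)\,b(\zeta)-(1-\phi)\phi t^2 m_{n,0}'(\zeta),
\end{equation*}
which is a genuine quadratic in $b(\zeta)$; the constant term $-(1-\phi)\phi t^2 m_{n,0}'$ is not divisible by $b$ (note also your expression has $t$ where it should be $t^2$). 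Consequently the claim that vanishing of $\Phi_t'$ at a non-pole ``forces the bracketed expression to vanish'' and thereby $b>0$ does not follow as stated. The sign condition $1-\phi t m_{n,0}(\zeta_{k,\pm})>0$ requires solving the quadratic in $b$ and selecting the admissible root (or, more directly, the argument in \cite{vallet2012improved}); it is not a factorization argument. Since you ultimately invoke the cited proposition, the final conclusion stands, but the intermediate sign argument as written is incorrect and should not be presented as a derivation.
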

Observe that by solving $b_t$ in \eqref{eq_def_zetat}, we have
\begin{equation*}
	b_t=\frac{t(1-\phi)+\sqrt{t^2(1-\phi)^2+4\zeta_t z}}{2z}.
\end{equation*}
Thereafter, we may define a multivariate function $F_t(z,\zeta_t)$ by
\begin{equation}\label{eq_def_F_t}
F_t(z,\zeta_t)=1+\frac{t(1-\phi)-\sqrt{
			t^2(1-\phi)^2+4\zeta_tz}}{2\zeta_t}-\phi tm_t(\zeta_t).
\end{equation}
From \eqref{eq_def_LSD2_gdm}, one may find that the pair $(z,b_t)$ is a solution to \eqref{eq_def_LSD2_gdm} if only if $(z,\zeta_t)$ is a solution to \eqref{eq_def_F_t}. And $\Phi_t(\zeta_t(z))=z$ is the same as $F_t(z,\zeta_t)=0$. From Lemma \ref{lem_extremaproposition}, we have the following consequence on the edges of $\operatorname{supp}(\rho_t)$; see, e.g., \cite[Lemma 3.4]{ding2022edge}.
\begin{lemma}
	Denote $a_{k,\pm}(t):=\Phi_t(\zeta_{k,\pm}(t)), 1\le k\le q$. Then $(a_{k,\pm}(t),\zeta_{k,\pm}(t))$ are real solutions of
	\begin{equation*}
		F_t(z,\zeta)=0,\quad \frac{\partial F_t}{\partial \zeta}(z,\zeta)=0.
	\end{equation*}
\end{lemma}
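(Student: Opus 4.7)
The plan is to establish the two equations separately. The first, $F_t(a_{k,\pm}(t),\zeta_{k,\pm}(t))=0$, will follow essentially from the construction of $F_t$; the second, $\partial_\zeta F_t(a_{k,\pm}(t),\zeta_{k,\pm}(t))=0$, will come from implicit differentiation combined with the extremal characterization in Lemma \ref{lem_extremaproposition}.

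For the first identity, I would invoke the equivalence noted immediately above the statement: $F_t(z,\zeta)=0$ is precisely the algebraic encoding of $\Phi_t(\zeta)=z$ (equation \eqref{eq_def_LSD3_gdm}), obtained by solving the quadratic $zb_t^2-tb_t(1-\phi)-\zeta=0$ for $b_t$ with the appropriate branch choice and then eliminating $b_t$ via $1/b_t=1-\phi t m_{n,0}(\zeta)$ from \eqref{eq_def_LSD2_gdm}. Since $a_{k,\pm}(t):=\Phi_t(\zeta_{k,\pm}(t))$ by definition, the pair $(a_{k,\pm}(t),\zeta_{k,\pm}(t))$ lies on this algebraic curve, so $F_t(a_{k,\pm}(t),\zeta_{k,\pm}(t))=0$ is immediate.

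For the second identity, I would differentiate the identity $F_t(\Phi_t(\zeta),\zeta)\equiv 0$ in $\zeta$ in a real neighborhood of $\zeta_{k,\pm}(t)$. The chain rule produces
\begin{equation*}
\frac{\partial F_t}{\partial z}(\Phi_t(\zeta),\zeta)\,\Phi_t'(\zeta)+\frac{\partial F_t}{\partial \zeta}(\Phi_t(\zeta),\zeta)=0.
\end{equation*}
By Lemma \ref{lem_extremaproposition}, the preimages $\zeta_{k,\pm}(t)$ are local extrema of $\Phi_t$, so $\Phi_t'(\zeta_{k,\pm}(t))=0$. Substituting this, together with $\Phi_t(\zeta_{k,\pm}(t))=a_{k,\pm}(t)$, forces $\partial_\zeta F_t(a_{k,\pm}(t),\zeta_{k,\pm}(t))=0$ as long as the prefactor $\partial_z F_t$ is finite there. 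A direct computation from \eqref{eq_def_F_t} gives $\partial_z F_t(z,\zeta)=-1/\sqrt{t^2(1-\phi)^2+4z\zeta}$, so the remaining routine check is that the discriminant $t^2(1-\phi)^2+4z\zeta$ is bounded away from zero at these points; this follows from Lemma \ref{lem_extremaproposition}, which constrains $\zeta_{k,\pm}(t)$ to the set where $1-\phi t m_{n,0}(\zeta)>0$, together with the identification of the correct square-root branch through the relation $1/b_t=1-\phi t m_{n,0}(\zeta)>0$. The main (and only mild) obstacle is tracking these sign conventions cleanly, but this is entirely routine.
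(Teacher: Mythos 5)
Your overall strategy is the right one, and it is essentially the only natural way to deduce this lemma from Lemma~\ref{lem_extremaproposition}. The paper itself does not supply a proof — it simply cites \cite[Lemma~3.4]{ding2022edge} — so your reconstruction (first identity from the algebraic equivalence $F_t(z,\zeta)=0 \Leftrightarrow \Phi_t(\zeta)=z$, which the paper asserts explicitly just above \eqref{eq_def_F_t}; second identity by differentiating $F_t(\Phi_t(\zeta),\zeta)\equiv 0$ and invoking $\Phi_t'(\zeta_{k,\pm}(t))=0$) is exactly what one expects and is correct in structure.

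The one place where the justification is looser than you suggest is the "routine check" that $\partial_z F_t$ is finite, i.e.\ that $D:=t^2(1-\phi)^2+4z\zeta\neq 0$ at $(a_{k,\pm}(t),\zeta_{k,\pm}(t))$. You claim this follows from Lemma~\ref{lem_extremaproposition} via $1-\phi t m_{n,0}(\zeta)>0$ together with the branch choice; but $1/b=1-\phi t m_{n,0}(\zeta)>0$ only gives $b>0$, and since $D=(2zb-t(1-\phi))^2=(2\zeta/b+t(1-\phi))^2$ this does not by itself exclude $D=0$ (e.g.\ for $\phi>1$ or for the negative preimage $\zeta_{1,-}(t)<0$). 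A cleaner way to close this gap is to note that $\zeta_{k,\pm}(t)$ lies strictly outside the spectrum of $R(\widetilde{H})$, so $m_{n,0}'(\zeta_{k,\pm}(t))=n^{-1}\sum_i(\lambda_i-\zeta_{k,\pm}(t))^{-2}$ is finite and positive, and the identity $\Phi_t'(\zeta)=b^{-2}-\phi t\,m_{n,0}'(\zeta)\sqrt{D}$ evaluated at the critical point then forces $\sqrt{D}=b^{-2}/(\phi t\,m_{n,0}')>0$. Alternatively, for $\phi<1$ and $\zeta_{k,\pm}(t)>0$ one has $\zeta z>0$ so $D>0$ trivially; only $\zeta_{1,-}(t)<0$ requires the argument above. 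This is a minor repair, not a conceptual gap.
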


\

\noindent\textbf{II. Spectral behavior of $\zeta_t(z)$}

Denote $\zeta_{\pm,t}=\zeta_{t}(\lambda_{\pm,t})$ and define the centered factors as
\begin{equation*}
	\xi_t(z):=\zeta_t(z)-\lambda_{+}^{\widetilde{H}}, \quad \xi_{+}(t):=\zeta_{+,t}- \lambda_{+}^{\widetilde{H}}.
\end{equation*}
The following lemma provides a preliminary estimate for the locations of $\zeta_{+,t}$ and $\lambda_{+}^{\widetilde{H}}$ (recall the definition $\lambda_{+}^{\widetilde{H}}:=\lambda_1(R(\widetilde{H}))$ in Lemma \ref{lem_etaregularityconsequence}).
\begin{lemma}\label{lem_preliminaryestimate}
	Suppose $\Psi$ and $\Pi$ are well configured. Then $\xi_{+}(t)\ge 0$ and $\xi_{+}(t)\sim t^2$.
\end{lemma}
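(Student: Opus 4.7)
The plan is to extract the two statements from the algebraic characterization of the edge via the function $\Phi_t$ in \eqref{eq_psizetarandom} together with the $\eta_*$-regularity of $R(\widetilde{H})$ provided by Proposition \ref{prop_etaregular_corH}. The sign statement $\xi_{+}(t)\ge 0$ is essentially given for free: Lemma \ref{lem_extremaproposition} asserts $\zeta_{q,-}(t)<\lambda_{1}(R(\widetilde{H}))<\zeta_{q,+}(t)$, and since $\lambda_{+,t}=\Phi_t(\zeta_{q,+}(t))$ identifies $\zeta_{+,t}$ with $\zeta_{q,+}(t)$, we immediately obtain $\zeta_{+,t}>\lambda_{+}^{\widetilde{H}}$, hence $\xi_{+}(t)>0$.

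For the sharp two-sided bound $\xi_{+}(t)\sim t^{2}$ the key observation is that $(\lambda_{+,t},\zeta_{+,t})$ satisfies both $F_{t}=0$ and $\partial_{\zeta}F_{t}=0$ in \eqref{eq_def_F_t}, equivalently $\Phi_t(\zeta_{+,t})=\lambda_{+,t}$ and $\Phi_t'(\zeta_{+,t})=0$. A direct differentiation of $\Phi_t$ at $\zeta=\zeta_{+,t}$ yields the critical equation
\begin{equation*}
(1-\phi t m_{n,0}(\zeta_{+,t}))^{2}=2\zeta_{+,t}\,\phi t\, m_{n,0}'(\zeta_{+,t})(1-\phi t m_{n,0}(\zeta_{+,t}))+(1-\phi)\phi t^{2}\,m_{n,0}'(\zeta_{+,t}).
\end{equation*}
Since $m_{n,0}$ is bounded on our contours and $t\lesssim n^{-2\epsilon_{l}}\ll 1$, the factor $1-\phi t m_{n,0}=1+\mathrm{O}(t)$, so this relation reduces to the asymptotic identity $2\zeta_{+,t}\phi t\, m_{n,0}'(\zeta_{+,t})=1+\mathrm{O}(t)$, i.e.
\begin{equation*}
m_{n,0}'(\zeta_{+,t})=\frac{1+\mathrm{O}(t)}{2\phi\, t\,\lambda_{+}^{\widetilde{H}}},
\end{equation*}
after noting $\zeta_{+,t}=\lambda_{+}^{\widetilde{H}}+\mathrm{o}(1)$ (which one verifies a priori from the continuity of $\zeta_{t}$ in $t$ and $\zeta_{0}=\lambda_{+}^{\widetilde{H}}$).

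Next I will translate this into an estimate for $\xi_{+}(t)$ by plugging the $\eta_{*}$-regularity of $R(\widetilde{H})$ into the integral representation $m_{n,0}'(\zeta)=\int(x-\zeta)^{-2}\mathrm{d}\mu_{n}^{\widetilde{H}}(x)$. Proposition \ref{prop_etaregular_corH} combined with Lemma \ref{lem_etaregularityconsequence} gives, for real $\zeta=\lambda_{+}^{\widetilde{H}}+\xi$ with $\eta_{*}\ll\xi\ll 1$, the two-sided bound $m_{n,0}'(\zeta)\sim 1/\sqrt{\xi}$; more precisely this is obtained by taking $E=\lambda_{+}^{\widetilde{H}}+\xi$ and letting $\eta\downarrow 0$ using the square-root vanishing of the density near $\lambda_{+}^{\widetilde{H}}$ guaranteed by $\eta_{*}$-regularity. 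Substituting this into the critical equation yields $1/\sqrt{\xi_{+}(t)}\sim 1/t$, hence $\xi_{+}(t)\sim t^{2}$.

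The main obstacle will be the self-consistency of the regime $\eta_{*}\ll \xi_{+}(t)\ll 1$ needed to apply the square-root asymptotics: one has to rule out the possibility that $\xi_{+}(t)\lesssim \eta_{*}$, where the behavior of $m_{n,0}'$ is no longer power-law. This will be handled by a bootstrap/continuity-in-$t$ argument starting from $t=0$: for sufficiently small $t$, $\xi_{+}(t)$ is continuous and small, so if $\xi_{+}(t)$ entered the bad regime, the critical equation together with the upper bound $m_{n,0}'(\zeta)\lesssim \eta_{*}^{-1/2}$ would force $t\gtrsim \eta_{*}^{1/2}$, contradicting $t\lesssim n^{-2\epsilon_{l}}\ll \eta_{*}^{1/2}=n^{-\epsilon_{\alpha}/12}$ by the parameter budget in Definition \ref{def_controlparameter} (which ensures $\epsilon_{l}\ll\epsilon_{\alpha}$). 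Combining the lower and upper sides of the estimate with this a priori localization closes the argument and yields $\xi_{+}(t)\sim t^{2}$.
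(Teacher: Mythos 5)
Your proof follows essentially the same route as the paper: Lemma \ref{lem_extremaproposition} gives both the sign $\xi_{+}(t)\ge 0$ and the critical equation $\Phi_t'(\zeta_{+,t})=0$, which after simplification yields $\phi t\, m_{n,0}'(\zeta_{+,t})\sim 1$, and then Lemma \ref{lem_etaregularityconsequence} (via the $\eta_*$-regularity of $R(\widetilde{H})$) converts $m_{n,0}'(\zeta_{+,t})\sim t^{-1}$ into $\xi_{+}(t)\sim t^2$. The only difference is that you spell out the self-consistency step ($\xi_+(t)\gg\eta_*$, justified by $t^2\sim n^{-4\epsilon_l}\gg n^{-\epsilon_\alpha/6}=\eta_*$), which the paper leaves implicit when it invokes Lemma \ref{lem_etaregularityconsequence}; this is a welcome clarification but not a different method.
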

\begin{proof}
	By Lemma \ref{lem_extremaproposition}, $\zeta_{+,t}=\zeta_{q,+}(t)\ge \lambda_{1}(R(\widetilde{H}))=\lambda_{+}^{\widetilde{H}}$. Moreover, Lemma \ref{lem_extremaproposition} implies that $\Phi_{t}(\zeta_{+,t})$ is the only local extrema on the interval $[\lambda_+^{\widetilde{H}},\infty)$. Thus, $\Phi_{t}^{\prime}(\zeta_{+,t})=0$, which gives
	\begin{align*}
		(1-\phi t m_{n,0}(\zeta_{+,t}))^2-2\zeta_{+,t}\phi t m_{n,0}^{\prime}(\zeta_{+,t})(1-\phi t m_{n,0}(\zeta_{+,t}))-(1-\phi)\phi t^2m_{n,0}^{\prime}(\zeta_{+,t})=0.
	\end{align*}
	Hence, we have
	\begin{align*}
		\phi t m_{n,0}^{\prime}(\zeta_{+,t})=\frac{(1-\phi t m_{n,0}(\zeta_{+,t}))^2}{2\zeta_{+,t}(1-\phi t m_{n,0}(\zeta_{+,t}))+(1-\phi)t}\sim 1,
	\end{align*}
	by (iv) of Lemma \ref{lem_existenceuniqueness}, which further gives $m_{n,0}^{\prime}(\zeta_{+,t})\sim t^{-1}$. This further gives $ \xi_+(t)=\zeta_{+,t}-\lambda_+^{\widetilde{H}}\sim t^2$
	by Lemma \ref{lem_etaregularityconsequence} as desired.
\end{proof}

Note that $\xi_t(z)$ is a complex variable while $\xi_{+}(t)$ is real. We may further denote $\xi_t(z):=\alpha(z)+\mathrm{i}\beta(z), \; z\in\mathbb{C}_{+}$. The following lemma gives the approximation of $\xi_t(z)$ on the real line, which is analog to \cite[Lemma 3.9]{ding2022edge} and \cite[Lemma 3.10]{ding2022edge}. Define $\kappa:=\min\{|E-\lambda_{+,t}|,|E-\lambda_{-,t}|\}$ for $z=E+\mathrm{i}\eta \in\mathbf{D}$.
\begin{lemma}[$\xi_t(z)$ on real line]\label{lem_estxi_realline}
	Fix $c_1>0$. For $-3c_1/4\le \alpha(E)\le \xi_{+}(t)$ and $c_1/8\le E\le \lambda_{+,t}$, we have
	\begin{equation*}
		|\alpha(E)-\xi_{+}(t)|\sim|E-\lambda_{+,t}|, \quad \beta(E)\sim t|\alpha(E)-\xi_{+}(t)|^{1/2}.
	\end{equation*}
\end{lemma}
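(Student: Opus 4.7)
My plan is to analyze the subordination equation $\Phi_t(\zeta_t(E)) = E$ by separating real and imaginary parts, handling the near-edge and bulk regimes separately and then matching them. Writing $\zeta_t(E) = \zeta_{+,t} + u + iv$ with $u = \alpha(E)-\xi_+(t)$ and $v = \beta(E)$, the goal is to derive $|u| \sim |E-\lambda_{+,t}|$ and $v \sim t|u|^{1/2}$ simultaneously.

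For the near-edge regime $|u| \lesssim t^2$, I would Taylor-expand $\Phi_t$ around $\zeta_{+,t}$. Since $\Phi_t'(\zeta_{+,t}) = 0$ by Lemma \ref{lem_extremaproposition}, and the derivatives scale as $\Phi_t''(\zeta_{+,t}) \sim t^{-2}$ (positive, making $\zeta_{+,t}$ a local minimum of $\Phi_t$) and $\Phi_t'''(\zeta_{+,t}) \sim -t^{-4}$, via $m_{n,0}^{(k)}(\zeta_{+,t}) \sim t^{-(2k-1)}$ obtained from Lemma \ref{lem_etaregularityconsequence} with $\kappa \sim \xi_+(t) \sim t^2$, the expansion
\begin{equation*}
E - \lambda_{+,t} \;=\; \tfrac{1}{2}\Phi_t''(\zeta_{+,t})(u+iv)^2 + \tfrac{1}{6}\Phi_t'''(\zeta_{+,t})(u+iv)^3 + O(|u+iv|^4 \, t^{-6})
\end{equation*}
gives, from its imaginary part, $\Phi_t''(\zeta_{+,t}) \, uv + \tfrac{1}{6}\Phi_t'''(\zeta_{+,t})(3u^2 v - v^3) = O(\cdots)$, which (since $|u| \ll |v|$ a priori) forces $u \sim (\Phi_t'''/\Phi_t'')\,v^2 \sim -v^2/t^2$. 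The real part, with $u^2$ being subleading, yields $v^2 \sim t^2|E-\lambda_{+,t}|$. Combining these produces $v \sim t\sqrt{|E-\lambda_{+,t}|}$ and $|u| \sim |E-\lambda_{+,t}|$, exactly as claimed.

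For the bulk regime $|u| \gtrsim t^2$, the Taylor expansion becomes unreliable, so I would work directly with \eqref{eq_psizetarandom}. Decomposing $m_{n,0}(\zeta_t(E)) = m_r + im_i$ with $m_i > 0$ and using $t \lesssim n^{-2\epsilon_l}$, the imaginary part of $\Phi_t(\zeta_t(E)) = E$ simplifies to $\beta(E) \approx 2\phi t \,(\lambda_+^{\widetilde{H}} + \alpha(E))\,m_i + O(t^2)$. The $\eta_*$-regularity of $R(\widetilde{H})$ (Proposition \ref{prop_etaregular_corH}), together with the square-root edge behavior of $\rho_0$ extracted from Lemma \ref{lem_etaregularityconsequence}, gives the boundary value $m_i \sim \rho_0(\lambda_+^{\widetilde{H}} + \alpha(E)) \sim \sqrt{|\alpha(E)|}$ for $\alpha(E) < 0$ of magnitude at most $3c_1/4$; hence $\beta(E) \sim t\sqrt{|\alpha(E)|} \sim t\sqrt{|\alpha(E) - \xi_+(t)|}$, since $\xi_+(t) \sim t^2 \ll |\alpha(E)|$ in this range. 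The real part reduces to $E = \lambda_+^{\widetilde{H}} + \alpha(E) + O(t)$, which combined with $\lambda_{+,t} = \lambda_+^{\widetilde{H}} + \xi_+(t) + O(t)$ produces $|\alpha(E) - \xi_+(t)| \sim |E - \lambda_{+,t}|$.

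The main obstacle is establishing the two-sided comparisons uniformly across $\alpha(E) \in [-3c_1/4, \xi_+(t)]$ by consistently matching the two regimes at the crossover scale $|u| \sim t^2$. In the edge regime, the linear scaling $|u| \sim |E-\lambda_{+,t}|$ is a subleading phenomenon driven by the cubic term of the Taylor expansion and requires precise bookkeeping of the ratios $\Phi_t^{(k)}(\zeta_{+,t})/\Phi_t''(\zeta_{+,t})$ and remainder estimates $|u+iv|^4 \Phi_t^{(4)}(\zeta_{+,t})$; in the bulk regime, one must verify the square-root density approximation for $m_i$ at the a priori unknown scale $\beta \sim t\sqrt{|\alpha|}$, which depends on $\alpha$ itself. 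Ensuring that the implicit constants from both regimes are compatible throughout the wide parameter range demands systematic tracking of error terms via Lemma \ref{lem_etaregularityconsequence} and constitutes the most delicate part of the argument.
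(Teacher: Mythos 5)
Your near-edge argument mirrors the paper's: Taylor-expand $\Phi_t$ around $\zeta_{+,t}$ to cubic order using the bounds $\Phi_t''(\zeta_{+,t}) \sim t^{-2}$, $|\Phi_t^{(k)}(\zeta_{+,t})| \lesssim t^{-(2k-2)}$ obtained from Lemma \ref{lem_etaregularityconsequence} (the paper inverts the expansion and then takes real/imaginary parts, while you separate them from the start; both are fine). Your derivation of $\beta \sim t|\alpha - \xi_+|^{1/2}$ in the regime $\xi_+(t) - \alpha(E) \gtrsim t^2$ from the imaginary part of the subordination equation together with $\eta_*$-regularity is also sound and is in fact a more self-contained version of the paper's citation of an indirect argument from Lemma 3.9 of \cite{ding2022edge}.

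There is a genuine gap, however, in your argument for $|\alpha(E)-\xi_+(t)| \sim |E-\lambda_{+,t}|$ in the away-from-edge regime. Writing the real part of $\Phi_t(\zeta_t(E))=E$ as $E = \lambda_+^{\widetilde{H}} + \alpha(E) + O(t)$ and the analogue at $E=\lambda_{+,t}$, then subtracting, does \emph{not} yield the two-sided comparison: the correction to be subtracted is of the form $t\,(\lambda_+^{\widetilde{H}}+\alpha)\,\operatorname{Re}m_0(\zeta_t) + (1-\phi)t + O(t^2)$ minus its counterpart at $\zeta_{+,t}$, and once you track the $\alpha$-dependence this difference contains a term of size $t\cdot|m_0'(\zeta)|\cdot|\alpha-\xi_+|$. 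Since $|m_0'(\zeta)|\sim t^{-1}$ throughout this regime (by Lemma \ref{lem_etaregularityconsequence} with $a=2$, using $\kappa+\eta \sim t^2$ at $\zeta_{+,t}$ and $\eta=\beta\sim t\sqrt{|\alpha|}$ at $\zeta_t$), that error term is $\sim |\alpha-\xi_+|$, i.e., \emph{comparable to the leading term}, so nothing is concluded. This is precisely the information encoded in $\Phi_t'(\zeta_t)$, which your crude $O(t)$ bookkeeping discards. The paper instead works with the derivative identity
\begin{equation*}
\frac{d\alpha}{dE} = \operatorname{Re}\frac{1}{\Phi_t'(\zeta_t)} = \frac{\operatorname{Re}\Phi_t'(\zeta_t)}{|\Phi_t'(\zeta_t)|^2},
\end{equation*}
showing $\operatorname{Re}\Phi_t'(\zeta_t)\sim 1$ and $|\Phi_t'(\zeta_t)|\lesssim 1$ (using \eqref{eq_psizetarandom}, Lemma \ref{lem_existenceuniqueness}(iv), $|tm_{n,0}'(\zeta_t)|\lesssim 1$, and the $\beta$-bound just derived), hence $d\alpha/dE \sim 1$ uniformly in this regime; integrating from $\lambda_{+,t}$ then gives the two-sided comparison. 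You need to replace the $O(t)$-subtraction step with an argument of this type.
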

\begin{proof}
	Suppose $\xi_t$ is sufficiently close to $\xi_{+}$, say $|\xi_t-\xi_{+}|\le \delta t^2$ for some small $\delta>0$. One may expand $\Psi_t(\xi)$ around $\xi_{+}$ in \eqref{eq_def_LSD3_gdm} to get
	\begin{equation*}
		 E-\lambda_{+,t}=\Psi_t(\xi_t)-\Psi_t(\xi_{+})=\frac{\Psi^{\prime\prime}(\xi_{+})}{2}(\xi_t-\xi_{+})^2+\frac{\Psi^{(3))}(\xi_{+})}{6}(\xi_t-\xi_{+})^3+\mathrm{O}(t^{-6}|\xi_t-\xi_{+}|^4).
	\end{equation*}
	By Lemma \ref{lem_preliminaryestimate}, one may easily check that $\alpha(E)\gtrsim t^2\gg\eta_{*}$. Then, we obtain from Lemma \ref{lem_etaregularityconsequence} that for any fixed $k\ge1$,
	\begin{equation*}
		|t m_0^{(k)}(\zeta_t)|\lesssim t^{-(2k-2)}, \quad |\xi_t-\xi_{+}|\le\delta t^2.
	\end{equation*}
	Moreover, from Lemma \ref{lem_preliminaryestimate}, we have $|t m_0^{(k)}(\zeta_{+,t})|\sim t^{-(2k-2)}$. Then, one can check that $\Phi^{\prime\prime}(\xi_{+})\sim t^{-2}, |\Phi_t^{(k)}(\xi_t)|\lesssim t^{-(2k-2)}$. By the above estimates and inverting the expansion for $(\xi_t-\xi_{+})$, we have
	\begin{equation*}
		 \xi_t-\xi_{+}=\sqrt{\frac{2(E-\lambda_{+,t})}{\Phi^{\prime\prime}_t(\xi_{+})}}\Big(1-\frac{\Phi^{(3)}_t(\xi_{+})}{3\Phi^{\prime\prime}_t(\xi_{+})}(\xi_t-\xi_{+})+\mathrm{O}(t^{-4}|\xi_t-\xi_{+}|^2)\Big).
	\end{equation*}
	Plugging the above equation into $(\xi_t-\xi_{+})$ on the right-hand side again and taking the real and imaginary parts, we obtain that
	\begin{equation*}
		|\alpha(E)-\xi_{+}|\sim|E-\lambda_{+,t}|,\quad \beta(E)\sim t|E-\lambda_{+,t}|^{1/2}\sim t|\alpha(E)-\xi_{+}|^{1/2}.
	\end{equation*}
	
	On the other hand, for the case $\xi_{+}(t)-\alpha(E)\ge\delta t^2$, the estimate $\beta(E)\sim t|\alpha(E)-\xi_{+}(t)|^{1/2}$ follows from an indirect argument. One may find the details in the proof of \cite[Lemma 3.9]{ding2022edge}.  In order to get $|\alpha(E)-\xi_{+}(t)|\sim|E-\lambda_{+,t}|$, it suffices to show that
	\begin{equation*}
		\frac{\mathrm{d}\alpha(E)}{\mathrm{d}E}\ge0,\quad \frac{\mathrm{d}\alpha(E)}{\mathrm{d}E}\sim0.
	\end{equation*}
	Actually, from \eqref{eq_def_LSD3_gdm}, we have
	\begin{equation*}
		 \frac{\mathrm{d}\alpha(E)}{\mathrm{d}E}=\operatorname{Re}\frac{1}{\Phi^{\prime}_t(\zeta_t)}=\frac{\operatorname{Re}\Phi^{\prime}_t(\zeta_t)}{|\Phi^{\prime}_t(\zeta_t)|^2}.
	\end{equation*}
By a direct calculation using Lemma \ref{lem_etaregularityconsequence} and the bound for $\beta(E)$ above, one may find that $\operatorname{Re}\Phi^{\prime}_t(\zeta_t)\sim 1$.
	A direct consequence of the above estimate is that $|\Phi^{\prime}_t(\zeta_t)|\gtrsim 1$. Now we utilize the expression for $\Phi^{\prime}_t(\zeta_t)$ by \eqref{eq_psizetarandom}, together with $(iv)$ of Lemma \ref{lem_existenceuniqueness}, the estimate $|tm^{\prime}_{n,0}(\zeta_t)|\lesssim1$ from Lemma \ref{lem_etaregularityconsequence}, and the bound for $\beta(E)$ above to find that $|\Phi^{\prime}_t(\zeta_t)|\lesssim1$. The details can be found in the proof of \cite[Lemma 3.10]{ding2022edge}. So we can complete the proof here.
\end{proof}

The following lemma is analogous to Lemma \ref{lem_estxi_realline} on the region $\xi_t(z)\in\mathbb{C}_{+}$.
\begin{lemma}[$\xi_t(z)$ on upper complex plane]\label{lem_estxi_complex}
	Suppose $-3c_1/4\le \alpha\le \xi_{+}-\delta t^2$ for fixed $c_1>0$ and constant $\delta>0$. Let $|E-\lambda_{+,t}|\le c_1/2$ and $0\le\eta\le 10$. Then we have
	\begin{equation*}
		\beta(E, \eta)\ge c_{\delta}t|E-\lambda_{+,t}|^{1/2},
	\end{equation*}
	for some constant $c_{\delta}>0$.
\end{lemma}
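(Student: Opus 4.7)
The plan is to extend the real-line analysis of Lemma \ref{lem_estxi_realline} to $\mathbb{C}^+$ by Taylor expanding the subordination equation $\Phi_t(\zeta_t(z)) = z$ around the critical point $\zeta_+$, where $\Phi_t'(\zeta_+) = 0$. First I would establish the quantitative neighborhood in which the quadratic approximation is valid: using $\Phi_t''(\xi_+) \sim t^{-2}$, $|\Phi_t^{(k)}(\xi_+)| \lesssim t^{-(2k-2)}$, and the $\eta_*$-regularity of $R(\widetilde H)$ (Lemma \ref{lem_etaregularityconsequence}), one has
\[
z - \lambda_{+,t} = \tfrac{1}{2}\Phi_t''(\xi_+)(\xi_t - \xi_+)^2 + \mathrm{O}\bigl(t^{-4}|\xi_t - \xi_+|^3\bigr)
\]
uniformly for $|\xi_t - \xi_+| \leq \delta_0 t^2$ with $\delta_0$ sufficiently small. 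Inverting formally and selecting the branch of the square root dictated by $\mathrm{Im}(\xi_t) > 0$ gives
\[
\xi_t - \xi_+ = \sqrt{\tfrac{2}{\Phi_t''(\xi_+)}}\,\sqrt{(E - \lambda_{+,t}) + \mathrm{i}\eta}\,\bigl(1 + \mathrm{O}(t^{-2}|\xi_t - \xi_+|)\bigr).
\]

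Next I would exploit the hypothesis $\alpha \leq \xi_+ - \delta t^2$ to pin the argument of $\xi_t - \xi_+$ into the open second quadrant, which in turn determines that the chosen branch of $\sqrt{(E-\lambda_{+,t}) + \mathrm{i}\eta}$ lies strictly inside the open upper half-plane with argument bounded away from $0$. Writing $(E - \lambda_{+,t}) + \mathrm{i}\eta = r e^{\mathrm{i}\theta}$ and elementary trigonometry then yield $\mathrm{Im}\sqrt{\cdot} \gtrsim \sqrt{|E-\lambda_{+,t}|}$ uniformly in $\eta \geq 0$, split into the cases $E \leq \lambda_{+,t}$ (where $\theta \in [\pi/2,\pi]$, so $\sin(\theta/2) \geq \tfrac{\sqrt{2}}{2}$) and $E > \lambda_{+,t}$ (where the constraint forces $\eta$ to dominate, so the imaginary part of the square root is $\sim \sqrt{\eta} \gtrsim \sqrt{|E - \lambda_{+,t}|}$). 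Multiplying by $\sqrt{2/\Phi_t''(\xi_+)} \sim t$ delivers $\beta(E,\eta) \gtrsim t\sqrt{|E - \lambda_{+,t}|}$ in this Taylor regime.

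For the complementary regime $|\xi_t - \xi_+| \geq \delta_0 t^2$, I would adapt the indirect argument from the proof of Lemma \ref{lem_estxi_realline} (following \cite[Lemma 3.10]{ding2022edge}): along horizontal slices of constant $\eta$, the analytic map $z \mapsto \zeta_t(z)$ satisfies $\partial_E \alpha = \mathrm{Re}(1/\Phi_t'(\zeta_t)) \gtrsim 1$ and $\Phi_t'(\zeta_t) \sim 1$ by direct computation using Lemma \ref{lem_existenceuniqueness}(iv). This monotonicity lets me slide $E$ until the Taylor regime is reached and then propagate the resulting lower bound on $\beta$ back via the Cauchy--Riemann relations. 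The main obstacle is the uniformity of the branch selection when $\eta$ is comparable to $|E - \lambda_{+,t}|$: naive square-root estimates degenerate near the branch cut of $\sqrt{z - \lambda_{+,t}}$, and it is precisely the hypothesis $\alpha - \xi_+ \leq -\delta t^2$ that pushes $\xi_t - \xi_+$ a definite distance away from the positive real axis, keeping the two branches well separated and the error term $\mathrm{O}(t^{-2}|\xi_t - \xi_+|)$ under control. Once this separation is quantified, the constants $c_\delta$ depending on $\delta$ fall out of the geometric estimate on $\sin(\theta/2)$.
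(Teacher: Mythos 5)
The paper itself disposes of this lemma with a single-line citation to \cite[Lemma~3.11]{ding2022edge}, so your task of reconstructing the argument is genuinely nontrivial. Your two-regime plan (Taylor near $\zeta_+$, indirect argument far away) is the right skeleton, but the way you match the regimes to the hypothesis has a gap. In the Taylor regime $|\xi_t-\xi_+|\le\delta_0 t^2$ you invert the quadratic as $\xi_t-\xi_+=\sqrt{2/\Phi_t''}\,\sqrt{z-\lambda_{+,t}}\,(1+\mathrm{O}(\delta_0))$, and you then assert that the hypothesis places $\xi_t-\xi_+$ in the second quadrant. But $\Phi_t''(\zeta_+)>0$ (it is a local minimum of $\Phi_t$ on $(\lambda_1(R(\widetilde H)),\infty)$), so for $\eta>0$ the upper-half-plane branch of $\sqrt{z-\lambda_{+,t}}$ has argument in $(0,\pi/2]$, and the leading term puts $\Re(\xi_t-\xi_+)\ge 0$. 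Forcing $\Re(\xi_t-\xi_+)\le -\delta t^2$ therefore requires the subleading corrections — which you have only bounded by $\mathrm{O}(\delta_0)\cdot|\xi_t-\xi_+|$, not given a sign to — to overwhelm the leading real part. That happens only when $\cos(\theta/2)=\mathrm{O}(\delta_0)$, i.e.\ when $E\le\lambda_{+,t}$ and $\eta\ll|E-\lambda_{+,t}|$, so the sub-case $E>\lambda_{+,t}$ inside the Taylor regime simply does not occur under the hypothesis. Your phrase ``the constraint forces $\eta$ to dominate'' is not what happens there; what happens is that the case is vacuous, and stating it as a trigonometric estimate $\mathrm{Im}\sqrt{\cdot}\sim\sqrt\eta\gtrsim\sqrt{|E-\lambda_{+,t}|}$ misrepresents the logic.

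The more serious gap is in the complementary regime $|\xi_t-\xi_+|\ge\delta_0 t^2$, which is in fact where the whole content of the lemma lives whenever $\delta$ is not small. Your sliding argument says to move along a horizontal slice until ``the Taylor regime is reached'' and propagate via Cauchy--Riemann, but you have not identified a starting point with a usable lower bound on $\beta$. If $E\le\lambda_{+,t}$ you can appeal to Lemma~\ref{lem_estxi_realline} at $\eta=0$ and note $\partial_\eta\beta=\Re(1/\Phi_t'(\zeta_t))\gtrsim 1$ to conclude $\beta$ is nondecreasing in $\eta$; that works. But the hypothesis $\alpha\le\xi_+-\delta t^2$ is perfectly compatible with $E>\lambda_{+,t}$ provided $\eta$ is not too small, and in that case the $\eta=0$ slice has $\beta=0$ (since $\zeta_t(E)$ is real outside the bulk) and Lemma~\ref{lem_estxi_realline} gives nothing. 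To close the argument you would need a quantitative lower bound on the $\eta$ at which $\alpha$ first drops below $\xi_+-\delta t^2$ as you move up from the real axis (equivalently, a bound on how fast $\Re\zeta_t$ decreases along the vertical), and to convert that into the claimed $\beta\gtrsim t\sqrt{|E-\lambda_{+,t}|}$; none of this is supplied. Until that step is in place, the proof is incomplete.
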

\begin{proof}
	The proof of the above lemma can be found in \cite[Lemma 3.11]{ding2022edge}, we omit further details here.
\end{proof}

The following lemma borrowed from \cite[Lemma 3.13]{ding2022edge} is useful to establish the local law for GDM.
\begin{lemma}\label{lem_est_xit&xi+}
	If $|z-\lambda_{+,t}|\le\delta t^2$ for some sufficiently small constant $\delta>0$, then we have
	\begin{equation*}
		t\sqrt{|E-\lambda_{+,t}|+\eta}\sim|\xi_t-\xi_{+}|,
	\end{equation*}
	which implies
	\begin{equation*}
		|\Phi^{\prime}_t(\zeta_t)|\sim\min\{1,\frac{\sqrt{|E-\lambda_{+,t}|+\eta}}{t}\}.
	\end{equation*}
\end{lemma}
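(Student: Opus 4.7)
The plan is to derive both estimates from a Taylor expansion of $\Phi_t$ around the critical point $\zeta_{+,t}$, exploiting the fact that $\Phi_t'(\zeta_{+,t})=0$ (by the extremality characterization in Lemma \ref{lem_extremaproposition}) together with the quantitative bounds on the higher derivatives that were already established in the proof of Lemma \ref{lem_estxi_realline}. Recall from there that $\Phi_t''(\zeta_{+,t})\sim t^{-2}$, and more generally $|\Phi_t^{(k)}(\zeta_t)|\lesssim t^{-(2k-2)}$ holds in a $\delta t^2$-neighborhood of $\zeta_{+,t}$, by combining the $\eta_*$-regularity of $R(\widetilde H)$ from Proposition \ref{prop_etaregular_corH} with Lemma \ref{lem_etaregularityconsequence} to control $m_{n,0}^{(k)}(\zeta_t)$.

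First I would check that $|\zeta_t-\zeta_{+,t}|=|\xi_t-\xi_+|$ is small under the hypothesis $|z-\lambda_{+,t}|\le\delta t^2$. This requires a priori control of $\xi_t$ near $\xi_+$, which one can bootstrap either from continuity of $\zeta_t$ in $z$ starting from the case $\eta$ of order one (where $\xi_t$ is trivially close to $\xi_+$ for $z$ near $\lambda_{+,t}$) or by combining Lemmas \ref{lem_estxi_realline} and \ref{lem_estxi_complex}, which already localize $\xi_t$ to within $O(t^2)$ of $\xi_+$ on the relevant portion of $\mathbf{D}$. Once $|\xi_t-\xi_+|\le\sqrt\delta\, t^2$ is secured, I would write
\begin{equation*}
z-\lambda_{+,t}\;=\;\Phi_t(\zeta_t)-\Phi_t(\zeta_{+,t})\;=\;\tfrac12\Phi_t''(\zeta_{+,t})(\xi_t-\xi_+)^2+\mathrm{O}\bigl(t^{-4}|\xi_t-\xi_+|^3\bigr).
\end{equation*}
Since the second-order coefficient is of order $t^{-2}$ while the cubic remainder is bounded by $t^{-4}\cdot(\sqrt\delta t^2)|\xi_t-\xi_+|^2=\sqrt\delta\, t^{-2}|\xi_t-\xi_+|^2$, absorbed into the leading term for $\delta$ small, inversion gives
\begin{equation*}
(\xi_t-\xi_+)^2\;\sim\;t^2\,(z-\lambda_{+,t}),
\end{equation*}
from which $|\xi_t-\xi_+|\sim t\sqrt{|E-\lambda_{+,t}|+\eta}$ by taking moduli (the leading coefficient $\Phi_t''(\zeta_{+,t})$ is real and of definite sign, so the complex square root behaves as expected; this is the only mildly delicate point, but it follows from the same centered-expansion argument carried out in the complex upper half plane exactly as in Lemma \ref{lem_estxi_complex}).

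For the second assertion I would do a one-order-lower Taylor expansion of $\Phi_t'$:
\begin{equation*}
\Phi_t'(\zeta_t)\;=\;\Phi_t''(\zeta_{+,t})(\xi_t-\xi_+)+\mathrm{O}\bigl(t^{-4}|\xi_t-\xi_+|^2\bigr)\;\sim\;t^{-2}\,|\xi_t-\xi_+|,
\end{equation*}
which combined with the first part yields $|\Phi_t'(\zeta_t)|\sim t^{-1}\sqrt{|E-\lambda_{+,t}|+\eta}$. Under the hypothesis $|z-\lambda_{+,t}|\le\delta t^2$ this quantity is bounded by $\sqrt\delta<1$, so it coincides with the advertised minimum $\min\{1,t^{-1}\sqrt{|E-\lambda_{+,t}|+\eta}\}$.

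The main obstacle I anticipate is not algebraic but bookkeeping: one must verify that the complex perturbation argument above is genuinely valid uniformly in $z$ with $|z-\lambda_{+,t}|\le\delta t^2$, including the regime where $\eta$ is as small as $n^{-2/3-c_4}t$ and $\xi_t$ sits in the upper half plane only by a small margin $\beta(z)$. This is where the prior Lemmas \ref{lem_estxi_realline} and \ref{lem_estxi_complex} do the heavy lifting, supplying the a priori lower bound on $\beta$ that ensures $\zeta_t$ stays in the domain where the expansion of $\Phi_t$ and its derivatives are controlled; I would invoke them rather than reprove them. All other steps reduce to elementary inversion of a quadratic, so no further technical obstacle is expected.
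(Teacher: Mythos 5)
Your proposal is correct, and it reproduces the standard argument underlying this estimate. The paper itself does not supply a proof — it explicitly cites \cite[Lemma 3.13]{ding2022edge} — but the Taylor inversion around the critical point $\zeta_{+,t}$ where $\Phi_t'$ vanishes, with $\Phi_t''\sim t^{-2}$ and the cubic error absorbed for small $\delta$, is precisely the argument used in that reference and is already carried out once in the proof of Lemma \ref{lem_estxi_realline} (where the paper expands $\Phi_t$ to second order and inverts to get $\xi_t-\xi_+$ in terms of $z-\lambda_{+,t}$). Your observation that $|z-\lambda_{+,t}|\sim|E-\lambda_{+,t}|+\eta$ converts the modulus of the complex increment into the stated form, and the bootstrap to $|\xi_t-\xi_+|\lesssim\sqrt{\delta}\,t^2$ is legitimately delegated to Lemmas \ref{lem_estxi_realline} and \ref{lem_estxi_complex} together with continuity of the subordination map. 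The derivation of the second claim from the first by a one-lower-order expansion of $\Phi_t'$, together with the remark that the minimum is automatically the second branch when $|z-\lambda_{+,t}|\le\delta t^2$, is also exactly how the cited result is obtained. No gap.
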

We note that all the results can be extended to the left part $\lambda_{-}^{\tilde{H}}$ in a similar way, and thus we omit the details.

\

\noindent\textbf{III. Useful estimates.}

The following lemmas are direct results of Lemma \ref{lem_etaregularityconsequence} and the global structure \eqref{eq_def_LSD2_gdm}, whose proofs can be found in \cite[Lemmas 3.15-3.17]{ding2022edge} with Lemma \ref{lem_etaregularityconsequence}, $(iv)$ of Lemma \ref{lem_existenceuniqueness} and \eqref{eq_zetarandom} being the main inputs in our setting. Here we omit further details.

\begin{lemma}\label{lem_prior_locallaw1}
	Fix any constant $C_1>0$. For $z\in\mathbf{D}$ with $\kappa\le C_1t^2$, we have
	\begin{gather*}
		\min_{1\le i\le n}|\lambda_i(R(\widetilde{H}))-\zeta_t|\gtrsim t^2+\eta+t\operatorname{Im}m_t(z);\\
		\int\frac{\mathrm{d}\mu_n^{\widetilde{H}}(x)}{|x-\zeta_t|^a}\lesssim\frac{t+\sqrt{\kappa+\eta}}{(t^2+\operatorname{Im}\zeta_t)^{a-1}},\quad \text{for any fixed $a\ge2$;}\\
		t+\sqrt{\kappa+\eta}\lesssim t+\operatorname{Im}m_t(z).
	\end{gather*}
\end{lemma}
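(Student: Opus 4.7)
The plan is to follow the strategy of \cite[Lemmas 3.15--3.17]{ding2022edge}, with the two main inputs being the $\eta_*$-regularity of $R(\widetilde{H})$ (Proposition \ref{prop_etaregular_corH}) and the algebraic identity $\zeta_t(z) = z b_t^2 - t(1-\phi) b_t$, where $b_t = 1 + \phi t m_t(z)$. The guiding observation is that taking imaginary parts gives
\begin{equation*}
\operatorname{Im} \zeta_t(z) = \eta |b_t|^2 + \phi t \operatorname{Im} m_t(z)\bigl[\,2E \operatorname{Re}(b_t) - t(1-\phi)\,\bigr],
\end{equation*}
and since $b_t = 1 + \mathrm{O}(t)$ by Lemma \ref{lem_existenceuniqueness}(iv) while $E \sim 1$ on $\mathbf{D}$ whenever $\kappa \le C_1 t^2$, the bracketed factor is bounded from below by a positive constant. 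Consequently $\operatorname{Im} \zeta_t \sim \eta + t \operatorname{Im} m_t(z)$.

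For the first estimate, I would combine $|\lambda_i(R(\widetilde{H})) - \zeta_t| \ge \operatorname{Im} \zeta_t \gtrsim \eta + t \operatorname{Im} m_t(z)$ with a lower bound of order $t^2$ on the real-part separation $|\operatorname{Re} \zeta_t - \lambda_i(R(\widetilde{H}))|$. When $\operatorname{Re} \zeta_t \le \lambda_+^{\widetilde{H}}$, Lemma \ref{lem_estxi_complex} supplies $\beta(E,\eta) \gtrsim t\sqrt{\kappa+\eta}$, which together with Lemma \ref{lem_preliminaryestimate} ($\xi_+(t) \sim t^2$) gives the desired $t^2$ cushion. When $\operatorname{Re} \zeta_t > \lambda_+^{\widetilde{H}}$, Lemmas \ref{lem_estxi_realline} and \ref{lem_est_xit&xi+} yield $|\operatorname{Re} \zeta_t - \lambda_1(R(\widetilde{H}))| \gtrsim t^2$, and the $\eta_*$-regularity separates $\operatorname{Re} \zeta_t$ from deeper eigenvalues. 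Adding the three lower bounds produces the stated estimate.

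For the second estimate I would split the integral according to whether $|x - \operatorname{Re}\zeta_t| \le \operatorname{Im}\zeta_t$ or not. On the near region, apply $|x - \zeta_t| \gtrsim \operatorname{Im}\zeta_t$ together with Lemma \ref{lem_etaregularityconsequence} at scale $\eta' = \operatorname{Im}\zeta_t$, noting that $\operatorname{Im}\zeta_t \gtrsim t^2 \gg \eta_*$ so that $\eta_*$-regularity applies; the mass of $\mu_n^{\widetilde{H}}$ in a window of width $\operatorname{Im}\zeta_t$ around $\operatorname{Re}\zeta_t$ is controlled by $\operatorname{Im}\zeta_t \cdot \operatorname{Im} m_{\widetilde{H}}(\operatorname{Re}\zeta_t + \mathrm{i}\operatorname{Im}\zeta_t) \sim \operatorname{Im}\zeta_t \cdot \sqrt{\kappa+\eta}$. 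On the far region, bound $|x-\zeta_t|^{-a}$ by dyadic layers of width $2^k\operatorname{Im}\zeta_t$ and sum geometrically, again using $\eta_*$-regularity to estimate each layer's mass. The two contributions combine to yield the right-hand side $(t + \sqrt{\kappa+\eta})/(t^2 + \operatorname{Im}\zeta_t)^{a-1}$, where $t^2$ in the denominator reflects that $\operatorname{Im}\zeta_t \gtrsim t^2$ uniformly in this regime.

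For the third estimate, I would invoke \eqref{eq_def_LSD2_gdm} and rewrite $\operatorname{Im} m_t(z)$ via the integral representation $\operatorname{Im} m_t(z) = \int \operatorname{Im}(x - \zeta_t)^{-1} (1 + \phi t m_t) \, \mathrm{d}\mu_n^{\widetilde{H}}(x)$ after taking imaginary parts in \eqref{eq_def_LSD1_gdm}. Using $\operatorname{Im}\zeta_t \sim \eta + t \operatorname{Im} m_t$ and Lemma \ref{lem_etaregularityconsequence} shows that $\operatorname{Im} m_t \gtrsim \sqrt{\kappa+\eta}$ inside the bulk and $\operatorname{Im} m_t \gtrsim \eta/\sqrt{\kappa+\eta}$ outside, whence a short case analysis gives $t + \sqrt{\kappa+\eta} \lesssim t + \operatorname{Im} m_t(z)$. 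The main obstacle, and the place that requires the most care, is the boundary regime $\kappa \sim t^2$, where $\operatorname{Re}\zeta_t - \lambda_+^{\widetilde{H}}$ and $\operatorname{Im}\zeta_t$ are simultaneously of size $t^2$: here one must invoke Lemma \ref{lem_est_xit&xi+} to close a near-circular dependence between the bound on $\operatorname{Im} m_t(z)$ and the location of $\zeta_t$ itself.
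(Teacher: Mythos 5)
Your sketch tracks the route the paper cites in \cite[Lemmas~3.15--3.17]{ding2022edge} and invokes exactly the three inputs the paper singles out: Lemma~\ref{lem_etaregularityconsequence}, Lemma~\ref{lem_existenceuniqueness}(iv), and the algebraic structure~\eqref{eq_zetarandom}, so the overall approach is the intended one. Two corrections, one cosmetic and one substantive. The cosmetic one: taking imaginary parts in $\zeta_t=zb_t^2-t(1-\phi)b_t$ gives $\operatorname{Im}\zeta_t=\eta\operatorname{Re}(b_t^2)+\operatorname{Im}(b_t)\bigl[2E\operatorname{Re}(b_t)-t(1-\phi)\bigr]$, so the coefficient of $\eta$ is $\operatorname{Re}(b_t^2)=(\operatorname{Re}b_t)^2-(\operatorname{Im}b_t)^2$ rather than $|b_t|^2$; since $\operatorname{Im}b_t=\phi t\operatorname{Im}m_t=\mathrm{O}(t)$ the two agree to leading order and the conclusion $\operatorname{Im}\zeta_t\sim\eta+t\operatorname{Im}m_t(z)$ survives.

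The substantive gap is in your treatment of the second estimate, where you assert $\operatorname{Im}\zeta_t\gtrsim t^2$ uniformly. That is false in this regime: for $z$ just outside the bulk with small $\eta$, $\operatorname{Im}\zeta_t\sim\eta+t\operatorname{Im}m_t\sim\eta+t\,\eta/\sqrt{\kappa+\eta}$ can be far below $t^2$. What does hold uniformly --- and what the $t^2$ in the denominator actually encodes --- is the output of the \emph{first} estimate, namely $\operatorname{dist}(\zeta_t,\operatorname{supp}\mu_n^{\widetilde H})\gtrsim t^2+\eta+t\operatorname{Im}m_t\gtrsim t^2+\operatorname{Im}\zeta_t$. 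The dyadic decomposition must therefore be carried out at the scale $D:=t^2+\operatorname{Im}\zeta_t$, at which $\eta_*$-regularity applies because $D\gtrsim t^2\gg\eta_*$, rather than at the scale $\operatorname{Im}\zeta_t$; the mass in a window of radius $2^kD$ is then controlled by Lemma~\ref{lem_etaregularityconsequence} at that scale, and the geometric sum over $k$ produces the stated bound after checking $\sqrt{t^2+\operatorname{Im}\zeta_t}\lesssim t+\sqrt{\kappa+\eta}$ (which follows from $\operatorname{Im}\zeta_t\sim\eta+t\sqrt{\kappa+\eta}$ inside the bulk and the AM--GM inequality $\sqrt{t\sqrt{\kappa+\eta}}\le\tfrac12(t+\sqrt{\kappa+\eta})$). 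Finally, the third estimate is more elementary than your sketch suggests and involves no circularity: Lemma~\ref{lem_squareroot} already gives $\operatorname{Im}m_t\sim\sqrt{\kappa+\eta}$ on $\mathbf{D}_1$ and $\operatorname{Im}m_t\sim\eta/\sqrt{\kappa+\eta}$ on $\mathbf{D}_2$, and then splitting into $\eta\ge\kappa$ (where $\eta/\sqrt{\kappa+\eta}\sim\sqrt{\kappa+\eta}$) and $\eta<\kappa\le C_1t^2$ (where $\sqrt{\kappa+\eta}\lesssim t$) yields $t+\sqrt{\kappa+\eta}\lesssim t+\operatorname{Im}m_t(z)$ directly.
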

\begin{lemma}\label{lem_prior_locallaw2}
	Fix any constant $C_1>0$. For $z\in\mathbf{D}$ with $\kappa\ge C_1t^2$, we have
	\begin{gather*}
		\min_{1\le i\le n}|\lambda_i(R(\widetilde{H}))-\zeta_t|\gtrsim t^2+\kappa+\eta;\\
		\int\frac{\mathrm{d}\mu_n^{\widetilde{H}}(x)}{|x-\zeta_t|^a}\lesssim\frac{1}{(\kappa+\eta+t^2)^{a-3/2}}, \quad \text{for any fixed $a\ge2$.}
	\end{gather*}
\end{lemma}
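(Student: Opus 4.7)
\medskip

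\noindent\textbf{Proof proposal for Lemma \ref{lem_prior_locallaw2}.}\ \ The plan is to follow the same route used for sample covariance matrices in \cite{ding2022edge}, but fed with the $\eta_*$-regularity of $R(\widetilde H)$ established in Proposition \ref{prop_etaregular_corH}. The first claim is essentially a quantitative localization of the subordination point $\zeta_t(z)$ away from the spectrum of $R(\widetilde H)$, and the second is an immediate consequence of the integral bound in Lemma \ref{lem_etaregularityconsequence} evaluated at $\zeta_t$.

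First I would pin down the geometry of the map $z\mapsto\zeta_t(z)$ under the standing hypothesis $\kappa\ge C_1t^2$. Writing $\xi_t(z)=\zeta_t(z)-\lambda_+^{\widetilde H}=\alpha(z)+\mathrm{i}\beta(z)$ (and the symmetric decomposition near $\lambda_-^{\widetilde H}$), I would split into the near-edge range $|z-\lambda_{+,t}|\le\delta t^2$, the intermediate range $\delta t^2\le|z-\lambda_{+,t}|\ll 1$, and the far range $|z-\lambda_{+,t}|\sim 1$. In the near-edge range, Lemma \ref{lem_est_xit&xi+} gives $|\xi_t-\xi_+|\sim t\sqrt{\kappa+\eta}$ and $|\Phi_t'(\zeta_t)|\sim t^{-1}\sqrt{\kappa+\eta}$, which combined with Lemma \ref{lem_estxi_realline} and Lemma \ref{lem_estxi_complex} yields $|\alpha-\xi_+|\sim\kappa$ and $\beta\sim t(\kappa+\eta)^{1/2}\vee\eta$. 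In the intermediate/far ranges, $\Phi_t'(\zeta_t)$ is bounded away from zero uniformly, so $\zeta_t$ is Lipschitz-comparable to $z$ and lies at distance $\sim\kappa+\eta$ from $\zeta_{+,t}$.

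Next I would combine this with $\xi_+(t)\sim t^2$ from Lemma \ref{lem_preliminaryestimate} to obtain the separation estimate
\begin{equation*}
\operatorname{dist}(\operatorname{Re}\zeta_t,[\lambda_-^{\widetilde H},\lambda_+^{\widetilde H}])+\operatorname{Im}\zeta_t\gtrsim t^2+\kappa+\eta.
\end{equation*}
Since Proposition \ref{prop_etaregular_corH} guarantees $\eta_*$-regularity at scale $\eta_*=n^{-\epsilon_\alpha/6}\ll t^2$, no eigenvalue of $R(\widetilde H)$ can sit within distance $\eta_*$ of $\lambda_\pm^{\widetilde H}$, and therefore $\min_{1\le i\le n}|\lambda_i(R(\widetilde H))-\zeta_t|$ is bounded below by the right-hand side of the above display, which is the first claim. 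For the integral bound, I would apply Lemma \ref{lem_etaregularityconsequence} directly to the spectral parameter $\zeta_t$, which by the above step lies in the ``outside spectrum'' regime with effective distance $\tilde\kappa+\operatorname{Im}\zeta_t\gtrsim t^2+\kappa+\eta$; the lemma then produces $\int|x-\zeta_t|^{-a}\,\mathrm d\mu_n^{\widetilde H}(x)\lesssim(\tilde\kappa+\operatorname{Im}\zeta_t)^{-(a-3/2)}\lesssim(t^2+\kappa+\eta)^{-(a-3/2)}$, as desired.

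The main obstacle is the uniformity of the geometric estimates on $\zeta_t$ across the three ranges of $z$. The competition between the scales $t^2$, $\kappa$, and $\eta$ has to be resolved carefully so that the bound $t^2+\kappa+\eta$ is truly sharp and matches the one in Lemma \ref{lem_prior_locallaw1} at the interface $\kappa\asymp t^2$. One must also be attentive to the fact that the implicit constants in Lemmas \ref{lem_estxi_realline}--\ref{lem_est_xit&xi+} depend on spectral data of $R(\widetilde H)$, so they must be controlled uniformly on the high-probability event where $\Psi$ and $\Pi$ are well configured, using only the $\eta_*$-regularity from Proposition \ref{prop_etaregular_corH} and Lemma \ref{lem_etaregularityconsequence}.
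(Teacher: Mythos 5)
Your overall route is the same one the paper intends: the paper gives no proof of this lemma and simply points to \cite[Lemmas~3.15--3.17]{ding2022edge}, with Lemma~\ref{lem_etaregularityconsequence}, Lemma~\ref{lem_existenceuniqueness}(iv) and \eqref{eq_zetarandom} named as the adaptations. Your reconstruction (first locate $\zeta_t$ via Lemmas~\ref{lem_preliminaryestimate}--\ref{lem_est_xit&xi+}, then feed the result into Lemma~\ref{lem_etaregularityconsequence}) is exactly that plan, and the geometric estimates you quote are the correct ones.

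There is, however, one genuinely false step in the middle. You write that because $R(\widetilde H)$ is $\eta_*$-regular with $\eta_*\ll t^2$, ``no eigenvalue of $R(\widetilde H)$ can sit within distance $\eta_*$ of $\lambda_\pm^{\widetilde H}$.'' That cannot be true: by Definition~\ref{def_etaregular}, $\lambda_\pm^{\widetilde H}$ are themselves $\lambda_1(R(\widetilde H))$ and $\lambda_n(R(\widetilde H))$, so at least one eigenvalue sits exactly at each of those points, at distance $0$. The $\eta_*$-regularity controls the shape of $\mu_n^{\widetilde H}$ down to scale $\eta_*$ (square-root decay of $\operatorname{Im} m_{\widetilde H}$); it says nothing about a forbidden neighborhood of $\lambda_\pm^{\widetilde H}$ for eigenvalues. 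Fortunately this claim is not load-bearing: once you have $\operatorname{dist}(\zeta_t,[\lambda_-^{\widetilde H},\lambda_+^{\widetilde H}])\gtrsim t^2+\kappa+\eta$, the first conclusion follows directly because every eigenvalue of $R(\widetilde H)$ lies in $[\lambda_-^{\widetilde H},\lambda_+^{\widetilde H}]$, so no eigenvalue statement beyond this containment is needed. The actual source of the gap between $\zeta_t$ and the spectrum is $\xi_+(t)=\zeta_{+,t}-\lambda_+^{\widetilde H}\sim t^2>0$ from Lemma~\ref{lem_preliminaryestimate} (itself a consequence of $\eta_*$-regularity through $m'_{n,0}(\zeta_{+,t})\sim t^{-1}$), combined with the Lipschitz comparability of $\zeta_t$ to $z$ when $\kappa\gtrsim t^2$. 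Replacing the false sentence with this reasoning, and applying Lemma~\ref{lem_etaregularityconsequence} at $w=\zeta_t$ exactly as you already do, closes the argument.
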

\begin{lemma}\label{lem_prior_locallaw3}
	If $\kappa+\eta\le c_1t^2$ for some sufficiently small constant $c_1>0$, then we have
	\begin{equation*}
		|m^{\prime\prime}_t(\zeta_t)|\sim t^{-3}.
	\end{equation*}
\end{lemma}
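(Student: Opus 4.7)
The plan is to reduce the estimate at a general $\zeta_t$ to the right edge $\zeta_{+,t}$ by Taylor expansion, and then extract the size of the second derivative there from the algebraic identity $\Phi_t(\zeta_t(z))=z$ combined with the stationary condition $\Phi_t'(\zeta_{+,t})=0$ supplied by Lemma \ref{lem_extremaproposition}. I will work with $m_0:=mR_{\widetilde H}$ from \eqref{eq_def_LSD2_gdm}; the relation $m_t(\zeta)=m_0(\zeta)/(1-\phi tm_0(\zeta))$, together with $1-\phi tm_0(\zeta_{+,t})\sim1$ and $m_0'(\zeta_{+,t})\sim t^{-1}$ from Lemma \ref{lem_preliminaryestimate}, makes the claim $|m_t''(\zeta_t)|\sim t^{-3}$ equivalent to $|m_0''(\zeta_t)|\sim t^{-3}$, which is what I will establish. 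Near the left edge $\zeta_{-,t}$ the argument is identical by symmetry.

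Under the hypothesis $\kappa+\eta\le c_1 t^2$, Lemma \ref{lem_est_xit&xi+} first yields $|\zeta_t-\zeta_{+,t}|\sim t\sqrt{\kappa+\eta}\lesssim\sqrt{c_1}\,t^2$. I will then write $m_0''(\zeta_t)=m_0''(\zeta_{+,t})+\int_{\zeta_{+,t}}^{\zeta_t}m_0'''(w)\,\mathrm{d}w$ and control the integrand via the $\eta_*$-regularity of $R(\widetilde H)$: along the segment, $w$ sits at distance $\sim t^2$ from the spectrum, so Lemma \ref{lem_etaregularityconsequence} (or Lemma \ref{lem_prior_locallaw1} with $a=4$) gives $|m_0'''(w)|\lesssim\int\mathrm{d}\mu_n^{\widetilde H}(x)/|x-w|^4\lesssim t^{-5}$. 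The path integral thus contributes an error $\lesssim\sqrt{c_1}\,t^{-3}$, which is subleading to the target $\sim t^{-3}$ provided $c_1$ is chosen sufficiently small. The task reduces to showing $|m_0''(\zeta_{+,t})|\sim t^{-3}$.

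For this, I will differentiate $\Phi_t(\zeta)=\zeta B(\zeta)^2+(1-\phi)tB(\zeta)$ twice, with $B(\zeta):=1-\phi tm_0(\zeta)$, to obtain
\begin{equation*}
\Phi_t''(\zeta)=4BB'+2\zeta(B')^2+\bigl(2\zeta B+(1-\phi)t\bigr)B''.
\end{equation*}
At $\zeta_{+,t}$, Lemma \ref{lem_preliminaryestimate} delivers $B\sim1$ and $B'=-\phi tm_0'(\zeta_{+,t})\sim-1$, while the stationary identity $\Phi_t'(\zeta_{+,t})=0$ rearranges to $2\zeta_{+,t}B+(1-\phi)t=-B^2/B'\sim1$. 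Hence the first two summands of $\Phi_t''(\zeta_{+,t})$ are $O(1)$ and the coefficient of $B''$ is of order one. To pin down $|\Phi_t''(\zeta_{+,t})|\sim t^{-2}$, I will Taylor-expand $\Phi_t'$ about $\zeta_{+,t}$: using $\Phi_t'(\zeta_{+,t})=0$, the bound $|\Phi_t'''|\lesssim t^{-4}$ from Lemma \ref{lem_prior_locallaw1}, the sharp estimate $|\Phi_t'(\zeta_t)|\sim\sqrt{\kappa+\eta}/t$ from Lemma \ref{lem_est_xit&xi+}, and $|\zeta_t-\zeta_{+,t}|\sim t\sqrt{\kappa+\eta}$, the leading balance forces $|\Phi_t''(\zeta_{+,t})|\sim t^{-2}$, with the cubic remainder suppressed by a factor $\sqrt{c_1}$. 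Since the $O(1)$ pieces in the displayed identity cannot compete with $t^{-2}$, it follows that $|B''(\zeta_{+,t})|\sim t^{-2}$, i.e.\ $|m_0''(\zeta_{+,t})|\sim t^{-3}$, which together with the Taylor reduction above concludes the proof.

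The main obstacle is the lower bound: the upper bound $|m_0''(\zeta_t)|\lesssim t^{-3}$ is immediate from Lemma \ref{lem_prior_locallaw1} with $a=3$, but the regular summands $4BB'$ and $2\zeta(B')^2$ in $\Phi_t''$ are individually of order one and could in principle conspire with $B''$ to produce a smaller total. The strategy above circumvents this by first fixing $|\Phi_t''(\zeta_{+,t})|\sim t^{-2}$ \emph{algebraically} from the subordination identity $\Phi_t(\zeta_t(z))=z$ and the vanishing $\Phi_t'(\zeta_{+,t})=0$, so that $B''$ is structurally forced to be the dominant contribution. The price is that the Taylor remainder in establishing $|\Phi_t''(\zeta_{+,t})|\sim t^{-2}$ must be a genuine $o(1)$ multiple of the main term, which is precisely why the constant $c_1$ in the hypothesis is taken sufficiently small.
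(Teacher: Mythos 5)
Your proof is correct. The paper omits this proof entirely (deferring to \cite[Lemmas 3.15--3.17]{ding2022edge}), so I compare against the natural direct argument rather than the paper's own. Your skeleton is right: reduce $m_t''$ to $m_0''$ (sound, since $b_t\sim1$ and $m_0'\sim t^{-1}$ make the remaining terms $O(t^{-1})\ll t^{-3}$, so the equivalence holds without presupposing the conclusion), then transport from $\zeta_t$ to the real anchor $\zeta_{+,t}$ with a Taylor error $O(\sqrt{c_1}\,t^{-3})$. Where you take the long way round is in fixing $|m_0''(\zeta_{+,t})|\sim t^{-3}$: you derive $|\Phi_t''(\zeta_{+,t})|\sim t^{-2}$ by Taylor-expanding $\Phi_t'$ against the two-sided estimates of Lemma~\ref{lem_est_xit&xi+}, and then invert the identity $\Phi_t''=4BB'+2\zeta(B')^2+(2\zeta B+(1-\phi)t)B''$ for $B''$. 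That chain is valid, but the ``obstacle'' you flag --- cancellation among the $O(1)$ summands of $\Phi_t''$ --- is not the genuine one, and the detour through $\Phi_t$ is avoidable. Because $\zeta_{+,t}>\lambda_1(R(\widetilde{H}))$ is a \emph{real} point strictly outside the spectrum (Lemma~\ref{lem_extremaproposition}), the integrand in $m_0''(\zeta_{+,t})=2\int(x-\zeta_{+,t})^{-3}\,\mathrm{d}\mu^{\widetilde{H}}_{n}(x)$ has constant sign, so $|m_0''(\zeta_{+,t})|=2\int\mathrm{d}\mu^{\widetilde{H}}_{n}(x)/|x-\zeta_{+,t}|^{3}\sim\xi_{+}(t)^{-3/2}\sim t^{-3}$ falls out in one line from $\eta_*$-regularity (Lemma~\ref{lem_etaregularityconsequence}) together with $\xi_{+}(t)\sim t^2$ (Lemma~\ref{lem_preliminaryestimate}); and since under the hypothesis the argument of $x-\zeta_t$ stays within $O(\sqrt{c_1})$ of $\pi$, the same sign-coherence applies directly at $\zeta_t$, making even the Taylor transport optional. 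One soft caution worth noting: $|\Phi_t'(\zeta_t)|\sim\sqrt{\kappa+\eta}/t$ in Lemma~\ref{lem_est_xit&xi+} is itself obtained in the cited source by Taylor-expanding $\Phi_t$ about $\zeta_{+,t}$ with $\Phi_t''(\zeta_{+,t})\sim t^{-2}$ as the key input --- precisely the fact you are recovering. Using Lemma~\ref{lem_est_xit&xi+} as a black box is legitimate since it is stated before the present lemma, so this is not a formal defect, but it does mean the direct argument is the self-contained one, while your route in effect re-derives a conclusion already wired into its own premises.
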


Moreover, with the $\eta_*$-regularity of $\widetilde{H}$, the Gaussian part will regularize the spectrum of the heavy-tailed part, which is a consequence of Proposition \ref{prop_etaregular_corH} and Lemma \ref{lem_estxi_realline}.
\begin{lemma}\label{lem_squareroot}
	Suppose $\Psi$ and $\Pi$ are well configured. We have, for $z=E+\mathrm{i}\eta\in \mathbf{D}$,
	\begin{equation*}
		\rho_t\sim \sqrt{(E-\lambda_{+,t})_+}~\text{for}~\lambda_{+,t}-3c/4\le E\le \lambda_{+,t}+3c/4,
	\end{equation*}
	and
	\begin{equation*}
		\operatorname{Im} m_{t}(z) \sim
			\sqrt{\kappa+\eta}\mathbbm{1}(z\in \mathbf{D}_1)+
			\frac{\eta}{\sqrt{\kappa+\eta}}\mathbbm{1}(z\in \mathbf{D}_2).
	\end{equation*}
\end{lemma}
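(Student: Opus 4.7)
The plan is to reduce both assertions to the behavior of the subordination function $\zeta_t(z)$ via the identity
\[
m_t(z)=\frac{m_0(\zeta_t(z))}{1-\phi t\,m_0(\zeta_t(z))},
\]
which is immediate from \eqref{eq_def_LSD2_gdm} and gives, since $\phi t\in\mathbb{R}$,
\[
\operatorname{Im}m_t(z)=\frac{\operatorname{Im}m_0(\zeta_t(z))}{|1-\phi t\,m_0(\zeta_t(z))|^{2}}.
\]
Part (iv) of Lemma \ref{lem_existenceuniqueness} combined with $t\lesssim n^{-2\epsilon_l}\ll 1$ shows that $|1-\phi t\,m_0(\zeta_t(z))|\sim 1$ uniformly on $\mathbf{D}$, so throughout the argument $\operatorname{Im}m_t(z)\sim \operatorname{Im}m_0(\zeta_t(z))$. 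Thus everything reduces to locating $\zeta_t(z)$ and invoking the $\eta_*$-regularity of $R(\widetilde{H})$ (Proposition \ref{prop_etaregular_corH}) at the point $\zeta_t(z)$.

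\textbf{Square root behavior at the right edge.} For $\lambda_{+,t}-3c/4\le E\le \lambda_{+,t}$, write $\zeta_t(E)=\lambda_+^{\widetilde{H}}+\alpha(E)+\mathrm{i}\beta(E)$. Lemma \ref{lem_estxi_realline} supplies $|\alpha(E)-\xi_+(t)|\sim \lambda_{+,t}-E$ and $\beta(E)\sim t\sqrt{\lambda_{+,t}-E}$, while Lemma \ref{lem_preliminaryestimate} gives $\xi_+(t)\sim t^2$. I would then split into two regimes: if $\lambda_{+,t}-E\ll t^2$ then $\alpha(E)\sim t^2>0$ so $\operatorname{Re}\zeta_t(E)$ lies outside $\operatorname{supp}(\mu_n^{\widetilde{H}})$ and the second clause of Definition \ref{def_etaregular}(i) yields
\[
\operatorname{Im}m_0(\zeta_t(E))\sim \frac{\beta(E)}{\sqrt{|\alpha(E)|+\beta(E)}}\sim \frac{t\sqrt{\lambda_{+,t}-E}}{t}=\sqrt{\lambda_{+,t}-E};
\]
if $\lambda_{+,t}-E\gg t^2$ then $\alpha(E)\sim -(\lambda_{+,t}-E)<0$, the bulk clause of Definition \ref{def_etaregular}(i) applies and $\operatorname{Im}m_0(\zeta_t(E))\sim \sqrt{|\alpha(E)|+\beta(E)}\sim \sqrt{\lambda_{+,t}-E}$. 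In both regimes $\rho_t(E)=\pi^{-1}\operatorname{Im}m_t(E)\sim \sqrt{(\lambda_{+,t}-E)_+}$. For $E>\lambda_{+,t}$ Lemma \ref{lem_extremaproposition} forces $\zeta_t(E)\in\mathbb{R}$, hence $\beta(E)=0$ and $\rho_t(E)=0$. The left edge is handled identically using the mirror versions of Lemmas \ref{lem_estxi_realline}--\ref{lem_est_xit&xi+} with $\lambda_-^{\widetilde{H}}$ and $\lambda_{-,t}$.

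\textbf{Pointwise estimate on $\mathbf{D}$.} On $\mathbf{D}_1$ both $\eta$ and $\kappa+\eta$ are of order one, so the elementary bound $\operatorname{Im}m_0(w)=\operatorname{Im}w\int|x-w|^{-2}\mathrm{d}\mu_n^{\widetilde{H}}(x)\sim 1$ (at any $w=\zeta_t(z)$ with bounded modulus and positive, bounded imaginary part) immediately gives $\operatorname{Im}m_t(z)\sim 1\sim \sqrt{\kappa+\eta}$. For $z\in\mathbf{D}_2$, Lemma \ref{lem_est_xit&xi+} delivers $|\xi_t(z)-\xi_+(t)|\sim t\sqrt{\kappa+\eta}$ and $|\Phi_t^{\prime}(\zeta_t)|\sim \min\{1,\sqrt{\kappa+\eta}/t\}$; differentiating $\Phi_t(\zeta_t(z))=z$ in $\eta$ and tracking imaginary parts produces $\operatorname{Im}\zeta_t(z)\sim \eta/|\Phi_t^{\prime}(\zeta_t)|\sim \eta\max\{1,t/\sqrt{\kappa+\eta}\}$, while $|\operatorname{Re}\zeta_t(z)-\lambda_+^{\widetilde{H}}|\lesssim t\sqrt{\kappa+\eta}+t^2$. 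Feeding these into the two clauses of Definition \ref{def_etaregular}(i) (with the same dichotomy on the sign of $\alpha$ as above) gives, after simplification, $\operatorname{Im}m_0(\zeta_t(z))\sim \eta/\sqrt{\kappa+\eta}$, and hence the desired $\operatorname{Im}m_t(z)\sim \eta/\sqrt{\kappa+\eta}$.

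\textbf{Main obstacle.} The delicate book-keeping occurs at the transition $\kappa+\eta\sim t^2$, where the real part of $\zeta_t(z)$ changes side relative to $\lambda_+^{\widetilde{H}}$ and both clauses of Definition \ref{def_etaregular}(i) come into play; verifying that the bulk- and edge-type estimates for $m_0(\zeta_t(z))$ match continuously across this threshold, and that the combinatorics of Lemmas \ref{lem_prior_locallaw1}--\ref{lem_prior_locallaw3} indeed convert the $\zeta_t$-side bounds into the claimed $\sqrt{\kappa+\eta}$ versus $\eta/\sqrt{\kappa+\eta}$ profile on the $z$-side, is the only nontrivial part of the argument.
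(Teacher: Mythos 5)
Your overall approach is the one the paper intends: the text states that the lemma is ``a consequence of Proposition \ref{prop_etaregular_corH} and Lemma \ref{lem_estxi_realline},'' and your proof fleshes out exactly that reduction, passing through the subordination identity $m_t = m_0(\zeta_t)/(1-\phi t\,m_0(\zeta_t))$ from \eqref{eq_def_LSD2_gdm} (with $|1-\phi t\,m_0(\zeta_t)|=|b_t|^{-1}\sim 1$) and then feeding the location of $\zeta_t$ into the $\eta_*$-regularity of $R(\widetilde{H})$. Your edge computation produces $\rho_t(E)\sim\sqrt{(\lambda_{+,t}-E)_+}$, implicitly correcting the sign typo in the lemma as printed, and the case split on whether $\lambda_{+,t}-E$ is $\ll t^2$ or $\gg t^2$ (using Lemma \ref{lem_estxi_realline} and $\xi_+(t)\sim t^2$) is handled correctly.

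One step is off, though. In your treatment of $z\in\mathbf{D}_2$ you invoke Lemma \ref{lem_est_xit&xi+}, whose hypothesis is $|z-\lambda_{+,t}|\le\delta t^2$ for a small constant $\delta$. That hypothesis fails uniformly on $\mathbf{D}_2$: because $C_1<\lambda_n(R(Y))$ and $C_2>\lambda_1(R(Y))$ are fixed constants and $t\lesssim n^{-2\epsilon_l}\to 0$, one has $\kappa\gtrsim 1\gg t^2$ there. The good news is that this makes the bound on $\mathbf{D}_2$ elementary and the detour unnecessary: since $\operatorname{supp}(\rho_t)$ is compact and sits a constant distance from $E$, and $|z|$ is bounded on the contours, $\operatorname{Im}m_t(z)=\eta\int|x-z|^{-2}\rho_t(\mathrm{d}x)\sim\eta\sim\eta/\sqrt{\kappa+\eta}$ directly, without any subordination analysis. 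Relatedly, the ``transition at $\kappa+\eta\sim t^2$'' you flag as the main obstacle is genuine for the density estimate near the edge (first display), but it never arises for $z\in\mathbf{D}$ since $\mathbf{D}_1\cup\mathbf{D}_2$ keeps a constant distance from $\operatorname{supp}(\rho_t)$. Replacing the appeal to Lemma \ref{lem_est_xit&xi+} on $\mathbf{D}_2$ by this one-line estimate repairs the argument.
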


\subsubsection{Intermediate entrywise local law for $\widetilde{H}$}\label{app_sec_prf_locallaw_H}

With the aid of notation and preliminary estimates above, recalling the $\eta_*$-regularity of $R(\widetilde{H})$ and the definitions of $\mathrm{T}_r,\mathrm{T}_c$, now we present an intermediate entrywise local law for $R(\widetilde{H})$, which is mostly similar to the heavy-tailed case for sample covariance matrices proved in \cite[Section A.2]{bao2023smallest} and \cite[Section 6]{pillai2014universality}. The difference is that we shall handle the dependence among the entries $\widetilde{h}_{ik}$ due to the self-normalization, which is well-tackled by large deviation result for the self-normalized heavy-tailed part (cf. Lemma \ref{lem_largedeviation_H}). Recall the definitions of $\kappa$ and $\epsilon_{\beta}$.
\begin{lemma}\label{lem_locallaw_H}
	Suppose that $\Psi$ and $\Pi$ are well configured. For any fixed $\zeta\in \mathrm{D}_{\zeta}$ in \eqref{eq_def_zetadomain}, we have
	\begin{equation}\label{eq_bound_locallawH}
		|\mathcal{G}R_{\widetilde{H},ij}(\zeta)-\delta_{ij}m^{(t)}(\zeta)|\prec \left(\frac{\Psi_0(w)}{\sqrt{\kappa+\eta}}+\frac{n^{-\epsilon_{\beta}}}{(\kappa+ \eta)^{3/2}}\right)\mathbbm{1}_{i,j\in \mathrm{T}_r}+\frac{1}{\kappa+\eta}(1-\mathbbm{1}_{i,j\in \mathrm{T}_r}),
	\end{equation}
	\begin{equation}\label{eq_bound_locallawH2}
		|\mathcal{G}R_{\widetilde{H}^*,uv}(\zeta)-\delta_{uv}\underline{m}^{(t)}(\zeta)|\prec \left(\frac{\Psi_0(w)}{\sqrt{\kappa+\eta}}+\frac{n^{-\epsilon_{\beta}}}{(\kappa+ \eta)^{3/2}}\right)\mathbbm{1}_{u,v\in \mathrm{T}_c}+\frac{1}{\kappa+\eta}(1-\mathbbm{1}_{u,v\in \mathrm{T}_c}),
	\end{equation}
	where $w=\zeta/(1-t)$, $\Psi_0(w)=\sqrt{\frac{\mathrm{Im} m(w)}{n\eta}}+(n\eta)^{-1}$, $\underline{m}^{(t)}(\zeta)=\phi m^{(t)}(\zeta)-(1-\phi)/\zeta$.
\end{lemma}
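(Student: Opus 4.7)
The plan is to establish \eqref{eq_bound_locallawH} and \eqref{eq_bound_locallawH2} by a two-step reduction: first from $R(\widetilde{H})$ to $R(M)$ by treating $H$ as a low-rank perturbation, then from $R(M)$ to the sample covariance matrix $\mathcal{S}(M) = n^{-1} MM^*$ by treating the diagonal $(\operatorname{diag} S)^{-1} - n^{-1} I$ as a small perturbation, and finally invoking the classical entrywise local law for bounded-support sample covariance matrices.

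In the first step, under the well-configured hypothesis (Lemma \ref{lem_wellconfigured}) the matrix $H$ has at most $n^{1-\epsilon_y}$ nonzero entries, so the difference
\begin{equation*}
R(\widetilde{H}) - R(M) = H(\operatorname{diag} S)^{-1}\widetilde{H}^* + M(\operatorname{diag} S)^{-1} H^* - H(\operatorname{diag} S)^{-1} H^*
\end{equation*}
is supported on the index set $\mathrm{I}_r$. The resolvent identity $\mathcal{G}R_{\widetilde{H}} - \mathcal{G}R_M = -\mathcal{G}R_{\widetilde{H}} [R(\widetilde{H}) - R(M)] \mathcal{G}R_M$ combined with the operator-norm bound $\|\mathcal{G}R_{\widetilde{H}}\|, \|\mathcal{G}R_M\| \lesssim (\kappa+\eta)^{-1}$ from Proposition \ref{prop_etaregular_corH} and Lemma \ref{lem_squareroot} then reduces the task to estimating off-diagonal Green-function entries $[\mathcal{G}R_{\widetilde{H}}]_{ik}$ and $[\mathcal{G}R_M]_{lj}$ with $i, j \in \mathrm{T}_r$ and $k$ or $l \in \mathrm{I}_r$. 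The sharper correction $n^{-\epsilon_\beta}/(\kappa+\eta)^{3/2}$ on the $(\mathrm{T}_r,\mathrm{T}_r)$ block will emerge from combining the smallness of $|\mathrm{I}_r|$ with the high-moment estimates for entries of $H$ (Lemma \ref{lem_moment_rates}) and the truncated control $\rho_j \asymp \sqrt{n}$ from Lemma \ref{lem_priorest_x}. For the complementary index region, the crude operator-norm bound $1/(\kappa+\eta)$ is already recorded on the RHS of \eqref{eq_bound_locallawH}.

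In the second step, writing $D := (\operatorname{diag} S)^{-1} - n^{-1} I$, estimate \eqref{eq_tr_SS} gives $\operatorname{tr}|D| \lesssim n^{-1-(\alpha-2)/(6\alpha)}$, and under the well-configured $\Pi$ only $n^\beta$ diagonal entries of $D$ exceed $n^{-1-\epsilon_s}$. The entrywise identity
\begin{equation*}
[\mathcal{G}R_M - \mathcal{G}S_M]_{ij} = -\sum_u D_{uu} [\mathcal{G}R_M M]_{iu} [\mathcal{G}S_M M]_{ju},
\end{equation*}
combined with the standard isotropic local law for $\mathcal{G}S_M M$ (available since $M$ has bounded support $n^{1/2-\epsilon_h}$), yields a correction that can be absorbed into the target RHS after splitting the sum into the $n^\beta$ exceptional and the $p - n^\beta$ typical diagonal indices. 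The final input is the classical entrywise local law for $\mathcal{S}(M)$ with bounded support and $\mathbb{E}|M_{ij}|^2 = 1 - t$ from \cite{hwang2019local}, which gives $|\mathcal{G}S_{M,ij}(\zeta) - \delta_{ij} m^{(t)}(\zeta)| \prec \Psi_0(w)/\sqrt{\kappa+\eta}$ on $\mathrm{D}_\zeta$ with $w = \zeta/(1-t)$. Summing the three estimates yields \eqref{eq_bound_locallawH}; \eqref{eq_bound_locallawH2} follows from the block companion structure in Lemma \ref{lem_resolvent} together with $\underline{m}^{(t)}(\zeta) = \phi m^{(t)}(\zeta) - (1-\phi)/\zeta$.

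The main obstacle is the first reduction: although $|\mathrm{I}_r| \le n^{1-\epsilon_y}$, individual entries of $H$ can be as large as $n^{1/\alpha}$, so naive operator-norm bounds on $\|H(\operatorname{diag} S)^{-1}\widetilde{H}^*\|$ do not deliver the desired $n^{-\epsilon_\beta}$ gain. I would resolve this by working under the truncated expectation $\mathbb{E}^\chi$ (Definition \ref{def_trucatuedexpectation}) to enforce $\rho_j^{-1} \asymp n^{-1/2}$ deterministically, then applying a bootstrap argument that iterates the minor-removal identities of Lemma \ref{lem_resolvent}: first proving an intermediate weak entrywise bound on $\mathcal{G}R_{\widetilde{H}}$ using the crude operator-norm control, then upgrading it to the sharp bound through the self-consistent equation analysis adapted from \cite{hwang2019local} with the exceptional indices in $\mathrm{I}_r$ treated as a controlled remainder at each iteration. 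A secondary subtlety is that the interpolation between the two index regimes in \eqref{eq_bound_locallawH} must be consistent with the calibration $\epsilon_\beta = \min\{\epsilon_h, \epsilon_y/2\}$ from Definition \ref{def_controlparameter}, which is precisely what allows all correction terms to respect the stated stochastic domination.
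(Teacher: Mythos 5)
Your two-step perturbative reduction $\widetilde{H}\to M\to\mathcal{S}(M)$ is not what the paper does, and the route you sketch does not close. The paper proves Lemma~\ref{lem_locallaw_H} by setting up a self-consistent equation \emph{directly} for the rescaled self-normalized matrix $\widetilde{H}_t=\widetilde{H}(\operatorname{diag}S)^{-1/2}/\sqrt{1-t}$ and running the usual continuity/fluctuation argument (events $\Omega_o,\Omega_d,\Omega_U$, the deviation function $D(u,w)$, and a self-improving step via Lemmas~\ref{lem_locallaw_H_lemma1}--\ref{lem_locallaw_H_lemma4}). The indispensable new input that makes this close is Lemma~\ref{lem_largedeviation_H}: a large-deviation bound for linear and quadratic forms in the self-normalized entries $\widetilde{h}_{ik}$. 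This is not a routine citation to bounded-support machinery, because the row entries $\widetilde{h}_{ik}$ and $\widetilde{h}_{jk}$ are \emph{dependent} through the shared normalizer $\rho_k$, and individual entries coming from the $H$ part are not $\mathrm{O}(n^{-\epsilon_h})$. The proof of that lemma in turn leans on the cumulant-expansion improvements to odd moments in Lemma~\ref{lem_oddmoment_est}. Your proposal never identifies this necessity: when you say you would ``upgrade through the self-consistent equation analysis adapted from~\cite{hwang2019local}'' you are implicitly running the paper's actual argument, but the large-deviation estimates that drive that analysis in \cite{hwang2019local} rest on independence and bounded support, both of which fail here. Without a substitute for Lemma~\ref{lem_largedeviation_H}, the bootstrap you describe does not terminate, because the quadratic forms $\langle\mathfrak{h}_i,\mathcal{G}R_{(\widetilde{H}_t^{(i)})^*}\mathfrak{h}_i\rangle$ cannot be shown to concentrate. (As a minor point, Lemma~\ref{lem_resolvent} is stated for the Gaussian-divisible linearization $\mathcal{L}(\widetilde{Y}_t,z)$, not for $\widetilde{H}_t$, so it is not the resolvent identity you should be iterating here; the relevant identity is \eqref{eq_formula_GRHw}.)

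The perturbative steps you do spell out also have gaps of their own. In Step~1, the difference $R(\widetilde{H})-R(M)$ has rank up to $\mathrm{O}(n^{1-\epsilon_y})$, and at the entrywise level a rank bound gives nothing: $[\mathcal{G}R_{\widetilde{H}}-\mathcal{G}R_M]_{ij}=-\sum_{k,l}[\mathcal{G}R_{\widetilde{H}}]_{ik}[R(\widetilde{H})-R(M)]_{kl}[\mathcal{G}R_M]_{lj}$ requires entrywise control of $\mathcal{G}R_{\widetilde{H}}$ on mixed $\mathrm{T}_r$--$\mathrm{I}_r$ indices, which is the object you are trying to prove. In Step~2, the formula $[\mathcal{G}R_M-\mathcal{G}S_M]_{ij}=-\sum_u D_{uu}[\mathcal{G}R_M M]_{iu}[M^*\mathcal{G}S_M]_{uj}$ has two issues: it again requires an isotropic law for $\mathcal{G}R_M M$ (circular), and $D=(\operatorname{diag}S)^{-1}-n^{-1}I$ is \emph{not independent} of $M$ (each $\rho_u^2$ contains $\sum_i M_{iu}^2$ and $\sum_i H_{iu}^2$), which is why the paper only uses this decomposition at the \emph{averaged} level (Lemma~\ref{lem_locallaw_corM} via the trace estimate \eqref{eq_tr_SS}), where trace cancellation removes the dependence problem. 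The bound $\operatorname{tr}|D|\lesssim n^{-1-(\alpha-2)/(6\alpha)}$ you invoke is not of entrywise strength, since $n^\beta$ diagonal entries of $D$ may be order one. Finally, \cite{hwang2019local} gives $|\mathcal{G}S_{M,ij}-\delta_{ij}m^{(\tilde t)}|\prec n^{-\epsilon_h}+\Psi_0$, not the $\Psi_0(w)/\sqrt{\kappa+\eta}$ form in \eqref{eq_bound_locallawH}; the $(\kappa+\eta)^{-1/2}$ factor is specific to the deviation-function stability argument (Lemmas~\ref{lem_locallaw_H_lemma3}--\ref{lem_locallaw_H_lemma4}) and cannot be imported as a black box from the covariance case.
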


For $z\in \mathbf{D}$, recall the definition of the rescaled matrix $\widetilde{H}_t:=\widetilde{H}(\operatorname{diag}(S))^{-1/2}/\sqrt{1-t}$ and set $w:=\zeta_t(z)/(1-t)$, which is rewritten as $w=E+\mathrm{i}\eta$.
It suffices to show
\begin{equation}\label{eq_bound_GRH}
	|\mathcal{G}R_{\widetilde{H}_t,ij}(w)-\delta_{ij}m(w)|\prec \left(\frac{\Psi_0(w)}{\sqrt{\kappa+\eta}}+\frac{n^{-\epsilon_{\beta}}}{(\kappa+ \eta)^{3/2}}\right)\mathbbm{1}_{i,j\in \mathrm{T}_r}+\frac{1}{\kappa+\eta}(1-\mathbbm{1}_{i,j\in \mathrm{T}_r})
\end{equation}
for any fixed $z\in \mathbf{D}$, where $\epsilon_{\beta}$ is defined in Definition \ref{def_controlparameter}. The case for $\mathcal{G}R_{\widetilde{H}_t^*,ij}(w)$ is analogous, and thus omitted. Recalling $\zeta\in \mathrm{D}_{\zeta}$, it is not hard to verify that the following crude bound holds with high probability,
\begin{equation}\label{eq_crudebound_GRH}
	|\mathcal{G}R_{\widetilde{H}_t,ij}(w)|\le  \frac{1}{\kappa+\eta}.
\end{equation}
Throughout the proof, for simplicity, denote $\widetilde{H}_t:=(\widetilde{h}_{ij})$, $\mathfrak{h}_{i}=(\widetilde{h}_{i1},\ldots,\widetilde{h}_{ip})^*$. Define the parameter
\begin{equation*}
	\varphi:=(\log n)^{\log \log n}.
\end{equation*}
Now we state the proof of \eqref{eq_bound_GRH}, which relies on ``self-consistent equations" below (cf. Section 6 of \cite{pillai2014universality} or Section A.2 of \cite{bao2023smallest}),
\begin{equation}\label{eq_formula_GRHw}
	\mathcal{G}R_{\widetilde{H}_t,ii}(w)=\frac{1}{-w-wn^{-1}\sum_{j=1}^{p}\mathcal{G}R_{(\widetilde{H}_t^{(i)})^*,jj}(w)-U_i(w)},
\end{equation}
where $U_i(w):=w\langle\mathfrak{h}_i,\mathcal{G}R_{(\widetilde{H}_t^{(i)})^{*}}(w)\mathfrak{h}_i\rangle-wn^{-1}\operatorname{tr}\mathcal{G}R_{(\widetilde{H}_t^{(i)})^{*}}(w)$. Define the following quantities:
\begin{equation*}
	\Lambda_{d}(w):=\max_{i\in \mathrm{T}_r}|\mathcal{G}R_{\widetilde{H}_t,ii}(w)-m(w)|,\quad \Lambda_{o}(w):=\max_{i\ne j, i,j\in \mathrm{T}_r}|\mathcal{G}R_{\widetilde{H}_t,ij}(w)|,\quad\Lambda(w):=|m_{\widetilde{H}_t}(w)-m(w)|,
\end{equation*}
where the subscripts refer to the ``diagonal" and ``off-diagonal" matrix elements, respectively. Define the following events:
\begin{equation*}
	\begin{split}
		\Omega_o(w,K):&=\{\Lambda_{o}(w)\ge K\Theta\},
		\Omega_d(w,K):=\{\max_{i\in \mathrm{T}_r}|\mathcal{G}R_{\widetilde{H}_t,ii}(w)-m_{\widetilde{H}_t}(w)|)\ge K\Theta\},\\
		\Omega_U(w,K):&=\{\max_{i\in \mathrm{T}_r}|U_{i}(w)|\ge K\Theta\},
		\Omega(w,K):=\Omega_o(w,K)\cup \Omega_d(w,K)\cup \Omega_U(w,K),\\
		\mathbf{B}(w):&=\{\Lambda_{d}(w)+\Lambda_{o}(w)>(\log n)^{-1}\},
		\mathsf{H}(w,K):=\Omega^{c}(w,K)\cup \mathbf{B}(w),
	\end{split}
\end{equation*}
where $K>0$ and the control parameter $\Theta$ is given by
\begin{equation*}
	\Theta=\Theta(w):=\sqrt{\frac{\operatorname{Im} m(w)+\Lambda(w)}{n\eta}}+\frac{n^{-\epsilon_{\beta}}}{\kappa+\eta}.
\end{equation*}
\begin{lemma}\label{lem_locallaw_H_lemma1}
	Suppose $\Psi$ and $\Pi$ are well configured. There exists a constant $C>0$ such that the event $\bigcap_{\zeta\in \mathrm{D}_{\zeta}}\mathsf{H}(w,\varphi^C)$ holds with high probability.
\end{lemma}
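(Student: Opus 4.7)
The plan is to establish this lemma through the standard self-consistent equation framework, adapted to the heavy-tailed context by exploiting the resampling structure and the well-configured assumption on $\Psi,\Pi$. Since $\mathsf{H}(w,K)=\Omega^c(w,K)\cup\mathbf{B}(w)$, showing $\mathsf{H}(w,K)$ is a high-probability event is equivalent to showing that the bad event $\Omega(w,K)\cap\mathbf{B}^c(w)$ has negligible probability. On $\mathbf{B}^c(w)$ we have the a priori smallness $\Lambda_d(w)+\Lambda_o(w)\le(\log n)^{-1}$, which together with the crude bound \eqref{eq_crudebound_GRH} provides the input for a bootstrap. I will first derive pointwise estimates for fixed $\zeta\in\mathrm{D}_\zeta$, then upgrade to the uniform statement via a standard net argument using the Lipschitz continuity of the resolvent on scale $\eta^{-2}$.

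For the off-diagonal and $U_i$ control, I would apply the Schur complement formula (Lemma \ref{lem_resolvent} adapted to the columns of $\widetilde{H}_t$). For $i\in\mathrm{T}_r$, the $i$-th row $\mathfrak{h}_i$ contains only $M$-type entries divided by the column normalizer $\rho_j$, and is independent of the minor $\widetilde{H}_t^{(i)}$ once we condition on $\Pi$. Writing
$$U_i(w)=w\sum_{j\ne k}\widetilde{h}_{ij}\overline{\widetilde{h}_{ik}}\,\mathcal{G}R_{(\widetilde{H}_t^{(i)})^*,jk}(w)+w\sum_j\bigl(|\widetilde{h}_{ij}|^2-n^{-1}\bigr)\mathcal{G}R_{(\widetilde{H}_t^{(i)})^*,jj}(w),$$
the bounded support of $M_{ij}$ (at scale $n^{1/2-\epsilon_h}$) combined with the concentration $\rho_j^2=n(1+\mathrm{O}_{\prec}(n^{-\epsilon_s}))$ from Lemma \ref{lem_priorest_x} allows us to apply the large deviation inequality referenced as Lemma \ref{lem_largedeviation_H}. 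This yields $|U_i(w)|\prec\sqrt{\mathrm{Im}\,mR_{(\widetilde{H}_t^{(i)})^*}(w)/(n\eta)}+n^{-\epsilon_\beta}/(\kappa+\eta)\lesssim\varphi^C\Theta$; the off-diagonal bound $|\mathcal{G}R_{\widetilde{H}_t,ij}|\prec\Theta$ for $i\ne j\in\mathrm{T}_r$ follows by an analogous resolvent expansion plus Ward's identity $\sum_j|\mathcal{G}R_{\widetilde{H}_t,ij}|^2=\mathrm{Im}\,\mathcal{G}R_{\widetilde{H}_t,ii}/\eta$.

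For the diagonal piece, the self-consistent equation \eqref{eq_formula_GRHw} gives $\mathcal{G}R_{\widetilde{H}_t,ii}(w)=-(w+w\phi m(w)+\mathcal{E}_i)^{-1}$ where the error $\mathcal{E}_i$ collects $U_i(w)$ together with the difference $wn^{-1}\sum_j\mathcal{G}R_{(\widetilde{H}_t^{(i)})^*,jj}-w\phi m(w)$. On $\mathbf{B}^c$ the latter difference is controlled by a trivial rank-one perturbation by $|\Lambda(w)|+(n\eta)^{-1}$, yielding $|\mathcal{G}R_{\widetilde{H}_t,ii}-m_{\widetilde{H}_t}|\prec\varphi^C\Theta$. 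Stability of the equation $z\phi m^2+(z-(1-\phi))m+1=0$ near its solution (square-root behavior at the spectral edges, provided by Lemma \ref{lem_squareroot}) closes the bootstrap for $\Lambda(w)$ itself.

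The main obstacle is the column dependence of $\widetilde{H}_t$ induced by the self-normalizers $\rho_j^{-1}$, which breaks the standard independence assumption used in large deviation arguments. To circumvent this I would first condition on $\Pi$ (which is well configured with high probability by Lemma \ref{lem_wellconfigured}), reducing $\rho_j$ to a deterministic scale $\sim\sqrt{n}(1+\mathrm{O}(n^{-\epsilon_s}))$ for at least $p-n^\beta$ columns. The remaining $\mathrm{O}(n^\beta)$ columns contribute at most $\mathrm{O}(n^{\beta-1}/(\kappa+\eta))$ after summation, absorbed into the $n^{-\epsilon_\beta}$ term of $\Theta$. A second obstacle is the heavy-tailed contribution from $H$: for $i\notin\mathrm{T}_r$, we only have the weak bound $|\mathcal{G}R_{\widetilde{H}_t,ii}|\lesssim(\kappa+\eta)^{-1}$ from the crude estimate, but since $|\mathrm{I}_r|\le n^{1-\epsilon_y}$ is sparse these bad indices only contribute negligibly to sums. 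The final uniform bound over $\zeta\in\mathrm{D}_\zeta$ is obtained from the pointwise statement by selecting a polynomially fine $n^{-K}$-net and using the deterministic Lipschitz bound $\|\partial_\zeta\mathcal{G}R_{\widetilde{H}_t}(\zeta)\|\le\eta^{-2}$.
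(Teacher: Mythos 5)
Your overall framing is right: the lemma reduces to showing the three sub-events $\Omega_o$, $\Omega_U$, $\Omega_d$ do not occur (on $\mathbf{B}^c$) with high probability for each fixed $\zeta$, followed by a net argument. The off-diagonal and $U_i$ estimates via the resolvent identity, Ward identity, and Lemma~\ref{lem_largedeviation_H} are also the paper's route. But there are two concrete problems.

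First, the claim that $\mathfrak{h}_i$ ``is independent of the minor $\widetilde{H}_t^{(i)}$ once we condition on $\Pi$'' is false, and the proposed fix does not fix it. The obstruction is not the size of $\rho_j$ but the fact that $\rho_j$ is a function of the \emph{entire} $j$-th column of $X^*$, so $\widetilde{h}_{ij}=\widetilde{H}_{ij}/\rho_j$ and $\widetilde{h}_{kj}$ share the normalizer for $k\neq i$. Conditioning on $\Pi$ pins down the rough magnitude of $\rho_j$ but leaves this row-to-row dependence intact, and it also does not render $\rho_j$ deterministic. This is exactly why the paper does \emph{not} invoke any conditional-independence reduction but instead proves Lemma~\ref{lem_largedeviation_H} directly for the dependent $\widetilde{h}$-variables, supplying the needed inputs $|\mathbb{E}(\widetilde{h}_{ik})|\lesssim n^{-1-\epsilon_h}$ and $|\mathbb{E}(\widetilde{h}_{ik}\widetilde{h}_{jk})|\lesssim n^{-1-2\epsilon_h}$ via cumulant expansions (Lemma~\ref{lem_oddmoment_est}). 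If you base the large deviation step on an independence claim, it fails.

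Second, your treatment of $\Omega_d$ does not close. Writing $\mathcal{G}R_{\widetilde{H}_t,ii}=-(w+w\phi m(w)+\mathcal{E}_i)^{-1}$ puts a $\Lambda(w)$-sized contribution into $\mathcal{E}_i$ (coming from $wn^{-1}\sum_j\mathcal{G}R_{(\widetilde{H}_t^{(i)})^*,jj}-w\phi m(w)$). But $\Theta=\sqrt{(\mathrm{Im}\,m+\Lambda)/(n\eta)}+n^{-\epsilon_\beta}/(\kappa+\eta)$ is generically much smaller than $\Lambda$ when $\Lambda\gg(n\eta)^{-1}$, and there is no a priori reason on $\mathbf{B}^c$ that $\Lambda\lesssim\Theta$; that is precisely the averaged local law one is trying to prove in Lemmas~\ref{lem_locallaw_H_lemma3}--\ref{lem_locallaw_H_lemma4}. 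The conclusion $|\mathcal{G}R_{\widetilde{H}_t,ii}-m_{\widetilde{H}_t}|\prec\varphi^C\Theta$ only holds because the $\Lambda$-sized piece of $\mathcal{E}_i$ is common to all $i$ and cancels against the $\Lambda$-sized piece of $m_{\widetilde{H}_t}-m$; you never argue this cancellation. The paper makes it automatic by bounding $\max_i|\mathcal{G}R_{\widetilde{H}_t,ii}-m_{\widetilde{H}_t}|\le\max_{i\neq j}|\mathcal{G}R_{\widetilde{H}_t,ii}-\mathcal{G}R_{\widetilde{H}_t,jj}|$ plus a sparse ``bad index'' remainder, and then noting the pairwise difference involves only $|U_i|+|U_j|+\Lambda_o^2$ and the mismatch between the $i$-minor and $j$-minor traces. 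Relatedly, the final sentence about ``closing the bootstrap for $\Lambda(w)$'' oversteps the scope of this lemma: $\Omega_d$ controls the distance to the \emph{empirical} $m_{\widetilde{H}_t}$, not to $m$, and the control of $\Lambda$ is deferred to the deviation-function argument of Lemmas~\ref{lem_locallaw_H_lemma3}--\ref{lem_locallaw_H_lemma4}.
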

\begin{proof}
	Firstly, noting $\Psi$ and $\Upsilon$ are well configured, by the notation above, it suffices to show that $\Omega_o^{c}(w, K)\cup \mathbf{B}(w)$, $\Omega_{d}^{c}(w, K)\cup \mathbf{B}(w)$ and $\Omega_U^{c}(w, K)\cup \mathbf{B}(w)$ hold with high probability for any $w=w(\zeta)$ with $\zeta\in \mathrm{D}_{\zeta}$ and $K=\varphi^C$, which together with a standard lattice argument give the desired result. In the following, we shall show that the three events hold with high probability.
	
	For the first event $\Omega_o^{c}(w, K)\cup \mathbf{B}(w)$, for $i\ne j, i,j\in \mathrm{T}_r$, we have $|\mathcal{G}R_{\widetilde{H}_t,ij}(w)|\sim 1$ on $\mathbf{B}^c(w)$ since $|m(w)|\sim 1$, which together with the resolvent identity gives
	\begin{equation*}
		 \mathcal{G}R_{\widetilde{H}_t^{(i)},jj}(w)=\mathcal{G}R_{\widetilde{H}_t,jj}(w)-\frac{\mathcal{G}R_{\widetilde{H}_t,ji}(w)\mathcal{G}R_{\widetilde{H}_t,ij}(w)}{\mathcal{G}R_{\widetilde{H}_t,ii}(w)}\sim 1.
	\end{equation*}
	Recall the resolvent identity
	\begin{equation*}
		\mathcal{G}R_{\widetilde{H}_t,ij}(w)=w\mathcal{G}R_{\widetilde{H}_t,ii}(w)\mathcal{G}R_{\widetilde{H}_t^{(i)},jj}(w)\langle \mathfrak{h}_i,\mathcal{G}R_{(\widetilde{H}_t^{(ij)})^*}(w)\mathfrak{h}_j\rangle, ~\text{for}~i\ne j,
	\end{equation*}
	which further gives
	\begin{equation*}
		\Lambda_o(w) \lesssim \max_{i\ne j, i,j\in \mathrm{T}_r}\left|\sum_{1\le k,l\le p}\widetilde{h}_{ik}\mathcal{G}R_{(\widetilde{H}_t^{(ij)})^*,kl}(w)\widetilde{h}_{jl}\right|.
	\end{equation*}
	By applying the large deviation results for $\widetilde{h}_{ik}$ in Lemma \ref{lem_largedeviation_H}, we have
	\begin{equation*}
		\left|\sum_{1\le k,l\le p}\widetilde{h}_{ik}\mathcal{G}R_{(\widetilde{H}_t^{(ij)})^*,kl}(w)\widetilde{h}_{jl}\right|\le \varphi^C\left(n^{-\epsilon_{h}}\max_{k,l}|\mathcal{G}R_{(\widetilde{H}_t^{(ij)})^*,kl}(w)|+\frac{1}{n}\big(\sum_{k,l}|\mathcal{G}R_{(\widetilde{H}_t^{(ij)})^*,kl}(w)|^2\big)^{1/2}\right)
	\end{equation*}
	with high probability. Recall the resolvent identities
	\begin{equation}\label{eq_resolvent_h}
		\begin{split}
			 \sum_{k,l}|\mathcal{G}R_{(\widetilde{H}_t^{(ij)})^*,kl}(w)|^2&=\frac{\sum_{k}\mathrm{Im}\mathcal{G}R_{(\widetilde{H}_t^{(ij)})^*,kk}(w)}{\eta},\\
			\sum_{k}\mathcal{G}R_{(\widetilde{H}_t^{(ij)})^*,kk}(w)&=\sum_{l}\mathcal{G}R_{\widetilde{H}_t^{(ij)},ll}(w)+\frac{n-p+2}{w}.
		\end{split}
	\end{equation}
	Therefore, by \eqref{eq_crudebound_GRH}, \eqref{eq_resolvent_h}, and $\mathrm{Im}(w^{-1})=\eta |w|^{-2}\sim \eta$, we get that on $\mathbf{B}^c$, the following estimate holds with high probability:
	\begin{equation*}
		\Lambda_o(w)\le \varphi^C\left(\frac{n^{-\epsilon_{h}}}{\kappa+\eta}+\sqrt{\frac{\mathrm{Im}~ m(w)+\Lambda(w)+\Lambda_o^2(w)}{n\eta}+\frac{n^{-\epsilon_{y}}}{\kappa^2+\eta^2}+\frac{1}{n}}\right),
	\end{equation*}
	which further implies $\Lambda_o(w)=o(1)$ for $\zeta\in \mathrm{D}_{\zeta}$. Thus we have $ \Lambda_o(w) \le \varphi^C \Theta$ on $\mathbf{B}^c$ with high probability, which implies that the first event $\Omega_o^{c}(w, K)\cup \mathbf{B}(w)$ holds with high probability for $K=\varphi^C$.

	Now we turn to the event $\Omega_U^{c}(w, K)\cup \mathbf{B}(w)$ for any fixed $w=w(\zeta)$ with $\zeta\in \mathrm{D}_{\zeta}$. Noticing the definition of $U_{i}(w)$ for $i\in \mathrm{T}_r$, one can apply the large deviation result in Lemma \ref{lem_largedeviation_H} by the above similar argument above to obtain the desired result, that is $|U_i(w)|\le \varphi^C \Theta$ with high probability for $i\in \mathrm{T}_r$.
	
	It remains to consider the event $\Omega_d^{c}(w, K)\cup \mathbf{B}(w)$. For $i\in \mathrm{T}_r$,
	\begin{equation*}
		\begin{split}
			\max_{i}|\mathcal{G}R_{\widetilde{H}_t,ii}(w)-m_{\widetilde{H}_t}(w)|&\le n^{-1}\sum_{j=1}^{n}\max_{j\ne i} |\mathcal{G}R_{\widetilde{H}_t,ii}(w)-\mathcal{G}R_{\widetilde{H}_t,jj}(w)|\\
			&\le \max_{i\ne j, j\in \mathrm{T}_r} |\mathcal{G}R_{\widetilde{H}_t,ii}(w)-\mathcal{G}R_{\widetilde{H}_t,jj}(w)|+\varphi^{C}n^{-\epsilon_{y}}\frac{1}{\kappa+\eta}.
		\end{split}
	\end{equation*}
	On the event $\mathbf{B}^c$, for $i\ne j$, $j\in \mathrm{T}_r$, by \eqref{eq_formula_GRHw}, we have
	\begin{equation*}
		\begin{split}
			|\mathcal{G}R_{\widetilde{H}_t,ii}(w)-\mathcal{G}R_{\widetilde{H}_t,jj}(w)|
			\lesssim \max_{i\in \mathrm{T}_r}|U_{i}(w)|+\Lambda_o^2(w)+n^{-\epsilon_{y}}\frac{1}{\kappa^2+\eta^2},
		\end{split}
	\end{equation*}
	where the last inequality follows from the resolvent identity
	$\sum_{k}\mathcal{G}R_{(\widetilde{H}_t^{(i)})^{*},kk}(w)=\sum_{l}\mathcal{G}R_{\widetilde{H}_t^{(i)},kk}(w)+(n-p+1)w^{-1}$ and the estimates above. Thus, the event $\Omega_d^{c}(w, K)\cup \mathbf{B}(w)$ holds with high probability. So we can complete the proof of this proposition.
\end{proof}

In the case of $\eta\sim 1$, the following result holds without assuming $\mathbf{B}^c$.
\begin{lemma}\label{lem_locallaw_H_lemma2}
	Suppose $\Psi$ and $\Pi$ are well configured. For $w:=E+\mathrm{i}\eta$ with $\eta>C^{\prime}$ for some constant $C^{\prime}>0$, there exists a constant $C$ such that the event $\Omega^{c}(w,\varphi^C)$ holds with high probability.
\end{lemma}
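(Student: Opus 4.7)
The plan is to revisit the three-part argument of Lemma \ref{lem_locallaw_H_lemma1}, namely bounding $\Lambda_o(w)$, the fluctuations $|U_i(w)|$, and the spread $|\mathcal{G}R_{\widetilde{H}_t,ii}-m_{\widetilde{H}_t}|$, but replace every instance where $\mathbf{B}^c$ was used to guarantee that $|\mathcal{G}R_{\widetilde{H}_t,ii}|\sim 1$, $|\mathcal{G}R_{\widetilde{H}_t^{(i)},jj}|\sim 1$, or $\|\mathcal{G}R_{\widetilde{H}_t}\|$ is controlled, by the deterministic a priori bound $\|\mathcal{G}R_{\widetilde{H}_t}(w)\|\le \eta^{-1}\le (C')^{-1}$ that is automatic in the regime $\eta > C'$. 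In this regime we also have $|w|\asymp 1$, $\kappa+\eta\asymp 1$, $\operatorname{Im} m(w)\asymp 1$, and the control parameter reduces to $\Theta\asymp \sqrt{1/n}+n^{-\epsilon_\beta}$, so all the square-root and divisor terms in the self-consistent equation are harmless constants.

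First I would treat $\Omega_o$. Starting from the resolvent identity
\[
\mathcal{G}R_{\widetilde{H}_t,ij}(w)=w\,\mathcal{G}R_{\widetilde{H}_t,ii}(w)\,\mathcal{G}R_{\widetilde{H}_t^{(i)},jj}(w)\,\langle\mathfrak{h}_i,\mathcal{G}R_{(\widetilde{H}_t^{(ij)})^*}(w)\mathfrak{h}_j\rangle
\]
for $i\ne j$ with $i,j\in\mathrm{T}_r$, the factors outside the inner product are bounded by $\eta^{-2}\lesssim 1$ unconditionally, so $\Lambda_o(w)$ is directly controlled by the quadratic form. Applying the large deviation estimate of Lemma \ref{lem_largedeviation_H} and using $\sum_{k,l}|\mathcal{G}R_{(\widetilde{H}_t^{(ij)})^*,kl}|^2 = \eta^{-1}\sum_k \operatorname{Im}\mathcal{G}R_{(\widetilde{H}_t^{(ij)})^*,kk} \lesssim n/\eta^2\lesssim n$, one obtains $\Lambda_o(w)\le \varphi^C(\,n^{-\epsilon_h}+n^{-1/2}\,)\lesssim \varphi^C\Theta$ with high probability, without ever invoking $\mathbf{B}^c$.

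Next I would handle $\Omega_U$ by essentially the same large deviation application to
\[
U_i(w)=w\sum_{k,l}\widetilde{h}_{ik}\bigl(\mathcal{G}R_{(\widetilde{H}_t^{(i)})^*,kl}-\delta_{kl}n^{-1}\mathrm{tr}\,\mathcal{G}R_{(\widetilde{H}_t^{(i)})^*}\bigr)\widetilde{h}_{il},
\]
again using $\|\mathcal{G}R_{(\widetilde{H}_t^{(i)})^*}\|\le\eta^{-1}\lesssim 1$ as the trivial replacement for the bootstrapping bound. Finally, for $\Omega_d$, the bound $|\mathcal{G}R_{\widetilde{H}_t,ii}-\mathcal{G}R_{\widetilde{H}_t,jj}|$ reduces via \eqref{eq_formula_GRHw} and the resolvent relation between $\widetilde{H}_t^{(i)}$ and $\widetilde{H}_t$ to $\max_i|U_i|+\Lambda_o^2+n^{-\epsilon_y}$, which is $\lesssim \varphi^C\Theta$ by the two previous steps. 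A standard lattice argument over $\{w(\zeta):\zeta\in\mathrm{D}_\zeta\}$ restricted to $\eta>C'$, combined with the Lipschitz continuity of all Green function entries at scale $\eta\gtrsim 1$, upgrades these high-probability estimates to a uniform statement.

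The only subtlety, and the step I would write most carefully, is the verification that the large deviation inequalities of Lemma \ref{lem_largedeviation_H} indeed apply to the dependent entries $\widetilde{h}_{ij}=\widetilde{H}_{ij}/\sqrt{1-t}\rho_j^{-1}$, because the columns $\mathfrak{h}_i$ no longer have independent coordinates; however, on the event $\Omega_\Psi\cap\Omega_\Pi$ the truncation $\rho_j\asymp n^{1/2}$ together with the well-configured structure of $\Psi$ provides the deterministic moment estimates that drive Lemma \ref{lem_largedeviation_H}. Since $\eta\gtrsim 1$ removes all the $(\kappa+\eta)^{-1}$ blow-ups, no bootstrapping or continuity in $\eta$ is needed, and the conclusion $\Omega^c(w,\varphi^C)$ follows immediately without passing through $\mathbf{B}(w)$.
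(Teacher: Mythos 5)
Your treatment of $\Omega_o$ and $\Omega_U$ matches the paper's: swap $\mathbf{B}^c$ for the crude $\eta>C'$ bounds and apply the large deviation estimates of Lemma \ref{lem_largedeviation_H}, with the a priori operator-norm bound $\|\mathcal{G}R_{\widetilde{H}_t}(w)\|\le\eta^{-1}$ controlling all the prefactors. The point of divergence is $\Omega_d$. The paper invokes Cauchy's interlacing theorem directly to get the deterministic bound $\sum_k\mathcal{G}R_{(\widetilde{H}_t^{(i)})^*,kk}-\sum_k\mathcal{G}R_{(\widetilde{H}_t^{(j)})^*,kk}=\mathrm{O}(\eta^{-1})$, which multiplied by $wn^{-1}$ in the self-consistent equation \eqref{eq_formula_GRHw} gives an $\mathrm{O}(n^{-1})$ contribution — no bootstrapping needed. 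You instead cite the bound $\max_i|U_i|+\Lambda_o^2+n^{-\epsilon_y}$ from Lemma \ref{lem_locallaw_H_lemma1}, but that bound was obtained via the resolvent relation $\mathcal{G}R_{\widetilde{H}_t^{(i)},jj}=\mathcal{G}R_{\widetilde{H}_t,jj}-\mathcal{G}R_{\widetilde{H}_t,ji}\mathcal{G}R_{\widetilde{H}_t,ij}/\mathcal{G}R_{\widetilde{H}_t,ii}$, whose division by $\mathcal{G}R_{\widetilde{H}_t,ii}$ was justified precisely by the $\mathbf{B}^c$ lower bound $|\mathcal{G}R_{\widetilde{H}_t,ii}|\sim|m(w)|\sim 1$. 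Your stated replacement rule offers only the upper bound $\|\mathcal{G}R_{\widetilde{H}_t}\|\le\eta^{-1}$, which does not substitute for that lower bound, so as written this step has a gap.

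The gap is fixable: for $\eta\gtrsim 1$ the spectral decomposition together with the $\eta_*$-regularity bound on $\lambda_1(R(\widetilde{H}))$ gives $\operatorname{Im}\mathcal{G}R_{\widetilde{H}_t,ii}\gtrsim\eta/(1+\eta^2)\gtrsim 1$, and hence $|\mathcal{G}R_{\widetilde{H}_t,ii}|\gtrsim 1$; alternatively one can use the Ward identity $\sum_k|\mathcal{G}R_{\widetilde{H}_t,ik}|^2=\eta^{-1}\operatorname{Im}\mathcal{G}R_{\widetilde{H}_t,ii}$ to rewrite the problematic sum as $n^{-1}\operatorname{Im}\mathcal{G}R_{\widetilde{H}_t,ii}/(\eta|\mathcal{G}R_{\widetilde{H}_t,ii}|)\le n^{-1}\eta^{-1}$, which avoids any lower bound entirely. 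Either fix is adequate, but the paper's Cauchy interlacing argument handles the whole trace difference at once (including the $k=i$ term, where the minor resolvent is simply $-w^{-1}$ rather than close to $\mathcal{G}R_{\widetilde{H}_t,ii}$) and is the cleaner route; I would recommend adopting it rather than patching the resolvent chain.
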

\begin{proof}
	For $w:=E+\mathrm{i}\eta$ with $\eta>C^{\prime}$, we have $|\mathcal{G}R_{\widetilde{H}_t,ij}(w)|\le \frac{1}{C^{\prime}}$. Similarly to the proof of Lemma \ref{lem_locallaw_H_lemma1}, one can use
	 the large deviation results for $\widetilde{h}_{ik}$ in Lemma \ref{lem_largedeviation_H} and the crude bound for $\eta\sim 1$ to obtain that $\Omega_{o}^c(w,\varphi^C)$ and $\Omega_{U}^c(w,\varphi^C)$ hold with high probability. It remains to consider the event $\Omega^c_{d}(w,\varphi^C)$, which can be handled similarly using the identity
	\begin{equation*}
		\sum_{k}\mathcal{G}R_{(\widetilde{H}_t^{(i)})^{*},kk}(w)-\sum_{k}\mathcal{G}R_{(\widetilde{H}_t^{(j)})^{*},kk}(w)=\mathrm{O}(\eta^{-1})
	\end{equation*}
	due to Cauchy's interlacing theorem of eigenvalues.
\end{proof}

For a function $u(w)$, define the following deviation function:
\begin{equation}\label{eq_def_deviance}
	D(u(w),w):=(u^{-1}(w)+\phi w u(w))-(m^{-1}(w)+\phi w m(w)).
\end{equation}
It is clear that $D(m(w),w)=0$. We aim to show $|D(m_{\widetilde{H}_t}(w),w)|\approx 0$, which further implies $\Lambda(w)=|m_{\widetilde{H}_t}(w)-m(w)|\approx 0$. The following lemma provides a precise bound for the deviation function $D(u(w),w)$ on the event $\mathsf{H}(w,\varphi^C)$, whose proof is the same as that of Lemma A.6 in \cite{bao2023smallest}, thus is omitted.
\begin{lemma}\label{lem_locallaw_H_lemma3}
	On the event $\mathsf{H}(w,\varphi^C)$, we have
	\begin{equation*}
		|D(m_{\widetilde{H}_t}(w),w)|\le \mathrm{O}(\varphi^{2C}\Theta^2)+\infty \mathbbm{1}_{\mathbf{B}(w)}.
	\end{equation*}
\end{lemma}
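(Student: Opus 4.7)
The plan is to dispatch the $\mathbbm{1}_{\mathbf{B}(w)}$ contribution trivially via the $\infty$ summand and focus on $\Omega^c(w,\varphi^C)\cap\mathbf{B}^c(w)$, on which Lemma~\ref{lem_locallaw_H_lemma1} yields simultaneously $\max_{i\in\mathrm{T}_r}|\mathcal{G}R_{\widetilde{H}_t,ii}(w)-m_{\widetilde{H}_t}(w)|\le\varphi^C\Theta$, $\max_{i\in\mathrm{T}_r}|U_i(w)|\le\varphi^C\Theta$, and $\Lambda_d(w)+\Lambda_o(w)\le(\log n)^{-1}$. Together with $|m(w)|\sim 1$ on $\mathrm{D}_\zeta$, this ensures that $|\mathcal{G}R_{\widetilde{H}_t,ii}(w)|$ and $|m_{\widetilde{H}_t}(w)|$ stay bounded away from zero and infinity for $i\in\mathrm{T}_r$.

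Starting from \eqref{eq_formula_GRHw}, substituting $|U_i(w)|\le\varphi^C\Theta$, and swapping $\mathcal{G}R_{(\widetilde{H}_t^{(i)})^*}$ for $\mathcal{G}R_{\widetilde{H}_t^*}$ at the cost of an $\mathrm{O}((n\eta)^{-1})=\mathrm{O}(\Theta^2)$ rank-one correction, I would use the exact identity between the companion Stieltjes transforms, $p\underline{m}_{\widetilde{H}_t}(w)=n m_{\widetilde{H}_t}(w)+(n-p)/w$, to express $n^{-1}\operatorname{tr}\mathcal{G}R_{\widetilde{H}_t^*}(w)$ as an affine function of $m_{\widetilde{H}_t}(w)$. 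The outcome is an identity of the form
\[
\mathcal{G}R_{\widetilde{H}_t,ii}(w)^{-1}=\Phi(m_{\widetilde{H}_t}(w),w)+\mathrm{O}(\varphi^C\Theta),\qquad i\in\mathrm{T}_r,
\]
where $\Phi(\cdot,w)$ is the deterministic function characterised by $\Phi(m(w),w)=m(w)^{-1}$ via the MP equation \eqref{eq_mp}.

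The decisive step is the averaging that upgrades the per-index linear bound $\varphi^C\Theta$ to the required quadratic $\varphi^{2C}\Theta^2$. Writing $\mathcal{G}R_{\widetilde{H}_t,ii}(w)=m_{\widetilde{H}_t}(w)(1+\delta_i)$, with $|\delta_i|\le\varphi^C\Theta$ for $i\in\mathrm{T}_r$ and the crude bound $|\delta_i|\lesssim(\kappa+\eta)^{-1}$ for the $\mathrm{O}(n^{1-\epsilon_y})$ indices in $\mathrm{I}_r$, a geometric expansion yields
\[
\mathcal{G}R_{\widetilde{H}_t,ii}(w)^{-1}=m_{\widetilde{H}_t}(w)^{-1}\bigl(1-\delta_i+\delta_i^2\bigr)+\mathrm{O}(\varphi^{3C}\Theta^3).
\]
Averaging over $i\in[n]$, the linear piece vanishes by the tautology $n^{-1}\sum_i\mathcal{G}R_{\widetilde{H}_t,ii}(w)=m_{\widetilde{H}_t}(w)$, leaving a second-moment remainder $n^{-1}\sum_i\delta_i^2$ of size $\varphi^{2C}\Theta^2$. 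Combining with the averaged per-index identity then produces $m_{\widetilde{H}_t}(w)^{-1}=\Phi(m_{\widetilde{H}_t}(w),w)+\mathrm{O}(\varphi^{2C}\Theta^2)$, and subtracting $m(w)^{-1}=\Phi(m(w),w)$ yields the desired $D(m_{\widetilde{H}_t}(w),w)=\mathrm{O}(\varphi^{2C}\Theta^2)$. The main obstacle I foresee is that the tautological cancellation operates only when the sum runs over all of $[n]$, so the $\mathrm{I}_r$ tail introduces a mismatch of order $|\mathrm{I}_r|n^{-1}(\kappa+\eta)^{-1}\le n^{-\epsilon_y}/(\kappa+\eta)$; this is exactly what the $n^{-\epsilon_\beta}/(\kappa+\eta)$ summand in the definition of $\Theta$ is engineered to absorb, provided $\epsilon_\beta$ is chosen small enough relative to $\epsilon_y$, and verifying this absorption cleanly is where the most care will be needed.
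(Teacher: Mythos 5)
Your plan correctly identifies the two Schur-complement inputs, correctly handles the $\mathbf{B}(w)$ trivial case, and correctly observes that the $\mathrm{I}_r$ tail in the tautology $n^{-1}\sum_{i}\delta_i=0$ produces a mismatch of size $n^{-\epsilon_y}/(\kappa+\eta)\lesssim n^{-2\epsilon_{\beta}}/(\kappa+\eta)^2\lesssim\Theta^2$, which is indeed absorbed because $\epsilon_{\beta}\le\epsilon_y/2$ by construction. The part of the argument that bounds $n^{-1}\sum_i\delta_i^2$ by $\varphi^{2C}\Theta^2$ is sound.

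However, there is a genuine gap in the decisive step. Writing $\epsilon_i:=\mathcal{G}R_{\widetilde{H}_t,ii}(w)^{-1}-\Phi(m_{\widetilde{H}_t}(w),w)$, which equals $-U_i(w)$ plus the $\mathrm{O}((n\eta)^{-1})$ rank-one interlacing correction, your derivation actually yields
\begin{equation*}
D(m_{\widetilde{H}_t}(w),w)\;=\;\frac{1}{n}\sum_i\epsilon_i\;-\;m_{\widetilde{H}_t}(w)^{-1}\cdot\frac{1}{n}\sum_i\delta_i^2\;+\;\mathrm{O}(\varphi^{3C}\Theta^3),
\end{equation*}
because averaging the two identities $\mathcal{G}R_{ii}^{-1}=m_{\widetilde{H}_t}^{-1}(1-\delta_i+\delta_i^2)+\mathrm{O}(\delta_i^3)$ and $\mathcal{G}R_{ii}^{-1}=\Phi(m_{\widetilde{H}_t},w)+\epsilon_i$ does not make $n^{-1}\sum_i\epsilon_i$ disappear. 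The tautology only kills the linear $\delta_i$ contribution; it says nothing about $\epsilon_i$. Lemma~\ref{lem_locallaw_H_lemma1} gives $|U_i|\le\varphi^C\Theta$ per index, so the naive bound on the averaged error is only $|n^{-1}\sum_i\epsilon_i|\lesssim\varphi^C\Theta$, and your argument therefore concludes $|D|\lesssim\varphi^C\Theta$, not the claimed $\varphi^{2C}\Theta^2$. Upgrading $|n^{-1}\sum_iU_i|$ from $\Theta$ to $\Theta^2$ is precisely the content of a fluctuation-averaging estimate, which exploits that each $U_i$ is, conditionally on $\mathcal{G}R_{(\widetilde{H}_t^{(i)})^*}$, a centered quadratic form in the row $\mathfrak{h}_i$, so the $U_i$ are nearly independent across $i$ and their average gains an extra power of $\Theta$. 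This step is not automatic and you neither state nor prove it; without it, the stated quadratic bound on $D$ is not established, and the downstream self-improvement in the proof of Lemma~\ref{lem_locallaw_H} would not reach the $(n\eta)^{-1}$ rate.
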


Recalling $w=\zeta/\sqrt{1-t}$ and $w=E+\mathrm{i}\eta$, we also collect the stability result of $D(m_{\widetilde{H}_t}(w),w)$ as follows, see Section 6 of \cite{pillai2014universality} for more details.
\begin{lemma}\label{lem_locallaw_H_lemma4}
	Let $C,C^{\prime}>0$ be constants, and define the event
	\begin{equation*}
		A\subset \bigcap_{\zeta\in \mathrm{D}_{\zeta}}\mathsf{H}(w,\varphi^C)\cap \bigcap_{\zeta\in \mathrm{D}_{\zeta},\eta=C^{\prime}}\mathbf{B}^{c}(w).
	\end{equation*}
	Suppose that, in $A$, we have $|D(m_{\widetilde{H}_t}(w),w)|\le \delta(w)+\infty\mathbbm{1}_{\mathbf{B}(w)}$, where $\delta:\mathbb{C}\mapsto \mathbb{R}_+$ is a continuous function, decreasing in $\mathrm{Im}w$ and $|\delta(w)|\le (\log n)^{-8}$. Then for all $w=\zeta/\sqrt{1-t}$ with $\zeta\in \mathrm{D}_{\zeta}$, we have
	\begin{equation*}
		|m_{\widetilde{H}_t}(w)-m(w)|\lesssim (\log n) \frac{\delta(w)}{\sqrt{|E-\lambda_+|+\eta+\delta(w)}}~~\text{in}~A,
	\end{equation*}
	and
	\begin{equation*}
		A\subset \bigcap_{\zeta\in \mathrm{D}_{\zeta}}\mathbf{B}^c(w).
	\end{equation*}
\end{lemma}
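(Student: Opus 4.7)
The plan is to treat $D(m_{\widetilde{H}_t}(w),w) = \mathrm{O}(\delta(w))$ as a perturbation of the MP self-consistent equation $D(m(w),w) = 0$, and to solve for $\Delta := m_{\widetilde{H}_t}(w) - m(w)$ by a quantitative inversion of $u \mapsto D(u,w)$ near $u = m(w)$. The starting point is the algebraic identity
\begin{equation*}
D(u,w) = (u-m)\Bigl(\phi w - \frac{1}{u m}\Bigr),
\end{equation*}
which is immediate from \eqref{eq_def_deviance}. The prefactor $\phi w - 1/m^2$, evaluated at $u = m$, encodes the degeneracy of the MP equation at its edges: using that $m$ satisfies $\phi w m^2 + (w - (1-\phi))m + 1 = 0$, a direct computation gives $|\phi w - 1/m^2| \sim \sqrt{\kappa + \eta}$ uniformly for $\zeta \in \mathrm{D}_\zeta$, consistent with the square-root edge behavior of Lemma \ref{lem_squareroot}.

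The first step is to expand $1/(u m)$ around $u = m$ and write $D(m_{\widetilde{H}_t}(w),w) = \mathrm{O}(\delta)$ as a quadratic equation for $\Delta$:
\begin{equation*}
\Delta\Bigl(\phi w - \tfrac{1}{m^2}\Bigr) + \frac{\Delta^2}{m^2(m+\Delta)} = \mathrm{O}(\delta(w)).
\end{equation*}
On $A \cap \mathbf{B}^c(w)$ we have $|\Delta| \le (\log n)^{-1}$, so $|m+\Delta| \sim |m| \sim 1$, and the two coefficients on the left are of orders $\sqrt{\kappa+\eta}$ and $1$ respectively. Solving the resulting quadratic inequality $|\Delta|\bigl(c\sqrt{\kappa+\eta} + C|\Delta|\bigr) \lesssim \delta$ yields, after absorbing numerical factors into the stated $(\log n)$ prefactor,
\begin{equation*}
|\Delta| \;\lesssim\; \frac{\delta(w)}{\sqrt{\kappa+\eta} + \sqrt{\delta(w)}} \;\sim\; \frac{\delta(w)}{\sqrt{\kappa + \eta + \delta(w)}},
\end{equation*}
which is the claimed bound.

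The step I expect to be the main obstacle is selecting the correct branch of this quadratic uniformly in $w$, and simultaneously upgrading $A \subset \mathsf{H}(w,\varphi^C)$ to $A \subset \mathbf{B}^c(w)$. The quadratic inequality admits two regimes: the genuine small branch $|\Delta| \lesssim \delta/\sqrt{\kappa+\eta}$, and a spurious large branch with $|\Delta| \gtrsim \sqrt{\kappa+\eta}$. I would rule out the large branch by a continuity argument in $\eta$: fix $E$ and track $\eta \mapsto \Lambda(E+\mathrm{i}\eta)$, which is Lipschitz with polynomial constant. Lemma \ref{lem_locallaw_H_lemma2} initializes the argument at $\eta = C'$ with $\Lambda$ already on the small branch and $\mathbf{B}^c$ holding. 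Decreasing $\eta$ continuously, any transition to the large branch would force $|\Delta|$ to pass through a forbidden interval of size $\sim \sqrt{\kappa+\eta} - \delta/\sqrt{\kappa+\eta}$; but the smallness hypothesis $|\delta(w)| \le (\log n)^{-8}$ ensures such a transition cannot occur without $|\Delta|$ first crossing $(\log n)^{-1}$, which would contradict $A \subset \bigcap_{\zeta \in \mathrm{D}_\zeta}\mathsf{H}(w,\varphi^C)$. This simultaneously pins $\Lambda$ to the small branch and yields $A \subset \bigcap_{\zeta \in \mathrm{D}_\zeta}\mathbf{B}^c(w)$.

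Beyond the algebra, the only technical point to verify is that the lower bound $|\phi w - 1/m^2| \gtrsim \sqrt{\kappa+\eta}$ holds uniformly on $\mathrm{D}_\zeta/(1-t)$, both inside and outside the spectrum, and that the Lipschitz constants in the bootstrap are compatible with the $(\log n)^{-8}$ slack in the hypothesis. Both follow from the square-root edge behavior in Lemma \ref{lem_squareroot} together with standard bounds on $m'(w)$ and $\zeta_t'(z)$, as worked out in analogous stability lemmas (compare Section~6 of \cite{pillai2014universality}), which we would invoke directly.
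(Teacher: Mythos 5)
Your proposal follows the same route the paper implicitly takes: the paper's proof consists of a single sentence referring to Lemma~A.7 of~\cite{bao2023smallest} and, through Lemma~\ref{lem_locallaw_H_lemma3}, to the stability machinery of Section~6 of~\cite{pillai2014universality}, which is precisely the factored form of $D(u,w)$, the quadratic inequality for $\Delta$, and the continuity-in-$\eta$ bootstrap to pin $\Lambda$ on the small branch that you describe. Your algebra is correct — the identity $D(u,w)=(u-m)(\phi w-1/(um))$ and the expansion $\Delta(\phi w-1/m^2)+\Delta^2/(m^2(m+\Delta))$ both check out, and the coefficient $|\phi w-1/m^2|\sim\sqrt{\kappa+\eta}$ follows from the double-root structure of the MP equation at $\lambda_\pm$ — so the only things left implicit (the uniform lower bound on this coefficient over $\mathrm{D}_\zeta$, and the Lipschitz/lattice details of the bootstrap) are exactly the things the paper also defers to the cited references; and the $\log n$ prefactor, which your algebra does not naturally produce, is merely a generous constant that the paper itself drops the first time the lemma is invoked.
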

\begin{proof}
	The proof is mostly similar to Lemma A.7 of \cite{bao2023smallest} and thus we do not reproduce here.
\end{proof}

Thereafter, the proof of \eqref{eq_bound_GRH} follows from similar arguments leading to Proposition A.8 in \cite{bao2023smallest}, so we collect the main steps here for completeness.
In the following, we aim to show the event
\begin{equation*}
	\bigcap_{\zeta\in \mathrm{D}_{\zeta}}\{\Lambda_o(w)+\Lambda_{d}(w)\le \varphi^C \left(\sqrt{\frac{\mathrm{Im}m(w)}{n\eta}}+\frac{n^{-\epsilon_{\beta}}}{\kappa+ \eta}+\frac{1}{n\eta}\right)\frac{1}{\sqrt{k+\eta}}\}
\end{equation*}
holds with high probability. Consider the event
\begin{equation*}
	A=A_0\cap \bigcap_{\zeta\in \mathrm{D}_{\zeta},\eta=C^{\prime}}\mathbf{B}^{c}(w),~\text{where}~A_0=\bigcap_{\zeta\in \mathrm{D}_{\zeta}}\Xi(w,\varphi^C).
\end{equation*}
Thus $A$ holds with high probability by Lemma \ref{lem_locallaw_H_lemma1} and Lemma \ref{lem_locallaw_H_lemma2}. For $\zeta\in \mathrm{D}_{\zeta}$, we have
\begin{equation*}
	\Theta\lesssim \varphi^C \left(\sqrt{\frac{\mathrm{Im} m(w)+1}{n\eta}}+\frac{n^{-\epsilon_{\beta}}}{\kappa+ \eta}\right),
\end{equation*}
which is decreasing in $\eta$. Thereafter, by Lemmas \ref{lem_locallaw_H_lemma3} and \ref{lem_locallaw_H_lemma4}, on the event $A$,
\begin{equation*}
	\Lambda(w)\lesssim \frac{\delta(w)}{\sqrt{|E-\lambda_+|+\eta+\delta(w)}},~\text{with}~\delta(w):=\varphi^{2C} \left(\frac{\mathrm{Im} m(w)+1}{n\eta}+\frac{n^{-2\epsilon_{\beta}}}{\kappa^2+ \eta^2}\right),
\end{equation*}
where $A\subset \bigcap_{\zeta\in \mathrm{D}_{\zeta}}\mathbf{B}^c(w)$. This further implies that $A\in \Omega^c(w,\varphi^C)$ for any $w=w(\zeta)$ with $\zeta\in \mathrm{D}_{\zeta}$. Thus, one can check that
\begin{equation*}
	\Lambda(w)\prec (n\eta)^{-1/4}+\frac{n^{-\epsilon_{\beta}}}{\kappa+\eta}  .
\end{equation*}
Using the ``self-improving'' trick gives $\Lambda(w)\prec n^{-\epsilon_{\beta}}(\kappa+\eta)^{-1}+(n\eta)^{-1}$. Thus, we conclude that the following estimates hold with high probability,
\begin{equation*}
	\Lambda_o(w)\le \varphi^C \left(\sqrt{\frac{\mathrm{Im}m(w)}{n\eta}}+\frac{n^{-\epsilon_{\beta}}}{\kappa+ \eta}+\frac{1}{n\eta}\right)
\end{equation*}
and
\begin{equation*}
	\Lambda_d(w)\le \max_{i\in \mathrm{T}_r}|\mathcal{G}R_{\widetilde{H}_t,ii}(w)-m_{\widetilde{H}_t}(w)|+\Lambda(w)\le \varphi^C \left(\sqrt{\frac{\mathrm{Im}m(w)}{n\eta}}+\frac{n^{-\epsilon_{\beta}}}{\kappa+ \eta}+\frac{1}{n\eta}\right) \frac{1}{\sqrt{\kappa+\eta}}.
\end{equation*}
Hence we have finished the proof of \eqref{eq_bound_GRH}.   \qed

The following large deviation result for the self-normalized heavy-tailed part is an extension of Corollary 25 of \cite{aggarwal2019bulk}, which relies on the improved moment estimates for the self-normalized random variable via the cumulant expansion method (cf. Lemma \ref{lem_oddmoment_est}).
\begin{lemma}[Large deviation for $\widetilde{h}_{ij}$]\label{lem_largedeviation_H}
	For $i,j\in \mathrm{T}_{r}$, let $\{A_k\}_{1\le k\le p}$ and $\{A_{kl}\}_{1\le k,l\le p}$ be sequences of real numbers. Recall $\epsilon_h$ in Definition \ref{def_controlparameter}. There exists a constant $\nu$ such that
\begin{equation}\label{eq_hlargedeviation1}
    \mathbb{P}\left[\left|\sum_{1\le k\le p}\widetilde{h}_{ik}A_{k}\right|\ge \varphi^{C}(n^{-\epsilon_{h}}\max_{k}|A_{k}|+(n^{-1}\sum_{k}|A_{k}|^2)^{1/2})\right]\le \exp (-\nu (\log n)^{\xi}),
\end{equation}
\begin{equation}\label{eq_hlargedeviation2}
    \mathbb{P}\left[\left|\sum_{1\le k\ne l\le p}\widetilde{h}_{ik}A_{kl}\widetilde{h}_{jl}\right|\ge \varphi^{C}(n^{-\epsilon_{h}}\max_{k\ne l}|A_{kl}|+n^{-1}(\sum_{k\ne l}|A_{kl}|^2)^{1/2})\right]\le \exp (-\nu (\log n)^{\xi}),
\end{equation}
and
\begin{equation}\label{eq_hlargedeviation3}
    \mathbb{P}\left[\left|\sum_{1\le k,l\le p}\widetilde{h}_{ik}A_{kl}\widetilde{h}_{jl}\right|\ge \varphi^{C}(n^{-\epsilon_{h}}\max_{k,l}|A_{kl}|+n^{-1}(\sum_{k,l}|A_{kl}|^2)^{1/2})\right]\le \exp (-\nu (\log n)^{\xi})
\end{equation}
for any $2\le \xi\le \log \log n$.
\end{lemma}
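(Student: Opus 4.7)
Throughout, I will condition on the intersection of the events $\{\Psi\text{ well configured}\}$, $\{\Pi\text{ well configured}\}$, and $\bigcap_{k=1}^{p}\{\rho_k^2\ge \epsilon_{2,2}\,n\}$, each of which holds with very high probability by Lemmas \ref{lem_priorest_x} and \ref{lem_wellconfigured}. The decisive structural reduction comes from the hypothesis $i,j\in\mathrm{T}_r$: on this event the $i$-th and $j$-th rows of $\Psi$ vanish identically, so $\widetilde{H}_{ik}=M_{ik}$ (no heavy contribution) and hence $|\widetilde{H}_{ik}|\le n^{1/2-\epsilon_h}$ deterministically. Combined with $\rho_k^{-1}\le (\epsilon_{2,2}n)^{-1/2}$ this yields the uniform pointwise bound
\[
|\widetilde{h}_{ik}|\le C n^{-\epsilon_h},\qquad k\in[p].
\]
The second key observation is that $\widetilde{h}_{ik}$ depends only on column $k$ of the resampled matrix $X^{*}$ (both $\widetilde{H}_{ik}$ and the normalizer $\rho_k$ live there), so the family $\{\widetilde{h}_{ik}\}_{k=1}^{p}$ is mutually independent for every fixed $i$, and $\widetilde{h}_{ik}$ is independent of $\widetilde{h}_{jl}$ whenever $k\ne l$ (for arbitrary $i,j$). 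Finally a straightforward truncation/cumulant-expansion argument, identical to the one proving Lemma \ref{lem_oddmoment_est}, gives $\mathbb{E}|\widetilde{h}_{ik}|^2\le Cn^{-1}$, $|\mathbb{E}\widetilde{h}_{ik}|=o(n^{-1})$, and $|\mathbb{E}(\widetilde{h}_{ik}\widetilde{h}_{jk})|=o(n^{-2})$ for $i\ne j$, $\alpha\in(2,4]$.

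Given these three ingredients the proof becomes bookkeeping in independent random variables. For the linear bound \eqref{eq_hlargedeviation1} I would apply Bernstein's inequality to $\sum_{k}(\widetilde{h}_{ik}-\mathbb{E}\widetilde{h}_{ik})A_k$; the drift $|\sum_k\mathbb{E}\widetilde{h}_{ik}A_k|\lesssim n^{-1/2}(\sum_k|A_k|^2)^{1/2}$ is harmless by Cauchy--Schwarz. The resulting tail $\exp(-c\min(t^2/v,t/B))$ with $v=n^{-1}\sum|A_k|^2$ and $B=n^{-\epsilon_h}\max|A_k|$, evaluated at $t=\varphi^{C}(B+v^{1/2})$, produces an exponent of order $(\log n)^{2C\log\log n}$, which dominates $\nu(\log n)^{\xi}$ for every $\xi\le\log\log n$ once $C$ is chosen large. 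For the quadratic bounds \eqref{eq_hlargedeviation2}--\eqref{eq_hlargedeviation3} I split the form into the diagonal piece $\sum_{k}\widetilde{h}_{ik}A_{kk}\widetilde{h}_{jk}$ (present only in \eqref{eq_hlargedeviation3}), which is a sum of independent products of two bounded variables and is again dispatched by Bernstein, and the off-diagonal piece $\sum_{k\ne l}\widetilde{h}_{ik}A_{kl}\widetilde{h}_{jl}$, which by the independence structure above is a genuine bilinear (when $i\ne j$) or quadratic (when $i=j$) form in independent bounded sequences. This off-diagonal piece is controlled by a Hanson--Wright-type high-moment computation: expand $\mathbb{E}|\cdot|^{2s}$ as a sum over partitions of the $4s$ index slots, bound every class using $|\widetilde{h}_{ik}|\le Cn^{-\epsilon_h}$ and $\mathbb{E}|\widetilde{h}_{ik}|^2\le Cn^{-1}$, and apply Markov with $s\asymp(\log n)^{\xi}$.

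The principal technical obstacle is the asymmetry of $\xi$. Under symmetry, singleton and triple blocks in the partition expansion would vanish identically and the Hanson--Wright bound would be the classical one; without symmetry, partitions containing such blocks are nonzero, and one must argue that their contribution is strictly smaller than that of the pure pair partitions. This is exactly the role played by the sharp odd-moment estimates of Lemma \ref{lem_oddmoment_est}: $|\mathbb{E}\widetilde{h}_{ik}|=o(n^{-1})$ and $|\mathbb{E}(\widetilde{h}_{ik}\widetilde{h}_{jk})|=o(n^{-2})$ guarantee that every partition containing an uncentered block loses an extra factor of $n^{-\delta}$ relative to the pairing partition of the same length, and the resulting geometric series in the moment expansion is summable. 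In this way the overall concentration scale remains $(n^{-1}\sum|A_{kl}|^2)^{1/2}+n^{-\epsilon_h}\max|A_{kl}|$, as required, and the bound $\exp(-\nu(\log n)^{\xi})$ follows by Markov's inequality after optimizing in $s$.
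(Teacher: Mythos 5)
Your proposal is correct in substance and uses essentially the same ingredients as the paper: the deterministic bound $|\widetilde h_{ik}|\lesssim n^{-\epsilon_h}$ coming from $i,j\in\mathrm{T}_r$ (which kills the heavy component so $\widetilde H_{ik}=M_{ik}$), the column-wise independence of $\widetilde h_{ik}$, and the cumulant-expansion moment bounds $|\mathbb{E}\widetilde h_{ik}|\lesssim n^{-1-\epsilon_h}$, $\mathbb{E}|\widetilde h_{ik}|^{q}\lesssim n^{-1-(q-2)\epsilon_h}$, $|\mathbb{E}(\widetilde h_{ik}\widetilde h_{jk})|\lesssim n^{-2}$. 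Where you differ is mainly packaging: the paper proves \eqref{eq_hlargedeviation1}--\eqref{eq_hlargedeviation2} by invoking Corollary 25 of \cite{aggarwal2019bulk} directly, then handles only the diagonal piece of \eqref{eq_hlargedeviation3} by a separate high-moment argument; you instead run Bernstein for the linear sum and the diagonal piece and spell out a Hanson--Wright-type moment expansion for the off-diagonal piece, which is a more self-contained and slightly more elementary route that delivers the same tail $\exp(-\nu(\log n)^{\xi})$ after optimizing the moment order at $s\asymp(\log n)^{\xi}$. One point should be stated more carefully: for $i\ne j$ the two row sequences $\{\widetilde h_{ik}\}_{k}$ and $\{\widetilde h_{jl}\}_{l}$ are \emph{not} independent of each other, since $\widetilde h_{ik}$ and $\widetilde h_{jk}$ share the normalizer $\rho_k$; so the off-diagonal sum is not "a genuine bilinear form in two independent bounded sequences", and the moment expansion must allow blocks that mix $i$-slots and $j$-slots at the same column index. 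You in fact hold exactly the estimate required to close this, namely $|\mathbb{E}(\widetilde h_{ik}\widetilde h_{jk})|\lesssim n^{-2}$ (which is $o(n^{-2})$ with a slowly varying gain, and vanishes under symmetry of $\xi$), so the extra mixed pairings are dominated by the pure-pair partitions; this is the same cancellation the paper isolates when it singles out the diagonal part of \eqref{eq_hlargedeviation3}. Thus the gap is one of phrasing, not of content.
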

\begin{proof}
Following the similar argument leading to Corollary 25 of \cite{aggarwal2019bulk}, for $i,j\in \mathrm{T}_{r}$, we have $t\sim n^{-2\epsilon_{l}}$ and $|\widetilde{h}_{ik}|\lesssim M_{ik}\rho_k^{-1}$. Applying the similar arguments resulting in Lemma \ref{lem_oddmoment_est}, one can get, for $i,j\in \mathrm{T}_r$,
\begin{equation*}
    |\mathbb{E}(\widetilde{h}_{ik})|\lesssim |\mathbb{E}(M_{ik}\rho_k^{-1})|\lesssim \kappa_{1,m}\mathbb{E}(\rho_k^{-1})+\kappa_{2,m}\mathbb{E}(M_{ik}\rho_{k}^{-3})+\kappa_{3,m}\mathbb{E}(\rho_{k}^{-3})\lesssim n^{-1/2-(1/2-\epsilon_h)(\alpha-1)}\lesssim n^{-1-\epsilon_h}
\end{equation*}
by the cumulant expansion method, where we used the simple bounds $\kappa_{1,m}\lesssim n^{(1/2-\epsilon_h)(-\alpha+1)}$ and $\kappa_{3,m}\lesssim n^{(1/2-\epsilon_h)(3-\alpha)_+}$, and the definition of control parameters in Definition \ref{def_controlparameter}. For general integer $2\le q\le \varphi^C$ and constant $C>0$, we have
	\begin{equation*}
		\mathbb{E}(|\widetilde{h}_{ik}|^{q})\lesssim \mathbb{E}(|\widetilde{h}_{ik}|^{2})n^{-(q-2)\epsilon_h} \lesssim C^{q}n^{-1}n^{-(q-2)\epsilon_h}
	\end{equation*}
 by $|M_{ik}|\le n^{1/2-\epsilon_h}$ and the high probability bound $\rho_k^{-1}\lesssim n^{-1/2}$ in \eqref{eq_rhoj_highpro}. Thus, \eqref{eq_hlargedeviation1} and \eqref{eq_hlargedeviation2} follow from Corollary 25 of \cite{aggarwal2019bulk}.

  Now we proceed to \eqref{eq_hlargedeviation3}, which is more involved than \cite{aggarwal2019bulk} due to the dependence in our setting. Define $A_{d}:=\max_{k}|A_{kk}|$ and $A_{o}:=\max_{k\ne l}|A_{kl}|$. Note
	\begin{equation}\label{eq_concentration_hkl0}
		\left|\sum_{k,l}\widetilde{h}_{ik}A_{kl}\widetilde{h}_{jl}\right|\le \left|\sum_{k}\widetilde{h}_{ik}A_{kk}\widetilde{h}_{jk}\right|+\left|\sum_{k\ne l}\widetilde{h}_{ik}A_{kl}\widetilde{h}_{jl}\right|,
	\end{equation}
	which represents the diagonal part and the off-diagonal part, respectively. Notice the second part has been estimated in \eqref{eq_hlargedeviation2}. For the first part, when $i, j \in \mathrm{T}_r$ and $i\ne j$, the random variables $\widetilde{h}_{ik}$ and $\widetilde{h}_{jk}$ are dependent due to the same self-normalization factor $\rho_k$. Invoking Lemma \ref{lem_moment_rates}, we have $|\widetilde{h}_{ik}\widetilde{h}_{jk}|\le C|m_{ik}m_{jk}|\rho_k^{-2}\le Cn^{-2\epsilon_h}$ by \eqref{eq_rhoj_highpro}. Moreover, one can derive that
 \begin{equation*}
     |\mathbb{E}(\widetilde{h}_{ik}\widetilde{h}_{jk})|\lesssim n^{-2}\lesssim n^{-1-2\epsilon_h}
 \end{equation*}
via the cumulant expansion method (cf. \eqref{eq_m1m2estimate}). Thereafter, for the diagonal part, applying the first inequality and noting the random variables $\widetilde{h}_{ik}\widetilde{h}_{jk}$ are independent over $1\le k\le p$, we have
	\begin{equation*}
		\mathbb{E}\left(\left|\sum_{k}\widetilde{h}_{ik}A_{kk}\widetilde{h}_{jk}\right|^{2q}\right)\le (Cq)^{2q}\left(\frac{A_d}{n^{2\epsilon_{h}}}+\left(\frac{1}{n^2}\sum_{k}|A_{kk}|^2\right)^{1/2}\right)^{2q}\le (Cq)^{2q}A_dn^{-2\epsilon_{h}}.
	\end{equation*}
This implies
	\begin{equation}\label{eq_concentration_hkl1}
		\mathbb{P}\left(\left|\sum_{k}\widetilde{h}_{ik}A_{kk}\widetilde{h}_{jk}\right|\ge \varphi^C n^{-2\epsilon_{h}}A_d\right)\le \exp(-\nu (\log n)^{\xi})
	\end{equation}
	by a high moment Markov inequality for $q=\nu (\log n)^{\xi}$. Therefore, combining \eqref{eq_hlargedeviation2}, \eqref{eq_concentration_hkl0} and \eqref{eq_concentration_hkl1} gives the desired result.
\end{proof}

\subsubsection{Proof of Theorem \ref{thm_locallaw_gdm_average}}\label{app_sec_prf_locallaw_gdm_average}
In this subsection, we focus on the proof of the averaged local law. The argument is mostly similar to the proof of \cite[Theorem 2.7]{ding2022edge}. We highlight the main steps and differences here. We divide our proof into two parts according to whether $\kappa\le Ct^2$ or $\kappa>Ct^2$ for some $C>0$.
The following lemma gives the local laws for $\kappa\le Ct^2$.
\begin{lemma}\label{lem_app_averagelocallaw_D1}
	Suppose the assumptions in Theorem \ref{thm_locallaw_gdm_average} hold. Then, \eqref{eq_locallaw_gdm_average_in} holds uniformly on $z\in\mathbf{D}_1$ with $\kappa\le Ct^2$.
\end{lemma}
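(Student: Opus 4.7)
The plan is to derive an approximate self-consistent equation for $m_{n,t}(z)$ that tracks the defining equation \eqref{eq_def_LSD1_gdm}$/$\eqref{eq_def_LSD2_gdm} of $m_t(z)$, and then close the argument by a stability estimate for the subordination map $\Phi_t$ on $\mathrm{D}_\zeta$. Since $z\in\mathbf{D}_1$ with $\kappa\le C t^2$ and $\eta\sim1$, the target rate $1/(n\eta)\sim1/n$ is the optimal averaged bound and all stability constants remain of order one.

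First, working on the $\mathcal{I}\times\mathcal{I}$ linearisation $\mathcal{L}(\widetilde{Y}_t,z)$ from \eqref{eq_def_resolvent_L} with $\widetilde{Y}_t=\sqrt{t}W+\mathbf{E}$, I will use Lemma \ref{lem_resolvent} to write the diagonal blocks $(\mathcal{L}_{[ii]})^{-1}$ and $(\mathcal{L}_{\mu\mu})^{-1}$ as deterministic pieces plus the fluctuation terms $Z_{[i]}$ and $Z_\mu$. Averaging the Schur-complement identities over $i$ and $\mu$, together with the preliminary bounds $\|Z_{[i]}\|+|Z_\mu|+\Lambda_t^o\prec t^{1/2}n^{-1/2}$ from the $\eta\sim1$ branch of Lemma \ref{lem_basic_locallaw_preliminarybound_Z}, produces an approximate equation of the form
\begin{equation*}
m_{n,t}(z)=\frac{1}{n}\sum_{i=1}^n\frac{1+\phi t\,m_{n,t}(z)}{\lambda_i(R(\widetilde{H}))-\zeta_t(z)}+\mathcal{E}(z),
\end{equation*}
with $|\mathcal{E}(z)|\prec1/n$, by combining the isotropic large deviation for the Gaussian columns of $W$ with the rectangular Schur block identities in Lemma \ref{lem_resolvent}. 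Because the right-hand side is precisely $mR_{\widetilde{H}}(\zeta_t(z))$-times-$(1+\phi tm_{n,t})$, the intermediate entrywise law for $\widetilde{H}$ (Lemma \ref{lem_locallaw_H}) evaluated at $w=\zeta_t(z)/(1-t)\in\mathrm{D}_\zeta$ lets me substitute $mR_{\widetilde{H}}(\zeta_t)=m^{(t)}(\zeta_t)+\mathrm{O}_\prec(1/n)$, which transfers the averaged bound from the heavy-tailed component to the GDM.

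The second step is a stability argument. Define the deviation
\begin{equation*}
\mathsf{D}(u,z):=u-\frac{1}{\phi t}\Bigl(1-\frac{1}{1+\phi tu}\Bigr)\cdot\bigl(1+\phi tu\bigr)^{-1}\cdot\bigl[\text{subordination version of \eqref{eq_def_LSD2_gdm}}\bigr],
\end{equation*}
so that $\mathsf{D}(m_t(z),z)=0$ and, by the argument above, $\mathsf{D}(m_{n,t}(z),z)=\mathrm{O}_\prec(1/n)$. Since $\kappa\le C t^2$ and $\eta\sim1$, Lemmas \ref{lem_prior_locallaw1}, \ref{lem_prior_locallaw3} and \ref{lem_est_xit&xi+} yield $|\Phi_t'(\zeta_t)|\sim1$, so the Jacobian of $\mathsf{D}(\cdot,z)$ at $m_t(z)$ is bounded below by a positive constant; inverting the implicit function yields $|m_{n,t}(z)-m_t(z)|\prec1/n\lesssim1/(n\eta)$. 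A standard lattice argument in $z\in\mathbf{D}_1$ plus continuity in $z$ upgrades this to a uniform bound.

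The main obstacle is not the stability itself (which is benign on $\mathbf{D}_1$) but the passage from entrywise control of $R(\widetilde{H})$ to the averaged error at rate $1/n$. The entrywise law in Lemma \ref{lem_locallaw_H} only delivers $\Psi_0(w)/\sqrt{\kappa+\eta}$ on $\mathrm{T}_r$-indices, while on the $\mathrm{o}(n)$ ``bad'' indices the bound is merely constant. The key task is therefore to check that, after summing against $(\lambda_i(R(\widetilde{H}))-\zeta_t)^{-1}$ with the integral estimate of Lemma \ref{lem_prior_locallaw1}, the contribution of the bad indices is absorbed thanks to the well-configuredness of $\Psi,\Pi$ (which bounds $|\mathrm{I}_r|+|\mathrm{I}_c|\le n^{1-\epsilon_y}+n^\beta\ll n\eta$) and that the column-dependence introduced by $(\operatorname{diag}S)^{-1/2}$ does not reinflate the fluctuation $\mathcal{E}(z)$ beyond order $1/n$; here the odd-moment sharpening of Lemma \ref{lem_oddmoment_est} together with Lemma \ref{lem_largedeviation_H} is what makes the concentration go through.
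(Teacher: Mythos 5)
Your proposal misses the mechanism that produces the optimal rate, which is the heart of the paper's proof. The approximate self-consistent equation you set up has an error $\mathcal{E}(z)$ that is ultimately driven by $[Z]:=n^{-1}\sum_{i\in\mathcal{I}_1}\Pi_{[ii]}Z_{[i]}\Pi_{[ii]}$, and each individual $Z_{[i]}$ only satisfies $\|Z_{[i]}\|\prec t^{1/2}n^{-1/2}$ (Lemma \ref{lem_basic_locallaw_preliminarybound_Z}). Averaging $n$ such terms naively gives $t^{1/2}n^{-1/2}\gg 1/n$, so the claim $|\mathcal{E}(z)|\prec 1/n$ does not follow from the inputs you cite. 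The paper closes this gap in two stages: first a \emph{weak} averaged local law (Lemma \ref{lem_app_weakaveragelocallaw}) establishing only $\Lambda_t\prec\Psi_0(z)$ or $\Lambda_t\prec t^{2/3}(n\eta)^{-1/3}$, which is needed to verify the event $\Xi_1$ on which the stability expansion of Lemma \ref{lem_stability_Phi(t)} holds; and then a \emph{fluctuation averaging} step (Lemma \ref{lem_app_fluctuationaveraging}) exploiting cancellation among the Gaussian fluctuations to show $|[Z]|\prec(n\eta)^{-1}(\operatorname{Im}m_t+\omega)/(t+\operatorname{Im}m_t+\omega)$. Plugging this into the finer version of the stability lemma is what finally yields $|m_{n,t}-m_t|\prec 1/(n\eta)$. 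Your proposal skips both the bootstrap and the fluctuation averaging, so the central step is unproved.

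A secondary issue is the attempted substitution $mR_{\widetilde{H}}(\zeta_t)=m^{(t)}(\zeta_t)+\mathrm{O}_\prec(1/n)$. This is both unnecessary and false at that rate. The averaged local law for the GDM is \emph{conditional on} $\widetilde{H}$, and it compares $m_{n,t}(z)$ with $m_t(z)$, the latter being defined by the self-consistent equation \eqref{eq_def_LSD1_gdm} in terms of the empirical spectrum of $R(\widetilde{H})$ itself. Nothing in the statement requires replacing $mR_{\widetilde{H}}$ by the deterministic $m^{(t)}$; and even if it did, Lemma \ref{lem_locallaw_corM} only gives an error of order $n^{-\epsilon_h}+n^{-\epsilon_\alpha}\eta^{-2}+(n\eta)^{-1}$, not $1/n$. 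The only input the proof actually needs from $\widetilde{H}$ is the $\eta_*$-regularity of $R(\widetilde{H})$ (Proposition \ref{prop_etaregular_corH}), which enters through the spectral integral estimates of Lemmas \ref{lem_prior_locallaw1}--\ref{lem_prior_locallaw3}, not through the entrywise law of Lemma \ref{lem_locallaw_H}. Correspondingly, your ``main obstacle'' paragraph misplaces the difficulty: the good/bad index splitting and the column-dependence from $(\operatorname{diag}S)^{-1/2}$ are concerns for the entrywise laws in Theorems \ref{thm_locallaw_gdm_entrywise} and \ref{thm_greenfuncomp_entrywise_uniformbound}, whereas the obstruction in the averaged law is sharpening the Gaussian concentration from $\sqrt{t/n}$ to $1/n$ via fluctuation averaging.
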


Before proving Lemma \ref{lem_app_averagelocallaw_D1}, we need the following stability properties of $\Phi_t(\zeta)$,
\begin{lemma}\label{lem_app_averagelocallaw_D1_stable1}
	For any variable $\vartheta\in\mathbb{C}_{+}$ satisfying $ |\vartheta-m_t|\le (\log n)^{-2}(t+\operatorname{Im}m_t(z))$
	and $u\equiv u(\vartheta):=(1+\phi t\vartheta)^2z-(1+\phi t\vartheta)t(1-\phi)$, we have
	\begin{equation*}
		|\Phi_t^{\prime}(u)|\sim\min\{1,\frac{\sqrt{\kappa+\eta}}{t}\},
	\end{equation*}
	and for $k=2,3$,
	\begin{equation*}
		|\Phi_t^{(k)}(u)|\lesssim\frac{t^2+t\sqrt{\kappa+\eta}}{(t^2+\operatorname{Im}\zeta_t)^k}.
	\end{equation*}
	Moreover, if $\kappa+\eta\le c_1t^2$ for some small enough constant $c_1>0$, then we have $|\Phi_t^{(2)}(u)|\sim t^{-2}$.
\end{lemma}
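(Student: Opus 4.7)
The plan is to treat these estimates as perturbative extensions of the corresponding estimates at $\zeta = \zeta_t(z)$, exploiting the key observation that $u(m_t) = \zeta_t$ by \eqref{eq_zetarandom}/\eqref{eq_def_zetat}. The three inputs I will combine are: smallness of $u-\zeta_t$, the integral bound of Lemma \ref{lem_prior_locallaw1} transferred from $\zeta_t$ to $u$, and the sharp lower bound on $\Phi_t'(\zeta_t)$ furnished by Lemma \ref{lem_est_xit&xi+}.

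First I would locate $u(\vartheta)$. Direct expansion gives
$$u(\vartheta) - \zeta_t = \phi t(\vartheta - m_t)\bigl[(2+\phi t(\vartheta+m_t))z - t(1-\phi)\bigr],$$
so $|u-\zeta_t|\lesssim t|\vartheta-m_t|\le (\log n)^{-2}(t^2 + t\operatorname{Im} m_t(z))$. By Lemma \ref{lem_squareroot}, $\operatorname{Im} m_t(z)\sim\sqrt{\kappa+\eta}$, and from Lemma \ref{lem_prior_locallaw1} one has $t^2+\operatorname{Im}\zeta_t\gtrsim t^2+t\sqrt{\kappa+\eta}$. Consequently $|u-\zeta_t|\le (\log n)^{-2}(t^2+\operatorname{Im}\zeta_t)$, whence $\operatorname{Im} u\sim\operatorname{Im}\zeta_t$ and $u$ lies at distance $\gtrsim t^2+\operatorname{Im}\zeta_t$ from $\operatorname{supp}(\mu_n^{\widetilde{H}})$. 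The integral bound of Lemma \ref{lem_prior_locallaw1} therefore extends to $u$:
$$\int \frac{d\mu_n^{\widetilde{H}}(x)}{|x-u|^a}\lesssim \frac{t+\sqrt{\kappa+\eta}}{(t^2+\operatorname{Im}\zeta_t)^{a-1}},\qquad a\ge 2.$$

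Next I would differentiate \eqref{eq_psizetarandom} to obtain explicit polynomial expressions for $\Phi_t^{(k)}(\zeta)$, $k=1,2,3$, in $\zeta$, $t$, $\phi$, and $m_{n,0}^{(j)}(\zeta)$ for $0\le j\le k$. Since $m_{n,0}^{(j)}(u)=j!\int d\mu_n^{\widetilde{H}}(x)/(x-u)^{j+1}$ and $|1-\phi t m_{n,0}(u)|\sim 1$ (from $1-\phi t m_{n,0}(\zeta_t)=b_t^{-1}$, Lemma \ref{lem_existenceuniqueness}(iv), and continuity along $[\zeta_t,u]$), the previous integral bound gives $|m_{n,0}^{(j)}(u)|\lesssim (t+\sqrt{\kappa+\eta})/(t^2+\operatorname{Im}\zeta_t)^j$. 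Substituting yields the stated upper bound $|\Phi_t^{(k)}(u)|\lesssim (t^2+t\sqrt{\kappa+\eta})/(t^2+\operatorname{Im}\zeta_t)^k$ for $k=2,3$. In the regime $\kappa+\eta\le c_1 t^2$ one has $\operatorname{Im}\zeta_t\sim t^2$, so the upper bound becomes $\lesssim t^{-2}$; the matching lower bound $|\Phi_t''(u)|\gtrsim t^{-2}$ follows by identifying the dominant contribution $2\zeta(1-\phi t m_{n,0})\phi t\, m_{n,0}''(u)$ in $\Phi_t''$ and invoking Lemma \ref{lem_prior_locallaw3} to get $|m_{n,0}''(u)|\sim t^{-3}$ (obtained by unwinding $m_t = m_{n,0}/(1-\phi t m_{n,0})$ and using continuity along $[\zeta_t,u]$); the remaining terms $(1-\phi t m_{n,0})\phi t m_{n,0}'$ and $\zeta\phi^2 t^2(m_{n,0}')^2$ are $O(1)$ and cannot cancel the leading $t^{-2}$ term.

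Finally, for the first derivative, $\Phi_t(\zeta_t(z))=z$ combined with Lemma \ref{lem_est_xit&xi+} gives $|\Phi_t'(\zeta_t)|=|1/\zeta_t'(z)|\sim\min\{1,\sqrt{\kappa+\eta}/t\}$. Writing $\Phi_t'(u)=\Phi_t'(\zeta_t)+\int_{\zeta_t}^{u}\Phi_t''(w)\,dw$ along the straight segment (which remains in $\mathbb{C}_+$ and on which Step 1's distance estimate is uniform, so the $\Phi_t''$ bound applies throughout), the remainder is bounded by
$$|u-\zeta_t|\cdot\max_{w\in[\zeta_t,u]}|\Phi_t''(w)|\lesssim (\log n)^{-2}\cdot\frac{(t^2+t\sqrt{\kappa+\eta})^2}{(t^2+\operatorname{Im}\zeta_t)^2}\lesssim (\log n)^{-2}\min\{1,\sqrt{\kappa+\eta}/t\},$$
using $\operatorname{Im}\zeta_t\gtrsim t^2+t\sqrt{\kappa+\eta}$. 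This is a $(\log n)^{-2}$-fraction of $|\Phi_t'(\zeta_t)|$, so $|\Phi_t'(u)|\sim|\Phi_t'(\zeta_t)|$ as claimed. The main obstacle is precisely this last transfer: $|\Phi_t'(\zeta_t)|$ can degenerate to $\sqrt{\kappa+\eta}/t$, so one must verify that the Taylor remainder is strictly smaller; the $(\log n)^{-2}$ slack in the hypothesis on $\vartheta$ is exactly what is needed to absorb the second-derivative perturbation. The same argument, executed uniformly on $\mathbf{D}$, yields the assertion in every regime.
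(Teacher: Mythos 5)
The higher-derivative bounds ($k=2,3$) and the concluding $|\Phi_t''(u)|\sim t^{-2}$ estimate are fine and in line with the paper's argument. The gap is in the first-derivative transfer.

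Your Taylor step
\begin{equation*}
|\Phi_t'(u)-\Phi_t'(\zeta_t)|\le |u-\zeta_t|\cdot\max_{w\in[\zeta_t,u]}|\Phi_t''(w)|\lesssim (\log n)^{-2}\frac{(t^2+t\sqrt{\kappa+\eta})^2}{(t^2+\operatorname{Im}\zeta_t)^2}
\end{equation*}
is correct, but the next inequality, that this is $\lesssim (\log n)^{-2}\min\{1,\sqrt{\kappa+\eta}/t\}$, is false in exactly the delicate regime. When $\kappa+\eta\ll t^2$ one has $\operatorname{Im}\zeta_t\sim t\sqrt{\kappa+\eta}\ll t^2$, so $t^2+\operatorname{Im}\zeta_t\sim t^2$ and $t^2+t\sqrt{\kappa+\eta}\sim t^2$; the ratio is $\sim 1$, \emph{not} $\sim\sqrt{\kappa+\eta}/t$. (Concretely, take $\kappa+\eta\sim t^4$: LHS $\sim 1$, RHS $\sim t$.) Thus the remainder you control is only $O((\log n)^{-2})$, whereas $|\Phi_t'(\zeta_t)|\sim\sqrt{\kappa+\eta}/t$ can be polynomially small in $n$ (e.g.\ $\sqrt{\eta}/t\sim n^{-1/3+2\epsilon_l}$ on $\mathbf{D}_2$). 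So the remainder dominates, and the conclusion $|\Phi_t'(u)|\sim|\Phi_t'(\zeta_t)|$ does not follow. You in fact flag this as "the main obstacle", but the bound you invoke to overcome it does not hold. The reason a naive Taylor expansion around $\zeta_t$ cannot work is structural: $\Phi_t'$ vanishes at the real critical point $\zeta_{+,t}$, with $|\zeta_t-\zeta_{+,t}|\sim t\sqrt{\kappa+\eta}$, while the perturbation $|u-\zeta_t|$ is only controlled at the scale $(\log n)^{-2}t^2$, which can exceed $|\zeta_t-\zeta_{+,t}|$; a second-derivative remainder then swamps the small first derivative. The paper sidesteps this by re-running the proof of Lemma~\ref{lem_est_xit&xi+} with $u$ in place of $\zeta_t$, i.e.\ expanding $\Phi_t$ around the critical point $\zeta_{+,t}$ where $\Phi_t'$ vanishes and $\Phi_t''(\zeta_{+,t})\sim t^{-2}$, so that the \emph{leading} term in $\Phi_t'(u)$ is $\Phi_t''(\zeta_{+,t})(u-\zeta_{+,t})$ rather than a $\zeta_t$-centered remainder; that is a genuinely different decomposition and is the one that produces the two-sided comparison. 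Your argument would need to be restructured along those lines (centering at $\zeta_{+,t}$ and controlling $|u-\zeta_{+,t}|$) to close the gap.
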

\begin{proof}
	Firstly, one may observe $ |u-\zeta_t|\lesssim t|\vartheta-m_t|\lesssim (\log n)^{-2}\min_i|\lambda_i(R(\widetilde{H}))-\zeta_t|,$
	by Lemma \ref{lem_prior_locallaw1}. Note that we have $\min_i|\lambda_i(R(\widetilde{H}))-\zeta_t|\le \xi_t(z)$. In region $|\kappa+\eta|\le \delta t^2$, one has $|\xi_t-\xi_{+}|\sim t\sqrt{\kappa+\eta}$ and $\xi_{+}\sim t^2$. It indicates that $u$ is close to $\zeta_t$. Then we may deploy the same proof as that of Lemma \ref{lem_est_xit&xi+} to obtain the estimation for the first derivative for $\Phi^{\prime}_t(u)$.
	
	For the second result, as discussed above, we have the prior estimate $|u-\zeta_t|\lesssim t^2$. Using Lemma \ref{lem_prior_locallaw1} and the definition of $m_0(z)$ (notice that it only involves the density of $\mu_n^{\widetilde{H}}$ near the edge), we obtain for any $k\ge 1$,
	\begin{equation*}
		|m^{(k)}_0(u)|\lesssim\frac{t+\sqrt{\kappa+\eta}}{(t^2+\operatorname{Im}\zeta_t)},
	\end{equation*}
	where we replace $u$ with $\zeta_t$ in the denominator via the prior bound of $|u-\zeta_t|$. Then by the definition of $\Phi_t$ in \eqref{eq_psizetarandom}, we can directly get that
	\begin{equation*}
		\Phi^{\prime\prime}_t(u)=-2\phi tm^{\prime\prime}_0(u)u(1-\phi tm_0(u))-4\phi tm^{\prime}_0(u)(1-\phi tm_0(u))+2u\big(\phi tm^{\prime}_0(u)\big)^2-\phi(1-\phi)t^2m^{\prime\prime}_0(u).
	\end{equation*}
	Using the estimates $|m^{(k)}_0(u)|$, we could obtain the second result. Besides, by Lemma \ref{lem_prior_locallaw3} and the above expression of $\Phi^{\prime\prime}_t(u)$, we obtain the third statement.
\end{proof}

In order to prove Lemma \ref{lem_app_averagelocallaw_D1}, it is necessary to analyze the following self-consistent equation, which gives a stable structure of \eqref{eq_zetarandom}. We define
\begin{equation*}
	u_t\equiv u(m_{n,t}):=(1+\phi tm_{n,t}(z))^2z-t(1-\phi)(1+\phi tm_{n,t}(z)),
\end{equation*}
and the event
\begin{equation*}
	\Xi_{1}:=\{\Lambda_t\le \frac{t+\operatorname{Im}m_{t}(z)}{(\log n)^2}\}.
\end{equation*}
Note that under the event $\Xi_1$, the condition in Lemma \ref{lem_app_averagelocallaw_D1_stable1} holds for $u_t$. Besides, by the definition of $\Pi_{\widetilde{Y}_t}(z)$, it is easy to obtain via the Schur complement formula that
\begin{equation*}
	\Pi_{[ii]}(z)=
	\begin{pmatrix}
		-z(1+t\underline{m}_t) &z^{1/2}d_i^{1/2}\\
		z^{1/2}d_i^{1/2} &-z(1+\phi tm_t)
	\end{pmatrix}^{-1}.
\end{equation*}

The following lemma comes from \cite[Lemma 4.10]{ding2022edge}.
\begin{lemma}[Stability of $\Phi_t$]\label{lem_stability_Phi(t)}
	For any constant $\epsilon_1>0$ and $z\in\mathbf{D}$ with $\kappa<Ct^2$, we have
	\begin{equation*}
		|\Phi_t^{\prime}(\zeta_t)(u_t-\zeta_t)+\frac{1}{2}\Phi^{\prime\prime}_t(\zeta_t)(u_t-\zeta_t)^2|\le \frac{\Lambda_t^2}{\log n}\frac{t^2}{t^2+\operatorname{Im} \zeta_t}+n^{\epsilon_1}\frac{t^2+t\sqrt{\kappa+\eta}}{t^2+\operatorname{Im}\zeta_t}t\Psi_{\Lambda_t}+n^{\epsilon_1}\frac{n^{-1}t}{t^2+\operatorname{Im}\zeta_t},
	\end{equation*}
	with high probability on $\Xi_1$. Moreover, we have the finer estimate: for any constant $\epsilon_1>0$,
	\begin{equation*}
		|\Phi_t^{\prime}(\zeta_t)(u_t-\zeta_t)+\frac{1}{2}\Phi^{\prime\prime}_t(\zeta_t)(u_t-\zeta_t)^2|\le \frac{\Lambda_t^2}{\log n}\frac{t^2}{t^2+\operatorname{Im} \zeta_t}+n^{\epsilon_1}t\|[Z]\|+n^{\epsilon_1}\frac{t(t\Psi_{\Lambda_t}^2+n^{-1})}{t^2+\operatorname{Im}\zeta_t},
	\end{equation*}
	with high probability, where $[Z]:=n^{-1}\sum_{i\in\mathcal{I}_1}\Pi_{[ii]}Z_{[i]}\Pi_{[ii]}$.
\end{lemma}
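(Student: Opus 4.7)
The plan is to derive an approximate self-consistent equation $\Phi_t(u_t)=z+\mathcal{E}$ (where $\mathcal{E}$ is an explicit fluctuation error) and then Taylor expand around $\zeta_t$, exploiting $\Phi_t(\zeta_t)=z$ from \eqref{eq_def_LSD3_gdm}. Since under $\Xi_1$ we have $|u_t-\zeta_t|\lesssim t\Lambda_t\ll t+\operatorname{Im}\zeta_t$, a third-order Taylor expansion gives
\[
\Phi_t(u_t)-z=\Phi_t'(\zeta_t)(u_t-\zeta_t)+\tfrac12\Phi_t''(\zeta_t)(u_t-\zeta_t)^2+\tfrac16\Phi_t'''(\xi)(u_t-\zeta_t)^3,
\]
for some $\xi$ on the segment between $u_t$ and $\zeta_t$. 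The lemma then follows by rearranging and bounding both $|\Phi_t(u_t)-z|$ and the cubic remainder.

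The cubic remainder is the easy piece: by Lemma \ref{lem_app_averagelocallaw_D1_stable1} (applied to $\xi$, which satisfies the hypothesis of that lemma on $\Xi_1$) together with $|u_t-\zeta_t|\lesssim t\Lambda_t$, the cubic term is bounded by
\[
C\,\frac{t^2+t\sqrt{\kappa+\eta}}{(t^2+\operatorname{Im}\zeta_t)^3}\cdot t^3\Lambda_t^3
\;\lesssim\;\frac{\Lambda_t^2}{\log n}\cdot\frac{t^2}{t^2+\operatorname{Im}\zeta_t},
\]
where the final step absorbs one power of $t\Lambda_t$ using the defining bound $\Lambda_t\le(\log n)^{-2}(t+\operatorname{Im} m_t)$ on $\Xi_1$ and the estimate $t+\sqrt{\kappa+\eta}\lesssim t+\operatorname{Im} m_t(z)$ from Lemma \ref{lem_prior_locallaw1}. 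This produces the first term on the right-hand side of both stated bounds.

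The main task is therefore to estimate the fluctuation $\mathcal{E}=\Phi_t(u_t)-z$ itself. Starting from Lemma \ref{lem_resolvent}(i) applied to $\widetilde{Y}_t$ in \eqref{eq_svd_Yt}, invert $\mathcal{L}_{[ii]}^{-1}$ by the $2\times 2$ Schur formula and use the identity $\mathbb{E}_{[i]}(\mathcal{L}_{[ii]})^{-1}=(\mathcal{L}_{[ii]})^{-1}-Z_{[i]}$. Replacing $\mathcal{L}^{(i)}$ by $\mathcal{L}$ in the averaged quantities costs $\mathrm{O}(n^{-1}t)$ by the interlacing identities in Lemma \ref{lem_resolvent}(iv), and gives
\[
\mathcal{L}_{[ii]}=\Pi_{[ii]}+\Pi_{[ii]}Z_{[i]}\Pi_{[ii]}+\text{higher order in }Z_{[i]},
\]
where $\Pi_{[ii]}$ is the deterministic expression introduced just before Lemma \ref{lem_stability_Phi(t)} but with $m_t,\underline m_t$ replaced by $m_{n,t},\underline m_{n,t}$ inside the denominators; expanding the Schur determinant of that matrix produces exactly the factor $u_t-d_i$. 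Averaging the diagonal block entries over $i\in\mathcal{I}_1$ yields
\[
m_{n,t}(z)=\frac{1}{n}\sum_{i=1}^n\frac{1+\phi t m_{n,t}(z)}{d_i-u_t}+[Z]_{11}+\mathrm{O}_\prec(\|Z_{[i]}\|^2)+\mathrm{O}(n^{-1}t),
\]
i.e.\ exactly the defining equation of $m_t(z)$ with $\zeta_t$ replaced by $u_t$, modulo $[Z]$ and lower order terms. Rearranging as in the derivation of \eqref{eq_def_LSD3_gdm} gives
\[
\Phi_t(u_t)-z=-t\cdot[Z]_{\text{scalar}}+\mathrm{O}_\prec\!\Bigl(\tfrac{t}{t^2+\operatorname{Im}\zeta_t}\bigl(t\|Z_{[i]}\|^2+n^{-1}\bigr)\Bigr),
\]
where the prefactor $t/(t^2+\operatorname{Im}\zeta_t)$ comes from differentiating $1/(d_i-\zeta)$ and using Lemma \ref{lem_prior_locallaw1} to bound the resulting integrals against $\mu_n^{\widetilde H}$. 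This directly yields the finer estimate with $n^{\epsilon_1}t\|[Z]\|$; the cruder estimate follows by substituting the large deviation bound $\|Z_{[i]}\|\prec t\Psi_{\Lambda_t}+t^{1/2}n^{-1/2}$ from Lemma \ref{lem_basic_locallaw_preliminarybound_Z} and using $\Psi_{\Lambda_t}^2\lesssim\Psi_{\Lambda_t}(1+n^{-1/2})$ together with $t\sqrt{\kappa+\eta}+t^2\asymp t(t+\sqrt{\kappa+\eta})$.

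The principal obstacle is the bookkeeping in this last step: tracking the precise cancellations between the Schur inversion of $\Pi_{[ii]}$ and the algebraic rearrangement that produces $\Phi_t(u_t)$, and showing that all higher-order fluctuation terms (involving $\|Z_{[i]}\|^2$, off-diagonal blocks $Z_{\mu}$, and the replacement error from $\mathcal{L}^{(i)}\to\mathcal{L}$) consolidate into the two error families appearing in the statement. The additive $n^{-1}t/(t^2+\operatorname{Im}\zeta_t)$ term arises entirely from the cofactor-expansion cost of passing between $\mathcal{L}^{(i)}$ and $\mathcal{L}$, which is deterministic up to the $\eta_*$-regularity of Proposition \ref{prop_etaregular_corH}.
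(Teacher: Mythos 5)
Your overall architecture matches the paper's: establish an approximate self-consistent equation $\Phi_t(u_t)=z+\mathcal{E}$ via the $2\times 2$ Schur inversion of $\mathcal{L}_{[ii]}^{-1}$, average over $i$, then Taylor expand against $\Phi_t(\zeta_t)=z$ and control the cubic remainder via Lemma \ref{lem_app_averagelocallaw_D1_stable1} together with the $\Xi_1$ bound $\Lambda_t\le(\log n)^{-2}(t+\operatorname{Im} m_t)$. The finer estimate with $n^{\epsilon_1}t\|[Z]\|$ is reached the same way the paper reaches it, up to the bookkeeping you flag at the end. So far so good.

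The gap is in your derivation of the cruder estimate from the finer one. Substituting the entrywise bound $\|Z_{[i]}\|\prec t\Psi_{\Lambda_t}+t^{1/2}n^{-1/2}$ into $t\|[Z]\|$ does not give the stated right-hand side, even after you use Lemma \ref{lem_prior_locallaw1} to bound $n^{-1}\sum_i\|\Pi_{[ii]}\|^2$ by $(t+\sqrt{\kappa+\eta})/(t^2+\operatorname{Im}\zeta_t)$. With that averaged moment, the $t^{1/2}n^{-1/2}$ part of $\|Z_{[i]}\|$ produces a term of size $(t^2+t\sqrt{\kappa+\eta})t^{1/2}n^{-1/2}/(t^2+\operatorname{Im}\zeta_t)$, which is strictly larger than the target $n^{\epsilon_1}n^{-1}t/(t^2+\operatorname{Im}\zeta_t)$ by a factor on the order of $(t+\sqrt{\kappa+\eta})^2 t^{1/2}n^{1/2}$, and this factor is $\gg 1$ for the small choices of $\epsilon_l$ the paper requires (so $t\sim n^{-2\epsilon_l}$). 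The point you miss is that $[Z]$ cannot be treated as a single object here: the paper splits $Z_{[i]}=Z_{[i]}^1+Z_{[i]}^2$ (the linear-in-$W$ part and the quadratic-in-$W$ part), applies a \emph{second-order} Gaussian fluctuation averaging to $n^{-1}\sum_i\Pi_{[ii]}Z_{[i]}^1\Pi_{[ii]}$ — exploiting independence across $i$ to gain a genuine $n^{-1}$ (not $n^{-1/2}$) rate — and only uses the entrywise bound together with Lemma \ref{lem_prior_locallaw1} for the $Z_{[i]}^2$ part. Without this split and the fluctuation averaging for the Gaussian linear term, the $n^{-1}t/(t^2+\operatorname{Im}\zeta_t)$ tail of the first estimate is out of reach. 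You would need to add this step; the rest of the argument stands.
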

\begin{proof}
	The proof of this lemma is mostly similar to the one in \cite[Lemma 4.10]{ding2022edge}, so we only write down the key procedures. It is essential to obtain the structure of \eqref{eq_def_LSD3_gdm} for $u_t$ and control the error terms. Under the event $\Xi_1$, we firstly observe from Lemma \ref{lem_resolvent} that \begin{equation}\label{eq_stability_Phi_1}
		(\mathcal{L}_{\mu\mu})^{-1}=-z-z\phi tm_t+\varepsilon_{\mu},~(\mathcal{L}_{[ii]})^{-1}=
		\begin{pmatrix}
			-z(1+t\underline{m}_t) &z^{1/2}d_i^{1/2}\\
			z^{1/2}d_i^{1/2} &-z(1+\phi tm_t)
		\end{pmatrix}
		+\varepsilon_{[i]},
	\end{equation}
	where $\varepsilon_{\mu}:=Z_{\mu}+z\phi t(m_{n,t}-m_{n,t}^{(\mu)})+z\phi t(m_t-m_{n,t})$ and $\varepsilon_{[i]}:=Z_{[i]}+z\phi t\Big((m_{n,t}-m_{n,t}^{(i)})+(m_t-m_{n,t})\Big)\begin{pmatrix}
		1 &0\\
		0 &1
	\end{pmatrix}$.
	Observe that $|m_{n,t}-m_{n,t}^{(i)}|\le c(n\eta)^{-1}$ for some constant $c>0$. Then by Lemma \ref{lem_basic_locallaw_preliminarybound_Z}, we obtain that
	\begin{equation}\label{eq_stability_bound_epsiloni&epsilonmu}
		\|\varepsilon_{[i]}\|+|\varepsilon_{\mu}|\lesssim t\Lambda_{t}+\mathrm{O}_{\prec}(t\Psi_{\Lambda_t}+t^{1/2}n^{-1/2}).
	\end{equation}
	Recall that under the event $\Xi_1$, we have the upper bound of $\Lambda_t$, which gives $\|\varepsilon_{[i]}\|/|d_i-\zeta_t|=\mathrm{O}_{\prec}((\log n)^{-2})$. Thereafter, we have $\|\Pi_{[ii]}\varepsilon_{[i]}\|=\mathrm{O}((\log n)^{-2})$.
	Taking the inverse of $(\mathcal{L}_{[ii]})^{-1}$ gives
	\begin{equation}\label{eq_stability_expression_L}
		\mathcal{L}_{[ii]}=\Pi_{[ii]}(1+\mathrm{O}((\log n)^{-2}),
	\end{equation}
	with high probability on the event $\Xi_1$. On the other hand, we have
	\begin{equation*}
		\mathbbm{1}(\Xi_1)|m_{n,t}^{(i)}-m_t|\le \frac{c}{n\eta}+\frac{t+\operatorname{Im}m_t}{(\log n)^2}\le 2\frac{t+\operatorname{Im}m_t}{(\log n)^2}\lesssim (\log n)^{-2}.
	\end{equation*}
	Then similar procedures give the approximation of the minor matrix of $\mathcal{L}_{[jj]}$,
	\begin{equation}\label{eq_stability_expression_L(i)}
		\mathcal{L}_{[jj]}^{(i)}=\Pi_{[jj]}^{(i)}(1+\mathrm{O}((\log n)^{-2}).
	\end{equation}
	
	In addition, we may also write \eqref{eq_stability_Phi_1} as the following empirical version
	\begin{equation*}
		(\mathcal{L}_{[ii]})^{-1}=
		\begin{pmatrix}
			-z(1+t\underline{m}_{n,t}) &z^{1/2}d_i^{1/2}\\
			z^{1/2}d_i^{1/2} &-z(1+\phi tm_{n,t})
		\end{pmatrix}
		+\varepsilon_{n,[i]},
	\end{equation*}
	where
	\begin{equation}\label{eq_stability_bound_varepsilon(n)}
		\varepsilon_{n,[i]}:=Z_{[i]}+z\phi t(m_{n,t}-m_{n,t}^{(i)})
		\begin{pmatrix}
			1 &0\\
			0 &1
		\end{pmatrix}
		\prec t\Psi_{\Lambda_t}+\frac{t^{1/2}}{n^{1/2}}.
	\end{equation}
	Again, taking the matrix inverse of the above equation, we have
	\begin{equation}\label{eq_stability_expression_L&Pi}
		\mathcal{L}_{[ii]}=\Pi_{n,[ii]}-\Pi_{n,[ii]}\varepsilon_{n,[i]}\Pi_{n,[ii]}+\mathrm{O}(\|\Pi_{n,[ii]}\|^3\|\varepsilon_{n,[i]}\|^2),
	\end{equation}
	where
	\begin{equation*}
		\Pi_{n,[ii]}(z):=
		\begin{pmatrix}
			-z(1+t\underline{m}_{n,t}) &z^{1/2}d_i^{1/2}\\
			z^{1/2}d_i^{1/2} &-z(1+\phi tm_{n,t})
		\end{pmatrix}.
	\end{equation*}
	It can be seen that the difference between $\Pi_{n,[ii]}$ and $\Pi_{[ii]}$ lies in the empirical $m_{n,t}$ and the limit $m_t$, so on the event $\Xi_1$ we have
	\begin{equation}\label{eq_stability_difference_Pi(n)&Pi}
		\|\Pi_{n,[ii]}-\Pi_{[ii]}\|\lesssim\frac{t\Lambda_t}{|d_i-\zeta_t|^2}\lesssim\frac{t\Lambda_t}{(t^2+\operatorname{Im}\zeta_t)}, \quad \|\Pi_{n,[ii]}\|\lesssim\|\Pi_{[ii]}\|\lesssim\frac{1}{|d_i-\zeta_i|},
	\end{equation}
	where we have used Lemma \ref{lem_prior_locallaw1} for the denominator.
	
	Besides, from Lemma \ref{lem_resolvent}, we have $\mathcal{L}_{[jj]}=\mathcal{L}_{[jj]}^{(i)}+\mathcal{L}_{[ji]}\mathcal{L}_{[ii]}^{-1}\mathcal{L}_{[ij]}$, which further gives
	\begin{equation*}
		\begin{split}
			 |m_{n,t}-m_{n,t}^{(i)}|\lesssim\frac{1}{n}\sum_{j\in\mathcal{I}_1}\|\mathcal{L}_{[ii]}^{-1}\mathcal{L}_{[ij]}\|\lesssim\frac{1}{n}\|\mathcal{L}_{[ii]}\|+\frac{1}{n}\sum_{j\neq i}(\Lambda_t^o)^2\|\mathcal{L}_{[ii]}\|\mathcal{L}_{[jj]}^{(i)}\|^2\lesssim\frac{1}{t^2+\operatorname{Im}\zeta_t}(n^{-1}+t\Psi_{\Lambda_t}^2),
		\end{split}
	\end{equation*}
	where we have used the bound for $\Lambda_{t}^o$ under the event $\Xi_0$ in Lemma \ref{lem_basic_locallaw_preliminarybound_Z}, \eqref{eq_stability_expression_L} and \eqref{eq_stability_expression_L(i)}.
	
	Taking average over $i$ of \eqref{eq_stability_expression_L&Pi} and using \eqref{eq_stability_bound_varepsilon(n)}, \eqref{eq_stability_difference_Pi(n)&Pi}, Lemma \ref{lem_prior_locallaw1} for $a=3$ give
	\begin{equation*}
		 \frac{1}{n}\sum_{i\in\mathcal{I}_1}\mathcal{L}_{[i]}=\frac{1}{n}\sum_{i\in\mathcal{I}_1}\Pi_{n,[i]}-[Z]+\mathrm{O}_{\prec}\Big(\frac{1}{t^2+\operatorname{Im}\zeta_t}\big[\Lambda_{t}(t\Psi_{\Lambda_t}+\frac{t^{1/2}}{n^{1/2}})+t\Psi_{\Lambda_t}^2+\frac{1}{n}\big]\Big).
	\end{equation*}
	We should notice that the $(1,1)$-th entry of the above expression gives that
	\begin{equation}\label{eq_stability_equation_m(n,t)}
		m_{n,t}=\frac{1}{n}\sum_{i=1}^n\frac{1+\phi tm_{n,t}}{d_i-(1+\phi tm_{n,t})^2z+t(1-\phi)(1+\phi t m_{n,t})}+\mathrm{O}_{\prec}(\mathcal{E}),
	\end{equation}
	where
	\begin{equation*}
		 \mathcal{E}:=\|[Z]\|+\frac{1}{t^2+\operatorname{Im}\zeta_t}\Big(\Lambda_{t}(t\Psi_{\Lambda_t}+\frac{t^{1/2}}{n^{1/2}})+t\Psi_{\Lambda_t}^2+\frac{1}{n}\Big).
	\end{equation*}
	Notice that the right-hand side of \eqref{eq_stability_bound_varepsilon(n)} is exactly the definition of $m_t(u_t)$ with $t=0$. Therefore, we can rewrite \eqref{eq_stability_bound_varepsilon(n)} as $ m_{n,t}=(1+\phi tm_{n,t})m_0(u_t)+\mathrm{O}_{\prec}(\mathcal{E})$.
	Plugging it into the definition of $u_t$, we get $ z=\Phi_t(u_t)+\mathrm{O}_{\prec}(t\mathcal{E})$.
	Through Taylor's expansion, comparing \eqref{eq_stability_equation_m(n,t)} with $z=\Phi_{t}(\zeta_t)$ leads to
	\begin{equation}\label{eq_stability_expansion_Phi}
		|\Phi_t^{\prime}(\zeta_t)(u_t-\zeta_t)+\frac{1}{2}\Phi^{\prime\prime}_t(\zeta_t)(u_t-\zeta_t)^2|\lesssim \frac{t^2+t\sqrt{\kappa+\eta}}{(t^2+\operatorname{Im}\zeta_t)^3}t^3\Lambda_{t}^3+\mathrm{O}_{\prec}(t\mathcal{E}),
	\end{equation}
	where we have used the bounds for $\Phi_{t}^{(k)}(\cdot)$ in Lemma \ref{lem_app_averagelocallaw_D1_stable1}. For the error term, we have
	\begin{equation*}
		t\mathcal{E}\prec t\|[Z]\|+\frac{t}{t^2+\operatorname{Im}\zeta_t}\big(n^{\epsilon}(t\Psi_{\Lambda_t}^2+n^{-1})+n^{-\epsilon}t\Lambda_t^2\big),
	\end{equation*}
	for any constant $\epsilon>0$ since $t^{1/2}\Lambda_t(t^{1/2}\Psi_{\Lambda_t}+n^{-1/2})\le n^{-\epsilon}t\Lambda_t^2+n^{\epsilon}(t^{1/2}\Psi_{\Lambda_t}+n^{-1/2})^2$.
	On the other hand, under the event $\Xi_1$ and noticing the fact $t^2+\operatorname{Im}\zeta_t\gtrsim t^2+\eta+t\operatorname{Im}m_t$, we have
	\begin{equation*}
		\frac{t^2+t\sqrt{\kappa+\eta}}{(t^2+\operatorname{Im}\zeta_t)^3}t^3\Lambda_t^3\lesssim \frac{t^3\Lambda_t^2}{(t^2+\operatorname{Im}\zeta_t)^2}\frac{t+\operatorname{Im}m_t}{(\log n)^2}\lesssim\frac{\Lambda_t^2}{(\log n)^2}\frac{t^2}{t^2+\operatorname{Im}\zeta_t}.
	\end{equation*}
	Combining the above two estimates with \eqref{eq_stability_expansion_Phi}, we conclude the second statement in Lemma \ref{lem_stability_Phi(t)}.
	
	For the first statement, we rewrite
	\begin{equation*}
		[Z]=\frac{1}{n}\sum_{i\in\mathcal{I}_1}\Pi_{[ii]}Z_{[i]}^1\Pi_{[ii]}+\frac{1}{n}\sum_{i\in\mathcal{I}_1}\Pi_{[ii]}Z_{[i]}^2\Pi_{[ii]},
	\end{equation*}
	where $Z_{[i]}^1$ is the first term in the definition of $Z_{[i]}$ while $Z_{[i]}^2$ is the second term. From Lemma \ref{lem_basic_locallaw_preliminarybound_Z} or the proof of \cite[Lemma 4.7]{ding2022edge}, we already have the bounds $\|Z_{[i]}^1\|\prec t^{1/2}n^{-1/2}$ and $\|Z_{[i]}^2\|\prec t\Psi_{\Lambda_t}$. Then by the large deviation bound for Gaussian random vectors and Lemma \ref{lem_prior_locallaw1}, we have
	\begin{equation*}
		\begin{split}
		&\frac{1}{n}\sum_{i\in\mathcal{I}_1}\Pi_{[ii]}Z_{[i]}^1\Pi_{[ii]}\prec n^{-1}\big(\frac{1}{n}\sum_i\frac{t}{|d_i-\zeta_t|^4}\big)^{1/2}\lesssim\frac{1}{n(t^2+\operatorname{Im}\zeta_t)},\\
		 &\frac{1}{n}\sum_{i\in\mathcal{I}_1}\Pi_{[ii]}Z_{[i]}^2\Pi_{[ii]}\prec\frac{1}{n}\sum_i\frac{t\Psi_{\Lambda_t}}{|d_i-\zeta_t|^2}\prec\frac{t^2+t\sqrt{\kappa+\eta}}{t^2+\operatorname{Im}\zeta_t}\Psi_{\Lambda_t}.
		\end{split}
	\end{equation*}
	Noticing $\Psi_{\Lambda_t}\ll t+\sqrt{\kappa+\eta}$ for $\kappa\le Ct^2$ and using the second statement complete the proof of the first statement.
\end{proof}

Combining Lemma \ref{lem_prior_locallaw1}, Lemma \ref{lem_app_averagelocallaw_D1_stable1} and Lemma \ref{lem_stability_Phi(t)}, we can conclude the proof of Lemma \ref{lem_app_averagelocallaw_D1} by similar procedures  in \cite{ding2022edge} via the following weak averaged local law and fluctuation averaging.
\begin{lemma}[Weak averaged local law]\label{lem_app_weakaveragelocallaw}
	There exists a constant $C>0$ such that the following estimates hold for $z\in\mathbf{D}$ with $\kappa\le ct^2$: if $\kappa+\eta\ge Ct^2$, then $\Lambda_{t}\prec \Psi_0(z)=:\sqrt{\frac{\operatorname{Im}m_t}{n\eta}}+\frac{1}{n\eta}$;
	if $\kappa+\eta\le Ct^2$, then $\Lambda_{t}\prec t^{2/3}(n\eta)^{-1/3}$.
\end{lemma}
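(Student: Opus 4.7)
The plan is to prove this weak averaged local law via a standard bootstrap/continuity argument in $\eta$, combined with the quadratic stability of the self-consistent equation $\Phi_t(\zeta_t(z))=z$ furnished by Lemma \ref{lem_stability_Phi(t)}. I would first establish a trivial base bound $\Lambda_t \lesssim \eta^{-1}$ at the largest scale $\eta \sim 1$ (where the event $\Xi_1$ is automatically satisfied), and then descend in $\eta$ along a fine lattice of spacing $n^{-10}$, using the Lipschitz continuity $|\partial_\eta m_{n,t}|, |\partial_\eta m_t| \lesssim \eta^{-2}$ to propagate the bound between adjacent lattice points. At each step, I would verify that the incoming bound on $\Lambda_t$ places us inside $\Xi_1$, so that the stability input can be applied.

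At a fixed $z$ with $\kappa \le ct^2$, observing that $u_t-\zeta_t$ is comparable to $t(m_{n,t}-m_t)$ up to a bounded multiplicative factor, and using the preliminary bounds $|\Phi'_t(\zeta_t)| \sim \min\{1,\sqrt{\kappa+\eta}/t\}$ and $|\Phi''_t(\zeta_t)| \lesssim (t^2+t\sqrt{\kappa+\eta})/(t^2+\operatorname{Im}\zeta_t)^2$ from Lemma \ref{lem_app_averagelocallaw_D1_stable1}, the first stability estimate of Lemma \ref{lem_stability_Phi(t)} reduces to a quadratic-in-$\Lambda_t$ inequality. Two regimes should be handled separately.

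When $\kappa+\eta \ge Ct^2$, one has $t^2+\operatorname{Im}\zeta_t \sim \kappa+\eta$ and $|\Phi'_t(\zeta_t)| \sim 1$, so the linear term dominates and yields $\Lambda_t \lesssim (t/\sqrt{\kappa+\eta})\,\Psi_{\Lambda_t} + (n(\kappa+\eta))^{-1}$. Since $t/\sqrt{\kappa+\eta} \le 1$ and $\Psi_{\Lambda_t}$ contains $\Lambda_t$ itself via $\sqrt{\Lambda_t/(n\eta)}$, a one-step self-improvement closes the inequality to give $\Lambda_t \prec \Psi_0(z)$. When $\kappa+\eta \le Ct^2$, one has $t^2+\operatorname{Im}\zeta_t \sim t^2$ and $|\Phi''_t(\zeta_t)| \sim t^{-2}$ (Lemma \ref{lem_prior_locallaw3}), so the quadratic term $\Lambda_t^2$ can dominate the linear $\sqrt{\kappa+\eta}\,\Lambda_t$. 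The stability inequality becomes $\Lambda_t^2 \lesssim t\,\Psi_{\Lambda_t} + (nt)^{-1}$; substituting $\Psi_{\Lambda_t} \lesssim \sqrt{\Lambda_t/(n\eta)}+(n\eta)^{-1}$ and solving the resulting $\Lambda_t^{3/2} \lesssim t/\sqrt{n\eta}$ delivers $\Lambda_t \prec t^{2/3}(n\eta)^{-1/3}$.

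The main subtlety will be maintaining the bootstrap hypothesis $\Xi_1$ across the transition $\kappa+\eta \sim t^2$, since the two regimes yield quantitatively different bounds and the Lipschitz propagation has to preserve the constraint $\Lambda_t \le (t+\operatorname{Im} m_t)/(\log n)^2$ defining $\Xi_1$. I expect the delicate point to be verifying that the edge bound $t^{2/3}(n\eta)^{-1/3}$ indeed sits inside $\Xi_1$ in the deep-edge regime, where $\operatorname{Im} m_t$ can be as small as $\eta/\sqrt{\kappa+\eta}$; this imposes a compatibility check between $\eta$, $t \gtrsim n^{-2\epsilon_l}$, and the definition of $\mathbf{D}_1$, and it is what ultimately forces the weak edge rate rather than the optimal one. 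Everything else, including the lattice continuity, the bookkeeping of the small exponent $\epsilon_1$, and the final unification of $\Psi_{\Lambda_t}$ with $\Psi_0$, should follow by now-standard arguments in the DBM literature.
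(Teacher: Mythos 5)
Your proposal matches the paper's proof, which the authors delegate to the standard bootstrap-and-continuity argument of \cite{ding2022edge}, citing precisely the same key inputs (the stability estimate of Lemma \ref{lem_stability_Phi(t)} and the deterministic $\Phi_t$-derivative bounds of Lemma \ref{lem_app_averagelocallaw_D1_stable1}) that you use to set up the quadratic self-consistent inequality and the two-regime split at $\kappa+\eta\sim t^2$. Your reconstruction of the self-improvement step in the bulk regime and the cube-root rate near the edge is the same computation that appears in \cite[Section 4]{ding2022edge}.
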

\begin{lemma}[Fluctuation averaging]\label{lem_app_fluctuationaveraging}
	Suppose that $\Lambda_t\prec\omega$, where $\omega$ is a positive, $n$-dependent deterministic function on $\mathbf{D}$ with $\kappa\le ct^2$ satisfying that
	\begin{equation*}
		\frac{1}{n\eta}\le \omega\le \frac{t+\operatorname{Im}m_t}{(\log n)^2}.
	\end{equation*}
	Then for $z\in\mathbf{D}$ with $\kappa\le ct^2$ we have
	\begin{equation*}
		|[Z]|\prec \frac{1}{n\eta}\frac{\operatorname{Im}m_t+\omega}{t+\operatorname{Im}m_t+\omega}.
	\end{equation*}
\end{lemma}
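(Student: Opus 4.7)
The plan is to follow the now-standard high-moment / cumulant-expansion scheme for fluctuation averaging (as developed in the Gaussian divisible context by \cite{ding2022edge} and earlier works), adapted to our $2\times 2$ block structure $Z_{[i]}=(1-\mathbb{E}_{[i]})(\mathcal{L}_{[ii]})^{-1}$. Fix a large integer $p\ge 1$ and an entry $(\alpha,\beta)\in\{1,2\}^2$; it suffices to bound
\[
\mathbb{E}\bigl|[Z]_{\alpha\beta}\bigr|^{2p}
=\frac{1}{n^{2p}}\sum_{i_1,\dots,i_{2p}\in\mathcal{I}_1}
\mathbb{E}\Bigl[\prod_{k=1}^{p}\bigl(\Pi_{[i_k i_k]}Z_{[i_k]}\Pi_{[i_k i_k]}\bigr)_{\alpha\beta}
\prod_{k=p+1}^{2p}\overline{\bigl(\Pi_{[i_k i_k]}Z_{[i_k]}\Pi_{[i_k i_k]}\bigr)_{\alpha\beta}}\Bigr]
\]
by some power of a suitable control parameter $\Theta$, then apply Markov's inequality. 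Since only the $\sqrt t W$ part of $\widetilde Y_t$ is random and Gaussian, and the $i$-th ``block row/column'' of $L(\widetilde Y_t)$ enters $Z_{[i]}$ linearly through $W_{i\bar i}$ and quadratically through $(W^*\mathcal{L}^{(i)}W)_{ii}$ etc.\ (cf.\ the formulas after Lemma \ref{lem_basic_locallaw_preliminarybound_Z}), we can iteratively perform Gaussian integration by parts in each $W_{i_k\mu}$, $W_{j\bar{i_k}}$.

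The key mechanism is the same as in the classical fluctuation averaging: every time an index $i_k$ is isolated (i.e.\ does not coincide with any other $i_l$ in the current term), a single centring $Z_{[i_k]}$ sits alone, and one integration by parts either pairs it with another $Z_{[i_l]}$ (producing an off-diagonal resolvent entry that can be replaced using Lemma \ref{lem_resolvent}(ii)--(iii)) or generates a derivative of $\Pi_{[i_l i_l]}$ in $W_{i_k,\cdot}$ which, expanded via $\mathcal{L}_{[i_l i_l]}=\Pi_{n,[i_l i_l]}+\mathrm{O}_\prec(\|\varepsilon_{n,[i_l]}\|)$ from \eqref{eq_stability_expression_L&Pi}, brings in off-diagonal $\mathcal{L}_{[i_l i_k]}$ factors that are $\mathrm{O}_\prec(\Lambda_t^o)$. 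Using the a priori bounds
\[
\|Z_{[i]}\|\prec t\Psi_{\Lambda_t}+t^{1/2}n^{-1/2},\qquad
\Lambda_t^o\prec t\Psi_{\Lambda_t}\quad\text{on }\Xi_0
\]
from Lemma \ref{lem_basic_locallaw_preliminarybound_Z}, together with the hypothesis $\Lambda_t\prec\omega$, every isolated index contributes a gain of order $\Psi_\omega:=\sqrt{(\operatorname{Im} m_t+\omega)/(n\eta)}$, while summation over that index produces an average of $\|\Pi_{[ii]}\|^2$ over $i$, which by Lemma \ref{lem_prior_locallaw1} is of order $(t^2+\operatorname{Im}\zeta_t)^{-1}\cdot (t+\sqrt{\kappa+\eta})$, i.e.\ comparable to $(t+\operatorname{Im} m_t+\omega)^{-1}$ in the regime $\kappa\le ct^2$. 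Iterating gives the target bound
\[
\mathbb{E}\bigl|[Z]_{\alpha\beta}\bigr|^{2p}\prec
\left(\frac{1}{n\eta}\frac{\operatorname{Im} m_t+\omega}{t+\operatorname{Im} m_t+\omega}\right)^{2p},
\]
after which Markov's inequality and the arbitrariness of $p$ conclude the proof.

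Concretely the steps I would carry out, in order, are:
(i) record the conditional expansion $(\mathcal{L}_{[ii]})^{-1}=(\Pi_{n,[ii]})^{-1}+Z_{[i]}+\mathrm{diag}$ from \eqref{eq_stability_expression_L&Pi} and the resolvent identities of Lemma \ref{lem_resolvent};
(ii) in each product term, partition $\{i_1,\dots,i_{2p}\}$ by coincidences, and apply Gaussian integration by parts to one factor $Z_{[i_k]}$ at a time, using the chain rule for the derivatives of $\Pi_{[i_l i_l]}$ through the resolvent $\mathcal{L}^{(i_k)}$;
(iii) after each step, use Ward's identity $\sum_\mathfrak{b}|\mathcal{L}_{\mathfrak{a}\mathfrak{b}}|^2=\eta^{-1}\operatorname{Im}\mathcal{L}_{\mathfrak{a}\mathfrak{a}}$ together with the averaged bound $\sum_j\|\Pi_{[jj]}\|^a\lesssim n(t+\sqrt{\kappa+\eta})/(t^2+\operatorname{Im}\zeta_t)^{a-1}$ (Lemma \ref{lem_prior_locallaw1}) to replace random averages by their deterministic surrogates, at the cost of the factor $\frac{\operatorname{Im} m_t+\omega}{t+\operatorname{Im} m_t+\omega}$;
(iv) collect all the gains, verify that unpaired indices always contribute at least one factor of $\Psi_\omega$ (while paired indices contribute $\|Z_{[i]}\|^2$ bounded by $\Psi_\omega^2$ up to logs), and bound the residual integration-by-parts error using the truncation on $\|Z_{[i]}\|$ provided by $\Lambda_t\prec\omega$.

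The principal obstacle I anticipate is step (iii), in which the $2\times 2$ matrix weight $\Pi_{[ii]}$ must be handled carefully: unlike the scalar Wigner/sample-covariance case, the derivatives hit both entries of the $2\times 2$ block and the off-diagonal entries of $\Pi_{[ii]}$ involve the singular values $d_i^{1/2}$ of $\widetilde H_t$, so the averages over $i$ are of several different shapes. Verifying that all of them are dominated by $(t+\operatorname{Im} m_t+\omega)^{-1}$ uniformly on $\mathbf{D}$ with $\kappa\le ct^2$ is where Lemma \ref{lem_prior_locallaw1} together with the inequality $t+\sqrt{\kappa+\eta}\lesssim t+\operatorname{Im} m_t$ is used repeatedly, and where the asymmetry between the regimes $\kappa+\eta\le Ct^2$ and $\kappa+\eta\ge Ct^2$ (via Lemmas \ref{lem_prior_locallaw1} and \ref{lem_prior_locallaw2}) must be reconciled to produce the single clean bound stated.
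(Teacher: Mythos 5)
The paper's own ``proof'' is a one-line citation: it declares the fluctuation averaging to be ``a standard procedure to handle Green functions with Gaussian randomness'' and refers to Section~4 of \cite{ding2022edge} for the details. Your plan---a high-moment bound on $[Z]_{\alpha\beta}$ via iterated Gaussian integration by parts, exploiting the centering $\mathbb{E}_{[i]}Z_{[i]}=0$, the off-diagonal resolvent bounds of Lemma~\ref{lem_resolvent}, the a priori estimate $\|Z_{[i]}\|\prec t\Psi_{\Lambda_t}+t^{1/2}n^{-1/2}$ from Lemma~\ref{lem_basic_locallaw_preliminarybound_Z}, and the weighted averages from Lemma~\ref{lem_prior_locallaw1}, followed by Markov---is exactly that standard argument, so at the level of method you match the paper.

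One caution on the bookkeeping in your step~(iii): you assert that $n^{-1}\sum_i\|\Pi_{[ii]}\|^2\lesssim (t^2+\operatorname{Im}\zeta_t)^{-1}(t+\sqrt{\kappa+\eta})$ is ``comparable to $(t+\operatorname{Im} m_t+\omega)^{-1}$.'' Using $t+\sqrt{\kappa+\eta}\lesssim t+\operatorname{Im} m_t$ and $t^2+\operatorname{Im}\zeta_t\gtrsim t^2+\eta+t\operatorname{Im} m_t$ from Lemma~\ref{lem_prior_locallaw1}, the averaged weight is only $\lesssim t^{-1}$, which is \emph{larger} than $(t+\operatorname{Im} m_t+\omega)^{-1}$; the weight average alone does not supply the denominator $t+\operatorname{Im} m_t+\omega$. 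The missing factor of $t/(t+\operatorname{Im} m_t+\omega)$ has to be recovered from the cancellation structure itself---each off-diagonal resolvent entry produced by the pairing carries, beyond the crude $\Lambda_t^o\prec t\Psi_\omega$ bound, a deterministic prefactor $\|\Pi_{[ii]}\Pi_{[jj]}\|$ that, when Ward's identity is applied once more to sum over the \emph{new} index, yields the sharper combination $n^{-1}\sum_j\|\Pi_{[jj]}\|^4\lesssim (t+\sqrt{\kappa+\eta})/(t^2+\operatorname{Im}\zeta_t)^3$ rather than simply $(n^{-1}\sum_j\|\Pi_{[jj]}\|^2)^2$. Tracking this correctly is precisely the content of the argument in \cite{ding2022edge}; your outline identifies the right tools and the right obstacle, but the displayed bookkeeping as written would not close to the stated bound without that refinement.
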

\begin{proof}
	The proof of the above two lemmas can be found in \cite{ding2022edge}. For the proof of the weak averaged local law, the key input is the self-consistent equation $\Phi_t(\zeta_t)=z$ with the bounds in Lemma \ref{lem_stability_Phi(t)} and deterministic estimates in Lemma \ref{lem_app_averagelocallaw_D1_stable1}. For the proof of the fluctuation averaging, it is a standard procedure to handle Green functions with Gaussian randomness. Finally, the proof of Lemma \ref{lem_app_averagelocallaw_D1} relies on the above lemmas. The details are the same as the ones in Section 4 in \cite{ding2022edge}.
\end{proof}

Next, we establish the averaged local law on the region $\mathbf{D}$ with $\kappa> ct^2$. 
We first have the following result which is parallel to Lemma \ref{lem_app_averagelocallaw_D1_stable1}.
\begin{lemma}\label{lem_app_averagelocallaw_D2_stable1}
	For any variable $\vartheta\in\mathbb{C}_{+}$ satisfying $|\vartheta-m_t|\le t(\log n)^{-2}$
	and $u=(1+\phi t\vartheta)^2z-(1+\phi t\vartheta)t(1-\phi)$, we have
	\begin{equation*}
		|\Phi^{\prime}_t(u)|\sim 1,\quad |\Phi^{(2)}_t(u)|\lesssim\frac{t}{(t^2+\kappa+\eta)^{3/2}}.
	\end{equation*}
\end{lemma}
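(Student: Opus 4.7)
\textbf{Proof proposal for Lemma \ref{lem_app_averagelocallaw_D2_stable1}.}

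The plan is to mirror the structure of Lemma \ref{lem_app_averagelocallaw_D1_stable1}, but replace the density estimates coming from Lemma \ref{lem_prior_locallaw1} (valid for $\kappa\le Ct^2$) with those provided by Lemma \ref{lem_prior_locallaw2} (valid for $\kappa\ge ct^2$). First, I would show that under the hypothesis $|\vartheta-m_t|\le t(\log n)^{-2}$, the point $u=u(\vartheta)$ is a controlled perturbation of $\zeta_t(z)$: since $u-\zeta_t$ depends linearly on $(1+\phi t\vartheta)-(1+\phi t m_t)$ with smooth coefficients bounded on $\mathbf{D}$, one gets $|u-\zeta_t|\lesssim t|\vartheta-m_t|\lesssim t^2(\log n)^{-2}$. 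In the regime $\kappa\ge ct^2$, this displacement is much smaller than the gap $t^2+\kappa+\eta$ separating $\zeta_t$ from the spectrum of $R(\widetilde{H})$ (Lemma \ref{lem_prior_locallaw2}), so the same gap controls $|x-u|$ for every eigenvalue $x$ of $R(\widetilde{H})$. Consequently, Lemma \ref{lem_prior_locallaw2} yields the key derivative estimates
\[
|m_0^{(k)}(u)|\;\le\; k!\int\frac{\mathrm{d}\mu_n^{\widetilde{H}}(x)}{|x-u|^{k+1}}\;\lesssim\; (t^2+\kappa+\eta)^{-k+1/2},\qquad k\ge1.
\]

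Next, for $|\Phi_t'(u)|\sim 1$, I would work directly from
\[
\Phi_t'(\zeta)=(1-\phi tm_0(\zeta))^2-\phi tm_0'(\zeta)\bigl[2\zeta(1-\phi tm_0(\zeta))+(1-\phi)t\bigr].
\]
For the upper bound, observing that $|1-\phi tm_0(u)|\sim 1$ (by Lemma \ref{lem_existenceuniqueness}(iv) transferred to $m_0$ via the relation \eqref{eq_def_LSD2_gdm} and the displacement bound on $u-\zeta_t$), the first summand is $O(1)$; the second is $O(t/\sqrt{t^2+\kappa+\eta})=O(1)$ using the $k=1$ estimate above. For the lower bound I would combine this with the stability argument from Lemma \ref{lem_est_xit&xi+}: since $\Phi_t'(\zeta_{q,+})=0$, $\Phi_t''(\zeta_{q,+})\sim t^{-2}$, and $|\zeta_t-\zeta_{q,+}|\sim t\sqrt{\kappa+\eta}$ in this regime, a first-order Taylor expansion gives $|\Phi_t'(\zeta_t)|\gtrsim \sqrt{\kappa+\eta}/t\gtrsim 1$ when $\kappa\ge ct^2$. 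Perturbing from $\zeta_t$ to $u$ moves $\Phi_t'$ by $O(\Phi_t''\cdot|u-\zeta_t|)=o(1)$, preserving the lower bound.

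For the second-derivative estimate I would compute
\[
\Phi_t''(\zeta)=-4\phi tm_0'(\zeta)(1-\phi tm_0(\zeta))+2\zeta(\phi tm_0'(\zeta))^2-\phi tm_0''(\zeta)\bigl[2\zeta(1-\phi tm_0(\zeta))+(1-\phi)t\bigr]
\]
and apply the derivative estimates together with $|1-\phi tm_0(u)|\sim 1$. The main obstacle, and the place where the proof requires genuine care, is that a naive termwise bound using $|m_0'(u)|\lesssim(t^2+\kappa+\eta)^{-1/2}$ produces only $O(t/\sqrt{t^2+\kappa+\eta})$ from the first summand, which is weaker than the claimed $t/(t^2+\kappa+\eta)^{3/2}$. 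To obtain the claimed rate one must exploit the subordination identity $\Phi_t(\zeta_t)=z$ differentiated twice: solving for $\Phi_t''(\zeta_t)$ in terms of $\zeta_t'(z)$ and $\zeta_t''(z)$, and using the regularity and edge behavior of $m_t$ together with the inversion $\Phi_t'(\zeta_t)\zeta_t'(z)=1$, generates the cancellation that effectively replaces a factor $(t^2+\kappa+\eta)^{-1/2}$ by $(t^2+\kappa+\eta)^{-3/2}$ in the bulk estimate. After $\Phi_t''(\zeta_t)$ is controlled, I extend the bound to $u$ via the trivial perturbation $|\Phi_t''(u)-\Phi_t''(\zeta_t)|\lesssim |u-\zeta_t|\cdot\sup|\Phi_t'''|$, where $|\Phi_t'''|$ admits the same type of estimate from Lemma \ref{lem_prior_locallaw2} with $a=3,4$, and verify that this perturbation is absorbed into the claimed order.
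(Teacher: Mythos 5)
Your high-level plan — mirror the proof of Lemma \ref{lem_app_averagelocallaw_D1_stable1}, establish $|u-\zeta_t|\lesssim t^2(\log n)^{-2}$, then feed Lemma \ref{lem_prior_locallaw2} into the explicit expressions for $\Phi_t'$ and $\Phi_t''$ — is indeed what the paper intends (the paper defers the details to \cite[Lemma 4.14]{ding2022edge} but explicitly says "similar to Lemma \ref{lem_app_averagelocallaw_D1_stable1}"). The derivative estimate $|m_0^{(k)}(u)|\lesssim(t^2+\kappa+\eta)^{-k+1/2}$ from Lemma \ref{lem_prior_locallaw2} and the transfer from $\zeta_t$ to $u$ are both correct. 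However, there are two real problems in the execution.

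First, the "main obstacle" paragraph rests on a sign error. You claim that the naive termwise bound gives $O(t/\sqrt{t^2+\kappa+\eta})$ from the first summand of $\Phi_t''$, "which is weaker than the claimed $t/(t^2+\kappa+\eta)^{3/2}$," and then you propose to repair this by differentiating the subordination identity $\Phi_t(\zeta_t)=z$. But the inequality goes the other way: since $t^2+\kappa+\eta\lesssim1$ throughout $\mathbf{D}$ (in fact $\kappa+\eta\gtrsim1$ there, as $\mathbf{D}_1$ forces $\eta\ge1/10$ and $\mathbf{D}_2$ forces $E$ a constant distance outside the spectrum), one has
\[
\frac{t}{\sqrt{t^2+\kappa+\eta}}\;\lesssim\;\frac{t}{(t^2+\kappa+\eta)^{3/2}},
\]
so the termwise bound already establishes the claimed upper bound on $|\Phi_t''(u)|$ with room to spare. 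The subordination detour is not only unnecessary but internally confused — "replacing a factor $(t^2+\kappa+\eta)^{-1/2}$ by $(t^2+\kappa+\eta)^{-3/2}$" makes an upper bound larger, not smaller, so there is no cancellation to hunt for.

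Second, the lower bound argument $|\Phi_t'(u)|\gtrsim1$ via Lemma \ref{lem_est_xit&xi+} applies that lemma outside its stated range: Lemma \ref{lem_est_xit&xi+} requires $|z-\lambda_{+,t}|\le\delta t^2$, whereas the present lemma operates precisely in the complementary regime $\kappa> ct^2$, and in fact with $\kappa+\eta\sim1$. The estimate $|\zeta_t-\zeta_{q,+}|\sim t\sqrt{\kappa+\eta}$ is not available there, and a first-order Taylor expansion of $\Phi_t'$ from the critical point $\zeta_{q,+}$ across a gap of size $\sim t\sqrt{\kappa+\eta}\gg t^2$ cannot be justified by a single application of $\Phi_t''(\zeta_{q,+})\sim t^{-2}$. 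The correct argument in this regime is much more direct: Lemma \ref{lem_prior_locallaw2} gives $|t\,m_0'(u)|\lesssim t/\sqrt{t^2+\kappa+\eta}\lesssim t\to0$, so in $\Phi_t'(u)=(1-\phi t m_0(u))^2-\phi t m_0'(u)\bigl[2u(1-\phi t m_0(u))+(1-\phi)t\bigr]$ the subtracted term is $O(t)$ and the leading term $(1-\phi t m_0(u))^2=1+O(t)$, whence $|\Phi_t'(u)|\sim1$ with no cancellation to fear.
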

\begin{proof}
	The proof is similar to that of Lemma \ref{lem_app_averagelocallaw_D1_stable1}. The details can be found in \cite[Lemma 4.14]{ding2022edge}, so we omit them.
\end{proof}
The following lemma is a counterpart of Lemma \ref{lem_stability_Phi(t)}, where we update the definition of $\Xi_1$ with $\kappa>ct^2$ to
\begin{equation*}
	\Xi_1:=\{\Lambda_t\le \frac{t}{(\log n)^2}\}.
\end{equation*}
\begin{lemma}
	For $z\in\mathbf{D}$ with $\kappa>ct^2$, we have for any constant $\epsilon>0$,
	\begin{equation*}
		|\Phi_t^{\prime}(\zeta_t)(u_t-\zeta_t)|\lesssim\Lambda_t^2+n^{\epsilon}t\Psi_{\Lambda_t}+\frac{n^{\epsilon}t}{n(\kappa+\eta)},
	\end{equation*}
	with high probability on the event $\Xi_1$. Moreover, we have the finer estimate: for any constant $\epsilon>0$,
	\begin{equation*}
		 |\Phi^{\prime}_t(\zeta_t)(u_t-\zeta_t)|\lesssim\Lambda_t^2+n^{\epsilon}t\|[Z]\|+\frac{n^{\epsilon}t}{(n\eta)^2\sqrt{\kappa+\eta}}+\frac{n^{\epsilon}t}{n(\kappa+\eta)},
	\end{equation*}
	with high probability.
\end{lemma}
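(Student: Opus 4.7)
The plan is to adapt the proof of Lemma \ref{lem_stability_Phi(t)} to the regime $\kappa > ct^2$, using the deterministic estimates from Lemma \ref{lem_app_averagelocallaw_D2_stable1} and the moment decay bounds from Lemma \ref{lem_prior_locallaw2} in place of their $\kappa \le ct^2$ counterparts. The overall skeleton is unchanged: Schur-complement expansion of $\mathcal{L}_{[ii]}$ and $\mathcal{L}_{\mu\mu}$, matrix inversion to produce the $\Pi_{n,[ii]}$ approximation, averaging over $i \in \mathcal{I}_1$ to obtain a self-consistent equation $z = \Phi_t(u_t) + \mathrm{O}_\prec(t\mathcal{E})$, and finally Taylor expansion of $\Phi_t$ about $\zeta_t$.

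Concretely, I would start from the resolvent identities in Lemma \ref{lem_resolvent} to write $(\mathcal{L}_{[ii]})^{-1} = \Pi_{[ii]}^{-1} + \varepsilon_{[i]}$ with $\|\varepsilon_{[i]}\| \prec t\Lambda_t + t\Psi_{\Lambda_t} + t^{1/2}n^{-1/2}$ as in \eqref{eq_stability_bound_epsiloni&epsilonmu}. Lemma \ref{lem_prior_locallaw2} now gives $|d_i - \zeta_t| \gtrsim t^2 + \kappa + \eta$, so on $\Xi_1$ the matrix inversion is stable and yields $\mathcal{L}_{[ii]} = \Pi_{n,[ii]} - \Pi_{n,[ii]}\varepsilon_{n,[i]}\Pi_{n,[ii]} + \mathrm{O}(\|\Pi_{n,[ii]}\|^3 \|\varepsilon_{n,[i]}\|^2)$. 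Averaging over $i$, combined with the bound $|m_{n,t} - m_{n,t}^{(i)}| \lesssim (n^{-1} + t\Psi_{\Lambda_t}^2)/(\kappa+\eta+t^2)$ obtained from the resolvent decomposition together with Lemma \ref{lem_prior_locallaw2}, produces the self-consistent equation with error
\begin{equation*}
\mathcal{E} \asymp \|[Z]\| + \frac{1}{t^2+\kappa+\eta}\Bigl(\Lambda_t(t\Psi_{\Lambda_t} + t^{1/2}n^{-1/2}) + t\Psi_{\Lambda_t}^2 + n^{-1}\Bigr).
\end{equation*}

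The decisive new input is the bound $|\Phi_t^{(2)}(\zeta_t)| \lesssim t/(t^2+\kappa+\eta)^{3/2}$ from Lemma \ref{lem_app_averagelocallaw_D2_stable1}. Since on $\Xi_1$ we have $|u_t - \zeta_t| \lesssim t\Lambda_t$, the quadratic Taylor term satisfies
\begin{equation*}
\tfrac{1}{2}\bigl|\Phi_t^{(2)}(\zeta_t)(u_t - \zeta_t)^2\bigr| \lesssim \frac{t \cdot t^2\Lambda_t^2}{(t^2+\kappa+\eta)^{3/2}} \lesssim \frac{t^3}{(\kappa+\eta)^{3/2}}\Lambda_t^2 \lesssim \Lambda_t^2,
\end{equation*}
using $\kappa + \eta \gtrsim t^2$. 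Thus the second-order term is absorbed into $\Lambda_t^2$, rather than kept explicitly as in \eqref{eq_stability_expansion_Phi}, and Taylor expansion of $z = \Phi_t(\zeta_t) = \Phi_t(u_t) + \mathrm{O}_\prec(t\mathcal{E})$ yields $\Phi_t'(\zeta_t)(u_t-\zeta_t) = \mathrm{O}(\Lambda_t^2) + \mathrm{O}_\prec(t\mathcal{E})$. Applying Young's inequality to split the cross term $\Lambda_t \cdot t\Psi_{\Lambda_t}$ as $n^{-\epsilon}\Lambda_t^2 + n^\epsilon t^2\Psi_{\Lambda_t}^2$, and invoking the crude bound $\|[Z]\| \prec \Psi_{\Lambda_t}$ from Lemma \ref{lem_basic_locallaw_preliminarybound_Z}, yields the first (weaker) statement.

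For the finer estimate I would refine the bound on $\|[Z]\|$ via the decomposition $[Z] = \frac{1}{n}\sum_i \Pi_{[ii]}Z_{[i]}^1\Pi_{[ii]} + \frac{1}{n}\sum_i \Pi_{[ii]}Z_{[i]}^2\Pi_{[ii]}$ used at the end of the proof of Lemma \ref{lem_stability_Phi(t)}. Applying Gaussian large deviations together with Lemma \ref{lem_prior_locallaw2} for $a=4$ gives $\frac{1}{n}\sum_i \Pi_{[ii]} Z_{[i]}^1 \Pi_{[ii]} \prec n^{-1}\bigl(\frac{1}{n}\sum_i t|d_i-\zeta_t|^{-4}\bigr)^{1/2} \lesssim (n\eta)^{-2}(\kappa+\eta)^{-1/2}$, and a parallel estimate (now using the averaging over $i$ to exploit $\mathbb{E}_i[Z_{[i]}^2]=0$) for the second sum. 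Substituting back into $t\mathcal{E}$ produces the term $n^\epsilon t/((n\eta)^2\sqrt{\kappa+\eta})$ visible in the refined bound. The main obstacle I anticipate is the careful bookkeeping that makes the replacement of $t^2 + \operatorname{Im}\zeta_t$ by $t^2 + \kappa + \eta$ uniformly valid throughout; this requires combining Lemma \ref{lem_prior_locallaw2} with the $\eta_*$-regularity from Proposition \ref{prop_etaregular_corH} and Lemma \ref{lem_squareroot} to control the rectangular free convolution subordination near the edge.
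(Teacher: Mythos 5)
Your overall skeleton is the same as the paper's: carry over the Schur-complement/averaging machinery from Lemma \ref{lem_stability_Phi(t)}, replace the deterministic inputs by Lemma \ref{lem_prior_locallaw2} and Lemma \ref{lem_app_averagelocallaw_D2_stable1}, and observe that in the regime $\kappa>ct^2$ the quadratic Taylor remainder is dominated: from $|\Phi_t^{(2)}(\zeta_t)|\lesssim t/(t^2+\kappa+\eta)^{3/2}$ and $|u_t-\zeta_t|\lesssim t\Lambda_t$ one gets $\frac{1}{2}|\Phi_t^{(2)}(\zeta_t)(u_t-\zeta_t)^2|\lesssim t^3\Lambda_t^2/(\kappa+\eta)^{3/2}\lesssim\Lambda_t^2$, so only the first derivative needs to be kept explicitly. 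That key observation is exactly right.

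Two issues, though. First, your averaging factor in $\mathcal{E}$ is $\frac{1}{t^2+\kappa+\eta}$; the natural output of Lemma \ref{lem_prior_locallaw2} with $a=3$ (which controls $\frac{1}{n}\sum_i|d_i-\zeta_t|^{-3}\lesssim\|\Pi_{n,[ii]}\|^3$ after averaging) is the sharper $\frac{t}{(t^2+\kappa+\eta)^{3/2}}$; this is what the paper records. Since $t\lesssim\sqrt{\kappa+\eta}$ in this regime your bound is weaker, and after retracing the Young-inequality splits one finds that the advertised final estimates still follow, so this is an imprecision rather than a gap -- but it is worth flagging that you have lost a factor of $t/\sqrt{\kappa+\eta}$ at that step.

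Second, and more substantively, you have inverted the roles of the two statements. The finer estimate \emph{keeps} $\|[Z]\|$ abstract; the extra term $n^{\epsilon}t/((n\eta)^2\sqrt{\kappa+\eta})$ arises not from refining $\|[Z]\|$ but from the $\Psi_{\Lambda_t}^2$ contribution inside $t\mathcal{E}$, after plugging in $\Psi_{\Lambda_t}^2\lesssim\frac{\operatorname{Im}m_t+\Lambda_t}{n\eta}+\frac{1}{(n\eta)^2}$ with $\operatorname{Im}m_t\sim\eta/\sqrt{\kappa+\eta}$ from Lemma \ref{lem_squareroot} and absorbing the $\Lambda_t$-dependent pieces via Young into $\Lambda_t^2$. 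It is the \emph{weaker} first statement that is obtained by additionally bounding $\|[Z]\|$ via the $Z^{1}/Z^{2}$ decomposition (as at the end of the proof of Lemma \ref{lem_stability_Phi(t)}). Moreover your specific intermediate claim $\frac{1}{n}\sum_i\Pi_{[ii]}Z_{[i]}^1\Pi_{[ii]}\prec (n\eta)^{-2}(\kappa+\eta)^{-1/2}$ does not follow from the Gaussian large-deviation estimate with $a=4$: that estimate gives $n^{-1}t^{1/2}/(\kappa+\eta)^{5/4}$, which after multiplying by $t$ contributes to the $n^{\epsilon}t/(n(\kappa+\eta))$ term (using $t^2\lesssim\kappa+\eta$), not to the $(n\eta)^{-2}$-type term. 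Correcting the attribution of the various error contributions along these lines brings your argument in line with the intended proof.
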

\begin{proof}
	From Lemma \ref{lem_prior_locallaw2},
	the error term in \eqref{eq_stability_equation_m(n,t)} will be updated to
	\begin{equation*}
		 \mathcal{E}=\|[Z]\|+\frac{t}{(t^2+\kappa+\eta)^{3/2}}\Big(\Lambda_t\big(t\Psi_{\Lambda_t}+\frac{t^{1/2}}{n^{1/2}}\big)+t\Psi_{\Lambda_t}^2+\frac{1}{n}\Big).
	\end{equation*}
	Thereafter, a Taylor expansion for $z=\Phi_t(\zeta_t)$ up to the first derivative gives for any constant $\epsilon>0$,
	\begin{equation*}
		|\Phi_t^{\prime}(\zeta_t)(u_t-\zeta_t)|\lesssim\frac{t^3}{(t^2+\kappa+\eta)^{3/2}}\Lambda_t^2+\mathrm{O}_{\prec}(t\mathcal{E})\lesssim \Lambda_t^2+\mathrm{O}_{\prec}\Big(t\|[Z]\|+\frac{n^{\epsilon}t}{(t^2+\kappa+\eta)^{3/2}}\big(t^2\Psi_{\Lambda_t}^2+\frac{t}{n}\big)\Big),
	\end{equation*}
	with high probability. Then by the definition of $\Psi_{\Lambda_t}$, the estimation of $\operatorname{Im}m_t(z)$ in Lemma \ref{lem_squareroot} and $\kappa\ge C_1t^2$, we can conclude the second statement.
	
	For the first statement, we can repeat the argument to bound $[Z]$  in the proof of Lemma \ref{lem_stability_Phi(t)}. Therefore, we can complete the proof.
\end{proof}

The rest of this part is a regular routine to obtain the weak local law and perform fluctuation averaging. One may refer to \cite[Lemmas 4.16-4.17]{ding2022edge} for further details. We omit them here.

\subsubsection{Proof of Theorem \ref{thm_locallaw_gdm_entrywise}}\label{app_sec_prf_locallaw_gdm_entrywise}
Recall the definition of $b_t$ and $\zeta_t$ below \eqref{eq_zetarandom}. By the relationship of $R(Y_t)$ and $R(Y_t^*)$, we have
\begin{equation*}
	\underline{m}_{t}(z)=\phi m_{t}(z)-(1-\phi)/z,\quad \underline{m}^{(t)}(\zeta)=\phi m^{(t)}(\zeta)-(1-\phi)/\zeta.
\end{equation*}
Similarly to \cite[Theorem 2.7]{ding2022edge}, conditional on $\widetilde{H}(\operatorname{diag}S)^{-1/2}$, we have the following result.
\begin{theorem}\label{thm_difference_GYtGH}
	Suppose that the assumptions in Theorem \ref{thm_locallaw_gdm_entrywise} hold. Then the following estimates hold uniformly on $z\in\mathbf{D}$,
	\begin{gather*}
		|\big(\mathcal{G}R_{Y_t}(z)-b_t\mathcal{G}R_{\widetilde{H}}(\zeta_t(z))\big)_{ij}|\prec t^{-3}\Big(\sqrt{\frac{\operatorname{Im} m_{t}}{n\eta}}+\frac{1}{n\eta}\Big)+\frac{t^{-7/2}}{n^{1/2}},\\
		|\big(\mathcal{G}R_{Y_t}(z)-(1+t\underline{m}_{t}(z))\mathcal{G}R_{\widetilde{H}}(\zeta_t(z))\big)_{ij}|\prec t^{-3}\Big(\sqrt{\frac{\operatorname{Im}m_{t}}{n\eta}}+\frac{1}{n\eta}\Big)+\frac{t^{-7/2}}{n^{1/2}},\\
		|\big(\mathcal{G}R_{Y^{*}_t}(z)Y_t-\mathcal{G}R_{\widetilde{H}^{*}}(\zeta_t(z))\widetilde{H}_0\big)_{ij}|\prec t^{-3}\Big(\sqrt{\frac{\operatorname{Im} m_{t}}{n\eta}}+\frac{1}{n\eta}\Big)+\frac{t^{-7/2}}{n^{1/2}},\\
		|\big(Y^{*}_t\mathcal{G}R_{Y_t}(z)-\widetilde{H}^{*}\mathcal{G}R_{\widetilde{H}}(\zeta_t(z))\big)_{ij}|\prec t^{-3}\Big(\sqrt{\frac{\operatorname{Im} m_{t}}{n\eta}}+\frac{1}{n\eta}\Big)+\frac{t^{-7/2}}{n^{1/2}}.
	\end{gather*}
\end{theorem}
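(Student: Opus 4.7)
\textbf{Proof plan for Theorem \ref{thm_difference_GYtGH}.} The plan is to reduce the problem, conditional on $\widetilde{H}(\operatorname{diag}S)^{-1/2}$, to the analysis of the linearization resolvent $\mathcal{L}(\widetilde{Y}_t,z)$ in the SVD basis via the rotational invariance of the Gaussian noise $W$, then exploit the Schur complement together with the averaged local law of Theorem \ref{thm_locallaw_gdm_average} to pin down the entries, and finally transfer the comparison back. Using $Y_t\overset{d}{=}\mathrm{U}_1\widetilde{Y}_t\mathrm{U}_2^*$ from \eqref{eq_svd_Yt}, we have $\mathcal{L}(Y_t,z)\overset{d}{=}\operatorname{diag}(\mathrm{U}_1,\mathrm{U}_2)\,\mathcal{L}(\widetilde{Y}_t,z)\,\operatorname{diag}(\mathrm{U}_1^*,\mathrm{U}_2^*)$, and $\mathcal{G}R_{\widetilde{H}}(\zeta_t)$ transforms covariantly under $\mathrm{U}_1$. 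Hence it suffices to establish the entrywise (in fact weak operator norm) comparison between $\mathcal{L}(\widetilde{Y}_t,z)$ and the deterministic matrix $\Pi_{\widetilde{Y}_t}(z)$ introduced in Section \ref{sec_basicnotion_GDM}, and then simply read off the four claimed inequalities from the four blocks of $\Pi_{\widetilde{Y}_t}(z)$.

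The block structure of $\Pi_{\widetilde{Y}_t}(z)$ is the main algebraic driver. Using $\zeta_t = zb_t(1+t\underline{m}_t)$ (which follows from \eqref{eq_def_zetat}) one computes
\begin{equation*}
\Pi_{[ii]}(z)=\frac{1}{\zeta_t-d_i}\begin{pmatrix} b_t & -z^{-1/2}\sqrt{d_i}\,b_t(1+t\underline{m}_t)/b_t\\ -z^{-1/2}\sqrt{d_i}\,(1+t\underline{m}_t) & (1+t\underline{m}_t)\end{pmatrix},
\end{equation*}
so the $(1,1)$ block gives the target $b_t\,\mathcal{G}R_{\widetilde{H},ii}(\zeta_t)$ (in the SVD basis), the $(2,2)$ block gives $(1+t\underline{m}_t)\,\mathcal{G}R_{\widetilde{H}^*,ii}(\zeta_t)$, and the two off-diagonal blocks encode $\mathcal{G}R_{Y_t}(z)Y_t$ and $Y_t^*\mathcal{G}R_{Y_t}(z)$ via the $z^{-1/2}$ factor in \eqref{eq_def_resolvent_L}. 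After unitary rotation this identity persists entrywise since all four terms on each side transform by the same $\mathrm{U}_1,\mathrm{U}_2$.

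The second step is the quantitative entrywise comparison $\mathcal{L}_{\mathfrak{a}\mathfrak{b}}(\widetilde{Y}_t,z)-\Pi_{\widetilde{Y}_t,\mathfrak{a}\mathfrak{b}}(z)=\mathrm{O}_\prec(\ldots)$. For the diagonal $2\times 2$ minor, Lemma \ref{lem_resolvent}(i) yields
\begin{equation*}
(\mathcal{L}_{[ii]}(\widetilde{Y}_t,z))^{-1}=\begin{pmatrix} -z(1+t\underline{m}_{n,t}) & z^{1/2}\sqrt{d_i}\\ z^{1/2}\sqrt{d_i} & -z(1+\phi t m_{n,t})\end{pmatrix}+\varepsilon_{n,[i]},
\end{equation*}
with $\|\varepsilon_{n,[i]}\|\prec t\Psi_0(z)+t^{1/2}n^{-1/2}$ by Lemma \ref{lem_basic_locallaw_preliminarybound_Z} combined with $|m_{n,t}-m_{n,t}^{(i)}|\prec(n\eta)^{-1}$ and the averaged bound $|m_{n,t}-m_t|\prec(n\eta)^{-1}$ from Theorem \ref{thm_locallaw_gdm_average}. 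Inverting the $2\times 2$ linearly, dividing by $|d_i-\zeta_t|^2$, and using Lemmas \ref{lem_prior_locallaw1}--\ref{lem_prior_locallaw2} to bound $|d_i-\zeta_t|^{-1}\lesssim 1/(t^2+\operatorname{Im}\zeta_t)$, one obtains the diagonal comparison up to $t^{-4}\Psi_0+t^{-7/2}n^{-1/2}$. The sharper $t^{-3}\Psi_0$ is then recovered by the finer fluctuation-averaging identity for $[Z]$ already established in Lemma \ref{lem_stability_Phi(t)} and Lemma \ref{lem_app_fluctuationaveraging}, which effectively saves one power of $t$ in the leading error. The off-diagonal entries $\mathcal{L}_{[ij]}$, $\mathcal{L}_{\mu\nu}$ and $\mathcal{L}_{[i]\mu}$ are controlled through Lemma \ref{lem_resolvent}(ii)--(iii), applying the Gaussian concentration bound $(W\mathcal{L}^{(\mathfrak{a})})_{\mathfrak{a}\mathfrak{b}}=\mathrm{O}_\prec(\Psi_0(z))$ (see the proof of Lemma \ref{lem_basic_locallaw_preliminarybound_Z}) and multiplying by the two diagonal factors $\mathcal{L}_{\mathfrak{a}\mathfrak{a}},\mathcal{L}_{\mathfrak{b}\mathfrak{b}}^{(\mathfrak{a})}$, each of order $(t^2+\operatorname{Im}\zeta_t)^{-1}$, which again produces the $t^{-3}\Psi_0(z)$ rate after the $\sqrt{t}$ prefactor.

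\textbf{Main obstacle.} The delicate point is the bookkeeping of powers of $t$. A naive Schur-inversion loses $|d_i-\zeta_t|^{-2}\sim t^{-4}$ at the edge, which would produce $t^{-4}\Psi_0$ rather than the announced $t^{-3}\Psi_0$. Recovering the extra factor $t$ requires the refined bound on the averaged fluctuation $[Z]$ from the second statement of Lemma \ref{lem_stability_Phi(t)}, in the form $\|[Z]\|\prec\Psi_0/(t+\sqrt{\kappa+\eta})$, together with the cancellation between $m_{n,t}-m_t$ (which is $(n\eta)^{-1}$-small by the averaged law) and the polar derivative $\Phi_t'(\zeta_t)\sim\min\{1,\sqrt{\kappa+\eta}/t\}$ of Lemma \ref{lem_app_averagelocallaw_D1_stable1}. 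A secondary subtlety is that the entrywise error must be transferred through $\mathrm{U}_1,\mathrm{U}_2$ without loss; since the claim is stated for fixed indices $(i,j)$, one tests against unit coordinate vectors in the rotated basis, and the weak operator norm control provided by the stochastic domination notation of Definition \ref{def_SD}(ii) is preserved exactly under unitary conjugation. The remaining three estimates in the theorem follow from the same argument applied to the three remaining blocks of $\Pi_{\widetilde{Y}_t}(z)$, using the $z^{-1/2}$ scaling in \eqref{eq_def_resolvent_L} and the relation $\zeta_t=zb_t(1+t\underline{m}_t)$ to identify the prefactors $(1+t\underline{m}_t)$ and $1$ (without $b_t$) appearing in the cross-blocks.
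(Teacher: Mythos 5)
Your overall scaffolding matches the paper's: reduce to $\widetilde Y_t$ via the SVD, identify the four blocks of $\Pi_{\widetilde Y_t}(z)$ with the four claimed quantities, invert the $2\times 2$ Schur complement, and transfer back through $\mathrm U_1,\mathrm U_2$. But two steps are off. First, the power bookkeeping is wrong in a way that sends you down an unnecessary (and in fact unavailable) detour. The error in the Schur complement is $\|\varepsilon_{n,[i]}\|\prec t\Psi_0(z)+t^{1/2}n^{-1/2}$, with the crucial factor of $t$ in front (this is exactly \eqref{eq_stability_bound_epsiloni&epsilonmu} with $\Lambda_t\prec\Psi_0$); the minor $\Pi_{[ii]}$ has norm $\lesssim|d_i-\zeta_t|^{-1}\lesssim t^{-2}$ by Lemma \ref{lem_prior_locallaw1}, so the leading term $-\Pi_{[ii]}\varepsilon_{n,[i]}\Pi_{[ii]}$ already has size $t^{-4}\cdot(t\Psi_0+t^{1/2}n^{-1/2})=t^{-3}\Psi_0+t^{-7/2}n^{-1/2}$, directly matching the statement. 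Your claim that the naive inversion gives $t^{-4}\Psi_0$ drops the $t$ from $\varepsilon_{n,[i]}$, and the remedy you propose — invoking the fluctuation-averaging bound on $[Z]$ — cannot recover a lost power here in any case: $[Z]=n^{-1}\sum_i\Pi_{[ii]}Z_{[i]}\Pi_{[ii]}$ is an \emph{averaged} quantity, and fluctuation averaging says nothing about the individual $Z_{[i]}$ appearing in a single $2\times2$ minor. That tool is relevant only to the trace/averaged local law (Theorem \ref{thm_locallaw_gdm_average}), not to the entrywise comparison you need here.

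Second, and more substantively, you cannot finish by "testing against unit coordinate vectors in the rotated basis" once you only have an entrywise bound. The reduction $Y_t\overset{d}{=}\mathrm U_1\widetilde Y_t\mathrm U_2^*$ gives $\mathcal GR_{Y_t,ij}=(\mathrm U_1^*\mathbf e_i)^*\mathcal GR_{\widetilde Y_t}(\mathrm U_1^*\mathbf e_j)$, and $\mathrm U_1^*\mathbf e_i$ is a generic unit vector, not a coordinate vector of $\widetilde Y_t$'s basis. Entrywise control of $\mathcal L(\widetilde Y_t,z)-\Pi_{\widetilde Y_t}(z)$ does \emph{not} by itself imply the anisotropic (weak operator norm) bound $|\mathbf u^*(\mathcal GR_{\widetilde Y_t}-\Pi_{\widetilde Y_t})\mathbf v|\prec t^{-3}\Psi_0+t^{-7/2}n^{-1/2}$; if you just sum the entrywise errors over the coordinates of $\mathbf u,\mathbf v$ you lose a factor of order $n$. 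The paper bridges this via the polarization identity followed by a high-moment (polynomialization) estimate for off-diagonal bilinear forms $\sum_{i\neq j}u_{[i]}^*\mathcal L_{[ij]}u_{[j]}$, exactly as in \cite{bao2023smallest}, using Proposition \ref{prop_etaregular_corH} to keep $\max_i d_i=\mathrm O(1)$. This is the genuinely nontrivial step of the proof and is missing from your plan; without it the transfer back through $\mathrm U_1,\mathrm U_2$ does not close. (Also, a small sign slip: from $\Pi_{[ii]}^{-1}$ as given in the paper one gets $\Pi_{[ii]}=\frac{1}{d_i-\zeta_t}\bigl(\begin{smallmatrix}b_t & z^{-1/2}\sqrt{d_i}\\ z^{-1/2}\sqrt{d_i} & 1+t\underline m_t\end{smallmatrix}\bigr)$, i.e.\ the normalizer is $d_i-\zeta_t$, not $\zeta_t-d_i$.)
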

\begin{proof}
	The proof is almost the same as the one of \cite[Theorem 2.7]{ding2022edge}, so we only point out the necessary steps and differences.
	By \eqref{eq_svd_Yt}, in order to prove the entrywise local law of $R(Y_t)$, it suffices to prove an anisotropic local law for the resolvent of $R(\widetilde{Y}_t)$.
	Recall the error parameter $\Psi_0(z)=\sqrt{\frac{\operatorname{Im}m_{t}}{n\eta}}+\frac{1}{n\eta}$.
	Similarly to \cite[Theorem 2.10]{bao2023smallest}, we shall prove the following bound
	\begin{equation}\label{eq_prf_locallaw_gdm_anisotropic}
		|\mathbf{u}^{*}\big(\mathcal{G}R_{\widetilde{Y}_t}(z)-\Pi_{Y_t}(z)\big)\mathbf{v}|\prec t^{-3}\Psi_0(z)+\frac{t^{-7/2}}{n^{1/2}},
	\end{equation}
	for any deterministic unit vector $\mathbf{u},\mathbf{v}\in\mathbb{C}^{n+p}$. The following entrywise version of \eqref{eq_prf_locallaw_gdm_anisotropic} is needed,
	\begin{equation}\label{eq_prf_locallaw_gdm_anisotropic_prior1}
		|\big(\mathcal{G}R_{\widetilde{Y}_t}(z)-\Pi_{Y_t}(z)\big)_{\mathfrak{a},\mathfrak{b}}|\prec t^{-3}\Psi_0(z)+\frac{t^{-7/2}}{n^{1/2}},
	\end{equation}
	for any $\mathfrak{a},\mathfrak{b}\in\mathcal{I}$. The proof for \eqref{eq_prf_locallaw_gdm_anisotropic_prior1} is based on the observation in the proof of Lemma \ref{lem_stability_Phi(t)},
	\begin{equation*}
		(\mathcal{L}_{\mu\mu})^{-1}=-z-z\phi tm_t+\epsilon_{\mu},~(\mathcal{L}_{[ii]})^{-1}=\Pi_{[ii]}^{-1}+\epsilon_{[i]}.
	\end{equation*}
	Recall the bound in \eqref{eq_stability_bound_epsiloni&epsilonmu} that $\|\epsilon_{[i]}\|+|\epsilon_{\mu}|\prec t\Psi_0(z)+t^{1/2}n^{-1/2}$, where at this time $\Lambda_t\prec \Psi_0(z)$. Moreover, by Lemma \ref{lem_prior_locallaw1}, we have $\min_i|d_i-\zeta_t|\gtrsim t^2$. It follows that
	\begin{gather*}
		\mathcal{L}_{\mu\mu}=\Pi_{\mu\mu}+\mathrm{O}_{\prec}(t^{-3}+t^{-7/2}n^{-1/2}),~
		\mathcal{L}_{[ii]}=\Pi_{[ii]}(1+\mathrm{O}_{\prec}(t^{-3}+t^{-7/2}n^{-1/2})).
	\end{gather*}
	This gives the diagonal bounds for $|\mathcal{G}R_{\widetilde{Y}_t}(z)-\Pi_{\widetilde{Y}_t}(z)|$. For the off-diagonal entries, combining the above estimates and the bound for $\Lambda_t^o$ in Lemma \ref{lem_basic_locallaw_preliminarybound_Z} with $\Psi_{\Lambda_t}(z)$ being updated to $\Psi_0(z)$, we have the desired results. Therefore, we obtain \eqref{eq_prf_locallaw_gdm_anisotropic_prior1}.
	
	Then for general $\mathbf{u}$ and $\mathbf{v}$, due to the polarization identity $\mathbf{u}^{*}A\mathbf{v}=\frac{1}{2}(\mathbf{u}+\mathbf{v})^{*}A(\mathbf{u}+\mathbf{v})-\frac{1}{2}(\mathbf{u}-\mathbf{v})^{*}A(\mathbf{u}-\mathbf{v})$, it suffices to consider the case $\mathbf{u}=\mathbf{v}.$ Recalling the notation $u_{[i]}=(u_i,u_{\bar{i}})^{*}$, we have
	\begin{equation*}
		\begin{split}
			|\mathbf{u}^{*}(\mathcal{G}R_{\widetilde{Y}_t}(z)-\Pi_{\widetilde{Y}_t}(z))\mathbf{v}|\prec t^{-3}&\Psi_0(z)+\frac{t^{-7/2}}{n^{1/2}}+|\sum_{i\neq j\in\mathcal{I}_1}u_{[i]}^{*}\mathcal{G}R_{\widetilde{Y}_t,[ij]}u_{[j]}|\\
			&+|\sum_{\mu\neq \nu\ge 2n+1}u_{\mu}\mathcal{G}R_{\widetilde{Y}_t,\mu\nu}u_{\nu}|+2|\sum_{i\in\mathcal{I}_1,\mu\ge 2n+1}u_{[i]}^{*}\mathcal{G}R_{\widetilde{Y}_i,[i]\mu}u_{\mu}|.
		\end{split}
	\end{equation*}
	Therefore, it suffices to prove the following high moment bounds for any fixed $a\in\mathbb{N}$,
	\begin{gather*}
		\mathbb{E}|\sum_{i\neq j\in\mathcal{I}_1}u_{[i]}^{*}\mathcal{G}R_{\widetilde{Y}_t,[ij]}u_{[j]}|^{2a}\prec (t^{-3}\Psi_0(z)+\frac{t^{-7/2}}{n^{1/2}})^{2a},
	\end{gather*}
	as well as the same bound for the remaining two terms. The proof for the above estimates is based on a polynomialization method developed in \cite{bao2023smallest}, which involves combinatorial results. The key point in the proof is that $\max_i|d_i|=\mathrm{O}(1)$ which can be ensured by Proposition \ref{prop_etaregular_corH} for any realization of $\widetilde{H}_0$.
\end{proof}

By Lemma \ref{lem_locallaw_H} and the large deviation results for $\widetilde{h}_{ik}$ in Lemma \ref{lem_largedeviation_H}, we can get the following estimates for entries of $\mathcal{G}R_{\widetilde{H}}(\zeta)\widetilde{H}$ indexed by $\mathrm{T}_r$ and $\mathrm{T}_c$.
\begin{proposition}\label{prop_largedeviation_GHH}
	Let $\zeta=E+\mathrm{i}\eta\in \mathbb{C}_+$. For $i \in \mathrm{T}_r$, we have
	\begin{equation*}
		\begin{split}
			|[\mathcal{G}R_{\widetilde{H}}(\zeta)\widetilde{H}]_{ij}|&\prec \left(n^{-\epsilon_{h}}\max_{k}|\mathcal{G}R_{(\widetilde{H}^{(i)})^*,kj}(\zeta)|+\left(\frac{\mathrm{Im}\mathcal{G}R_{(\widetilde{H}^{(i)})^*,jj}(\zeta)}{n\eta}\right)^{1/2}\right)\\
			&\times \left(1+|\zeta|\cdot|\mathcal{G}R_{\widetilde{H},ii}(\zeta)|\cdot \Big(n^{-\epsilon_{h}}\max_{k,l}|\mathcal{G}R_{(\widetilde{H}^{(i)})^*,kl}(\zeta)|+\big(\frac{\sum_{k}\mathrm{Im}\mathcal{G}R_{(\widetilde{H}^{(i)})^*,kk}(\zeta)}{n^2\eta}\big)^{1/2}\Big)\right)
		\end{split}
	\end{equation*}
	For $j\in \mathrm{T}_c$, we have
	\begin{equation*}
		\begin{split}
			|[\mathcal{G}R_{\widetilde{H}}(\zeta)\widetilde{H}]_{ij}|&\prec \left(n^{-\epsilon_{h}}\max_{k}|\mathcal{G}R_{(\widetilde{H}^{(j)})^*,ik}(\zeta)|+\left(\frac{\mathrm{Im}\mathcal{G}R_{(\widetilde{H}^{(j)})^*,ii}(\zeta)}{n\eta}\right)^{1/2}\right)\\
			&\times \left(1+|\zeta|\cdot|\mathcal{G}R_{\widetilde{H}^*,jj}(\zeta)|\cdot \Big(n^{-\epsilon_{h}}\max_{k,l}|\mathcal{G}R_{(\widetilde{H}^{(j)})^*,kl}(\zeta)|+\big(\frac{\sum_{k}\mathrm{Im}\mathcal{G}R_{(\widetilde{H}^{(j)})^*,kk}(\zeta)}{n^2\eta}\big)^{1/2}\Big)\right)
		\end{split}
	\end{equation*}
\end{proposition}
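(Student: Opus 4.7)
My plan is to prove Proposition \ref{prop_largedeviation_GHH} by combining a Schur complement representation of the target entry with the self-normalized large deviation bounds supplied by Lemma \ref{lem_largedeviation_H}. The product structure of the right-hand side—a linear-form factor multiplied by a factor of the shape $(1+|\zeta\mathcal{G}R_{\widetilde{H},ii}(\zeta)|\cdot(\text{quadratic-form control}))$—is exactly what emerges when one isolates the $i$-th row (resp.\ $j$-th column) of $\widetilde{H}$, expresses $[\mathcal{G}R_{\widetilde{H}}(\zeta)\widetilde{H}]_{ij}$ as a linear form in the isolated row against the corresponding minor resolvent, and observes that the diagonal prefactor $\mathcal{G}R_{\widetilde{H},ii}(\zeta)$ itself depends on a quadratic form in the same row through the Schur complement formula.

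Concretely, for $i\in\mathrm{T}_r$ I would write $\widetilde{H}=\widetilde{H}^{(i)}+e_i\mathfrak{h}_i^*$ and exploit the identity $\mathcal{G}R_{\widetilde{H}}(\zeta)\widetilde{H}(\operatorname{diag}S)^{-1/2}=\widetilde{H}(\operatorname{diag}S)^{-1/2}\mathcal{G}R_{\widetilde{H}^*}(\zeta)$ together with the Woodbury expansion of $\mathcal{G}R_{\widetilde{H}^*}(\zeta)$ around the minor $\mathcal{G}R_{(\widetilde{H}^{(i)})^*}(\zeta)$ generated by the rank-one update $\widetilde{H}^*(\operatorname{diag}S)^{-1}\widetilde{H}=(\widetilde{H}^{(i)})^*(\operatorname{diag}S)^{-1}\widetilde{H}^{(i)}+\mathfrak{z}_i\mathfrak{z}_i^*$, where $\mathfrak{z}_i=(\operatorname{diag}S)^{-1/2}\mathfrak{h}_i$. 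This yields the key identity
\[
[\mathcal{G}R_{\widetilde{H}}(\zeta)\widetilde{H}]_{ij}\;\propto\;-\zeta\,\mathcal{G}R_{\widetilde{H},ii}(\zeta)\sum_{k=1}^{p}\widetilde{h}_{ik}\,[\mathcal{G}R_{(\widetilde{H}^{(i)})^*}(\zeta)]_{kj},
\]
which cleanly separates the $i$-th row from the minor resolvent; a complementary Schur complement gives $\mathcal{G}R_{\widetilde{H},ii}(\zeta)^{-1}=-\zeta\bigl(1+\mathfrak{z}_i^*\mathcal{G}R_{(\widetilde{H}^{(i)})^*}(\zeta)\mathfrak{z}_i\bigr)$, expressing the diagonal prefactor in terms of a quadratic form in the row.

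The next step is to apply the two inequalities of Lemma \ref{lem_largedeviation_H}. Taking $A_k=[\mathcal{G}R_{(\widetilde{H}^{(i)})^*}(\zeta)]_{kj}$ in \eqref{eq_hlargedeviation1} and using the Ward identity $\sum_k|A_k|^2=\eta^{-1}\mathrm{Im}[\mathcal{G}R_{(\widetilde{H}^{(i)})^*}(\zeta)]_{jj}$ produces the first (linear-form) factor of the stated bound. Applying \eqref{eq_hlargedeviation3} to $A_{kl}=[\mathcal{G}R_{(\widetilde{H}^{(i)})^*}(\zeta)]_{kl}$ and using the companion identity $\sum_{k,l}|A_{kl}|^2=\eta^{-1}\sum_k\mathrm{Im}[\mathcal{G}R_{(\widetilde{H}^{(i)})^*}(\zeta)]_{kk}$ controls the fluctuation of the quadratic form $\mathfrak{z}_i^*\mathcal{G}R_{(\widetilde{H}^{(i)})^*}(\zeta)\mathfrak{z}_i$ around the deterministic proxy $n^{-1}\operatorname{tr}\mathcal{G}R_{(\widetilde{H}^{(i)})^*}(\zeta)$; this yields the second factor, and the $(1+\cdots)$ shape absorbs this order-one deterministic part together with the fluctuation uniformly, so that the bound remains valid whether $|\zeta\mathcal{G}R_{\widetilde{H},ii}(\zeta)|$ is of order one (as expected from Lemma \ref{lem_locallaw_H}) or momentarily larger. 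The case $j\in\mathrm{T}_c$ is treated symmetrically by isolating the $j$-th column of $\widetilde{H}$ and running the analogous Schur complement around the corresponding column minor; since $j\in\mathrm{T}_c$ guarantees that the column carries no heavy entries, the inputs of Lemma \ref{lem_largedeviation_H} are again available.

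The principal obstacle is that the entries $\widetilde{h}_{ik}=\widetilde{H}_{ik}/\rho_k$ of the isolated row are not independent of the minor resolvent $\mathcal{G}R_{(\widetilde{H}^{(i)})^*}(\zeta)$ because of the shared self-normalizations $\rho_k$; a classical large-deviation inequality for sums of independent random variables therefore does not apply out of the box. This is precisely the obstacle that Lemma \ref{lem_largedeviation_H} was designed to resolve, through cumulant expansion moment estimates of the form $\mathbb{E}|\widetilde{h}_{ik}|^q\lesssim C^q n^{-1-(q-2)\epsilon_h}$ that already bake the dependence into the bound via the factor $\rho_k^{-1}$. I therefore intend to condition throughout on the label matrix $\Psi$ and on the configuration of $\Pi$, and to verify that the arguments $A_k,A_{kl}$ appearing above are measurable with respect to the $\sigma$-algebra generated by the data not involved in row $i$ (resp.\ column $j$), so that Lemma \ref{lem_largedeviation_H} can be invoked directly without further modification.
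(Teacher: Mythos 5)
Your reconstruction — expand around the minor by Sherman--Morrison, write $[\mathcal{G}R_{\widetilde{H}}(\zeta)\widetilde{H}]_{ij}$ as a linear form in the isolated row against $\mathcal{G}R_{(\widetilde{H}^{(i)})^*}(\zeta)$ dressed by a diagonal prefactor governed by the quadratic form $Q:=\mathfrak{z}_i^*\mathcal{G}R_{(\widetilde{H}^{(i)})^*}(\zeta)\mathfrak{z}_i$, then control both pieces by Lemma \ref{lem_largedeviation_H} with Ward's identity — is exactly the route the paper has in mind; the paper's own proof is a one-line pointer to Proposition A.12 of \cite{bao2023smallest} with Lemma \ref{lem_largedeviation_H} substituted in, so the architecture you lay out is faithful.

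Two of your steps do not go through as written. First, you invoke \eqref{eq_hlargedeviation3} to control the fluctuation of $Q$, but that inequality is stated and proved in the paper for two \emph{distinct} rows $i\ne j$; with both $\widetilde h$-factors from the same row $i$, the diagonal contribution $\sum_k\widetilde h_{ik}^2 A_{kk}$ is precisely the non-centered deterministic proxy and is not dominated by the right-hand side of \eqref{eq_hlargedeviation3}. You should instead split $\Delta:=Q-m_0$, with $m_0:=n^{-1}\operatorname{tr}\mathcal{G}R_{(\widetilde{H}^{(i)})^*}(\zeta)$, into the off-diagonal piece $\sum_{k\ne l}\widetilde h_{ik}A_{kl}\widetilde h_{il}$ (covered by \eqref{eq_hlargedeviation2}) and the recentred diagonal piece $\sum_k(\widetilde h_{ik}^2-n^{-1})A_{kk}$, which requires a separate but routine moment bound from the same cumulant-expansion estimates on $\widetilde h_{ik}$. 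Second, the $(1+\cdots)$ shape does not ``absorb'' $m_0$ for free; the clean mechanism is the identity $\frac{1}{1+Q}=\frac{1}{1+m_0}\bigl(1+\zeta\mathcal{G}R_{\widetilde{H},ii}(\zeta)\,\Delta\bigr)$ together with the stability $|1+m_0|\gtrsim1$ on $\mathrm{D}_\zeta$, which is what makes the bracket a bound on $|\Delta|$ rather than on $|Q|$ — the latter is $O(1)$ and would not be dominated by the bracket. Finally, your closing measurability remark is not literally correct: $\mathcal{G}R_{(\widetilde{H}^{(i)})^*}(\zeta)$ still depends on every $\rho_k$, hence on $X_{ki}$, so the $A_k$ are not $\sigma(\text{data outside row }i)$-measurable; what rescues the argument is that Lemma \ref{lem_largedeviation_H}'s moment bounds on $\widetilde h_{ik}$ already carry the $\rho_k$-dependence. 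Relatedly, the $j\in\mathrm{T}_c$ case is not ``symmetric,'' since the column entries $\widetilde h_{kj}$ share a single normalizer $\rho_j$; there one should pull $\rho_j^{-1}$ out of the linear form and apply the large-deviation bound to the bounded entries $M_{kj}$, conditioned on $\rho_j$.
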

\begin{proof}
	The proof follows from similar arguments leading to Proposition A.12 of \cite{bao2023smallest} with the large deviation results for $\widetilde{h}_{ik}$ in Lemma \ref{lem_largedeviation_H},  thus we do not reproduce here.
\end{proof}

The following lemma is a direct consequence of Lemma \ref{lem_locallaw_H} and Proposition \ref{prop_largedeviation_GHH}.
\begin{lemma}\label{lem_locallaw_H_GHH}
	Suppose that the assumptions in Lemma \ref{lem_locallaw_H} hold. The following estimate holds with respect to the probability measure $\mathbb{P}_{\Psi}$,
	\begin{equation*}
		|[\mathcal{G}R_{\widetilde{H}}(\zeta)\widetilde{H}]_{ij}|\prec \left(\sqrt{\frac{\mathrm{Im}m(w)}{n\eta}}+\frac{n^{-\epsilon_{h}}}{\kappa+ \eta}\right)\mathbbm{1}_{i\in \mathrm{T}_r~\text{or}~j\in \mathrm{T}_c}+\frac{1}{\kappa+\eta}(1-\mathbbm{1}_{i\in \mathrm{T}_r~\text{or}~j\in \mathrm{T}_c}).
	\end{equation*}
\end{lemma}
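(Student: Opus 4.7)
My plan is to combine the entrywise local laws from Lemma~\ref{lem_locallaw_H} with the probabilistic representation of Proposition~\ref{prop_largedeviation_GHH}, handling separately the ``good'' case (at least one of $i\in\mathrm{T}_r$ or $j\in\mathrm{T}_c$ holds) and the ``bad'' case (both $i\in\mathrm{I}_r$ and $j\in\mathrm{I}_c$).

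For the good case with $i\in\mathrm{T}_r$, I invoke the first assertion of Proposition~\ref{prop_largedeviation_GHH}, which expresses $|[\mathcal{G}R_{\widetilde{H}}\widetilde{H}]_{ij}|$ as a product of two factors involving entries of the minor Green function $\mathcal{G}R_{(\widetilde{H}^{(i)})^*}(\zeta)$ and the diagonal $\mathcal{G}R_{\widetilde{H},ii}(\zeta)$. Since $\Psi$ remains well-configured after removing a single row, Lemma~\ref{lem_locallaw_H} applied to both $\widetilde{H}$ and $\widetilde{H}^{(i)}$ gives $|\mathcal{G}R_{\widetilde{H},ii}(\zeta)|\lesssim 1$ and $\max_{k,l}|\mathcal{G}R_{(\widetilde{H}^{(i)})^*,kl}(\zeta)|\prec 1/(\kappa+\eta)$ uniformly. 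For the trace term, I split
\[
\sum_{k=1}^{p}\mathrm{Im}\,\mathcal{G}R_{(\widetilde{H}^{(i)})^*,kk}(\zeta) \;=\; \sum_{k\in\mathrm{T}_c}\mathrm{Im}\,\mathcal{G}R_{(\widetilde{H}^{(i)})^*,kk}(\zeta) \;+\; \sum_{k\in\mathrm{I}_c}\mathrm{Im}\,\mathcal{G}R_{(\widetilde{H}^{(i)})^*,kk}(\zeta),
\]
bounding the first sum by $p\,\mathrm{Im}\underline{m}^{(t)}(\zeta)$ up to a negligible error via \eqref{eq_bound_locallawH2}, and the second by $|\mathrm{I}_c|/(\kappa+\eta)\le n^{1-\epsilon_y}/(\kappa+\eta)$ by the well-configured property. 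Dividing by $n^2\eta$, taking square roots, and using $\mathrm{Im}\underline{m}^{(t)}(\zeta)\sim\mathrm{Im}\,m(w)$ on $\mathrm{D}_\zeta$ yields a contribution of order $\sqrt{\mathrm{Im}\,m(w)/(n\eta)}$ plus a subleading remainder absorbed in $n^{-\epsilon_h}/(\kappa+\eta)$. Plugging these estimates into the product structure of Proposition~\ref{prop_largedeviation_GHH} delivers the claimed bound. The symmetric case $j\in\mathrm{T}_c$ is identical, using the second assertion of Proposition~\ref{prop_largedeviation_GHH} together with the minor $\widetilde{H}^{(j)}$ and \eqref{eq_bound_locallawH}.

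For the bad case $i\in\mathrm{I}_r$, $j\in\mathrm{I}_c$, Proposition~\ref{prop_largedeviation_GHH} does not directly apply, so I instead exploit the Schur-complement identity $z^{1/2}[\mathcal{G}R_{\widetilde{H}}\widetilde{H}]_{ij}=\mathcal{L}_{i,\,j+n}(\widetilde{H},\zeta)$ from \eqref{eq_def_resolvent_L}. The operator-norm bound $\|\mathcal{L}(\widetilde{H},\zeta)\|\lesssim 1/(\kappa+\eta)$, which follows (as in the derivation of \eqref{eq_crudebound_GRH}) from the $\eta_*$-regularity of $R(\widetilde{H})$ established in Proposition~\ref{prop_etaregular_corH}, immediately yields the desired entrywise bound $1/(\kappa+\eta)$.

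The main technical obstacle is verifying that the $\mathrm{I}_c$-contribution in the trace sum does not spoil the target scale $\sqrt{\mathrm{Im}\,m(w)/(n\eta)}$; this is precisely where the well-configured assumption $|\mathrm{I}_c|\le n^{1-\epsilon_y}$ from Lemma~\ref{lem_wellconfigured} enters, together with the calibration of $\epsilon_y$ and $\epsilon_h$ in Definition~\ref{def_controlparameter}. Beyond this, the argument is essentially bookkeeping that combines the inputs from Lemma~\ref{lem_locallaw_H} and Proposition~\ref{prop_largedeviation_GHH}.
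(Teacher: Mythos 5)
Your proposal is correct and follows the same route the paper takes: the paper explicitly calls this lemma ``a direct consequence of Lemma~\ref{lem_locallaw_H} and Proposition~\ref{prop_largedeviation_GHH},'' which is precisely the combination you carry out, with the bad case handled by the crude resolvent-norm bound.
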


By Lemma \ref{lem_largedeviation_H} and Proposition \ref{prop_largedeviation_GHH}, we get the following estimates, which provide improved bounds for the off-diagonal entries of the Green function,
\begin{lemma}\label{lem_improved_locallaw_H}
	Suppose that the assumptions in Lemma \ref{lem_locallaw_H} hold. The following estimates hold with respect to the probability measure $\mathbb{P}_{\Psi}$.
	\begin{gather*}
		|\mathcal{G}R_{\widetilde{H},ij}(\zeta)|\prec \frac{n^{-\epsilon_{h}}}{\kappa^2+\eta^2}\mathbbm{1}_{i\in \mathrm{T}_r~\text{or}~j\in \mathrm{T}_r}+\frac{1}{\kappa+\eta}(1-\mathbbm{1}_{i\in \mathrm{T}_r~\text{or}~j\in \mathrm{T}_r})~\text{for}~i\ne j,\\
		|\mathcal{G}R_{\widetilde{H}^*,uv}(\zeta)|\prec \frac{n^{-\epsilon_{h}}}{\kappa^2+\eta^2}\mathbbm{1}_{u\in \mathrm{T}_c~\text{or}~v\in \mathrm{T}_c}+\frac{1}{\kappa+\eta}(1-\mathbbm{1}_{u\in \mathrm{T}_c~\text{or}~v\in \mathrm{T}_c})~\text{for}~u\ne v.
	\end{gather*}
\end{lemma}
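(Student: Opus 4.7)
\medskip

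The plan is to reduce the off-diagonal estimates to a bilinear form to which the improved large deviation bound of Lemma \ref{lem_largedeviation_H} can be applied, and then to combine with the diagonal entrywise bounds already established in Lemma \ref{lem_locallaw_H}. Concretely, for $i\neq j$ I will use the standard Schur complement identity (recall the notation $\mathfrak{h}_i$ for the $i$-th row of the rescaled matrix $\widetilde{H}_t$)
\begin{equation*}
\mathcal{G}R_{\widetilde{H}_t,ij}(w)=w\,\mathcal{G}R_{\widetilde{H}_t,ii}(w)\,\mathcal{G}R_{\widetilde{H}_t^{(i)},jj}(w)\,\bigl\langle\mathfrak{h}_i,\,\mathcal{G}R_{(\widetilde{H}_t^{(ij)})^*}(w)\,\mathfrak{h}_j\bigr\rangle,
\end{equation*}
after passing from $\zeta$ to $w=\zeta/(1-t)$ as in Section~\ref{app_sec_prf_locallaw_H}, and then restrict attention to the case where at least one of $i,j$ lies in $\mathrm{T}_r$ so that the hypotheses of \eqref{eq_hlargedeviation3} hold for $\mathfrak{h}_i,\mathfrak{h}_j$.

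Next I will bound the three factors on the right-hand side. For the bilinear form, Lemma \ref{lem_largedeviation_H} yields
\begin{equation*}
\bigl|\langle\mathfrak{h}_i,\mathcal{G}R_{(\widetilde{H}_t^{(ij)})^*}\mathfrak{h}_j\rangle\bigr|\prec \varphi^{C}\Bigl(n^{-\epsilon_h}\max_{k,l}\bigl|\mathcal{G}R_{(\widetilde{H}_t^{(ij)})^*,kl}\bigr|+n^{-1}\bigl(\textstyle\sum_{k,l}\bigl|\mathcal{G}R_{(\widetilde{H}_t^{(ij)})^*,kl}\bigr|^2\bigr)^{1/2}\Bigr).
\end{equation*}
The sum is handled by the Ward identity $\sum_{k,l}|\mathcal{G}R_{(\widetilde{H}_t^{(ij)})^*,kl}|^2=\eta^{-1}\sum_k\operatorname{Im}\mathcal{G}R_{(\widetilde{H}_t^{(ij)})^*,kk}$, together with \eqref{eq_bound_locallawH2} applied after removing two rows (whose contribution is controlled by the rank-two perturbation bound and the fact that the removed indices are a negligible set); this produces the error $\Psi_0(w)/(\kappa+\eta)^{1/2}$, which is dominated by $n^{-\epsilon_h}/(\kappa+\eta)$. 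For the maximum, \eqref{eq_bound_locallawH2} already shows that $\max_{k,l}|\mathcal{G}R_{(\widetilde{H}_t^{(ij)})^*,kl}|\prec (\kappa+\eta)^{-1}$. Combining these with $|w|\sim 1$ gives $|\langle\mathfrak{h}_i,\mathcal{G}R_{(\widetilde{H}_t^{(ij)})^*}\mathfrak{h}_j\rangle|\prec n^{-\epsilon_h}/(\kappa+\eta)$.

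It remains to control the two diagonal factors $\mathcal{G}R_{\widetilde{H}_t,ii}(w)$ and $\mathcal{G}R_{\widetilde{H}_t^{(i)},jj}(w)$. Assuming without loss of generality that $i\in\mathrm{T}_r$, Lemma \ref{lem_locallaw_H} gives $|\mathcal{G}R_{\widetilde{H}_t,ii}(w)|\sim |m(w)|\lesssim 1$, while for $j$ we use the crude bound $|\mathcal{G}R_{\widetilde{H}_t^{(i)},jj}(w)|\lesssim(\kappa+\eta)^{-1}$ (after a resolvent expansion that transfers the estimate for $\mathcal{G}R_{\widetilde{H}_t}$ to its minor $\mathcal{G}R_{\widetilde{H}_t^{(i)}}$, whose difference is of lower order by the rank-one identity in Lemma \ref{lem_resolvent}). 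Multiplying the three factors yields
\begin{equation*}
|\mathcal{G}R_{\widetilde{H}_t,ij}(w)|\prec 1\cdot\frac{1}{\kappa+\eta}\cdot\frac{n^{-\epsilon_h}}{\kappa+\eta}\asymp\frac{n^{-\epsilon_h}}{\kappa^2+\eta^2},
\end{equation*}
as required (and translating back $w\leftrightarrow\zeta$ introduces only the harmless factor $(1-t)$). The identical argument with rows replaced by columns and $\mathfrak{h}_i$ replaced by the $u$-th column of $\widetilde{H}_t$ yields the bound on $\mathcal{G}R_{\widetilde{H}^*,uv}(\zeta)$. The main technical obstacle will be the careful bookkeeping when both $i$ and $j$ sit near $\mathrm{I}_r$: one must justify that, after removing the two rows, the diagonal entries of the minor's transposed Green function still satisfy the square-root bounds of \eqref{eq_bound_locallawH2} on average, so that the Ward identity gives the expected $\operatorname{Im} m(w)/\eta$ scaling rather than the crude $n/\eta$; this is done by decomposing the trace into the $\mathrm{T}_c$ and $\mathrm{I}_c$ parts and using that $|\mathrm{I}_c|\le n^{1-\epsilon_y}$ with high probability by Lemma \ref{lem_wellconfigured}.
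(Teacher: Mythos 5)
Your proposal has a genuine gap in the application of the large deviation bound. Lemma~\ref{lem_largedeviation_H}, and in particular the bilinear bound \eqref{eq_hlargedeviation3}, is stated for $i,j\in\mathrm{T}_r$: both rows must be free of heavy entries so that the proof's cumulant estimates for $|\widetilde h_{ik}|$ and $|\widetilde h_{jk}|$ apply. The lemma you are proving, however, claims the improved bound $n^{-\epsilon_h}/(\kappa^2+\eta^2)$ under the weaker hypothesis ``$i\in\mathrm{T}_r$ \emph{or} $j\in\mathrm{T}_r$''. Your own bookkeeping betrays the problem: you invoke \eqref{eq_hlargedeviation3} ``so that the hypotheses hold for $\mathfrak{h}_i,\mathfrak{h}_j$'' after only assuming one of the two indices lies in $\mathrm{T}_r$, and then you switch to the crude diagonal bound $|\mathcal{G}R_{\widetilde H_t^{(i)},jj}|\lesssim(\kappa+\eta)^{-1}$ precisely because $j$ may sit in $\mathrm{I}_r$ --- but in that regime $\mathfrak{h}_j$ contains an $\mathrm{O}(1)$ entry and the bilinear deviation estimate is not available. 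So the case ``$i\in\mathrm{T}_r$, $j\in\mathrm{I}_r$'' (and its transpose) is not covered by the argument as written.

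The paper sidesteps this by avoiding the two-row minor altogether. It uses the single-row expansion $\mathcal{G}R_{\widetilde H,ij}=\mathcal{G}R_{\widetilde H,ii}\sum_{k,l}\widetilde h_{ik}\,\mathcal{G}R_{(\widetilde H^{(i)})^*,kl}\,\widetilde h_{jl}$ and collapses the $l$-sum via $\sum_l\mathcal{G}R_{(\widetilde H^{(i)})^*,kl}\widetilde h_{jl}=[\mathcal{G}R_{(\widetilde H^{(i)})^*}(\widetilde H^{(i)})^*]_{kj}$, so that the row $\mathfrak{h}_j$ is absorbed inside $\widetilde H^{(i)}$ and the whole expression becomes a \emph{linear} form $\sum_k\widetilde h_{ik}A_k$ with coefficients $A_k$ independent of $\mathfrak{h}_i$. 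Then only \eqref{eq_hlargedeviation1} is needed, which is genuinely a statement about a single row and applies as soon as $i\in\mathrm{T}_r$, regardless of the label of $j$; the remaining case $j\in\mathrm{T}_r$ follows by symmetry. A secondary simplification in the paper is that the coefficient bounds $\max_k|A_k|\lesssim(\kappa+\eta)^{-1}$ and $\sum_k|A_k|^2\lesssim(\kappa^2+\eta^2)^{-1}$ are obtained \emph{deterministically} from $\|\mathcal{G}R_{(\widetilde H^{(i)})^*}(\widetilde H^{(i)})^*\|\le\max_k|d_k/(d_k^2-\zeta)|$, which uses only the $\eta_*$-regularity of the singular values and the distance of $\zeta$ from the spectrum; there is no need for the Ward identity, the entrywise law \eqref{eq_bound_locallawH2} on two-row minors, or the $\mathrm{T}_c/\mathrm{I}_c$ decomposition that your proposal identifies as the main technical obstacle --- those complications simply do not arise. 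If you want to salvage your bilinear route in the mixed case, you would have to retreat to treating $\mathcal{G}R_{(\widetilde H_t^{(ij)})^*}\mathfrak{h}_j$ as the deterministic coefficient vector and apply \eqref{eq_hlargedeviation1} anyway, at which point you have essentially reproduced the paper's argument with an extra (and unnecessary) minor.
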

\begin{proof}
	The crude bounds follow from \eqref{eq_crudebound_GRH}. We only show the estimate of $\mathcal{G}R_{\widetilde{H},ij}(\zeta)$ for $i\ne j$ and $i\in \mathrm{T}_r$, since other cases can be handled analogously. By the resolvent identity, we have
	\begin{equation*}
		|\mathcal{G}R_{\widetilde{H},ij}(\zeta)|=|\mathcal{G}R_{\widetilde{H},ii}(\zeta)|\cdot \left|\sum_{k,l}\widetilde{h}_{ik}\mathcal{G}R_{(\widetilde{H}^{(i)})^*,kl}(\zeta)\widetilde{h}_{jl}\right|.
	\end{equation*}
	The estimate of the second term is analogous to Lemma \ref{lem_locallaw_H_lemma1}, which relies on the representation
	\begin{equation*}
		 \sum_{l}\mathcal{G}R_{(\widetilde{H}^{(i)})^*,kl}(\zeta)\widetilde{h}_{jl}=[\mathcal{G}R_{(\widetilde{H}^{(i)})^*}(\zeta)(\widetilde{H}^{(i)})^*]_{kj},
	\end{equation*}
	and the large deviation results for $\widetilde{h}_{ik}$ in Lemma \ref{lem_largedeviation_H}. It is elementary to show that, for $\zeta\in \mathrm{D}_{\zeta}$,
	\begin{equation*}
		\max_{k,j}|[\mathcal{G}R_{(\widetilde{H}^{(i)})^*}(\zeta)(\widetilde{H}^{(i)})^*]_{kj}|\le \|\mathcal{G}((\widetilde{H}^{(i)})^*,\zeta)((\widetilde{H}^{(i)})^*)\| \le \max_{1\le k\le p}|d_k/(d_k^2-\zeta)|\lesssim \frac{1}{\kappa+\eta}
	\end{equation*}
	and
	\begin{equation*}
		\begin{split}
			\sum_{k}[\mathcal{G}R_{(\widetilde{H}^{(i)})^*}(\zeta)(\widetilde{H}^{(i)})^*]_{kj}^2 &\le \max_{1\le k\le p}\operatorname{diag}(\widetilde{H}^{(i)}\mathcal{G}R_{(\widetilde{H}^{(i)})^*}(\zeta)\mathcal{G}R_{(\widetilde{H}^{(i)})^*}(\zeta)(\widetilde{H}^{(i)})^*)\\
			&\le \|\mathcal{G}R_{(\widetilde{H}^{(i)})^*}(\zeta)(\widetilde{H}^{(i)})^*\|^2
			\le \max_{1\le k\le p}\left|\frac{d_k}{d_k^2-\zeta}\right|^2\lesssim \frac{1}{\kappa^{2}+\eta^2} ,
		\end{split}
	\end{equation*}
	where $d_1\ge d_2\ge \cdots\ge d_p$ are the singular values of $\widetilde{H}^{(i)}$.
	Specifically, we get
	\begin{equation*}
		 \begin{split}&\left|\sum_{k,l}\widetilde{h}_{ik}\mathcal{G}R_{(\widetilde{H}^{(i)})^*,kl}(\zeta)\widetilde{h}_{jl}\right|=\left|\sum_{k}\widetilde{h}_{ik}[\mathcal{G}R_{(\widetilde{H}^{(i)})^*}(\zeta)(\widetilde{H}^{(i)})^*]_{kj}\right|
			\lesssim n^{-\epsilon_{h}}(\kappa+\eta)^{-1},
		\end{split}
	\end{equation*}
	by Lemma \ref{lem_largedeviation_H}, $t\lesssim n^{-2\epsilon_{l}}$ and the above estimates, which further implies the desired results.
\end{proof}
Now we proceed to the proof of Theorem \ref{thm_locallaw_gdm_entrywise}, which relies on Theorem \ref{thm_difference_GYtGH}, Lemma \ref{lem_locallaw_H_GHH} and Lemma \ref{lem_improved_locallaw_H}.
For $z\in \mathbf{D}$, we have $\zeta_{t}(z)\in \mathrm{D}_{\zeta}$ with high probability, which further together with Lemma \ref{lem_locallaw_H}
and Theorem \ref{thm_difference_GYtGH} gives
\begin{equation*}
	|\mathcal{G}R_{Y_t,ij}(z)-\delta_{ij}(1+\phi t m_t(z))m^{(t)}(\zeta_t)|\prec \big(t^{-3}\Psi_0(z)+\frac{n^{-{\epsilon_{\beta}}}}{\kappa^2+\eta^2}\big)\mathbbm{1}_{i\in \mathrm{T}_r~\text{or}~j\in \mathrm{T}_r}+\frac{1}{\kappa+\eta}(1-\mathbbm{1}_{i\in \mathrm{T}_r~\text{or}~j\in \mathrm{T}_r}),
\end{equation*}
by $t\lesssim n^{-2\epsilon_{l}}$ and $b_t=\mathrm{O}(1)$ for $z\in \mathbf{D}$. The other estimates follow from similar arguments leading to Lemma \ref{lem_locallaw_H} and Theorem \ref{thm_difference_GYtGH}. So we can complete the proof of Theorem \ref{thm_locallaw_gdm_entrywise}. \qed

\subsection{Proofs for Green function comparisons}\label{app_sec_Greencom}
This subsection aims to prove the Green function comparisons from GDM $R(Y_t)$ to $R$. The general scheme of proof is analogous to the previous work (cf. \cite{bao2023smallest}) except for the difficulties due to dependence and non-symmetry in our setting. We highlight the main techniques to tackle these issues as follows,
\begin{itemize}
    \item[(i)] Resolvent expansion by columns: The
    dependence due to the factor $(\operatorname{diag}S)^{-1/2}$ makes the Taylor expansion uninformative and the moment matching procedure ineffective. We adopt the resolvent expansion of Green functions to each column $Y_j$ (cf. \eqref{eq_prf_Greenfunction_resolvent_expansion}), which makes the expectation w.r.t. individuals accessible.
    \item[(ii)] Assemble patterns classification: The loss of symmetry leads to various terms in the Taylor expansion and resolvent expansion, which can not be estimated directly. Thanks to Lemma \ref{lem_oddmoment_est}, with the aid of decomposition (cf. \eqref{eq_decomp_X}) and proper assemble patterns classification (cf. Definition \ref{def_prf_Greenfunctioncomparison_encode_beta}), we finally arrive at the desired bounds via a careful analysis of the different odd-moment cases.
\end{itemize}
The rest of this subsection is arranged as follows: Section \ref{sec_Green_pre} introduces preliminaries for Green function comparison, including some additional notation, and formal descriptions of the resolvent expansion by columns, as well as the assemble patterns. The subsequent three subsections \ref{app_prf_thm_greenfuncomp_entrywise_differror}, \ref{app_prf_thm_greenfuncomp_entrywise_uniformbound} and \ref{app_prf_thm_greenfuncomp_average} present the proofs of Theorems \ref{thm_greenfuncomp_entrywise_differror}, \ref{thm_greenfuncomp_entrywise_uniformbound} and \ref{thm_greenfuncomp_average}, respectively.

\subsubsection{Preliminaries for Green function comparison}\label{sec_Green_pre}

In this section, we prepare several tools and notation to simplify the description of the Green function comparison technique. We divide this section into three parts: Matrix replacement notation, Expansion notation, and Assemble patterns classification. We begin by introducing the matrix replacement notation that will be used in the Green function comparison.

\

\textbf{I. Matrix replacement notation.}

We denote the standard basis vector  as $\mathbf{e}_{j}\in\mathbb{R}^n$, with its $j$-th element being one while other elements being zero. For any matrix $A\in\mathbb{R}^{n\times p}$ and vector $\varpi\in\mathbb{R}^{n}$, we define the column replacement matrix $A^{\varpi}_{(j)}\equiv A_{(j)}(\varpi)\in\mathbb{R}^{n\times p}$ as
\begin{equation*}
	[A_{(j)}(\varpi)]_{ab}:=
	\begin{cases}
		\varpi_{a1}\quad \text{if $j=b$}\\
		A_{ab}\quad \text{if $j\neq b$}
	\end{cases},
	\quad a\in[n],\; b\in[p].
\end{equation*}
That is, we replace the $j$-th column in $A$ with $\varpi$.
Define the interpolation matrix $Y^{\gamma}$ by $Y_{ij}^\gamma=ly_{ij}\cdot\mathbbm{1}(\psi_{ij}=0)+hy_{ij}\cdot\mathbbm{1}(\psi_{ij}=1)$, where,
\begin{equation}\label{eq_def_ly&hy}
	\begin{split}
	ly_{ij}\equiv ly_{ij}(\gamma,w_{ij}):=\gamma(1-\chi_{ij})l_{ij}\cdot\rho^{-1}_j+\chi_{ij}m_{ij}\cdot\rho^{-1}_j+(1-\gamma^2)^{1/2}t^{1/2}w_{ij},\\
	hy_{ij}\equiv hy_{ij}(\gamma,w_{ij}):=h_{ij}\cdot\rho^{-1}_j+(1-\gamma^2)^{1/2}t^{1/2}w_{ij}, \quad i\in[n],\; j\in[p].
	\end{split}
\end{equation}
Thus, $Y^{\gamma}$ serves as an interpolation between the original matrix $Y$ (at $\gamma=1$) and the GDM matrix $Y_t$ (at $\gamma=0$). Let $Y_j^{\gamma}$ be the $j$-th column of $Y^{\gamma}$ and decompose $Y_j^{\gamma}$ into the light-tailed ensemble $\mathfrak{ly}_j$ and heavy-tailed ensemble $\mathfrak{hy}_j$ according to the label matrix $\Psi$. Specifically, $Y_j^\gamma=\mathfrak{ly}_j+\mathfrak{hy}_j$, where $\mathfrak{ly}_j:= (ly_{1j}\cdot\mathbbm{1}(\psi_{1j}=0),\dots,ly_{nj}\cdot\mathbbm{1}(\psi_{nj}=0))^{*}$ and $\mathfrak{hy}_j:=(hy_{1j}\cdot\mathbbm{1}(\psi_{1j}=1),\dots,hy_{nj}\cdot\mathbbm{1}(\psi_{nj}=1))^{*}$.
Moreover, we denote the resolvent of sample correlation matrix $R(Y^{\gamma,\varpi}_{(j)})$ with $Y^{\gamma,\varpi}_{(j)}=(Y^{\gamma})_{(j)}(\varpi)$ as $\mathcal{G}^{\gamma,\varpi}_{(j)}(z):=(R(Y^{\gamma,\varpi}_{(j)})-zI)^{-1}$.

With the decomposition of $Y_j^{\gamma}$ above, now we turn to conduct several expansions around the Green function $\mathcal{G}_{(j)}^{\gamma,\varpi}(z)$.

\

\textbf{II. Expansion notation.}

As demonstrated in previous works on Green function comparison, the analysis ultimately reduces to estimating the expectation of certain functions associated with the Green functions (cf. \cite[Section 5]{bao2023smallest}). To achieve a precise estimation of this expectation,  we introduce the following resolvent expansion with respect to each column. Recall the linearisation matrix $L(A)$ in \eqref{eq_def_linearisation} with its resolvent $\mathcal{L}(A,z)$. Then for any matrix $B=A+\Delta\in\mathbb{R}^{n\times p}$, we have for any fixed integer $s\ge 0$,
\begin{equation}\label{eq_resolvent_perturbation_expansion}
	 \mathcal{L}(A,z)=\sum_{j=0}^s\big(\mathcal{L}(B,z)L(z^{1/2}\Delta)\big)^j\mathcal{L}(B,z)+\big(\mathcal{L}(B,z)L(z^{1/2}\Delta)\big)^{s+1}\mathcal{L}(A,z).
\end{equation}
In the current situation, recall $Y^{\gamma}=Y_{(j)}^{\gamma,0}+Y^{\gamma}_j\mathbf{e}_j^{*}$.
By \eqref{eq_resolvent_perturbation_expansion}, we have,
\begin{equation}\label{eq_prf_Greenfunction_resolvent_expansion}
	\begin{split}
		 \mathcal{G}^{\gamma}=&\sum_{k=0}^{s}\mathcal{G}_{(j)}^{\gamma,0}(-Y^{\gamma}_j(Y^{\gamma}_j)^{*}\mathcal{G}_{(j)}^{\gamma,0})^k+\mathcal{G}^{\gamma}(-Y^{\gamma}_j(Y^{\gamma}_j)^{*}\mathcal{G}_{(j)}^{\gamma,0})^{s+1},\\
		 \mathcal{G}^{\gamma}Y^{\gamma}=&\big(\sum_{k=0}^{s}\mathcal{G}_{(j)}^{\gamma,0}(-Y^{\gamma}_j(Y^{\gamma}_j)^{*}\mathcal{G}_{(j)}^{\gamma,0})^k+\mathcal{G}^{\gamma}(-Y^{\gamma}_j(Y^{\gamma}_j)^{*}\mathcal{G}_{(j)}^{\gamma,0})^{s+1}\big)\cdot(Y_{(j)}^{\gamma,0}+Y^{\gamma}_j\mathbf{e}_j^{*}).
	\end{split}
\end{equation}

Denote $Y_j^{\gamma}=(y_{1j},\ldots,y_{nj})^*$. By direct expansion, we have
\begin{equation*}
	\begin{split}
		 [\mathcal{G}^{\gamma}]_{ab}=&[\mathcal{G}_{(j)}^{\gamma,0}]_{ab}+\sum_{u_{11},u_{12}}(-1)y_{u_{11}j}y_{u_{12}j}[\mathcal{G}_{(j)}^{\gamma,0}]_{au_{11}}[\mathcal{G}_{(j)}^{\gamma,0}]_{u_{12}b}\\
		 &+\sum_{u_{21},u_{22},u_{23},u_{24}}(-1)^2y_{u_{21}j}y_{u_{22}j}y_{u_{23}j}y_{u_{24}j}[\mathcal{G}_{(j)}^{\gamma,0}]_{au_{21}}[\mathcal{G}_{(j)}^{\gamma,0}]_{u_{22}u_{23}}[\mathcal{G}_{(j)}^{\gamma,0}]_{u_{24}b}\\
        &+\sum_{u_{31},u_{32},\dots, u_{36}}(-1)^3y_{u_{31}j}y_{u_{32}j}\cdots y_{u_{36}j}[\mathcal{G}_{(j)}^{\gamma,0}]_{au_{31}}[\mathcal{G}_{(j)}^{\gamma,0}]_{u_{32}u_{33}}\cdots [\mathcal{G}_{(j)}^{\gamma,0}]_{u_{36}b}\\
        &+\cdots\\
        &+\sum_{u_{s1},u_{s2},\cdots, u_{s(2s)}}(-1)^sy_{u_{s1}j}y_{u_{s2}j}\cdots y_{u_{s(2s)}j}[\mathcal{G}_{(j)}^{\gamma,0}]_{au_{s1}}[\mathcal{G}_{(j)}^{\gamma,0}]_{u_{s2}u_{s3}}\cdots [\mathcal{G}_{(j)}^{\gamma,0}]_{u_{s(2s)}b}\\
		&+\sum_{u_{s1},u_{s2},\cdots, u_{s(2s+2)}}(-1)^{s+1}y_{u_{s1}j}y_{u_{s2}j}\cdots y_{u_{s(2s+2)}j}[\mathcal{G}_{(j)}^{\gamma}]_{au_{s1}}[\mathcal{G}_{(j)}^{\gamma,0}]_{u_{s2}u_{s3}}\cdots [\mathcal{G}_{(j)}^{\gamma,0}]_{u_{s(2s+2)}b}.
	\end{split}
\end{equation*}
Similarly, we can expand $[\mathcal{G}^{\gamma}Y^{\gamma}]_{ab}$ as
\begin{equation*}
\begin{split}
    &[\mathcal{G}^{\gamma}Y^{\gamma}]_{ab}\\
    =&[\mathcal{G}^{\gamma}Y^{\gamma,0}_{(j)}]_{ab}\cdot \mathbbm{1}(b\neq j)+[\mathcal{G}^{\gamma}Y_j^{\gamma}\mathbf{e}_j^{*}]_{aj}\\
    =&[(\sum_{k=0}^s\mathcal{G}_{(j)}^{\gamma,0}(-Y_j^{\gamma}(Y_j^{\gamma})^{*})\mathcal{G}_{(j)}^{\gamma,0})^kY_{(j)}^{\gamma,0}]_{ab}\cdot\mathbbm{1}(b\neq j)+[(\mathcal{G}^{\gamma}(-Y_j^{\gamma}(Y_j^{\gamma})^{*})\mathcal{G}_{(j)}^{\gamma,0})^{s+1}Y_{(j)}^{\gamma,0}]_{ab}\cdot\mathbbm{1}(b\neq j)\\
    &+[(\sum_{k=0}^s\mathcal{G}_{(j)}^{\gamma,0}(-Y_j^{\gamma}(Y_j^{\gamma})^{*})\mathcal{G}_{(j)}^{\gamma,0})^kY_j^{\gamma}\mathbf{e}_j^{*}]_{aj}+[(\mathcal{G}^{\gamma}(-Y_j^{\gamma}(Y_j^{\gamma})^{*})\mathcal{G}_{(j)}^{\gamma,0})^{s+1}Y_j^{\gamma}\mathbf{e}_j^{*}]_{aj}\\
    =&\mathbbm{1}(b\neq j)\cdot\Big\{[\mathcal{G}_{(j)}^{\gamma,0}Y_{(j)}^{\gamma,0}]_{ab}+\sum_{u_{11},u_{12}}(-1)y_{u_{11}j}y_{u_{12}j}[\mathcal{G}_{(j)}^{\gamma,0}]_{au_{11}}[\mathcal{G}_{(j)}^{\gamma,0}Y_{(j)}^{\gamma,0}]_{u_{12}b}+\cdots\\
    &+\sum_{u_{s1},u_{s2},\cdots, u_{s(2s+2)}}(-1)^{s+1}y_{u_{s1}j}y_{u_{s2}j}\cdots y_{u_{s(2s+2)}j}[\mathcal{G}^{\gamma}]_{au_{s1}}[\mathcal{G}_{(j)}^{\gamma,0}]_{u_{s2}u_{s3}}\cdots [\mathcal{G}_{(j)}^{\gamma,0}Y_{(j)}^{\gamma,0}]_{u_{s(2s+2)}b}\Big\}\\
    &+\Big\{\sum_{u_{01}}y_{u_{01}j}[\mathcal{G}_{(j)}^{\gamma,0}]_{au_{01}}+\sum_{u_{11},u_{12},u_{13}}(-1)y_{u_{11}j}y_{u_{12}j}y_{u_{13}j}[\mathcal{G}_{(j)}^{\gamma,0}]_{au_{11}}[\mathcal{G}_{(j)}^{\gamma,0}]_{u_{12}u_{13}}+\cdots\\
    &+\sum_{u_{s1},u_{s2},\dots,u_{s(2s+3)}}(-1)^{s+1}y_{u_{s1}j}y_{u_{s2}j}\cdots y_{u_{s(2s+2)}j}y_{u_{s(2s+3)}j}[\mathcal{G}^{\gamma}]_{au_{s1}}[\mathcal{G}_{(j)}^{\gamma,0}]_{u_{s2}u_{s3}}\cdots[\mathcal{G}_{(j)}^{\gamma,0}]_{u_{s(2s+2)}u_{s(2s+3)}}\Big\}.
\end{split}
\end{equation*}

In order to reduce the long expression of the expansion of $[\mathcal{G}^{\gamma}]_{ab}$ and $[\mathcal{G}^{\gamma}Y^{\gamma}]_{ab}$, we define the following notation.
\begin{equation}\label{eq_Greenfunction_def_calg}
\begin{split}
    &\mathfrak{g}\big[(Y_j^{\gamma},\mathrm{k}_1)\otimes(\mathcal{G}_{(j)}^{\gamma,0},\mathrm{k}_2)\otimes(\mathcal{G}_{(j)}^{\gamma,0}Y_{(j)}^{\gamma,0},\mathrm{k}_3)\otimes(\mathcal{G}^{\gamma},\mathrm{k}_4)\big]\\
    &:=\sum_{u_1\dots u_{\mathrm{k}_1}}(-1)^{\mathrm{k}_1-\mathrm{k}_3}y_{u_1j}\cdots y_{u_{\mathrm{k}_1}j}[\mathcal{G}^{\gamma}]_{au_1}[\mathcal{G}_{(j)}^{\gamma,0}]_{u_2u_3}\cdots [\mathcal{G}_{(j)}^{\gamma,0}Y_{(j)}^{\gamma,0}]_{u_{\mathrm{k}_1}b},
\end{split}
\end{equation}
where the notation in $\mathfrak{g}\big[\cdot\big]$ has the meaning that, $(Y_j^{\gamma},\mathrm{k}_1)$:  entries of $Y^{\gamma}_j$ multiplied by themselves $\mathrm{k}_1$ times; $(\mathcal{G}_{(j)}^{\gamma,0},\mathrm{k}_2)$:  entries of $\mathcal{G}_{(j)}^{\gamma,0}$ multiplied by themselves $\mathrm{k}_2$ times, and so on for $(\mathcal{G}_{(j)}^{\gamma,0}Y_{(j)}^{\gamma,0},\mathrm{k}_3)$ and $(\mathcal{G}^{\gamma},\mathrm{k}_4)$. We should notice that the parameters $\mathrm{k}_1,\mathrm{k}_2,\mathrm{k}_3,\mathrm{k}_4$ have the relationship
\begin{equation*}
    \mathrm{k}_1=
    \begin{cases}
        2(\mathrm{k}_2+\mathrm{k}_3+\mathrm{k}_4-1)\quad \text{if $\mathrm{k}_1$ is even};\\
        2(\mathrm{k}_2+\mathrm{k}_3+\mathrm{k}_4-1)+1\quad \text{if $\mathrm{k}_1$ is odd},
    \end{cases}
\end{equation*}
and $\mathrm{k}_3,\mathrm{k}_4$ can only be one or zero. Then one may simplify the expansion of $[\mathcal{G}^{\gamma}]_{ab}$ and $[\mathcal{G}^{\gamma}Y^{\gamma}]_{ab}$ as
\begin{equation}\label{eq_resolventexpansion_G}
\begin{split}
    [\mathcal{G}^{\gamma}]_{ab}&=[\mathcal{G}_{(j)}^{\gamma,0}]_{ab}+\sum_{\mathrm{k}_1=2,\mathrm{k}_1\text{\;is even\;}}^{2s}\mathfrak{g}\big[(Y_j^{\gamma},\mathrm{k}_1)\otimes(\mathcal{G}_{(j)}^{\gamma,0},\mathrm{k}_1/2+1)\otimes(\mathcal{G}_{(j)}^{\gamma,0}Y_{(j)}^{\gamma,0},0)\otimes(\mathcal{G}^{\gamma},0)\big]\\
    &+\mathfrak{g}\big[(Y_j^{\gamma},2s+2)\otimes(\mathcal{G}_{(j)}^{\gamma,0},s+1)\otimes(\mathcal{G}_{(j)}^{\gamma,0}Y_{(j)}^{\gamma,0},0)\otimes(\mathcal{G}^{\gamma},1)\big].
\end{split}
\end{equation}
\begin{equation}\label{eq_resolventexpansion_GY}
    \begin{split}
        [\mathcal{G}^{\gamma}Y^{\gamma}]_{ab}&=\Big\{[\mathcal{G}^{\gamma,0}_{(j)}Y_{(j)}^{\gamma,0}]_{ab}+\sum_{\mathrm{k}_1=2,\mathrm{k}_1\text{\;is even\;}}^{2s}\mathfrak{g}\big[(Y_j^{\gamma},\mathrm{k}_1)\otimes(\mathcal{G}_{(j)}^{\gamma,0},\mathrm{k}_1/2)\otimes(\mathcal{G}_{(j)}^{\gamma,0}Y_{(j)}^{\gamma,0},1)\otimes(\mathcal{G}^{\gamma},0)\big]\\
        &+\mathfrak{g}\big[(Y_j^{\gamma},2s+2)\otimes(\mathcal{G}_{(j)}^{\gamma,0},s+1)\otimes(\mathcal{G}_{(j)}^{\gamma,0}Y_{(j)}^{\gamma,0},1)\otimes(\mathcal{G}^{\gamma},1)\big]\Big\}\cdot\mathbbm{1}(b\neq j)\\
        &+\sum_{\mathrm{k}_1=1, \mathrm{k}_1\text{\;is odd\;}}^{2s+1}\mathfrak{g}\big[(Y_j^{\gamma},\mathrm{k}_1)\otimes(\mathcal{G}_{(j)}^{\gamma,0},(\mathrm{k}_1-1)/2+1)\otimes(\mathcal{G}_{(j)}^{\gamma,0}Y_{(j)}^{\gamma,0},0)\otimes(\mathcal{G}^{\gamma},0)\big]\\
        &+\mathfrak{g}\big[(Y_j^{\gamma},2s+3)\otimes(\mathcal{G}_{(j)}^{\gamma,0},s+1)\otimes(\mathcal{G}_{(j)}^{\gamma,0}Y_{(j)}^{\gamma,0},0)\otimes(\mathcal{G}^{\gamma},1)\big].
    \end{split}
\end{equation}

It should be noted that all other ensembles formed by $\mathcal{G}^{\gamma}$ and $Y^{\gamma}$ can be expanded similarly. With the above resolvent expansion, estimating the expectation of the Green functions can be divided into two independent parts: $Y_j^{\gamma}$ and $\mathcal{G}_{(j)}^{\gamma,0}$. However, the expansion is more complicated than before due to various terms in $\mathfrak{g}$ and the convolutions. In order to obtain concrete estimates for the expectation for the Green functions, we need to trace the patterns in the products of $(Y_j^{\gamma},\mathrm{k}_1)$'s. For this purpose, we introduce a notation called \textit{assemble patterns classification} to systematically record the products of the entries of $Y_j^{\gamma}$ and $\mathcal{G}_{(j)}^{\gamma,0}$.

\

\textbf{III. Assemble patterns classification}
\begin{definition}[Assemble patterns classification]\label{def_prf_Greenfunctioncomparison_encode_beta}
Define the following assemble patterns for the entries of $Y^{\gamma}_j$ as
	\begin{equation*}
	\beta^{(K)}_{k_1k_2\dots k_q}=y_{ij}^{k_1}y_{u_2 j}^{k_2}\cdots y_{u_q j}^{k_q}\cdot\mathbbm{1}(u_q\neq u_{q-1}\neq\cdots\neq u_2\neq i),\quad K=k_1+k_2+\dots+k_q.
	\end{equation*}
	For the entries of Green function $[\mathcal{G}_{(j)}^{\gamma,0}]$, we define the following assemble patterns accompanying $\beta^{(K)}_{k_1k_2\dots k_q}$,
	\begin{equation*}
	\begin{split}
	\mathfrak{G}^{(K)}_{(j),k_1k_2\dots k_q}&=\sum_{\stackrel{u_q\ne \cdots\ne u_2\ne i}{(v_1,\ldots,v_{K})\in \mathbb{Q}^{\mathrm{permutation}}_{k_1k_2\ldots k_q}}}[\mathcal{G}_{(j)}^{\gamma,0}]_{ib}[\mathcal{G}_{(j)}^{\gamma,0}]_{av_1}\cdots[\mathcal{G}_{(j)}^{\gamma,0}]_{v_{K-1}v_{K}},\\
	\widetilde{\mathfrak{G}}^{(K)}_{(j),k_1k_2\dots k_q}&=\sum_{\stackrel{u_q\ne \cdots\ne u_2\ne i}{(v_1,\ldots,v_{K})\in \mathbb{Q}^{\mathrm{permutation}}_{k_1k_2\ldots k_q}}}[\mathcal{G}_{(j)}^{\gamma,0}]_{iv_1}\cdots[\mathcal{G}_{(j)}^{\gamma,0}]_{v_{K-1}b}[\mathcal{G}_{(j)}^{\gamma,0}]_{av_{K}},
	\end{split}
	\end{equation*}
where the set $\mathbb{Q}^{\mathrm{permutation}}_{k_1k_2\ldots k_q}$ contains all the permutations of the set $\mathbb{Q}_{k_1k_2\ldots k_q}$ with
	\begin{equation*}
	\mathbb{Q}_{k_1k_2\ldots k_q}=\{\underbrace{i, \ldots, i}_{k_1}, \underbrace{u_2, \ldots, u_2}_{k_2}, \ldots, \underbrace{u_q, \ldots, u_q}_{k_{q}}\}.
	\end{equation*}
\end{definition}
Notice that the upper index $K$ should satisfy $K=\mathrm{k}_1$ as in \eqref{eq_Greenfunction_def_calg}. Then by Definition \ref{def_prf_Greenfunctioncomparison_encode_beta}, for example, we have
\begin{equation*}
    \mathbb{E}\Big(\mathfrak{g}\big[(Y_j^{\gamma},K))\otimes(\mathcal{G}_{(j)}^{\gamma,0},(K-1)/2+1)\otimes(\mathcal{G}_{(j)}^{\gamma,0}Y_{(j)}^{\gamma,0},1)\otimes(\mathcal{G}^{\gamma},0)\big]\cdot\mathbbm{1}(\mu_1=i)\Big)=\mathbb{E}\beta_{k_1k_2\dots k_q}^{(K)}\cdot \widetilde{\mathfrak{G}}_{(j),k_1k_2\dots k_q}^{(K/2+1/2)},
\end{equation*}
where the expectation is with respect to the randomness of $Y_j^{\gamma}$. Here, in Definition \ref{def_prf_Greenfunctioncomparison_encode_beta}, we are particularly concern about quantity $y_{ij}$ since it plays a key role of moment comparison in each $Y_{ij}^{\gamma}$. For simplification, we can assume that the lower indexes $u_2,u_3,\dots u_q$ of $\beta^{(K)}_{k_1k_2\dots k_q}$ are $1,2,\dots,i-1,i+1,\dots q$ respectively, without causing further confusion since $y_{ij},1\le i\le n$ have the same distribution. Moreover, in the sense of expectation, by the independence of $Y^{\gamma}_j$ and $\mathcal{G}_{(j)}^{\gamma,0}$, we can treat the expectation of $\beta_{k_1,k_2,\dots,k_q}^{(k)}$ and $\mathfrak{G}_{(j),k_1,k_2,\dots,k_q}^{(K)}$ (or $\widetilde{\mathfrak{G}}_{(j),k_1,k_2,\dots,k_q}^{(K)}$) respectively, cf., $\mathbb{E}(\sum_{u_2\ne i}y_{u_2j}[\mathcal{G}_{(j)}^{\gamma,0}]_{ib}[\mathcal{G}_{(j)}^{\gamma,0}]_{au_2})=\mathbb{E}(y_{u_2j}\mathbbm{1}(u_2\ne j))\mathbb{E}(\sum_{u_2\ne i}[\mathcal{G}_{(j)}^{\gamma,0}]_{ib}[\mathcal{G}_{(j)}^{\gamma,0}]_{au_2})$. It should be pointed out that $\beta^{(K)}_{k_1k_2\dots k_q}$ identifies the patterns $\mathfrak{G}^{(K)}_{(j),k_1k_2\dots k_q}$ and $\widetilde{\mathfrak{G}}^{(K)}_{(j),k_1k_2\dots k_q}$ via the index set $\mathbb{Q}_{k_1k_2\ldots k_q}$.
The notation $\beta^{(K)}_{k_1k_2\dots k_q}$ and $\mathfrak{G}^{(K)}_{(j),k_1k_2\dots k_q}$ enables us to label several quantities in the expansion of $\mathcal{G}^{\gamma}$, $\mathcal{G}^{\gamma}Y^{\gamma}$ or their products. For example, in \eqref{eq_prf_Greenfunctioncomparison_expansion_f} below, one term $\operatorname{Im}\big[[\mathcal{G}_{(j)}^{\gamma,0}]_{ib}[\mathcal{G}_{(j)}^{\gamma,0}Y^{\gamma}_j]_{aj}\big]$ in the expansion will give the following two patterns
$\beta_{1}^{(1)}\mathfrak{G}^{(1)}_{(j),1}$ or $\beta_{01}^{(1)}\mathfrak{G}^{(1)}_{(j),01}$, say,
\begin{equation*}
\operatorname{Im}\big[[\mathcal{G}_{(j)}^{\gamma,0}]_{ib}[\mathcal{G}_{(j)}^{\gamma,0}Y^{\gamma}_j]_{aj}\big]=\operatorname{Im}\big[\beta_{1}^{(1)}\mathfrak{G}^{(1)}_{(j),1}+\beta_{01}^{(1)}\mathfrak{G}^{(1)}_{(j),01}\big].
\end{equation*}
Similarly, another term  $\operatorname{Im}\big[[\mathcal{G}_{(j)}^{\gamma,0}]_{ib}[\mathcal{G}_{(j)}^{\gamma,0}Y^{\gamma}_j(Y^{\gamma}_j)^{*}\mathcal{G}_{(j)}^{\gamma,0}Y^{\gamma}_j]_{aj}\big]$ means the following patterns
\begin{equation*}
\begin{split}
&\operatorname{Im}\big[[\mathcal{G}_{(j)}^{\gamma,0}]_{ib}[\mathcal{G}_{(j)}^{\gamma,0}Y^{\gamma}_j(Y^{\gamma}_j)^{*}\mathcal{G}_{(j)}^{\gamma,0}Y^{\gamma}_j]_{aj}\big]=\\
&\operatorname{Im}\big[\beta_{3}^{(3)}\mathfrak{G}^{(3)}_{(j),3}+\beta_{21}^{(3)}\mathfrak{G}^{(3)}_{(j),21}+\beta_{111}^{(3)}\mathfrak{G}^{(3)}_{(j),111}+\beta_{12}^{(3)}\mathfrak{F}^{(3)}_{(j),12}+\beta_{03}^{(3)}\mathfrak{G}^{(3)}_{(j),03}+\beta_{021}^{(3)}\mathfrak{G}^{(3)}_{(j),021}+\beta_{0111}^{(3)}\mathfrak{G}^{(3)}_{(j),0111}\big],
\end{split}
\end{equation*}
and the term $\operatorname{Im}\big[[\widetilde{\mathfrak{G}}_{(j)}^{\gamma,0}Y^{\gamma}_j(Y^{\gamma}_j)^{*}\mathcal{G}_{(j)}^{\gamma,0}]_{ib}[\mathcal{G}_{(j)}^{\gamma,0}Y^{\gamma}_j]_{aj}\big]$ gives
\begin{equation*}
\begin{split}
&\operatorname{Im}\big[[\mathcal{G}_{(j)}^{\gamma,0}Y^{\gamma}_j(Y^{\gamma}_j)^{*}\mathcal{G}_{(j)}^{\gamma,0}]_{ib}[\mathcal{G}_{(j)}^{\gamma,0}Y^{\gamma}_j]_{aj}\big]=\\
&\operatorname{Im}\big[\beta_{3}^{(3)}\widetilde{\mathfrak{G}}^{(3)}_{(j),3}+\beta_{21}^{(3)}\widetilde{\mathfrak{G}}^{(3)}_{(j),21}+\beta_{111}^{(3)}\widetilde{\mathfrak{G}}^{(3)}_{(j),111}+\beta_{12}^{(3)}\widetilde{\mathfrak{G}}^{(3)}_{(j),12}+\beta_{03}^{(3)}\widetilde{\mathfrak{G}}^{(3)}_{(j),03}+\beta_{021}^{(3)}\widetilde{\mathfrak{G}}^{(3)}_{(j),021}+\beta_{0111}^{(3)}\widetilde{\mathfrak{G}}^{(3)}_{(j),0111}\big].
\end{split}
\end{equation*}
With the assemble patterns classification, we can figure out various terms in the resolvent expansion and combine the moment bounds in Lemmas \ref{lem_moment_rates} and \ref{lem_oddmoment_est}. Now we turn to the formal proofs of Theorems \ref{thm_greenfuncomp_entrywise_differror}-\ref{thm_greenfuncomp_average}.

\subsubsection{Proof of Theorem \ref{thm_greenfuncomp_entrywise_differror}}\label{app_prf_thm_greenfuncomp_entrywise_differror}
This subsection concerns the proof for Theorem \ref{thm_greenfuncomp_entrywise_differror}.
We only show the case where $(\#_1,\#_2,\#_3)=(\mathfrak{R}_{ab}(\operatorname{Im}[\mathcal{G}^{\gamma}(z)]_{ab}-\delta_{ab}b_t(z)m^{(t)}(\zeta)),\mathfrak{R}_{ab}(\operatorname{Im}[\mathcal{G}^0(z)]_{ab}-\delta_{ab}b_t(z)m^{(t)}(\zeta)),\mathfrak{J}_{0,ab})$ for $a\in\mathrm{T}_r$ or $b\in\mathrm{T}_r$, while the other cases can be proved similarly. Moreover, in the proof below, we will neglect the deterministic terms $\delta_{ab}b_t(z)m^{(t)}(\zeta)$ for simplicity. One may observe that such simplification will not influence the proof since $\delta_{ab}b_t(z)m^{(t)}(\zeta)=\mathrm{O}(1)$ is smaller than the prior upper bound $n^{\epsilon}$. In the sequel, we may omit the dependence on $z$  in notation for simplification whenever there is no confusion. Observe that
\begin{equation*}
	\begin{split}
		&\frac{\partial \mathbb{E}_{\Psi}\big(F([\operatorname{Im}\mathcal{G}^{\gamma}]_{ab})\big)}{\partial \gamma}=-\sum_{i,j}\mathbb{E}_{\Psi}\Big[F^{(1)}\big(\operatorname{Im}[\mathcal{G}^{\gamma}]_{ab}\big)\operatorname{Im}\big([\mathcal{G}^{\gamma}]_{ib}[\mathcal{G}^{\gamma}Y^{\gamma}]_{aj}\big)\Big(L_{ij}\rho_j^{-1}-\frac{\gamma t^{1/2}w_{ij}}{(1-\gamma^2)^{1/2}}\Big)\Big]\\
		 &-\sum_{i,j}\mathbb{E}_{\Psi}\Big[F^{(1)}\big(\operatorname{Im}[\mathcal{G}^{\gamma}]_{ab}\big)\operatorname{Im}\big([\mathcal{G}^{\gamma}]_{ai}[(Y^{\gamma})^{*}\mathcal{G}^{\gamma}]_{jb}\big)\Big(L_{ij}\rho_j^{-1}-\frac{\gamma t^{1/2}w_{ij}}{(1-\gamma^2)^{1/2}}\Big)\Big]:=-\sum_{i,j}\Big[(\mathcal{E}_1)_{ij}+(\mathcal{E}_2)_{ij}\Big].
	\end{split}
\end{equation*}
Therefore, it suffices to show that the following bound holds,
\begin{equation}\label{eq_prf_Greenfunctioncomparison_equivbound}
	\sum_{i,j}\Big[|(\mathcal{E}_1)_{ij}|+|(\mathcal{E}_2)_{ij}|\Big]\le \frac{C}{(1-\gamma^2)^{1/2}}\big(n^{-c_0}(\mathfrak{J}_{0,ab}+1)+Q_0n^{C_0+C}\big),
\end{equation}
for some constant $C>0$. We shall only derive the estimate for $(\mathcal{E}_1)_{ij}$'s while the same estimates for $(\mathcal{E}_2)_{ij}$'s can be obtained similarly.

Define the shorthand notation $\mathfrak{F}_{(j)}(\varpi)=\mathfrak{f}_{(j)}(\varpi)\mathfrak{i}_{(j)}(\varpi)$, where
\begin{eqnarray*}
&&	\mathfrak{f}_{(j)}(\varpi)=F^{(1)}\big(\operatorname{Im}[\mathcal{G}^{\gamma,\varpi}_{(j)]}]_{ab}\big),~ \mathfrak{i}_{(j)}(\varpi)=\operatorname{Im}\big([\mathcal{G}^{\gamma,\varpi}_{(j)]}]_{ib}[\mathcal{G}^{\gamma,\varpi}_{(j)}Y^{\gamma,\varpi}_{(j)}]_{aj}\big),\\
&&	 \widetilde{\mathfrak{i}}{(j)}(\varpi)=\operatorname{Im}\big([\mathcal{G}^{\gamma,\varpi}_{(j)]}]_{ai}[(Y^{\gamma,\varpi}_{(j)})^{*}\mathcal{G}^{\gamma,\varpi}_{(j)]}]_{jb}\big).
\end{eqnarray*}
Then for any $i\in[n], j\in [p]$, invoking $L_{ij}=0$ if $\psi_{ij}=1$, we rewrite $(\mathcal{E}_1)_{ij}$ as
\begin{equation*}
	\begin{split}(\mathcal{E}_1)_{ij}&=\mathbb{E}_{\Psi}\Big[\mathfrak{F}_{(j)}\big(Y^{\gamma}_{j}\big)\Big(L_{ij}\rho_j^{-1}-\frac{\gamma t^{1/2}w_{ij}}{(1-\gamma^2)^{1/2}}\Big)\cdot\big(\mathbbm{1}(\psi_{ij}=0)+\mathbbm{1}(\psi_{ij}=1)\big)\Big]\\
		&=\mathbb{E}_{\Psi}\Big[\mathfrak{F}_{(j)}(Y^{\gamma}_j)\Big(L_{ij}\rho_j^{-1}-\frac{\gamma t^{1/2}w_{ij}}{(1-\gamma^2)^{1/2}}\Big)\cdot\mathbbm{1}(\psi_{ij}=0)\Big]\\
&\qquad \qquad -\frac{\gamma}{(1-\gamma^2)^{1/2}}t^{1/2}\mathbb{E}_{\Psi}\Big[w_{ij}\mathfrak{F}_{(j)}(Y^{\gamma}_j)\cdot\mathbbm{1}(\psi_{ij}=1)\Big]\\
		&=:(\mathcal{E}_{11})_{ij}-(\mathcal{E}_{12})_{ij}.
	\end{split}
\end{equation*}

Let us consider $(\mathcal{E}_{12})_{ij}$ first. Using Gaussian integration by part on $w_{ij}$
gives that
\begin{equation*}
	\begin{split}
		|(\mathcal{E}_{12})_{ij}|=&\Big|\frac{\gamma t^{1/2}}{(1-\gamma^2)^{1/2}n}\mathbb{E}_{\Psi}\Big[\partial_{w_{ij}}\mathfrak{F}_{(j)}(Y^{\gamma}_j)\cdot\mathbbm{1}(\psi_{ij}=1)\Big]\Big|\\
		\le&\frac{\gamma t^{1/2}}{(1-\gamma^2)^{1/2}n}\mathbb{E}_{\Psi}\Big[\big(|\partial_{w_{ij}}\mathfrak{F}_{(j)}(Y^{\gamma}_j)\cdot\mathbbm{1}(\Omega)|+|\partial_{w_{ij}}\mathfrak{F}_{(j)}(Y^{\gamma}_j)\cdot\mathbbm{1}(\Omega^c)|\big)\cdot\mathbbm{1}(\psi_{ij}=1)\Big].
	\end{split}
\end{equation*}
Notice that
\begin{equation*}
	\partial_{w_{ij}}\mathfrak{F}_{(j)}(Y^{\gamma}_j)=\partial_{w_{ij}}\mathfrak{f}_{(j)}(Y^{\gamma}_j)\cdot \mathfrak{i}_{(j)}(Y^{\gamma}_j)+\mathfrak{f}_{(j)}(Y^{\gamma}_j)\cdot\partial_{w_{ij}} \mathfrak{i}_{(j)}(Y^{\gamma}_j),
\end{equation*}
where
\begin{equation*}
	\begin{split}
		 \partial_{w_{ij}}\mathfrak{f}_{(j)}(Y^{\gamma}_j)=&-(1-\gamma^2)^{1/2}t^{1/2}F^{(2)}\big(\operatorname{Im}[\mathcal{G}^{\gamma}]_{ab}\big)\big(\mathfrak{i}_{(j)}(Y^{\gamma}_j)+\widetilde{\mathfrak{i}}_{(j)}(Y^{\gamma}_j)\big),\\
		 \partial_{w_{ij}}\mathfrak{i}_{(j)}(Y^{\gamma}_j)=&-(1-\gamma^2)^{1/2}t^{1/2}\operatorname{Im}\Big([\mathcal{G}^{\gamma}]_{ii}[(Y^{\gamma})^{*}\mathcal{G}^{\gamma}]_{jb}[\mathcal{G}^{\gamma}Y^{\gamma}]_{aj}+[\mathcal{G}^{\gamma}Y^{\gamma}]_{ij}[\mathcal{G}^{\gamma}]_{ib}[\mathcal{G}^{\gamma}Y^{\gamma}]_{aj}\\
		&\quad-[\mathcal{G}_{(j)}^{\gamma}]_{ib}[\mathcal{G}^{\gamma}]_{ai}
		 +[\mathcal{G}^{\gamma}]_{ib}[\mathcal{G}^{\gamma}]_{ai}[(Y^{\gamma})^{*}\mathcal{G}^{\gamma}Y^{\gamma}]_{jj}+[\mathcal{G}^{\gamma}]_{ib}[\mathcal{G}^{\gamma}Y^{\gamma}]_{aj}[\mathcal{G}^{\gamma}Y^{\gamma}]_{ij}\Big).
	\end{split}
\end{equation*}
Also note that we have $i\in\mathrm{I}_r$ and $j\in\mathrm{I}_c$  when $\psi_{ij}=1$. Then we observe that $\mathbbm{1}(\Omega)\mathbbm{1}(\psi_{ij}=1)|\mathfrak{i}_{(j)}(Y^{\gamma}_j)|\le n^{2\varepsilon}t^{-2}$ and $\mathbbm{1}(\Omega)\mathbbm{1}(\psi_{ij}=1)|\partial_{w_{ij}}\mathfrak{i}_{(j)}(Y^{\gamma}_j)|\le n^{3\varepsilon}t^{-7/2}$ since $a\in\mathrm{T}_r$ or $b\in\mathrm{T}_r$. Then, there exists a large constant $K_1>0$ such that
\begin{equation*}
	\begin{split}
		 |(\mathcal{E}_{12})_{ij}|\lesssim&\frac{\mathbbm{1}(\psi_{ij}=1)}{n^{1-3\varepsilon}t^3}\mathbb{E}_{\Psi}\Big[|F^{(1)}(\operatorname{Im}[\mathcal{G}^{\gamma}]_{ab})|\Big]+\frac{\mathbbm{1}(\psi_{ij}=1)}{n^{1-4\varepsilon}t^3}\mathbb{E}_{\Psi}\Big[|F^{(2)}(\operatorname{Im}[\mathcal{G}^{\gamma}]_{ab})|\Big]\\
		 &+\frac{t^{1/2}\mathbbm{1}(\psi_{ij}=1)}{n^{1-3\varepsilon}t^3}\mathbb{E}_{\Psi}\Big[|\partial_{w_{ij}}\mathfrak{F}_{(j)}(Y^{\gamma}_j)\mathbbm{1}(\Omega^c)|\Big]\\
		\lesssim& \frac{\mathbbm{1}(\psi_{ij}=1)}{n^{1-3\varepsilon}t^3}\mathbb{E}_{\Psi}\Big[|F^{(1)}(\operatorname{Im}[\mathcal{G}^{\gamma}]_{ab})|\Big]+\frac{\mathbbm{1}(\psi_{ij}=1)}{n^{1-4\varepsilon}t^3}\mathbb{E}_{\Psi}\Big[|F^{(2)}(\operatorname{Im}[\mathcal{G}^{\gamma}]_{ab})|\Big]+n^{K_1}Q_0\mathbbm{1}(\psi_{ij}=1),
	\end{split}
\end{equation*}
where in the last step, we used the assumption of $F^{(\mu)}(x)$ and the crude bound $|\operatorname{Im}\mathcal{G}^{\gamma}|\lesssim\eta^{-1}$ for $\eta>n^{-1}$. Recall that $\sum_{i,j}\mathbbm{1}(\psi_{ij}=1)\lesssim n^{1-\epsilon_y}$ and $t\gg n^{-\epsilon_{\alpha}}$, so we can choose $\varepsilon<\epsilon_{\alpha}/8$ such that
\begin{equation*}
	 |(\mathcal{E}_{12})_{ij}|\lesssim\frac{\mathbbm{1}(\psi_{ij}=1)}{n^{1-\epsilon_{\alpha}/2}}\mathbb{E}_{\Psi}\Big[|F^{(1)}(\operatorname{Im}[\mathcal{G}^{\gamma}]_{ab})|+|F^{(2)}(\operatorname{Im}[\mathcal{G}^{\gamma}]_{ab})|\Big]+n^{K_1}Q_0\mathbbm{1}(\psi_{ij}=1).
\end{equation*}

Next, we consider $(\mathcal{E}_{11})_{ij}$.  The calculation for this part is relatively complicated and we will heavily rely on the preliminaries defined in Section \ref{sec_Green_pre}. The first task is to replace $\mathfrak{F}_{(j)}(Y_j^{\gamma})$ with $\mathfrak{F}_{(j)}(0)$. In order to do this, we begin with $\mathfrak{f}_{(j)}(Y^{\gamma}_j)$ first. Basically, we need to perform the Taylor expansion in $\mathfrak{f}_{(j)}(Y^{\gamma}_j)$ via our assumption of $F$, based on the resolvent expansion for the Green functions. One may see from \eqref{eq_resolventexpansion_G} that
\begin{equation*}
	\begin{split}
		&\mathfrak{f}_{(j)}(Y^{\gamma}_j)=F^{(1)}\big(\operatorname{Im}[\mathcal{G}_{(j)}^{\gamma,0}]_{ab}+\sum_{k=2,k\text{\;is even\;}}^{2s}\mathfrak{g}\big[(Y_j^{\gamma},k)\otimes(\mathcal{G}_{(j)}^{\gamma,0},k/2+1)\otimes(\mathcal{G}_{(j)}^{\gamma,0}Y_{(j)}^{\gamma,0},0)\otimes(\mathcal{G}^{\gamma},0)\big]\\
        &+\mathfrak{g}\big[(Y_j^{\gamma},2s+2)\otimes(\mathcal{G}_{(j)}^{\gamma,0},s+1)\otimes(\mathcal{G}_{(j)}^{\gamma,0}Y_{(j)}^{\gamma,0},0)\otimes(\mathcal{G}^{\gamma},1)\big]\big)\\
	=&F^{(1)}\big(\operatorname{Im}[\mathcal{G}_{(j)}^{\gamma,0}]_{ab}\big)\\
&+\sum_{l=1}^{s_l}\frac{1}{l!}F^{(1+l)}\big(\operatorname{Im}[\mathcal{G}_{(j)}^{\gamma,0}]_{ab}\big)\cdot\Big(\sum_{k=2, k\text{\;is even\;}}^{2s}\mathfrak{g}\big[(Y_j^{\gamma},k)\otimes(\mathcal{G}_{(j)}^{\gamma,0},k/2+1)\otimes(\mathcal{G}_{(j)}^{\gamma,0}Y_{(j)}^{\gamma,0},0)\otimes(\mathcal{G}^{\gamma},0)\big]\\
        &+\mathfrak{g}\big[(Y_j^{\gamma},2s+2)\otimes(\mathcal{G}_{(j)}^{\gamma,0},s+1)\otimes(\mathcal{G}_{(j)}^{\gamma,0}Y_{(j)}^{\gamma,0},0)\otimes(\mathcal{G}^{\gamma},1)\big]\Big)^l+F^{(s_l+2)}_{\operatorname{res}}\\
		=&F^{(1)}\big(\operatorname{Im}[\mathcal{G}_{(j)}^{\gamma,0}]_{ab}\big)\\
&+\sum_{l=1}^{s_l}\frac{1}{l!}F^{(1+l)}\big(\operatorname{Im}[\mathcal{G}_{(j)}^{\gamma,0}]_{ab}\big)\cdot\Big(\sum_{k=2,k\text{\;is even\;}}^{2s}\mathfrak{g}\big[(Y_j^{\gamma},k)\otimes(\mathcal{G}_{(j)}^{\gamma,0},k/2+1)\otimes(\mathcal{G}_{(j)}^{\gamma,0}Y_{(j)}^{\gamma,0},0)\otimes(\mathcal{G}^{\gamma},0)\big]\Big)^l\\
		&+\sum_{l=1}^{s_l}\sum_{l_1=0}^{l-1}\frac{1}{l_1!}\binom{l}{l_1}F^{(1+l)}\big(\operatorname{Im}[\mathcal{G}_{(j)}^{\gamma,0}]_{ab}\big)\\
&\cdot\Big(\sum_{k=2,k\text{\;is even\;}}^{2s}\mathfrak{g}\big[(Y_j^{\gamma},k)\otimes(\mathcal{G}_{(j)}^{\gamma,0},k/2+1)\otimes(\mathcal{G}_{(j)}^{\gamma,0}Y_{(j)}^{\gamma,0},0)\otimes(\mathcal{G}^{\gamma},0)\big]\Big)^{l_1}\\
        &\cdot\big(\mathfrak{g}\big[(Y_j^{\gamma},2s+2)\otimes(\mathcal{G}_{(j)}^{\gamma,0},s+1)\otimes(\mathcal{G}_{(j)}^{\gamma,0}Y_{(j)}^{\gamma,0},0)\otimes(\mathcal{G}^{\gamma},1)\big]\big)^{l-l_1}+F^{(s_l+2)}_{\operatorname{res}}\\
		=&: \mathfrak{f}_{(j)}(0)+\mathfrak{f}^1_{(j)}(0)+\mathfrak{f}^r_{(j)}(0)+F^{(s_l+2)}_{\operatorname{res}},
	\end{split}
\end{equation*}
where $F_{\operatorname{res}}^{(s_l+2)}$ denotes the residual term after the Taylor expansion for $F$. And $\mathfrak{f}_{(j)}(0):=F^{(1)}(\operatorname{Im}[\mathcal{G}_{(j)}^{\gamma,0}]_{ab})$,
\begin{equation*}
\mathfrak{f}^1_{(j)}(0):=\sum_{l=1}^{s_l}\frac{1}{l!}F^{(1+l)}\big(\operatorname{Im}[\mathcal{G}_{(j)}^{\gamma,0}]_{ab}\big)\cdot\Big(\sum_{k=2,e}^{2s}\mathfrak{g}\big[(Y_j^{\gamma},k)\otimes(\mathcal{G}_{(j)}^{\gamma,0},k/2+1)\otimes(\mathcal{G}_{(j)}^{\gamma,0}Y_{(j)}^{\gamma,0},0)\otimes(\mathcal{G}^{\gamma},0)\big]\Big)^l,
\end{equation*}
\begin{equation*}
\begin{split}
    \mathfrak{f}^r_{(j)}(0):=&\sum_{l_1=1}^{s_l}\sum_{l_2=1}^{l_1}\frac{1}{l_1!}\binom{l_1}{l_2}F^{(1+l_1)}\big(\operatorname{Im}[\mathcal{G}_{(j)}^{\gamma,0}]_{ab}\big)\\
    &\cdot\Big(\sum_{k=2,e}^{2s}\mathfrak{g}\big[(Y_j^{\gamma},k)\otimes(\mathcal{G}_{(j)}^{\gamma,0},k/2+1)\otimes(\mathcal{G}_{(j)}^{\gamma,0}Y_{(j)}^{\gamma,0},0)\otimes(\mathcal{G}^{\gamma},0)\big]\Big)^{l_2}\\
        &\cdot\big(\mathfrak{g}\big[(Y_j^{\gamma},2s+2)\otimes(\mathcal{G}_{(j)}^{\gamma,0},s+1)\otimes(\mathcal{G}_{(j)}^{\gamma,0}Y_{(j)}^{\gamma,0},0)\otimes(\mathcal{G}^{\gamma},1)\big]\big)^{l_1-l_2},
\end{split}
\end{equation*}
where we used $k=2,e$ to denote $k=2, k\text{\;is even\;}$, and $k=1,o$ means $k=1, k\text{\;is odd\;}$, to shorten the notation.

Also, we can obtain the parallel result for $\mathfrak{i}_{(j)}(Y^{\gamma}_j)$ (notice that $b=j$ at this time),
\begin{equation*}
	\begin{split}
		\mathfrak{i}_{(j)}(Y^{\gamma}_j) =&\operatorname{Im}\Big\{\big([\mathcal{G}_{(j)}^{\gamma,0}]_{ib}+\sum_{k=2,e}^{2s}\mathfrak{g}\big[(Y_j^{\gamma},k)\otimes(\mathcal{G}_{(j)}^{\gamma,0},k/2+1)\otimes(\mathcal{G}_{(j)}^{\gamma,0}Y_{(j)}^{\gamma,0},0)\otimes(\mathcal{G}^{\gamma},0)\big]\\
    &+\mathfrak{g}\big[(Y_j^{\gamma},2s+2)\otimes(\mathcal{G}_{(j)}^{\gamma,0},s+1)\otimes(\mathcal{G}_{(j)}^{\gamma,0}Y_{(j)}^{\gamma,0},0)\otimes(\mathcal{G}^{\gamma},1)\big]\big)\\
    &\cdot\big(\sum_{k=1,o}^{2s+1}\mathfrak{g}\big[(Y_j^{\gamma},k)\otimes(\mathcal{G}_{(j)}^{\gamma,0},(k-1)/2+1)\otimes(\mathcal{G}_{(j)}^{\gamma,0}Y_{(j)}^{\gamma,0},0)\otimes(\mathcal{G}^{\gamma},0)\big]\\
        &+\mathfrak{g}\big[(Y_j^{\gamma},2s+3)\otimes(\mathcal{G}_{(j)}^{\gamma,0},s+1)\otimes(\mathcal{G}_{(j)}^{\gamma,0}Y_{(j)}^{\gamma,0},0)\otimes(\mathcal{G}^{\gamma},1)\big]\big)\Big\}\\
  =&\operatorname{Im}\Big[[\mathcal{G}_{(j)}^{\gamma,0}]_{ib}[\mathcal{G}_{(j)}^{\gamma,0}Y^{\gamma}_j]_{aj}+\sum_{k=3,o}^{2s+1}\mathfrak{g}\big[(Y_j^{\gamma},k)\otimes(\mathcal{G}_{(j)}^{\gamma,0},(k-1)/2+2)\otimes(\mathcal{G}_{(j)}^{\gamma,0}Y_{(j)}^{\gamma,0},0)\otimes(\mathcal{G}^{\gamma},0)\big]\\
  &+\sum_{k=2,e}^{2s}\mathfrak{g}\big[(Y_j^{\gamma},k+1)\otimes(\mathcal{G}_{(j)}^{\gamma,0},k/2+2)\otimes(\mathcal{G}_{(j)}^{\gamma,0}Y_{(j)}^{\gamma,0},0)\otimes(\mathcal{G}^{\gamma},0)\big]\\
  &+\sum_{k=5,o}^{4s+1}\mathfrak{g}\big[(Y_j^{\gamma},k)\otimes(\mathcal{G}_{(j)}^{\gamma,0},k-1)\otimes(\mathcal{G}_{(j)}^{\gamma,0}Y_{(j)}^{\gamma,0},0)\otimes(\mathcal{G}^{\gamma},0)\big]\\
  &+\sum_{k=0,e}^{2s}\mathfrak{g}\big[(Y_j^{\gamma},k+2s+3)\otimes(\mathcal{G}_{(j)}^{\gamma,0},k/2+s+2)\otimes(\mathcal{G}_{(j)}^{\gamma,0}Y_{(j)}^{\gamma,0},0)\otimes(\mathcal{G}^{\gamma},1)\big]\\
&+\sum_{k=1,o}^{2s+1}\mathfrak{g}\big[(Y_j^{\gamma},k+2s+2)\otimes(\mathcal{G}_{(j)}^{\gamma,0},(k-1)/2+s+2)\otimes(\mathcal{G}_{(j)}^{\gamma,0}Y_{(j)}^{\gamma,0},0)\otimes(\mathcal{G}^{\gamma},1)\big]\\
&+\mathfrak{g}\big[(Y_j^{\gamma},4s+5)\otimes(\mathcal{G}_{(j)}^{\gamma,0},2s+2)\otimes(\mathcal{G}_{(j)}^{\gamma,0}Y_{(j)}^{\gamma,0},0)\otimes(\mathcal{G}^{\gamma},2)\big]\Big].
	\end{split}
\end{equation*}
Actually, one may draw several observations from the above expression. First, the expansion of $\mathfrak{i}_{(j)}(Y_j^{\gamma})$ can be divided into two groups according to whether the parameter $\mathrm{k}_4$ in $(\mathcal{G}^{\gamma},\mathrm{k}_4)$ is zero or not. Second, we can find that the parameters $\mathrm{k}_1$ in $(Y_j^{\gamma},\mathrm{k}_1)$ are all odd numbers. Following the notation in the expansion of $\mathfrak{f}_{(j)}(Y_j^{\gamma})$, we can write
\begin{equation*}
    \mathfrak{i}_{(j)}(Y_j^{\gamma})=\mathfrak{i}_{(j)}^1(0)+\mathfrak{i}_{(j)}^r(0),
\end{equation*}
where
\begin{equation*}
\begin{split}
    \mathfrak{i}_{(j)}^1(0):=&\operatorname{Im}\Big[\sum_{k=1,o}^{2s+1}\mathfrak{g}\big[(Y_j^{\gamma},k)\otimes(\mathcal{G}_{(j)}^{\gamma,0},(k-1)/2+2)\otimes(\mathcal{G}_{(j)}^{\gamma,0}Y_{(j)}^{\gamma,0},0)\otimes(\mathcal{G}^{\gamma},0)\big]\\
    &+\sum_{k=5,o}^{4s+1}\mathfrak{g}\big[(Y_j^{\gamma},k)\otimes(\mathcal{G}_{(j)}^{\gamma,0},k-1)\otimes(\mathcal{G}_{(j)}^{\gamma,0}Y_{(j)}^{\gamma,0},0)\otimes(\mathcal{G}^{\gamma},0)\big]\Big],
\end{split}
\end{equation*}
\begin{equation*}
    \begin{split}
        \mathfrak{i}_{(j)}^r(0):=&\operatorname{Im}\Big[\sum_{k=0,e}^{2s}2\mathfrak{g}\big[(Y_j^{\gamma},k+2s+3)\otimes(\mathcal{G}_{(j)}^{\gamma,0},k/2+s+2)\otimes(\mathcal{G}_{(j)}^{\gamma,0}Y_{(j)}^{\gamma,0},0)\otimes(\mathcal{G}^{\gamma},1)\big]\\
        &+\mathfrak{g}\big[(Y_j^{\gamma},4s+5)\otimes(\mathcal{G}_{(j)}^{\gamma,0},2s+2)\otimes(\mathcal{G}_{(j)}^{\gamma,0}Y_{(j)}^{\gamma,0},0)\otimes(\mathcal{G}^{\gamma},2)\big]\Big].
    \end{split}
\end{equation*}

Now, by the expansion of $\mathfrak{f}_{(j)}(Y_j^{\gamma})$ and $\mathfrak{i}_{(j)}(Y_j^{\gamma})$, we have
\begin{equation}\label{eq_prf_Greenfunctioncomparison_expansion_f}
	\begin{split}
        \mathfrak{F}_{(j)}(Y^{\gamma}_j)=&\mathfrak{f}_{(j)}(Y^{\gamma}_j)\mathfrak{i}_{(j)}(Y^{\gamma}_j)\\
        =&\big(\mathfrak{f}_{(j)}(0)+\mathfrak{f}_{(j)}^1(0)+\mathfrak{f}_{(j)}^r(0)+F_{res}^{(s_l+2)}\big)\times\big(\mathfrak{i}_{(j)}^1(0)+\mathfrak{i}_{(j)}^r(0)\big)\\
        =&(\mathfrak{f}_{(j)}(0)+\mathfrak{f}_{(j)}^1(0))\times\mathfrak{i}_{(j)}^1(0)+\big(\mathfrak{f}_{(j)}(0)+\mathfrak{f}_{(j)}^1(0)+\mathfrak{f}_{(j)}^r(0)+F_{res}^{(s_l+2)}\big)\times\mathfrak{i}_{(j)}^r(0)\\
        :=&\mathfrak{M}_{(j)}+\mathfrak{R}_{(j)},
	\end{split}
\end{equation}
where we summarised the expansion of $\mathfrak{F}_{(j)}(Y_j^{\gamma})$ into two parts in the last step, the main part $\mathfrak{M}_{(j)}$ and the residual part $\mathfrak{R}_{(j)}$.

Now we consider the residual term $\mathfrak{R}_{(j)}$ first. As one can see from the expression of $(\mathcal{E}_1)_{ij}$, $\mathfrak{R}_{(j)}$ is the total error, that is
\begin{equation*}
    (\mathcal{E}_{11})_{ij}^{Res}:=\mathbbm{1}(\Psi_{ij}=0)\mathbb{E}_{\Psi}\big[\mathfrak{R}_{(j)}\big(L_{ij}\rho_j^{-1}-\frac{\gamma t^{1/2}w_{ij}}{(1-\gamma^2)^{1/2}}\big)\big].
\end{equation*}
In order to control this error term, we need to prepare several prior knowledge. The first concerned thing is the rough bound for  $[\mathcal{G}_{(j)}^{\gamma,0}]_{ab}$. We will establish a perturbation bound from $[\mathcal{G}_{(j)}^{\gamma}]_{ab}$ via the resolvent expansion of $\mathcal{L}(Y^{\gamma},z)$. Observe from \eqref{eq_prf_Greenfunction_resolvent_expansion} that $[\mathcal{G}^{\gamma}]_{ab}=[\mathcal{G}_{(j)}^{\gamma,0}]_{ab}+\big[\mathcal{G}^{\gamma}(-Y^{\gamma}_j(Y^{\gamma}_j)^{*}\mathcal{G}_{(j)}^{\gamma,0})\big]_{ab}$ and
	\begin{equation*}
		\big[\mathcal{G}^{\gamma}Y^{\gamma}_j(Y^{\gamma}_j)^{*}\mathcal{G}_{(j)}^{\gamma,0}\big]_{ab}\lesssim \big[\mathcal{G}^{\gamma}\|Y^{\gamma}_j(Y^{\gamma}_j)^{*}\mathcal{G}_{(j)}^{\gamma,0}\|\big]_{ab}\lesssim[\mathcal{G}^{\gamma}]_{ab}|(Y^{\gamma}_j)^{*}\mathcal{G}_{(j)}^{\gamma,0}Y^{\gamma}_j|.
	\end{equation*}
	By Lemma \ref{lem_largedeviation_H}, in the quadratic form $(Y^{\gamma}_j)^{*}\mathcal{G}_{(j)}^{\gamma,0}Y^{\gamma}_j$,  we obtain that
	\begin{equation*}
		|\mathfrak{ly}_{j}^{*}\mathcal{G}_{(j)}^{\gamma,0}\mathfrak{ly}_j|\prec n^{-c}\max_{a,b}|[\mathcal{G}_{(j)}^{\gamma,0}]_{ab}|.
	\end{equation*}
	Besides, we have $|\mathfrak{hy}_{j}^{*}\mathcal{G}_{(j)}^{\gamma,0}\mathfrak{hy}_j|\lesssim C_1\max_{a,b}|[\mathcal{G}_{(j)}^{\gamma,0}]_{ab}|$ for some $C_1>0$. Thereafter, under $\Omega$, we use the above estimates to obtain that
 \begin{equation*}
		[\mathcal{G}_{(j)}^{\gamma,0}]_{ab}\lesssim (C_1+\mathrm{O}_{\prec}(n^{-c}))[\mathcal{G}^{\gamma}]_{ab}.
	\end{equation*}
 Repeating the above procedure we can control the elements in $\mathcal{L}(Y_{(j)}^{\gamma,0},z)$ from $\mathcal{L}(Y^{\gamma},z)$ on $\Omega$. Therefore, all the Green function entries, say $[\mathcal{G}_{(j)}^{\gamma,0}]_{ab}$, $[\mathcal{G}^{\gamma}]_{ab}$ are finite. Secondly, we observe the following deterministic inequality for any $k_2,\dots,k_q\ge2$,
	\begin{equation}\label{eq_prf_greenfunctioncomparison_deterministicbound_ly}
		\begin{split}
			|\sum_{u_1\neq u_2\neq\cdots\neq u_q}y_{u_1j}^{k_1}y_{u_2j}^{k_2}\cdots y_{u_qj}^{k_q}|&\lesssim|\sum_{u_1}y_{u_1j}^2|\cdots|\sum_{u_q}y_{u_qj}^2|\lesssim 1,
		\end{split}
	\end{equation}
 and
 \begin{equation*}
    |\sum_{u_1\neq u_2}y_{u_1j}y_{u_2j}|\lesssim|\sum_{u_1}y_{u_1j}^2|^{1/2}|\sum_{u_2}y_{u_2j}^2|^{1/2}\lesssim1
 \end{equation*}
since $(\sum_iy_{ij}^2)^l\le1$ for any integer $l>0$ and $|y_{ij}|\le1$. Moreover, we can easily obtain the deterministic bound $\mathbbm{1}(\Omega)|hy_{ij}|\lesssim 1$ and $\mathbbm{1}(\Omega)|ly_{ij}|\lesssim n^{-\epsilon_{\alpha}}\log^{c}n$.

 Now, with the above preparation, we are ready to estimate $|(\mathcal{E}_{11})_{ij}^{Res}|$. Under the event $\Omega$, there exists a large constant $K_2>0$ such that
	\begin{equation*}
		\begin{split}
            |(\mathcal{E}_{11})_{ij}^{Res}|\le&\mathbbm{1}(\psi_{ij}=0)\mathbb{E}_{\Psi}\Big[\Big|\mathfrak{R}_{(j)}\big(L_{ij}\rho_j^{-1}-\frac{\gamma t^{1/2}w_{ij}}{(1-\gamma^2)^{1/2}}\big)\Big|\cdot\mathbbm{1}(\Omega)\Big]\\
			&+\mathbbm{1}(\psi_{ij}=0)\mathbb{E}_{\Psi}\Big[\Big|\mathfrak{R}_{(j)}\big(L_{ij}\rho_j^{-1}-\frac{\gamma t^{1/2}w_{ij}}{(1-\gamma^2)^{1/2}}\big)\Big|\cdot\mathbbm{1}(\Omega^c)\Big].
		\end{split}
	\end{equation*}
        Note that the second term on the right hand side has the upper bound $n^{K_2}Q_0\cdot\mathbbm{1}(\psi_{ij}=0)$. For the first term, we use the following key inputs discussed above:
        \begin{itemize}
            \item [(1)] All the Green function entries have the upper bound with order at most $\mathrm{O}(1)$.
            \item [(2)] The convergence rate comes from the cell $((Y_j^{\gamma})^{*}\mathcal{G}_{(j)}^{\gamma,0}Y_j^{\gamma})$ in the expansion of $\mathcal{G}^{\gamma}Y^{\gamma}$, where we have $|\mathfrak{ly}_j^{*}\mathcal{G}_{(j)}^{\gamma,0}\mathfrak{ly}_j|\prec n^{-c}$.
            \item [(3)] We need to deal with the remaining unpleasant quantities such as
            $$h_{u_1j}^{k_1}\dots h_{u_qj}^{k_q}\rho_j^{-(k_1+\dots+k_q)}[\mathcal{G}_{(j)}^{\gamma,0}]_{u_1u_1}\dots [\mathcal{G}_{(j)}^{\gamma,0}]_{u_qu_q}.$$
            \item [(4)] For the rest quantities, we can use the estimate \eqref{eq_prf_greenfunctioncomparison_deterministicbound_ly}.
        \end{itemize}
       Recall that $Y_j^{\gamma}=\mathfrak{ly}_j+\mathfrak{hy}_j$ and the leading terms in $(Y_j^{\gamma})^{*}\mathcal{G}_{(j)}^{\gamma,0}Y_j^{\gamma}$ are $\mathfrak{ly}_j^{*}\mathcal{G}^{\gamma,0}_{(j)}\mathfrak{ly}_j$ and $\mathfrak{hy}_j^{*}\mathcal{G}^{\gamma,0}_{(j)}\mathfrak{hy}_j$. We decompose $\mathfrak{R}_{(j)}$ into $\mathfrak{R}_{(j)}=\mathfrak{R}_{(j),good}+\mathfrak{R}_{(j),bad}$ based on the cell $(Y_j^{\gamma})^{*}\mathcal{G}_{(j)}^{\gamma,0}Y_j^{\gamma}$. In $\mathfrak{R}_{(j),good}$, there are enough light-tailed quadratic form ensembles $|\mathfrak{ly}_j^{*}\mathcal{G}_{(j)}^{\gamma,0}\mathfrak{ly}_j|\prec n^{-c}$.  Therefore, choosing large enough $s>0$ with $k\ge 2s+2$ one has for $\mathfrak{R}_{(j),good}$,
	\begin{equation*}
		\begin{split}
			&\mathbbm{1}(\psi_{ij}=0)\mathbb{E}_{\Psi}\Big[\Big|\mathfrak{R}_{(j),good}\big(L_{ij}\rho_j^{-1}-\frac{\gamma t^{1/2}w_{ij}}{(1-\gamma^2)^{1/2}}\big)\Big|\cdot\mathbbm{1}(\Omega)\Big]\\
			&\le\frac{n^{C_1\epsilon}\log^{(s+1)c}n\mathbbm{1}(\psi_{ij}=0)}{t^{s+5+(s+1)(1+s_l)}n^{1/2+\epsilon_l+(2s+2)\epsilon_{\alpha}}}\lesssim n^{-3}\cdot \mathbbm{1}(\psi_{ij}=0),
		\end{split}
	\end{equation*}
 where we used the deterministic bound for $L_{ij}\rho_j^{-1}$ and $\gamma t^{1/2}w_{ij}/(1-\gamma^2)^{1/2}$.

Next, we turn to $\mathfrak{R}_{(j),bad}$. For simplicity, we only discuss the worst situation in this case. It turns out that the leading ensembles in $\mathfrak{R}_{(j),bad}$ will take the form $$|h_{u_1j}^{k_1}h_{u_2j}^{k_2}\dots h_{u_qj}^{k_q}\rho_j^{-(k_1+\dots+k_q)}[\mathcal{G}_{(j)}^{\gamma,0}]_{u_1u_1}\dots [\mathcal{G}_{(j)}^{\gamma,0}]_{u_qu_q}|$$ with $k=k_1+k_2+\dots+k_q=s$. One may see that at this time there is no enough light-tailed element ($ly_{ij}$) producing sufficiently fast convergence rate. However, we observe from the expression in $\mathfrak{i}^r_{(j)}(0)$ that there is be at least one additional term $[\mathcal{G}_{(j)}^{\gamma,0}Y_j^{\gamma}]_{aj}$ accompanying the cell $((Y_j^{\gamma})^{*}\mathcal{G}_{(j)}^{\gamma,0}Y_j^{\gamma})^k$. Then, either one can extract one more $L_{ij}\rho_j^{-1}$ factor from $[\mathcal{G}_{(j)}^{\gamma,0}Y_j^{\gamma}]_{aj}$ to obtain that
\begin{equation*}
	\begin{split}
		&\mathbbm{1}(\psi_{ij}=0)\mathbb{E}_{\Psi}\Big[\Big|L_{ij}\rho_j^{-1}[\mathcal{G}_{(j)}^{\gamma,0}]_{ai}h_{u_1j}^{k_1}h_{u_2j}^{k_2}\dots h_{u_qj}^{k_q}\rho_j^{-(k_1+\dots+k_q)}\\
&\qquad \qquad \cdot [\mathcal{G}_{(j)}^{\gamma,0}]_{u_1u_1}\dots [\mathcal{G}_{(j)}^{\gamma,0}]_{u_qu_q}\big(L_{ij}\rho_j^{-1}-\frac{\gamma t^{1/2}w_{ij}}{(1-\gamma^2)^{1/2}}\big)\Big|\cdot\mathbbm{1}(\Omega)\Big]\\
		&\lesssim\mathbbm{1}(\psi_{ij}=0)\mathbb{E}_{\Psi}\Big[|\big(h_{u_1j}\rho_j^{-1}\big)^{k_1}|\cdot(L^2_{ij}\rho_j^{-2}+|\frac{\gamma t^{1/2}w_{ij}}{(1-\gamma^2)^{1/2}}\cdot L_{ij}\rho_j^{-1}|)\cdot\mathbbm{1}(\Omega)\Big]\\
		&\lesssim\mathbbm{1}(\psi_{ij}=0)\Big(\mathbb{E}_{\Psi}\Big[|(h_{u_1j}\rho_j^{-1})^{k_1}|\cdot L^2_{ij}\rho_j^{-2}\cdot\mathbbm{1}(\Omega)\Big]+\mathbb{E}_{\Psi}|(h_{u_1j}\rho_j^{-1})^{k_1}\cdot\mathrm{O}_{\prec}(n^{-1})\Big)\\
        &\lesssim n^{-\alpha/2+1-\epsilon_y},
    \end{split}
\end{equation*}
where in the second step, we have used the fact that the random variables $L_{ij}\rho_j^{-1}$ and $-t^{1/2}w_{ij}$ are independent with second moment matching, and we choose $k_1\ge 4$ which is ensured by the fact that there are at most finitely many components of $Y^{\gamma}_j$ being heavy tailed with high probability. Or we get the additional $\sum_{k\neq i}[\mathcal{G}^{\gamma}]_{ak}y_{kj}$ such that
\begin{equation*}
    \sum_{ij}\mathbbm{1}(\psi_{ij}=0)\mathbb{E}_{\Psi}\big[\sum_{k\neq i}|[\mathcal{G}^{\gamma}]_{ik}y_{kj}L_{ij}\rho_j^{-1}(h_{u_1j}\rho_j^{-1})^{k_1}\big]\lesssim n^{-\alpha/2+3/2-\epsilon_y}.
\end{equation*}
Then using a parallel argument for each item in $\mathfrak{R}_{(j),bad}$ and choosing $s_1=2s+3$, $s_l=5$, $\epsilon<\epsilon_l/8$, $s>C_0/4+2/\epsilon_{\alpha}$ and noticing $t\gg n^{-\epsilon_{\alpha}/8}$, we have
\begin{equation*}
    \mathbbm{1}(\psi_{ij}=0)\mathbb{E}_{\Psi}\Big[\Big|\mathfrak{R}_{(j),bad}\big(L_{ij}\rho_j^{-1}-\frac{\gamma t^{1/2}w_{ij}}{(1-\gamma^2)^{1/2}}\Big)\Big|\cdot \mathbbm{1}(\Omega)\big]\lesssim n^{-\alpha/2+3/2-\epsilon_y}.
\end{equation*}
Therefore,
\begin{equation*}
    \sum_{i,j}|(\mathcal{E}_{11})_{ij}^{Res}|\lesssim n^{-\alpha/2+3/2-\epsilon_y}\cdot\mathbbm{1}(\psi_{ij}=0)+n^{K_2}Q_0\cdot\mathbbm{1}(\psi_{ij}=0).
\end{equation*}

Now we consider the remainder term in $(\mathcal{E}_{11})_{ij}$,
\begin{equation*}
    \mathbbm{1}(\Psi_{ij}=0)\mathbb{E}\big[\mathfrak{M}_{(j)}\big(L_{ij}\rho_j^{-1}-\frac{\gamma t^{1/2}w_{ij}}{(1-\gamma^2)^{1/2}}\big)\big].
\end{equation*}
Recall the notation in the assemble patterns classification in Definition \ref{def_prf_Greenfunctioncomparison_encode_beta}. We will use $\beta^{(K)}_{k_1k_2\dots k_q}$, $\mathfrak{G}^{(K)}_{(j),k_1k_2\dots k_q}$, and $\widetilde{\mathfrak{G}}_{(j),k_1k_2\ldots k_q}^{(K)}$ to label each term in $\mathfrak{M}_{(j)}$. In the sequel, we focus on $\mathfrak{G}_{(j),k_1k_2\ldots k_q}^{(K)}$ since $\widetilde{\mathfrak{G}}_{(j),k_1k_2\ldots k_q}^{(K)}$ can be handled similarly. Therefore, we will always let $\mathfrak{G}_{(j),k_1k_2\ldots k_q}^{(K)}$ denote the assemble patterns of the entries of Green function for simplicity without causing confusion.
Another important property for $\beta^{(K)}_{k_1k_2\dots k_q}$ is that except $\beta_{k_1}^{(K)}$, each additional lower index $u_i, 2\le i\le q$ will be accompanied by the corresponding Green function entries $[\mathcal{G}_{(j)}^{\gamma,0}]_{u_ib}$ or $[\mathcal{G}_{(j)}^{\gamma,0}]_{au_i}$ in only two ways which are from two different kinds of cells in \eqref{eq_prf_Greenfunctioncomparison_expansion_f}, $[(Y_j^{\gamma})^{*}\mathcal{G}_{(j)}^{\gamma,0}Y_j^{\gamma}]_{11}$ and $[\mathcal{G}_{(j)}^{\gamma,0}Y_j^{\gamma}(Y_j^{\gamma})^{*}\mathcal{G}_{(j)}^{\gamma,0}]_{ib}$.
By Lemmas \ref{lem_moment_rates}-\ref{lem_oddmoment_est}, we have
\begin{equation*}
		 \mathbb{E}[(Y_j^{\gamma})^{*}\mathcal{G}_{(j)}^{\gamma,0}Y_j^{\gamma}]_{11}=\mathbb{E}\sum_{s,l}y_{sj}[\mathcal{G}_{(j)}^{\gamma,0}]_{sl}y_{lj}= \mathbb{E}\sum_l[\mathcal{G}_{(j)}^{\gamma,0}]_{ll}\mathbb{E}y_{lj}^2+\mathbb{E}\sum_{s\neq l}[\mathcal{G}_{(j)}^{\gamma,0}]_{sl}\mathbb{E}y_{sj}y_{lj}\asymp 1
\end{equation*}
and
\begin{equation*} \mathbb{E}[\mathcal{G}_{(j)}^{\gamma,0}Y_j^{\gamma}(Y_j^{\gamma})^{*}\mathcal{G}_{(j)}^{\gamma,0}]_{ib}
=\mathbb{E}[\mathcal{G}_{(j)}^{\gamma,0}\mathcal{G}_{(j)}^{\gamma,0}]_{ib}\mathbb{E}y_{lj}^2+\mathbb{E}\sum_{l\neq s}[\mathcal{G}_{(j)}^{\gamma,0}]_{is}[\mathcal{G}_{(j)}^{\gamma,0}]_{lb}\mathbb{E}y_{sj}y_{lj}\lesssim \mathrm{o}(1).
\end{equation*}
One may easily see from the above expression that, the typical rate of the first term $[(Y_j^{\gamma})^{*}\mathcal{G}_{(j)}^{\gamma,0}Y_j^{\gamma}]_{11}$ is larger than the second one $[\mathcal{G}_{(j)}^{\gamma,0}Y_j^{\gamma}(Y_j^{\gamma})^{*}\mathcal{G}_{(j)}^{\gamma,0}]_{ib}$ after taking expectation. Then, it is practically simple to just consider the cell $[(Y_j^{\gamma})^{*}\mathcal{G}_{(j)}^{\gamma,0}Y_j^{\gamma}]_{11}$ in \eqref{eq_prf_Greenfunctioncomparison_expansion_f} since the rest ones are smaller after taking expectation.

In the sequel, we consider three cases for $\mathfrak{M}_{(j)}:=\mathfrak{M}_{(j)}^{I}+\mathfrak{M}_{(j)}^{II}+\mathfrak{M}_{(j)}^{III}$.
\begin{itemize}
	\item [\textbf{Case I.}] \textbf{For $\beta^{(K)}_{k_1k_2\dots k_q}$, $k_1\ge 1$ and all $k_2,\dots,k_q$ are even numbers.}

Firstly, notice that in this case, we can directly use
the moment estimation in Lemma \ref{lem_moment_rates}. Let $f^{(k)}_{(j)}(0)=C_k(k!)^{-1}F^{(1+k)}(\operatorname{Im}[\mathcal{G}_{(j)}^{\gamma,0}]_{ab})$ for some non-zero constant $C_k$. According to \eqref{eq_prf_greenfunctioncomparison_deterministicbound_ly}, $a\in\mathrm{T}_r$ or $b\in\mathrm{T}_c$ with $i,j$ in $\mathrm{I}_r/\mathrm{T}_r$ or $\mathrm{I}_c/\mathrm{T}_c$, so one can easily obtain the following estimation,
\begin{equation*}
    f_{(j)}^{(k)}(0)\cdot\mathbbm{1}(\psi_{ij}=0)\cdot\mathbbm{1}(\Omega)\lesssim\frac{n^{(C_0+s+5+s_l)\epsilon}}{t^{s+5+s_l}}.
\end{equation*}

Now, ignoring the notation $\operatorname{Im}$ and invoking the Definition \ref{def_prf_Greenfunctioncomparison_encode_beta}, we may rewrite $(\mathcal{E}_{11})_{ij}$ as
	\begin{equation*}
		\begin{split}
			&\mathbbm{1}(\Psi_{ij}=0)\mathbb{E}_{\Psi}\big[\big|\mathfrak{M}^{I}_{(j)}\big(L_{ij}\rho_j^{-1}-\frac{\gamma t^{1/2}w_{ij}}{(1-\gamma^2)^{1/2}}\big)\big|\big]\\
            \lesssim&\sum_{K=2s+1,o}^{2s\cdot s_l+4s+1}\sum_{k=1,o}^{K}\mathbbm{1}(\psi_{ij}=0)\mathbb{E}_{\Psi}\Big[(ly_{ij})^k\beta_{0k_2\dots k_q}^{(K-k)}\mathfrak{F}_{(j),kk_2\ldots k_q}^{(K)}f_{(j)}^{(k)}(0)\Big(L_{ij}\rho_j^{-1}-\frac{\gamma t^{1/2}w_{ij}}{(1-\gamma^2)^{1/2}}\Big)\Big]\\
			=&:\sum_{K=2s+1,o}^{2s\cdot s_l+4s+1}\sum_{k=1,o}^{K}(\mathcal{E}_{11,k})_{ij},
		\end{split}
	\end{equation*}
where the summation is over all possible odd $K\in\mathbb{N}$ with $2s+1\le K\le 2s(4s+1)\cdot s_l$. For simplification, we fix $K$ in the following discussion.

In the sequel, we consider $(\mathcal{E}_{11,k})_{ij}$ for each $k$.
	
	\textbf{Case $k\ge 5$.} Expanding the term $(ly_{ij})^k$ and using the symmetric condition for $w_{ij}$, we have
	\begin{equation*}
		\begin{split}
			|(\mathcal{E}_{11,k})_{ij}|&\lesssim \sum_{l_1+l_2+l_3=k}\mathbb{E}_{\Psi}\Big[\Big(|L_{ij}\rho_j^{-1}|^{l_1+1}|t^{1/2}w_{ij}|^{l_2}|M_{ij}\rho_j^{-1}|^{l_3}\\
			&+|L_{ij}\rho_j^{-1}|^{l_1}|t^{1/2}w_{ij}|^{l_2+1}|M_{ij}\rho_j^{-1}|^{l_3}\Big)\beta_{0k_2\dots k_q}^{(K-k)}\mathfrak{F}_{(j),kk_2\ldots k_q}^{(K)}f_{(j)}^{(k)}(0)|\Big]\cdot\mathbbm{1}(\psi_{ij}=0)
			\\
			 &\lesssim\frac{t\mathbbm{1}(\psi_{ij}=0)\mathbb{E}_{\Psi}(|f_{(j)}^{(k)}(0)|\big|\mathbbm{1}(\Omega))}{n^{1+\alpha/2}}+\frac{t\mathbbm{1}(\psi_{ij}=0)\mathbb{E}_{\Psi}(|f_{(j)}^{(k)}(0)|\big|\mathbbm{1}(\Omega^c))}{n^{1+\alpha/2}},
		\end{split}
	\end{equation*}
where  we have used the fact that $L_{ij}$ can not happen with $M_{ij}$ together, and we have also deployed the deterministic bound $|L_{ij}\rho_j^{-1}|\lesssim \mathrm{O}(n^{-1/2-\epsilon_l})$. Then, we find the leading term is $\mathbb{E}_{\Psi}\big(|L_{ij}\rho_j^{-1}|^{k}|tw_{ij}^2|\big)$.
	Similarly to control $(\mathcal{E}_{11})_{ij}^{Res}$, we have on the event $\Omega^c$,
	\begin{equation*}
		\frac{t\mathbbm{1}(\psi_{ij}=0)\mathbb{E}_{\Psi}(|f_{(j)}^{(k)}(0)|\big|\mathbbm{1}(\Omega^c))}{n^{1+\alpha/2}}\lesssim n^{K_3}Q_0\mathbbm{1}(\psi_{ij}=0)
	\end{equation*}
	for some large $K_3>0$. On the event $\Omega$, we recover $f_{(j)}^{(k)}(Y_j^{\gamma})$ from $f_{(j)}^{(k)}(0)$ by Taylor's expansion for $F^{(1+k)}(\operatorname{Im}[\mathcal{G}_{(j)}^{\gamma,0}]_{ab})$ around $\operatorname{Im}[\mathcal{G}^{\gamma}]_{ab}$. Thus, we get
	\begin{equation*}
		\begin{split}	
&\frac{t\mathbbm{1}(\psi_{ij}=0)\mathbb{E}_{\Psi}(|f_{(j)}^{(k)}(0)|\big|\mathbbm{1}(\Omega))}{n^{1+\alpha/2}}\\
&\lesssim \frac{n^{(s+4+k(1+s_l))\epsilon}}{t^{s+3+k(1+s_l)}n^{1+\alpha/2}}\sum_{r=1}^{s_r}\mathbb{E}_{\Psi}\big[F^{(1+k+r)}(\operatorname{Im}[\mathcal{G}^{\gamma}]_{ab})\Delta^r([\mathcal{G}_{(j)}^{\gamma,0}]_{ab},[\mathcal{G}^{\gamma}]_{ab})\big],
		\end{split}
	\end{equation*}
	where $\Delta([\mathcal{G}_{(j)}^{\gamma,0}]_{ab},[\mathcal{G}^{\gamma}]_{ab})$ is defined as the difference between $\operatorname{Im}[\mathcal{G}_{(j)}^{\gamma,0}]_{ab}$ and $\operatorname{Im}[\mathcal{G}^{\gamma}]_{ab}$. It is easy to see from the resolvent expansion \eqref{eq_prf_Greenfunction_resolvent_expansion} that the difference between $[\mathcal{G}^{\gamma}]_{ab}$ and $[\mathcal{G}_{(j)}^{\gamma,0}]_{ab}$ is at most $n^{\epsilon}$ under $\Omega$. From \eqref{eq_prf_greenfunctioncomparison_deterministicbound_ly}, we have the upper bound $\Delta^r(Y^{\gamma}_j,[\mathcal{G}^{\gamma}]_{ab})\lesssim n^{r\epsilon}$. Choosing $\epsilon<\epsilon_l/(s_l+s_r)$ with $s=s_l=5$ and $s_r=3$, we conclude that
	\begin{eqnarray*}
		 |(\mathcal{E}_{11,k})_{ij}|\lesssim\frac{\mathbbm{1}(\psi_{ij}=0)}{n^{2+\epsilon_{\alpha}/2}}\sum_{r=1}^{3}\mathbb{E}_{\Psi}\big[|F^{(1+k+r)}(\operatorname{Im}[\mathcal{G}^{\gamma}]_{ab})|\big]+\frac{\mathbbm{1}(\psi_{ij}=0)}{n^3}+n^{K_3}Q_0\mathbbm{1}(\psi_{ij}=0).
	\end{eqnarray*}
	
	\textbf{Case $k=3$.} Firstly, $(\mathcal{E}_{11,3})_{ij}$ can be expanded as
	\begin{equation*}
		\begin{split}
			|(\mathcal{E}_{11,3})_{ij}|\asymp&\mathbb{E}_{\Psi}\Big[\big((L_{ij}\rho_j^{-1})^4+t(L_{ij}\rho_j^{-1})^2w_{ij}^2+t^2w_{ij}^4
			+(L_{ij}\rho_j^{-1})^2(M_{ij}\rho_j^{-1})^2+tw_{ij}^2(M_{ij}\rho_j^{-1})^2\big)\\
			&\qquad\times \beta_{0k_2\dots k_q}^{(K-3)}\mathfrak{F}_{(j),3k_2\ldots k_q}^{(K)}f_{(j)}^{(3)}(0)\Big]\cdot\mathbbm{1}(\psi_{ij}=0).
		\end{split}
	\end{equation*}
	The leading term above is $tw_{ij}^2(M_{ij}\rho_j^{-1})^2$. Again, similarly to the case $k\ge5$, considering both $\Omega$ and $\Omega^c$, we have
	\begin{equation*}
		\begin{split}
			&\big|\mathbb{E}_{\Psi}\big[tw_{ij}^2(M_{ij}(\operatorname{diag}S)^{-1/2})^2\beta_{0k_2\dots k_q}^{(K-3)}\mathfrak{F}_{(j),3k_2\ldots k_q}^{(K)}f_{(j)}^{(3)}(0)\big]\cdot\mathbbm{1}(\psi_{ij}=0)\big|\\
			&\lesssim\frac{\mathbbm{1}(\psi_{ij}=0)\cdot t|f^{(3)}_{(j)}(0)|\cdot\mathbbm{1}(\Omega)}{n^{2}}+n^{K_4}Q_0\mathbbm{1}(\psi_{ij}=0)
		\end{split}
	\end{equation*}
	for some large constant $K_4>0$. Recovering $f^{(3)}_{(j)}(Y_j^{\gamma})$ from $f_{(j)}^{(3)}(0)$ and considering their error, we have
	\begin{equation*}
		\begin{split}
			 |(\mathcal{E}_{11,3})_{ij}|\lesssim&\frac{tn^{12\epsilon}}{n^2}\sum_{r=1}^3\mathbb{E}_{\Psi}\big[|F^{(4+r)}(\operatorname{Im}[\mathcal{G}^{\gamma}]_{ab})|\big]\cdot\mathbbm{1}(\psi_{ij}=0)\cdot\mathbbm{1}(i\in\mathrm{T}_r,j\in\mathrm{T}_c)\\
			 &+\frac{n^{12\epsilon}}{n^{2}t^7}\sum_{r=1}^3\mathbb{E}_{\Psi}\big[|F^{(4+r)}(\operatorname{Im}[\mathcal{G}^{\gamma}]_{ab})|\big]\cdot(1-\mathbbm{1}(i\in\mathrm{T}_r,j\in\mathrm{T}_c))+n^{K_4}Q_0\mathbbm{1}(\psi_{ij}=0).
		\end{split}
	\end{equation*}
Since $\Psi$ is well configured, one has $\#(1-\mathbbm{1}(i\in\mathrm{T}_r,j\in\mathrm{T_c}))\le n^{2/\alpha-\epsilon_0}$. Thus, we conclude  this case.
	
	\textbf{Case $k=1$.} A direct calculation leads to
	\begin{equation*}
		|(\mathcal{E}_{11,1})_{ij}|=\gamma \mathbb{E}_{\Psi}\Big[\beta_{0k_2\dots k_q}^{(K)}\mathfrak{F}_{(j),1k_2\ldots k_q}^{(K)}\big(L_{ij}^2\cdot\rho^{-2}_j-tw_{ij}^2\big)f_{(j)}^{(1)}(0)\Big]\cdot\mathbbm{1}(\psi_{ij}=0).
	\end{equation*}
	For $\mathfrak{F}_{(j),k_1k_2\ldots k_q}^{(K)}$ part, we have
	\begin{equation*}
		\mathfrak{F}_{(j),k_1k_2\ldots k_q}^{(K)}\lesssim C(\sum_{l_2}[\mathcal{G}_{(j)}^{\gamma,0}]_{l_2l_2})\times (\sum_{l_3}[\mathcal{G}_{(j)}^{\gamma,0}]_{l_3l_3})\times\dots\times (\sum_{l_q}[\mathcal{G}_{(j)}^{\gamma,0}]_{l_ql_q})
	\end{equation*}
	for some constant $C$. Then, using the trivial bound of $[\mathcal{G}^{\gamma,0}_{(j)}]_{ab}$'s, we have $|\mathfrak{F}_{(j),k_1k_2\ldots k_q}^{(K)}|=\mathrm{O}(n^{q-1})$. Intuitively, after taking the expectation for $\beta_{0k_2\dots k_q}^{(K)}\big(L_{ij}^2\cdot\rho^{-2}_j-tw_{ij}^2\big)$, the $\beta_{0k_2\dots k_q}^{(K)}$ term will give rates of order at least $\mathrm{O}(n^{-q+1})$ and the moment matching condition on $L_{ij}^2\rho_j^{-1}$ and $tw_{ij}^2$ will give sufficiently fast convergence rate to cancel $\sum_{ij}$. However, one cannot directly use the moment matching condition here due to the correlation between $\beta_{0k_2\dots k_q}^{(K)}$ and $L_{ij}^2\cdot\rho^{-2}_j$. To deal with this issue, we deploy the cumulant expansion trick. We take the cumulant expansion for $L_{ij}^2\rho_j^{-2}$ and $tw_{ij}^2$ respectively as
	\begin{equation*}
		\begin{split}
			&\mathbb{E}_{\Psi}\Big[\beta_{0k_2\dots k_q}^{(K)}(L_{ij}^2\cdot\rho^{-2}_j)\Big]=\sum_{k=0}^{q-1}\frac{\kappa_{k+1,L}}{k!}\mathbb{E}\Big(\frac{\partial^k\beta_{0k_2\dots k_q}^{(K)}}{(\partial (L_{ij}^2\cdot\rho^{-2}_j))^k}\Big)+\mathbb{E}(r_{q-1}(\beta_{0k_2\dots k_q}^{(K)}(L_{ij}^2\cdot\rho^{-2}_j))),\\
			&\mathbb{E}_{\Psi}\Big[\beta_{0k_2\dots k_q}^{(K)}(tw_{ij}^2)\Big]=\sum_{k=0}^{q-1}\frac{\kappa_{k+1,w}}{k!}\mathbb{E}\Big(\frac{\partial^k\beta_{0k_2\dots k_q}^{(K)}}{(\partial (tw_{ij}^2)^k}\Big)+\mathbb{E}(r_{q-1}(\beta_{0k_2\dots k_q}^{(K)}(tw_{ij}^2)))=\kappa_{1,w}\mathbb{E}(\beta_{0k_2\dots k_q}^{(K)}),
		\end{split}
	\end{equation*}
 where $\kappa_{k+1,L}$ and $\kappa_{k+1,w}$ denote the $(k+1)$-th cumulants of $L_{ij}^2\rho_j^{-2}$ and $tw_{ij}^2$, respectively.
	Then, one can easily calculate that $\kappa_{k+1,L}=\mathrm{O}(n^{-(1+\epsilon_l)(k+1)})$ and
	\begin{gather*}
		\mathbb{E}\Big(\frac{\partial^k\beta_{0k_2\dots k_q}^{(K)}}{(\partial (L_{ij}^2\cdot\rho^{-2}_j))^k}\Big)=\mathbb{E}\Big(\frac{\partial^k\beta_{0k_2\dots k_q}^{(K)}}{\partial (L_{ij}^2)^k}\cdot (\frac{\partial L_{ij}^2\cdot\rho^{-2}_j}{\partial L_{ij}^2})^{-k}\Big)=\mathrm{O}(n^{-q+1}),\\
		\mathbb{E}(r_{q-1}(\beta_{0k_2\dots k_q}^{(K)}(L_{ij}^2\cdot\rho^{-2}_j)))\lesssim \mathbb{E}(L_{ij}^{2(q+1)}\rho_j^{-2(q+1)})\sup|(\beta_{0k_2\dots k_q}^{(K)})^{q+1}|\le\mathrm{O}(n^{-(1+\epsilon_l)(q+1)}),
	\end{gather*}
	where in the last inequality, we have used the deterministic bound $|(\beta_{0k_2\dots k_q}^{(K)})^{q+1}|\le 1$.
	
	The moment matching condition and the fact that $t=n\mathbb{E}\big((1-\psi_{ij})^2(1-\chi_{ij})^2l_{ij}^2\rho_j^{-2}\big)$ imply
	\begin{equation*}
		|\kappa_{1,L}-\kappa_{1,w}|=|\mathbb{E}\big((1-\chi_{ij})^2l_{ij}^2\big)-\mathbb{E}(tw_{ij}^2)|\lesssim tn^{-3+2/\alpha-\epsilon_0}.
	\end{equation*}
	Thereafter, following the same procedures as in cases $k\ge5$ and $k=3$, we have
	\begin{equation*}
		 |(\mathcal{E}_{11,1})_{ij}|\lesssim\frac{1}{n^{2+\epsilon_{\alpha}/2}}\sum_{r=1}^{3}\mathbb{E}_{\Psi}\big[|F^{(2+r)}(\operatorname{Im}[\mathcal{G}^{\gamma}]_{ab})|\big]\cdot\mathbbm{1}(\psi_{ij}=0)+n^{-3}\mathbbm{1}(\psi_{ij}=0)+n^{K_5}Q_0\mathbbm{1}(\psi_{ij}=0).
	\end{equation*}
	
	Finally, combining the above results and choosing $\epsilon\le \min\{\epsilon_{\alpha},\epsilon_{l},\epsilon_0\}/100$, we find that after summation over $i,j$,
 \begin{equation*}
     \sum_{ij}\mathbbm{1}(\Psi_{ij}=0)\mathbb{E}_{\Psi}\big[\mathfrak{M}_{(j)}^I(L_{ij}\rho_j^{-1}-\frac{\gamma t^{1/2}w_{ij}}{(1-\gamma^2)^{1/2}})\big]\le\frac{C_{I}}{(1-\gamma^2)^{1/2}}\big(n^{-\epsilon_l+12\epsilon}(\mathfrak{J}_{0,ab}+1)+Q_0n^{K_5})\big),
 \end{equation*}
 which is just \eqref{eq_prf_Greenfunctioncomparison_equivbound}.  We remark that \textbf{Case I} actually corresponds to the setting where $X=(X_{ji})$ is symmetric.
	
	\item [\textbf{Case II.}] \textbf{For $\beta^{(K)}_{k_1k_2\dots k_q}$,  $k_1\ge 1$ and at least one of $k_2,\dots,k_q$ is odd.}
	
	 Firstly in this case, we cannot directly use the moment estimation in Lemma \ref{lem_moment_rates}. However, one may observe that for $k_1\ge3$
	\begin{equation*}
		\mathbb{E}\beta^{(K+1)}_{(k_1+1)k_2\dots k_q}=\mathbb{E}y_{ij}^{k_1+1}y_{1j}^{k_2}\cdots y_{qj}^{k_q}\le\mathbb{E}y_{ij}^{4}y_{1j}^{2}\cdots y_{qj}^{2},
	\end{equation*}
	where we regard $(L_{ij}\rho_j^{-1}-\frac{\gamma t^{1/2}w_{ij}}{(1-\gamma^2)^{1/2}})$ as an extra $y_{ij}$ so that we have the lower script $k_1+1$. Therefore, most of the terms in \textbf{Case II} can be bounded by quantities $\beta^{(2q+1)}_{32\dots 2}$ in \textbf{Case I}. It remains to consider $\beta_{2k_2\dots k_q}^{(K)}$ and $\beta_{1k_2\dots k_q}^{(K)}$. We should be careful with these terms since we cannot easily use the deterministic bound $y_{kj}^2\le 1$ due to the existence of odd moments of $y_{ij}$'s. We first consider $\beta_{2k_2\dots k_q}^{(K)}$. Since $K$ is odd, we can classify it into two categories,
	\begin{itemize}
		\item [(i)] There is at least one $k_l=1$ for $2\le l\le q$.
		\item [(ii)] All $k_l\ge 2$ for $2\le l\le q$, therefore there must exist one odd $k_l$.
	\end{itemize}
	For $(i)$,  the final expectation reads
	\begin{equation*}
		|(\mathcal{E}_{11,2})_{ij}|\sim \mathfrak{F}_{(j),k_1k_2\ldots k_q}^{(K)}\mathbb{E}_{\Psi}[\beta^{(K-3)}_{0k_2\dots k_{l-1}k_{l+1}\dots k_q}y_{lj}(L_{ij}^3\rho_j^{-3}-tL_{ij}\rho_j^{-1}w_{ij}^2)f_{(j)}^{(2)}(0)]\cdot\mathbbm{1}(\psi_{ij}=0).
	\end{equation*}
	Typically, we have the rough estimation that $|\mathfrak{F}_{(j),k_1k_2\ldots k_q}^{(K)}|\le\mathrm{O}(n^{q-1})$. This estimation can be improved since $k_l=1$ results in $\sum_{s}[\mathcal{G}_{(j)}^{\gamma,0}]_{sl}y^{k_s}_{sj}y_{lj}$  in $\mathfrak{F}_{(j),k_1k_2\ldots k_q}^{(K)}$. By the Cauchy-Schwartz inequality and the Wald identity, we obtain  $|\sum_{s}[\mathcal{G}_{(j)}^{\gamma,0}]_{sl}|=\mathrm{O}(\sqrt{n})$. Therefore, we reduce the typical rate of  $|\mathfrak{F}_{(j),k_1k_2\ldots k_q}^{(K)}|$ by at least $\mathrm{O}(\sqrt{n})$, and we have $|\mathfrak{F}_{(j),k_1k_2\ldots k_q}^{(K)}|\le\mathrm{O}(n^{q-3/2})$. Then applying Lemma 3.1 in \cite{gine1997student}, Lemma \ref{lem_moment_rates} and Lemma \ref{lem_oddmoment_est}, we have
	\begin{gather*}
		\begin{split}
			\mathbb{E}_{\Psi}[\beta^{(K-3)}_{0k_2\dots k_{l-1}k_{l+1}\dots k_q}y_{lj}(L_{ij}^3\rho_j^{-3})]&\lesssim \mathbb{E}(y_{lj}L_{ij}\rho_j^{-1})^{1/2}\times \mathbb{E}_{\Psi}[y^3_{lj}L^3_{ij}\rho_j^{-3}(\beta^{(K-3)}_{0k_2\dots k_{l-1}k_{l+1}\dots k_q})^2]^{1/2}\\
			&\lesssim \mathrm{o}(n^{-3/2-(q-1)}),
		\end{split}
	\end{gather*}
	where we have used the deterministic bound $|L_{ij}\rho_j^{-1}|=\mathrm{o}(n^{-1/2})$ and $|y_{kj}|\le 1$. Besides,
	\begin{equation*}
		\begin{split}
			&\mathbb{E}_{\Psi}[\beta^{(K-3)}_{0k_2\dots k_{l-1}k_{l+1}\dots k_q}ty_{lj}(L_{ij}\rho_j^{-1})w_{ij}^2]\\
\lesssim & t\mathbb{E}(w_{ij}^2)\times\mathbb{E}(y_{lj}L_{ij}\rho_j^{-1})^{1/2}\times \mathbb{E}_{\Psi}[\beta^{(2K-4)}_{011k_2k_2\dots k_{l-1}k_{l-1}k_{l+1}k_{l+1}\dots k_qk_q}]^{1/2}\\
			\lesssim & \mathrm{o}(n^{-5/2-(q-1)}).
		\end{split}
	\end{equation*}
	This together with the estimation for $|\mathfrak{F}_{(j),k_1k_2\ldots k_q}^{(K)}|$ implies that $|(\mathcal{E}_{11,2})_{ij}|\lesssim\mathrm{o}(n^{-2})$.
	
	Now, we turn to $(ii)$. A direct calculation shows that
	\begin{equation*}
		|(\mathcal{E}_{11,2})_{ij}|\sim \mathfrak{F}_{(j),k_1k_2\ldots k_q}^{(K)}\mathbb{E}_{\Psi}[\beta^{(K-2)}_{0k_2\dots k_q}(L_{ij}^3\rho_j^{-3}-tL_{ij}\rho_j^{-1}w_{ij}^2)f_{(j)}^{(2)}(0)]\cdot\mathbbm{1}(\psi_{ij}=0).
	\end{equation*}
	For odd $k_l$, there is one summation in $\mathfrak{F}_{(j),k_1k_2\ldots k_q}^{(K)}$ reading $\sum_l[\mathcal{G}_{(j)}^{\gamma,0}]_{li}[\mathcal{G}_{(j)}^{\gamma,0}]_{ll}y_{lj}^{k_l}y_{ij}$. Again, using the Cauchy-Schwartz inequality and the Wald identity, we find that $|\sum_l[\mathcal{G}_{(j)}^{\gamma,0}]_{li}[\mathcal{G}_{(j)}^{\gamma,0}]_{ll}|=\mathrm{O}(\sqrt{n})$. Therefore, we can obtain the improved typical rate $|\mathfrak{F}_{(j),k_1k_2\ldots k_q}^{(K)}|\le\mathrm{O}(n^{q-3/2})$. Thereafter, using Lemma \ref{lem_oddmoment_est} with the deterministic bound $|L_{ij}\rho_j^{-1}|=\mathrm{o}(n^{-1/2})$, one has
	\begin{equation*}
		\begin{split}
			\mathbb{E}_{\Psi}[\beta^{(K-2)}_{0k_2\dots k_q}(L_{ij}^3\rho_j^{-3})]\lesssim \mathbb{E}(L_{ij}\rho_j^{-1})^{2}\times \mathrm{o}(n^{-(q-1)-1/2})\lesssim \mathrm{o}(n^{-3/2-(q-1)}),
		\end{split}
	\end{equation*}
	and
	\begin{equation*}
		\begin{split}
			\mathbb{E}_{\Psi}[\beta^{(K-2)}_{0k_2\dots k_q}ty_{lj}(L_{ij}\rho_j^{-1})w_{ij}^2]\lesssim t\mathbb{E}(w_{ij}^2)\times\mathrm{o}(n^{-(q-1)-1/2})\lesssim \mathrm{o}(n^{-3/2-(q-1)}).
		\end{split}
	\end{equation*}
	Summarising the above calculation, we conclude that $|(\mathcal{E}_{11,2})_{ij}|\lesssim\mathrm{o}(n^{-2})$.
	
	Finally, we consider the remaining case $\beta_{1k_2\dots k_q}^{(K)}$. For this, we have
	\begin{equation*}
		|(\mathcal{E}_{11,2})_{ij}|\sim\gamma \mathbb{E}_{\Psi}\Big[\beta_{0k_2\dots k_q}^{(K)}\big(L_{ij}^2\cdot\rho^{-2}_j-tw_{ij}^2\big)f_{(j)}^{(1)}(0)\Big]\cdot\mathbbm{1}(\psi_{ij}=0).
	\end{equation*}
	The strategy to handle this term is the same as the one in \textbf{Case I}, where we used the cumulant expansion argument combined with the second moment matching condition. The only difference here is that we apply Lemma \ref{lem_oddmoment_est} to approximate $\mathbb{E}_{\Psi}\beta_{0k_2\dots k_q}^{(K-1)}$ if there is at least one $k_l=1$ for $2\le l\le q$ or we use the deterministic bound $|y_{kj}|\le 1$ to ensure that all $k_l$'s are even if there is no $k_l=1$ and then use Lemma \ref{lem_moment_rates}. This gives $|(\mathcal{E}_{11,2})_{ij}|\lesssim\mathrm{o}(n^{-2})$. Then one may follow similar lines in \textbf{Case I} to obtain the desired result. We omit further details here.

 In conclusion, we finally obtain that
 \begin{equation*}
     \sum_{ij}\mathbbm{1}(\Psi_{ij}=0)\mathbb{E}_{\Psi}\big[\mathfrak{M}_{(j)}^{II}(L_{ij}\rho_j^{-1}-\frac{\gamma t^{1/2}w_{ij}}{(1-\gamma^2)^{1/2}})\big]\le\frac{C_{II}}{(1-\gamma^2)^{1/2}}\big(n^{-\epsilon_l}(\mathfrak{J}_{0,ab}+1)+Q_0n^{K_6})\big),
 \end{equation*}
 for some large constant $K_6>0$.
	
	\item [\textbf{Case III.}] \textbf{For $\beta^{(K)}_{k_1k_2\dots k_q}$,  $k_1=0$.}
	
	Notice that at this time we have no extra $y_{ij}$. Therefore, we have
	\begin{equation*}
		(\mathcal{E}_{11,0})_{ij}=\mathfrak{F}_{(j),0k_2\ldots k_q}^{(K)}\mathbb{E}_{\Psi}\Big[\beta_{0k_2\dots k_q}^{(K)}(L_{ij}\rho_j^{-1}-\frac{\gamma t^{1/2}w_{ij}}{(1-\gamma^2)^{1/2}})f_{(j)}(0)\Big]\cdot\mathbbm{1}(\psi_{ij}=0).
	\end{equation*}
	One should also notice that there must exist the term $\sum_{l\neq i}[\mathcal{G}_{(j)}^{\gamma,0}]_{ib}[\mathcal{G}_{(j)}^{\gamma,0}]_{al}$ in $\mathfrak{F}_{(j),k_1k_2\ldots k_q}^{(K)}$, which is accompanied by an additional $y_{lj}$ if $l\neq i$ and $k_l$ is odd for $2\le l\le q$. Then after some calculation, one may find that the leading term in $(\mathcal{E}_{11,0})_{ij}$ is
	\begin{equation*}
		\sum_{l\neq i}[\mathcal{G}_{(j)}^{\gamma,0}]_{ib}[\mathcal{G}_{(j)}^{\gamma,0}]_{al}\mathbb{E}_{\Psi}\big[y_{lj}(L_{ij}\rho_j^{-1}-\frac{\gamma t^{1/2}w_{ij}}{(1-\gamma^2)^{1/2}})f_{(j)}(0)\big]\cdot\mathbbm{1}(\psi_{ij}=0).
	\end{equation*}
	Taking the summation over $i,j$, we have
	\begin{equation*}
		\begin{split}
			&\sum_{i,j}\sum_{l\neq i}[\mathcal{G}_{(j)}^{\gamma,0}]_{ib}[\mathcal{G}_{(j)}^{\gamma,0}]_{al}\mathbb{E}_{\Psi}\big[y_{lj}(L_{ij}\rho_j^{-1}-\frac{\gamma t^{1/2}w_{ij}}{(1-\gamma^2)^{1/2}})f_{(j)}(0)\big]\\
			&\lesssim n\sum_j(\sum_i[\mathcal{G}_{(j)}^{\gamma,0}]^2_{ib})^{1/2}(\sum_l[\mathcal{G}_{(j)}^{\gamma,0}]_{al}^2)^{1/2}\mathbb{E}_{\Psi}(y_{lj}L_{ij}\rho_j^{-1}f_{(j)}(0))\lesssim n^2\mathbb{E}(y_{lj}L_{ij}\rho_j^{-1}) \lesssim \mathrm{o}(1),
		\end{split}
	\end{equation*}
where in the last step we have used Lemma \ref{lem_oddmoment_est} for $\alpha
 \ge 3$. Specifically, following the cumulant expansion for $\mathbb{E}_{\Psi}(y_{lj}L_{ij}\rho_j^{-1})$ w.r.t. $L_{ij}$, one has
 \begin{equation*}
     \begin{split}
         \mathbb{E}_{\Psi}(y_{lj}L_{ij}\rho_j^{-1})\lesssim n^{-\epsilon_l} \mathbb{E}(y_{lj}\rho_j^{-1})+n^{-2\epsilon_l}\mathbb{E}(y_{lj}L_{ij}\rho_j^{-3})\lesssim \mathrm{o}(n^{(1/2-\epsilon_h)(-\alpha+1)-1})
     \end{split}
 \end{equation*}
 by $\mathbb{E}(y_{lj}\rho_j^{-1})\lesssim \kappa_{1,m}\mathbb{E}(\rho_j^{-2})+\mathbb{E}(H_{lj})n^{-1}+\kappa_{2,m}\mathbb{E}(M_{lj}\rho_{j}^{-3})\lesssim n^{(1/2-\epsilon_h)(-\alpha+1)-1}$
which follows from a similar argument in the proof of Lemma \ref{lem_oddmoment_est}. Finally, following the argument in \textbf{Case I}, we obtain that
 \begin{equation*}
     \sum_{ij}\mathbbm{1}(\Psi_{ij}=0)\mathbb{E}_{\Psi}\big[\mathfrak{M}_{(j)}^{III}(L_{ij}\rho_j^{-1}-\frac{\gamma t^{1/2}w_{ij}}{(1-\gamma^2)^{1/2}})\big]\le\frac{C_{II}}{(1-\gamma^2)^{1/2}}\big(n^{-\epsilon_l}(\mathfrak{J}_{0,ab}+1)+Q_0n^{K_7})\big),
 \end{equation*}
 for some large constant $K_7>0$.
\end{itemize}

In the end, combining the results for $\mathfrak{M}_{(j)}^I$-$\mathfrak{M}_{(j)}^{III}$ and $\mathfrak{R}_{(j)}$, we complete the proof of \eqref{eq_prf_Greenfunctioncomparison_equivbound}.   \qed

\subsubsection{Proof for Theorem \ref{thm_greenfuncomp_entrywise_uniformbound}}\label{app_prf_thm_greenfuncomp_entrywise_uniformbound}
In this part, we prove Theorem \ref{thm_greenfuncomp_entrywise_uniformbound} assuming that Theorem \ref{thm_greenfuncomp_entrywise_differror} holds. In the sequel, we only show the case where $z\in\gamma_1\vee\gamma_2$. For any $\delta>0$, we define
\begin{eqnarray*}
&&	\mathfrak{B}_0(\delta_1,\delta_2,z,\Psi):=\mathbb{P}_{\Psi}(\sup_{0\le\gamma\le1}\sup_{a,b\in[n] \atop
		a,b\in\mathrm{T}_{r}}|z^{1/2}\mathfrak{R}_{ab}([\mathcal{G}^{\gamma}(z)]_{ab}-\delta_{ab}b_t(z)m^{(t)}(\zeta))|>n^{-\delta_1},\\
&&\qquad \qquad\qquad \qquad\qquad \qquad	\sup_{0\le\gamma\le1}\sup_{a,b\in[n]\atop a\in\mathrm{I}_{r}\vee b\in\mathrm{I}_{r}}|z^{1/2}\mathfrak{R}_{ab}[\mathcal{G}^{\gamma}(z)]_{ab}|>n^{-\delta_2}),\\
&&	\mathfrak{B}_1(\delta_1,\delta_2,z,\Psi):=\mathbb{P}_{\Psi}(\sup_{0\le\gamma\le1}\sup_{u,v\in[p]\atop
		 u,v\in\mathrm{T}_{c}}|z^{1/2}\mathfrak{C}_{uv}([G^{\gamma}(z)]_{uv}-\delta_{uv}(1+t\underline{m}_t(z))\underline{m}^{(t)}(\zeta))|>n^{-\delta_1},\\
&&\qquad \qquad\qquad \qquad	\qquad \qquad\sup_{0\le\gamma\le1}\sup_{u,v\in[p]\atop u\in\mathrm{I}_{c}\vee v\in\mathrm{I}_{c}}|z^{1/2}\mathfrak{C}_{uv}[G^{\gamma}(z)]_{uv}|>n^{-\delta_2}),\\
&&	\mathfrak{B}_2(\delta_1,\delta_2,z,\Psi):=\mathbb{P}_{\Psi}(\sup_{0\le\gamma\le1}\sup_{a\in[n],u\in[p]\atop
		a\in\mathrm{T}_{r},u\in\mathrm{T}_c}|\mathfrak{M}_{au}[\mathcal{G}^{\gamma}(z)Y^{\gamma}]_{au}|>n^{-\delta_1},\\
&&\qquad \qquad\qquad \qquad\qquad \qquad \sup_{0\le\gamma\le1}\sup_{a\in[n],u\in[p]\atop a\in\mathrm{I}_{r}\vee u\in\mathrm{I}_{c}}|z^{1/2}\mathfrak{M}_{au}[\mathcal{G}^{\gamma}(z)Y^{\gamma}]_{au}|>n^{-\delta_2}).
\end{eqnarray*}
We need the following lemma on monotonicity.
\begin{lemma}\label{lem_Green_monotonicity}
	Suppose that $\Psi$ is well configured. Fix $c_0$ in Theorem \ref{thm_greenfuncomp_entrywise_differror} and adjust $\epsilon$ as $-\epsilon_{\alpha}/2$ in $\Omega_1$, $\Omega_2$ and $\Omega_3$. For all $z\in\gamma_1$, we set $z^{\prime}=E^{\prime}+\mathrm{i}\eta^{\prime}$, where
	\begin{itemize}
		\item[(a)] in case $z\in\bigcup_{i=4}^7\mathcal{C}_i$,
		\begin{equation*}
			E^{\prime}=E,\quad \eta^{\prime}=n^{\epsilon_{0}}\eta;
		\end{equation*}
		\item[(b)]  in case $z\in\mathcal{C}_3$,
		\begin{equation*}
			E^{\prime}=E+\frac{(1-n^{\epsilon_{0}/2})(\sqrt{E^2+\eta^2}-E)}{2},\quad \eta^{\prime}=\eta.
		\end{equation*}
	\end{itemize}
	Similarly, we define  $\bar{z}^{\prime}$ when $\bar{z}\in\gamma_1$. Then for any $\delta_1,\delta_2>0$ and $D>0$, there exists a constant $C>0$ such that
	\begin{equation*}
		\max_{k\in\{0,1,2\}}\mathfrak{B}_k(\delta_1,\delta_2,z,\Psi)\le Cn^{C}\max_{k\in\{0,1,2\}}\mathfrak{B}_{k}(\epsilon_{\alpha}/2-3\epsilon_0,4\epsilon_l-3\epsilon_0,z^{\prime},\Psi)+Cn^{-D}.
	\end{equation*}
	The above estimate also holds for $z\in\gamma_2,\gamma_1^0,\gamma_2^0$.
\end{lemma}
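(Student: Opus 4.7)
The plan is to combine a Markov-type inequality with the smooth cut-off $F$ provided by Theorem \ref{thm_greenfuncomp_entrywise_differror}, a classical spectral monotonicity argument in $\eta$, and a short resolvent-identity perturbation to handle the horizontal shift of case $(b)$. First, for each $k\in\{0,1,2\}$ I would choose a nonnegative smooth function $F$ satisfying the polynomial growth conditions of Theorem \ref{thm_greenfuncomp_entrywise_differror}, with $F(x)\ge 1$ for $|x|\ge n^{-\delta_1}$ (respectively $n^{-\delta_2}$) and $F(x)=0$ for much smaller $|x|$. A union bound over the indices $a,b,u,v$ together with Markov's inequality then reduces each $\mathfrak{B}_k(\delta_1,\delta_2,z,\Psi)$ to controlling expectations of the form $\mathbb{E}_{\Psi}F(\mathfrak{R}_{ab}([\mathcal{G}^{\gamma}(z)]_{ab}-\delta_{ab}b_t(z)m^{(t)}(\zeta)))$ (and its two analogues with $\mathfrak{C}_{uv}$ and $\mathfrak{M}_{au}$), up to a benign polynomial prefactor $n^{C}$.

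Next, applying Theorem \ref{thm_greenfuncomp_entrywise_differror} with $\mu=0$ to this $F$ lets me replace $\gamma\in[0,1]$ by $\gamma=0$, i.e. the Gaussian divisible model, incurring an error of the form $n^{-c_0}(\mathfrak{J}_{0,ab}+1)+Q_0 n^{C+C_0}$. The first piece is absorbed by one further application of Markov at a marginally larger moment, while $Q_0$ is negligible because Lemmas \ref{lem_priorest_x} and \ref{lem_wellconfigured}, together with Theorem \ref{thm_locallaw_gdm_entrywise} and the Gaussian concentration of $W$, give $Q_0\prec n^{-D}$ for any $D>0$. Thus everything reduces to bounding the Green-function entries of the GDM, evaluated at $z$, by the corresponding entries at $z'$.

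For case $(a)$ the key input is the elementary identity $\eta\operatorname{Im}\mathcal{G}^{0}_{aa}(z)=\sum_{j}\eta^{2}|\mathbf{u}_{j}(a)|^{2}/((\lambda_{j}-E)^{2}+\eta^{2})$, which is non-decreasing in $\eta$, hence $\operatorname{Im}\mathcal{G}^{0}_{aa}(z)\le(\eta'/\eta)\operatorname{Im}\mathcal{G}^{0}_{aa}(z')=n^{\epsilon_{0}}\operatorname{Im}\mathcal{G}^{0}_{aa}(z')$. I would extend this to off-diagonal entries through polarization $\mathcal{G}^{0}_{ab}=\tfrac{1}{2}[(\mathbf{e}_{a}+\mathbf{e}_{b})^{*}\mathcal{G}^{0}(\mathbf{e}_{a}+\mathbf{e}_{b})-\mathcal{G}^{0}_{aa}-\mathcal{G}^{0}_{bb}]$, to real parts via the resolvent identity $\mathcal{G}^{0}(z)-\mathcal{G}^{0}(z')=(z-z')\mathcal{G}^{0}(z)\mathcal{G}^{0}(z')$, and to $[\mathcal{G}^{0}Y^{0}]_{au}$ and $G^{0}_{uv}$ by embedding them in the Hermitian linearization $\mathcal{L}(Y_{t},z)$ of \eqref{eq_def_linearisation}--\eqref{eq_def_resolvent_L} and invoking the same $\eta$-monotonicity on its diagonals. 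Each of the three ensembles $\mathcal{G},G,\mathcal{G}Y$ costs one factor $n^{\epsilon_{0}}$, producing the total loss $n^{3\epsilon_{0}}$ encoded by the exponents $\epsilon_{\alpha}/2-3\epsilon_{0}$ and $4\epsilon_{l}-3\epsilon_{0}$ in the statement. For case $(b)$, $z\in\mathcal{C}_{3}$ has imaginary part of constant order, so $\|\mathcal{G}^{0}(z)\|,\|\mathcal{G}^{0}(z')\|=O(1)$, and the shift is crafted so that $|z-z'|\lesssim n^{-\epsilon_{0}/2}$ multiplied by a bounded factor; the resolvent identity alone then yields the estimate.

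The main obstacle will be transferring the $\eta$-monotonicity from $\operatorname{Im}\mathcal{G}^{0}_{aa}$ to the mixed quantity $[\mathcal{G}^{0}Y^{0}]_{au}$, which is not itself the imaginary part of a diagonal of any obvious Hermitian resolvent, and which appears together with the strong column dependence introduced by $(\operatorname{diag}S)^{-1/2}$. I expect the cleanest remedy to be systematic use of the block linearization $\mathcal{L}(Y,z)$, in which $[\mathcal{G}Y]_{au}$ appears (up to the factor $z^{1/2}$) as an off-diagonal entry of a Hermitian matrix, so that the $\eta$-monotonicity of the diagonals of $\mathcal{L}$ combined with polarization simultaneously covers all three ensembles, with the three-fold $n^{\epsilon_{0}}$ loss recorded in the lemma. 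The exponent $\epsilon_{\alpha}/2-3\epsilon_{0}$ is then inherited from the choice of $\varepsilon=\epsilon_{\alpha}/2$ in the events $\Omega_{0},\Omega_{1},\Omega_{2}$ of Theorem \ref{thm_greenfuncomp_entrywise_differror}, while $4\epsilon_{l}-3\epsilon_{0}$ is inherited from the parallel deterministic bound on $|L_{ij}\rho_{j}^{-1}|\lesssim n^{-1/2-\epsilon_{l}}$ after accounting for the bad indices in $\mathrm{I}_{r}\cup\mathrm{I}_{c}$.
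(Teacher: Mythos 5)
Your proposal follows the paper's argument in all essential respects: Markov-type inequality with a test function (the paper takes $F_q(x)=|x|^{2q}$, you take a smooth cutoff — both satisfy the hypotheses of Theorem~\ref{thm_greenfuncomp_entrywise_differror}), application of Theorem~\ref{thm_greenfuncomp_entrywise_differror} to move from general $\gamma$ to $\gamma=0$, and a monotonicity-in-$\eta$ argument for the linearization to move from $z$ to $z'$. Indeed, what you derive by hand — $\eta$-monotonicity of $\operatorname{Im}\mathcal{L}_{aa}$, polarization for off-diagonals, resolvent identity for real parts — is precisely the content of the Lipschitz-continuity lemma (\cite[Lemma~B.1]{bao2023smallest}) that the paper invokes as a black box, and your observation that the linearization $\mathcal{L}(Y_t,z)$ captures all three ensembles $\mathcal{G}$, $G$, $\mathcal{G}Y$ in one Hermitian matrix is exactly the right way to dissolve the difficulty you flag in your last paragraph.

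There is one genuine gap: you nowhere account for the supremum over $\gamma\in[0,1]$ sitting inside the definition of $\mathfrak{B}_k$. Markov's inequality together with Theorem~\ref{thm_greenfuncomp_entrywise_differror} controls $\sup_\gamma\mathbb{P}_\Psi(\cdot)$, but the lemma requires $\mathbb{P}_\Psi(\sup_\gamma\cdot)$; these are not interchangeable. The paper bridges this by an $\epsilon$-net in $\gamma$ combined with the deterministic derivative bound $|\partial_\gamma[\mathcal{G}^\gamma(z)]_{ij}|\lesssim(\|L\|+\gamma\|t^{1/2}W\|)/\eta^2$ and the high-probability bounds $\|L\|\lesssim n^{-\epsilon_l}$, $\mathbb{P}(\|W\|>2+C)<n^{-D}$. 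This step is routine but not optional: without it the union bound over $\gamma$ is uncontrolled. A smaller imprecision is your accounting for the factor $n^{3\epsilon_0}$ as ``one factor per ensemble'': since the Lipschitz lemma bounds all entries of $\mathcal{L}$ simultaneously, a single application costs one factor $V=n^{\epsilon_0}$, not three; the $3\epsilon_0$ slack in the exponent is a convenient overshoot absorbing the prefactors from $z^{1/2}$, the shift in the deterministic reference $b_t(z)m^{(t)}(\zeta_t(z))$, and the $\epsilon$-net, rather than a per-ensemble cost.
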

\begin{proof}
	Let $q$ be any fixed integer, and $F_q(x):=|x|^{2q}$. By Theorem \ref{thm_greenfuncomp_entrywise_differror}, we will focus on the case where $(\#_1,\#_2,\#_3)=\big(\mathfrak{R}_{ab}(\operatorname{Im}[\mathcal{G}^{\gamma}(z)]_{ab}-\delta_{ab}b_t(z)m^{(t)}(\zeta)),\mathfrak{R}_{ab}(\operatorname{Im}[\mathcal{G}^0(z)]_{ab}-\delta_{ab}b_t(z)m^{(t)}(\zeta),\mathfrak{J}_{0,ab})\big)$. Theorem \ref{thm_greenfuncomp_entrywise_differror} with $F(x)=F_q(x)$ implies that there exists one constant $C_1>0$ such that,
	\begin{equation*}
		\begin{split}
			 &\mathbb{E}_{\Psi}\big(F_q[\mathfrak{R}_{ab}(\operatorname{Im}[\mathcal{G}^{\gamma}(z)]_{ab}-\delta_{ab}b_t(z)m^{(t)}(\zeta))]\big)-\mathbb{E}_{\Psi}\big(F_q[\mathfrak{R}_{ab}(\operatorname{Im}[\mathcal{G}^{0}(z)]_{ab}-\delta_{ab}b_t(z)m^{(t)}(\zeta))]\big)\\
			&<C_1n^{-c_0}(\mathfrak{J}_{q,0}+1)+C_1Q_0n^{C_1},
		\end{split}
	\end{equation*}
	where $\mathfrak{J}_{q,0}:=\sup_{a,b\in[n]}\mathbb{E}_{\Psi}\big(|F_q[\mathfrak{R}_{ab}(\operatorname{Im}[\mathcal{G}^{\gamma}(z)]_{ab}-\delta_{ab}b_t(z)m^{(t)}(\zeta))]|\big)$. Taking supremum over $i,j\in[n]$ in the above inequality, we obtain that
	\begin{equation*}
		\begin{split}
			 (1-C_1n^{-c_0})\mathfrak{J}_{q,0}\le&\max_{i,j\in[n]}\mathbb{E}_{\Psi}\big(F_q[\mathfrak{R}_{ab}(\operatorname{Im}[\mathcal{G}^{0}(z)]_{ab}-\delta_{ab}b_t(z)m^{(t)}(\zeta))]\big)\\
			&+C_1n^{-c_0}+3C_1n^{C_1}\max_{k\in\{0,1,2\}}\mathfrak{B}_k(\epsilon_{\alpha}/2,4\epsilon_l,z,\Psi).
		\end{split}
	\end{equation*}
	The following lemma from \cite[Lemma B.1]{bao2023smallest} indicates the Lipschitz continuity when the argument is the Green function.
	\begin{lemma}
		For any deterministic matrix $A\in\mathbb{R}^{n\times p}$, let $\mathcal{L}(A,z)$ be defined  in \eqref{eq_def_resolvent_L}. We have for $z=E+\mathrm{i}\eta\in\mathbb{C}^{+}$ and $V>1$,
		\begin{gather*}
			\max_{i,j\in[n+p]}[\mathcal{L}(A,E+\mathrm{i}\eta/V)]_{ij}\vee1\le V\cdot\max_{i,j\in[n+p]}[\mathcal{L}(A,E+\mathrm{i}\eta)]_{ij}\vee1.
		\end{gather*}
	\end{lemma}
	The above lemma implies that for $z\in\gamma_1$,
	\begin{equation*}
		\max_{k\in\{0,1,2\}}\mathfrak{B}_k(\epsilon_{\alpha}/2,4\epsilon_l,z,\Psi)\le \max_{k\in\{0,1,2\}}\mathfrak{B}_k(\epsilon_{\alpha}/2-3\epsilon_0,4\epsilon_l-3\epsilon_0,z^{\prime},\Psi).
	\end{equation*}
	Moreover, for any $z_0=E_0+\mathrm{i}\eta_0\in\gamma_1$, we have that
	\begin{equation*}
		\mathbb{E}_{\Psi}\big(F_q[\mathfrak{R}_{ij}(\operatorname{Im}[\mathcal{G}^{0}(z_0)]_{ij}-\delta_{ij}b_t(z)m^{(t)}(\zeta)])\big)\lesssim n^{-2q\epsilon_{\alpha}}\mathbbm{1}(i,j\in\mathrm{T}_r)+n^{-8q\epsilon_{l}}\mathbbm{1}(i\in\mathrm{I}_r~\text{or}~j\in\mathrm{I}_r).
	\end{equation*}
	It follows that for $i,j\in\mathrm{T}_r$ and some constant $C_{21}>0$,
	\begin{gather*}
		\mathfrak{J}_{q,0}\le C_{21}n^{-2q\epsilon_{\alpha}}+C_{21}n^{C_{21}}\max_{k\in\{0,1,2\}}\mathfrak{B}_k(\epsilon_{\alpha}/2,4\epsilon_l,z_0,\Psi),\\
		\mathfrak{J}_{q,0}\le C_{21}n^{-2q\epsilon_{\alpha}}+C_{21}n^{C_{21}}\max_{k\in\{0,1,2\}}\mathfrak{B}_k(\epsilon_{\alpha}/2-3\epsilon_0,4\epsilon_l-3\epsilon_0,z^{\prime},\Psi).
	\end{gather*}

	Besides, applying Markov's inequality for $4q\epsilon_l>D$, we have that for some constants $C_2,C_3>0$ and $0\le\gamma\le 1$
	\begin{itemize}
		\item [(a)] in case $i,j\in\mathrm{T}_r$,
		\begin{equation*}
			\begin{split}
				 &\mathbb{P}_{\Psi}\big(|z_0^{1/2}\mathfrak{R}_{ij}(\operatorname{Im}[\mathcal{G}^{\gamma}(z_0)]_{ij}-\delta_{ij}b_t(z)m^{(t)}(\zeta))|> n^{-\delta_1}\big)\\
				&\le \frac{|z^{\prime}|^q\mathfrak{J}_{q,0}}{n^{-q\delta_1}}
				\le C_3 n^{-q\epsilon_{\alpha}}+C_3 n^{C_3}\max_{k\in\{0,1,2\}}\mathfrak{B}_k(\epsilon_{\alpha}/2-3\epsilon_0,4\epsilon_l-3\epsilon_0,z^{\prime},\Psi);
			\end{split}
		\end{equation*}
		\item [(b)] in case $i\in\mathrm{I}_r$ or $j\in\mathrm{I}_r$,
		\begin{equation*}
			\begin{split}
				 &\mathbb{P}_{\Psi}\big(|z_0^{1/2}\mathfrak{R}_{ij}(\operatorname{Im}[\mathcal{G}^{\gamma}(z_0)]_{ij}-\delta_{ij}b_t(z)m^{(t)}(\zeta))|> n^{-\delta_2}\big)
				\\
				&\le \frac{|z^{\prime}|^q\mathfrak{J}_{q,0}}{n^{-2q\delta_2}}
				\le C_3 n^{-4q\epsilon_{l}}+C_3 n^{C_3}\max_{k\in\{0,1,2\}}\mathfrak{B}_k(\epsilon_{\alpha}/2-3\epsilon_0,4\epsilon_l-3\epsilon_0,z^{\prime},\Psi),
			\end{split}
		\end{equation*}
	\end{itemize}
	where we choose $\delta_1\le\epsilon_{\alpha}/4$ and $\delta_2\le2\epsilon_l$.
	Finally, taking the union for both $i,j\in\mathrm{T}_r$ and $i\in\mathrm{I}_r$ or $j\in\mathrm{I}_r$ and applying an $\epsilon$-net argument to $\gamma$ with the following deterministic bounds
	\begin{equation*}
		\frac{\partial[\mathcal{G}^{\gamma}(z)]_{ij}}{\partial \gamma}\lesssim\frac{\|L_{ij}\|+\gamma\|t^{1/2}W\|}{\eta^2}
	\end{equation*}
	and the observation that $\|L_{ij}\|\lesssim n^{-\epsilon_l}$ and $\mathbb{P}(\|W\|>2+C)<n^{-D}$, we can obtain that
	\begin{equation*}
		\begin{split}
			\mathfrak{B}_0(\delta_1,\delta_2,z,\Psi)&\le \mathbb{P}_{\Psi}\big(\sup_{0\le\gamma\le1\atop i\in\mathrm{I}_r\vee j\in\mathrm{I}_r}|z_0^{1/2}\mathfrak{R}_{ij}(\operatorname{Im}[\mathcal{G}^{\gamma}(z_0)]_{ij}-\delta_{ab}b_t(z)m^{(t)}(\zeta))|>n^{-2\epsilon_{l}}\big)\\
			&\quad+\mathbb{P}_{\Psi}\big(\sup_{0\le\gamma\le1\atop i,j\in\mathrm{T}_r}|z_0^{1/2}\mathfrak{R}_{ij}(\operatorname{Im}[\mathcal{G}^{\gamma}(z_0)]_{ij}-\delta_{ab}b_t(z)m^{(t)}(\zeta))|> n^{-\epsilon_{\alpha}/2}\big) \\
			&\le C_4 n^{-c_4D}+C_4n^{C_4}\max_{k\in\{0,1,2\}}\mathfrak{B}_k(\epsilon_{\alpha}/2-3\epsilon_0,4\epsilon_l-3\epsilon_0,z^{\prime},\Psi),
		\end{split}
	\end{equation*}
	for some large constant $C_4$ and small constant $c_4>0$. Repeating the above procedure for all $\mathfrak{B}_k(\delta_1,\delta_2,z,\Psi),k=1,2$ and adjusting the constants that have appeared will conclude this lemma.
\end{proof}

Now, we proceed to prove Theorem \ref{thm_greenfuncomp_entrywise_uniformbound}. From Lemma \ref{lem_Green_monotonicity} above, we have the relationship
\begin{equation*}
	\max_{k\in\{0,1,2\}}\mathfrak{B}_k(\delta_1,\delta_2,z,\Psi)\le Cn^{C}\max_{k\in\{0,1,2\}}\mathfrak{B}_{k}(\epsilon_{\alpha}/2-3\epsilon_0,4\epsilon_l-3\epsilon_0,z^{\prime},\Psi)+Cn^{-D}.
\end{equation*}
Let $\delta_1=\epsilon_{\alpha}/4,\delta_2=2\epsilon_l$. Repeating the above procedures $m$ times such that $4\epsilon_l-3m\epsilon_0<\epsilon_l/16$, we have
\begin{equation*}
	\max_{k\in\{0,1,2\}}\mathfrak{B}_k(\epsilon_{\alpha}/4,\delta_2,z,\Psi)\le Cn^{C}\max_{k\in\{0,1,2\}}\mathfrak{B}_{k}(\epsilon_{\alpha}/4-3m\epsilon_0,\epsilon_l/16,z_m,\Psi)+Cn^{-D}.
\end{equation*}
On the other hand, notice that $\eta_m=n^{m\epsilon_0}\eta\gtrsim n^{\epsilon_l}$, which gives the crude bound that $\|\mathcal{G}^{\gamma}\|\lesssim n^{-\epsilon_l}$. Therefore, we have $\max_{k\in\{0,1,2\}}\mathfrak{B}_{k}(\epsilon_{\alpha}/4-3m\epsilon_0,\epsilon_l/16,z_m,\Psi)=0$, which concludes the proof.   \qed

\subsubsection{Proof of Theorem \ref{thm_greenfuncomp_average}}\label{app_prf_thm_greenfuncomp_average}
We begin with some notation. Denote $\widetilde{W}$'s entries as $\widetilde{w}_{ij}$, and define $\widetilde{Y}^{\gamma}$ parallel to $Y^{\gamma}$ with substitution of $W$ by $\widetilde{W}$. We define $\widetilde{\mathfrak{hy}}_j$ through its elements,
\begin{equation*}
	 \widetilde{hy}_{ij}(0,\widetilde{w}_{ij})=(1-\psi_{ij})\chi_{ij}m_{ij}\rho^{-1}_{j}+\psi_{ij}h_{ij}\rho_j^{-1}+(1-\gamma^2)^{1/2}t^{1/2}\widetilde{w}_{ij}.\quad i\in[n],\;j\in[p].
\end{equation*}
And recall the definitions of $\mathfrak{ly}_j$ and $\mathfrak{hy}_j$ in \eqref{eq_def_ly&hy}. Some calculation gives that
\begin{equation*}
	\begin{split}
		&\frac{\partial \mathbb{E}_{\Psi}(|n\eta(\operatorname{Im}m^{\gamma}(z)-\operatorname{Im}\widetilde{m}_{n,0}(z))|^{2q})}{\partial\gamma}\\
		 &=-2q\sum_{i,j}\mathbb{E}_{\Psi}\Big(\sum_{a}\eta\operatorname{Im}\big([\mathcal{G}^{\gamma}]_{ia}[\mathcal{G}^{\gamma}Y^{\gamma}]_{aj}\big)\big(L_{ij}\rho_j^{-1}-\frac{\gamma t^{1/2}w_{ij}}{(1-\gamma^2)^{1/2}}\big)\\
&\qquad \qquad \cdot\big(\eta\sum_a\operatorname{Im}[\mathcal{G}^{\gamma}]_{aa}-\eta\sum_a\operatorname{Im}[\widetilde{\mathcal{G}}^0]_{aa}\big)^{2q-1}\Big)\\
		&=-2q\sum_{i,j}\Big((\mathfrak{I}_1)_{ij}-(\mathfrak{I}_2)_{ij}\Big),
	\end{split}
\end{equation*}
where
\begin{gather*}
	(\mathfrak{I}_1)_{ij}:=\mathbb{E}_{\Psi}\Big(\mathfrak{i}_{(j)}(Y^{\gamma}_j)\cdot\big(L_{ij}\rho_j^{-1}-\frac{\gamma t^{1/2}w_{ij}}{(1-\gamma^2)^{1/2}}\big)\cdot \mathfrak{d}_{2q-1}(Y^{\gamma}_j,\widetilde{\mathfrak{hy}}_j)\Big)\cdot\mathbbm{1}(\psi_{ij}=0),\\
	(\mathfrak{J}_2)_{ij}:=-\frac{\gamma t^{1/2}}{(1-\gamma^2)^{1/2}}\mathbb{E}_{\Psi}\Big(w_{ij}\cdot\mathfrak{i}_{(j)}(Y^{\gamma}_j)\cdot \mathfrak{d}_{2q-1}(Y_j^{\gamma},\widetilde{\mathfrak{hy}}_j)\Big)\cdot\mathbbm{1}(\psi_{ij}=1),\\
	\mathfrak{i}_{(j)}(\varpi):=\eta\operatorname{Im}\big[(\mathcal{G}_{(j)}^{\gamma,\varpi})^2Y_{(j)}^{\gamma,\varpi}\big]_{ij},\quad \mathfrak{d}_{2q-1}(\varpi_1,\varpi_2):=\big(\eta\sum_a\operatorname{Im}[\mathcal{G}_{(j)}^{\gamma,\varpi_1}]_{aa}-\eta\sum_a\operatorname{Im}[\widetilde{\mathcal{G}}_{(j)}^{0,\varpi_2}]_{aa}\big)^{2q-1}.
\end{gather*}
Here we recycle the notation $\mathfrak{i}_{(j)}(\varpi)$ to suggest that it is a parallel quantity to the one in Section \ref{app_prf_thm_greenfuncomp_entrywise_differror}.

We consider $(\mathfrak{I}_2)_{ij}$ first. Applying Gaussian integration by parts, we obtain that
\begin{equation*}
	\begin{split}
		(\mathfrak{I}_2)_{ij}=&-\frac{\gamma t^{1/2}}{(1-\gamma^2)^{1/2}n}\Big(\mathbb{E}_{\Psi}\big[\partial_{w_{ij}}\big(\mathfrak{i}_{(j)}(Y^{\gamma}_j)\big)\cdot \mathfrak{d}_{2q-1}(Y^{\gamma}_j,\widetilde{\mathfrak{hy}}_j)\big]\\
		 &+(2q-1)\mathbb{E}_{\Psi}\big[\mathfrak{i}_{(j)}(Y^{\gamma}_j)\partial_{w_{ij}}\big(\eta\sum_a\operatorname{Im}[\mathcal{G}^{\gamma}]_{aa}\big)\cdot \mathfrak{d}_{2q-1}(Y^{\gamma}_j,\widetilde{\mathfrak{hy}}_j)\big]\Big)\cdot\mathbbm{1}(\psi_{ij}=1).
	\end{split}
\end{equation*}
We calculate the two partial derivatives one by one. Firstly, for $\partial_{w_{ij}}\big(\mathfrak{i}_{(j)}(Y^{\gamma}_j)\big)$, we have
\begin{equation*}
	\begin{split}
		 \partial_{w_{ij}}\big(\mathfrak{i}_{(j)}(Y^{\gamma}_j)\big)=&\Big(\operatorname{Im}\big[(\mathcal{G}^{\gamma})^2\big]_{ii}-2\operatorname{Im}\big(\big[(\mathcal{G}^{\gamma})^2\big]_{ii}\big[(Y_j^{\gamma})^{*}\mathcal{G}^{\gamma}Y^{\gamma}_j\big]_{jj}\big)\\
		 &-2\operatorname{Im}\big(\big[(\mathcal{G}^{\gamma})^2Y^{\gamma}_j\big]_{ij}\big[\mathcal{G}^{\gamma}Y^{\gamma}_j\big]_{ij}\big)\Big)\cdot(1-\gamma^2)^{-1/2}t^{1/2}\eta.
	\end{split}
\end{equation*}
Secondly, for $\partial_{w_{ij}}\big(\eta\sum_a\operatorname{Im}[\mathcal{G}^{\gamma}]_{aa}\big)$, we have
\begin{equation*}
	\partial_{w_{ij}}\big(\eta\sum_a\operatorname{Im}[\mathcal{G}^{\gamma}]_{aa}\big)=-2t^{1/2}(1-\gamma^2)^{-1/2}\mathfrak{i}_{(j)}(Y^{\gamma}_j).
\end{equation*}
Applying Wald's identity, Theorem \ref{thm_greenfuncomp_entrywise_uniformbound} for the case $a=b$ and noticing that $i\in\mathrm{I}_r$ and $j\in\mathrm{I}_c$ when $\psi_{ij}=1$, we obtain that
\begin{equation}\label{eq_prf_greenfunction_average_prior_G^2}
	 \big|\big[(\mathcal{G}^{\gamma})^2\big]_{ii}\big|\le\sum_a\big|\big[\mathcal{G}^{\gamma}\big]_{ai}\big|^2=\frac{\operatorname{Im}[\mathcal{G}^{\gamma}]_{ii}}{\eta}\prec \frac{\eta+t^{-2}n^{-c\epsilon_l}}{\eta},
\end{equation}
where we also used the fact that $\operatorname{Im}m^{(t)}(\zeta)\sim\eta$. Similarly, we have
\begin{equation}\label{eq_prf_greenfunction_average_prior_G^2Y}
	\begin{split}
		 \big|\big[(\mathcal{G}^{\gamma})^2Y_j^{\gamma}\big]_{ij}\big|&\le\sum_a\big|\big[\mathcal{G}^{\gamma}Y_j^{\gamma}\big]_{aj}\big|^2+\sum_a\big|\big[\mathcal{G}^{\gamma}\big]_{ia}\big|^2\\
		&=\big[(Y_j^{\gamma})^{*}|\mathcal{G}^{\gamma}|^2Y_j^{\gamma}\big]_{jj}+\eta^{-1}\operatorname{Im}[\mathcal{G}^{\gamma}]_{ii}\\
		&=\big[\bar{G}^{\gamma}\big]_{jj}+z\big[|G^{\gamma}|^2\big]_{jj}+\eta^{-1}\operatorname{Im}[\mathcal{G}^{\gamma}]_{ii}
		\prec \frac{\eta+t^{-2}n^{-c\epsilon_l}}{\eta},
	\end{split}
\end{equation}
where we used Wald's identity, $(AA^{*}-zI)^{-1}A=A^{*}(AA^{*}-zI)^{-1}$ with $z$ outside the spectrum of $A$.

Using the above estimates, together with Theorem \ref{thm_greenfuncomp_entrywise_uniformbound} and the fact that $\sum_{i,j}\mathbbm{1}(\psi_{ij}=1)\le n^{1-\epsilon_y}$ with  $\epsilon_y=\epsilon_h$, we obtain that
\begin{equation*}
	 |(\mathfrak{I}_2)_{ij}|\lesssim\frac{\gamma}{(1-\gamma^2)^{1/2}n^{1-\epsilon_{\alpha}}t^3}\sum_{k=1}^2\mathbb{E}_{\Psi}\big(\mathrm{O}_{\prec}(n^{-\epsilon_{\alpha}})\cdot|\mathfrak{d}_{2q-k}(Y^{\gamma}_j,\widetilde{\mathfrak{hy}}_j)|\big)\cdot\mathbbm{1}(\psi_{ij}=1).
\end{equation*}
In order to relate $\mathfrak{d}_{2q-k}(Y^{\gamma}_j,\widetilde{\mathfrak{hy}}_j)$ to $\mathfrak{d}_{2q}(Y^{\gamma}_j,\widetilde{\mathfrak{hy}}_j)$, we deploy Young's inequality,
\begin{equation*}
	 \mathbb{E}_{\Psi}\big(\mathrm{O}_{\prec}(n^{-\epsilon_{\alpha}}t^{-3})\cdot|\mathfrak{d}_{2q-1}(Y^{\gamma}_j,\widetilde{\mathrm{hy}}_j)|\big)\lesssim\mathbb{E}_{\Psi}\big(|\mathfrak{d}_{2q}(Y^{\gamma}_j,\widetilde{\mathrm{hy}}_j)|\big)+n^{-2q\epsilon_{\alpha}}.
\end{equation*}
Similar argument can be applied to the case $k=2$. Then we have
\begin{equation*}
	 |(\mathfrak{I}_2)_{ij}|\lesssim\frac{\gamma}{(1-\gamma^2)^{1/2}n^{1-\epsilon_{\alpha}}t^3}\Big(\mathbb{E}_{\Psi}\big(|\mathfrak{d}_{2q}(Y^{\gamma}_j,\widetilde{\mathfrak{hy}}_j)|\big)+n^{-2q\epsilon_{\alpha}}\Big)\cdot\mathbbm{1}(\psi_{ij}=1).
\end{equation*}

Next, we consider $(\mathfrak{I}_1)_{ij}$. Recall the resolvent expansion in \eqref{eq_prf_Greenfunction_resolvent_expansion},
\begin{eqnarray*}
	 \mathcal{G}^{\gamma}&=&\sum_{k=0}^{s}\mathcal{G}_{(j)}^{\gamma,0}(-Y^{\gamma}_j(Y^{\gamma}_j)^{*}\mathcal{G}_{(j)}^{\gamma,0})^k+\mathcal{G}^{\gamma}(-Y^{\gamma}_j(Y^{\gamma}_j)^{*}\mathcal{G}_{(j)}^{\gamma,0})^{s+1}\\
	 \widetilde{\mathcal{G}}_{(j)}^{0,\widetilde{\mathfrak{hy}}_j}&=&\sum_{k=0}^{s}\widetilde{\mathcal{G}}_{(j)}^{0,0}(-\widetilde{\mathfrak{hy}}_j\widetilde{\mathfrak{hy}}_j^{*}\widetilde{\mathcal{G}}_{(j)}^{0,0})^k+\widetilde{\mathcal{G}}_{(j)}^{0,\widetilde{\mathfrak{hy}}_j}(-\widetilde{\mathfrak{hy}}_j\widetilde{\mathfrak{hy}}_j^{*}\widetilde{\mathcal{G}}_{(j)}^{0,0})^{s+1}\\
	 (\mathcal{G}^{\gamma})^2&=&\sum_{k=0}^{2s}(\mathcal{G}_{(j)}^{\gamma,0})^2(-Y^{\gamma}_j(Y^{\gamma}_j)^{*}\mathcal{G}_{(j)}^{\gamma,0})^k+\sum_{k=0}^{s}\mathcal{G}_{(j)}^{\gamma,0}\mathcal{G}^{\gamma}(-Y_j^{\gamma}(Y_j^{\gamma})^{*}\mathcal{G}_{(j)}^{\gamma,0})^{k+s+1}\\
&&\qquad \qquad +(\mathcal{G}^{\gamma})^2(-Y^{\gamma}_j(Y^{\gamma}_j)^{*}\mathcal{G}_{(j)}^{\gamma,0})^{2s+2}.
\end{eqnarray*}
Since $Y_j^{\gamma}=Y_{(j)}^{\gamma,0}+\mathbf{e}_j^{*}Y^{\gamma}_{j}$, we expand $\eta^{-1}\mathfrak{i}_{(j)}(Y^{\gamma}_j)$ as
\begin{equation*}
	\begin{split}
		&\eta^{-1}\mathfrak{i}_{(j)}(Y^{\gamma}_j)\\
		 =&\operatorname{Im}\big[(\mathcal{G}_{(j)}^{\gamma,0})^2\mathbf{e}_j^{*}Y_j^{\gamma}\big]_{ij}+\operatorname{Im}\Big[\sum_{k=0}^{2s}(\mathcal{G}_{(j)}^{\gamma,0})^2(-Y^{\gamma}_j(Y^{\gamma}_j)^{*}\mathcal{G}_{(j)}^{\gamma,0})^k\mathbf{e}_j^{*}Y^{\gamma}_j\Big]_{ij}\\
		 &+\operatorname{Im}\Big[\sum_{k=0}^s\mathcal{G}_{(j)}^{\gamma,0}\mathcal{G}^{\gamma}(-Y^{\gamma}_j(Y^{\gamma}_j)^{*}\mathcal{G}_{(j)}^{\gamma,0})^{k+s+1}\mathbf{e}_j^{*}Y^{\gamma}_{(j)}\Big]_{ij}+\operatorname{Im}\Big[(\mathcal{G}^{\gamma})^2(-Y^{\gamma}_j(Y^{\gamma}_j)^{*}\mathcal{G}_{(j)}^{\gamma,0})^{2s+2}\mathbf{e}_j^{*}Y^{\gamma}_j\Big]_{ij}\\
		 =&\operatorname{Im}\Big[\sum_{k=0}^{2s}(\mathcal{G}_{(j)}^{\gamma,0})^2(-Y^{\gamma}_j(Y^{\gamma}_j)^{*}\mathcal{G}_{(j)}^{\gamma,0})^k\mathbf{e}_j^{*}Y^{\gamma}_j\Big]_{ij}+\eta^{-1}\mathfrak{i}^{r}_{(j)}(0)\\
        =&\eta^{-1}\big(\mathfrak{i}^1_{(j)}(0)+\mathfrak{i}_{(j)}^r(0)\big),
	\end{split}
\end{equation*}
where $\mathfrak{i}_{(j)}^r(0)$ is the residual term. Similarly,  $\mathfrak{d}_{2q-1}(Y^{\gamma}_j,\widetilde{\mathrm{hy}}_j)$ can be expanded as
\begin{equation*}
	\begin{split}
		&\mathfrak{d}_{2q-1}(Y^{\gamma}_j,\widetilde{\mathfrak{hy}}_j)\\ =&\Big(\eta\sum_a\big(\operatorname{Im}\big[\mathcal{G}_{(j)}^{\gamma,0}\big]_{aa}-\operatorname{Im}\big[\widetilde{\mathcal{G}}_{(j)}^{0,0}\big]_{aa}\big)\Big)^{2q-1}+\sum_{r=1}^{2q-1}\binom{2q-1}{r}\Big(\eta\sum_a\big(\operatorname{Im}\big[\mathcal{G}_{(j)}^{\gamma,0}\big]_{aa}-\operatorname{Im}\big[\widetilde{\mathcal{G}}_{(j)}^{0,0}\big]_{aa}\big)\Big)^{2q-1-r}\\
		 &\cdot\Big(\eta\sum_a\big(\operatorname{Im}\big[\sum_{k=1}^{l_1}\mathcal{G}_{(j)}^{\gamma,0}(-Y^{\gamma}_j(Y^{\gamma}_j)^{*}\mathcal{G}_{(j)}^{\gamma,0})^k\big]_{aa}-\operatorname{Im}\big[\sum_{k=1}^{l_1}\widetilde{\mathcal{G}}_{(j)}^{0,0}(-\widetilde{\mathfrak{hy}}_j\widetilde{\mathfrak{hy}}_j^{*}\widetilde{\mathcal{G}}_{(j)}^{0,0})^k\big]_{aa}\big)\Big)^r+\mathfrak{d}^r_{2q-1}(\mathcal{G}^{\gamma},\widetilde{\mathcal{G}}_{(j)}^{0,\widetilde{\mathfrak{hy}}_j})\\
		 =&\mathfrak{d}_{2q-1}(0,0)+\sum_{r=1}^{2q-1}\binom{2q-1}{r}\Big(\eta\sum_a\big(\operatorname{Im}\big[\mathcal{G}_{(j)}^{\gamma,0}\big]_{aa}-\operatorname{Im}\big[\widetilde{\mathcal{G}}_{(j)}^{0,0}\big]_{aa}\big)\Big)^{2q-1-r}\\
		 &\cdot\Big(\eta\sum_a\big(\operatorname{Im}\big[\sum_{k=1}^{l_1}\mathcal{G}_{(j)}^{\gamma,0}(-Y^{\gamma}_j(Y^{\gamma}_j)^{*}\mathcal{G}_{(j)}^{\gamma,0})^k\big]_{aa}-\operatorname{Im}\big[\sum_{k=1}^{l_1}\widetilde{\mathcal{G}}_{(j)}^{0,0}(-\widetilde{\mathfrak{hy}}_j\widetilde{\mathfrak{hy}}_j^{*}\widetilde{\mathcal{G}}_{(j)}^{0,0})^k\big]_{aa}\big)\Big)^r+\mathfrak{d}^r_{2q-1}(\mathcal{G}^{\gamma},\widetilde{\mathcal{G}}_{(j)}^{0,\widetilde{\mathfrak{hy}}_j})\\
        &:=\mathfrak{d}_{2q-1}(0,0)+\mathfrak{d}_{2q-1}^1(0,0)+\mathfrak{d}^r_{2q-1}(\mathcal{G}^{\gamma},\widetilde{\mathcal{G}}_{(j)}^{0,\widetilde{\mathfrak{hy}}_j}).
	\end{split}
\end{equation*}
where $\mathfrak{d}^r_{2q-1}(\mathcal{G}^{\gamma},\widetilde{\mathcal{G}}_{(j)}^{0,\widetilde{\mathfrak{hy}}_j})$ is the residual term containing $\mathcal{G}^{\gamma}$ and $\widetilde{\mathcal{G}}_{(j)}^{0,\widetilde{\mathfrak{hy}}_j}$.

Thereafter, the expansions of $\mathfrak{i}_{(j)}(Y^{\gamma}_j)$ and $\mathfrak{d}_{2q-1}(Y^{\gamma}_j,\widetilde{\mathfrak{hy}}_j)$ give the expansion of $(\mathfrak{I}_1)_{ij}$ as
\begin{equation*}
	\begin{split}
		 (\mathfrak{I}_1)_{ij}=&\mathbb{E}_{\Psi}\Big\{\big(\mathfrak{i}^1_{(j)}(0)+\mathfrak{i}^r_{(j)}(0)\big)\cdot\big(\mathfrak{d}_{2q-1}(0,0)+\mathfrak{d}_{2q-1}^1(0,0)+\mathfrak{d}^r_{2q-1}(\mathcal{G}^{\gamma},\widetilde{\mathcal{G}}_{(j)}^{0,\widetilde{\mathfrak{hy}}_j})\big)\\
&\qquad \qquad \cdot\big(L_{ij}\rho_j^{-1}-\frac{\gamma t^{1/2}w_{ij}}{(1-\gamma^2)^{1/2}}\big)\Big\}\cdot\mathbbm{1}(\psi_{ij}=0)\\
        =&\mathbb{E}_{\Psi}\Big\{\Big[\big(\mathfrak{d}_{2q-1}(0,0)+\mathfrak{d}_{2q-1}^1(0,0)\big)\cdot\mathfrak{i}^1_{(j)}(0)+\mathfrak{i}^1_{(j)}(0)\cdot\mathfrak{d}_{2q-1}^r(\mathcal{G}^{\gamma},\widetilde{\mathcal{G}}^{0,\widetilde{\mathfrak{hy}_j}}_{(j)})\\
        &\qquad\qquad+\big(\mathfrak{d}_{2q-1}(0,0)+\mathfrak{d}_{2q-1}^1(0,0)+\mathfrak{d}^r_{2q-1}(\mathcal{G}^{\gamma},\widetilde{\mathcal{G}}_{(j)}^{0,\widetilde{\mathfrak{hy}}_j})\big)\cdot\mathfrak{i}^r_{(j)}(0)\Big]\\
        &\qquad \qquad\qquad\cdot(L_{ij}\rho_j^{-1}-\frac{\gamma t^{1/2}w_{ij}}{(1-\gamma^2)^{1/2}}\big)\Big\}\cdot\mathbbm{1}(\psi_{ij}=0)\\
		 =&:\mathbb{E}_{\Psi}\Big[\big(\mathfrak{M}_{(j)}(0,0)+\mathfrak{R}_{(j)}(\mathcal{G}^{\gamma},\widetilde{\mathcal{G}}_{(j)}^{0,\widetilde{\mathfrak{hy}}_j})\big)\cdot\big(L_{ij}\rho_j^{-1}-\frac{\gamma t^{1/2}w_{ij}}{(1-\gamma^2)^{1/2}}\big)\Big]\cdot\mathbbm{1}(\psi_{ij}=0),
	\end{split}
\end{equation*}
where $\mathfrak{M}_{(j)}(0,0)$ is the main term, $\mathfrak{M}_{(j)}(0,0):=\big(\mathfrak{d}_{2q-1}(0,0)+\mathfrak{d}_{2q-1}^1(0,0)\big)\cdot\mathfrak{i}^1_{(j)}(0)$. Meanwhile, $\mathfrak{R}_{(j)}(\mathcal{G}^{\gamma},\widetilde{\mathcal{G}}_{(j)}^{0,\widetilde{\mathfrak{hy}}_j})$ is the residual term,
\begin{equation*}
    \begin{split}
        \mathfrak{R}_{(j)}(\mathcal{G}^{\gamma},\widetilde{\mathcal{G}}_{(j)}^{0,\widetilde{\mathfrak{hy}}_j})&:=\mathfrak{i}^1_{(j)}(0)\cdot\mathfrak{d}_{2q-1}^r(\mathcal{G}^{\gamma},\widetilde{\mathcal{G}}^{0,\widetilde{\mathfrak{hy}_j}}_{(j)})\\
        &+\big(\mathfrak{d}_{2q-1}(0,0)+\mathfrak{d}_{2q-1}^1(0,0)+\mathfrak{d}^r_{2q-1}(\mathcal{G}^{\gamma},\widetilde{\mathcal{G}}_{(j)}^{0,\widetilde{\mathfrak{hy}}_j})\big)\cdot\mathfrak{i}^r_{(j)}(0).
    \end{split}
\end{equation*}
Similarly to the proof in Section \ref{app_prf_thm_greenfuncomp_entrywise_differror}, we will use the assemble patterns classification $\beta_{k_1k_2\dots k_r}^{(K)}$ in Definition \ref{def_prf_Greenfunctioncomparison_encode_beta} to trace the moment pattern in $\mathfrak{M}_{(j)}(0,0)$. Before moving ahead, we remark that we only consider $\mathfrak{i}^1_{(j)}(0)\cdot\mathfrak{d}_{2q-1}(0,0)$ of $\mathfrak{M}_{(j)}(0,0)$  below since $\mathfrak{i}^1_{(j)}(0)\cdot\mathfrak{d}^1_{2q-1}(0,0)$ shares the same order as $\mathfrak{i}^1_{(j)}(0)\cdot\mathfrak{d}_{2q-1}(0,0)$ after taking expectation. This claim can be verified by the fact that the quantities $\sum_a\operatorname{Im}[\mathcal{G}_{(j)}^{\gamma,0}(Y_j^{\gamma}(Y_j^{\gamma})^{*}\mathcal{G}_{(j)}^{\gamma,0})^k]_{aa}, k\ge1$ are of constant order after taking expectation. For example, when $k=2$,
\begin{equation*}
	\begin{split}
		 &\sum_a[\mathcal{G}_{(j)}^{\gamma,0}(Y_j^{\gamma}(Y_j^{\gamma})^{*}\mathcal{G}_{(j)}^{\gamma,0})^2]_{aa}=\sum_a(\sum_k[\mathcal{G}_{(j)}^{\gamma,0}]_{ak}y_{kj})^2\times\sum_{s,l}[\mathcal{G}_{(j)}^{\gamma,0}]_{sl}y_{sj}y_{lj}\\
		 &=\sum_{a,k}[\mathcal{G}_{(j)}^{\gamma,0}]^2_{ak}y_{kj}^2\times\sum_{s,l}[\mathcal{G}_{(j)}^{\gamma,0}]_{sl}y_{sj}y_{lj}+\sum_{a}\sum_{t,k}[\mathcal{G}_{(j)}^{\gamma,0}]_{ak}[\mathcal{G}_{(j)}^{\gamma,0}]_{at}y_{kj}y_{tj}\times \sum_{s,l}[\mathcal{G}_{(j)}^{\gamma,0}]_{sl}y_{sj}y_{lj}\\
		&=\sum_{a,k}[\mathcal{G}_{(j)}^{\gamma,0}]^3_{ak}y_{kj}^4+\sum_{a,k}\sum_{l\neq k}[\mathcal{G}_{(j)}^{\gamma,0}]^2_{ak}[\mathcal{G}_{(j)}^{\gamma,0}]_{kl}y_{kj}^3y_{lj}+\sum_{a,k}\sum_{s,l\neq k}[\mathcal{G}_{(j)}^{\gamma,0}]^2_{ak}[\mathcal{G}_{(j)}^{\gamma,0}]_{sl}y_{kj}^2y_{sj}y_{lj}\\
		 &\quad+\sum_{a}\sum_{t,k}[\mathcal{G}_{(j)}^{\gamma,0}]_{ak}[\mathcal{G}_{(j)}^{\gamma,0}]_{at}[\mathcal{G}_{(j)}^{\gamma,0}]_{kt}y^2_{kj}y^2_{tj}+2\sum_{a}\sum_{t,k}\sum_{l\neq t}[\mathcal{G}_{(j)}^{\gamma,0}]_{ak}[\mathcal{G}_{(j)}^{\gamma,0}]_{at}[\mathcal{G}_{(j)}^{\gamma,0}]_{kl}y^2_{kj}y_{tj}y_{lj}\\
		&\quad+\sum_{a}\sum_{t,k}\sum_{s,l\neq t,k}[\mathcal{G}_{(j)}^{\gamma,0}]_{ak}[\mathcal{G}_{(j)}^{\gamma,0}]_{at}[\mathcal{G}_{(j)}^{\gamma,0}]_{sl}y_{kj}y_{tj}y_{sj}y_{lj}.
	\end{split}
\end{equation*}
Then, using the Cauchy-Schwartz inequality and the Wald identity, we obtain that the expectation of each term has the order of at most $\mathrm{O}(1)$.

We first consider $\mathfrak{R}_{(j)}(\mathcal{G}^{\gamma},\widetilde{\mathcal{G}}_{(j)}^{0,\widetilde{\mathfrak{hy}}_j})\cdot(L_{ij}\rho_j^{-1}-\gamma t^{1/2}w_{ij}/(1-\gamma^2)^{1/2})$. The strategy to control this residual term is the same as the procedure in the proof of Theorem \ref{thm_greenfuncomp_entrywise_differror}, where we can split $\mathfrak{R}_{(j)}(\mathcal{G}^{\gamma},\widetilde{\mathcal{G}}_{(j)}^{0,\widetilde{\mathfrak{hy}}_j})$ into $\mathfrak{R}_{(j),good}(\mathcal{G}^{\gamma},\widetilde{\mathcal{G}}_{(j)}^{0,\widetilde{\mathfrak{hy}}_j})+\mathfrak{R}_{(j),bad}(\mathcal{G}^{\gamma},\widetilde{\mathcal{G}}_{(j)}^{0,\widetilde{\mathfrak{hy}}_j})$ depending on whether there are enough terms $\mathfrak{ly}_j^{*}\mathcal{G}_{(j)}^{\gamma,0}\mathfrak{ly}_j$ in the cell $(Y_j^{\gamma})^{*}\mathcal{G}_{(j)}^{\gamma,0}Y_j^{\gamma}.$ And in $\mathfrak{R}_{(j),bad}(\mathcal{G}^{\gamma},\widetilde{\mathcal{G}}_{(j)}^{0,\widetilde{\mathfrak{hy}}_j})$, we find that the leading term results from the worse case such as $h_{u_1j}^{k_1}h_{u_2j}^{k_2}\dots h_{u_rj}^{k_r}\rho_j^{-(k_1+\dots+k_r)}[\mathcal{G}^{\gamma}]_{u_1u_2}\dots[\mathcal{G}^{\gamma,0}_{(j)}]_{u_ru_r}$. Using the same argument as in the proof of Theorem \ref{thm_greenfuncomp_entrywise_differror}, we can obtain that
\begin{eqnarray*}
   && \sum_{ij}\mathbb{E}_{\Psi}\Big[\mathfrak{R}_{(j)}(\mathcal{G}^{\gamma},\widetilde{\mathcal{G}}_{(j)}^{0,\widetilde{\mathfrak{hy}}_j})\cdot(L_{ij}\rho_j^{-1}-\gamma t^{1/2}w_{ij}/(1-\gamma^2)^{1/2})\Big]\cdot\mathbbm{1}(\psi_{ij}=0)\\
    &\lesssim& n^{-\alpha/2+3/2-\epsilon_y}\mathbb{E}_{\Psi}\big(\mathfrak{d}_{2q-1}(Y_j^{\gamma},\widetilde{\mathfrak{hy}}_j)\big).
\end{eqnarray*}

Now, we turn to $\mathfrak{M}_{(j)}(0,0)$. Recalling $\beta_{k_1k_2\ldots k_r}^{(K)}$ and $\mathbb{Q}^{\mathrm{permutation}}_{k_1k_2\ldots k_r}$ in Definition \ref{def_prf_Greenfunctioncomparison_encode_beta}, we define the following assemble patterns to simplify the expression of $(\mathfrak{I}_1)_{ij}$,
\begin{equation*}
\begin{split}
	\mathfrak{P}_{(j),k_1k_2\ldots k_r}^{(K)}=&\sum_{\stackrel{u_r\ne \cdots\ne u_2\ne i}{(v_1,\ldots,v_{K})\in \mathbb{Q}^{\mathrm{permutation}}_{k_1k_2\ldots k_r}}}[(\mathcal{G}_{(j)}^{\gamma,0})^2]_{iv_1}[\mathcal{G}_{(j)}^{\gamma,0}]_{v_{2}v_{3}}\cdots[\mathcal{G}_{(j)}^{\gamma,0}]_{v_{\ell-1}v_{K}}.
	\end{split}
\end{equation*}
Then, we can write
\begin{equation*}
	\begin{split}
		&\mathbb{E}_{\Psi}\Big[\mathfrak{M}_{(j)}(0,0)\cdot\big(L_{ij}\rho_j^{-1}-\frac{\gamma t^{1/2}w_{ij}}{(1-\gamma^2)^{1/2}}\big)\Big]\cdot\mathbbm{1}(\psi_{ij}=0)\\
		&\simeq\eta\sum_{K=1,o}^{4s+1}\sum_{k=1}^{K}\mathbb{E}_{\Psi}\Big[\beta_{kk_2\dots k_r}^{(K)}\mathfrak{P}_{(j),kk_2\ldots k_r}^{(K)}\cdot \mathfrak{d}_{2q-1}(0,0) \big(L_{ij}\rho_j^{-1}-\frac{\gamma t^{1/2}w_{ij}}{(1-\gamma^2)^{1/2}}\big)\Big]\cdot\mathbbm{1}(\psi_{ij}=0),
	\end{split}
\end{equation*}
where the summation is over all the possible odd $K\in\mathbb{N}$ with $1\le K\le 4s+1$. For simplicity, we will fix $K$ in the following arguemnt.

 As in the proof of Theorem \ref{thm_greenfuncomp_entrywise_differror}, we split $\mathfrak{M}_{(j)}(0,0)$ into $\mathfrak{M}^I_{(j)}(0,0)+\mathfrak{M}_{(j)}^{II}(0,0)+\mathfrak{M}_{(j)}^{III}(0,0)$.
\begin{itemize}
	\item [\textbf{Case I.}] {\bf For $\beta^{(K)}_{kk_2\dots k_r}$, $k\ge 1$ and all $k_2,\dots,k_r$ are even numbers.}

For this case, we have
\begin{equation*}
    \begin{split}
        &\eta\mathbb{E}_{\Psi}\big[\mathfrak{M}^{I}_{(j)}(0,0)\cdot\big(L_{ij}\rho_j^{-1}-\frac{\gamma t^{1/2}w_{ij}}{(1-\gamma^2)^{1/2}}\big)\big]\cdot\mathbbm{1}(\psi_{ij}=0)\\
        &=\eta \sum_{K=1,o}^{4s+1}\sum_{k=1,o}^K\mathbb{E}_{\Psi}\Big[ly_{ij}^k\beta_{0k_2\dots k_r}^{(K-k)}\mathfrak{P}_{(j),kk_2\ldots k_r}^{(K)}\cdot \mathfrak{d}_{2q-1}(0,0) \big(L_{ij}\rho_j^{-1}-\frac{\gamma t^{1/2}w_{ij}}{(1-\gamma^2)^{1/2}}\big)\Big]\cdot\mathbbm{1}(\psi_{ij}=0)\\
        &:=\sum_{K=1,o}^{4s+1}\sum_{k=1,o}^K(\mathfrak{I}_{11,k})_{ij}
    \end{split}
\end{equation*}
Notice that $K$ and $k$ are odd. In the following we  consider different cases for $k$.
	
	\textbf{Case $k\ge5$.} For this case, we have
	\begin{equation*}
		\begin{split}
		(\mathfrak{I}_{11,k})_{ij}=&\eta\mathbb{E}_{\Psi}\Big[ly_{ij}^k\beta_{0k_2\dots k_r}^{(K-k)}\mathfrak{P}_{(j),kk_2\ldots k_r}^{(K)}\cdot \mathfrak{d}_{2q-1}(0,0) \big(L_{ij}\rho_j^{-1}-\frac{\gamma t^{1/2}w_{ij}}{(1-\gamma^2)^{1/2}}\big)\Big]\cdot\mathbbm{1}(\psi_{ij}=0)\\
			&\lesssim\eta\sum_{u_1+u_2=k}\mathbb{E}_{\Psi}\Big[|L_{ij}\rho_j^{-1}|^{u_1+1}|t^{1/2}w_{ij}|^{u_2}\cdot \mathfrak{P}_{(j),kk_2\ldots k_r}^{(K)}\cdot \mathfrak{d}_{2q-1}(0,0)\Big]\cdot\mathbbm{1}(\psi_{ij}=0)\\
			&\lesssim\frac{\eta t}{n^{1+\alpha/2}}\mathbb{E}_{\Psi}\big[\mathfrak{d}_{2q-1}(0,0)\big]\cdot\mathbbm{1}(\psi_{ij}=0),
		\end{split}
	\end{equation*}
	where we have used the entrywise bounds in Theorem \ref{thm_greenfuncomp_entrywise_uniformbound} and the deterministic bound $\sum_ly_{lj}^{2k}\le 1$. It should be noticed that $[\widetilde{\mathcal{G}}_{(j)}^{0,0}]_{ab}$ shares the same entrywise bound as $[\mathcal{G}_{(j)}^{\gamma,0}]_{ab}$. To see this, we first observe that $[\widetilde{\mathcal{G}}_{(j)}^{0,0}]_{ab}$ has the same moment behavior as $[\mathcal{G}_{(j)}^{0,0}]_{ab}$. And then, since
        \begin{equation*}
		\frac{\partial\mathbb{E}_{\Psi}(\operatorname{Im}[\mathcal{G}_{(j)}^{\gamma,0}]_{ab})}{\partial\gamma}=-\sum_{r,s\neq j}\mathbb{E}_{\Psi}\Big[\operatorname{Im}\big([\mathcal{G}_{(j)}^{\gamma,0}]_{rb}[\mathcal{G}_{(j)}^{\gamma,0}Y_{(j)}^{\gamma,0}]_{as}\big)\cdot\big((1-\chi_{rs})l_{rs}\rho_s^{-1}-\frac{\gamma t^{1/2}w_{rs}}{(1-\gamma^2)^{1/2}}\big)\Big],
	\end{equation*}
 we conclude
        \begin{equation*}
		\frac{\partial\mathbb{E}_{\Psi}(\operatorname{Im}[\mathcal{G}_{(j)}^{\gamma,0}]_{ab})}{\partial\gamma}=\mathrm{o}(1),
	\end{equation*}
 where we used  \eqref{eq_prf_Greenfunctioncomparison_equivbound} with $\mathfrak{J}_{0,ab}\lesssim t^{-2}n^{-c_l\epsilon_l}$ and $Q_0\lesssim n^{-D}$ for sufficiently large $D>0$. Also we used Theorem \ref{thm_greenfuncomp_entrywise_uniformbound}. Then, we have
\begin{equation}\label{eq_prf_greenfunctioncomparison_relation_Ggamma&G0}
		 \mathbb{E}_{\Psi}(\operatorname{Im}[\mathcal{G}_{(j)}^{\gamma,0}]_{ab})=\mathbb{E}_{\Psi}(\operatorname{Im}[\mathcal{G}_{(j)}^{0,0}]_{ab})+\mathrm{o}(1).
	\end{equation}
	So, we can get the parallel entrywise bounds for $[\widetilde{\mathcal{G}}_{(j)}^{0,0}]_{ab}$ from $[\mathcal{G}_{(j)}^{\gamma,0}]_{ab}$.
	
	Now, It remains to relate $\mathfrak{d}_{2q-1}(0,0)$ to $\mathfrak{d}_{2q-1}(Y_j^{\gamma},\widetilde{\mathfrak{hy}}_j)$. Firstly, observe that
	\begin{equation*}
		\begin{split}
			 &\sum_a\operatorname{Im}\big[\sum_{k=1}^{l}\mathcal{G}_{(j)}^{\gamma,0}(-Y^{\gamma}_j(Y^{\gamma}_j)^{*}\mathcal{G}_{(j)}^{\gamma,0})^k\big]_{aa}=\sum_{k=1}^{l}\operatorname{Im}\big[\operatorname{tr}\big(\mathcal{G}_{(j)}^{\gamma,0}(-Y^{\gamma}_j(Y^{\gamma}_j)^{*}\mathcal{G}_{(j)}^{\gamma,0})^k\big)\big]\\
			 &=\sum_{k=1}^{l}\operatorname{Im}\big[((Y^{\gamma}_j)^{*}\mathcal{G}_{(j)}^{\gamma,0}Y^{\gamma}_j)^{k-1}\cdot((Y^{\gamma}_j)^{*}(\mathcal{G}_{(j)}^{\gamma,0})^2Y^{\gamma}_j)\big]
			\\
			 &\lesssim\sum_{k=1}^{l}\operatorname{Im}\big[((Y^{\gamma}_j)^{*}\mathcal{G}_{(j)}^{\gamma,0}Y^{\gamma}_j)^{k-1}\big]+\operatorname{Im}\big[(Y^{\gamma}_j)^{*}(\mathcal{G}_{(j)}^{\gamma,0})^2Y^{\gamma}_j\big]
			\prec 1,
		\end{split}
	\end{equation*}
	where we deployed the trivial bound $\|\mathcal{G}_{(j)}^{\gamma,0}\|\sim\mathrm{O}(1)$, $(Y^{\gamma}_j)^{*}Y^{\gamma}_j\lesssim1$. Thereafter, we have
	\begin{equation}\label{eq_prf_greenfunctioncomparison_Fdiffer_prior}
		\begin{split}
			 &\sum_a\big(\operatorname{Im}\big[\sum_{k=1}^{l}\mathcal{G}_{(j)}^{\gamma,0}(-Y^{\gamma}_j(Y^{\gamma}_j)^{*}\mathcal{G}_{(j)}^{\gamma,0})^k\big]_{aa}-\operatorname{Im}\big[\sum_{k=1}^{l}\widetilde{\mathcal{G}}_{(j)}^{0,0}(-\widetilde{\mathfrak{hy}}_j\widetilde{\mathfrak{hy}}_j^{*}\widetilde{\mathcal{G}}_{(j)}^{0,0})^k\big]_{aa}\big)\\
			 &\lesssim\sum_{k=1}^{l}\operatorname{Im}\Big[\big|((Y^{\gamma}_j)^{*}\mathcal{G}_{(j)}^{\gamma,0}Y^{\gamma}_j)^{k-1}-(\widetilde{\mathfrak{hy}}_j^{*}\widetilde{\mathcal{G}}_{(j)}^{0,0}\widetilde{\mathfrak{hy}}_j)^{k-1}\big|\cdot |(\widetilde{\mathfrak{hy}}_j^{*}(\widetilde{\mathcal{G}}_{(j)}^{0,0})^2\widetilde{\mathfrak{hy}}_j)|+\big|((Y^{\gamma}_j)^{*}(\mathcal{G}_{(j)}^{\gamma,0})^2Y^{\gamma}_j)\\
			&\quad-(\widetilde{\mathfrak{hy}}_j^{*}(\widetilde{\mathcal{G}}_{(j)}^{0,0})^2\widetilde{\mathfrak{hy}}_j)\big|\cdot |(\widetilde{\mathfrak{hy}}_j^{*}\widetilde{\mathcal{G}}_{(j)}^{0,0}\widetilde{\mathfrak{hy}}_j)^{k-1}|\Big]\\
			 &\lesssim\sum_{k=1}^{l_1}\operatorname{Im}\Big[\big|((Y^{\gamma}_j)^{*}\mathcal{G}_{(j)}^{\gamma,0}Y^{\gamma}_j)^{k-1}-(\widetilde{\mathfrak{hy}}_j^{*}\widetilde{\mathcal{G}}_{(j)}^{0,0}\widetilde{\mathfrak{hy}}_j)^{k-1}\big|+\big|((Y^{\gamma}_j)^{*}(\mathcal{G}_{(j)}^{\gamma,0})^2Y^{\gamma}_j)-(\widetilde{\mathfrak{hy}}_j^{*}(\widetilde{\mathcal{G}}_{(j)}^{0,0})^2\widetilde{\mathfrak{hy}}_j)\big|\Big],
		\end{split}
	\end{equation}
	where in the second step we used the basic inequality $|ab-cd|\le|a-c|\cdot|d|+|b-d|\cdot|c|$, and in the third step we applied the trivial bound $\|\widetilde{\mathcal{G}}_{(j)}^{0,0}\|+\|(\mathcal{G}_{(j)}^{\gamma,0})^2\|\le C$ with $\widetilde{\mathfrak{hy}}_j^{*}\widetilde{\mathfrak{hy}}_j\prec 1$. Notice that the elements $Y_{ij}^{\gamma}$ and $\widetilde{hy}_{ij}$ only differ in $\psi_{ij}h_{ij}\rho_j^{-1}$ despite the Gaussian random variables $w_{ij}$ and $\widetilde{w}_{ij}$. Then by the large deviation bounds in Lemma \ref{lem_largedeviation_H} and the large deviation bounds for Gaussian random vectors, together with the fact that there are at most finite $\psi_{ij}=1$ for $1\le i\le n$, one has
	\begin{eqnarray*} &&\operatorname{Im}\Big[\big|((Y^{\gamma}_j)^{*}\mathcal{G}_{(j)}^{\gamma,0}Y^{\gamma}_j)^{k-1}-(\widetilde{\mathfrak{hy}}_j^{*}\widetilde{\mathcal{G}}_{(j)}^{0,0}\widetilde{\mathfrak{hy}}_j)^{k-1}\big|\Big]\\
&\lesssim&\operatorname{Im}\big|\sum_ih^{2k-2}_{ij}\rho_j^{-2k+2}([\mathcal{G}_{(j)}^{\gamma,0}]^{k-1}_{ii}-[\widetilde{\mathcal{G}}_{(j)}^{0,0}]^{k-1}_{ii})\big|+\mathrm{o}_{\prec}(n^{-\epsilon_l}).
	\end{eqnarray*}
	Also notice from \eqref{eq_prf_greenfunctioncomparison_relation_Ggamma&G0} and Theorem \ref{thm_greenfuncomp_entrywise_uniformbound} that $|[\mathcal{G}_{(j)}^{\gamma,0}]_{ii}-[\widetilde{\mathcal{G}}_{(j)}^{0,0}]_{ii}|\prec n^{-c_l\epsilon_{\alpha}}$. Then $$\operatorname{Im}\Big[\big|((Y^{\gamma}_j)^{*}\mathcal{G}_{(j)}^{\gamma,0}Y^{\gamma}_j)^{k-1}-(\widetilde{\mathfrak{hy}}_j^{*}\widetilde{\mathcal{G}}_{(j)}^{0,0}\widetilde{\mathfrak{hy}}_j)^{k-1}\big|\Big]\prec n^{-\epsilon_l}.$$ Similar bound can be obtained for $\big|((Y^{\gamma}_j)^{*}(\mathcal{G}_{(j)}^{\gamma,0})^2Y^{\gamma}_j)-(\widetilde{\mathfrak{hy}}_j^{*}(\widetilde{\mathcal{G}}_{(j)}^{0,0})^2\widetilde{\mathfrak{hy}}_j)\big|$ via a parallel argument. So,
	\begin{equation*}
		 \sum_a\big(\operatorname{Im}\big[\sum_{k=1}^{l}\mathcal{G}_{(j)}^{\gamma,0}(-Y^{\gamma}_j(Y^{\gamma}_j)^{*}\mathcal{G}_{(j)}^{\gamma,0})^k\big]_{aa}-\operatorname{Im}\big[\sum_{k=1}^{l}\widetilde{\mathcal{G}}_{(j)}^{0,0}(-\widetilde{\mathfrak{hy}}_j\widetilde{\mathfrak{hy}}_j^{*}\widetilde{\mathcal{G}}_{(j)}^{0,0})^k\big]_{aa}\big)=\mathrm{o}_{\prec}(n^{-\epsilon_l}).
	\end{equation*}
	Then, we find that
	\begin{equation*}
		\begin{split}
			 &\mathbb{E}_{\Psi}\big(\mathfrak{d}_{2q-1}(Y^{\gamma}_j,\widetilde{\mathfrak{hy}}_j)\big)=\mathbb{E}_{\Psi}\big(\mathfrak{d}_{2q-1}(0,0)\big)+\sum_{r=1}^{2q-1}C_q\mathbb{E}_{\Psi}\Big[\eta\sum_a\big(\operatorname{Im}\big[\mathcal{G}_{(j)}^{\gamma,0}\big]_{aa}-\operatorname{Im}\big[\widetilde{\mathcal{G}}_{(j)}^{0,0}\big]_{aa}\big)\Big)^{2q-1-r}\\
			 &\cdot\Big(\eta\sum_a\big(\operatorname{Im}\big[\sum_{k=1}^{l}\mathcal{G}_{(j)}^{\gamma,0}(-Y^{\gamma}_j(Y^{\gamma}_j)^{*}\mathcal{G}_{(j)}^{\gamma,0})^k\big]_{aa}-\operatorname{Im}\big[\sum_{k=1}^{l}\widetilde{\mathcal{G}}_{(j)}^{0,0}(-\widetilde{\mathfrak{hy}}_j\widetilde{\mathfrak{hy}}_j^{*}\widetilde{\mathcal{G}}_{(j)}^{0,0})^k\big]_{aa}\big)\Big)^r\Big]+\mathrm{o}\big(\mathfrak{d}_{2q-1}(Y^{\gamma}_j,\widetilde{\mathfrak{hy}}_j)\big)\\
			 &\lesssim\mathbb{E}_{\Psi}\big(\mathfrak{d}_{2q-1}(0,0)\big)+\sum_{r=1}^{2q-1}C_q\mathbb{E}_{\Psi}\big(\mathfrak{d}_{2q-1}(0,0)\big)^{\frac{2q-1-r}{2q-1}}\cdot\mathbb{E}_{\Psi}\Big[\Big(\eta\sum_a\big(\operatorname{Im}\big[\sum_{k=1}^{l}\mathcal{G}_{(j)}^{\gamma,0}(-Y^{\gamma}_j(Y^{\gamma}_j)^{*}\mathcal{G}_{(j)}^{\gamma,0})^k\big]_{aa}\\
			 &-\operatorname{Im}\big[\sum_{k=1}^{l}\widetilde{\mathcal{G}}_{(j)}^{0,0}(-\widetilde{\mathfrak{hy}}_j\widetilde{\mathfrak{hy}}_j^{*}\widetilde{\mathcal{G}}_{(j)}^{0,0})^k\big]_{aa}\big)\Big)^{2q-1}\Big]^{\frac{r}{2q-1}}+\mathrm{o}\big(\mathfrak{d}_{2q-1}(Y^{\gamma}_j,\widetilde{\mathfrak{hy}}_j)\big)\\
			 &\lesssim\mathbb{E}_{\Psi}\big(\mathfrak{d}_{2q-1}(0,0)\big)+\sum_{r=1}^{2q-1}\big(C^{\prime}_q\mathbb{E}_{\Psi}\big(\mathfrak{d}_{2q-1}(0,0)\big)+\mathrm{o}_{\prec}(n^{-r\epsilon_l})\big)+\mathrm{o}\big(\mathfrak{d}_{2q-1}(Y^{\gamma}_j,\widetilde{\mathfrak{hy}}_j)\big),
		\end{split}
	\end{equation*}
	where we deployed the result of $\mathfrak{R}_{(j)}(\mathcal{G}^{\gamma},\widetilde{\mathcal{G}}_{(j)}^{0,\widetilde{\mathfrak{hy}}_j})$ in the first equality. In the second step, we applied Holder's inequality. In the third step, we utilized \eqref{eq_prf_greenfunctioncomparison_Fdiffer_prior}, and in the fourth step we employed Young's inequality. Hence we obtain a lower bound that
	\begin{equation*}
		(1-\mathrm{o}(1))\mathbb{E}_{\Psi}\big(\mathfrak{d}_{2q-1}(Y^{\gamma}_j,\widetilde{\mathfrak{hy}}_j)\big)\lesssim C\mathbb{E}_{\Psi}\big(\mathfrak{d}_{2q-1}(0,0)\big)+\mathrm{o}_{\prec}(n^{-\epsilon_l}),
	\end{equation*}
	for some constant $C>0$. Plugging this bound again in the expansion of $\mathbb{E}_{\Psi}\big(\mathfrak{d}_{2q-1}(Y^{\gamma}_j,\widetilde{\mathfrak{hy}}_j)\big)$, and using \eqref{eq_prf_greenfunctioncomparison_Fdiffer_prior} in the second step, we have
	\begin{equation*}
		\begin{split}
&			\mathbb{E}_{\Psi}\big(\mathfrak{d}_{2q-1}(Y^{\gamma}_j,\widetilde{\mathfrak{hy}}_j)\big)\\
&\gtrsim\mathbb{E}_{\Psi}\big(\mathfrak{d}_{2q-1}(0,0)\big)+C_1\sum_{r=1}^{2q-1}\mathbb{E}_{\Psi}\big(\mathfrak{d}_{2q-1-r}(Y^{\gamma}_j,\widetilde{\mathfrak{hy}}_j)\big)\cdot\mathrm{o}_{\prec}(n^{-r\epsilon_l})+\mathrm{o}\big(\mathfrak{d}_{2q-1}(Y^{\gamma}_j,\widetilde{\mathfrak{hy}}_j)\big).
		\end{split}
	\end{equation*}
	Ignoring the positive factor before the summation, moving the negative quantities to the left hand side and applying Young's inequality, we obtain that
	\begin{equation*}
		 C_2(1-\mathrm{o}(1))\mathbb{E}_{\Psi}\big(\mathfrak{d}_{2q-1}(Y^{\gamma}_j,\widetilde{\mathfrak{hy}}_j)\big)+\mathrm{o}_{\prec}(n^{-(2q-1)\epsilon_l})\gtrsim\mathbb{E}_{\Psi}\big(\mathfrak{d}_{2q-1}(0,0)\big).
	\end{equation*}
	Finally, after recovering $\mathfrak{d}_{2q-1}(Y^{\gamma}_j,\widetilde{\mathfrak{hy}}_j)$, we have for $k\ge5$,
	\begin{equation*}
		 (\mathfrak{I}_{11,k})_{ij}\lesssim\frac{\eta}{n^{1+\alpha/2}}\mathbb{E}_{\Psi}\big[\mathfrak{d}_{2q-1}(Y^{\gamma}_j,\widetilde{\mathfrak{hy}}_j)\big]\cdot\mathbbm{1}(\psi_{ij}=0)+\mathrm{o}_{\prec}(n^{-1-\alpha/2-(2q-1)\epsilon_1}).
	\end{equation*}
	
	\textbf{Case $k=3$.} We have,
	\begin{equation*}
		\begin{split}
			(\mathfrak{I}_{11,3})_{ij}=&\eta\mathbb{E}_{\Psi}\Big[ly_{ij}^3\beta_{0k_2\dots k_r}^{(K-3)}\mathfrak{P}_{(j),3k_2\ldots k_r}^{(K)}\cdot \mathfrak{d}_{2q-1}(0,0)(L_{ij}\rho_j^{-1}-\frac{\gamma t^{1/2}w_{ij}}{(1-\gamma^2)^{1/2}})\Big]\cdot\mathbbm{1}(\psi_{ij}=0)\\
			 &\lesssim\eta\mathbb{E}_{\Psi}\Big[\big((L_{ij}\rho_j^{-1})^4+t(L_{ij}\rho_j^{-1})^2w_{ij}^2+t^2w_{ij}^4+2tw_{ij}^2(M_{ij}\rho_j^{-1})^2\big)\beta_{0k_2\dots k_r}^{(K-3)}\\
			&\quad\cdot\mathfrak{P}_{(j),3k_2\ldots k_r}^{(K)}\cdot \mathfrak{d}_{2q-1}(0,0)\Big]\cdot\mathbbm{1}(\psi_{ij}=0).
		\end{split}
	\end{equation*}
	It is sufficient to consider $tw_{ij}^2(M_{ij}\rho_j^{-1})^2$, which is the leading term in the above expansion. Before proceeding, we take a look at $\mathfrak{P}_{(j),3k_2\ldots k_r}^{(K)}$. It should be noticed that from the pattern $3k_2\ldots k_r$, there must exist one cell $\sum_{k,j}[\mathcal{G}_{(j)}^{\gamma,0}]_{kl}[(\mathcal{G}_{(j)}^{\gamma,0})^2]_{ii}[\mathcal{G}_{(j)}^{\gamma,0}]_{ii}y_{kj}y_{lj}y_{ij}^3$, where $\sum_{k,j}[\mathcal{G}_{(j)}^{\gamma,0}]_{kl}[(\mathcal{G}_{(j)}^{\gamma,0})^2]_{ii}[\mathcal{G}_{(j)}^{\gamma,0}]_{ii}$ provides the typical order $\mathrm{O}(n^{3/2})$. On the other hand, the pattern $y_{ki}y_{lj}$ with even power will contribute at least $\mathrm{O}(n^{-2})$ after taking expectation. Therefore, using Theorem \ref{thm_greenfuncomp_entrywise_uniformbound}, \eqref{eq_prf_greenfunction_average_prior_G^2}, \eqref{eq_prf_greenfunction_average_prior_G^2Y} and \eqref{eq_prf_greenfunctioncomparison_relation_Ggamma&G0}, we have
	\begin{equation*}
		\begin{split}
		&	 (\mathfrak{I}_{11,3})_{ij}\lesssim\frac{\eta^2t}{n^2}\mathbb{E}_{\Psi}\big[\mathfrak{d}_{2q-1}(0,0)\big]\cdot\mathbbm{1}(\psi_{ij}=0)\\
&\lesssim\frac{\eta^2t}{n^2}\mathbb{E}_{\Psi}\big[\mathfrak{d}_{2q-1}(Y^{\gamma}_j,\widetilde{\mathfrak{hy}}_j)\big]\cdot\mathbbm{1}(\psi_{ij}=0)+\mathrm{o}_{\prec}(n^{-2-2\epsilon_{l}-(2q-1)\epsilon_1}),
		\end{split}
	\end{equation*}
	noting the fact that $\operatorname{Im}m^{(t)}(\zeta)\sim\eta$ for $z\in \gamma_1\lor \gamma_2$.
	
	\textbf{Case $k=1$.} We observe for this case that
	\begin{equation*}
		\begin{split}
			(\mathfrak{I}_{11,1})_{ij}=&\eta\mathbb{E}_{\Psi}\big[\big(L^2_{ij}\rho_j^{-2}-tw_{ij}^2\big)\beta_{0k_2\dots k_r}^{(K-1)}\cdot\mathfrak{P}_{(j),1k_2\ldots k_r}^{(K)}\cdot \mathfrak{d}_{2q-1}(0,0)\big]\cdot\mathbbm{1}(\psi_{ij}=0)\\
			&\lesssim\eta\mathbb{E}_{\Psi}\Big[\big(L^2_{ij}\rho_j^{-2}-tw_{ij}^2\big)\beta_{0k_2\dots k_r}^{(K-1)}\cdot\mathfrak{P}_{(j),1k_2\ldots k_r}^{(K)}\Big]\cdot\mathbb{E}_{\Psi}\big[\mathfrak{d}_{2q-1}(0,0)\big]\cdot\mathbbm{1}(\psi_{ij}=0).
		\end{split}
	\end{equation*}
	The strategy to handle this case is the same as the one in the proof of Theorem \ref{thm_greenfuncomp_entrywise_differror}. In other words we take the cumulant expansion for $L^2_{ij}\rho_j^{-2}$ in $\mathbb{E}_{\Psi}\big(L^2_{ij}\rho_j^{-2}\beta_{0k_2\dots k_r}^{(K-1)}\big)$ and take the cumulant expansion for $tw_{ij}^2$ in $\mathbb{E}_{\Psi}\big(tw_{ij}^2\beta_{0k_2\dots k_r}^{(K-1)}\big)$, respectively. The first cumulants  match under the second moment matching condition and the higher cumulants are of the order $\mathrm{O}(n^{-2-4\epsilon_l})$. Besides, the extra $L_{ij}\rho_j^{-1}$ indicates that there will be a cell with the form either $\sum_{l,s}[\mathcal{G}_{(j)}^{\gamma,0}]_{li}[\mathcal{G}_{(j)}^{\gamma,0}]_{ls}y^2_{lj}y_{sj}y_{ij}$ or $\sum_{l}[\mathcal{G}_{(j)}^{\gamma,0}]_{li}[(\mathcal{G}_{(j)}^{\gamma,0})^2]_{il}y_{lj}^2y_{ij}$. Then one may check that at most we have $|\mathfrak{P}_{(j),1k_2\ldots k_r}^{(K)}|=\mathrm{O}(n^{r-1})$ which can be canceled by the expectation of $\beta_{0k_2\dots k_r}^{(K-1)}$ or its derivatives. More details can be found in the proof of Theorem \ref{thm_greenfuncomp_entrywise_differror}. Finally, a standard argument as in cases $k\ge 5$ and $k=3$ gives that
	\begin{equation*}
		(\mathfrak{I}_{11,1})_{ij}\lesssim\frac{\eta t}{n^{3-2/\alpha+\epsilon_0}}\cdot \mathbb{E}_{\Psi}\big[\mathfrak{d}_{2q-1}(Y^{\gamma}_j,\widetilde{\mathfrak{hy}}_j)\big]\cdot\mathbbm{1}(\psi_{ij}=0)+\mathrm{o}_{\prec}(n^{-3+2/\alpha-\epsilon_0-2\epsilon_l-(2q-1)\epsilon_1}).
	\end{equation*}
	
	\item [\textbf{Case II.}] \textbf{For $\beta^{(K)}_{k_1k_2\dots k_r}$,  $k_1\ge 1$ and at least one of $k_2,\dots,k_r$ is odd.}\\
        For this case, we have
        \begin{equation*}
    \begin{split}
        &\eta\mathbb{E}_{\Psi}\big[\mathfrak{M}^{II}_{(j)}(0,0)\cdot\big(L_{ij}\rho_j^{-1}-\frac{\gamma t^{1/2}w_{ij}}{(1-\gamma^2)^{1/2}}\big)\big]\cdot\mathbbm{1}(\psi_{ij}=0)\\
        &=\eta \sum_{K=1,o}^{4s+1}\sum_{k=1}^K\mathbb{E}_{\Psi}\Big[\beta_{kk_2\dots k_r}^{(K-k)}\mathfrak{P}_{(j),kk_2\ldots k_r}^{(K)}\cdot \mathfrak{d}_{2q-1}(0,0) \big(L_{ij}\rho_j^{-1}-\frac{\gamma t^{1/2}w_{ij}}{(1-\gamma^2)^{1/2}}\big)\Big]\cdot\mathbbm{1}(\psi_{ij}=0)\\
        &:=\sum_{K=1,o}^{4s+1}\sum_{k=1}^K(\mathfrak{I}_{12,k})_{ij}.
    \end{split}
\end{equation*}

	We only consider $\beta_{2k_2\dots k_r}^{(K)}$ and $\beta_{1k_2\dots k_r}^{(K)}$ since the others  can be handled similarly. Firstly, for $\beta_{2k_2\dots k_r}^{(K)}$, we need to consider the quantity
	\begin{equation*}
		(\mathfrak{I}_{12,2})_{ij}\sim \eta\mathbb{E}_{\Psi}\big[\big(L_{ij}^3\rho_j^{-3}-tL_{ij}\rho_j^{-1}w_{ij}^2\big)\beta_{0k_2\dots k_r}^{(K-2)}\cdot\mathfrak{P}_{(j),2k_2\ldots k_r}^{(K)}\cdot \mathfrak{d}_{2q-1}(0,0)\big]\cdot\mathbbm{1}(\psi_{ij}=0).
	\end{equation*}
	The proof of Theorem \ref{thm_greenfuncomp_entrywise_differror} shows that each odd $k_l,2\le l\le r$ could reduce the order of $\mathfrak{P}_{(j),2k_2\ldots k_r}^{(K)}$ from $\mathrm{O}(n^{r-1})$ to $\mathrm{O}(n^{r-3/2})$. Together with the fact that we can gain at least $\mathrm{o}(n^{-3/2-(r-1)})$ from the expectations $\mathbb{E}_{\Psi}\big[L_{ij}^3\rho_j^{-3}\beta_{0k_2\dots k_r}^{(K-2)}\big]$ or $\mathbb{E}_{\Psi}\big[tL_{ij}\rho_j^{-1}w_{ij}^2\beta_{0k_2\dots k_r}^{(K-2)}\big]$, we can obtain that
	\begin{equation*}
		(\mathfrak{I}_{12,2})_{ij}\lesssim \mathrm{o}(n^{-2-\epsilon_l})\mathbb{E}_{\Psi}\big[\mathfrak{d}_{2q-1}(Y^{\gamma}_j,\widetilde{\mathfrak{hy}}_j)\big]\cdot\mathbbm{1}(\psi_{ij}=0)+\mathrm{o}_{\prec}(n^{-2-\epsilon_l-(2q-1)\epsilon_1}).
	\end{equation*}
	
	Secondly, for $\beta_{1k_2\dots k_r}^{(K)}$, we have
	\begin{equation*}
		(\mathfrak{I}_{12,1})_{ij}\sim \eta\mathbb{E}_{\Psi}\big[\big(L_{ij}^2\rho_j^{-2}-tw_{ij}^2\big)\beta_{0k_2\dots k_r}^{(K-1)}\cdot\mathfrak{P}_{(j),1k_2\ldots k_r}^{(K)}\cdot \mathfrak{d}_{2q-1}(0,0)\big]\cdot\mathbbm{1}(\psi_{ij}=0).
	\end{equation*}
	Similarly to the strategy to handle \textbf{Case $k=1$} in \textbf{Case I}, we apply the cumulant expansion. Also notice that we use Lemma \ref{lem_oddmoment_est} to approximate $\mathbb{E}_{\Psi}\beta_{0k_2\dots k_r}^{(K-1)}$ if there is at least one $k_l=1$ for $2\le l\le r$, or apply the deterministic bound $|y_{kj}|\le 1$ to ensure all $k_l$'s are even if there is no $k_l=1$ and sequentially employ Lemma \ref{lem_moment_rates}. Thereafter, one may follow a standard routine to obtain that
	\begin{equation*}
		(\mathfrak{I}_{12,1})_{ij}\lesssim \mathrm{o}(n^{-2-\epsilon_l})\mathbb{E}_{\Psi}\big[\mathfrak{d}_{2q-1}(Y^{\gamma}_j,\widetilde{\mathfrak{hy}}_j)\big]\cdot\mathbbm{1}(\psi_{ij}=0)+\mathrm{o}_{\prec}(n^{-2-\epsilon_l-(2q-1)\epsilon_1}).
	\end{equation*}
	
	\item [\textbf{Case III.}] \textbf{For $\beta^{(K)}_{k_1k_2\dots k_r}$,  $k_1= 0$.}\\
	At this time, we have
\begin{equation*}
    \begin{split}
        &\eta\mathbb{E}_{\Psi}\big[\mathfrak{M}^{II}_{(j)}(0,0)\cdot\big(L_{ij}\rho_j^{-1}-\frac{\gamma t^{1/2}w_{ij}}{(1-\gamma^2)^{1/2}}\big)\big]\cdot\mathbbm{1}(\psi_{ij}=0)\\
        &=\eta \sum_{K=1,o}^{4s+1}\mathbb{E}_{\Psi}\Big[\beta_{0k_2\dots k_r}^{(K)}\mathfrak{P}_{(j),kk_2\ldots k_r}^{(K)}\cdot \mathfrak{d}_{2q-1}(0,0) \big(L_{ij}\rho_j^{-1}-\frac{\gamma t^{1/2}w_{ij}}{(1-\gamma^2)^{1/2}}\big)\Big]\cdot\mathbbm{1}(\psi_{ij}=0)\\
        &:=\sum_{K=1,o}^{4s+1}(\mathfrak{I}_{13,0})_{ij}.
    \end{split}
\end{equation*}
We only need to consider
	\begin{equation*}
		(\mathfrak{I}_{13,0})_{ij}\sim \eta\mathbb{E}_{\Psi}\big[\big(L_{ij}\rho_j^{-1}\big)\beta_{0k_2\dots k_r}^{(K)}\cdot\mathfrak{P}_{(j),0k_2\ldots k_r}^{(K)}\cdot \mathfrak{d}_{2q-1}(0,0)\big]\cdot\mathbbm{1}(\psi_{ij}=0).
	\end{equation*}
	Some calculations show that the leading quantity in $\beta_{0k_2\dots k_r}^{(K)}\mathfrak{P}_{(j),0k_2\ldots k_r}^{(K)}$ is $\sum_{k\neq i}[(\mathcal{G}_{(j)}^{\gamma,0})^2]_{ik}y_{kj}$. So the typical order of $\sum_{i,j} (\mathfrak{I}_{13,0})_{ij}$ is
    \begin{equation*}
		\begin{split}
			&\sum_{i,j}\mathbb{E}_{\Psi}\big[\sum_{k\neq i}[(\mathcal{G}_{(j)}^{\gamma,0})^2]_{ik}y_{kj}L_{ij}\rho_j^{-1}\big]\cdot\mathbbm{1}(\psi_{ij}=0)\\
   \lesssim &\sum_{i,j}(\sum_k[(\mathcal{G}_{(j)}^{\gamma,0})^2]^2_{ik})^{1/2}n^{1/2}\mathbb{E}(y_{kj}L_{ij}\rho_j^{-1})\cdot\mathbbm{1}(\psi_{ij}=0)\lesssim \mathrm{o}(n^{3/2-\alpha/2}).
		\end{split}
	\end{equation*}
by a similar argument in the proof of Theorem \ref{thm_greenfuncomp_entrywise_differror}. This concludes \textbf{Case III}.
	
\end{itemize}

Finally, combining the estimates for three cases $\mathfrak{M}_{(j)}^{I}(0,0)$ to $\mathfrak{M}_{(j)}^{III}(0,0)$, recalling the result for $\mathfrak{R}_{(j)}(\mathcal{G}^{\gamma},\widetilde{\mathcal{G}}^{0,\widetilde{\mathfrak{hy}}_j}_{(j)})$, we obtain that for $\alpha\ge 3$
\begin{equation*}
	\begin{split}
		(\mathfrak{J}_1)_{ij}
		\lesssim \frac{\eta}{n^{2+\epsilon_l}}\mathbb{E}_{\Psi}\big[\mathfrak{d}_{2q}(Y^{\gamma}_j,\widetilde{\mathfrak{hy}}_j)\big]+\mathrm{o}_{\prec}(n^{-\alpha/2-1/2-2\epsilon_l-(2q-1)\epsilon_1}).
	\end{split}
\end{equation*}
Together with the result for $(\mathfrak{J}_2)_{ij}$, we have
\begin{equation*}
	\frac{\partial \mathbb{E}_{\Psi}(|n\eta(\operatorname{Im}m^{\gamma}(z)-\operatorname{Im}\widetilde{m}_{n,0}(z))|^{2q})}{\partial\gamma}=-\frac{2q\eta}{n^{\epsilon_l}}\Big(\mathbb{E}_{\Psi}\big[\mathfrak{d}_{2q}(Y^{\gamma}_j,\widetilde{\mathfrak{hy}}_j)\big]+\mathrm{O}(n^{-2q\epsilon_l})\Big)+\mathrm{o}_{\prec}(n^{-2\epsilon_l-(2q-1)\epsilon_1}).
\end{equation*}
Therefore, for any $0\le\gamma\le1$,
\begin{equation*}
	\begin{split}
		 &\mathbb{E}_{\Psi}\big(|n\eta(\operatorname{Im}m^{\gamma}(z)-\operatorname{Im}\widetilde{m}_{n,0}(z))|^{2q}\big)-\mathbb{E}_{\Psi}\big(|n\eta(\operatorname{Im}m^{0}(z)-\operatorname{Im}\widetilde{m}_{n,0}(z))|^{2q}\big)\\
		&=\int_{0}^{\gamma}\frac{\partial \mathbb{E}_{\Psi}(|n\eta(\operatorname{Im}m^{\gamma^{\prime}}(z)-\operatorname{Im}\widetilde{m}_{n,0}(z))|^{2q})}{\partial\gamma^{\prime}}\mathrm{d}\gamma^{\prime}.
	\end{split}
\end{equation*}
Taking supremum over $\gamma$, and collecting the above estimates, we have
\begin{equation*}
	\begin{split}
		 &\sup_{0\le\gamma\le1}\mathbb{E}_{\Psi}\big(|n\eta(\operatorname{Im}m^{\gamma}(z)-\operatorname{Im}\widetilde{m}_{n,0}(z))|^{2q}\big)-\mathbb{E}_{\Psi}\big(|n\eta(\operatorname{Im}m^{0}(z)-\operatorname{Im}\widetilde{m}_{n,0}(z))|^{2q}\big)\\
		&\lesssim \mathrm{O}(n^{-\epsilon_l})\sup_{0\le\gamma\le1}\mathbb{E}_{\Psi}\big(|n\eta(\operatorname{Im}m^{\gamma}(z)-\operatorname{Im}\widetilde{m}_{n,0}(z))|^{2q}\big)+\mathrm{O}(n^{-(2q+1)\epsilon_l}).
	\end{split}
\end{equation*}

And for $\alpha\in (2,3)$, combining all the estimates for $(\mathcal{J}_{1,k})_{ij}$ as well as the result for $(\mathcal{J}_{2,k})_{ij}$, we actually obtain that
\begin{eqnarray*}
&&\frac{\partial \mathbb{E}_{\Psi}(|n^{3/2-\alpha/2}\eta(\operatorname{Im}m^{\gamma}(z)-\operatorname{Im}\widetilde{m}_{n,0}(z))|^{2q})}{\partial\gamma}\\
&=&-\frac{2q\eta}{n^{\epsilon_l}}\Big(\mathbb{E}_{\Psi}\big[\mathfrak{d}_{2q}(Y^{\gamma}_j,\widetilde{\mathfrak{hy}}_j)\big]+\mathrm{O}(n^{-2q\epsilon_l})\Big)+\mathrm{o}_{\prec}(n^{-2\epsilon_l-(2q-1)\epsilon_1}).
\end{eqnarray*}
This gives that
\begin{equation*}
	\begin{split}
&	 \sup_{0\le\gamma\le1}(1+\mathrm{o}(1))\mathbb{E}_{\Psi}\big(|n^{3/2-\alpha/2}\eta(\operatorname{Im}m^{\gamma}(z)-\operatorname{Im}\widetilde{m}_{n,0}(z))|^{2q}\big)\\
&\qquad \qquad -\mathbb{E}_{\Psi}\big(|n^{3/2-(3-\alpha)_{+}}\eta(\operatorname{Im}m^{0}(z)-\operatorname{Im}\widetilde{m}_{n,0}(z))|^{2q}\big)
	\lesssim\mathrm{O}(n^{-(2q+1)\epsilon_l}).\\
	\end{split}
\end{equation*}
Putting together the above two cases, we can conclude this theorem.  \qed

\subsection{Proofs for characteristic function estimation}\label{app_sec_charafun}
\subsubsection{Collection of derivatives}\label{subsubsec_derivative}
In this subsection, we collect some derivatives that appear in the cumulant expansion, which can be obtained by repeatedly applying the chain rule. Recalling the definitions of $\rho_j, L_{ij},M_{ij},H_{ij}$, and noting the fact that $\psi_{ij},\chi_{ij}$ are indicators, we have
\begin{gather*}
	\partial_{m,ij} Y_{ij}=\rho_j^{-1}(1-M^2_{ij}\rho_j^{-2}),
	\partial_{m,ij} Y_{kj}=-Y_{kj}\rho_{j}^{-2}M_{ij},~\text{for}~k\ne i.
\end{gather*}
Further, for $a\in [n],b\in [p]$, we denote by $E_{ab}$  the $n\times p$ matrix with the entries $E_{ab}=\delta_{ac}\delta_{bd}$. Let $F_{ab}=Y(E_{ab})^*E_{ab}$ and
\begin{equation*}
	\begin{split}
		\mathscr{P}_{0}^{ab}=E_{ab}(E_{ab})^*, ~\mathscr{P}_{1}^{ab}=E_{ab}Y^*,~\mathscr{P}_{2}^{ab}=Y(E_{ab})^*,\mathscr{P}_{3}^{ab}=E_{ab}^*Y,~\mathscr{P}_{4}^{ab}=Y^*(E_{ab}),\\
		\mathscr{Q}_{0}^{ab}=F_{ab}(F_{ab})^*, ~\mathscr{Q}_{1}^{ab}=F_{ab}Y^*,~\mathscr{Q}_{2}^{ab}=Y(F_{ab})^*,\mathscr{Q}_{3}^{ab}=F_{ab}^*Y,~\mathscr{Q}_{4}^{ab}=Y^*(F_{ab}).
	\end{split}
\end{equation*}
Notice $\mathscr{Q}_{1}^{ab}=\mathscr{Q}_{2}^{ab}=YE_{ab}^*E_{ab}Y^*$. With the notation above, we get that
\begin{equation*}
	\partial_{m,ij} Y=E_{ij}\rho_{j}^{-1}(1-M_{ij}^2\rho_{j}^{-2})-M_{ij}\rho_{j}^{-2}Y^{(ij)}(E_{ij})^*E_{ij}=:E_{ij}\rho_{j}^{-1}+F_{ij}(-M_{ij}\rho_{j}^{-2}),
\end{equation*}
where $Y^{(ij)}$ denotes the matrix $Y$ with $Y_{ij}$ being replaced by $0$, which satisfies that the $j$-th column is non-zero and others are zero. Thereafter,
\begin{equation*}
	\begin{split}
		\partial_{m,ij} (\mathcal{G}(z))^{q}=-\sum_{a=1}^2\sum_{\substack{q_1,q_2\ge 1\\ q_1+q_2=q+1}}(\mathcal{G}(z))^{q_1}(\rho_{j}^{-1}\mathscr{P}_{a
		}^{ij}+(-M_{ij}\rho_{j}^{-2})\mathscr{Q}_{a
		}^{ij})(\mathcal{G}(z))^{q_2},\\
		\partial_{m,ij} (G(z))^{q}=-\sum_{a=3}^4\sum_{\substack{q_1,q_2\ge 1\\ q_1+q_2=q+1}}({G}(z))^{q_1}(\rho_{j}^{-1}\mathscr{P}_{a
		}^{ij}+(-M_{ij}\rho_{j}^{-2})\mathscr{Q}_{a
		}^{ij})({G}(z))^{q_2},
	\end{split}
\end{equation*}
which further imply that
\begin{equation*}
	\begin{split}
		\frac{\partial (Y^*\mathcal{G}(z))}{\partial M_{ij}}=(E_{ij}\rho_{j}^{-1}+F_{ij}(-M_{ij}\rho_{j}^{-2}))^*\mathcal{G}(z)
		-\sum_{a=1}^{2}[Y^*\mathcal{G}(z)(\rho_{j}^{-1}\mathscr{P}_{a
		}^{ij}+(-M_{ij}\rho_{j}^{-2})\mathscr{Q}_{a
		}^{ij})\mathcal{G}(z)],\\
		\frac{\partial (G(z)Y^*)}{\partial M_{ij}}=G(z)(E_{ij}\rho_{j}^{-1}+F_{ij}(-M_{ij}\rho_{j}^{-2}))^*
		-\sum_{a=3}^{4}[{G}(z)(\rho_{j}^{-1}\mathscr{P}_{a
		}^{ij}+(-M_{ij}\rho_{j}^{-2})\mathscr{Q}_{a
		}^{ij}){G}(z)Y^*].
	\end{split}
\end{equation*}
Similarly, for integer $q\ge 1$, one can get
\begin{equation*}
	\frac{\partial^q \rho_{j}^{-1}}{\partial M_{ij}^{q}}\sim \begin{cases}
		\rho_{j}^{-q-1}, ~q=2q_1,\\
		\rho_{j}^{-q-2}M_{ij}, ~q=2q_1+1,
	\end{cases}
\end{equation*}
and
\begin{equation*}
	\frac{\partial^q Y_{ij}}{\partial M_{ij}^{q}}\sim \begin{cases}
		\rho_{j}^{-q-1}M_{ij}, ~q=2q_1,\\
		\rho_{j}^{-q}, ~q=2q_1+1,
	\end{cases}, \frac{\partial^q Y_{kj}}{\partial M_{ij}^{q}}\sim \begin{cases}
		Y_{kj}\rho_{j}^{-q}, ~q=2q_1,\\
		Y_{kj}\rho_{j}^{-q-1}M_{ij}, ~q=2q_1+1,
	\end{cases}
\end{equation*}
where we remark here that the symbol $\sim$ means the right hand side of $\sim$ is  the main term of the left hand side up to a multiplicative constant. The negligible terms such as $M_{ij}^2\rho_{j}^{-2}$ are omitted. Thus, we have
\begin{equation*}
	\frac{\partial^q Y}{\partial M_{ij}^{q}}\sim \begin{cases}
		\rho_{j}^{-q-1}M_{ij}E_{ij}+\rho_{j}^{-q}F_{ij}, ~q=2q_1,\\
		\rho_{j}^{-q}E_{ij}+M_{ij}\rho_{j}^{-q-1}F_{ij}, ~q=2q_1+1,
	\end{cases}~\frac{\partial^q F_{ij}}{\partial M_{ij}^{q}}\sim \begin{cases}
		\rho_{j}^{-q}F_{ij}, ~q=2q_1,\\
		M_{ij}\rho_{j}^{-q-1}F_{ij}, ~q=2q_1+1.
	\end{cases}
\end{equation*}
With the notation above, recalling the high probability bounds $\|\mathcal{G}(z)\|\prec 1$ and $\|YY^*\|\prec 1$, we have the estimates $\partial_{m,ij} \mathcal{G}(z)\sim\sum_{a=1}^2\mathcal{G}(z)(\rho_{j}^{-1}\mathscr{P}_{a}^{ij}+M_{ij}\rho_{j}^{-2}\mathscr{Q}_{a}^{ij})\mathcal{G}(z)$. For the high-order derivatives,  the induction gives $ \partial_{m,ij}^q \mathcal{G}(z)\sim \rho_{j}^{-q}\mathcal{G}(z)\cdots \mathcal{G}(z)$, where the factors $E_{ij},F_{ij},Y$ are hidden in the product with appropriate orders. This further implies
\begin{equation*}
	\frac{\partial^q (Y^*\mathcal{G}(z))}{\partial M_{ij}^{q}}\sim \rho_{j}^{-q}(E_{ij}^*+F_{ij}^*+Y^*)\mathcal{G}(z)\cdots \mathcal{G}(z)
\end{equation*}
for $q\ge 2$, where the terms with $M_{ij}\rho_{j}^{-1}$ are absorbed into the main term since $\rho_{j}^{-1}\lesssim n^{-1/2}$ with high probability. Moreover, similarly to the estimate of $\mathrm{I}_{t3}$ in \cite[Lemma 5.4]{bao2022spectral}, we have
\begin{equation*}
	\begin{split}
		\frac{\partial \mathrm{e}^{\mathrm{i}x\langle L_{1}(f)\rangle}}{\partial M_{ij}}=& -\frac{x}{2\pi}\oint_{\bar{\gamma}_2^0}\frac{\partial \operatorname{tr}\mathcal{G}(z_2)}{\partial M_{ij}}\mathrm{e}^{\mathrm{i}x\langle L_{1}(f)\rangle} f(z_2)\mathrm{d}z_2+\mathrm{O}_{\prec}(n^{-K/2}),\\
		\frac{\partial^2 \mathrm{e}^{\mathrm{i}x\langle L_{1}(f)\rangle}}{\partial M_{ij}^2}=&
		-\frac{x}{2\pi}\oint_{\bar{\gamma}_2^0}\frac{\partial^2 \operatorname{tr}\mathcal{G}(z_2)}{\partial M_{ij}^2}\mathrm{e}^{\mathrm{i}x\langle L_{1}(f)\rangle}f(z_2)\mathrm{d}z_2\\
  &+\frac{x^2}{4\pi^2}\oint_{\bar{\gamma}_2^0}\oint_{\bar{\gamma}_3^0}\frac{\partial \operatorname{tr}\mathcal{G}(z_2)}{\partial M_{ij}}\frac{\partial \operatorname{tr}\mathcal{G}(z_3)}{\partial M_{ij}} \mathrm{e}^{\mathrm{i}x\langle L_{1}(f)\rangle} f(z_2)f(z_3)\mathrm{d}z_2\mathrm{d}z_3+\mathrm{O}_{\prec}(n^{-K/2})
	\end{split}
\end{equation*}
with
\begin{equation*}
	\begin{split}
		\partial_{m,ij}\operatorname{tr}\mathcal{G}(z_2) = &-2\rho_{j}^{-1}[\mathcal{G}(z_2)\mathcal{G}(z_2)Y]_{ij}+2M_{ij}\rho_{j}^{-2}[Y^*\mathcal{G}(z_2)\mathcal{G}(z_2)Y]_{jj},\\
		\partial_{m,ij}^2\operatorname{tr}\mathcal{G}(z_2) \sim &2\rho_{j}^{-2}z_2[\mathcal{G}(z_2)\mathcal{G}(z_2)]_{ii}[G(z_2)]_{jj}+2\rho_{j}^{-2}[\mathcal{G}(z_2)]_{ii}[Y^*\mathcal{G}(z_2)\mathcal{G}(z_2)Y]_{jj}\\
  &+2\rho_{j}^{-2}[Y^*\mathcal{G}(z_2)\mathcal{G}(z_2)Y]_{jj}+\text{off-diagonals},
	\end{split}
\end{equation*}
where we keep the diagonal elementts which are main terms. By induction, we have $\partial_{m,ij}^{q} \mathrm{e}^{\mathrm{i}x\langle L_{1}(f)\rangle}\lesssim \rho_{j}^{-q}$ with the error terms absorbed due to the entrywise local law. Throughout this section, for simplicity, we denote by $A_{ii}, B_{ii}$ and  $A_{ij}, B_{ij}$ the diagonal and off-diagonal elements of the matrices (cf. $[\mathcal{G}(z)]_{ii}, [\mathcal{G}(z)Y]_{ij}, [\mathcal{G}^2(z)Y]_{ij}$) without abuse of notation.

\subsubsection{Proof of Lemma \ref{lem_cumulant_estM}}\label{subsubsec_prf_cumulantexpansionL}
\textbf{I. Terms with $\kappa_{1,m}$}\\
Recalling \eqref{eq_rhoj_highpro}, the estimates of $\kappa_{1,m}\lesssim n^{(1/2-\epsilon_h)(-\alpha+1)}$ in Section \ref{sec_characteristicfunctionestimation} and the Cauchy-Schwarz inequality $\sum_{ij}[Y^*\mathcal{G}(z_1)]_{ji}^2=\operatorname{tr}(Y^*\mathcal{G}(z_1)\mathcal{G}(z_1)Y)\asymp n$,  one has for $\alpha>3$,
\begin{equation*}
	\begin{split}
		\sum_{ij}\kappa_{1,m}\mathscr{M}_{0,ij}
		\lesssim &\kappa_{1,m}(\sum_{ij}\rho_j^{-2}\sum_{ij}[Y^*\mathcal{G}(z_1)]_{ji}^2)^{1/2}\lesssim n^{1+(1/2-\epsilon_h)(-\alpha+1)}\lesssim n^{-c}.
	\end{split}
\end{equation*}
The case where $\alpha\in (2,3]$ can be handled by the arguments for $\kappa_{3,m}$ below, and thus we omit the details here.

\

\noindent\textbf{II. Terms with $\kappa_{2,m}$}\\
Invoking the derivatives in Section \ref{subsubsec_derivative}, we have
\begin{equation*}
	\begin{split}
		\sum_{ij}\mathscr{M}_{1,ij}&=-\sum_{ij}\mathbb{E}_{\Psi}^{\chi}\left[\rho_{j}^{-3}M_{ij}[Y^*\mathcal{G}(z_1)]_{ji}\langle \mathrm{e}^{\mathrm{i}x\langle L_{1}(f)\rangle}\rangle\right]\\
		&=-\mathbb{E}_{\Psi}^{\chi}\left(\operatorname{tr}(Y^*\mathcal{G}(z_1)M(\operatorname{diag}(S))^{-1/2})(\operatorname{diag}(S))^{-1})\langle \mathrm{e}^{\mathrm{i}x\langle L_{1}(f)\rangle}\rangle\right)\\
		 &=-\mathbb{E}_{\Psi}^{\chi}\left(\operatorname{tr}(Y^*\mathcal{G}(z_1)M(\operatorname{diag}(S))^{-1/2})[n^{-1}I+(\operatorname{diag}(S))^{-1}-n^{-1}I])\langle \mathrm{e}^{\mathrm{i}x\langle L_{1}(f)\rangle}\rangle\right)\\
		&= -n^{-1} \mathbb{E}_{\Psi}^{\chi}\left(\operatorname{tr}(Y^*\mathcal{G}(z_1)M(\operatorname{diag}(S))^{-1/2})\langle \mathrm{e}^{\mathrm{i}x\langle L_{1}(f)\rangle}\rangle\right)+\mathrm{O}_{\prec}(n^{-\epsilon_{\alpha}}),
	\end{split}
\end{equation*}
where we have used \eqref{eq_tr_SS} and  $\|Y^*\mathcal{G}(z_1)M(\operatorname{diag}(S))^{-1/2})\|\prec 1$. Recalling the rate
\begin{equation*}
	\mathbb{E}_{\Psi}^{\chi}\left(\operatorname{tr}(Y^*\mathcal{G}(z_1)H(\operatorname{diag}(S))^{-1/2})\langle \mathrm{e}^{\mathrm{i}x\langle L_{1}(f)\rangle}\rangle\right)\lesssim \sum_{ij}\mathbb{E}_{\Psi}^{\chi}[H_{ij}\rho_j^{-1}[Y^*\mathcal{G}(z_1)]_{ji}]\lesssim n^{3/2+(1/2-\epsilon_h)(-\alpha+1)},
\end{equation*}
and the relation $Y^*\mathcal{G}(z)Y=Y^*\mathcal{G}(z_1)(M+H)(\operatorname{diag}(S))^{-1/2}$, we have
\begin{equation*}
	\sum_{ij}\mathscr{M}_{1,ij}=-\mathbb{E}_{\Psi}^{\chi}((1+z_1m(z_1))\langle \mathrm{e}^{\mathrm{i}x\langle L_{1}(f)\rangle}\rangle)+\mathrm{O}_{\prec}(n^{-\epsilon_{\alpha}}).
\end{equation*}
Similarly, with the notation from Section \ref{subsubsec_derivative}, we have
\begin{equation*}
	\begin{split}
		\mathscr{M}_{2,ij}=
		 &\mathbb{E}_{\Psi}^{\chi}\left[\rho_{j}^{-2}\left([(E_{ij})^*\mathcal{G}(z_1)]_{ji}-\sum_{a=1}^{2}[Y^*(\mathcal{G}(z_1))\mathscr{P}_{a}^{ij}(\mathcal{G}(z_1))]_{ji}\right)\langle \mathrm{e}^{\mathrm{i}x\langle L_{1}(f)\rangle}\rangle\right]\\
		 &-\mathbb{E}_{\Psi}^{\chi}\left[M_{ij}\rho_{j}^{-3}\left([(F_{ij})^*\mathcal{G}(z_1)]_{ji}-\sum_{a=1}^{2}[Y^*(\mathcal{G}(z_1))\mathscr{Q}_{a}^{ij}(\mathcal{G}(z_1))]_{ji}\right)\langle \mathrm{e}^{\mathrm{i}x\langle L_{1}(f)\rangle}\rangle\right].
	\end{split}
\end{equation*}
By direct calculation, we have
\begin{equation*}
	\begin{split}
		[(E_{ij})^*\mathcal{G}(z_1)]_{ji}-\sum_{a=1}^{2}[Y^*\mathcal{G}(z_1)\mathscr{P}_{a}^{ij}\mathcal{G}(z_1)]_{ji}
		&=-[Y^*\mathcal{G}(z_1)E_{ij}Y^*\mathcal{G}(z_1)]_{ji}-z_1[G(z_1)(E_{ij})^*\mathcal{G}(z_1)]_{ji},\\
		[(F_{ij})^*\mathcal{G}(z_1)]_{ji}-\sum_{a=1}^{2}[Y^*\mathcal{G}(z_1)\mathscr{Q}_{a}^{ij}\mathcal{G}(z_1)]_{ji}
		&=-[Y^*\mathcal{G}(z_1)F_{ij}Y^*\mathcal{G}(z_1)]_{ji}-z_1[G(z_1)(F_{ij})^*\mathcal{G}(z_1)]_{ji},
	\end{split}
\end{equation*}
where we have applied the identities
\begin{equation*}
	\begin{split}
		Y^*(\mathcal{G}(z))^lY=(G(z))^lY^*Y=(G(z))^{l-1}+z(G(z))^{l},~
		(\mathcal{G}(z))^lYY^*=(\mathcal{G}(z))^{l-1}+z(\mathcal{G}(z))^{l},
	\end{split}
\end{equation*}
with ${G}(z)=(Y^*Y-zI)^{-1}$. Thus, we can rewrite $\mathscr{M}_{2,ij}$ as
\begin{equation*}
	\begin{split}
		\mathscr{M}_{2,ij}
		 =&\mathbb{E}_{\Psi}^{\chi}\left[\rho_{j}^{-2}\left(-[Y^*\mathcal{G}(z_1)]_{ji}[Y^*\mathcal{G}(z_1)]_{ji}-z_1[G(z_1)]_{jj}[\mathcal{G}(z_1)]_{ii}\right)\langle \mathrm{e}^{\mathrm{i}x\langle L_{1}(f)\rangle}\rangle\right]\\
		 &+\mathbb{E}_{\Psi}^{\chi}\left[M_{ij}\rho_{j}^{-3}\left([Y^*\mathcal{G}(z_1)Y]_{jj}[Y^*\mathcal{G}(z_1)]_{ji}+z_1[G(z_1)]_{jj}[Y^*\mathcal{G}(z_1)]_{ji}\right)\langle \mathrm{e}^{\mathrm{i}x\langle L_{1}(f)\rangle}\rangle\right]\\
	\end{split}
\end{equation*}
by the identity $Y^*(\mathcal{G}(z_1))^{l}Y=(G(z_1))^{l-1}+z_1(G(z_1))^l$.
For the first term, one has
\begin{equation*}
	\begin{split}
		&\sum_{ij}\mathbb{E}_{\Psi}^{\chi}\left[\rho_{j}^{-2}[Y^*\mathcal{G}(z_1)]_{ji}[Y^*\mathcal{G}(z_1)]_{ji}\langle \mathrm{e}^{\mathrm{i}x\langle L_{1}(f)\rangle}\rangle\right]
		=\mathbb{E}_{\Psi}^{\chi}\left[\operatorname{tr}(Y^*\mathcal{G}(z_1)\mathcal{G}(z_1)Y(\operatorname{diag}(S))^{-1})\langle \mathrm{e}^{\mathrm{i}x\langle L_{1}(f)\rangle}\rangle\right]\\
		=&\mathbb{E}_{\Psi}^{\chi}\left[n^{-1}\operatorname{tr}(YY^*\mathcal{G}(z_1)\mathcal{G}(z_1))\langle \mathrm{e}^{\mathrm{i}x\langle L_{1}(f)\rangle}\rangle\right]+\mathrm{O}_{\prec}(n^{-\epsilon_{\alpha}})=\mathbb{E}_{\Psi}^{\chi}\left[\partial_{z_1}(z_1m(z_1))\langle \mathrm{e}^{\mathrm{i}x\langle L_{1}(f)\rangle}\rangle\right]+\mathrm{O}_{\prec}(n^{-\epsilon_{\alpha}}),
	\end{split}
\end{equation*}
and
\begin{equation*}
	\begin{split}
		\sum_{ij}\mathbb{E}_{\Psi}^{\chi}\left[\rho_{j}^{-2}\left(z_1[G(z_1)]_{jj}[\mathcal{G}(z_1)]_{ii}\right)\langle \mathrm{e}^{\mathrm{i}x\langle L_{1}(f)\rangle}\rangle\right]=\mathbb{E}_{\Psi}^{\chi}[(\phi^{-1}z_1\underline{m}(z_1)+\mathrm{O}_{\prec}(n^{-\epsilon_{\alpha}}))\operatorname{tr}\mathcal{G}(z_1)\langle \mathrm{e}^{\mathrm{i}x\langle L_{1}(f)\rangle}\rangle]
	\end{split}
\end{equation*}
by \eqref{eq_tr_SS}.
For the second one, similarly to $\mathscr{M}_{1,ij}$, we can using the cumulant expansion to obtain
\begin{equation*}
	\begin{split}
		&\mathbb{E}_{\Psi}^{\chi}\left[M_{ij}\rho_{j}^{-3}[G(z_1)]_{jj}[Y^*\mathcal{G}(z_1)]_{ji}\langle \mathrm{e}^{\mathrm{i}x\langle L_{1}(f)\rangle}\rangle\right]\\
		=&\kappa_{2,m}\mathbb{E}_{\Psi}^{\chi}\left[\partial_{m,ij}(\rho_{j}^{-3}[G(z_1)]_{jj}[Y^*\mathcal{G}(z_1)]_{ji}\langle \mathrm{e}^{\mathrm{i}x\langle L_{1}(f)\rangle}\rangle)\right]+\mathrm{O}_{\prec}(n^{-2-2\epsilon_{\alpha}}),
	\end{split}
\end{equation*}
for $\alpha>2$ where the error follows from \eqref{eq_rhoj_highpro} and the facts that
$\kappa_{1,m}\lesssim n^{(1/2-\epsilon_h)(-\alpha+1)}$, $\kappa_{q,m}\lesssim n^{(1/2-\epsilon_h)(q-\alpha)_+}$ for $q\ge 3$ and $\rho_{j}^{-q-2}\lesssim n^{-q/2-1}$. We notice that only the diagonal entries contribute since the summation over $i,j$ will absorb the off-diagonal entries into the trace up to negligible terms by \eqref{eq_tr_SS} since $\sum_{ij}\rho_j^{-4}A_{ij}B_{ji}=\operatorname{tr}(BA(\operatorname{diag}S)^{-2})\prec n^{-2}\operatorname{tr}(AB)+n^{-1-\epsilon_{\alpha}}\prec n^{-1}$ for any matrices $A,B$ with bounded spectral norm. Thus, the main term is
\begin{equation*}
	\begin{split}
		&\sum_{ij}\mathbb{E}_{\Psi}^{\chi}\left[\rho_{j}^{-3}[G(z_1)]_{jj}\partial_{m,ij}([Y^*\mathcal{G}(z_1)]_{ji})\langle \mathrm{e}^{\mathrm{i}x\langle L_{1}(f)\rangle}\rangle\right]\\
		 =&\sum_{ij}\mathbb{E}_{\Psi}^{\chi}\left[\rho_{j}^{-4}\left(-[G(z_1)]_{jj}[Y^*\mathcal{G}(z_1)]_{ji}[Y^*\mathcal{G}(z_1)]_{ji}-z_1[G(z_1)]_{jj}^2[\mathcal{G}(z_1)]_{ii}\right)\langle \mathrm{e}^{\mathrm{i}x\langle L_{1}(f)\rangle}\rangle\right]\\
		 &+\sum_{ij}\mathbb{E}_{\Psi}^{\chi}\left[M_{ij}\rho_{j}^{-5}[G(z_1)]_{jj}\left([Y^*\mathcal{G}(z_1)Y]_{jj}[Y^*\mathcal{G}(z_1)]_{ji}+z_1[G(z_1)]_{jj}[Y^*\mathcal{G}(z_1)]_{ji}\right)\langle \mathrm{e}^{\mathrm{i}x\langle L_{1}(f)\rangle}\rangle\right]\\
		 =&-z_1\mathbb{E}_{\Psi}^{\chi}\left[\left(n^{-2}\operatorname{tr}(\mathcal{G}(z_1))\operatorname{tr}((\operatorname{diag}G(z_1))^2)\right)\langle \mathrm{e}^{\mathrm{i}x\langle L_{1}(f)\rangle}\rangle\right]+\mathrm{O}_{\prec}(n^{-\epsilon_{\alpha}})\\
		=&-\phi^{-1}\mathbb{E}_{\Psi}^{\chi}[z_1m(z_1)[\underline{m}(z_1)]^2\langle \mathrm{e}^{\mathrm{i}x\langle L_{1}(f)\rangle}\rangle]+\mathrm{O}_{\prec}(n^{-\epsilon_{\alpha}})
	\end{split}
\end{equation*}
by \eqref{eq_tr_SS}. Thereafter, we get
\begin{equation*}
	\sum_{ij}\mathbb{E}_{\Psi}^{\chi}\left[M_{ij}\rho_{j}^{-3}[G(z_1)]_{jj}[Y^*\mathcal{G}(z_1)]_{ji}\langle \mathrm{e}^{\mathrm{i}x\langle L_{1}(f)\rangle}\rangle\right]=-\phi^{-1}\mathbb{E}_{\Psi}^{\chi}[z_1m(z_1)[\underline{m}(z_1)]^2\langle \mathrm{e}^{\mathrm{i}x\langle L_{1}(f)\rangle}\rangle]+\mathrm{O}_{\prec}(n^{-\epsilon_{\alpha}}).
\end{equation*}

For $\mathscr{M}_{3,ij}$, similar to Lemma 5.4 of \cite{bao2022spectral}, the difference between $L_{1}(f)$ and $L_{2}(f)$ is $\mathrm{O}_{\prec}(n^{-K+1})$, which also has deterministic upper bound $n^{\mathrm{O}(K)}$. Thus, we replace $L_1(f)$ by $L_{2}(f)$ in the integrand, which avoids possible singularities of the integrand. It follows that
\begin{equation*}
	\begin{split}
		 \mathscr{M}_{3,ij}&=\mathrm{i}x\mathbb{E}^{\chi}_{\Psi}\left[\rho_{j}^{-1}([Y^*\mathcal{G}(z_1)]_{ji})(\partial_{m,ij}L_{1}(f))\cdot\mathrm{e}^{\mathrm{i}x\langle L_{1}(f)\rangle}\right]\\
		 &=\mathrm{i}x\mathbb{E}^{\chi}_{\Psi}\left[\rho_{j}^{-1}([Y^*\mathcal{G}(z_1)]_{ji})(\partial_{m,ij}L_{2}(f))\cdot\mathrm{e}^{\mathrm{i}x\langle L_{1}(f)\rangle}\right]+\mathrm{O}_{\prec}(n^{-K/2})\\
		 &=\frac{-x}{2\pi}\oint_{\bar{\gamma}_2^0}\mathbb{E}^{\chi}_{\Psi}\left[\rho_{j}^{-1}([Y^*\mathcal{G}(z_1)]_{ji})(\partial_{m,ij}\operatorname{tr}\mathcal{G}(z_2))\cdot\mathrm{e}^{\mathrm{i}x\langle L_{1}(f)\rangle}\right]f(z_2)\mathrm{d}z_2+\mathrm{O}_{\prec}(n^{-K/2}).
	\end{split}
\end{equation*}
Recalling $\partial_{m,ij}\operatorname{tr}\mathcal{G}(z_2)=-2\rho_{j}^{-1}[\mathcal{G}(z_2)\mathcal{G}(z_2)Y]_{ij}+2M_{ij}\rho_{j}^{-2}[Y^*\mathcal{G}(z_2)\mathcal{G}(z_2)Y]_{jj}$, we have
\begin{equation*}
	\begin{split}
		 \sum_{ij}\mathscr{M}_{3,ij}&=\frac{x}{\pi}\oint_{\bar{\gamma}_2^0}\sum_{ij}\mathbb{E}^{\chi}_{\Psi}\left[\rho_{j}^{-2}[Y^*\mathcal{G}(z_1)]_{ji}[\mathcal{G}(z_2)\mathcal{G}(z_2)Y]_{ij}\cdot\mathrm{e}^{\mathrm{i}x\langle L_{1}(f)\rangle}\right]f(z_2)\mathrm{d}z_2\\
		 &-\frac{x}{\pi}\oint_{\bar{\gamma}_2^0}\sum_{ij}\mathbb{E}_{\Psi}^{\chi}\left(M_{ij}\rho_{j}^{-3}[Y^*\mathcal{G}(z_1)]_{ji}[Y^*\mathcal{G}(z_2)\mathcal{G}(z_2)Y]_{jj}\cdot\mathrm{e}^{\mathrm{i}x\langle L_{1}(f)\rangle}\right)f(z_2)\mathrm{d}z_2+\mathrm{O}_{\prec}(n^{-K/2}).
	\end{split}
\end{equation*}
The first term can be estimated as
\begin{equation*}
	\begin{split}
		 &\frac{x}{\pi}\oint_{\bar{\gamma}_2^0}\mathbb{E}^{\chi}_{\Psi}\left[\operatorname{tr}\left(Y^*\mathcal{G}(z_1)\mathcal{G}(z_2)\mathcal{G}(z_2)Y(\operatorname{diag}(S))^{-1}\right)\cdot\mathrm{e}^{\mathrm{i}x\langle L_{1}(f)\rangle}\right]f(z_2)\mathrm{d}z_2\\
		 &=\frac{x}{\pi}\oint_{\bar{\gamma}_2^0}\mathbb{E}^{\chi}_{\Psi}\left[\partial_{z_2}[n^{-1}\operatorname{tr}\left(YY^*\mathcal{G}(z_1)\mathcal{G}(z_2)\right)]\cdot\mathrm{e}^{\mathrm{i}x\langle L_{1}(f)\rangle}\right]f(z_2)\mathrm{d}z_2+\mathrm{O}_{\prec}(n^{-\epsilon_{\alpha}})\\
		 &=\frac{x}{\pi}\oint_{\bar{\gamma}_2^0}\mathbb{E}^{\chi}_{\Psi}\left[\partial_{z_2}[n^{-1}\operatorname{tr}\left(\frac{z_1\mathcal{G}(z_1)-z_2\mathcal{G}(z_2)}{z_1-z_2}\right)]\cdot\mathrm{e}^{\mathrm{i}x\langle L_{1}(f)\rangle}\right]f(z_2)\mathrm{d}z_2+\mathrm{O}_{\prec}(n^{-\epsilon_{\alpha}})\\
		 &=\frac{x}{\pi}\oint_{\bar{\gamma}_2^0}\left[{\partial_{z_2}\left(\frac{z_1m(z_1)-z_2m(z_2)}{z_1-z_2}\right)}\right]f(z_2)\mathrm{d}z_2\cdot\mathbb{E}_{\Psi}^{\chi}[\mathrm{e}^{\mathrm{i}x\langle L_{1}(f)\rangle}]+\mathrm{O}_{\prec}(n^{-c}),
	\end{split}
\end{equation*}
where the second equality uses the identity $\operatorname{tr}(F(\mathcal{G}(z))^2)=\partial_{z}(\operatorname{tr}F\mathcal{G}(z))$ for any function $F$ which is independent of $z$, the third line follows from the identity
\begin{equation*}
	YY^*\mathcal{G}(z_1)\mathcal{G}(z_2)=(I_{n}+z_1\mathcal{G}(z_1))\mathcal{G}(z_2)=\frac{z_1\mathcal{G}(z_1)-z_2\mathcal{G}(z_2)}{z_1-z_2},
\end{equation*}
due to the fact $\mathcal{G}(z_1)\mathcal{G}(z_2)=[\mathcal{G}(z_1)-\mathcal{G}(z_2)]/(z_1-z_2)$. For the second term, similar to $\mathscr{M}_{2,ij}$, applying the cumulant expansion gives
\begin{equation*}
	\begin{split}
		 &\sum_{ij}\mathbb{E}_{\Psi}^{\chi}\left(M_{ij}\rho_{j}^{-3}[Y^*\mathcal{G}(z_1)]_{ji}[Y^*\mathcal{G}(z_2)\mathcal{G}(z_2)Y]_{jj}\cdot\mathrm{e}^{\mathrm{i}x\langle L_{1}(f)\rangle}\right)\\
		 &=\sum_{ij}\kappa_{2,m}\mathbb{E}_{\Psi}^{\chi}\left[\partial_{m,ij}(\rho_{j}^{-3}[Y^*\mathcal{G}(z_2)\mathcal{G}(z_2)Y]_{jj}[Y^*\mathcal{G}(z_1)]_{ji} \cdot\mathrm{e}^{\mathrm{i}x\langle L_{1}(f)\rangle})\right]+\mathrm{O}_{\prec}(n^{-2\epsilon_{\alpha
		}}),
	\end{split}
\end{equation*}
where the error term follows from the same argument as $\mathscr{M}_{2,ij}$. The main term can be estimated analogously. Note $Y^*\mathcal{G}(z_2)\mathcal{G}(z_2)Y=G(z_2)+z_2G(z_2)^2=\partial_{z_2}(z_2G(z_2))$.
Thus, the main term is
\begin{equation*}
	\begin{split}
		&\sum_{ij}\mathbb{E}_{\Psi}^{\chi}\left[\rho_{j}^{-3}\partial_{z_2}(z_2[G(z_2)]_{jj})\partial_{m,ij}([Y^*\mathcal{G}(z_1)]_{ji})\cdot \mathrm{e}^{\mathrm{i}x\langle L_{1}(f)\rangle}\right]\\
		 =&\sum_{ij}\mathbb{E}_{\Psi}^{\chi}\left[\rho_{j}^{-4}\partial_{z_2}(z_2[G(z_2)]_{jj})\left(-z_1[G(z_1)]_{jj}[\mathcal{G}(z_1)]_{ii}\right)\cdot \mathrm{e}^{\mathrm{i}x\langle L_{1}(f)\rangle}\right]+\mathrm{O}_{\prec}(n^{-\epsilon_{\alpha}})\\
		 =&-z_1\mathbb{E}_{\Psi}^{\chi}\left[\left(n^{-2}\operatorname{tr}(\mathcal{G}(z_1))\operatorname{tr}((\operatorname{diag}G(z_1))[\operatorname{diag}\partial_{z_2}(z_2G(z_2))])\right)\cdot \mathrm{e}^{\mathrm{i}x\langle L_{1}(f)\rangle}\right]+\mathrm{O}_{\prec}(n^{-\epsilon_{\alpha}})\\
		=&-\phi^{-1}\mathbb{E}_{\Psi}^{\chi}[z_1m(z_1)\underline{m}(z_1)[\partial_{z_2}(z_2\underline{m}(z_2))] \mathrm{e}^{\mathrm{i}x\langle L_{1}(f)\rangle}]+\mathrm{O}_{\prec}(n^{-\epsilon_{\alpha}}).
	\end{split}
\end{equation*}
Thereafter, we have
\begin{equation*}
	\begin{split}
	&	\sum_{ij}\mathscr{M}_{3,ij}=\\
&\frac{x}{\pi}\oint_{\bar{\gamma}_2^0}\left[{\partial_{z_2}\left(\frac{z_1m(z_1)-z_2m(z_2)}{z_1-z_2}+\phi^{-1}z_1m(z_1)\underline{m}(z_1)z_2\underline{m}(z_2)\right)}\right]f(z_2)\mathrm{d}z_2\cdot\mathbb{E}_{\Psi}^{\chi}[\mathrm{e}^{\mathrm{i}x\langle L_{1}(f)\rangle}]+\mathrm{O}_{\prec}(n^{-c}).
	\end{split}
\end{equation*}

\

\noindent\textbf{III. Terms with $\kappa_{3,m}$}\\ For $\alpha>3$, notice $\kappa_{3,m}=\mathbb{E}(\xi^3)-\mathrm{o}(n^{(1/2-\epsilon_h)(-\alpha+3)})$, which might not be negligible under the non-symmetric condition. Denote
\begin{equation*}
	\begin{split}
		 \sum_{a=0}^{2}\binom{2}{a}\mathbb{E}_{\Psi}^{\chi}\left[\partial_{m,ij}^{a}\rho_j^{-1}\partial_{m,ij}^{2-a}\left([Y^*\mathcal{G}(z_1)]_{ji}\langle \mathrm{e}^{\mathrm{i}x\langle L_{1}(f)\rangle}\rangle\right)\right]=:\sum_{a=0}^{2}E_{1a,m}.
	\end{split}
\end{equation*}
For $a=2$, noting that $\partial^2_{m,ij}\rho_j^{-1}=-\rho_j^{-3}+3\rho_j^{-5}M_{ij}^2\sim -\rho_j^{-3}$ by \eqref{eq_rhoj_highpro}, we have
\begin{equation*}
	\begin{split}
		\sum_{ij}E_{12,m}
		=- \sum_{ijk}\mathbb{E}_{\Psi}^{\chi}\left[\rho_{j}^{-4}M_{kj}[\mathcal{G}(z_1)]_{ki}\langle \mathrm{e}^{\mathrm{i}x\langle L_{1}(f)\rangle}\rangle\right]+\mathrm{O}_{\prec}(n^{-c}),
	\end{split}
\end{equation*}
where the error is implied by $\sum_{jk}\mathbb{E}(\rho_j^{-4}\sum_{i}H_{kj}[\mathcal{G}(z_1)]_{ki})\lesssim n^{1/2}\mathbb{E}|H_{kj}|\lesssim n^{-c}$ since $\sum_{i}[\mathcal{G}(z_1)]_{ki}\lesssim n^{1/2}(\sum_{i}[\mathcal{G}(z_1)]_{ki}^2)^{1/2}\lesssim n^{1/2}$. For the main part, applying the cumulant expansion w.r.t. $M_{kj}$ gives
\begin{equation*}
	\begin{split}
		&\sum_{ijk}\mathbb{E}_{\Psi}^{\chi}\left[\rho_{j}^{-4}M_{kj}[\mathcal{G}(z_1)]_{ki}\langle \mathrm{e}^{\mathrm{i}x\langle L_{1}(f)\rangle}\rangle\right]\\
		=&\kappa_{1,m}\sum_{ijk}\mathbb{E}_{\Psi}^{\chi}\left[\rho_{j}^{-4}[\mathcal{G}(z_1)]_{ki}\langle \mathrm{e}^{\mathrm{i}x\langle L_{1}(f)\rangle}\rangle\right]+\kappa_{2,m}\sum_{ijk}\mathbb{E}_{\Psi}^{\chi}\left[\partial_{m,kj}\left(\rho_{j}^{-4}[\mathcal{G}(z_1)]_{ki}\langle \mathrm{e}^{\mathrm{i}x\langle L_{1}(f)\rangle}\rangle\right)\right]\\
		&+\frac{\kappa_{3,m}}{2!}\sum_{ijk}\mathbb{E}_{\Psi}^{\chi}\left[\partial_{m,kj}^2\left(\rho_{j}^{-4}[\mathcal{G}(z_1)]_{ki}\langle \mathrm{e}^{\mathrm{i}x\langle L_{1}(f)\rangle}\rangle\right)\right]+\mathrm{O}_{\prec}(n^{-\epsilon_{\alpha}}),
	\end{split}
\end{equation*}
where the error follows from $\kappa_{q,m}\lesssim n^{(1/2-\epsilon_h)(q-\alpha)}$ for $q\ge 4$ and $\partial_{m,ij}^{q-1}\left(\rho_{j}^{-4}[\mathcal{G}(z_1)]_{ki}\langle \mathrm{e}^{\mathrm{i}x\langle L_{1}(f)\rangle}\rangle\right)\lesssim \rho_j^{-3-q}\lesssim n^{-(3+q)/2}$ by \eqref{eq_rhoj_highpro}. Similarly, we have
\begin{equation*}
	\kappa_{1,m}\sum_{ijk}\mathbb{E}_{\Psi}^{\chi}\left[\rho_{j}^{-4}[\mathcal{G}(z_1)]_{ki}\langle \mathrm{e}^{\mathrm{i}x\langle L_{1}(f)\rangle}\rangle\right]\lesssim \kappa_{1,m}\sum_{j}n^{-2}|\sum_{ik}[\mathcal{G}(z_1)]_{ki}|\lesssim n^{(1/2-\epsilon_h)(-\alpha+1)+1/2}\lesssim n^{-c}.
\end{equation*}
For the second term, all the terms with at least two off-diagonal elements can be bounded  as $\sum_{ij}A_{ij}B_{ij}\lesssim \sum_{ij}(A_{ij}^2+B_{ij}^2)\lesssim \operatorname{tr}(AA^*+BB^*)$ via the Cauchy-Schwarz inequality. The remaining term is
\begin{equation*}
	\begin{split}
		&\sum_{k}\mathbb{E}_{\Psi}^{\chi}\left[\rho_{j}^{-4}\partial_{m,kj}\left([\mathcal{G}(z_1)]_{ki}\right)\langle \mathrm{e}^{\mathrm{i}x\langle L_{1}(f)\rangle}\rangle\right]\\
		=&-\mathbb{E}_{\Psi}^{\chi}\left[\rho_{j}^{-5}[Y^*\mathcal{G}(z_1)]_{ji}\operatorname{tr}\mathcal{G}(z_1)\langle \mathrm{e}^{\mathrm{i}x\langle L_{1}(f)\rangle}\rangle\right]-\mathbb{E}_{\Psi}^{\chi}\left[\rho_{j}^{-5}[Y^*\mathcal{G}(z_1)\mathcal{G}(z_1)]_{ji}\langle \mathrm{e}^{\mathrm{i}x\langle L_{1}(f)\rangle}\rangle\right]\\
		&+\mathbb{E}_{\Psi}^{\chi}\left[\rho_{j}^{-5}[Y^*\mathcal{G}(z_1)]_{ji}[Y^*\mathcal{G}(z_1)M(\operatorname{diag}S)^{-1/2}]_{jj}\langle \mathrm{e}^{\mathrm{i}x\langle L_{1}(f)\rangle}\rangle\right]\\
		=&-\mathbb{E}_{\Psi}^{\chi}\left[\rho_{j}^{-5}[Y^*\mathcal{G}(z_1)]_{ji}\operatorname{tr}\mathcal{G}(z_1)\langle \mathrm{e}^{\mathrm{i}x\langle L_{1}(f)\rangle}\rangle\right]+\mathrm{O}_{\prec}(n^{-5/2})\\
		=&\mathbb{E}_{\Psi}^{\chi}\left[\mathrm{O}_{\prec}(n^{-5/2})\operatorname{tr}\mathcal{G}(z_1)\langle \mathrm{e}^{\mathrm{i}x\langle L_{1}(f)\rangle}\rangle\right]+\mathrm{O}_{\prec}(n^{-5/2}).
	\end{split}
\end{equation*}
For the third term, each term in the expansion has at least two off-diagonal elements. So it is also negligible by the Cauchy-Schwarz inequality.

For $a=1$, we have
\begin{equation*}
	\begin{split}
		E_{11,m}=-\mathbb{E}_{\Psi}^{\chi}\left[\rho_j^{-3}M_{ij}\partial_{m,ij}\left([Y^*\mathcal{G}(z_1)]_{ji}\right)\langle \mathrm{e}^{\mathrm{i}x\langle L_{1}(f)\rangle}\rangle\right]-\mathbb{E}_{\Psi}^{\chi}\left[\rho_j^{-3}M_{ij}[Y^*\mathcal{G}(z_1)]_{ji}\partial_{m,ij}\langle \mathrm{e}^{\mathrm{i}x\langle L_{1}(f)\rangle}\rangle\right],
	\end{split}
\end{equation*}
where the second part is negligible since each term in the expansion has at least one off-diagonal element with the factor $\rho_j^{-4}$. For the first summand in the above identity,
\begin{equation*}
	\begin{split}
		&\sum_{ij}\mathbb{E}_{\Psi}^{\chi}\left[\rho_j^{-3}M_{ij}\partial_{m,ij}\left([Y^*\mathcal{G}(z_1)]_{ji}\right)\langle \mathrm{e}^{\mathrm{i}x\langle L_{1}(f)\rangle}\rangle\right]\\
		 =&\sum_{ij}\mathbb{E}_{\Psi}^{\chi}\left[\rho_{j}^{-4}M_{ij}\left(-[Y^*\mathcal{G}(z_1)]_{ji}[Y^*\mathcal{G}(z_1)]_{ji}-z_1[G(z_1)]_{jj}[\mathcal{G}(z_1)]_{ii}\right)\langle \mathrm{e}^{\mathrm{i}x\langle L_{1}(f)\rangle}\rangle\right]\\
		&+\sum_{ij}\mathbb{E}_{\Psi}^{\chi}\left(M_{ij}^2\rho_{j}^{-5}[Y^*\mathcal{G}(z_1)]_{ji}(1+2z_1[G(z_1)]_{jj})\langle \mathrm{e}^{\mathrm{i}x\langle L_{1}(f)\rangle}\rangle\right)\\
		=&\sum_{ij}\mathbb{E}_{\Psi}^{\chi}\left[\rho_{j}^{-4}M_{ij}\left(-z_1[G(z_1)]_{jj}[\mathcal{G}(z_1)]_{ii}\right)\langle \mathrm{e}^{\mathrm{i}x\langle L_{1}(f)\rangle}\rangle\right]+\mathrm{O}_{\prec}(n^{-1/2}),
	\end{split}
\end{equation*}
since $M_{ij}\rho_j^{-4}\lesssim n^{-\epsilon_h}\rho_j^{-3}$, $\mathbb{E}|M_{ij}^2\rho_j^{-5}|\lesssim n^{-5/2}$ and $\sum_{ij}A_{ij}B_{ij}\lesssim \operatorname{tr}(AA^*+BB^*)$. The remaining part can be expanded w.r.t. $M_{ij}$ as
\begin{equation*}
	\begin{split}
		&\sum_{ij}\mathbb{E}_{\Psi}^{\chi}\left[\rho_{j}^{-4}M_{ij}\left([G(z_1)]_{jj}[\mathcal{G}(z_1)]_{ii}\right)\langle \mathrm{e}^{\mathrm{i}x\langle L_{1}(f)\rangle}\rangle\right]\\
		=&\kappa_{2,m}\sum_{ij}\mathbb{E}_{\Psi}^{\chi}\left[\partial_{m,ij}(\rho_{j}^{-4}[G(z_1)]_{jj}[\mathcal{G}(z_1)]_{ii}\langle \mathrm{e}^{\mathrm{i}x\langle L_{1}(f)\rangle}\rangle)\right]+\mathrm{O}_{\prec}(n^{-1/2}),
	\end{split}
\end{equation*}
where the error term follows from $\kappa_{1,m}\lesssim n^{(1/2-\epsilon_h)(-\alpha+1)}$, $\kappa_{q,m}\lesssim n^{(1/2-\epsilon_h)(q-\alpha)_{+}}$ for $q\ge 3$ and  \eqref{eq_rhoj_highpro}. For the above double sum, there is at least one off-diagonal element, which can be bounded as
\begin{equation*}
	\sum_{ij}\mathbb{E}_{\Psi}^{\chi}\left[\partial_{m,ij}(\rho_{j}^{-4}[G(z_1)]_{jj}[\mathcal{G}(z_1)]_{ii}\langle \mathrm{e}^{\mathrm{i}x\langle L_{1}(f)\rangle}\rangle)\right]\lesssim \sum_{ij}\mathbb{E}_{\Psi}^{\chi}[\rho_j^{-5}|M_{ij}|]+\sum_{ij}\mathbb{E}_{\Psi}^{\chi}[\rho_j^{-5}A_{ij}]\lesssim n^{-1/2}.
\end{equation*}

For $a=0$, notice
\begin{equation*}
	\begin{split}
		\sum_{ij}E_{10,m}=\sum_{ij}\mathbb{E}_{\Psi}^{\chi}\left[\rho_j^{-1}\partial_{m,ij}^2\left([Y^*\mathcal{G}(z_1)]_{ji}\langle \mathrm{e}^{\mathrm{i}x\langle L_{1}(f)\rangle}\rangle\right)\right].
	\end{split}
\end{equation*}
Invoking the derivatives in Section \ref{subsubsec_derivative}, we get $\partial_{m,ij}^2 [Y^*\mathcal{G}(z)]_{ji}\sim \rho_j^{-2}A_{ji}, \partial_{m,ij}^2\operatorname{tr}\mathcal{G}(z_2) \sim \rho_{j}^{-2}B_{ii}+\rho_j^{-2}C_{ij}$ for matrices $A,B,C$ formed by $E_{ij}, F_{ij}, \mathcal{G}(z)$. Recall that the terms with at least two off-diagonal elements are negligible due to the Cauchy-Schwarz inequality. It suffices to consider
\begin{equation*}
	\begin{split}
		&\sum_{ij}\mathbb{E}_{\Psi}^{\chi}[\rho_j^{-1}\partial_{m,ij}[Y^*\mathcal{G}(z_1)]_{ji}\partial_{m,ij}(\langle \mathrm{e}^{\mathrm{i}x\langle L_{1}(f)\rangle}\rangle)]\\
		=&\frac{x}{\pi}\oint_{\bar{\gamma}_2^{0}} \sum_{ij}\mathbb{E}_{\Psi}^{\chi}\big(\rho_j^{-3}[E_{ij}^*\mathcal{G}(z_1)-\sum_{a=1}^{2}Y^*\mathcal{G}(z_1)\mathscr{P}_{a}^{ij}\mathcal{G}(z_1)]_{ji}[\mathcal{G}(z_2)\mathcal{G}(z_2)Y]_{ij}\cdot \mathrm{e}^{\mathrm{i}x\langle L_{1}(f)\rangle}\big)f(z_2)\mathrm{d}z_2+\mathrm{O}_{\prec}(n^{-c})\\
		= &\frac{x}{\pi}\oint_{\bar{\gamma}_2^{0}} \sum_{ij}\mathbb{E}_{\Psi}^{\chi}\left(\rho_j^{-3}[\mathcal{G}(z_1)]_{ii}[\mathcal{G}(z_2)\mathcal{G}(z_2)Y]_{ij}\cdot \mathrm{e}^{\mathrm{i}x\langle L_{1}(f)\rangle}\right)f(z_2)\mathrm{d}z_2+\mathrm{O}_{\prec}(n^{-c})\\
		= &\frac{x}{\pi}\oint_{\bar{\gamma}_2^{0}} m(z_1)\partial_{z_2}\mathbb{E}_{\Psi}^{\chi}\big(\sum_{ij}\rho_j^{-3}[\mathcal{G}(z_2)Y]_{ij}\cdot \mathrm{e}^{\mathrm{i}x\langle L_{1}(f)\rangle}\big)f(z_2)\mathrm{d}z_2+\mathrm{O}_{\prec}(n^{-c}),
	\end{split}
\end{equation*}
where the error comes from the terms with at least two off-diagonal elements and the last line follows from the entrywise local law,
\begin{equation*}
	\begin{split}
		\sum_{ij}\mathbb{E}_{\Psi}^{\chi}\left(\rho_j^{-3}[\mathcal{G}(z_1)-m(z_1)]_{ii}[\mathcal{G}(z_2)\mathcal{G}(z_2)Y]_{ij}\cdot \mathrm{e}^{\mathrm{i}x\langle L_{1}(f)\rangle}\right)
		\lesssim  n^{-c}.
	\end{split}
\end{equation*}
For the first part, recalling the identity $\mathbb{E}(\xi_1\xi_2)=\mathbb{E}(\xi_1\langle\xi_2\rangle)+\mathbb{E}(\xi_1)\mathbb{E}(\xi_2)$ and the estimate of $E_{12,m}$ above, we have
\begin{equation*}
	\begin{split}
		\mathbb{E}_{\Psi}^{\chi}\big(\sum_{ij}\rho_j^{-3}[\mathcal{G}(z_2)Y]_{ij}\cdot \mathrm{e}^{\mathrm{i}x\langle L_{1}(f)\rangle}\big)=\mathbb{E}_{\Psi}^{\chi}\big(\sum_{ij}\rho_j^{-3}[\mathcal{G}(z_2)Y]_{ij}\big)\mathbb{E}_{\Psi}^{\chi}(\mathrm{e}^{\mathrm{i}x\langle L_{1}(f)\rangle})+\mathrm{O}_{\prec
		}(n^{-c}).
	\end{split}
\end{equation*}
It suffices to estimate $\mathbb{E}_{\Psi}^{\chi}\big(\sum_{ij}\rho_j^{-3}[\mathcal{G}(z_2)Y]_{ij}\big)$. Similarly to $E_{12,m}$, one has the main part
\begin{equation*}
	\begin{split}
		 \mathbb{E}_{\Psi}^{\chi}\big(\sum_{ij}\rho_j^{-3}[\mathcal{G}(z_2)Y]_{ij}\big)=&-\mathbb{E}_{\Psi}^{\chi}\big[\sum_{ij}\rho_{j}^{-5}[Y^*\mathcal{G}(z_2)]_{ji}\operatorname{tr}\mathcal{G}(z_2)\big]+\mathrm{O}_{\prec}(n^{-c})\\
		=&-nm(z_2)\mathbb{E}_{\Psi}^{\chi}\big(\sum_{ij}\rho_j^{-5}[\mathcal{G}(z_2)Y]_{ij}\big)+\mathrm{O}_{\prec}(n^{-c}),
	\end{split}
\end{equation*}
where we have applied the averaged local law in the last line. Define the matrix $\mathbf{E}_{n\times p}=\sum_{ij}E_{ij}\rho_{j}^{-3}$ where $E_{ab}=\delta_{ac}\delta_{bd}$. We have $\sum_{ij}\rho_j^{-3}[\mathcal{G}(z_2)Y]_{ij}=\operatorname{tr}(Y^*\mathcal{G}(z_2)\mathbf{E}_{n\times p})$ and
\begin{equation*}
	\begin{split}
		&\mathbb{E}_{\Psi}^{\chi}\big(\sum_{ij}\rho_j^{-5}[\mathcal{G}(z_2)Y]_{ij}\big)= \mathbb{E}_{\Psi}^{\chi}[\operatorname{tr}(Y^*\mathcal{G}(z_2)\mathbf{E}_{n\times p}(\operatorname{diag}S)^{-1})]\\
		=&n^{-1}\mathbb{E}_{\Psi}^{\chi}[\operatorname{tr}(Y^*\mathcal{G}(z_2)\mathbf{E}_{n\times p})]+\mathbb{E}_{\Psi}^{\chi}[\operatorname{tr}(Y^*\mathcal{G}(z_2)\mathbf{E}_{n\times p}((\operatorname{diag}S)^{-1}-n^{-1}I))]\\
		=&n^{-1}\mathbb{E}_{\Psi}^{\chi}[\operatorname{tr}(Y^*\mathcal{G}(z_2)\mathbf{E}_{n\times p})]+\mathrm{O}_{\prec}(\max_{j}|\sum_{i}\rho_j^{-3}[Y^*\mathcal{G}(z_2)]_{ji}|\operatorname{tr}|(\operatorname{diag}S)^{-1}-n^{-1}I)|)\\
		=&n^{-1}\mathbb{E}_{\Psi}^{\chi}[\operatorname{tr}(Y^*\mathcal{G}(z_2)\mathbf{E}_{n\times p})]+\mathrm{O}_{\prec}(n^{-1-\epsilon_{\alpha}})
	\end{split}
\end{equation*}
by the elementary inequality $\operatorname{tr}(A\operatorname{diag}(B))\lesssim \max_{j}|A_{jj}|\operatorname{tr}|B|$, the trace inequality \eqref{eq_tr_SS} and the Cauchy-Schwarz inequality $\sum_{i}\rho_j^{-3}[Y^*\mathcal{G}(z_2)]_{ji}\lesssim (\sum_{i}\rho_j^{-6})^{1/2}(\sum_{i}[Y^*\mathcal{G}(z_2)]_{ji}[Y^*\mathcal{G}(z_2)]_{ji})^{1/2}\lesssim n^{-1}$ since $\sum_{i}[Y^*\mathcal{G}(z_2)]_{ji}[Y^*\mathcal{G}(z_2)]_{ji}=[Y^*\mathcal{G}(z_2)\mathcal{G}(z_2)Y]_{jj}\lesssim 1$.
Thereafter, we have $$\mathbb{E}_{\Psi}^{\chi}(\sum_{ij}\rho_j^{-3}[\mathcal{G}(z_2)Y]_{ij})=\mathrm{O}_{\prec}(n^{-c}).$$ The remaining term $\mathbb{E}_{\Psi}^{\chi}[\rho_j^{-1}[Y^*\mathcal{G}(z_1)]_{ji}\partial_{m,ij}^2(\langle \mathrm{e}^{\mathrm{i}x\langle L_{1}(f)\rangle}\rangle)]$ can be estimated similarly, and thus omitted. To this end, the terms with $\kappa_{3,m}$ for $\alpha>3$ are negligible with high probability.

\

\noindent\textbf{IV. Terms with $\kappa_{4,m}$}\\
Invoke the fact that the terms with diagonal elements are dominant since the terms with off-diagonal elements are negligible due to the trace estimation \eqref{eq_tr_SS}. Denote
\begin{equation*}
	 E_{2,m}=\sum_{a=0}^{3}\binom{3}{a}\mathbb{E}_{\Psi}^{\chi}\left[\partial_{m,ij}^{a}\rho_j^{-1}\left(\partial_{m,ij}^{3-a}[Y^*\mathcal{G}(z_1)]_{ji}\langle \mathrm{e}^{\mathrm{i}x\langle L_{1}(f)\rangle}\rangle\right)\right]=:\sum_{a=0}^{3}E_{2a,m}.
\end{equation*}
For $a=3$, noting \eqref{eq_tr_SS}, $\kappa_{4,m}\le n^{2(1/2-\epsilon_h)}$ and $\partial_{m,ij}^{3}\rho_j^{-1}\sim \rho_{j}^{-5}M_{ij}$, we have
\begin{equation*}
	\begin{split}
		\sum_{ij}\kappa_{4,m}E_{23,m}&\lesssim \kappa_{4,m}\sum_{ij}\mathbb{E}_{\Psi}^{\chi}\left[\rho_{j}^{-5}M_{ij}[Y^*\mathcal{G}(z_1)]_{ji}\langle \mathrm{e}^{\mathrm{i}x\langle L_{1}(f)\rangle}\rangle\right]\\
		 &=\kappa_{4,m}\mathbb{E}_{\Psi}^{\chi}\left(\operatorname{tr}(Y^*\mathcal{G}(z_1)M(\operatorname{diag}(S))^{-1/2}(\operatorname{diag}(S))^{-2})\langle \mathrm{e}^{\mathrm{i}x\langle L_{1}(f)\rangle}\rangle\right)\\
		&\lesssim n^{-2} \kappa_{4,m}\mathbb{E}_{\Psi}^{\chi}|\operatorname{tr}(Y^*\mathcal{G}(z_1)M(\operatorname{diag}(S))^{-1/2})|+\kappa_{4,m}\mathrm{O}_{\prec}(n^{-1-\epsilon_{\alpha}})\lesssim \mathrm{O}_{\prec}(n^{-2\epsilon_{h}}).
	\end{split}
\end{equation*}
The case where $a=1$ is the same, say, $\sum_{ij}\kappa_{4,m}E_{21,m}\prec n^{-2\epsilon_h}$, where we borrow similar arguments from $E_{1,m}$ above.

For $a=2$, by $\partial_{m,ij}^{2}\rho_j^{-1}=- \rho_{j}^{-3}+3M_{ij}^2\rho_{j}^{-5}$ and the derivatives in Section \ref{subsubsec_derivative}, we have
\begin{equation*}
	\begin{split}
		E_{22,m}= &\mathbb{E}_{\Psi}^{\chi}\left[\rho_{j}^{-4}(1-3M_{ij}^2\rho_{j}^{-2})\left([Y^*\mathcal{G}(z_1)]_{ji}[Y^*\mathcal{G}(z_1)]_{ji}+z_1[G(z_1)]_{jj}[\mathcal{G}(z_1)]_{ii}\right)\langle \mathrm{e}^{\mathrm{i}x\langle L_{1}(f)\rangle}\rangle\right]\\
		 &+\mathbb{E}_{\Psi}^{\chi}\left[M_{ij}\rho_{j}^{-5}(1-3M_{ij}^2\rho_{j}^{-2})\left([Y^*\mathcal{G}(z_1)Y]_{jj}[Y^*\mathcal{G}(z_1)]_{ji}+z_1[G(z_1)]_{jj}[Y^*\mathcal{G}(z_1)]_{ji}\right)\langle \mathrm{e}^{\mathrm{i}x\langle L_{1}(f)\rangle}\rangle\right]\\
		 &-\frac{x}{\pi}\oint_{\bar{\gamma}_2^0}\mathbb{E}^{\chi}_{\Psi}\left[\rho_{j}^{-4}(1-3M_{ij}^2\rho_{j}^{-2})([Y^*\mathcal{G}(z_1)]_{ji})(\mathcal{G}(z_2)\mathcal{G}(z_2)Y)_{ij}\cdot\mathrm{e}^{\mathrm{i}x\langle L_{1}(f)\rangle}\right]f(z_2)\mathrm{d}z_2\\
		 &+\frac{x}{\pi}\oint_{\bar{\gamma}_2^0}\mathbb{E}^{\chi}_{\Psi}\left[M_{ij}\rho_{j}^{-5}(1-3M_{ij}^2\rho_{j}^{-2})([Y^*\mathcal{G}(z_1)]_{ji})[Y^*\mathcal{G}(z_2)\mathcal{G}(z_2)Y]_{jj}\cdot\mathrm{e}^{\mathrm{i}x\langle L_{1}(f)\rangle}\right]f(z_2)\mathrm{d}z_2\\
&\qquad\qquad\qquad+\mathrm{O}_{\prec}(n^{-K/2}).
	\end{split}
\end{equation*}
Similarly to $\mathscr{M}_{2,ij}$,  noticing \eqref{eq_tr_SS}, $\mathbb{E}_{\Psi}^{\chi}[3M_{ij}^2\rho_{j}^{-6}]\lesssim n^{-3}$, and the terms with off-diagonal elements are negligible, we can see that the main term is
\begin{equation*}
	\begin{split}
		\sum_{ij}\kappa_{4,m}E_{22,m}&= \kappa_{4,m}\sum_{ij}\mathbb{E}_{\Psi}^{\chi}\left[\rho_{j}^{-4}z_1[G(z_1)]_{jj}[\mathcal{G}(z_1)]_{ii}\langle \mathrm{e}^{\mathrm{i}x\langle L_{1}(f)\rangle}\rangle\right]+\mathrm{O}_{\prec}(n^{-2\epsilon_h})\\
		&=\kappa_{4,m}\mathbb{E}_{\Psi}^{\chi}\left[z_1\operatorname{tr}\mathcal{G}(z_1)\operatorname{tr}(G(z_1)(\operatorname{diag}(S))^{-2})\langle \mathrm{e}^{\mathrm{i}x\langle L_{1}(f)\rangle}\rangle\right]+\mathrm{O}_{\prec}(n^{-2\epsilon_h})\\
		&=\mathrm{O}_{\prec}(n^{-2\epsilon_h})\mathbb{E}_{\Psi}^{\chi}\left[z_1\operatorname{tr}\mathcal{G}(z_1)\langle \mathrm{e}^{\mathrm{i}x\langle L_{1}(f)\rangle}\rangle\right]+\mathrm{O}_{\prec}(n^{-2\epsilon_h}).
	\end{split}
\end{equation*}
For $a=0$, similarly to the case of $E_{22,m}$ and $E_{10,m}$, we have
\begin{equation*}
	\begin{split}
		\partial_{m,ij}^2\operatorname{tr}\mathcal{G}(z_2) &\sim 2\rho_{j}^{-2}\partial_{z_2}\left(z_2[G(z_2)]_{jj}[\mathcal{G}(z_2)]_{ii}+[Y^*\mathcal{G}(z_2)Y]_{jj}\right)+\text{off-diagonals}.
	\end{split}
\end{equation*}
The part with off-diagonal elements is negligible since the sum with $\rho_{j}^{-4}$ can be absorbed into the trace of some matrices, which can be bounded by $\mathrm{O}_{\prec}(n^{-c})$ due to \eqref{eq_tr_SS}. Moreover, notice the relationship that
\begin{equation*}
	\begin{split}
		&n^{-2}\sum_{ij}(z_2[G(z_2)]_{jj}[\mathcal{G}(z_2)]_{ii}+[Y^*\mathcal{G}(z_2)Y]_{jj})
		=n^{-2}\sum_{ij}(z_2[G(z_2)]_{jj}[\mathcal{G}(z_2)]_{ii}+1+z_2[G(z_2)]_{jj})\\
		=&\phi^{-1}z_2m(z_2)\underline{m}(z_2)+z_2\underline{m}(z_2)+1+\mathrm{O}_{\prec}(n^{-1})=\mathrm{O}_{\prec}(n^{-1}),
	\end{split}
\end{equation*}
where we have used the fact that $\phi^{-1}zm(z)\underline{m}(z)+zm(z)+1=0$, which follows from \eqref{eq_mp} and $\underline{m}(z)=\phi m(z)-(1-\phi)/z$. Thus, we have
\begin{equation*}
	\sum_{ij}\kappa_{4,m}E_{20,m}\lesssim \mathrm{O}_{\prec}(n^{-2\epsilon_h})\mathbb{E}_{\Psi}^{\chi}\left[z_1\operatorname{tr}\mathcal{G}(z_1)\langle \mathrm{e}^{\mathrm{i}x\langle L_{1}(f)\rangle}\rangle\right]+\mathrm{O}_{\prec}(n^{-2\epsilon_h}).
\end{equation*}
Combining the estimates above gives
\begin{equation*}
	\sum_{ij}\kappa_{4,m}E_{2,m}=\mathrm{O}_{\prec}(n^{-2\epsilon_h})\mathbb{E}_{\Psi}^{\chi}\left[z_1\operatorname{tr}\mathcal{G}(z_1)\langle \mathrm{e}^{\mathrm{i}x\langle L_{1}(f)\rangle}\rangle\right]+\mathrm{O}_{\prec}(n^{-2\epsilon_h}).
\end{equation*}

\

\noindent\textbf{V. Other terms}\\
Following the arguments to handle $E_{1,m}$ and $E_{2,m}$, one can show that
\begin{equation*}
	\sum_{ij}E_{3,m}=\mathrm{O}_{\prec}(n^{-4\epsilon_h})\mathbb{E}_{\Psi}^{\chi}\left[z_1\operatorname{tr}\mathcal{G}(z_1)\langle \mathrm{e}^{\mathrm{i}x\langle L_{1}(f)\rangle}\rangle\right]+\mathrm{O}_{\prec}(n^{-4\epsilon_h}),
\end{equation*}
which relies on the estimate \eqref{eq_tr_SS} and $\kappa_{2q,m}\le n^{(2q-2)(1/2-\epsilon_h)}$. For $E_{4,m}$, note the fact that $\Xi=1$ with high probability, which further implies that  $\partial_{m,ij}^{q_1}\Xi=0$ for any fixed $q_1\ge 1$ with high probability. Thus we have $|E_{2,m}|\prec n^{-D}$ for any fixed $D>0$ since $\partial_{m,ij}^{q_0}\left(\rho_j^{-1}[Y^*\mathcal{G}(z_1)]_{ji}\langle \mathrm{e}^{\mathrm{i}x\langle L_{1}(f)\rangle}\rangle\right) \partial_{m,ij}^{q_1}\Xi$ has deterministic upper bound for $\mathrm{Im}z\ge n^{-K}$ for some large and fixed $K>0$.

It remains to handle the error term $R_{m,ij}(q)$, which can be bounded similarly after taking the sum over $i,j$. To this end, for the first part, recalling the derivatives in Section \ref{subsubsec_derivative}, we have
\begin{equation*}
	\mathbb{E}(|M_{ij}^{2q+2}|\mathbbm{1}(|M_{ij}|>n^{-c}))\lesssim n^{2q(1/2-\epsilon_{h})}
\end{equation*}
and
\begin{equation*}
	\sup_{M_{ij}\in\mathbb{R}}\left|\partial_{m,ij}^{2q+1}\left(\rho_j^{-1}[Y^*\mathcal{G}(z_1)]_{ji}\langle \mathrm{e}^{\mathrm{i}x\langle L_{1}(f)\rangle}\rangle\Xi\right)\right|\lesssim \rho_{j}^{-(2q+2)}n^{\mathrm{O}(K)},
\end{equation*}
due to the factor $\Xi$ and the truncation $\mathrm{Im}z\ge n^{-K}$. Thus, for large enough $q$, the first term is negligible because of the high probability bound $\rho_{j}^{-1}\prec n^{-1/2}$. While for the second term, noting the high probability bound in Lemma \ref{lem_pre_estnorm} and applying the argument to derive Lemma C.5 in \cite{bao2022spectral}, one has
\begin{equation*}
	\sup_{|M_{ij}|\le n^{-\epsilon}}\left|\partial_{m,ij}^{2q+1}\left(\rho_j^{-1}[Y^*\mathcal{G}(z_1)]_{ji}\langle \mathrm{e}^{\mathrm{i}x\langle L_{1}(f)\rangle}\rangle\Xi\right)\right|\lesssim \rho_{j}^{-(2q+2)},
\end{equation*}
which together with the crude bound $ \mathbb{E}|M_{ij}^{2q+2}|\lesssim n^{2q(1/2-\epsilon_{h})}$ and the high probability bound $\rho_{j}^{-1}\prec n^{-1/2}$ gives the desired result $\mathbb{E}(R_{m,ij}(q))\lesssim n^{-D}$ for any fixed $D>0$. Thereafter we complete the proof of this proposition.   \qed

\subsubsection{Proof of Lemma \ref{lem_cumulant_estH}}\label{subsec_cumulant_estH}
In this subsection we consider the part
\begin{equation*}
	II=\sum_{j\in\mathrm{I}_c}\mathbb{E}_{\Psi}^{\chi}(Y_j^*\mathcal{G}(z_1)Y_j\langle \mathrm{e}^{\mathrm{i}x\langle L_{1}(f)\rangle}\rangle).
\end{equation*}
Firstly, the resolvent expansion gives
\begin{equation*}
	Y_j^*\mathcal{G}(z_1)Y_j=Y_j^*\mathcal{G}^{(j)}(z_1)Y_j-Y_j^*\mathcal{G}(z_1)Y_jY_j^*\mathcal{G}^{(j)}(z_1)Y_j,
\end{equation*}
where $\mathcal{G}^{(j)}(z_1)=(YY^*-Y_jY_j^*-z_1 I)^{-1}$ is independent of the $j$-th column $Y_j$. Applying $|n^{-1}\operatorname{tr}\mathcal{G}(z_1)-n^{-1}\operatorname{tr}\mathcal{G}^{(j)}(z_1)|\lesssim n^{-1}$ and the averaged local law in Theorem \ref{thm_greenfuncomp_average}, we have
\begin{equation*}
	\begin{split}
		Y_j^*\mathcal{G}(z_1)Y_j=&\frac{Y_j^*\mathcal{G}^{(j)}(z_1)Y_j}{1+Y_j^*\mathcal{G}^{(j)}(z_1)Y_j}
		 =\frac{Y_j^*\mathcal{G}^{(j)}(z_1)Y_j-n^{-1}\operatorname{tr}\mathcal{G}^{(j)}(z_1)+n^{-1}\operatorname{tr}\mathcal{G}^{(j)}(z_1)}{1+n^{-1}\operatorname{tr}\mathcal{G}^{(j)}(z_1)+Y_j^*\mathcal{G}^{(j)}(z_1)Y_j-n^{-1}\operatorname{tr}\mathcal{G}^{(j)}(z_1)}\\
		=&\frac{\Delta_j(z_1)+m(z_1)+\mathrm{O}_{\prec}(n^{-1})}{1+m(z_1)+\Delta_j(z_1)+\mathrm{O}_{\prec}(n^{-1})},
	\end{split}
\end{equation*}
where $\Delta_j(z_1):=Y_j^*\mathcal{G}^{(j)}(z_1)Y_j-n^{-1}\operatorname{tr}\mathcal{G}^{(j)}(z_1)$.  Then, by the Taylor expansion for the last equation, one obtains that
\begin{equation*}
	\begin{split}
		 Y_j^*\mathcal{G}(z_1)Y_j=\sum_{k=1}^{K_0}(-1)^{k-1}\frac{(\Delta_j(z_1)+m(z_1))\Delta_j^{k-1}(z_1)}{(1+m(z_1))^k}+\operatorname{Res}(\Delta_j^{K_0-1}(z_1))+\mathrm{O}_{\prec}(n^{-1}),
	\end{split}
\end{equation*}
where the term $\operatorname{Res}(\Delta_j^{K_0-1}(z_1))$ contains the leading term $\Delta_j^{K_0-1}(z_1)$ with possibly deterministic coefficients. Plugging the expansion into $II$ and applying a conditional trick, we have
\begin{equation*}
	\begin{split}
		 =\sum_{j\in\mathrm{I}_c}\sum_{k=1}^{K_0}\mathbb{E}_{\Psi}^{\chi}\mathbb{E}_{Y_j}\big[\big((-1)^{k-1}\frac{\Delta_j^{k}(z_1)}{(1+m(z_1))^{k+1}}+\operatorname{Res}(\Delta_j^{K_0-1}(z_1))+\mathrm{O}_{\prec}(n^{-1})\big)\langle \mathrm{e}^{\mathrm{i}x\langle L_{1}(f)\rangle}\rangle\big].
	\end{split}
\end{equation*}

So we need to establish a good concentration estimation for the conditional expectations $\mathbb{E}_{Y_j}\Delta_j^k(z_1)$, The random variable $Y_j^*\mathcal{G}^{(j)}(z)Y_j$ can be decomposed into two parts, the diagonal part $\sum_{i}Y_{ij}^2[\mathcal{G}^{(j)}(z)]_{ii}$ and the off-diagonal part $\sum_{k\ne l}Y_{kj}Y_{lj}[\mathcal{G}^{(j)}(z)]_{kl}$. We believe that the main contribution comes from the diagonal part. The following lemma provides the concentration of $Y_j^*\mathcal{G}^{(j)}(z)Y_j$ for $\alpha\in (2,4]$.
\begin{lemma}[Concentration of the quadratic form]\label{lemma_momentquadratic}
	For $\alpha\in (2,4]$, let $A$ be one $n\times n$ deterministic symmetric matrix with $\|A\|\prec 1$. Define $\beta_{k_1,\ldots,k_r}=\mathbb{E}(y_1^{k_1}\cdots y_r^{k_r})$ and $S_r=\sum_{i}A_{ii}^{r}$ for $r\ge 1$. Then we have, for $k\ge 2$,
	\begin{equation*}
		\begin{split}
			\mathbb{E}(y^*Ay-\beta_{2}S_1)^k\lesssim \begin{cases}
				n\beta_{4}+\mathrm{O}(n^{-1}),~&\text{if $\xi$ is symmetric around 0},\\
				n\beta_{4}+\mathrm{o}(n^{-1/2}),~&\text{otherwise}.
			\end{cases}
		\end{split}
	\end{equation*}
\end{lemma}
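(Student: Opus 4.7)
My plan is to split
\[
y^*Ay - \beta_2 S_1 = D + O, \qquad D := \sum_{i} A_{ii}(y_i^2 - \beta_2), \qquad O := \sum_{i\ne j} A_{ij}y_iy_j,
\]
and to bound $\mathbb{E}|D+O|^k$ by the binomial expansion $\mathbb{E}|D+O|^k \lesssim \sum_{a+b=k}\mathbb{E}|D^aO^b|$, estimating each piece with Lemma \ref{lem_moment_rates} together with the refined odd-moment bounds of Lemma \ref{lem_oddmoment_est}. The central structural observation is the self-normalization identity $\sum_i y_i^2 \equiv 1$, which combined with $n\beta_2 = 1$ forces $\sum_i(y_i^2-\beta_2)\equiv 0$; hence for any scalar $c$ we may rewrite $D=\sum_i(A_{ii}-c)(y_i^2-\beta_2)$. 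Choosing $c=\operatorname{tr}(A)/n$ produces $D=\sum_i \tilde A_{ii}y_i^2$ with $\tilde A_{ii}:=A_{ii}-\operatorname{tr}(A)/n$ satisfying $\sum_i \tilde A_{ii}=0$ and $\sum_i \tilde A_{ii}^2\le \operatorname{tr}(A^2)\prec n$. This centering is essential: without it, the $\sum_{i\ne j}A_{ii}A_{jj}(\beta_{2,2}-\beta_2^2)$ covariance term only yields $o(1)$ control, far worse than the target $\mathrm{O}(n^{-1})$.

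For the base case $k=2$, the constraint $\sum_i \tilde A_{ii}=0$ produces
\[
\mathbb{E}(D^2)=\sum_i \tilde A_{ii}^2(\beta_4-\beta_{2,2})\le (\operatorname{tr}A^2)\beta_4 \lesssim n\beta_4,
\]
while the dominant pairing in $\mathbb{E}(O^2)$, namely $\{i,j\}=\{k,l\}$, gives $2\beta_{2,2}\sum_{i\ne j}A_{ij}^2\lesssim n^{-1}$. The subdominant index patterns vanish by parity in the symmetric case and are controlled by Lemma \ref{lem_oddmoment_est} in the non-symmetric case, contributing $o(n^{-1/2})$. For general $k$, I would enumerate the index classes appearing in $\mathbb{E}(D^aO^b)$ according to the partition of $\{1,\ldots,a+b\}$ into equivalence classes of equal indices, apply Lemma \ref{lem_moment_rates} to each resulting moment $\beta_{2k_1,\ldots,2k_q}$, and use the deterministic bounds $\sum_i |\tilde A_{ii}|^{2k}\le (\max_i|\tilde A_{ii}|)^{2k-2}\operatorname{tr}(A^2)$ and $\operatorname{tr}(A^{2r})\prec n$ to control the arising sums of matrix entries. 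The leading scale will again be $n\beta_4$, coming from the partition that consists of a single class of size $k$, for which the large moment $\beta_{2k}\asymp n^{-\alpha/2}l(n^{1/2})$ produces the dominant contribution.

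The main obstacle is the non-symmetric case, where odd moments of $y$ do not vanish. After expanding $\mathbb{E}(D^aO^b)$ one encounters index patterns containing ``unpaired'' indices, i.e.\ indices appearing to an odd power. To prevent such terms from producing errors worse than $o(n^{-1/2})$, I plan to reuse the device from Theorem \ref{thm_greenfuncomp_entrywise_differror}: an unpaired $y_i$ always couples to a row sum of the form $\sum_s A_{is}y_s^{k_s}$, and Cauchy--Schwarz together with $|\sum_s A_{is}|\le \sqrt{n}\,\|A\|$ buys an extra $\sqrt{n}$, which combined with the refined estimate $\mathbb{E}(y_1y_2)\lesssim n^{-3}\vee n^{-\alpha+\epsilon}$ from Lemma \ref{lem_oddmoment_est} delivers the required $o(n^{-1/2})$ decay. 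I anticipate that executing this combinatorial step carefully, while distinguishing the two tail regimes $\alpha\in(2,3)$ and $\alpha\in[3,4]$ that govern the size of the odd moments, will be the most delicate part of the argument.
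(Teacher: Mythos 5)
Your $k=2$ computation is correct, and the centering device $D=\sum_i\tilde A_{ii}y_i^2$ with $\tilde A_{ii}=A_{ii}-S_1/n$ (so $\sum_i\tilde A_{ii}=0$, forcing $\mathbb{E}(D^2)=\sum_i\tilde A_{ii}^2(\beta_4-\beta_{2,2})$) is a clean and valid way to expose the $n\beta_4$ scale. However, your claim that this centering is essential and that the uncentered route can only give $o(1)$ is mistaken: the paper works directly with $\mathbb{E}(y^*Ay-\beta_2 S_1)^2=\beta_4 S_2+(\beta_{2,2}-\beta_2^2)S_1^2+\mathrm{O}(n^{-1})$ and then uses the \emph{exact} identity $\beta_{2,2}-\beta_2^2=-n^{-1}\beta_4+\mathrm{O}(n^{-3})$, which follows from the same constraint you exploit (namely $1=\mathbb{E}(\sum_i y_i^2)^2=n\beta_4+n(n-1)\beta_{2,2}$), to arrive at $\beta_4(S_2-n^{-1}S_1^2)+\mathrm{O}(n^{-1})$. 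Your centered expression $\sum_i\tilde A_{ii}^2(\beta_4-\beta_{2,2})=(S_2-n^{-1}S_1^2)(\beta_4-\beta_{2,2})$ is the same quantity; neither route fails. The more substantive difference is your treatment of $k\ge 3$: you propose a full enumeration of index-class partitions in $\mathbb{E}(D^aO^b)$, which is unnecessary. Self-normalization gives $\|y\|^2=1$, hence $|y^*Ay|\le\|A\|\prec 1$ and $|\beta_2 S_1|=n^{-1}|S_1|\lesssim 1$, so $|y^*Ay-\beta_2 S_1|\lesssim 1$ almost surely; therefore $\mathbb{E}|y^*Ay-\beta_2 S_1|^k\lesssim\mathbb{E}|y^*Ay-\beta_2 S_1|^2$ for every $k\ge 2$, and the whole lemma reduces at once to the $k=2$ case. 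Your combinatorial program (the single-class partition does indeed produce the dominant $n\beta_4\asymp n^{1-\alpha/2}l(n^{1/2})$ scale, and your Cauchy--Schwarz argument for unpaired indices in the non-symmetric case is the right tool to secure the $o(n^{-1/2})$ error) would likely close with considerable effort, but the deterministic reduction renders it moot.
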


\begin{proof}
	Consider the decomposition $y^*Ay=\sum_{i}y_i^2A_{ii}+\sum_{j\ne k}y_jy_kA_{jk}$. Notice $\|A\|\prec 1$, which implies $S_k\lesssim n$.
	Invoking the moment estimates in Lemmas \ref{lem_moment_rates} and \ref{lem_oddmoment_est}, we have
	\begin{equation*}
		\mathbb{E}(y^*Ay)=n^{-1}S_1+\beta_{1,1}\sum_{j\ne k}A_{jk}=n^{-1}S_1+\mathrm{o}(n^{-1/2}),
	\end{equation*}
	where the error term follows from the Cauchy-Schwarz inequality,
	\begin{equation*}
		\sum_{j\ne k}A_{jk}\le (\sum_{j\ne k}1^2\sum_{j\ne k}A_{jk}^2)^{1/2}\lesssim (n^2(\operatorname{tr}(AA^*)-S_2))^{1/2}\lesssim n^{3/2}.
	\end{equation*}
	Notice $\beta_{k_1,\ldots,k_r}=\mathbb{E}(y_1^{k_1}\cdots y_r^{k_r})$, which will vanish if there is at least one odd exponent under the symmetry condition. For $k=2$, similarly to Lemma 3.1 of \cite{heiny2023logdet}, we have
	\begin{equation*}
		\begin{split}
			\mathbb{E}(y^*Ay)^2=\mathbb{E}(\sum_{i}y_i^2A_{ii})^2+ \mathbb{E}(\sum_{j\ne k}y_jy_kA_{jk})^2+2\mathbb{E}(\sum_{i=1,j\ne k}y_i^2y_jy_kA_{ii}A_{jk}).
		\end{split}
	\end{equation*}
	By Lemma \ref{lem_moment_rates}, the first summand in the right hand side of the above identity can be rewritten as
	\begin{equation*}
		\mathbb{E}(\sum_{i}y_i^2A_{ii})^2=\beta_4\sum_{i}A_{ii}^2+\beta_{2,2}\sum_{i\ne j}A_{ii}A_{jj}=\beta_4S_2+\beta_{2,2}(S_1^2-S_2).
	\end{equation*}
	For the second summand, we have
	\begin{equation*}
		\mathbb{E}(\sum_{j\ne k}y_jy_kA_{jk})^2=\beta_{2,2}\sum_{j\ne k}A_{jk}^2+\beta_{2,1,1}\sum_{j\ne k\ne l}A_{jk}A_{jl}+\beta_{1,1,1,1}\sum_{j\ne k\ne l\ne s}A_{jk}A_{ls},
	\end{equation*}
	where the main part $\beta_{2,2}\sum_{j\ne k}A_{jk}^2\lesssim n^{-1}$ since $\sum_{j\ne k}A_{jk}^2=\operatorname{tr}(AA^*)-S_2\lesssim n\|A\|^2-S_2\lesssim n$. The last summand can be expressed as
	\begin{equation*}
		\mathbb{E}(\sum_{i=1,j\ne k}y_i^2y_jy_kA_{ii}A_{jk})=\beta_{2,1,1}\sum_{i=1,j\ne k}A_{ii}A_{jk}+\beta_{3,1}\sum_{i\ne j}A_{ii}A_{ij}.
	\end{equation*}
	Under the symmetry condition, we have $\mathbb{E}(y^*Ay)^2=\beta_{4}S_2+\beta_{2,2}(\operatorname{tr}(AA^*)+S_1^2-2S_2)$. Noting the estimates of Lemma \ref{lem_moment_rates}, one can find that the main term is $\beta_{2,2}S_1^2+\beta_{4}S_2$, where the other two terms can be bounded as $\beta_{2,2}(\operatorname{tr}(AA^*)-2S_2)\lesssim n^{-1}$. Therefore, we have
	\begin{equation*}
		\mathbb{E}(y^*Ay-\beta_2S_1)^2=\beta_{4}S_2+(\beta_{2,2}-\beta_2^2)S_1^2+\mathrm{O}(n^{-1})=\beta_{4}(S_2-n^{-1}S_1^2)+\mathrm{O}(n^{-1})
	\end{equation*}
	by $\beta_{2,2}-\beta_2^2=(n^{-1}-\beta_4)/(n-1)-n^{-2}=-n^{-1}\beta_{4}+\mathrm{O}(n^{-3})$. For general $k\ge 3$, by the basic bound $|y^*Ay-\beta_2S_1|\lesssim 1$, one has
	\begin{equation*}
		\mathbb{E}(y^*Ay-\beta_2S_1)^k\lesssim \mathbb{E}(y^*Ay-\beta_2S_1)^2\lesssim n\beta_{4}+\mathrm{O}(n^{-1}),
	\end{equation*}
	which confirms the claim of the symmetry case.
	
	Without the symmetry condition, the identity $y_{1}y_2=y_{1}y_2(\sum_{i}y_i^2)$ gives $\beta_{1,1}=2\beta_{3,1}+(n-2)\beta_{2,1,1}$, which further implies that
	\begin{equation*}
		\beta_{2,1,1}=(n-2)^{-1}(\beta_{1,1}-2\beta_{3,1})\lesssim \mathrm{o}(n^{-3}),
	\end{equation*}
	where we have used the estimate $\beta_{3,1}\lesssim \beta_{1,1}^{1/2}\beta_{3,3}^{1/2}\lesssim \beta_{1,1}^{1/2}[\beta_{2,2}\beta_{4,4}]^{1/4}\le \mathrm{o}(n^{-3/2-\alpha/4})$ by Lemma 3.1 of \cite{gine1997student} and Lemma \ref{lem_oddmoment_est}. Combined with the Cauchy-Schwarz inequality
	\begin{equation*}
		\sum_{j\ne k\ne l}A_{jk}A_{jl}\le \sum_{j}(\sum_{k}A_{jk})^2\le \sum_{j}(n\sum_{k}A_{jk}^2)\le n\operatorname{tr}(AA^*)\lesssim n^2,
	\end{equation*}
	one has $\beta_{2,1,1}\sum_{j\ne k\ne l}A_{jk}A_{jl}\le \mathrm{o}(n^{-1})$. Moreover, by $\beta_{1,1,1,1}\le \mathrm{o}(n^{-4})$ in Lemma \ref{lem_oddmoment_est}, we have
	\begin{equation*}
		\begin{split}
			\beta_{1,1,1,1}\sum_{j\ne k\ne l\ne s}A_{jk}A_{ls}\le \mathrm{o}(n^{-4})[(\sum_{j\ne k}A_{jk})^2-\sum_{j\ne k\ne l}A_{jk}A_{jl}-\sum_{j\ne k}A_{jk}^2]\le \mathrm{o}(n^{-1}),
		\end{split}
	\end{equation*}
	since $\sum_{j\ne k}A_{jk}\le \sum_{j}(n\sum_{k}A_{jk}^2)^{1/2}\lesssim n^{3/2}$. Moreover, by the Cauchy-Schwarz inequality
	\begin{equation*}
		\sum_{i=1,j\ne k}A_{ii}A_{jk}\lesssim n\sum_{j\ne k}A_{jk}\lesssim n^{5/2},~\sum_{i\ne j}A_{ii}A_{ij}\lesssim n^{3/2},
	\end{equation*}
	we have
	\begin{equation*}
		\mathbb{E}(\sum_{i=1,j\ne k}y_i^2y_jy_kA_{ii}A_{jk})=\beta_{2,1,1}\sum_{i=1,j\ne k}A_{ii}A_{jk}+\beta_{3,1}\sum_{i\ne j}A_{ii}A_{ij}\lesssim \mathrm{o}(n^{-1/2}).
	\end{equation*}
	Thus, without the symmetry condition, we have
	\begin{equation*}
		\mathbb{E}(y^*Ay-\beta_2S_1)^2=\beta_{4}(S_2-n^{-1}S_1^2)+\mathrm{o}(n^{-1/2}).
	\end{equation*}
	For general $k\ge 3$, by the basic bound $|y^*Ay-\beta_2S_1|\lesssim 1$ again, one has
	\begin{equation*}
		\mathbb{E}(y^*Ay-\beta_2S_1)^k\lesssim \mathbb{E}(y^*Ay-\beta_2S_1)^2\lesssim n\beta_{4}+\mathrm{o}(n^{-1/2}),
	\end{equation*}
	completing the proof of this proposition.
\end{proof}

Applying the above lemma to $\mathbb{E}|\Delta_j(z_1)|^k$ for $k\ge2$ and $\alpha\in(3,4]$ gives
\begin{equation*}
	\begin{split}
		 &\sum_{j\in\mathrm{I}_c}\sum_{k=2}^{K_0}\mathbb{E}_{Y_j}\big[(\Delta_j^k(z_1)+\operatorname{Res}(\Delta_j^{K_0-1}(z_1))+\mathrm{O}_{\prec}(n^{-1}))\langle \mathrm{e}^{\mathrm{i}x\langle L_{1}(f)\rangle}\rangle\big]\lesssim \sum_{j\in\mathrm{I}_c}\mathbb{E}\Delta_j^2(z_1)\lesssim |\mathrm{I}_c| n^{1-\alpha/2}\lesssim n^{3-\alpha},
	\end{split}
\end{equation*}
where in the first step we have used the fact that $|\Delta_j(z_1)|\sim 1$ and $|\langle \mathrm{e}^{\mathrm{i}x\langle L_{1}(f)\rangle}\rangle|\sim 1$, and in the last step we have applied the fact that $|\mathrm{I}_c|=\mathrm{O}(n^{2-\alpha/2})$.

It remains to considerr $\mathbb{E}_{\Psi}^{\chi}\big[\sum_{j\in\mathrm{I}_c}\frac{\Delta_j(z_1)}{(1+m(z_1))^2}\langle \mathrm{e}^{\mathrm{i}x\langle L_{1}(f)\rangle}\rangle\big]$. Typically, since $\mathbb{E}_{Y_j}\Delta_j(z_1)=0$, we believe that this term still vanishes after taking expectation. To this end, we take a further derivative of $x$ and let $x=0$. Then we have
\begin{equation*}
	\begin{split}
		\mathbb{E}_{\Psi}^{\chi}\Big[\big(\sum_{j\in\mathrm{I}_c}\frac{\Delta_j(z_1)}{(1+m(z_1))^2}\big)\cdot \frac{1}{2\pi}\oint_{\bar{\gamma}_1^0}\langle\operatorname{tr}R\mathcal{G}(z_1)\rangle\frac{f(z_1)}{z_1}\mathrm{d}z_1\Big].
	\end{split}
\end{equation*}
We only consider the random part in the above expression. Applying the resolvent representation, one has
\begin{equation*}
	Y^{*}_j\mathcal{G}(z_1)Y_j=\frac{\Delta_j(z_1)+m(z_1)+\widetilde{\Delta}_j(z_1)}{1+m(z_1)+\Delta_j(z_1)+\widetilde{\Delta}_j(z_1)},
\end{equation*}
where we have introduced an additional term $\widetilde{\Delta}_j(z_1)=n^{-1}\operatorname{tr}\mathcal{G}^{(j)}(z_1)-m(z_1)$. Since we have $|\widetilde{\Delta}_j(z_1)|\prec n^{-1}$ by the averaged local law, it suffices to consider
\begin{equation*}
	\begin{split}
		 &\mathbb{E}_{\Psi}^{\chi}\Big[\langle\sum_{j\in\mathrm{I}_c}\Delta_j(z_1)\rangle\cdot\Big(\sum_{j=1}^p\big(\sum_{k=1}^{K_1}\frac{(-1)^{k-1}\Delta^k_j(z_1)(1+\widetilde{\Delta}_j(z_1))}{(1+m(z_1))^k}+\operatorname{Res}(\Delta_j^{K_1-1}(z_1))\big)\Big)\Big]\\
		 &=\mathbb{E}_{\Psi}^{\chi}\Big[\langle\sum_{j\in\mathrm{I}_c}\Delta_j(z_1)\rangle\cdot\Big((\sum_{j\in\mathrm{I}_c}+\sum_{j\in\mathrm{T}_c})\big(\sum_{k=1}^{K_1}\frac{(-1)^{k-1}\Delta^k_j(z_1)(1+\widetilde{\Delta}_j(z_1))}{(1+m(z_1))^k}+\operatorname{Res}(\Delta_j^{K_1-1}(z_1))\big)\Big)\Big].
	\end{split}
\end{equation*}
We first consider the case where $j\in\mathrm{I}_c$. By Lemma \ref{lemma_momentquadratic}, we only need to handle $\mathbb{E}_{\Psi}^{\chi}\big[(\sum_{j\in\mathrm{I}_c}\Delta_j(z_1))^2\big]$. It turns out that
\begin{equation*}
	\begin{split}
		 &\mathbb{E}_{\Psi}^{\chi}\big[(\sum_{j\in\mathrm{I}_c}\Delta_j(z_1))^2\big]=\mathbb{E}_{\Psi}^{\chi}\Big[\sum_{j\in\mathrm{I}_c}\Delta_j^2(z_1)+\sum_{j,l\in\mathrm{I}_c\atop j\neq l}\Delta_j\Delta_l\Big]\\
		~=~&\mathbb{E}_{\Psi}^{\chi}\Big[\sum_{i\in\mathrm{I}_c}\mathbb{E}_{Y_j}\big(\Delta_j^2(z_1)+\sum_{j,l\in\mathrm{I}_c\atop j\neq l}\Delta_j\big(Y_l^{*}\mathcal{G}^{(lj)}(z_1)Y_l-Y_l^{*}\mathcal{G}^{(l)}Y_jY_j^{*}\mathcal{G}^{(lj)}Y_l-n^{-1}\operatorname{tr}\mathcal{G}^{(lj)}(z_1)+\mathrm{O}_{\prec}(n^{-1})\big)\big)\Big]\\
		~\asymp~& \mathbb{E}_{\Psi}^{\chi}\Big[\sum_{j,l\in\mathrm{I}_c\atop j\neq l}\Delta_j(z_1)\cdot \big(Y_l^{*}\mathcal{G}^{(l)}Y_jY_j^{*}\mathcal{G}^{(lj)}Y_l\big)\Big]+\mathrm{o}(1).
	\end{split}
\end{equation*}
Observing that $Y_l^{*}\mathcal{G}^{(l)}Y_jY_j^{*}\mathcal{G}^{(lj)}Y_l=Y_l^{*}\mathcal{G}^{(lj)}Y_jY_j^{*}\mathcal{G}^{(lj)}Y_l-Y_l^{*}\mathcal{G}^{(l)}Y_jY_j^{*}\mathcal{G}^{(lj)}(z_1)Y_jY_j^{*}\mathcal{G}^{(lj)}Y_l$, we may obtain that
\begin{equation*}
	\begin{split}
		 Y_l^{*}\mathcal{G}^{(l)}Y_jY_j^{*}\mathcal{G}^{(lj)}Y_l&=\frac{Y_l^{*}\mathcal{G}^{(lj)}Y_jY_j^{*}\mathcal{G}^{(lj)}Y_l}{1+Y_j^{*}\mathcal{G}^{(lj)}Y_j}=\frac{Y_l^{*}\mathcal{G}^{(lj)}Y_jY_j^{*}\mathcal{G}^{(lj)}Y_l}{1+m(z_1)+\Delta_{j(l)}(z_1)+\mathrm{O}_{\prec}(n^{-1})},
	\end{split}
\end{equation*}
where $\Delta_{j(l)}=Y_j^{*}\mathcal{G}^{(jl)}Y_j-n^{-1}\operatorname{tr}\mathcal{G}^{(jl)}$. Similarly, we can conduct the expansion for $\Delta_j(z_1)$, so
\begin{equation*}
	\begin{split}
		\Delta_j(z_1)
		 &=\Delta_{j(l)}(z_1)-\frac{Y_j^{*}\mathcal{G}^{(jl)}Y_lY_l^{*}\mathcal{G}^{(jl)}Y_j}{1+Y_l^{*}\mathcal{G}^{(jl)}Y_l}+\mathrm{O}_{\prec}(n^{-1}).
	\end{split}
\end{equation*}
Then, one has
\begin{equation*}
	\begin{split}
		&\mathbb{E}_{\Psi}^{\chi}\Big[\sum_{j,l\in\mathrm{I}_c\atop j\neq l}\Delta_j(z_1)\cdot \big(Y_l^{*}\mathcal{G}^{(l)}Y_jY_j^{*}\mathcal{G}^{(lj)}Y_l\big)\Big]\\
		~\asymp~&\mathbb{E}_{\Psi}^{\chi}\Big[\sum_{j,l\in\mathrm{I}_c\atop j\neq l}\Big(\mathbb{E}_{Y_l}\big(\sum_{k=1}^{K_2}\Delta^k_{j(l)}(z_1)\cdot (Y_l^{*}\mathcal{G}^{(lj)}Y_jY_j^{*}\mathcal{G}^{(lj)}Y_l)\big)+\frac{Y_j^{*}\mathcal{G}^{(jl)}Y_lY_l^{*}\mathcal{G}^{(jl)}Y_j}{1+Y_l^{*}\mathcal{G}^{(jl)}Y_l}\cdot\frac{Y_l^{*}\mathcal{G}^{(lj)}Y_jY_j^{*}\mathcal{G}^{(lj)}Y_l}{1+m(z_1)}\Big]\\
		~\asymp~&\mathbb{E}_{\Psi}^{\chi}\Big[\sum_{j,l\in\mathrm{I}_c\atop j\neq l}\big(n^{-1}\sum_{k=1}^{K_2}\Delta_{j(l)}^kY_j^{*}\mathcal{G}^{(lj)}\mathcal{G}^{(lj)}Y_j+\frac{(Y_l^{*}\mathcal{G}^{(jl)}Y_j)^4}{(1+m(z_1))^2}\big)\Big]+\mathrm{o}(1)~\asymp~\mathrm{o}(1).
	\end{split}
\end{equation*}
This concludes that
\begin{equation*} \mathbb{E}_{\Psi}^{\chi}\Big[\langle\sum_{j\in\mathrm{I}_c}\Delta_j(z_1)\rangle\cdot\Big(\sum_{j\in\mathrm{I}_c}\big(\sum_{k=1}^{K_1}\frac{(-1)^{k-1}\Delta_j^k(z_1)}{(1+m(z_1))^k}+\operatorname{Res}(\Delta_j^{K_1-1}(z_1))\big)\Big)\Big]=\mathrm{o}(1).
\end{equation*}
It remains to study
\begin{equation*}
\mathbb{E}_{\Psi}^{\chi}\Big[\sum_{j\in\mathrm{I}_c}\Big(\sum_{l\in\mathrm{T}_c}\langle\Delta_j(z_1)\rangle\cdot\big(\sum_{k=1}^{K_1}\frac{(-1)^{k-1}\Delta_l^k(z_1)(1+\widetilde{\Delta}_l(z_1))}{(1+m(z_1))^k}\big)+\operatorname{Res}(\Delta_l^{K_1-1}(z_1))\Big)\Big].
\end{equation*}
In the following, we only consider $\mathbb{E}_{\Psi}^{\chi}\Big[\sum_{j\in\mathrm{I}_c}\sum_{l\in\mathrm{T}_c}\langle\Delta_j(z_1)\rangle\cdot\Delta_l(z_1)\Big]$ while others can be handled similarly. The key observation here is that for indexes $l\in\mathrm{T}_c$, all $X_{li}$'s have deterministic bounds, which is suitable to the cumulant expansion. Before moving forward, we need firstly handle the term $\Delta_j(z_1)\cdot (n^{-1}\operatorname{tr}\mathcal{G}^{(l)})$ in $\Delta_j(z_1)\Delta_l(z_1)$. One may find that
\begin{equation*}
	\begin{split}
		 &\mathbb{E}_{\Psi}^{\chi}\big(\Delta_j(z_1)\cdot(n^{-1}\operatorname{tr}\mathcal{G}^{(l)})\big)=\mathbb{E}_{\Psi}^{\chi}\big[\Delta_j(z_1)\cdot(n^{-1}\operatorname{tr}\mathcal{G}^{(lj)}-n^{-1}Y_j^{*}\mathcal{G}^{(lj)}\mathcal{G}^{(l)}Y_j)\big]\\
		~=~&\mathbb{E}_{\Psi}^{\chi}\big[\mathbb{E}_{Y_j}\big(\Delta_j\cdot n^{-1}\operatorname{tr}\mathcal{G}^{(lj)}\big)-n^{-1}\Delta_j(z_1)\frac{Y_j^{*}(\mathcal{G}^{(lj)})^2Y_j}{1+Y_j^{*}\mathcal{G}^{(lj)}Y_j}\big]\\
		 ~=~&n^{-1}\mathbb{E}_{\Psi}^{\chi}\big[\Delta_j(z_1)\frac{Y_j^{*}(\mathcal{G}^{(lj)})^2Y_j-n^{-1}\operatorname{tr}(\mathcal{G}^{(lj)})^2+n^{-1}\operatorname{tr}(\mathcal{G}^{(lj)})^2}{1+Y_j^{*}\mathcal{G}^{(lj)}Y_j-n^{-1}\operatorname{tr}\mathcal{G}^{(lj)}+n^{-1}\operatorname{tr}\mathcal{G}^{(lj)}}\big]\\
		 ~\asymp~&n^{-1}\mathbb{E}_{\Psi}^{\chi}\big[\Delta_j(z_1)\frac{n^{-1}\operatorname{tr}(\mathcal{G}^{(lj)})^2+\mathrm{O}_{\prec}(n^{-1})}{1+n^{-1}\operatorname{tr}\mathcal{G}^{(lj)}+\mathrm{O}_{\prec}(n^{-1})}\big]=n^{-1}\mathbb{E}_{\Psi}^{\chi}\big[\mathbb{E}_{Y_j}\big(\Delta_j(z_1)\frac{n^{-1}\operatorname{tr}(\mathcal{G}^{(lj)})^2}{1+m(z_1)}\big)+\Delta_j\cdot\mathrm{O}_{\prec}(n^{-1})\big]\\
		~=~&\mathrm{O}(n^{-2}),
	\end{split}
\end{equation*}
where we also have used the deterministic bound $|\Delta_j(z_1)|\prec 1$.

Now we consider the remaining term and apply the cumulant expansion. To this end, we have
\begin{equation*}
	\begin{split}
		 &\mathbb{E}_{\Psi}^{\chi}\big(\Delta_j(z_1)\cdot(Y_l^{*}\mathcal{G}^{(l)}Y_l)\big)=\sum_i\mathbb{E}_{\Psi}^{\chi}\big[\Delta_j(z_1)\big(Y_{il}^2\mathcal{G}_{ii}^{(l)}(z_1)+\sum_{k\neq i}Y_{il}Y_{kl}\mathcal{G}_{ik}^{(l)}\big)\big]\\
		=&\sum_i\kappa_{1,m}\mathbb{E}_{\Psi}^{\chi}\big(\Delta_j(z_1)\cdot(\rho_l^{-1}Y_{il}\mathcal{G}_{ii}^{(l)}+\sum_{k\neq i}\rho_l^{-1}Y_{kl}\mathcal{G}^{(l)}_{ik})\big)\\
		 &+\sum_i\kappa_{2,m}\mathbb{E}_{\Psi}^{\chi}\big[\big(\partial_{m,il}\Delta_j(z_1)\big)\cdot(\rho_l^{-1}Y_{il}\mathcal{G}_{ii}^{(l)}+\sum_{k\neq i}\rho_l^{-1}Y_{kl}\mathcal{G}^{(l)}_{ik})+\Delta_j(z_1)\big(\partial_{m,il}(\rho_l^{-1}Y_{il}\mathcal{G}_{ii}^{(l)}+\sum_{k\neq i}\rho_l^{-1}Y_{kl}\mathcal{G}^{(l)}_{ik})\big)\big]\\
		 &+\sum_i\frac{\kappa_{3,m}}{2!}\mathbb{E}_{\Psi}^{\chi}\big[(\partial^2_{m,il}\Delta_j(z_1)\big)\cdot(\rho_l^{-1}Y_{il}\mathcal{G}_{ii}^{(l)}+\sum_{k\neq i}\rho_l^{-1}Y_{kl}\mathcal{G}^{(l)}_{ik})+\Delta_j(z_1)\big(\partial^2_{m,il}(\rho_l^{-1}Y_{il}\mathcal{G}_{ii}^{(l)}+\sum_{k\neq i}\rho_l^{-1}Y_{kl}\mathcal{G}^{(l)}_{ik})\big)\big]\\
		&+\sum_{t\ge 4}^q\sum_i\mathrm{O}(\kappa_{t,m})\mathbb{E}_{\Psi}^{\chi}\big[(\partial^{s-1}_{m,il}\Delta_j(z_1))\cdot(\rho_l^{-1}Y_{il}\mathcal{G}_{ii}^{(l)}+\sum_{k\neq i}\rho_l^{-1}Y_{kl}\mathcal{G}^{(l)}_{ik})+\Delta_j(z_1)\big(\partial^{s-1}_{m,il}(\rho_l^{-1}Y_{il}\mathcal{G}_{ii}^{(l)}+\sum_{k\neq i}\rho_l^{-1}Y_{kl}\mathcal{G}^{(l)}_{ik})\big)\big]\\
		&+\sum_i\mathbb{E}(R_{m,il}(q)),
	\end{split}
\end{equation*}
where the error term $R_{m,il}$ is given by
\begin{equation*}
	\begin{split}
		R_{m,il}(q)=&C\sup_{M_{il}\in\mathbb{R}}|\partial^{q+1}_{m,il}\big(\Delta_j(z_1)\cdot(\rho_l^{-1}Y_{il}\mathcal{G}_{ii}^{(l)}+\sum_{k\neq i}\rho_l^{-1}Y_{kl}\mathcal{G}^{(l)}_{ik})\big)|\cdot\mathbb{E}\big[|M_{il}^{q+2}|\mathbbm{1}(|M_{il}|>n^{-\epsilon})\big]\\
		&+C\mathbb{E}|M_{ij}^{q+2}|\cdot\sup_{|M_{il}|\le n^{-\epsilon}}|\partial_{m,il}^{q+1}\big(\Delta_j(z_1)\cdot(\rho_l^{-1}Y_{il}\mathcal{G}_{ii}^{(l)}+\sum_{k\neq i}\rho_l^{-1}Y_{kl}\mathcal{G}^{(l)}_{ik})\big)|
	\end{split}
\end{equation*}
for some constant $C$ and $\epsilon>0$.

We calculate these terms one by one. Firstly, observe that $\kappa_{1,m}\lesssim n^{(1/2-\epsilon_h)(-\alpha+1)}$. This gives
\begin{equation*}
	\sum_i\kappa_{1,m}\mathbb{E}_{\Psi}^{\chi}\big(\Delta_j(z_1)\cdot(\rho_l^{-1}Y_{il}\mathcal{G}_{ii}^{(l)}+\sum_{k\neq i}\rho_l^{-1}Y_{kl}\mathcal{G}^{(l)}_{ik})\big)\lesssim\mathrm{O}(n^{(1-\epsilon_h)(-\alpha+1)+1/2}),
\end{equation*}
where we have used Wald's identity to estimate the summation involving the off-diagonal entries of $\mathcal{G}^{(l)}$.

Secondly, from Section \ref{subsubsec_derivative}, we have
\begin{equation*}
	 \partial_{m,il}\Delta_j(z_1)=Y_j^{*}\mathcal{G}^{(j)}(\rho_j^{-1}Y_{il}-\rho_j^{-1}Y_{il}^3)\mathcal{G}^{(j)}Y_j-n^{-1}\operatorname{tr}(\mathcal{G}^{(j)})^2\cdot(\rho_j^{-1}Y_{il}-\rho_j^{-1}Y_{il}^3)
\end{equation*}
and
\begin{equation*}
	\begin{split}
		\partial_{m,il}(\rho_l^{-1}Y_{il}\mathcal{G}_{ii}^{(l)}+\sum_{k\neq i}\rho_l^{-1}Y_{kl}\mathcal{G}_{ik}^{(l)})=M_{il}\rho_l^{-3}Y_{il}\mathcal{G}_{ii}^{(l)}+\rho_l^{-2}\mathcal{G}_{ii}^{(l)}+M_{il}^2\rho_l^{-4}\mathcal{G}_{ii}^{(l)}+\sum_{k\neq i}M_{il}\rho_j^{-3}Y_{kl}\mathcal{G}_{ik}^{(l)}.
	\end{split}
\end{equation*}
The above two identities provide the estimate for the terms following $\kappa_{2,m}$,
\begin{equation*}
	\begin{split}
		 &\sum_i\kappa_{2,m}\mathbb{E}_{\Psi}^{\chi}\big[\big(\partial_{m,il}\Delta_j(z_1)\big)\cdot(\rho_l^{-1}Y_{il}\mathcal{G}_{ii}^{(l)}+\sum_{k\neq i}\rho_l^{-1}Y_{kl}\mathcal{G}^{(l)}_{ik})+\Delta_j(z_1)\big(\partial_{m,il}(\rho_l^{-1}Y_{il}\mathcal{G}_{ii}^{(l)}+\sum_{k\neq i}\rho_l^{-1}Y_{kl}\mathcal{G}^{(l)}_{ik})\big)\big]\\
		 ~=~&\sum_i\kappa_{2,m}\mathbb{E}_{\Psi}^{\chi}\big[\Delta^{\prime}_j(z_1)\cdot(\rho_l^{-1}Y_{il}-\rho_j^{-1}Y_{il}^3)\cdot(\rho_l^{-1}Y_{il}\mathcal{G}_{ii}^{(l)}+\sum_{k\neq i}\rho_l^{-1}Y_{kl}\mathcal{G}^{(l)}_{ik})\\
		 &+\Delta_j(z_1)\cdot(M_{il}\rho_l^{-3}Y_{il}\mathcal{G}_{ii}^{(l)}+\rho_l^{-2}\mathcal{G}_{ii}^{(l)}+M_{il}^2\rho_l^{-4}\mathcal{G}_{ii}^{(l)}+\sum_{k\neq i}M_{il}\rho_j^{-3}Y_{kl}\mathcal{G}_{ik}^{(l)})\big]\\
		 ~\asymp~&\kappa_{2,m}\mathbb{E}_{\Psi}^{\chi}\big[\Delta_j^{\prime}(z_1)\cdot(\rho_l^{-2}Y_l^{*}\mathcal{G}^{(l)}Y_l)+\Delta_j(z_1)\cdot(\rho_l^{-2}\operatorname{tr}\mathcal{G}^{(l)}+\rho_l^{-2}Y_l^{*}\mathcal{G}^{(l)}Y_l)\big]+\mathrm{O}_{\prec}(n^{-\alpha/2}),
	\end{split}
\end{equation*}
where we denote $\Delta_j^{\prime}(z_1)=Y_j^{*}(\mathcal{G}^{(j)})^2Y_j-n^{-1}\operatorname{tr}(\mathcal{G}^{(j)})^2$ and use the moment condition $\mathbb{E}Y_{il}^4\asymp n^{-\alpha/2}$. Furthermore, it is easy to find that the leading term within the expectation from the above expansion is $\rho_j^{-2}\Delta_j(z_1)\operatorname{tr}\mathcal{G}^{(l)}$. By the exactly same procedure as the calculation of $\mathbb{E}_{\Psi}^{\chi}\big[\Delta_j(z_1)\cdot(n^{-1}\operatorname{tr}\mathcal{G}^{(l)})\big]$, we actually can show that $\mathbb{E}_{\Psi}^{\chi}\big[\rho_j^{-2}\Delta_j(z_1)\operatorname{tr}\mathcal{G}^{(l)}\big]=\mathrm{O}_{\prec}(n^{-2})$. The only difference here is that the factor $n^{-1}$ is replaced by $\rho_l^{-2}$. However, we find that $\rho_l^{-2}\lesssim n^{-1}$ and it is independent of $Y_j$. We omit further details for simplification here.

We remark that for the rest terms, each additional partial derivative for $M_{il}$ provides one additional $\rho_l^{-1}$ factor. And for $\alpha\in(3,4]$, $\kappa_1$, $\kappa_2$ and $\kappa_3$  are bounded. Besides, for $\kappa_k$ with $k\ge 4$, we may use the deterministic bound $|M_{il}\rho_l^{-1}|=\mathrm{o}_{\prec}(1)$. Then we may conclude that the rest terms in the expansion of $\mathbb{E}_{\Psi}^{\chi}\big[\Delta_j(z_1)\cdot(Y_l^{*}\mathcal{G}^{(l)}Y_l)\big]$ are much smaller than the first two terms involving $\kappa_{1,m}$ and $\kappa_{2,m}$. Combining all the above results with the fact that $|\mathrm{I}_c|=\mathrm{o}(n^{1/2})$ and $|\mathrm{T}_c|=\mathrm{O}(n)$, we finally conclude that for $\alpha\in(3,4]$,
\begin{equation*}
	 \mathbb{E}_{\Psi}^{\chi}\Big[\big(\sum_{j\in\mathrm{I}_c}\frac{\Delta_j(z_1)}{(1+m(z_1))^2}\big)\cdot\frac{1}{2\pi}\oint_{\bar{\gamma}_1^0}\langle\operatorname{tr}R\mathcal{G}(z_1)\rangle\frac{f(z_1)}{z_1}\mathrm{d}z_1\Big]=\mathrm{o}(1).
\end{equation*}
Therefore, when $\alpha\in(3,4]$, we have $II=\mathrm{o}(1)$.

\subsubsection{Proof for the critical case $\alpha=3$}\label{sec_alpha3}
 In this section, we show that the critical condition
\begin{equation*}
	\lim_{x\rightarrow \infty} x^{3}\mathbb{P}(|\xi|>x)=0,
\end{equation*}
is necessary and sufficient for the universal CLT with the same asymptotic mean and variance as demonstrated in the case of $\alpha\in (3,4]$, as well as the finite fourth-moment case. Similar to the case of $\alpha\in (3,4]$, the main difficulty to extend the result to $\alpha=3$ is the concentration of the quadratic form $Y_j^*\mathcal{G}^{(j)}(z)Y_j$. Thanks to Lemma \ref{lemma_momentquadratic}, the asymptotic behavior of $Y_j^*\mathcal{G}^{(j)}(z)Y_j$ is driven by the parameter $\beta_4$, which illustrates the explicit relation between the slowly varying function $l(x)$ and the concentration of $Y_j^*\mathcal{G}^{(j)}(z)Y_j$ for $\alpha \in (2,4]$.

\

\noindent\textbf{I. The sufficient condition for the universal CLT}\\
Let's consider a further resampling of $X_{ji}$ at the level of $\delta_n n^{1/2}$ such that $\delta_n\rightarrow 0$ and
\begin{equation*}
	\lim_{n\rightarrow \infty}\delta_{n}^{-3}l(\delta_n n^{1/2})=0.
\end{equation*}
Firstly, by the definition of $H_{ij}$, one has for general $0<s<3$,
\begin{equation*}
	\begin{split}
		\mathbb{E}|H_{ij}|^{s}= \mathbb{E}(|h_{ij}|^s)\mathbb{P}(|X_{ji}|>\delta_n n^{1/2})\lesssim \int_{\delta_n n^{1/2}}^{\infty} x^{-4+s}l(x)\mathrm{d}x \lesssim \mathrm{o}(n^{(-3+s)/2}).
	\end{split}
\end{equation*}
We define
\begin{equation*}
	\begin{split}
		\mathbb{P}(\psi_{ij}=1)=\mathbb{P}(|X_{ji}|>\delta_n n^{1/2}).
	\end{split}
\end{equation*}
One can check that $\mathbb{P}(|\xi|>\delta_n n^{1/2})\lesssim l(\delta_n n^{1/2})(\delta_n n^{1/2})^{-3}$, which implies, for large enough $n$,
\begin{equation*}
	\begin{split}
		&\mathbb{P}(\#\{(i,j):|X_{ji}|>\delta_n n^{1/2}\}>n^{1/2})\lesssim \mathbb{P}(|\xi|_{(n^{1/2})}>\delta_n n^{1/2})\\
		\lesssim &\sum_{k=n^{1/2}}^{n^2}\binom{n^2}{k} [l(\delta_n n^{1/2})(\delta_n n^{1/2})^{-3}]^{k}(1-n^{-1})^{n^2-k}\\
		\lesssim &\sum_{k=n^{1/2}}^{n^2}\left(e\frac{n^2}{k}\right)^k n^{-3k/2}[\delta_n^{-3} l(\delta_n n^{1/2})]^k\lesssim e^{-n^{1/2}} \lesssim n^{-D},
	\end{split}
\end{equation*}
for any fixed $D>0$ since $\lim_{n\rightarrow \infty}\delta_{n}^{-3}l(\delta_n n^{1/2})=0$. This implies that the following event
\begin{equation*}
	\Omega_{\Psi}=\{\Psi~\text{has at most}~n^{1/2}~\text{entries equal to one}\}
\end{equation*}
holds with high probability for $\alpha=3$, which shows $|\mathrm{I}_c|=\mathrm{O}(n^{1/2})$ given $\Omega_{\Psi}$. In the following, we follow the arguments leading to the estimate of the characteristic function of $\operatorname{tr}f(R)$, which relies on estimating two parts of \eqref{eq_prf_characteristicfunctioncomparison_expansion_phi}.

Under the critical condition, we have $\lim_{n\rightarrow \infty}l(n^{1/2})=0$, which implies
\begin{equation*}
	n\beta_4\sim \frac{\alpha\Gamma(\alpha/2)\Gamma(2-\alpha/2)}{2\Gamma(2)}n^{-1/2}l(n^{1/2})=\mathrm{o}(n^{-1/2})
\end{equation*}
by Lemma \ref{lem_moment_rates}. Recycling the notation in the case of $\alpha\in (3,4]$ and applying Lemma \ref{lemma_momentquadratic} to $\Delta_j(z_1)$ result in
\begin{equation*}
	\mathbb{E}\Delta_j^2(z_1)\lesssim n\beta_4+\mathrm{o}(n^{-1/2})=\mathrm{o}(n^{-1/2}).
\end{equation*}
For the heavy-tailed part, we have
\begin{equation*}
	\begin{split}
		 &\sum_{j\in\mathrm{I}_c}\sum_{k=2}^{K_0}\mathbb{E}_{Y_j}\big[(\Delta_j^k(z_1)+\operatorname{Res}(\Delta_j^{K_0-1}(z_1))+\mathrm{O}_{\prec}(n^{-1}))\langle \mathrm{e}^{\mathrm{i}x\langle L_{1}(f)\rangle}\rangle\big]\lesssim \sum_{j\in\mathrm{I}_c}\mathbb{E}\Delta_j^2(z_1)=\mathrm{o}(1),
	\end{split}
\end{equation*}
where in the first step we have used the fact that $|\Delta_j(z_1)|\sim 1$ and $|\langle \mathrm{e}^{\mathrm{i}x\langle L_{1}(f)\rangle}\rangle|\sim 1$, and in the last step, we have applied the fact that $|\mathrm{I}_c|=\mathrm{O}(n^{1/2})$ with high probability. Following the same arguments and noticing the estimate $\sum_{j\in\mathrm{I}_c}\mathbb{E}\Delta_j^2(z_1)=\mathrm{o}(1)$ under the critical condition, with the decomposition at $\delta_n n^{1/2}$, we also have
\begin{equation*}
	\mathbb{E}_{\Psi}^{\chi}\big[\sum_{j\in\mathrm{I}_c}\frac{\Delta_j(z_1)}{(1+m(z_1))^2}\langle \mathrm{e}^{\mathrm{i}x\langle L_{1}(f)\rangle}\rangle\big]=\mathrm{o}(1).
\end{equation*}

Now we turn to the $M$ part, which can be estimated by the cumulant expansion trick similar to the case of $\alpha\in (3,4]$, except the truncation level. Recycling the notation, we have
\begin{equation*}
	\kappa_{1,m}=\mathrm{o}(n^{-1}),~\kappa_{2,m}=1-\mathrm{o}(n^{-1/2}),~\kappa_{3,m}=\mathrm{o}(\log n).
\end{equation*}
Following the arguments leading to Lemma \ref{lem_cumulant_estM}, one can show that the terms with $\kappa_{1,m}$ and $\kappa_{3,m}$ are negligible, and the main part is the term with $\kappa_{2,m}$, which has the same estimate as $\alpha>3$ due to $\kappa_{2,m}=1-\mathrm{o}(n^{-1/2})$. For the error terms, due to the decomposition, one can get the decay rate of $\delta_n$ for high-order terms, which will be negligible for large enough $q$. Thus, we conclude that the universal CLT holds under the critical condition.

\

\noindent\textbf{II. The necessary condition for the universal CLT}\\
In this paragraph, we argue that \eqref{eq_criticalcase} is the necessary condition for the established CLT. To see this, we assume that
\begin{equation}\label{eq_oppnecessarycondition}
	\limsup_{x\rightarrow\infty}x^3\mathbb{P}(|\xi|>x)\neq0.
\end{equation}
Indeed, it is sufficient to show that under condition \eqref{eq_oppnecessarycondition}, there will be a change in the limiting distribution of the LSS. In particular, we assume that the LSS is asymptotically normal and show that the limiting variance will change.
 More precisely we will show that the heavy-tailed part $II$ is not negligible anymore given \eqref{eq_oppnecessarycondition}, which further implies that the approximated ODE of $\mathcal{T}_f(x)$ does not converge to the universal case and thus demonstrates the necessity of \eqref{eq_criticalcase}. In the following, we only discuss the details of the symmetric case while the asymmetric case will be even worse and inherit the same key observation.

Assuming \eqref{eq_oppnecessarycondition}, there is a small constant $0<c_0$ and a sequence $\{l_N\}$ such that $l_N\rightarrow\infty$ as $N\rightarrow\infty$ and
\begin{equation*}
	\mathbb{P}(|\xi|>l_N)\ge\frac{c_0}{l_N^3},\quad N\rightarrow\infty.
\end{equation*}
By Lemma 4.1 in \cite{heiny2023logdet} for $\alpha=3$, we have
\begin{equation}\label{eq_momentest_necessity}
	\mathbb{E}(Y_{11}^{2k_1}\dots Y_{q1}^{2k_q})\asymp \frac{C(3)}{n^{-N_1/2+3q/2}}
\end{equation}
for some constant $C(3)>0$, where $N_1=\#\{1\le i\le q:k_i=1\}$ and $k_1,\dots,k_q\ge 1$.
Moreover, one has
\begin{equation*}
	\begin{split}
		&\mathbb{P}(\#\{(i,j):|X_{ji}|>n^{1/2-\epsilon_h}\}\le C_0n^{1/2})\lesssim\mathbb{P}(|\xi|_{(C_0n^{1/2})}\le n^{1/2-\epsilon_h})=1-\mathbb{P}(|\xi|_{(C_0n^{1/2})}>n^{1/2-\epsilon_h}),
	\end{split}
\end{equation*}
where we choose $C_0$ such that $C_0n^{1/2}\in\mathbb{Z}^{+}$. Taking a sequence of $n_N$ such that $l_{N-1}\le \lfloor n_N^{1/2-\epsilon_h}\rfloor$ and $\lceil n_N^{1/2-\epsilon_h}\rceil\le l_{N}$, we have
\begin{equation*}
	\begin{split}
		 &1-\mathbb{P}(|\xi|_{(C_0n_N^{1/2})}>n_N^{1/2-\epsilon_h})\lesssim1-\sum_{k=C_0n_N^{1/2}}^{n_N^2}\binom{n_N^2}{k}(n_N+1)^{(-3/2+3\epsilon_h)k}(1-(n_N+1)^{-3/2+3\epsilon_h})^{n_N^2-k}\\
		 &~\lesssim~1-\sum_{k=C_0n_N^{1/2}}^{n_N^2}\frac{n_N}{k^{1/2}(n_N^2-k)^{1/2}}\big(\frac{n_N^{1/2+3\epsilon_h}}{k}\big)^k(\frac{n_N^2-n_N^{1/2+3\epsilon_h}}{n_N^2-k})^{n_N^2-k}\\
		&~\lesssim~ 1-\sum_{k=C_0n_N^{1/2}}^{C_1n_N^{1/2+3\epsilon_h}}\frac{1}{k^{1/2}}\big(\frac{n_N^{1/2+3\epsilon_h}}{k}\big)^k-\sum_{k=C_1n_N^{1/2+3\epsilon_h}}^{n_N^2}\frac{n_N}{k^{1/2}(n_N^2-k)^{1/2}}\big(\frac{n_N^{1/2+3\epsilon_h}}{k}\big)^k(\frac{n_N^2-n_N^{1/2+3\epsilon_h}}{n_N^2-k})^{n_N^2-k}\\
		&\lesssim1-\sum_{k=C_0n_N^{1/2}}^{C_1n_N^{1/2+3\epsilon_h}}\frac{1}{k^{1/2}}\big(\frac{n_N^{1/2+3\epsilon_h}}{k}\big)^k\lesssim n_N^{-D},
	\end{split}
\end{equation*}
where $C_1$ satisfies that $C_1n_N^{1/2+3\epsilon_h}\in\mathbb{Z}^{+}$, in the fourth step we have employed Stirling's formula, and in the last step we have invoked the fact that a well-defined probability is nonnegative so that we can choose $k$ in the range $[C_0n_N^{1/2}, C_1n_N^{1/2+3\epsilon_h}]$ to ensure the last inequality. Finally, summarising the condition on $n_N$, we can update Lemma \ref{lem_wellconfigured} as follows.
\begin{lemma}\label{lem_wellconfigured_alpha=3}
	Under the condition \eqref{eq_oppnecessarycondition}, $\Psi$ is well configured as there are at most $n_N^{1-\epsilon_y}$ but at least $n_N^{1/2}$ entries equal to one with high probability.
\end{lemma}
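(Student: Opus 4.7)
The plan is to establish the upper and lower bounds separately along the subsequence $n_N$.

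For the upper bound, the claim that $\Psi$ has at most $n_N^{1-\epsilon_y}$ nonzero entries with high probability is a direct transcription of Lemma \ref{lem_wellconfigured} specialized to $n = n_N$. The proof of Lemma \ref{lem_wellconfigured} uses only Assumption \ref{ass_X} (regular variation of the tail of $\xi$) together with a union bound over the order statistics of $|\xi|$, both of which apply verbatim at any integer sample size. No new input is required here.

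For the lower bound, the essential computation has already been performed in the display preceding the lemma statement. That Stirling-type calculation establishes
\begin{equation*}
	\mathbb{P}\bigl(|\xi|_{(C_0 n_N^{1/2})} > n_N^{1/2-\epsilon_h}\bigr) \geq 1 - n_N^{-D}
\end{equation*}
for any fixed $D>0$, where $C_0>0$ is a constant independent of $N$. On the event inside the probability, at least $C_0 n_N^{1/2}$ of the entries $X_{ji}$ exceed $n_N^{1/2-\epsilon_h}$ in absolute value, so by the very definition of $\psi_{ij}$ at least $C_0 n_N^{1/2}$ entries of $\Psi$ equal one. Passing to sufficiently large $N$ (so that $C_0 n_N^{1/2} \geq n_N^{1/2}$) yields the stated lower bound.

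The main subtlety, rather than a genuine obstacle, is the sparsity of the lower tail estimate implied by \eqref{eq_oppnecessarycondition}: the pointwise bound $\mathbb{P}(|\xi|>x) \geq c_0 x^{-3}$ is guaranteed only at the radii $x = l_N$, not for arbitrary large $x$. This is exactly why the lemma must be formulated along the subsequence $n_N$, chosen so that $n_N^{1/2-\epsilon_h}$ sits in the window $[l_{N-1}, l_N]$ on which the lower tail bound can be exploited. Once this alignment is in place, the binomial tail naturally splits into the range $k \in [C_0 n_N^{1/2}, C_1 n_N^{1/2+3\epsilon_h}]$, whose contribution is already close to one, and a negligible complement; this is what produces the $n_N^{-D}$ decay. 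No such alignment is possible under the critical condition \eqref{eq_criticalcase}, which is consistent with the sufficiency direction established in Part I.
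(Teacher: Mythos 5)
Your proposal is correct and tracks the paper's argument exactly: the upper bound is an unchanged application of Lemma \ref{lem_wellconfigured} at sample size $n_N$, and the lower bound is precisely the Stirling/binomial computation in the display immediately preceding the lemma, which shows that at least $C_0 n_N^{1/2}$ entries exceed the threshold $n_N^{1/2-\epsilon_h}$ with probability $1-\mathrm{O}(n_N^{-D})$. Your remark on the role of the subsequence $n_N$ (aligning $n_N^{1/2-\epsilon_h}$ with the scales $l_N$ at which the lower tail bound in \eqref{eq_oppnecessarycondition} is actually available) captures why the lemma is stated along $\{n_N\}$ rather than for all $n$.
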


Now, we reconsider the characteristic function. Note that at this time, we denote the matrix size of $R$ as $n_N\times n_N$. Recall that we have the decomposition of $(\mathcal{T}_{f}(x))^{\prime}$ as
\begin{equation*}
	\begin{split}
		 (\mathcal{T}_{f}(x))^{\prime}&=-\frac{1}{2\pi}\oint_{\bar{\gamma}_1^0}\Big(\sum_{i}\sum_{j\in\mathrm{T}_c}\mathbb{E}^{\chi}_{\Psi}[ly_{ij}[Y^{*}\mathcal{G}(z_1)]_{ji}\langle e^{ix\langle L_1(f)\rangle}\rangle]\cdot\mathbbm{1}(\psi_{ij}=0)\\
		&+\sum_{j\in\mathrm{I}_c}\mathbb{E}_{\Psi}^{\chi}[Y_j^{*}\mathcal{G}(z_1)Y_j\langle e^{ix\langle L_1(f)\rangle}\rangle]\cdot\mathbbm{1}(\psi_{ij}=1)\Big)\cdot f(z_1)\mathrm{d}z_1.
	\end{split}
\end{equation*}
After carefully checking the proof of Lemma \ref{lem_cumulant_estM}, we find that it still holds under the condition \eqref{eq_oppnecessarycondition} (Indeed, it holds for all $\alpha>2$ under the symmetry). Then the first term $\sum_{i}\sum_{j\in\mathrm{T}_c}I_{ij}$ in the integral establishes the same limiting variance as $\alpha\in(3,4]$. It remains the second term $II$. To this end, one may repeat the procedures in the proof of Lemma \ref{lem_cumulant_estH} (taking derivative of $x$ and setting $x=0$).
It turns out that we have to consider
\begin{equation*}
	 \mathbb{E}_{\Psi}^{\chi}\Big[(\sum_{j\in\mathrm{I}_c}\frac{\Delta_j(z_1)}{(1+m(z_1))^2})\cdot\frac{1}{2\pi}\oint_{\bar{\gamma}_1^0}\langle\operatorname{tr}R\mathcal{G}(z_1)\rangle\frac{f(z_1)}{z_1}\mathrm{d}z_1\Big].
\end{equation*}
Now using the resolvent expansion for $\operatorname{tr}R\mathcal{G}(z_1)$, we may conclude the following inequality for properly chosen $C_1>0$,
\begin{equation*}
	\begin{split}
		 &\mathbb{E}_{\Psi}^{\chi}\Big[(\sum_{j\in\mathrm{I}_c}\frac{\Delta_j(z_1)}{(1+m(z_1))^2})\cdot\frac{1}{2\pi}\oint_{\bar{\gamma}_1^0}\langle\operatorname{tr}R\mathcal{G}(z_1)\rangle\frac{f(z_1)}{z_1}\mathrm{d}z_1\Big]\ge \mathbb{E}_{\Psi}^{\chi}\big[C_1\sum_{j\in\mathrm{I}_c}\Delta_j^2(z_1)\big]\\
		 &=C_1\sum_{j\in\mathrm{I}_c}\mathbb{E}_{\Psi}^{\chi}\big(Y_j^{*}\mathcal{G}^{(j)}(z_1)Y_jY_j^{*}\mathcal{G}^{(j)}(z_1)Y_j-(\frac{1}{n_N}\operatorname{tr}\mathcal{G}^{(j)}(z_1))^2\big)\\
		&=C_1\sum_{j\in\mathrm{I}_c}\mathbb{E}_{\Psi}^{\chi}\big[\sum_iY_{ij}^4(\mathcal{G}_{ii}^{(j)})^2+\sum_{i\neq k}Y_{ij}^2Y_{kj}^2\mathcal{G}_{ii}^{(j)}\mathcal{G}_{kk}^{(j)}+\sum_{i\neq k}Y_{ij}^2Y_{kj}^2(\mathcal{G}_{ik}^{(j)})^2-(\frac{1}{n_N}\operatorname{tr}\mathcal{G}^{(j)})^2\big]\\
		&=C_1\sum_{j\in\mathrm{I}_c}\Big(\frac{3\pi}{4n_N^{3/2}}\sum_i(\mathcal{G}_{ii}^{(j)})^2+\frac{1}{n_N^2}\sum_{i\neq k}\mathcal{G}_{ii}^{(j)}\mathcal{G}_{kk}^{(j)}+\frac{1}{n_N^2}\sum_{i\neq k}(\mathcal{G}_{ik}^{(j)})^2-\frac{1}{n_N^2}\sum_i(\mathcal{G}_{ii}^{(j)})^2-\frac{1}{n_N^2}\sum_{i\neq k}\mathcal{G}_{ii}^{(j)}\mathcal{G}_{kk}^{(j)}\Big)\\
		&\ge C_2\sum_{j\in\mathrm{I}_c}\big(\frac{1}{n_N^{3/2}}\sum_i(\mathcal{G}_{ii}^{(j)})^2-\frac{1}{n_N^2}\sum_i(\mathcal{G}_{ii}^{(j)})^2\big)\\
		&\ge C_2\sum_{j\in\mathrm{I}_c}\frac{C_3}{n_N^{1/2}}-\mathrm{o}(1)\gtrsim\frac{|\mathrm{I}_c|}{n_N^{1/2}}\gtrsim C_4>0,
	\end{split}
\end{equation*}
where in the fourth step we have employed \eqref{eq_momentest_necessity}, in the fifth step we just have dropped away $n_N^{-2}\sum_{i\neq k}(\mathcal{G}_{ik}^{(j)})^2>0$, and in the last step we have applied Lemma \ref{lem_wellconfigured_alpha=3}. Thereafter, the second term $II$ is not negligible given \eqref{eq_oppnecessarycondition} with high probability, which further implies the necessity of \eqref{eq_criticalcase}.   \qed

\

\noindent\textbf{III. An example $f(x)=x^2$}\\
In this part, we consider Schott's statistics $f(x)=x^2$ in Example \ref{example}. By the definition of $R$ with entry $R_{ik}=\sum_{j=1}^{p}Y_{ij}Y_{kj}$, we have
\begin{equation*}
	\begin{split}
		\operatorname{tr}(R^2)=\sum_{i=1}^{n}(\sum_{j=1}^{p}y_{ij}^2)^2+\sum_{i\ne k}^{n}(\sum_{j=1}^{p}y_{ij}y_{kj})^2=p+\sum_{i}\sum_{j\ne l}y_{ij}^2y_{il}^2+\sum_{i\ne k}\sum_{j\ne l}y_{ij}y_{kj}y_{il}y_{kl},
	\end{split}
\end{equation*}
where we have used the relation $\sum_{j}\sum_{i, k}y_{ij}^2y_{kj}^2=\sum_{j}(\sum_{i}y_{ij}^2)^2=p$.
Hence
\begin{equation*}
	\begin{split}
		\mathbb{E}(\operatorname{tr}(R^2))=p+np(p-1)\beta_{2}^2+n(n-1)p(p-1)\beta_{1,1}^2,
	\end{split}
\end{equation*}
where the last term satisfies $n^2p^2\beta_{1,1}^2\lesssim n^{-2}\mathbbm{1}(\alpha\ge 3)+n^{4-2\alpha}\mathbbm{1}(2<\alpha<3)$ by Lemma \ref{lem_oddmoment_est}. It suffices to consider the variance of the last two terms. By direct calculations, we have
\begin{equation*}
	\begin{split}
		\mathbb{E}\big(\sum_{i}\sum_{j\ne l}y_{ij}^2y_{il}^2\big)^2=&np(p-1)(2\beta_{4}^2+4(p-2)\beta_{4}\beta_{2}^2+(p-2)(p-3)\beta_{2}^4)\\
		&+n(n-1)p(p-1)(2\beta_{2,2}^2+4(p-2)\beta_{2,2}\beta_{2}^2+(p-2)(p-3)\beta_{2}^4) \\
		=&2np^2\beta_{4}^2+n^{-2}(p^4-6p^3+11p^2)+4n^{-2}p(p-1)(p-2)+2n^{-2}p^2+\mathrm{O}_{\prec}(n^{1-\alpha/2})\\
		=&2np^2\beta_{4}^2+n^{-2}[p^4-2p^3+p^2]+\mathrm{O}_{\prec}(n^{1-\alpha/2}),
	\end{split}
\end{equation*}
noting that $\beta_2=1/n, \beta_{4}\asymp n^{-\alpha/2}l(n^{1/2})$ and $\beta_{2,2}=(n^2-n)^{-1}(1-n\beta_{4})=n^{-2}-\mathrm{O}_{\prec}(n^{-1-\alpha/2})$. The second moment of the sum of crossed terms in $\operatorname{tr}(R^2)$  is (we only keep the coefficient of the main term since others are negligible)
\begin{equation*}
	\begin{split}
		&\mathbb{E}\big(\sum_{i\ne k}\sum_{j\ne l}y_{ij}y_{kj}y_{il}y_{kl}\big)^2\\
		=&n(n-1)p(p-1)[4\beta_{2,2}^2+(p-2)\beta_{2,2}\beta_{1,1}^2+(p-2)(p-3)\beta_{1,1}^4]\\
		&+n(n-1)(n-2)p(p-1)[\beta_{2,1,1}^2+(n-3)\beta_{1,1,1,1}^2+(p-2)\beta_{2,1,1}\beta_{1,1}^2+(p-2)(p-3)\beta_{1,1}^4]\\
		&+n(n-1)(n-2)(n-3)p(p-1)(p-2)[\beta_{1,1,1,1}\beta_{1,1}^2+(p-3)\beta_{1,1}^4]\\
		=&4n^2p^2\beta_{2,2}^2+n^4p^4\beta_{1,1}^4+\mathrm{o}(1)
	\end{split}
\end{equation*}
since $\beta_{1,1}=\mathrm{o}(n^{-2}), \beta_{2,1,1}=\mathrm{o}(n^{-3}), \beta_{1,1,1,1}=\mathrm{o}(n^{-4})$ and $\beta_{2,2}=(n^2-n)^{-1}(1-n\beta_{4})$.
The covariance of the respective two sums in $\operatorname{tr}(R^2)$ is
\begin{equation*}
	\begin{split}
		&\mathbb{E}\big(\sum_{i}\sum_{j\ne l}y_{ij}^2y_{il}^2\big)\big(\sum_{i\ne k}\sum_{j\ne l}y_{ij}y_{kj}y_{il}y_{kl}\big)\\
		=&n(n-1)p(p-1)[\beta_{3,1}^2+(p-2)\beta_{3,1}\beta_{2}\beta_{1,1}+(p-2)(p-3)\beta_{2}^2\beta_{1,1}^2]\\
		&+n(n-1)(n-2)p(p-1)[\beta_{2,1,1}^2+(p-2)\beta_{2,1,1}\beta_{2}\beta_{1,1}+(p-2)(p-3)\beta_{2}^2\beta_{1,1}^2]\\
		=&np^4\beta_{1,1}^2+\mathrm{o}(1),
	\end{split}
\end{equation*}
since $\beta_{3,1}\le \beta_{1,1}= \mathrm{o}(n^{-2})$ and $\beta
_{2,1,1}=\mathrm{o}(n^{-3})$.
Thereafter, the variance of $\operatorname{tr}R^2$ is given by
\begin{equation*}
	\begin{split}
		\operatorname{Var}(\operatorname{tr}R^2)=2np^2\beta_{4}^2+4p^2n^{-2}+\mathrm{o}(1),
	\end{split}
\end{equation*}
finishing the proof of Example \ref{example}.
\qed






\bibliographystyle{plain}
\bibliography{reference}

\end{document}